\newlength{\standardunitlength}
\long\def\@makecaption#1#2{%
     \vskip 10pt

\setbox\@tempboxa\hbox{%
       \small\sf{\bfcaptionfont #1. }\ignorespaces #2}%
     \ifdim \wd\@tempboxa >\captionwidth {%
         \rightskip=\@captionmargin\leftskip=\@captionmargin
         \unhbox\@tempboxa\par}%
       \else
         \hbox to\hsize{\hfil\box\@tempboxa\hfil}%
     \fi}
\font\bfcaptionfont=cmssbx10 scaled \magstephalf
\newdimen\@captionmargin\@captionmargin=2\parindent
\newdimen\captionwidth\captionwidth=\hsize
                        \theoremstyle{plain}
\newtheorem{theorem}{Theorem}[section]
\newtheorem{lemma}[theorem]{Lemma}
\newtheorem{corollary}[theorem]{Corollary}
\newtheorem{proposition}[theorem]{Proposition}
\newtheorem{definition}{Definition}
\theoremstyle{definition}
\newtheorem{remark}[theorem]{Remark}
\newtheorem{example}[theorem]{Example}
\newcommand{\Span}{\operatorname{Span}}
\newcommand{\tr}{\operatorname{tr}}
\newcommand{\id}{\operatorname{id}}
\newcommand{\qbinom}[2]{\begin{bmatrix}#1\\ #2\end{bmatrix}}
\newcommand\begin{figure}
\newcommand\FIG[3]{\begin{figure}
    \includegraphics[#3]{#1.eps}
    \caption{#2}
    \label{fig:#1}
    \end{figure}}
\newcommand\incl[2]{{\includegraphics[height=#1]{#2}}}
\def\BN{\mathbb N}
\def\BZ{\mathbb Z}
\def\BQ{\mathbb Q}
\def\BR{\mathbb R}
\def\BC{\mathbb C}
\def\cT{\mathcal T}
\def\Xh{{\mathbf X}_h}
\def\cA{{\mathcal A}}
\def\cR{\mathcal R}
\def\D{\Delta}
\def\cU{\mathcal U}
\def\cL{\mathcal L}
\def\V{\mathcal V}
\def\P{\mathcal P}
\def\a{\alpha}
\def\Ga{\Gamma}
\def\modA {{\mathcal A}}
\def\modZ {{\mathbb Z}}
\def\la{\langle}
\def\ra{\rangle}
\def\g{\gamma}
\def\ve{\varepsilon}
\def\Ga{\Gamma}
\def\d{\delta}
\def\sub{\subset}
\def\sgn{\operatorname{sgn}}
\def\tT{\tilde \cT}
\def\Aut{\mathrm{Aut}}
\def\al{\alpha}
\def\bi{\mathbf i}
\def\fg{\mathfrak{g}}
\def\hh{\mathfrak{h}}
\def\bk{\mathbf{k}}
\def\UU{{\mathbf U_h}}
\def\tr{\mathrm{tr}}
\def\ev{{\mathrm{ev}}}
\def\evxi{\ev_\xi}
\newcommand{\ad}{\mathrm{ad}}
\newcommand{\tri}{\triangleright}
\newcommand{\br}{\mathbf r}
\def\cF{{\mathcal F}}
\def\be{\begin{equation} }
\def\ee{\end{equation} }
\def\cB{{\mathcal B}}
\def\ord{\mathrm{ord}}
\newcommand{\inv}{{\mathrm{inv}}}
\def\cD{{\mathcal D}}
\def\tcD{\tilde{\mathcal T}}
\def\sh{\operatorname{sh}}
\def\bc{\mathbf c}
\def\cF{{\mathcal F}}
\def\cI{\mathcal I}
\newcommand{\X}{X}
\newcommand{\cQ}{{\mathcal Q}}
\newcommand{\cH} {{\mathcal H}}
\def\Hopf{\mathfrak H}
\def\eqz{\overset{(\zeta)}{=}}
\def\neqz {\overset{(\zeta)}{\neq}}
\newcommand\suppress[1]{[[suppressed]]}
\newcommand\xyzxyz{x\otimes y\otimes z\mapsto xyz}
\newcommand\modK {\mathcal K}
\renewcommand\g{\mathfrak{g}}
\newcommand\modk {{\BC [[h]]}}
\newcommand\modg {\mathbf g}
\newcommand\modi {\mathbf i}
\newcommand\modj {\mathbf j}
\newcommand\modm {\mathbf m}
\newcommand\modn {\mathbf n}
\newcommand\modu {\mathbf u}
\newcommand\modQ {\mathbb Q}
\newcommand\ho{\hat\otimes}
\newcommand\col {\colon\thinspace}
\newcommand\Uq{\mathbf U_q}
\newcommand\Uqv{\Uq^\ev}
\newcommand\Uh{\mathbf U_h}
\newcommand\Uhx[1]{\mathbf U_h^{\ho #1}}
\newcommand\Uhg{\Uh(\g)}
\newcommand\Uhh{\Uh[h^{-1}]}
\newcommand\modr {\mathbf r}
\newcommand\modc {\mathbf c}
\newcommand\modb {\mathbf b}
\newcommand\Zqh{\widehat{\modZ [q]}}
\newcommand\trr{\triangleright}
\newcommand\hide[1]{.}
\newcommand\cl{\operatorname{cl}}
\newcommand\trq{\tr_q}
\newcommand\bH{\underline{\mathscr H}}
\newcommand\bS{\underline{S}}
\newcommand\bD{\underline{\Delta }}
\newcommand\dv{{\dot v}}
\newcommand\de{{\dot e}}
\newcommand\dK{{\dot K}}
\newcommand\dS{{\dot S}}
\newcommand\dT{{\dot T}}
\newcommand\zzzvert {\,|\,}
\newcommand\hf{\frac12}
\newcommand\dth{{\dot\theta }}
\newcommand\rad{\operatorname{ad^\mathrm{r}}}
\newcommand\radb{\underline{\rad}}
\newcommand\Zt{{\BN^t}}
\newcommand\np{\newpage}
\newcommand\no[1]{}
\newcommand\bbb[2]{\left[\begin{matrix}#1\\#2\end{matrix}\right]}
\newcommand\Gv{G^\ev}
\newcommand\cUq{\check{\mathbf U}_q}
\newcommand\uqq[1]{[\Uq]_{#1}}
\newcommand\uqvq[1]{[\Uqv]_{#1}}
\newcommand{\UZ}{{\mathbf U}_\BZ}
\newcommand{\bm}{{\mathbf m}}
\newcommand\bn{\mathbf{n}}
\newcommand\dx{\dot x}
\newcommand\dy{\dot y}
\newcommand\dmu{\dot\mu }
\newcommand\lm{{\lambda _\modm }}
\newcommand\elm{\de_\lm}
\newcommand\klm{\dK_\lm}
\newcommand\Zvv{\modZ [v,v^{-1}]}
\newcommand\ul{\underline}
\newcommand\congto{\overset{\cong}{\longrightarrow}}
\newcommand\Vev{\cV^{\ev}}
\newcommand\bfk{\mathbf{k}}
\newcommand\modh {\hh}
\def\Uq{\mathbf U_q}
\newcommand\calZ{{\mathcal Z}}
\newcommand\CZ{{\mathcal Z}}
\newcommand\mods {\operatorname{ev}}
\newcommand\Qqh{\widehat{\modQ [q]}}
\newcommand\ZZ{\mathcal Z}
\def\sH{\mathscr H}
\def\sX{\mathscr X}
\def\XA{{\mathbf X}_\cA}
\def\brQ{\breve Q}
\def\br{\mathbf r}
\def\be{ \begin{equation} }
\def\ee{ \end{equation} }
\def\Zq{\BZ[q^{\pm 1}]}
\def\Ht{\operatorname{ht}}
\def\Gal{\mathrm{Gal}}
\newcommand\Zxi{\modZ[\xi]}
\newcommand\Pg{{P\g}}
\newcommand{\ZZg}{\ZZ_\fg}
\newcommand{\ZZPg}{\ZZ_{P\fg}}
\newcommand\Qab{\mathbb{Q}^{\mathrm{ab}}}
\newcommand\bbQ{\mathbb{Q}}
\newcommand\bbC{\mathbb{C}}
\newcommand\bbZ{\mathbb{Z}}
\newcommand\hon{{\ho n}}
\newcommand\Ch{\bbC[[h]]}
\newcommand\ot{\otimes}
\def\sR{\Ch}
\def\bu{\mathbf u}
\def\scH{\mathscr H}
\def\boldmu{\boldsymbol \mu}
\def\boldep{\boldsymbol \epsilon}
\def\boldpsi{\boldsymbol \psi}
\def\scH{\mathscr H}
\def\sL{\mathscr L}
\def\oX{\overline {\sX}}
\def\oXtwo{\overline{\sX \otimes \sX}}
\def\btau{\kappa}
\def\ibar{\iota_{\mathrm{bar}}}
\def\brU{\breve{\mathbf U}}
\def\tphi{\varphi}
\def\bral{\breve \al}
\def\brK{\breve K}
\def\fW{{\mathfrak W}}
\def\iprod{\operatorname{\overleftarrow{\prod}  }}
\def\brH{\breve H}
\def\uS{\underline S}
\def\tA{{\tilde {\cA}}}
\def\sXZ{{\mathbf X}_\BZ}
\def\Uhalf{{\mathbf U}_{\sqrt h}\ }
\def\UUH{\Uhalf\ }
\def\UUh{\sX}
\def\oX{\overline{\sX}}
\def\Chh{\Chalf}
\def\bbe{\mathbf e}
\def\Vh{{\mathbf V}_h}
\def\bo{\bar \ot}
\def\tcR{\tilde{\cR}}
\def\Oo{\overline{\otimes}}
\def\uD{\underline \Delta}
\def\UA{\UZ}
\def\VA{{\mathbf V}_\BZ}
\def\brK{\breve K}
\def\bode{\boldsymbol{\delta}}
\def\VAev{\VA^\ev}
\def\BC{\BQ}
\def\BQvK{\BQ(v)[x^{\pm 1}]}
\def\leu{\prec}
\def\BC{{\mathbb C}}
\def\briota{\breve \iota}
\def\brbe{\breve{\bbe}}
\def\brUA{\breve{\mathbf U}_\BZ}
\def\Vev{\VA^\ev}
\def\sXZev{\sXZ^\ev}
\def\sXZe{\sXZ^\ev}
\def\sXZX{\breve{\mathbf X}_\BZ}
\def\brVA{\breve{\mathbf V}_\BZ}
\def\cI{\mathcal I}
\def\btri{\blacktriangleright}
\def\cUq{\tilde {\mathbf U}_q}
\def\dva{\dot \varphi}
\def\dtphi{\dot {\tphi}}
\def\UUh{{\mathbf U}_{\sqrt h}}
\def\UUH{{\mathbf U}_{\sqrt h}}
\def\Chh{{\BC[[\sqrt h]]}}
\def\tK{\widetilde {\modK}}
\def\UZe{\UZ^\ev}
\def\Ur{\mathcal U}
\def\oX{\overline {\sX}}
\def\CtK{\widetilde\modK'}
\newcommand{\CK}{\modK'}
\def\ooo{(q;q)}
\def\UZe{\UZ^\ev}
\def\boldpsi{\boldsymbol \psi}
\def\dad{\dot {\mathrm {ad}}}
\def\dad{\dot {\mathrm{ad}}}
\def\bu{\modu}
\def\fuD{f^ {\uD}}
\def\fbS{f^ {\bS}}
\def\fad{f^{\ad}}
\def\tfad{\tilde f^{\ad}}
\def\fmu{f^{\mu}} \def\tfmu{\tilde f ^\mu}
\def\boldmu{\boldsymbol \mu}
\def\uad{\underline{\mathrm{ad}}}
\def\btri{\blacktriangleright}
\def\ez{ \ev_{v^{1/\cD}=\zeta}}
\def\Cv{{\BC[v^{\pm1}]}}
\def\Cvh{{\widehat{\BC[v]}}}
\def\CXh{  \Xh}
\def\tB{\tilde {\cB}}
\newcommand{\Zc}{{\mathfrak Z}}
\def\cH{\chi}
 \def\Hopf{L}
 \def\DDD{\mathbb D}
\def\ddd{\mathbbmtt d}
\def\eqzeta{\, \overset{(\zeta)}{=} \, }
\def\CsX{\Xh}
\def\Cv{\BC[v^{\pm 1}]}
 \def\hCv{\widehat{\BC[v]}}
  \def\Cv{{\BC[v^{\pm 1}]}}
  \def\Ztwo{\BZ_{(2)}}
\def\vol{\operatorname{vol}}
\def\XZ{{\mathbf X}_\BZ}
\def\VZ{{\mathbf V}_\BZ}
\def\brUZ{\breve{\mathbf U}_\BZ}
\def\brVZ{\breve{\mathbf V}_\BZ}
\begin{document}

\title[Unified quantum invariants]{Unified quantum invariants for integral homology spheres associated with simple Lie algebras}

\author[Kazuo Habiro]{Kazuo Habiro}
\address{Research Institute for Mathematical Sciences, Kyoto
University, Kyoto 606-8502, Japan}

\email{habiro@kurims.kyoto-u.ac.jp}
\author[Thang  T. Q. L\^e]{Thang  T. Q. L\^e}
\address{School of Mathematics, 686 Cherry Street,
 Georgia Tech, Atlanta, GA 30332, USA}
\email{letu@math.gatech.edu}

\date{\today}

\thanks{K.H.  was partially supported by the Japan Society for
  the Promotion of Science, T. T. Q. L. was partially supported in part by National Science Foundation. \\
2010 {\em Mathematics Classification:} Primary 57M27. Secondary 57M25.\\
{\em Key words and phrases: quantum invariants, integral homology spheres,
  quantized enveloping algebras, ring of analytic functions on roots
  of unity}}

\begin{abstract}
  For each finite dimensional, simple, complex Lie algebra $\fg$ and
  each root of unity $\xi$ (with some mild restriction on the order)
  one can define the Witten-Reshetikhin-Turaev (WRT) quantum  invariant
  $\tau_M^\fg(\xi)\in \BC$ of oriented 3-manifolds $M$. In the present paper we construct an
  invariant $J_M$ of {\em integral homology spheres} $M$ with values in the
  cyclotomic completion $\Zqh$ of the polynomial ring $\modZ [q]$,
  such that the evaluation of $J_M$ at each root of unity gives the
  WRT quantum invariant of $M$ at that root of unity.  This result
  generalizes the case $\fg=sl_2$ proved by the first author.  It
  follows that $J_M$ unifies all the quantum invariants of $M$
  associated with $\fg$, and represents the quantum invariants as a
  kind of ``analytic function'' defined on the set of roots of unity.
  For example, $\tau_M(\xi)$ for all roots of unity are determined by
  a ``Taylor expansion'' at any root of unity, and also by the values
  at infinitely many roots of unity of prime power orders.  It follows
  that WRT quantum invariants $\tau _M(\xi)$ for all roots of unity are
  determined by the Ohtsuki series, which can be regarded as the
  Taylor expansion at $q=1$, and hence by the Le-Murakami-Ohtsuki
  invariant.  Another consequence is that the WRT quantum invariants
  $\tau_M^\fg(\xi)$ are algebraic integers.  The construction of the
  invariant $J_M$ is done on the level of quantum group, and does not
  involve any finite dimensional representation, unlike the definition
  of the WRT quantum invariant. Thus, our
  construction gives a unified, ``representation-free'' definition of
  the quantum invariants of integral homology spheres.
\end{abstract}

\maketitle

{\small \tableofcontents}

\parskip .5em

\np
\section{Introduction} \label{sec:intro}
The main goal of the paper is to construct an invariant $J^\fg_M$ of
integral homology spheres $M$ associated to each finite dimensional
simple Lie algebra $\g$, which unifies the Witten-Reshetikhin-Turaev
quantum invariants at various roots of unity.  The invariant $J^\fg_M$
takes values in the completion
$\Zqh=\varprojlim_n\modZ[q]/((1-q)(1-q^2)\cdots(1-q^n))$ of the
polynomial ring $\modZ[q]$, which may be regarded as a ring of
analytic functions on roots of unity.  This invariant unifies the
quantum invariants at various roots of unity in the sense that for
each root of unity $\xi$, the evaluation $\ev_\xi(J^\fg_M)$ at
$q=\xi$ of $J^\fg_M$ is equal to the WRT quantum invariant
$\tau^\fg_M(\xi)$ of $M$ at $\xi$ whenever $\tau^\fg_M(\xi)$ is
defined.  This invariant is a generalization of the $sl_2$ case
constructed in \cite{H:unified}.

\subsection{The WRT invariant}\label{sec:intro-wrt}
Witten \cite{Witten}, using non-mathematically rigorous path integrals
in quantum field theory, gave a physics interpretation of the Jones
polynomial \cite{Jones} and predicted the existence of $3$-manifold invariants
associated to every simple Lie algebra and certain integer, called
level. Using the quantum group $U_q(sl_2)$ at roots of unity,
Reshetikhin and Turaev \cite{RT2} gave a rigorous construction of
$3$-manifold invariants, which are believed to coincide with the Witten
invariants. These invariants are called the Witten-Reshetikhin-Turaev
(WRT) quantum invariants. Later the machinery of quantum groups helped to
generalize the WRT invariant $\tau_M^\fg(\xi)$ to the case when $\fg$
is an arbitrary simple Lie algebra, and $\xi$ is a root of
unity.

In this paper we will focus on the quantum invariants of an integral
homology $3$-sphere, i.e. a closed oriented $3$-manifold $M$ such that
$H_*(M,\BZ)= H_*(S^3,\BZ)$.

Let $\ZZ\subset \mathbb{C}$ denote the set of all roots of unity.  For
each simple Lie algebra $\fg$, there is a subset $\ZZg\subset \ZZ$ and
the $\fg$ WRT invariant of an integral homology sphere $M$ gives a
function
\begin{gather}
  \label{e3}
  \tau^\fg_M\colon \ZZg \to \mathbb{C},
\end{gather}
(We recall the definition of $\tau_M^\fg(\xi)$ in Section \ref{sec.WRT}. The definition of $\tau_M^\fg(\xi)$ for closed $3$-manifolds involves
 a choice of a certain root of $\xi$, but it turns out that for
integral homology spheres this choice is irrelevant.)

We are interested in the behavior of the WRT function \eqref{e3}
associated to each Lie algebra $\fg$.
It is natural to raise the following questions.
\begin{itemize}
\item Is it possible to extend the domain of the map $\tau^\fg_M$ to
 $\ZZ$ in a natural way?
\item How strongly are the values at different roots of unity
  $\xi,\xi'\in \ZZg$ related?
\item Is there some restriction on the range of the function?  In
  particular, is $\tau_M^\fg(\xi)$ an algebraic integer for all $\fg$
  and $\xi$?
\item How are the quantum invariants related to finite type invariants
of $3$-manifolds \cite{Ohtsuki,H:claspers,Goussarov}? In particular, is
there any relation between the quantum invariants and the
Le-Murakami-Ohtsuki  invariant \cite{LMO}?
\end{itemize}

\newcommand{\ZQ}{\modZ[q]}

\subsection{The ring $\Zqh$ of analytic functions on roots of unity}
\label{sec.Habiro}
Define a completion $\Zqh$ of the polynomial ring $\ZQ$ by
\begin{equation*}
  \Zqh = \varprojlim_{n}\modZ [q]/((q;q)_n),
\end{equation*}
where as usual
\begin{equation*}
  (x;q)_n := \prod_{j=1}^n (1-x q^{j-1}).
\end{equation*}
The ring $\Zqh$ may be regarded as the ring of ``analytic functions
defined on the set $\calZ$ of roots of unity''
\cite{H:cyclotomic,H:unified}.  This statement is justified by the
following facts.  For more details, see Section 1.2 of
\cite{H:unified}.

For a root of unity $\xi\in\CZ$ of order $r$, we have
$(\xi;\xi)_n=0$ for $n\ge r$.
Hence the evaluation map
\begin{gather*}
  \ev_\xi\colon \modZ[q]\to \modZ[\xi],\quad f(q)\mapsto f(\xi)
\end{gather*}
induces a ring homomorphism
\begin{gather*}
  \ev_\xi\colon \Zqh\to \modZ[\xi].
\end{gather*}
We write $f(\xi)=\ev_\xi(f(q))$.

Each element $f(q)\in\Zqh$ defines a function from $\CZ$ to $\BC$.
Thus we have a ring homomorphism
\begin{gather}
  \label{e4}
  \ev\colon\Zqh \to \mathbb{C}^{\CZ}
\end{gather}
defined by $\ev(f(q))(\xi)=\ev_\xi(f(q))$.
This homomorphism is injective \cite{H:cyclotomic}, i.e., $f(q)$ is
determined by the values $f(\xi)$ for $\xi\in\CZ$.
Therefore, we may regard $f(q)$ as a function on the set $\CZ$.

In fact, a function $f(q)\in\Zqh$ can be determined by values on a
subset $\CZ'$ of $\CZ$ if $\calZ'$ has a limit point $\xi_0\in\CZ$ with respect
to a certain topology of $\calZ$, see \cite[Theorem
6.3]{H:cyclotomic}.
In this topology, an element $\xi \in \calZ$ is a limit point of a
subset $\calZ'\subset \calZ$ if and only if there are infinitely many
$\xi' \in \calZ'$ such that the orders (as roots of unity) of   $\xi'\xi^{-1}$ are  prime powers.  For
example, each $f(q)\in\Zqh$ is determined by the values at infinitely
many roots of unity of prime  orders.

For $\xi\in\CZ$, there is a ring homomorphism
\begin{gather*}
  T_\xi\colon \Zqh \to \modZ[\xi][[q-\xi]]
\end{gather*}
induced by the inclusion $\modZ[q]\subset\modZ[\xi][q]$, since, for
$n\ge0$, the element $(q;q)_{n\,\ord(\xi)}$ is divisible by $(q-\xi)^n$
in $\modZ[\xi][q]$.  The image $T_\xi(f(q))$ of $f(q)\in\Zqh$ may be
regarded as the ``Taylor expansion'' of $f(q)$ at $\xi$.  The
homomorphism $T_\xi$ is injective \cite[Theorem 5.2]{H:cyclotomic}.
Hence a function $f(q)\in\Zqh$ is determined by its Taylor expansion
at a point $\xi\in\CZ$.  Injectivity of $T_\xi$ implies that $\Zqh$
is an integral domain.

The above-explained properties of $\Zqh$  depend on the ground ring
 $\BZ$ of integers in an essential way.  In fact, the similar completion $\Qqh =
\varprojlim_{n}\modQ [q]/((q;q)_n)$ is radically different. For example,
$\Qqh$ is not an integral domain, and quite opposite to the case over
$\BZ$, the Taylor expansion map $T_\xi \colon \Qqh\to \BQ[\xi][[q-\xi]]$ is
surjective but not injective, see \cite[Section 7.5]{H:cyclotomic}.

Recently, Manin \cite{Manin} and Marcolli \cite{Mar} have promoted the ring
$\Zqh$ as a candidate for the ring of analytic functions on the
non-existing ``field of one element''.

\subsection{Main result and consequences}
\label{sec:main-result-cons}

The following is the main result of the
present paper.

\begin{theorem}
  \label{r40a} For each simple Lie algebra $\fg$,
  there is a unique invariant $J_M=J^\g_M\in \Zqh$ of an integral homology sphere $M$
  such that for all $\xi\in\ZZg$ we have
  \begin{equation*}
    \mods_\xi (J_M) = \tau^\fg _M(\xi).
  \end{equation*}
\end{theorem}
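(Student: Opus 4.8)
The plan is to mimic the $sl_2$ construction of \cite{H:unified}, working at the level of the quantized enveloping algebra $\Uh(\fg)$ and its integral forms, but the key point is to produce a universal invariant living in the cyclotomic completion.

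\medskip

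\emph{Step 1: Surgery presentation and the universal invariant.} Present the integral homology sphere $M$ as surgery along an algebraically split, unit-framed link $L\subset S^3$ (such a presentation exists, and any two are related by Kirby moves preserving this form, by Habiro's work on bottom tangles / clasper calculus). To this link one associates a universal quantum invariant valued in a suitable completion of $\Uh(\fg)^{\ho n}$ (or, after closing the bottom tangle, in the center, or in a ribbon-Hopf-algebra trace); the colorings are not yet specialized to finite-dimensional representations. The essential feature, exactly as in the $sl_2$ case, is that the universal invariant of an algebraically split bottom tangle, after applying the appropriate partial traces, lands in a distinguished small subalgebra — the analogue of Habiro's ring $\modZ[q]$-lattice inside the center — which is where the cyclotomic completion enters.

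\medskip

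\emph{Step 2: Integrality and the cyclotomic completion.} Here one needs the structural results on the integral form $\UZ$ of the quantum group and the images of the relevant elements (ribbon element, $R$-matrix, twists) under the adjoint-type actions; the key algebraic input is that for an algebraically split, $\pm1$-framed link the contributions assemble into an element of $\Zqh$ rather than merely a formal power series. Concretely, one shows the relevant "twist" operators act on the pertinent subspace so that $n$-fold applications produce divisibility by $(q;q)_n$-type expressions, so the inverse limit converges. This is the technical heart of the paper and the step I expect to be the main obstacle: for general $\fg$ one must control the integrality of the quantum group data (divided powers, the analogue of Habiro's basis of the center, the action of the ribbon element) uniformly, without the explicit $sl_2$ formulas, and verify that the Kirby-move invariance survives in $\Zqh$.

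\medskip

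\emph{Step 3: Evaluation at roots of unity and uniqueness.} Once $J_M\in\Zqh$ is defined, one checks that applying the evaluation homomorphism $\mods_\xi\colon\Zqh\to\modZ[\xi]$ recovers, term by term from the surgery formula, the usual definition of $\tau^\fg_M(\xi)$ recalled in Section \ref{sec.WRT}: the specialization of the universal invariant at a root of unity, paired with the finite-dimensional representations appearing in the WRT normalization, reproduces the Reshetikhin–Turaev sum. This uses that $\xi\in\ZZg$ precisely guarantees the WRT invariant is defined and that the normalization factors are invertible. Finally, uniqueness is immediate from the injectivity of $\ev\colon\Zqh\to\BC^{\CZ}$ quoted above: any two elements of $\Zqh$ agreeing at all $\xi\in\ZZg$ — an infinite set with limit points in $\CZ$ — must coincide. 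The bulk of the paper is Step 2; Steps 1 and 3 are, after the $sl_2$ template, largely a matter of setting up the right integral structures.
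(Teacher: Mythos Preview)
Your three-step outline matches the paper's high-level structure, but your expectation for Step~2 --- ``mimic the $sl_2$ construction of \cite{H:unified}'' --- is precisely what does \emph{not} carry over. The authors state explicitly (Section~1.4) that they are unable to directly generalize the $sl_2$ proof, and in particular there is no workable ``analogue of Habiro's basis of the center'' for general $\fg$ that drives the argument.

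What the paper does instead is introduce a new object: a \emph{core subalgebra} $\Xh$ of (a mild extension of) $\Uh$, characterized by the requirement that the clasp element $\bc=J_{C^+}$ admit a presentation $\bc=\sum_i\bc'(i)\otimes\bc''(i)$ with each of $\{\bc'(i)\}$, $\{\bc''(i)\}$ a topological basis of $\Xh$. The twist forms are then defined not via traces but as $\cT_\pm(x)=\langle\br^{\pm1},x\rangle$ using the quantum Killing form, which is recovered here as the form dual to $\bc$. For integrality one builds an integral core $\XZ$ (sitting strictly between the De Concini--Procesi and Lusztig integral forms) over an auxiliary ring $\tA$, shows $J_T$ lands in a filtered module $\tK_n$ assembled from $\XZ$, and then uses a possibly noncommutative $G$-grading of $\Uq$ (refining the $Y$- and $Y/2Y$-gradings) to descend from $\BZ[v^{\pm1}]$ to $\BZ[q^{\pm1}]$. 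None of these ingredients are present in the $sl_2$ template, and your outline gives no hint of them.

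Step~3 is also more involved than you suggest: matching $\ev_\xi(J_M)$ with the WRT sum is not term-by-term from the surgery formula, but requires showing that $\cT_\pm$ and the quantum traces $\tr_q^{\Omega_\pm}$ agree at $\zeta$ on the center of a suitable completion, which in turn requires determining that center via the Harish-Chandra map and a dilatation/stability argument. Your uniqueness argument in Step~3, on the other hand, is exactly what the paper does.
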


Theorem \ref{r40a} is proved in Section \ref{sec:r40a}. It follows from Theorems  \ref{r.JMdef}, \ref{thm.coresub},
 \ref{r.Jvalues}, and \ref{thm030}.

The case $\g=sl_2$ of Theorem \ref{r40a} is announced in
\cite{H:rims2001} and proved in \cite{H:unified}.  For $\g=sl_2$, the
invariant $J_M$ has been generalized to invariants of rational
homology spheres with values in modifications of $\Zqh$ in
\cite{BBL,Le_Strong_Integrality,Beliakova-Le,Beliakova-Buehler-Le}.

The theorem implies that for integral homology $3$-spheres,
$\tau^\g _M(\xi)$ does not depend on the choice of a root of $\xi$ which
is used in the definition of $\tau^\g_M(\xi)$.

We list here a few consequences of Theorem \ref{r40a}.  For the
results stated without proof and with the $sl_2$ case proved in
\cite{H:unified}, the proof is the same as the proof of the
corresponding result in \cite{H:unified}.

\subsubsection{Analytic continuation of $\tau^\g_M$ to all roots of unity}

Even if a root of unity $\xi\in\CZ$ is not contained in $\ZZg$, the
domain of definition of the WRT function $\tau^\g_M$, we have a
well-defined value $\evxi(J_M)\in\Zxi$.  By the uniqueness of $J_M$,
it would be natural to {\em define} the $\g$ WRT invariant
$\tau^\g_M(\xi)$ at $\xi\in\CZ\setminus\ZZg$ as $\evxi(J_M)$.  We may
regard it as an analytic continuation of
$\tau^\g_M\colon\ZZg\to\mathbb{C}$.

The specializations $\evxi(J_M)$ are compatible also with the projective
version of the $\g$ WRT invariant
\begin{gather}
  \tau^{P\g}_M\colon \ZZPg \to \mathbb{C},
\end{gather}
where $\ZZPg$ is another subset of $\ZZ$. See Section \ref{sec.WRT}.

\begin{proposition}
  \label{r3}
  For an integral homology sphere $M$ and for $\xi\in\ZZPg$, we have
  \begin{gather*}
    \evxi (J_M) = \tau^{P\fg}_M(\xi).
  \end{gather*}
  As a consequence, for $\xi\in\ZZg\cap\ZZPg$, we have
  \begin{gather}
    \label{e5}
    \tau^\g_M(\xi)=\tau^\Pg_M(\xi).
  \end{gather}
\end{proposition}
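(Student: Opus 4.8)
The plan is to prove the first equality by showing that the \emph{same} element $J_M\in\Zqh$ that unifies the $\fg$ WRT invariants also computes the projective ones, by re-examining the proof of the defining property $\evxi(J_M)=\tau^\fg_M(\xi)$ (Theorems \ref{r.Jvalues} and \ref{thm030}). First I would recall from Section \ref{sec.WRT} the surgery presentations of both invariants: fix a framed link $L\subset S^3$ with $S^3_L\cong M$ and linking matrix of signature $(\sigma_+,\sigma_-)$; both $\tau^\fg_M(\xi)$ and $\tau^{P\fg}_M(\xi)$ are obtained from one and the same ``core'', namely the coloured framed-link invariant of $L$ (equivalently the Laplace transform applied to the universal $\fg$-invariant of $L$), by summing the colours over a lattice and dividing by Gauss sums $\Delta_+^{\sigma_+}\Delta_-^{\sigma_-}$; the two invariants differ only in which lattice indexes the colours (the weight lattice, respectively the root lattice up to the long-root normalization) and in the matching Gauss sums. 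Because $M$ is an integral homology sphere, the linking matrix of $L$ is unimodular, and this is the only place the hypothesis enters: it is exactly what makes the Laplace-transform manipulations in the proof of Theorem \ref{thm030} valid and what makes the discrepancy factor that in general separates $\tau^\fg$ from $\tau^{P\fg}$ (a Gauss sum over $H_1(M;\BZ)$) trivial.

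The key step is then to note that the invariant $J_M$ is constructed on the level of the quantum group without choosing any finite-dimensional representation, so that its construction records only the adjoint (root-lattice) structure; consequently, running the proof of Theorem \ref{thm030} verbatim but with the projective modular category and its Gauss sums in place of the full ones shows that $\evxi(J_M)$ equals the root-lattice surgery sum normalized in the projective way, that is $\tau^{P\fg}_M(\xi)$, for every $\xi\in\ZZPg$. This gives the first assertion of Proposition \ref{r3}. The consequence \eqref{e5} is then immediate: for $\xi\in\ZZg\cap\ZZPg$ the defining property of $J_M$ on $\ZZg$ gives $\tau^\fg_M(\xi)=\evxi(J_M)$, while the property just established gives $\evxi(J_M)=\tau^{P\fg}_M(\xi)$.

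I expect the main obstacle to be bookkeeping rather than conceptual: one must carefully match the two colour sets and the two systems of Gauss sums, and verify that every identity in the chain proving $\evxi(J_M)=\tau^\fg_M(\xi)$ survives the substitution of the projective normalization data, i.e.\ that the full weight lattice was never used in an essential way beyond the unimodularity of the linking matrix. A secondary point, to be handled exactly as for $\tau^\fg_M$ in Section \ref{sec.WRT}, is that the choice of a root of $\xi$ entering the definition of $\tau^{P\fg}_M$ for general closed $3$-manifolds becomes irrelevant for integral homology spheres, so that $\tau^{P\fg}_M(\xi)$ is well defined and the stated equalities make sense.
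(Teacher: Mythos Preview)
Your plan is correct and would succeed, but the paper organizes the argument so that the bookkeeping you anticipate never arises. Instead of re-running the proof of Theorem~\ref{thm030} with projective data substituted in, the paper proves a single uniform statement, Theorem~\ref{thm03}: for \emph{any} strong Kirby color $\Omega$ at level $\zeta$ and any integral homology sphere $M$, one has $\tau_M(\Omega)=\ev_{q=\xi}(J_M)$. The proof (culminating in Proposition~\ref{60011}) uses only the \emph{strong} handle-slide property of $\Omega$---invariance under sliding a component with a root color $V_\lambda$, $\lambda\in Y\cap X_+$, over the $\Omega$-colored one---together with the identification of the relevant center with the $\cA$-span of the $z_\lambda$ for $\lambda\in Y$ (Proposition~\ref{40003}). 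The specific lattice over which $\Omega$ itself is summed, and the matching Gauss sums, never enter the argument. Since $\Omega^{P\fg}(\zeta)$ is a strong Kirby color for $\zeta\in\ZZ'_{P\fg}$ (Section~\ref{sec:st.Kirby}), Proposition~\ref{r3} follows from Theorem~\ref{thm03} in one line, on exactly the same footing as Theorem~\ref{thm030}. One caution: your side remark about the discrepancy factor $\tilde\tau^\fg_M$ of Remark~\ref{r8} would by itself only yield the equality on $\ZZg\cap\ZZPg$, not on all of $\ZZPg$ as the proposition asserts, so it cannot substitute for the main argument.
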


\begin{remark}
  \label{r8}
  For a closed $3$-manifold $M$ which is not necessarily an
  integral homology sphere we do not have \eqref{e5} but for some values
  of $\xi$ we have identities of the form
  \begin{gather*}
    \tau^\g_M(\xi)=\tau^\Pg_M(\xi) \tilde\tau^\g_M(\xi)
  \end{gather*}
  where $\tilde\tau^\g_M(\xi)$ is an invariant of $M$ satisfying
  $\tilde\tau^\g_M(\xi)=1$ for $M$ an integral homology sphere.  For details, see
  e.g. \cite{Blanchet,KM,KT,Le:quantum}.
\end{remark}

\subsubsection{Integrality of quantum invariants}
An immediate consequence of Theorem \ref{r40a} is the following
integrality result.

\begin{corollary}%
  \label{r14a}
  For any integral homology sphere $M$ and for $\xi \in \calZ_\g$, we
  have $\tau^\g_M (\xi)\in \modZ [\xi]$.  In particular,
  $\tau^\g_M (\xi)$ is an algebraic integer.
\end{corollary}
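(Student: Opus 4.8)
The plan is to derive Corollary \ref{r14a} immediately from the main Theorem \ref{r40a} together with the ring-theoretic properties of $\Zqh$ recalled in Section \ref{sec.Habiro}. By Theorem \ref{r40a}, for an integral homology sphere $M$ there exists $J_M \in \Zqh$ with $\mods_\xi(J_M) = \tau^\fg_M(\xi)$ for every $\xi \in \ZZg$. The key structural fact is that for a root of unity $\xi$ of order $r$ one has $(\xi;\xi)_n = 0$ for all $n \ge r$, so the naive evaluation $f(q) \mapsto f(\xi)$ on $\modZ[q]$ kills the defining ideal $((q;q)_n)$ for $n$ large and therefore descends to a ring homomorphism $\mods_\xi \colon \Zqh \to \modZ[\xi]$. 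Thus $\tau^\fg_M(\xi) = \mods_\xi(J_M)$ lies in the image of $\mods_\xi$, which is contained in $\modZ[\xi]$.

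The steps, in order, would be: first, invoke Theorem \ref{r40a} to produce $J_M \in \Zqh$ with the stated evaluation property; second, recall from Section \ref{sec.Habiro} that $\mods_\xi$ maps $\Zqh$ into $\modZ[\xi]$ (this is the content of the construction of $\ev_\xi$ via the vanishing of $(\xi;\xi)_n$); third, conclude $\tau^\fg_M(\xi) = \mods_\xi(J_M) \in \modZ[\xi]$. Finally, since $\modZ[\xi]$ is by definition the subring of $\BC$ generated over $\BZ$ by the algebraic integer $\xi$, it consists entirely of algebraic integers, giving the ``in particular'' clause.

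There is essentially no obstacle here: the corollary is a formal consequence of the main theorem, and all the ingredients are already in place in the excerpt. The only point requiring a word of care is that the theorem only asserts $\mods_\xi(J_M) = \tau^\fg_M(\xi)$ for $\xi \in \ZZg$, which is exactly the range of $\xi$ for which the corollary is stated, so no analytic continuation (Proposition \ref{r3} or the discussion preceding it) is needed. One might add the remark that, once one adopts the convention $\tau^\fg_M(\xi) := \evxi(J_M)$ for $\xi \in \CZ \setminus \ZZg$, the integrality statement extends verbatim to all roots of unity, since $\evxi(J_M) \in \modZ[\xi]$ regardless of whether $\xi \in \ZZg$.

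\begin{proof}
By Theorem \ref{r40a} there is $J_M \in \Zqh$ with $\mods_\xi(J_M) = \tau^\fg_M(\xi)$ for all $\xi \in \ZZg$. As recalled in Section \ref{sec.Habiro}, for a root of unity $\xi$ of order $r$ we have $(\xi;\xi)_n = 0$ for $n \ge r$, so the evaluation $\modZ[q] \to \modZ[\xi]$ at $q = \xi$ factors through $\Zqh$, giving a ring homomorphism $\mods_\xi \colon \Zqh \to \modZ[\xi]$. Hence $\tau^\fg_M(\xi) = \mods_\xi(J_M) \in \modZ[\xi]$. Since $\xi$ is an algebraic integer, every element of $\modZ[\xi]$ is an algebraic integer, and in particular so is $\tau^\fg_M(\xi)$.
\end{proof}
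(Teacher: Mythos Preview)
Your proof is correct and matches the paper's approach exactly: the paper simply states that Corollary~\ref{r14a} is ``an immediate consequence of Theorem~\ref{r40a}'' without giving a separate proof, and your argument spells out precisely why---namely that $\ev_\xi\colon\Zqh\to\modZ[\xi]$ is well defined and $\tau^\fg_M(\xi)=\ev_\xi(J_M)$.
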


Here we list related integrality results for quantum invariants for
closed $3$-manifolds, which are not necessarily integral homology
spheres.

H. Murakami \cite{Murakami} (see also \cite{MR}) proved that the
$Psl_2$ WRT invariant, also known as the quantum $SO(3)$ invariant
\cite{KM}, of a closed $3$-manifold at $\xi\in\ZZ$ of {\em prime order}
is contained in $\Zxi$.  This result, for roots of unity of prime
orders, has been generalized to $sl_n$ by Masbaum and Wenzl \cite{MW}
and independently by Takata and Yokota \cite{TY}, and to all simple Lie algebras by the second
author \cite{Le:quantum}.

The case of roots of {\em non-prime orders}, conjectured by Lawrence
\cite{Lawrence:integrality} in the $sl_2$ case, has
been developed later.  The case $\g=sl_2$ of Corollary \ref{r14a} is
obtained by the first author in \cite{H:unified}.  Beliakova, Chen and
the second author \cite{BCL} proved that for any root of unity $\xi$,
$\tau^{sl_2}_M(\xi)$ (which depends on a fourth root of $\xi$) is
an algebraic integer.  For general Lie algebras, however, the proof in
\cite{BCL} does not work.  Corollary \ref{r14a} is the first
integrality result, for general Lie algebras, in the case of non-prime
orders.

\subsubsection{Relationships between quantum invariants at different
  roots of unity}

One can obtain from Theorem \ref{r40a} results about the values of the
WRT invariants, more refined than integrality.

Let $\Qab\subset\bbC$ denote the maximal abelian extension of $\bbQ$,
which is the smallest extension of $\bbQ$ containing $\calZ$.  The
image of the WRT function $\tau^\g_M$ is contained in the integer ring
$\mathcal{O}(\Qab)$ of $\Qab$, which is the subring of $\Qab$
generated by $\calZ$.  Note that an automorphism
$\alpha\in\Gal(\Qab/\bbQ)$ maps each root of unity $\xi$ to a root of
unity $\alpha(\xi)$ of the same order as $\xi$.  There is a canonical
isomorphism
\begin{gather*}
   \Gal(\Qab/\bbQ) \cong \Aut_{\mathrm{Grp}}(\calZ),
\end{gather*}
which maps $\alpha\in\Gal(\Qab/\bbQ)$ to its restriction to $\calZ$. Here $\Aut_{\mathrm{Grp}}(\calZ)$ is the group of automorphisms of $\calZ$, considered as a subgroup of the multiplicative group $\BC\setminus\{0\}$.

\begin{proposition}
  \label{r9}
  For every integral homology sphere $M$, the $\g$ WRT function
  $\tau^\g_M\colon\CZ\to\mathbb{Q}^{\mathrm{ab}}$ is
  Galois-equivariant in the sense that for each automorphism
  $\alpha\in\Gal(\mathbb{Q}^{\mathrm{ab}}/\mathbb{Q})$ we have
  \begin{gather*}
    \tau^\g_M(\alpha(\xi))=\alpha(\tau^\g_M(\xi))
  \end{gather*}
\end{proposition}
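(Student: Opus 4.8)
The plan is to deduce Proposition~\ref{r9} directly from Theorem~\ref{r40a}, using the fact that $J_M\in\Zqh$ has \emph{rational} (indeed integral) coefficients in an essential way. First I would recall that any $\alpha\in\Gal(\Qab/\BQ)$ fixes $\BQ$ pointwise, hence fixes $\modZ[q]$ coefficientwise, and therefore acts compatibly with the projective system defining $\Zqh=\varprojlim_n\modZ[q]/((q;q)_n)$: the only thing $\alpha$ does on $\modZ[\xi]=\modZ[q]/(\text{min.\ poly.\ of }\xi)$ is permute the roots of unity of a given order. The key mechanism is the commuting square relating the evaluation maps: for a root of unity $\xi$ of order $r$, and $f(q)\in\Zqh$, one has
\begin{gather*}
  \alpha(\ev_\xi(f(q))) = \ev_{\alpha(\xi)}(f(q)),
\end{gather*}
because $\ev_\xi$ and $\ev_{\alpha(\xi)}$ are induced by specializing $q\mapsto\xi$ resp.\ $q\mapsto\alpha(\xi)$ in $\modZ[q]$, and $\alpha$ intertwines these two specializations precisely since $f$ has integer (rational) coefficients: writing $f\equiv g(q)\pmod{(q;q)_n}$ with $g\in\modZ[q]$ and $n\ge r$, we get $\ev_\xi(f)=g(\xi)$, so $\alpha(g(\xi))=g(\alpha(\xi))=\ev_{\alpha(\xi)}(f)$.

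Granting this, the proof is immediate. By Theorem~\ref{r40a}, for $\xi\in\ZZg$ we have $\tau^\g_M(\xi)=\ev_\xi(J_M)$; applying the displayed identity with $f=J_M$ gives
\begin{gather*}
  \alpha(\tau^\g_M(\xi)) = \alpha(\ev_\xi(J_M)) = \ev_{\alpha(\xi)}(J_M) = \tau^\g_M(\alpha(\xi)),
\end{gather*}
where the last equality again uses Theorem~\ref{r40a} together with the (already noted) fact that $\alpha(\xi)$ has the same order as $\xi$, so $\alpha(\xi)\in\ZZg$ as well --- one must check that $\ZZg$ is stable under $\Gal(\Qab/\BQ)$, i.e.\ that membership in $\ZZg$ depends only on the order of the root of unity, which it does by the definition of $\ZZg$ recalled in Section~\ref{sec.WRT}. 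If one wishes to state the proposition over the extended domain $\CZ$ (using the analytic continuation $\tau^\g_M(\xi):=\ev_\xi(J_M)$ for $\xi\in\CZ\setminus\ZZg$), the same computation applies verbatim with no restriction on $\xi$, since the identity $\alpha(\ev_\xi(J_M))=\ev_{\alpha(\xi)}(J_M)$ holds for every root of unity.

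The only real content, and the step I would write out with care, is the verification that $\alpha\circ\ev_\xi=\ev_{\alpha(\xi)}$ on $\Zqh$; everything else is formal. The mild subtlety there is the passage to the inverse limit: one needs that for $f\in\Zqh$ the value $\ev_\xi(f)$ can be computed from \emph{any} polynomial representative $g\in\modZ[q]$ of $f$ modulo $(q;q)_n$ with $n\ge\ord(\xi)$ (which is exactly the observation, recalled in Section~\ref{sec.Habiro}, that $(\xi;\xi)_n=0$ for $n\ge\ord(\xi)$), so that $\ev_\xi$ is genuinely a ``substitute $q=\xi$'' map on honest polynomials, on which Galois-equivariance is transparent because $\alpha$ fixes the coefficients. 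I do not anticipate any serious obstacle; contrast this with the analogue over $\BQ$, where $\ev_\xi$ is still defined but $J_M\notin\Qqh$ would be needed, and indeed the whole point is that integrality of the coefficients of $J_M$ is what makes the Galois symmetry visible.
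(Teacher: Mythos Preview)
Your proof is correct and is exactly the argument the paper has in mind: the paper does not spell out a proof of Proposition~\ref{r9} but defers to the $sl_2$ case in \cite{H:unified} (``the proof is the same''), and your argument---$\alpha\circ\ev_\xi=\ev_{\alpha(\xi)}$ on $\Zqh$ because representatives live in $\modZ[q]$, combined with Theorem~\ref{r40a}---is that standard proof. One small remark: your closing comment slightly overstates the role of integrality; rationality of the coefficients of $J_M$ would already suffice for Galois equivariance, since $\alpha$ fixes $\BQ$; integrality is what gives the stronger conclusion $\tau^\g_M(\xi)\in\modZ[\xi]$ (Corollary~\ref{r14a}).
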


The $sl_2$ case of Proposition \ref{r9} is mentioned in
\cite{H:unified}.

Proposition \ref{r10} below is proved in Section \ref{sec:r10}.

\begin{proposition}[\cite{H:unified} for $\g=sl_2$]
  \label{r10}
We have $\ev_1(J_M)=1$  for every integral homology sphere $M$.
\end{proposition}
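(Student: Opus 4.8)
The plan is to evaluate the unified invariant $J_M$ at $q=1$ and identify $\ev_1(J_M)$ with the WRT invariant $\tau^\g_M(\xi)$ for a suitable sequence of roots of unity approaching $1$ in the topology of $\calZ$, then invoke the continuity/injectivity properties of $\Zqh$ recalled in Section~\ref{sec.Habiro}. More precisely, the constant $1$, viewed as an element of $\Zqh$, and $J_M$ are both genuine elements of $\Zqh$; by the injectivity of $\ev\colon\Zqh\to\BC^{\calZ}$ (and in fact by \cite[Theorem~6.3]{H:cyclotomic}) it suffices to show that $\ev_\xi(J_M)=1$ for infinitely many roots of unity $\xi$ whose orders form a set with $1$ as a limit point in the relevant topology --- for instance, all $\xi$ of prime order $p$ with $p$ large. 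For such $\xi$ we have $\ev_\xi(J_M)=\tau^\g_M(\xi)$ by Theorem~\ref{r40a}, so the statement reduces to a known fact about the WRT invariant: $\tau^\g_M(\xi)\to 1$, or more strongly $\tau^\g_M(\xi)=1$ for $M$ an integral homology sphere and $\xi$ of prime order $p$ coprime to the relevant data, in an appropriate sense.

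The cleaner route, which I would actually carry out, avoids any limiting argument and instead computes $\ev_1$ directly from the construction of $J_M$. Recall that $J_M$ is built (on the quantum-group level, via the constructions summarized in Theorems~\ref{r.JMdef}, \ref{thm.coresub}, \ref{r.Jvalues}, \ref{thm030}) from a surgery presentation of $M$ along an algebraically split, unit-framed link $L$, by applying a suitable universal invariant followed by the bottom-tangle/clasper machinery and landing in $\Zqh$. Setting $q=1$ degenerates the quantum group $\Uh(\g)$ to (the completion of) the universal enveloping algebra $U(\g)$, where the $R$-matrix becomes trivial and the ribbon element becomes $1$; hence the universal invariant of any link becomes independent of the link and equals the invariant of the unknot, i.e.\ $1$. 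The point of an integral homology sphere is precisely that its surgery link is algebraically split with $\pm1$ framings, so the normalization factors (the contributions of the $\pm1$-framed unknots used to normalize) also collapse to $1$ at $q=1$. Thus $\ev_1(J_M)=\ev_1$ of the normalized universal invariant of $L$ $=1$.

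The main steps, in order, are: (i) unwind the definition of $J_M$ from the cited theorems, isolating the element of $\Zqh$ (resp.\ its image under $\ev_1$) associated to a surgery link $L$ for $M$; (ii) check that the $q\to 1$ specialization of each ingredient --- the braiding, the ribbon element, the "Kirby color" or its analogue, and the normalization by $\pm1$-framed unknots --- is the classical, trivial one, using that $H_1(M;\BZ)=0$ forces the linking matrix of $L$ to be diagonal with entries $\pm1$; (iii) conclude that the specialized invariant is the universal $U(\g)$-invariant of a $0$-crossing $0$-framed diagram, which is $1$; (iv) note that $1\in\Zqh$ has $\ev_1(1)=1$, and since $\ev_1$ is a ring homomorphism the computation is legitimate.

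The step I expect to be the main obstacle is (ii): making rigorous that the $\ev_1$ specialization commutes with all the completions and projective limits involved in the definition of $J_M$ --- the invariant lives in $\Zqh$, not in $\BZ[q]$, and the universal invariant lives in a completed tensor power of $\Uh(\g)$, so one must be careful that "set $q=1$" is a well-defined continuous operation on these completed objects and that it does interchange with the bottom-tangle functor and the clasper/core subalgebra constructions of Theorems~\ref{thm.coresub} and \ref{thm030}. Once this compatibility is in place --- and it is essentially forced by naturality of all the maps together with the already-established fact that $J_M$ maps to $\Zqh$ --- the classical vanishing of the braiding at $q=1$ makes the rest routine. (Alternatively, if tracking the completions proves delicate, one falls back on the prime-order argument of the first paragraph, which only uses Theorem~\ref{r40a}, the injectivity results of Section~\ref{sec.Habiro}, and the classical fact that $\tau^\g_M(\xi)=1$ for integral homology spheres at roots of unity of large prime order.)
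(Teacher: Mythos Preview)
Your ``cleaner route'' is essentially the paper's argument, but the paper packages it so that the compatibility-with-completions issue you worry about in step (ii) never arises. Concretely, the paper observes (Proposition~\ref{p.color1a}) that the trivial $\Uh$-module $\Omega=\BC[[h]]$ is a strong Kirby color at level $\zeta=1$ with $\tau_M(\Omega)=1$, and then invokes Theorem~\ref{thm03}, which says that for \emph{any} strong Kirby color $\Omega$ at level $\zeta$ one has $\tau_M(\Omega)=\ev_{q=\zeta^{2D}}(J_M)$. Taking $\zeta=1$ gives $\ev_1(J_M)=1$ in one line. So you are right that ``everything becomes classical at $q=1$'' is the content, but the way to make it rigorous is not to re-track $\ev_1$ through the completions by hand; it is to feed the trivial module into the already-proved specialization theorem. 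The paper also records a second proof via finite type invariants: $\ev_1(J_M)$ is the constant term of the Ohtsuki/Taylor series, hence a degree-$0$ finite type invariant of integral homology spheres, hence constant, hence equal to its value $1$ on $S^3$.

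Your fallback argument, however, contains a genuine error. It is \emph{not} true that $\tau_M^\g(\xi)=1$ for integral homology spheres at roots of unity of large prime order; what holds (Corollary~\ref{r14}) is only the congruence $\tau_M^\g(\xi)\equiv 1\pmod{1-\xi}$. If the equality $\tau_M^\g(\xi)=1$ held at infinitely many primes, then by the determination property of $\Zqh$ (Corollary~\ref{r48a}) one would conclude $J_M=1$ identically in $\Zqh$, making the invariant trivial. So the limiting argument as you sketched it cannot work; the congruence does imply $\ev_1(J_M)\equiv 1\pmod{1-\xi}$ in $\BZ[\xi]$ for each prime-power $\xi$, but turning that into $\ev_1(J_M)=1$ still requires knowing $\ev_1(J_M)\in\BZ$ and then letting the modulus grow---which is fine, but is not the argument you wrote.
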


\begin{proposition}[\cite{H:unified} for $\g=sl_2$]
  \label{r15}
  For $\xi,\xi'\in\ZZ$ with $\ord(\xi'\xi^{-1})$ a prime power, we
  have
  \begin{gather*}
    \tau^\g_M(\xi)\equiv \tau^\g_M(\xi') \pmod{\xi'-\xi}
  \end{gather*}
  in $\BZ[\xi,\xi']$.
\end{proposition}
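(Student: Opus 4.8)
The plan is to obtain the congruence as an essentially formal consequence of Theorem~\ref{r40a} and the inverse-limit description of $\Zqh$, after which only a trivial fact about integer polynomials remains.

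First I would invoke Theorem~\ref{r40a}: using the analytic continuation $\tau^\g_M(\xi)\eqdef\ev_\xi(J_M)$ for $\xi\in\CZ\setminus\ZZg$ when needed, we have $\tau^\g_M(\xi)=\ev_\xi(J_M)$ and $\tau^\g_M(\xi')=\ev_{\xi'}(J_M)$ with $J_M\in\Zqh$. Set $r=\ord(\xi)$, $r'=\ord(\xi')$ and $N=\max(r,r')$. Since $\Zqh=\varprojlim_n\modZ[q]/((q;q)_n)$, choose a polynomial $g(q)\in\modZ[q]$ representing the image of $J_M$ in $\modZ[q]/((q;q)_N)$. Because $(\xi;\xi)_N=0$ and $(\xi';\xi')_N=0$ (as $N\ge r$ and $N\ge r'$), the homomorphisms $\ev_\xi$ and $\ev_{\xi'}$ on $\Zqh$ both factor through $\modZ[q]/((q;q)_N)$, as recalled in Section~\ref{sec.Habiro}; hence $\tau^\g_M(\xi)=g(\xi)$ and $\tau^\g_M(\xi')=g(\xi')$. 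Then I would apply the identity $a^i-b^i=(a-b)(a^{i-1}+a^{i-2}b+\dots+b^{i-1})$ over $\modZ$ to each monomial of $g$, with $a=\xi$ and $b=\xi'$: this gives $g(\xi)-g(\xi')\in(\xi-\xi')\,\modZ[\xi,\xi']$, so $\tau^\g_M(\xi)-\tau^\g_M(\xi')\in(\xi'-\xi)\,\modZ[\xi,\xi']$, which is the assertion.

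I do not expect a real obstacle; the statement is formal once Theorem~\ref{r40a} is in hand, the only point requiring a line of care being that $\ev_\xi$ and $\ev_{\xi'}$ descend to $\modZ[q]/((q;q)_N)$, which is precisely the property of $\Zqh$ recorded in Section~\ref{sec.Habiro}. It is worth noting that the hypothesis that $\ord(\xi'\xi^{-1})$ be a prime power is not used above and only serves to make the statement non-vacuous: writing $\eta=\xi'\xi^{-1}$ one has $\xi'-\xi=\xi(\eta-1)$, and for $\eta$ a primitive $d$-th root of unity with $d>1$ the element $\eta-1$ is a unit of $\modZ[\eta]\subseteq\modZ[\xi,\xi']$ if and only if $d$ is not a prime power (equivalently $\Phi_d(1)=1$), while for $d=p^{k}$ it is a non-unit of norm $\pm p$. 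Thus when $\ord(\xi'\xi^{-1})$ is not a prime power the modulus $\xi'-\xi$ is a unit and the congruence holds trivially, so only the prime-power case carries content.
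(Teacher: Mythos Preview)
Your proof is correct and is essentially the argument the paper has in mind: the paper does not spell out a proof of Proposition~\ref{r15} but refers to the $sl_2$ case in \cite{H:unified}, where exactly this reduction via Theorem~\ref{r40a} and a choice of polynomial representative in $\modZ[q]/((q;q)_N)$ is used. Your observation that the prime-power hypothesis only serves to make the congruence non-vacuous is also explicitly noted in the paper immediately after the statement.
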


Proposition \ref{r15} holds also when $\ord(\xi'\xi^{-1})$ is not a
prime power, but in this case the statement is trivial since
$\xi'-\xi$ is a unit in $\BZ[\xi,\xi']$.

\begin{corollary}
  \label{r14}
  For every integral homology sphere $M$ and for every root of unity
  $\xi\in\ZZ$ of prime power order, we have
  \begin{gather*}
    \tau^\g_M(\xi)-1 \in (1-\xi)\Zxi.
  \end{gather*}
  Consequently, we have $\tau^\g_M(\xi)\neq 0$.
\end{corollary}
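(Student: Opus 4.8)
The plan is to derive Corollary \ref{r14} directly from Proposition \ref{r15} together with Proposition \ref{r10}. Let $\xi\in\ZZ$ have prime power order $p^k$. The idea is to compare the value $\tau^\g_M(\xi)$ with the value at the trivial root of unity $\xi'=1$. Note that $\ord(\xi\cdot 1^{-1})=\ord(\xi)=p^k$ is a prime power, so Proposition \ref{r15} applies (after possibly swapping the roles of $\xi$ and $\xi'$, which is harmless since the congruence is symmetric). Hence
\begin{gather*}
  \tau^\g_M(\xi)\equiv \tau^\g_M(1)\pmod{1-\xi}
\end{gather*}
in $\BZ[\xi]$. By Proposition \ref{r10}, $\ev_1(J_M)=1$, and since $\tau^\g_M(1)=\ev_1(J_M)$ by the analytic-continuation convention of Theorem \ref{r40a} (or directly, when $1\in\ZZg$), we get $\tau^\g_M(1)=1$. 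Combining, $\tau^\g_M(\xi)-1\in(1-\xi)\Zxi$, which is the first claim.

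For the consequence $\tau^\g_M(\xi)\neq 0$, I would argue that $1-\xi$ is not a unit in $\Zxi$ when $\ord(\xi)=p^k>1$: indeed, in the cyclotomic ring $\BZ[\zeta_{p^k}]$ the element $1-\zeta_{p^k}$ generates the unique prime above $p$, and more generally for any primitive $p^k$-th root $\xi$ the norm of $1-\xi$ is $\pm p^{p^{k-1}}$, hence $1-\xi$ lies in a proper ideal of $\Zxi$. Therefore $\tau^\g_M(\xi)\equiv 1\not\equiv 0$ modulo $(1-\xi)$, so $\tau^\g_M(\xi)\neq 0$. (When $\ord(\xi)=1$, i.e.\ $\xi=1$, the statement $\tau^\g_M(1)=1\neq 0$ is immediate from Proposition \ref{r10}, so there is nothing to prove in that degenerate case.)

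I do not anticipate a serious obstacle here, since the corollary is essentially a formal specialization of the two preceding propositions; the only point requiring a little care is the last step, where one must invoke the fact that $1-\xi$ generates a proper ideal of $\Zxi$ precisely because $\xi$ has prime power order $>1$ — this is the place where the prime power hypothesis is genuinely used (beyond making Proposition \ref{r15} nontrivial). One should also make explicit that the reduction ``$\tau^\g_M(\xi)\equiv\tau^\g_M(\xi')\pmod{\xi'-\xi}$ in $\BZ[\xi,\xi']$'' specializes to a congruence in $\BZ[\xi]$ when $\xi'=1$, which is clear since $\BZ[\xi,1]=\BZ[\xi]$ and $\xi'-\xi=1-\xi$.
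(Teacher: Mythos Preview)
Your argument is correct and is exactly the derivation the paper intends: Corollary \ref{r14} is stated without a separate proof, as an immediate consequence of Proposition \ref{r15} (applied with $\xi'=1$) together with Proposition \ref{r10}. One minor slip that does not affect the logic: when $\xi$ is a primitive $p^k$-th root of unity, the norm of $1-\xi$ from $\BQ(\xi)$ to $\BQ$ is $\Phi_{p^k}(1)=p$, not $p^{p^{k-1}}$; all that is needed is that this norm is not $\pm 1$, so $1-\xi$ is a non-unit, and your conclusion stands.
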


For $\g=sl_2$, a refined version of Corollary \ref{r14} is given
in \cite[Corollary 12.10]{H:unified}.

\subsubsection{Integrality of the Ohtsuki series}
When $M$ is a rational homology sphere, Ohtsuki \cite{Ohtsuki1}
extracted a power series invariant, $\tau^{sl_2}_\infty(M) \in
\BQ[[q-1]]$, from the values of $\tau_M^{Psl_2}(\xi)$ at roots of
unity of prime orders.  The Ohtsuki series is characterized by certain
congruence relations modulo odd primes.  The existence of the Ohtsuki
series invariant for other Lie algebras was proved in
\cite{Le:PSUn,Le:quantum}, see also \cite{Rozansky:Ohtsuki}.

The Ohtsuki series $\tau^\g_\infty(M)\in\bbQ[[q-1]]$ and the unified
WRT invariant $J_M$ are related as follows.

\begin{proposition}[\cite{H:unified} for $sl_2$]
  \label{r16}
  For every integral homology sphere $M$, we have
  \begin{gather*}
    \tau^\g_\infty(M)=T_1(J_M)\in\bbZ[[q-1]].
  \end{gather*}
  In other words, the Ohtsuki series is equal to the Taylor expansion of the
  unified WRT invariant at $q=1$.
  Moreover, all the coefficients in the Ohtsuki series are integers.
\end{proposition}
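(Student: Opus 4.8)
\emph{Strategy.} The plan is to separate the claim into two parts: the identity $T_1(J_M)=\tau^\g_\infty(M)$ in $\bbQ[[q-1]]$, and the integrality of the coefficients of the Ohtsuki series. The second will be an immediate consequence of the first: by Theorem \ref{r40a} we have $J_M\in\Zqh$, and the Taylor expansion homomorphism at $q=1$ recalled in Section \ref{sec.Habiro} maps $\Zqh$ into $\bbZ[[q-1]]$; hence $T_1(J_M)\in\bbZ[[q-1]]$, so once $\tau^\g_\infty(M)=T_1(J_M)$ is established, all its coefficients lie in $\bbZ$.

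\emph{Proof of the identity.} Recall the characterization of the Ohtsuki series, due to Ohtsuki \cite{Ohtsuki1} for $sl_2$ and to Le \cite{Le:PSUn,Le:quantum} in general: $\tau^\g_\infty(M)=\sum_{n\ge0}c_n(q-1)^n\in\bbQ[[q-1]]$ is the unique power series such that, for every $N$ and every prime $p$ in a suitable infinite set, writing $\xi_p$ for a primitive $p$-th root of unity, the partial sum $\sum_{n=0}^{N}c_n(\xi_p-1)^n$ agrees with $\tau^{P\g}_M(\xi_p)$ modulo a sufficiently large power of $(1-\xi_p)$ in $\bbZ[\xi_p]$ (equivalently modulo $p$, provided $N<p-1$); these congruences determine the $c_n$ successively. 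Thus it suffices to show that the coefficients $a_n$ of $T_1(J_M)=\sum_n a_n(q-1)^n$ satisfy the same congruences.

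The key point is a compatibility between $\ev_\xi$ and $T_1$ on $\Zqh$: for $f\in\Zqh$ with $T_1(f)=\sum_n a_n(q-1)^n$ and any primitive $p$-th root of unity $\xi$,
\[
  \ev_\xi(f)\;\equiv\;\sum_{n=0}^{N}a_n(\xi-1)^n \pmod{(1-\xi)^{N+1}\bbZ[\xi]}\qquad\text{for every }N\ge0 .
\]
To see this, choose $m\ge\max(N+1,p)$ and a polynomial $g(q)\in\bbZ[q]$ representing the image of $f$ in $\bbZ[q]/((q;q)_m)$. Since $(\xi;\xi)_m=0$ we have $\ev_\xi(f)=g(\xi)$; since $(q;q)_m$ is divisible by $(1-q)^m$ in $\bbZ[q]$, the expansion of $g$ in powers of $q-1$ has the same coefficients as $T_1(f)$ in degrees $\le N$; and the remaining higher-degree terms of $g(\xi)$ lie in $(1-\xi)^{N+1}\bbZ[\xi]$. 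Applying this to $f=J_M$: for the primes $p$ relevant to the Ohtsuki characterization, $\xi_p\in\ZZPg$, so Proposition \ref{r3} gives $\ev_{\xi_p}(J_M)=\tau^{P\g}_M(\xi_p)$, and the displayed congruence becomes $\tau^{P\g}_M(\xi_p)\equiv\sum_{n=0}^{N}a_n(\xi_p-1)^n$ modulo the required power of $(1-\xi_p)$. Hence the $a_n$ satisfy the defining congruences of the Ohtsuki series, so $T_1(J_M)=\tau^\g_\infty(M)$. (Injectivity of $T_1$, \cite[Theorem 5.2]{H:cyclotomic}, is not needed for this, though it additionally shows that $J_M$ itself is recovered from its Taylor expansion at $q=1$.)

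\emph{Main obstacle.} The manipulations in $\Zqh$ and in $\bbZ[\xi_p]$ above are elementary; the actual work is reconciling conventions — confirming that the prime-order roots of unity entering Ohtsuki's and Le's characterization lie in $\ZZPg$, and checking that the power of $(1-\xi_p)$ produced by the truncation argument is at least as large as the one demanded by the precise form of those congruence relations, so that the number of matched coefficients grows with $p$ and forces $a_n=c_n$ for each fixed $n$. Once the bookkeeping is aligned, the argument is identical in shape to the $sl_2$ case treated in \cite{H:unified}.
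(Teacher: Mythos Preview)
Your proposal is correct and follows the same approach the paper intends: the paper does not give an explicit proof of Proposition \ref{r16} but states (in the paragraph preceding the list of consequences of Theorem \ref{r40a}) that ``the proof is the same as the proof of the corresponding result in \cite{H:unified},'' and your argument is precisely that proof transplanted to general $\g$ --- match the defining prime-power congruences of $\tau^\g_\infty(M)$ against the elementary congruences satisfied by any element of $\Zqh$, using Proposition \ref{r3} to identify $\ev_{\xi_p}(J_M)$ with $\tau^{P\g}_M(\xi_p)$ at the relevant primes. The integrality consequence via $T_1(\Zqh)\subset\bbZ[[q-1]]$ is exactly as you state.
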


The fact $\tau^{\fg}_\infty(M)\in\bbZ[[q-1]]$, for $\fg= sl_2$, was conjectured by
R. Lawrence \cite{Lawrence:integrality},
and first proved by Rozansky \cite{Rozansky:integrality}. Here we have general results for all simple Lie algebras.

\subsubsection{Relation to the Le-Murakami-Ohtsuki invariant}
The Le-Murakami-Ohtsuki (LMO) invariant \cite{LMO} is a counterpart of
the Kontsevich integral for homology $3$-spheres; it is a universal
invariant for finite type invariants of integral homology $3$-spheres
\cite{Le:universal}.  The LMO invariant $\tau^{LMO}(M)$ of a closed
$3$-manifold takes values in an algebra $\cA(\emptyset)$ of the
so-called Jacobi diagrams, which are certain types of trivalent
graphs. For each simple Lie algebra $\fg$, there is a ring
homomorphism (the weight map)
\begin{gather*}
  W_\fg\colon \cA(\emptyset) \to\BQ[[h]].
\end{gather*}
It was proved in \cite{KLO} that
$$ W_\fg(\tau^{LMO}(M) )= \tau^\g_\infty(M) |_{q= e^h}.$$
Hence, we have the following.

\begin{corollary}[\cite{H:unified} for $sl_2$]
  For an integral homology $3$-sphere $M$, the LMO invariant totally
  determines the WRT invariant $\tau_M^\fg(\xi)$ for every simple Lie
  algebra and every root of unity $\xi\in\ZZg$.
\end{corollary}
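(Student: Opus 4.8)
The plan is to obtain the corollary by chaining together results already in hand, with the injectivity of the Taylor expansion at $q=1$ as the decisive link. The starting point is the weight-map computation of \cite{KLO}, namely $W_\fg(\tau^{LMO}(M))=\tau^\fg_\infty(M)|_{q=e^h}$ in $\bbQ[[h]]$. The substitution $q\mapsto e^h$ induces a ring homomorphism $\bbQ[[q-1]]\to\bbQ[[h]]$ sending $q-1$ to $e^h-1=h+\frac12 h^2+\cdots$; since this image lies in the maximal ideal $(h)$ with nonzero coefficient of $h$, the homomorphism is an isomorphism of complete local rings, its inverse being induced by $h\mapsto\log q=\log(1+(q-1))$. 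Hence the Ohtsuki series $\tau^\fg_\infty(M)\in\bbQ[[q-1]]$ is recovered from $W_\fg(\tau^{LMO}(M))$, and so is determined by the LMO invariant $\tau^{LMO}(M)$ alone (for each fixed $\fg$).

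Next, Proposition \ref{r16} gives $\tau^\fg_\infty(M)=T_1(J_M)$, where $T_1\colon\Zqh\to\bbZ[[q-1]]$ is the Taylor expansion homomorphism at $\xi=1$ from Section \ref{sec.Habiro}. The key point is that $T_1$ is \emph{injective}, which is the case $\xi=1$ of \cite[Theorem 5.2]{H:cyclotomic} recalled above; therefore $J_M\in\Zqh$ is uniquely determined by $\tau^\fg_\infty(M)$, hence by $\tau^{LMO}(M)$. Finally, Theorem \ref{r40a} gives $\tau^\fg_M(\xi)=\ev_\xi(J_M)$ for every $\xi\in\ZZg$, so each WRT value is a function of $J_M$ and hence of $\tau^{LMO}(M)$; since $\fg$ was an arbitrary finite-dimensional simple Lie algebra, this is precisely the asserted statement. (Via the analytic continuation $\ev_\xi(J_M)$ of Section \ref{sec:main-result-cons}, the LMO invariant in fact pins down the extended WRT function on all of $\ZZ$, not merely on $\ZZg$.)

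I do not expect a genuine obstacle here: all of the substance is carried by the cited inputs --- Theorem \ref{r40a}, Proposition \ref{r16}, the injectivity of $T_1$, and the identity of \cite{KLO} --- and the only verification to be done by hand is the elementary one that $q\mapsto e^h$ is an invertible change of formal variable.
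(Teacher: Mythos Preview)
Your proof is correct and follows precisely the argument the paper intends: the paper only writes ``Hence, we have the following'' after recalling the identity $W_\fg(\tau^{LMO}(M))=\tau^\fg_\infty(M)|_{q=e^h}$ from \cite{KLO}, and you have spelled out the implicit chain through Proposition~\ref{r16}, the injectivity of $T_1$, and Theorem~\ref{r40a}.
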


It is still an open question whether the LMO invariant determines the WRT invariant for {\em rational} homology $3$-spheres. %

\subsubsection{Determination of the quantum invariants}

\begin{corollary}[\cite{H:unified} for $sl_2$]
  \label{r48a} For an integral homology $3$-sphere,
 $J_M$ is determined by the WRT function $\tau_M^\fg$.  (Thus $J_M$
  and $\tau^\g_M$ have the same strength in distinguishing two
  integral homology $3$-spheres.)  Moreover, both $J_M$ and $\tau^\g_M$
  are determined by the values of $\tau^\fg_M(\xi)$ for $\xi \in
  \calZ'$, where $\calZ'\subset \calZ$ is any infinite subset with at
 least one
  limit point in $\ZZ$ in the sense explained in Section \ref{sec.Habiro}.
\end{corollary}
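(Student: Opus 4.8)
The plan is to deduce the corollary formally from Theorem~\ref{r40a} together with the interpolation properties of $\Zqh$ recalled in Section~\ref{sec.Habiro}. One direction is a tautology: Theorem~\ref{r40a} gives $\ev_\xi(J_M)=\tau^\g_M(\xi)$ for every $\xi\in\ZZg$, and the analytic continuation defined just before this corollary sets $\tau^\g_M(\xi):=\ev_\xi(J_M)$ for $\xi\in\CZ\setminus\ZZg$; so the whole function $\tau^\g_M$ is read off from $J_M$, and in particular $J_M=J_{M'}$ forces $\tau^\g_M=\tau^\g_{M'}$.

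For the converse, suppose $M$ and $M'$ are integral homology spheres with $\tau^\g_M=\tau^\g_{M'}$ as functions on $\ZZg$. Then $J_M$ and $J_{M'}$ both lie in $\Zqh$ and satisfy $\ev_\xi(\,\cdot\,)=\tau^\g_M(\xi)$ for all $\xi\in\ZZg$, so the uniqueness clause of Theorem~\ref{r40a} gives $J_M=J_{M'}$; note this step is purely formal and uses no density of $\ZZg$. Thus $\tau^\g_M$ determines $J_M$, and combined with the first paragraph this is exactly the assertion that $J_M$ and $\tau^\g_M$ have the same power to distinguish integral homology spheres.

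For the refined statement, let $\calZ'\subset\CZ$ be an infinite subset with a limit point $\xi_0\in\CZ$ in the topology of Section~\ref{sec.Habiro}. By \cite[Theorem~6.3]{H:cyclotomic}, any element of $\Zqh$ is determined by its values on $\calZ'$; applied to $J_M$, this says that the family $\bigl(\ev_\xi(J_M)\bigr)_{\xi\in\calZ'}$, that is $\bigl(\tau^\g_M(\xi)\bigr)_{\xi\in\calZ'}$ --- where we read $\tau^\g_M(\xi)$ as the genuine WRT value if $\xi\in\ZZg$ and as the analytic continuation $\ev_\xi(J_M)$ otherwise --- determines $J_M$. Once $J_M$ is known, the first paragraph recovers all of $\tau^\g_M$; hence both invariants are determined by $\tau^\g_M|_{\calZ'}$.

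I do not expect a genuine obstacle, since all the substance is already contained in Theorem~\ref{r40a} and in \cite{H:cyclotomic} (and the $sl_2$ case is \cite{H:unified}). The only point requiring a little care is the bookkeeping of domains: one must invoke the analytic continuation of $\tau^\g_M$ to $\CZ\setminus\ZZg$ whenever $\calZ'\not\subset\ZZg$, and one must apply \cite[Theorem~6.3]{H:cyclotomic} with precisely the hypothesis --- $\calZ'$ has a limit point in $\CZ$ --- that appears in the statement of the corollary.
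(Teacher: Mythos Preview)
Your proposal is correct and follows the same approach as the paper: the paper does not give an explicit proof of this corollary but says (just before stating the consequences) that the proofs are the same as in \cite{H:unified}, and the ingredients you invoke --- Theorem~\ref{r40a} for the identification $\ev_\xi(J_M)=\tau^\g_M(\xi)$ and its uniqueness clause, together with \cite[Theorem~6.3]{H:cyclotomic} as recalled in Section~\ref{sec.Habiro} --- are exactly what is intended. Your care about the bookkeeping of domains (invoking the analytic continuation of $\tau^\g_M$ when $\calZ'\not\subset\ZZg$) is appropriate and matches the paper's conventions in Section~1.3.1.
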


For example,  the value of $\tau^\fg_M(\xi)$ at any root of unity
$\xi$ is determined by the values $\tau^\fg_M(\xi_k)$ at $\xi_k =
\exp(2\pi i/ 2^k)$ for infinitely many integers $k\ge0$.

\subsection{Formal construction of the unified invariant}

Here we outline the proof of Theorem \ref{r40a}. Since we are not able to directly generalize
the proof  of the case $\fg=sl_2$ in  \cite{H:unified}, we use another approach which involves deep
results in quantized enveloping algebras (quantum groups).
The conceptual definition of the unified invariant presented here is also different.

\subsubsection{First step: construction of $J_M$}
The first step is to construct an invariant $J_M\in\Zqh$ using the
quantum group $\Uh(\g)$ of $\g$.  Here we use neither the definition
of $\tau^\g_M(\xi)$ nor the quantum link invariants associated to
finite-dimensional representations of $\Uh(\g)$.  Instead, we use the
universal quantum invariant of bottom tangles and the {\em full twist
forms}, which are partially defined functionals $\cT_{\pm}$ on the
quantum group $\Uh(\g)$ and play a role of $\pm1$-framed surgery on
link components.

Every integral homology $3$-sphere $M$ can be obtained as the result
$S^3_L$ of surgery on $S^3$ along an algebraically split link $L$ with
framing $\pm 1$ on each component.  Here a link is said to be
algebraically split if the linking number between any two distinct
components is $0$.  Surgery on two algebraically split, $\pm 1$
framing links $L$ and $L'$ give the orientation-preserving
homeomorphic integral homology $3$-spheres if and only if $L$ and $L'$
are related by a sequence of Hoste moves (see Figure \ref{fig:FR})
\cite{H:kirby}.  Hence, in order to construct an invariant of integral homology
$3$-spheres, it suffices to construct an invariant of algebraically
split, $\pm 1$ framing links which is invariant under the Hoste moves.

To construct such a link invariant, we use the universal quantum
invariant of bottom tangles associated to the quantum group $\Uh(\g)$.
Here a bottom tangle is a tangle in a cube
consisting of arc components whose endpoints are on the bottom square
in such a way that the two endpoints of each component is placed side
by side (see Section \ref{sec:bottom-tangles}).  For an
$n$-component bottom tangle $T$, the universal $\g$ quantum invariant
$J_T=J^\g_T$ of $T$ is defined by using the universal $R$-matrix and
the ribbon element for the ribbon Hopf algebra structure of $\Uh(\g)$,
and takes values in the $n$-fold completed tensor power $\Uhg^\hon$.

The invariant $J_M\in\Zqh$ is defined as follows.  As above, let $L$ be
an $n$-component algebraically split framed link with framings
$\epsilon_1,\ldots,\epsilon_n\in\{\pm1\}$, and assume that $S^3_L\cong M$.
Let $T$ be an $n$-component bottom tangle whose closure is isotopic to
$L$, where the framings of $T$ are switched to $0$.  Define
\begin{gather}
  \label{e8}
  J_M:= (\cT_{\epsilon_1}\otimes\cdots\otimes\cT_{\epsilon_n})(J_T).
\end{gather}
Here $\cT_{\pm}\colon \Uhg\dashrightarrow \bbC[[h]]$ are {\em partial maps} (i.e.  maps defined on a submodule of $\Uh$)
defined formally by
\begin{gather*}
  \cT_{\pm}(x)=\langle x, \mathbf{r}^{\pm1}\rangle ,
\end{gather*}
where $\mathbf{r}\in\Uhg$ is the ribbon element, and
\begin{gather*}
\langle,\rangle  \colon \Uhg\ho\Uhg\dashrightarrow \Ch
\end{gather*}
is the quantum Killing form, which is a partial map.  The tensor
product $\cT_{\epsilon_1}\otimes\cdots\otimes\cT_{\epsilon_n}$ is not
well defined on the whole $\Uhg^\hon$, but is well defined on a
$\Zqh$-submodule $\tK_n\subset \Uhg^\hon$ and we have a $\Zqh$-module
homomorphism
\begin{gather*}
  \cT_{\epsilon_1}\otimes\cdots\otimes\cT_{\epsilon_n}\colon
  \tK_n\rightarrow \Zqh.
\end{gather*}
Here we regard $\Zqh$ as a subring of
$\bbC[[h]]$ by setting $q=\exp h$.
The module $\tK_n$ contains $J_T$ for all $n$-component, algebraically
split $0$-framed links $T$.    We will prove that $J_M$ as
defined in \eqref{e8} does not depend on the choice of $T$ and is
invariant under the Hoste moves. Hence $J_M\in\Zqh$ is an invariant of
an integral homology sphere.

One step in the construction of $J_M$ is to construct a certain integral form of the quantum group $\Uh(\fg)$ which is sandwiched between the Lusztig integral form and the De Concini-Procesi integral form.

\subsubsection{Second step: specialization to the WRT invariant at
  roots of unity}

The next step is to prove the specialization property
$\evxi(J_M)=\tau^\g_M(\xi)$ for each $\xi\in\ZZg$.  Once we have
proved this identity, uniqueness of $J_M$ follows since
every element of $\Zqh$ is determined by the values at infinitely many
$\xi\in\ZZ$ of prime power order, see Section \ref{sec.Habiro}.

\subsection{Organization of the paper}
In Section~\ref{sec:univ-invar-bott} we give a general construction of an invariant of {\em integral homology 3-spheres} from what we call a {\em core subalgebra} of a ribbon Hopf algebra. Section~\ref{sec:UhUq} introduces the quantized enveloping algebra $\Uh(\fg)$ and its subalgebras. In Section~\ref{sec:corealgebra} we  construct  a core subalgebra of the ribbon Hopf algebra $\UUh$, which is $\Uh$ with a slightly bigger ground ring. From the core subalgebra we get invariant $J_M$ of integral homology 3-spheres. In Section~\ref{sec:integcore} we construct an integral version of the core algebra. Section \ref{sec:nonc-grad} a (generically non-commutative) grading of the quantum group is introduced. In Section \ref{sec:integralM} we prove that $J_M\in \Zqh$. In Section \ref{sec.WRT} we show that the WRT invariant can be recovered from $J_M$, proving the main results. In Appendices we give an independent proof of a duality result of Drinfel'd and Gavarini and provide proofs of a couple of technical results used in the main body of the paper.

\subsection{Acknowledgements} The authors would like to thank H. Andersen, A.~Beliakova, A.~Brugui\`eres, C.~Kassel, G.~ Masbaum, Y.~Soibelman, S.~Suzuki, T.~Tanisaki, and N.~Xi for helpful discussions. Part of this paper was written while the second author was visiting Aarhus University, University of Zurich, RIMS Kyoto University, ETH Zurich, University of Toulouse, and he would like to thank these institutions for the support.

\np

\section{Invariants of integral homology $3$-spheres derived from ribbon Hopf algebras}
\label{sec:univ-invar-bott}

In this section, we give the part of the proofs of our main results,
which can be stated without giving the details of the structure of
the quantized enveloping algebra $\Uh=\Uh(\mathfrak g)$. We introduce the notion of a {\em core subalgebra} of a ribbon Hopf algebra and show that every core subalgebra gives rise to an invariant of {\em integral homology 3-spheres}.

\subsection{Modules over $\Ch$} \label{sec:ChModules}

Let $\Ch$ be the ring of formal power $\BC$-series in the variable $h$.

Note that $\Ch$ is a local ring, with maximal ideal $(h)=h\Ch$. An element $x = \sum x_k h^k\in \Ch$ is invertible if and only if the constant term $x_0$ is non-zero.

\subsubsection{$h$-adic topology, separation and completeness}
Let $V$ be a $\Ch$-module.  Then $V$ is equipped with the $h$-adic
topology given by the filtration $h^kV$, $k\ge0$. Any $\Ch$-module
homomorphism $f: V \to W$ is automatically continuous.  In general,
the $h$-adic topology of a $\Ch$-submodule $W$ of a $\Ch$-module $V$
is different from the topology of $W$ induced by the $h$-adic topology
of $V$.

Suppose $I$ is an index set. Let $V^I$ be the set of all collections
$(x_i)_{i\in I}$, $ x_i \in V$.  We say that a collection $(x_i)_{i\in
I} \in V^I$ is {\em $0$-convergent} in $V$ if for every positive
integer $k$, $x_i \in h^k V$ except for a finite number of $i\in I$.
In this case, the sum $\sum_{i\in I} x_i$ is convergent in the
$h$-adic topology of $V$.  If $I$ is finite, then any collection
$(x_i)_{i\in I}$ is $0$-convergent.

The {\em $h$-adic completion} $\hat{V}$ of $V$ is defined by
\begin{gather*}
  \hat{V}=\varprojlim_{k} V/h^kV.
\end{gather*}

A $\Ch$-module $V$ is {\em separated} if the natural map $V\to \hat{V}$ is injective,  which is equivalent to  $\cap_k h^k V = \{ 0\}$.
If $V$ is separated, we  identify $V$ with the image of the embedding $ V \hookrightarrow \hat V$.

A $\Ch$-module $V$ is {\em  complete} if the natural map
$V\to \hat{V}$ is surjective.

For a $\Ch$-submodule $W$ of a completed $\Ch$-module $V$, the {\em
  topological completion of $W$ in $V$} is the image of $\widehat W$
under the natural map $\widehat W \to \widehat V=V$. One should not
confuse the topological completion of $W$ and the topological closure
of $W$, the latter being the smallest closed (in the $h$-adic
topology) subset containing $W$. See Example \ref{exa:1} below.

\subsubsection{Topologically free modules}  \label{sec:top1}

For a vector space $A$ over $\BC$, let $A[[h]]$ denote the
$\Ch$-module of formal power series $\sum_{n\ge0} a_nh^n$, $a_n\in
A$.  Then $A[[h]]$ is naturally isomorphic to the $h$-adic completion
of $A\otimes_{\BC}\Ch$.

A $\Ch$-module $V$ is said to be {\em topologically free} if $V$ is
isomorphic to $A[[h]]$ for some vector space $A$.  A {\em topological
basis} of $V$ is the image by an isomorphism $A[[h]]\cong V$ of a
basis of $A\subset A[[h]]$.  The cardinality of a topological basis of
$V$ is called the {\em topological rank} of $V$.

It is known that a  $\Ch$-module is topologically free if and only if it is separated, complete, and torsion-free, see e.g. \cite[Proposition XVI.2.4]{Kassel}.

Let $I$ be a set.
Let $\sR^I=\prod_{i\in I}\Ch$ be the set of all collections $(x_i)_{i\in
I}$, $x_i \in \sR$.  Let $(\sR^I)_0\subset\sR^I$ be the $\Ch$-submodule
consisting of the $0$-convergent collections.
Then $(\sR^I)_0\cong(\BC I)[[h]]$ is topologically free, where $\BC I$
is the vector space generated by $I$.

Note that $\sR^I$ also is topologically free.  In fact, we have a $\Ch$-module isomorphism $\Ch^I
\cong \BC^I[[h]]$.  If $I$ is infinite, then the topological rank of $\sR^I$ is uncountable.

For $j\in I$, define a collection $\delta_j=((\delta_j)_i)_{i\in I}\in 
(\sR^I)_0
$ by
\be
\label{eq.deltai}
(\delta_j)_i =\delta_{j,i}=\begin{cases}
1 \quad & \text{if } \ i= j \\
0 &\text{if } \ i\neq j.
\end{cases}
\ee

Suppose $V$ is a topologically free $\Ch$-module with the isomorphism $f:
(\sR^I)_0 \to V$. Let $e(i)= f(\delta_i)\in V$. For $x\in V$, the
collection $(x_i)_{i\in I} = f^{-1}(x)$ is called the {\em coordinates} of
$x$ in the topological basis $\{ e(i) \}$. We have then \be
\label{eq.coor1} x = \sum_{i \in I} x_i e(i), \ee where the sum on the
right hand side converges to $x$ in the $h$-adic topology of $V$.

\subsubsection{Formal series modules}

A $\sR$--module $V$ is a {\em formal series $\sR$-module} if there is
a $\sR$-module isomorphism $f: \sR^I \to V$ for a {\em countable} set
$I$.

\begin{remark} Besides the $h$-adic topology, another natural topology on
 $\sR^I=\prod_{i\in I}\Ch$ is  the {\em product topology}.  (Recall
that the product topology of $\prod_{i\in I}\Ch$ is the coarsest
topology with all the projections $p_i:\prod_{i\in I}\Ch\to \Ch$ being
continuous.)
\end{remark}

Suppose $V$ is a formal series module, with an isomorphism $f: \sR^I
\to V$.  Let $e(i)= f(\delta_i)$, where $\delta_i$ is defined as in
\eqref{eq.deltai}. The set $\{e(i) \mid i \in I\}$ is called a {\em
formal basis} of $V$.

For $x\in V$ the collection $f^{-1}(x) \in \sR^I$ is called the {\em
coordinates} of $x$ in the formal basis $\{ e(i) \mid i \in
I\}$. Unlike the case of topological bases, in general the sum $
\sum_{i\in I} x_i e(i) $ does not converge in the $h$-adic topology of
$V$ (but does converge to $x$ in the product topology). However, it is
often the case that $V$ is a $\sR$-submodule of a bigger $\sR$-module
$V'$ in which $\{ e(i) \mid i\in I\}$ is 0-convergent. Then the sum $
\sum_{i\in I} x_i e(i) $, though not convergent in the $h$-adic
topology of $V$, does converge (to $x$) in the $h$-adic topology of
$V'$.

\begin{example}\label{exa:1} The following example is important for us.

Suppose $V$ is a topologically free
  $\sR$-module with a countable topological basis $\{e(i) \mid i\in I\}$. Assume that $a:I \to \Ch$ is a function such that $a(i) \neq 0$ for every $i\in I$ and
$(a(i))_{i\in I}$ is 0-convergent. Let $V(a)$ be the topological completion in $V$ of the $\Ch$-span of
$\{ a(i) e(i) \mid i \in I\}$. Then $V(a)$ is topologically free with
  $\{ a(i) e(i) \mid i \in I\}$ as a topological basis.

The submodule $V(a)$ is not closed in the $h$-adic topology of $V$.  The closure
$\overline{V(a)}$ of $V(a)$ in the $h$-adic topology is a formal
series $\Ch$-module, with an isomorphism
\begin{gather*}
  f\col \Ch^I \to V,\quad \delta_i\mapsto a(i)e(i).
\end{gather*}

The topology of $\overline{V(a)}$ induced by
the $h$-adic topology of $V$ is the product topology.

If $x\in V(a)$, then we have a unique presentation
\be
\label{eq.z1z}
x = \sum_{i\in I} x_i (a(i) e(i))
\ee
where $(x_i)_{i\in I} \in (\sR^I)_0$.

If $x\in \overline{ V(a)}$, then $x$ also has a unique presentation \eqref{eq.z1z}, with $(x_i)_{i\in I} \in \sR^I$.

\end{example}

\subsubsection{Completed tensor products}

For two complete $\Ch$-modules $V$ and $V'$, the {\em completed tensor
product} $V\ho V'$ of $V$ and $V'$ is the
$h$-adic completion of $V\otimes V'$, i.e.
\begin{gather*}
  V\ho V' = \varprojlim_{n} (V\otimes V')/h^n(V\otimes V').
\end{gather*}
Suppose  both $V$ and $V'$ are topologically free with topological bases $\{ b(i) \mid i\in I\}$
and $\{ b'(j) \mid j \in J\}$ respectively.
Then $V\ho V'$ is topologically free with a
topological basis naturally identified with $\{ b(i) \otimes b'(j) \mid i\in I, j \in J\}$.

\begin{proposition} \label{s.a2}
 Suppose $W_1,V_1,W_2,V_2$ are topologically free $\Ch$-modules, where $W_j$ is a submodule of $V_j$ for $j=1,2$.

  Then the natural maps $ W_1 \otimes W_2 \to V_1 \otimes V_2$  and
 $ W_1 \ho W_2 \to V_1 \ho V_2$ are injective.
\end{proposition}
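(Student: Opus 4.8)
The plan is to reduce both statements to a single structural fact about topologically free modules: a submodule $W$ of a topologically free $\Ch$-module $V$ is itself topologically free if and only if the inclusion is split in a suitable sense, and in any case $W \otimes_{\Ch} (-)$ is well-behaved. Concretely, I would first recall (from \cite[Proposition XVI.2.4]{Kassel}, cited in the excerpt) that a $\Ch$-module is topologically free if and only if it is separated, complete, and torsion-free. The key intermediate lemma I would isolate is: \emph{if $W \subseteq V$ are both topologically free, then the quotient $V/W$ is torsion-free, hence $W$ is a direct summand of $V$ after completion.} Actually the cleanest route avoids even needing $V/W$ torsion-free: since $V$ is torsion-free and $W$ is separated, choose a topological basis $\{b(i)\}_{i\in I}$ of $W$; I claim its image in $V$ is $\Ch$-linearly independent in the strong (topological) sense, i.e. any $0$-convergent relation $\sum_i x_i b(i) = 0$ in $V$ forces all $x_i = 0$. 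This holds because the same relation then holds in $W$ (the map $W \to V$ being injective on elements by hypothesis — wait, that is what we are proving), so instead I argue directly: the relation holds in $V$, and reducing mod $h$ it becomes a relation among the images $\bar b(i)$ in $V/hV$; but $\{\bar b(i)\}$ are part of a basis of the $\BC$-vector space $W/hW$, which injects into $V/hV$ since $V$ is torsion-free and $W$ separated (a standard argument: $W \cap hV = hW$). Dividing out the lowest-order terms and iterating the $h$-adic filtration gives $x_i \in \bigcap_k h^k\Ch = 0$.

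With that independence established, I would prove the $W_1 \otimes W_2 \to V_1 \otimes V_2$ statement as follows. Pick topological bases $\{b_1(i)\}_{i\in I}$ of $W_1$ and $\{b_2(j)\}_{j\in J}$ of $W_2$. An element of $W_1 \otimes_{\Ch} W_2$ is a \emph{finite} sum $\sum_{\alpha} w_\alpha \otimes w'_\alpha$; expanding each $w_\alpha$, $w'_\alpha$ in the bases, such an element is a finite $\Ch$-combination $\sum_{(i,j) \in S} c_{ij}\, b_1(i) \otimes b_2(j)$ over a finite index set $S$. Its image in $V_1 \otimes_{\Ch} V_2$ is the same combination, now viewed among the elements $b_1(i)\otimes b_2(j) \in V_1 \otimes V_2$. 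Because $\{b_1(i)\}$ and $\{b_2(j)\}$ extend to honest topological bases of $V_1$ and $V_2$ respectively — here I would need a second small lemma, namely that a topological basis of a topologically free submodule extends to one of the ambient module; this follows by choosing a $\BC$-basis of $W_j/hW_j$, extending it to one of $V_j/hV_j$, and lifting — the elements $b_1(i)\otimes b_2(j)$ over a finite set $S$ are part of a topological basis of $V_1 \ho V_2$, hence in particular $\Ch$-linearly independent over finite index sets. So the image vanishes only if all $c_{ij} = 0$, i.e. the original element was $0$. This gives injectivity of $W_1 \otimes W_2 \to V_1 \otimes V_2$.

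For the completed version, I would observe that $W_1 \ho W_2$ is, by definition, the $h$-adic completion of $W_1 \otimes W_2$, and under the chosen bases it is topologically free with topological basis $\{b_1(i) \otimes b_2(j)\}$; similarly $V_1 \ho V_2$ is topologically free with a topological basis containing all of these. The map $W_1 \ho W_2 \to V_1 \ho V_2$ then sends the element with $0$-convergent coordinates $(c_{ij})$ relative to $\{b_1(i)\otimes b_2(j)\}$ to the element of $V_1 \ho V_2$ with the \emph{same} coordinates relative to the extended topological basis. Since coordinates with respect to a topological basis are unique (as recorded in the discussion around \eqref{eq.coor1}), this map is injective: its kernel would be an element all of whose coordinates in the ambient topological basis vanish, hence $0$ already in $W_1 \ho W_2$.

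The main obstacle is the extension lemma --- that a topological basis of a topologically free submodule $W_j \subseteq V_j$ extends to a topological basis of $V_j$ --- because this requires knowing that $W_j \cap hV_j = hW_j$ (equivalently, $V_j/W_j$ has no $h$-torsion coming from $W_j$), which is not automatic for arbitrary submodules and must be extracted from the hypothesis that $W_j$ is itself topologically free, hence separated. I expect the argument to go: given $w \in W_j$ with $w = hv$ for some $v \in V_j$, writing $w = \sum_i x_i b_j(i)$ with $0$-convergent $x_i \in \Ch$, the relation $\sum_i x_i b_j(i) = h v$ shows (by the independence of the $b_j(i)$ in $V_j$ proved above, applied after noting $v$ too is an $h$-adic limit of combinations) that $h \mid x_i$ for all $i$, so $w \in hW_j$. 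Once this is in hand, the $\BC$-linear map $W_j/hW_j \hookrightarrow V_j/hV_j$ is injective, one extends bases over $\BC$, lifts, and completeness of $V_j$ together with the $h$-adic filtration argument (lowest-order-term induction, exactly as in the proof of the cited Kassel proposition) shows the lifted set is a topological basis of $V_j$. Everything else is bookkeeping with $0$-convergent coordinates.
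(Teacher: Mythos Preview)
Your extension lemma is false, and with it the scaffolding collapses. Take $V_1 = \Ch$ and $W_1 = h\Ch$: both are topologically free (the latter with topological basis $\{h\}$), yet $\{h\}$ cannot be extended to a topological basis of $\Ch$, since any such basis is a single unit. Correspondingly, your claimed identity $W \cap hV = hW$ fails here ($h\Ch \cap h\Ch = h\Ch \neq h^2\Ch$), the induced map $W_1/hW_1 \to V_1/hV_1$ is the zero map $\BC \to \BC$, and your ``key intermediate lemma'' that $V/W$ is torsion-free is violated outright: $V_1/W_1 \cong \BC$ is pure $h$-torsion. Separatedness of $W$ and torsion-freeness of $V$ do not give you $W \cap hV = hW$; that would require $V/W$ torsion-free, which is not part of the hypothesis.

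The paper's proof avoids all of this by never touching a basis of $W_j$ inside $V_j$. It first factors through $W_1 \otimes V_2$ to reduce to the case $W_2 = V_2$. Then, choosing a topological basis $\{b(i)\}$ of $V_2$ \emph{alone}, every element of $W_1 \ho V_2$ has a unique expansion $\sum_i x_i \otimes b(i)$ with $x_i \in W_1$ and $(x_i)$ $0$-convergent in $W_1$; this uses only that $W_1$ is topologically free, not any compatibility of its basis with $V_1$. The image in $V_1 \ho V_2$ is $\sum_i \iota(x_i) \otimes b(i)$; if it vanishes, uniqueness of the analogous expansion in $V_1 \ho V_2$ forces each $\iota(x_i) = 0$, and since $\iota$ is by hypothesis a submodule inclusion, $x_i = 0$. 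The uncompleted case follows because $W_1 \otimes V_2$, being separated, embeds in $W_1 \ho V_2$. No extension of bases, no claim about $V_j/W_j$, is needed.
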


\begin{proof}
The map  $W_1 \ot W_2 \to V_1 \otimes V_2$ is the composition of two maps  $W_1 \ot W_2 \to W_1 \otimes V_2$ and $W_1 \ot V_2 \to V_1 \otimes V_2$.  This reduces the proposition to the case  $W_2=V_2$, which we will assume.

 Let $\iota: W_1 \hookrightarrow V_1$ be the inclusion map. We need to show that $\iota \ot \id:W_1 \ot V_2
 \to V_1 \ot V_2$ and $\iota \ho \id:W_1 \ho V_2 \to
 V_1 \ho V_2$ are injective.

 Since $W_1\ot V_2$ is separated, we can consider  $W_1\ot V_2$ as a submodule of $W_1\ho V_2$. Then $\iota\ot \id$ is the restriction of $\iota \ho \id$. Thus, it is enough to show that $\iota \ho \id$
 is injective.

Suppose $x\in W_1 \ho V_2$ such that $(\iota \ho \id)(x) =0$. We have to show that $x=0$.

Let $\{ b(i), i \in I\}$ be a topological basis of $V_2$. Using a topological basis of $W_1$ one sees that $x$ has a unique presentation
\be
 x = \sum_{i\in I} x_i \otimes b(i),
 \label{e.dd2}
 \ee
where $x_i \in W_1$, and the
  collection $(x_i)_{i\in I}$ is
0-convergent in $V_1$. Then we have
$$ 0= (\iota \ho \id)(x)= \sum_{i\in I} \iota (x_i) \otimes b(i) \in V_1\ho V_2.$$
The uniqueness of the presentation of the form  \eqref{e.dd2} for elements in $V_1\ho V_2$ implies that $\iota(x_i)=0$ for every $i\in I$.
Because $\iota$ is injective, we have $x_i=0$ for every $i$. This means $x=0$, and hence $\iota\ho \id$ is injective.
\end{proof}

\subsection{Topological ribbon Hopf algebra}
\label{sec:ribbon-hopf-algebra}

In this paper, by a {\em topological Hopf algebra} $\mathscr H
=(\mathscr H ,{\boldsymbol \mu} ,{\boldsymbol \eta} ,\Delta
,{\boldsymbol \epsilon} ,S)$ we mean a topologically free $\Ch$-module
$\mathscr H$
of countable topological rank,
 together with $\Ch$-module homomorphisms
\begin{gather*}
  {\boldsymbol \mu} \col \mathscr H \ho \mathscr H \rightarrow \mathscr H ,\quad {\boldsymbol \eta} \col \modk \rightarrow \mathscr H ,\quad \Delta \col \mathscr H \rightarrow \mathscr H \ho \mathscr H ,\quad
  {\boldsymbol \epsilon} \col \mathscr H \rightarrow \modk ,\quad S\col \mathscr H \rightarrow \mathscr H
\end{gather*} which are the
multiplication, unit, comultiplication, counit and antipode of
$\mathscr H $, respectively, satisfying the usual axioms of a Hopf
algebra.
For simplicity, we include invertibility of the antipode in the axioms
of Hopf algebra. We denote $\eta(1)$ by $1\in \mathscr H$.

Note that $\sH$ is a $\Ch$-algebra in the usual (non-complete) sense,
although $\sH$ is not a $\Ch$-coalgebra in general.  A (left)
$\sH$-module $V$ (in the usual sense) is said to be {\em topologically
free} if $V$ is topologically free as a $\Ch$-module. In that case, by
continuity the left action $\sH\otimes V\to V$ induces a $\Ch$-module
homomorphism
\begin{gather*}
  \sH\ho V\to V.
\end{gather*}

For details on topological Hopf algebras and topologically free modules, see e.g. \cite[Section XVI.4]{Kassel}.

Let $\boldmu^{[n]}: \scH^{\ho n} \to \scH$ and
$ \Delta^{[n]} : \scH \to \scH^{\ho n}$ be respectively the multi-product and the multi-coproduct defined by
 \begin{align*}
 \boldmu^{[n]} &=   \boldmu   ( \id \ho \boldmu)  \dots ( \id ^{\ho (n-3)} \ho \boldmu) ( \id ^{\ho (n-2)} \ho \boldmu)\\
 \Delta^{[n]} &= ( \id ^{\ho (n-2)} \ho \Delta)  ( \id ^{\ho (n-3)} \ho \Delta)  \dots  ( \id \ho \Delta) \Delta
 \end{align*}
with the convention that $\Delta^{[1]}= \boldmu^{[1]}=\id$, $\Delta^{[0]}=\boldsymbol \epsilon$, and $ \boldmu^{[0]}=\boldsymbol \eta$.

A {\em universal $R$-matrix} \cite{Drinfeld} for $\mathscr H $ is an invertible element
$\cR=\sum\alpha \otimes \beta \in \mathscr H \ho \mathscr H $ satisfying
\begin{gather*}
  \cR\Delta (x)\cR^{-1}=\sum x_{(2)}\otimes x_{(1)}\quad \text{for $x\in \mathscr H $},\\
  (\Delta \otimes \id)(\cR)=\cR_{13}\cR_{23},\quad (\id\otimes \Delta )(\cR)=\cR_{13}\cR_{12},
\end{gather*}
where $\Delta (x)=\sum x_{(1)}\otimes x_{(2)}$ (Sweedler's notation), and
$\cR_{12}=\sum \alpha \otimes \beta \otimes 1$, $\cR_{13}=\sum \alpha \otimes 1\otimes \beta $,
$\cR_{23}=\sum1\otimes \alpha \otimes \beta $.
A Hopf algebra with a universal $R$-matrix is called a {\em
  quasitriangular Hopf algebra}.
The universal $R$-matrix satisfies
\begin{gather}
  \cR^{-1}= (S\otimes \id)(\cR)=(\id\otimes S^{-1})(\cR), \quad
  ({\boldsymbol \epsilon} \otimes \id)(\cR)=(\id\otimes {\boldsymbol \epsilon} )(\cR)=1 \notag\\
  (S\ot S) (\cR) = \cR    \label{eq.SR}.
\end{gather}

A quasitriangular Hopf algebra $(\mathscr H ,\cR)$ is called a {\em ribbon Hopf
  algebra} \cite{RT1} if it is equipped with a {\em
  ribbon element}, which is defined to be an invertible, central
  element $\modr \in \mathscr H $ satisfying
\begin{gather*}
  \modr ^2 =\bu S(\bu ),\quad S(\modr )=\modr ,\quad {\boldsymbol \epsilon} (\modr )=1, \quad
  \Delta (\modr )=(\modr \otimes \modr )(\cR_{21}\cR)^{-1},
\end{gather*}
where $\bu=\sum S(\beta )\alpha \in \mathscr H $ and $\cR_{21}=\sum \beta \otimes \alpha \in \mathscr H ^{\ho2}$.

The element $\modg :=\bu\modr ^{-1}\in \mathscr H $, called the {\em balanced element}, satisfies
\begin{gather*}
\Delta(\modg)= \modg \ot \modg,\quad  S(\modg)= \modg^{-1},\quad   \modg x\modg ^{-1} = S^2(x)\quad \text{for $x\in \mathscr H $}.
\end{gather*}

See \cite{Kassel,Ohtsuki,Turaev} for more details on quasitriangular
and ribbon Hopf algebras.

\subsection{Topologically free $\scH$-modules}The ground ring $\Ch$ is
considered as a topologically free $\scH$-module, called the {\em trivial module}, by the action of the co-unit:
$$ a \cdot x = \boldep(a) \, x.$$

Suppose $V,W$ are topologically free $\scH$-modules. Then $V \ho W$
has the structure of $\scH \ho \scH$-module, given by
$$ (a \ot b) \cdot (x \ot y) = (a \cdot x) \ot (b \cdot y).$$
Using the comultiplication, $ V \ho W$ has an
$\scH$-module structure given by
$$ a \cdot ( x \ot y) := \Delta(a) \cdot (x \ot y)
=\sum a_{(1)}x\ot a_{(2)}y.$$

An element $x\in V$ is called {\em invariant} (or {\em $\scH$-invariant}) if for every $a \in
\sH$,
$$ a \cdot x = \boldsymbol \epsilon (a) \, x.$$
The set of invariant elements of $V$ is denoted by $V^\inv$.
The following is standard and well-known.

\begin{proposition} \label{r.inva}
Suppose $V$ and $W$ are topologically free $\scH$-modules, and $f: V\ho W \to \Ch$ is a $\Ch$-module homomorphism.

(a) An element $x \in V \ho W$ is invariant if and only if for every $a\in \scH$,
\be
\label{eq.inva}
(S(a)  \otimes 1)\cdot x = (1 \ot \, a) \cdot x.
\ee

(b) Dually, $f$ is an $\scH$-module homomorphism if and only if
for every $a\in \scH$ and $x \in V\ho W$,
$$ f[ (a \ot 1 ) \cdot (x) ] = f [(1 \ot S(a)) \cdot (x)].$$

(c)
Suppose $f$ is an $\scH$-module homomorphism, and $x\in V$ is invariant. Then the $\Ch$-module homomorphism
$$ f_x: W \to \Ch, \quad y \mapsto f(x \ot y),$$
is an $\scH$-module homomorphism.

(d)
 Suppose $g: V \to \Ch$ is an $\scH$-module homomorphism. Then for every $i$ with $1\le i \le n$,
 $$ (\id^{\ho (i-1)} \ho \,  g \,  \ho \id^{\ho (n-i)} )\left( (V^{\ho n})^\inv \right) \subset ( V^{\ho (n-1)})^\inv.$$

\end{proposition}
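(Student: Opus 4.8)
The plan is to prove all four parts by routine Sweedler calculus with the Hopf algebra axioms, the only points needing attention being that every map and identity in sight is $\Ch$-linear and continuous (so relations valid on algebraic tensor products carry over to the $h$-adically completed ones) and the bookkeeping of iterated coproducts in parts (b) and (d). Throughout I use two elementary facts: a $\Ch$-linear map $f$ from a topologically free $\scH$-module to the trivial module $\Ch$ is an $\scH$-module homomorphism if and only if $f(a\cdot z)=\boldep(a)f(z)$ for all $a\in\scH$ and all $z$; and $\boldep\circ S=\boldep\circ S^{-1}=\boldep$, the second equality following from the first together with the invertibility of $S$. For part (a), in the ``only if'' direction I would assume $x$ invariant and transform $(1\ot a)\cdot x$ as follows: rewrite $1\ot a$ as $\sum S(a_{(1)})a_{(2)}\ot a_{(3)}$ (an identity following from the antipode and counit axioms), regroup by coassociativity as $\sum (S(a_{(1)})\ot1)\cdot\bigl(\Delta(a_{(2)})\cdot x\bigr)$, use invariance of $x$ to replace $\Delta(a_{(2)})\cdot x$ by $\boldep(a_{(2)})x$, and collapse by the counit axiom to land on $(S(a)\ot1)\cdot x$. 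In the ``if'' direction I would assume $(S(a)\ot1)\cdot x=(1\ot a)\cdot x$ for all $a$, write $\Delta(a)\cdot x=\sum (a_{(1)}\ot1)\cdot\bigl((1\ot a_{(2)})\cdot x\bigr)$, use the hypothesis on the second factor to turn this into $\sum \bigl(a_{(1)}S(a_{(2)})\ot1\bigr)\cdot x$, and collapse by the antipode axiom to $\boldep(a)\,x$, which says precisely that $x$ is invariant.

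Part (b) is the exact dual of (a), obtained by transporting the same manipulations through $f$. For the ``only if'' direction, assuming $f$ is an $\scH$-module homomorphism, I would rewrite $a\ot1$ as $\sum a_{(1)}\ot a_{(2)}S(a_{(3)})$, regroup by coassociativity so that a factor $\Delta(\,\cdot\,)$ acts on $z$ first, absorb it using $f(\Delta(b)\cdot w)=\boldep(b)f(w)$, and collapse by the counit axiom to reach $f[(1\ot S(a))\cdot z]$. For the ``if'' direction, I would write $f(\Delta(c)\cdot z)=\sum f\bigl[(c_{(1)}\ot1)\cdot\bigl((1\ot c_{(2)})\cdot z\bigr)\bigr]$, apply the hypothesis to the element $c_{(1)}$, and collapse using $\sum S(c_{(1)})c_{(2)}=\boldep(c)1$ to obtain $\boldep(c)f(z)$.

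Parts (c) and (d) then follow quickly from (a) and (b). For (c): given $a\in\scH$, apply (b) to $f$ on $V\ho W$ with the element $S^{-1}(a)$, obtaining $f[(1\ot a)\cdot(x\ot y)]=f[(S^{-1}(a)\ot1)\cdot(x\ot y)]=f\bigl[(S^{-1}(a)\cdot x)\ot y\bigr]$; since $x$ is invariant, $S^{-1}(a)\cdot x=\boldep(S^{-1}(a))x=\boldep(a)x$, so $f_x(a\cdot y)=f(x\ot(a\cdot y))=\boldep(a)f_x(y)$, which is exactly the statement that $f_x$ is an $\scH$-module homomorphism. For (d): put $\phi_i=\id^{\ho(i-1)}\ho g\ho\id^{\ho(n-i)}$. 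Since $g$ is a module homomorphism, applying $g$ in the $i$-th slot contracts the $i$-th tensor factor of $\Delta^{[n]}(a)$ through $\boldep$, and together with the coalgebra identity $(\id^{\ho(i-1)}\ho\boldep\ho\id^{\ho(n-i)})\circ\Delta^{[n]}=\Delta^{[n-1]}$ this yields the intertwining relation $\phi_i(\Delta^{[n]}(a)\cdot z)=\Delta^{[n-1]}(a)\cdot\phi_i(z)$ for all $a\in\scH$ and all $z\in V^{\ho n}$. Applying this to an invariant $z\in(V^{\ho n})^\inv$ gives $\Delta^{[n-1]}(a)\cdot\phi_i(z)=\phi_i(\boldep(a)z)=\boldep(a)\phi_i(z)$, so $\phi_i(z)\in(V^{\ho(n-1)})^\inv$.

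I do not expect a genuine obstacle: as the authors note, the result is standard and well known. The only care required is to get the iterated-coproduct reindexings in (b) and (d) exactly right, and to observe that every map appearing (the various module actions, $g\ho\id$, the map $\phi_i$) is $\Ch$-linear and continuous, so that the Hopf-algebra identities used --- valid a priori only on algebraic tensor products --- extend to the completed tensor products on which the proposition is stated.
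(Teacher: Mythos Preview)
Your proposal is correct and follows essentially the same approach as the paper: the Sweedler manipulations in (a) are identical to the paper's, part (b) is left to the reader in the paper (your argument is the expected dual), part (c) matches the paper's use of (b) with $S^{-1}(a)$, and in (d) the paper simply asserts that $\tilde g=\id^{\ho(i-1)}\ho g\ho\id^{\ho(n-i)}$ is an $\scH$-module homomorphism while you spell out the intertwining relation that justifies this. There is no substantive difference.
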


\begin{proof} (a) Suppose one has \eqref{eq.inva}. Let $ a\in \scH$ with $\Delta(a) = \sum  a_{(1)} \otimes a_{(2)}$. Assume $x = \sum x' \ot x''$. By definition,
\begin{align*}
a \cdot x & = \sum ( a_{(1)}\ot a_{(2)})  \cdot (x'  \ot   x'') = \sum ( a_{(1)}\ot 1) (1 \ot  a_{(2)})  \cdot (x'  \ot   x'') \\
&= \sum ( a_{(1)}\ot 1)     \cdot (x'  \ot  a_{(2)} \cdot x'')   \\
&=   \sum ( a_{(1)}\ot 1)  \left( S( a_{(2)}) \cdot x'  \ot  x''  \right)\\
&=   \sum ( a_{(1)}  S( a_{(2)}) \ot 1) \cdot   ( x' \ot x'') = \boldep (a) \, x,
\end{align*}
which show that $x$ is invariant.

Conversely, suppose $x$ is invariant.  From axioms of a Hopf algebra,
$$
 1 \ot a = \sum  (S( a_{(1)}) \ot 1)\,  \Delta  (a_{(2)}).
$$
Applying both sides to $x$,  we have
\begin{align*}
(1 \otimes \, a) \cdot x & =  \sum  (S(a_{(1)})   \ot 1 )\cdot \left( a_{(2)} \cdot  x  \right) \\
&=  \sum (S(a_{(1)})   \ot 1 )\cdot \left( \boldep (a_{(2)}) \, x  \right)   \quad \text{by invariance}\\
&=  (S( a)  \ot 1 )\cdot  x,
\end{align*}
which proves \eqref{eq.inva}.

(b) The proof of (b) is similar and is left for the reader. Statement (b) is mentioned in textbooks \cite[Section 6.20]{Jantzen} and \cite[Section 6.3.2]{KlS}.

(c) Let $a \in \scH$ and $y \in W$. One has
\begin{align*}
f_x ( a \cdot y) &= f ( x \ot ( a \cdot y))  \\
&= f( S^{-1} (a) \cdot x \ot y) \quad \text{by part (b)} \\
&= \boldep \left ( S^{-1} (a)   \right) f(x \ot y) \quad \text{by invariance of $x$} \\
&= \boldep (a) f_x(y) \quad \text{since }\boldep \left ( S^{-1} (a)   \right)  = \boldep (a).
\end{align*}
This proves $f_x$ is an $\scH$-module homomorphism.

(d) The map $\tilde g:= \id^{\ho (i-1)} \ho \,  g \,  \ho \id^{\ho (n-i)}$ is also an $\scH$-module homomorphism. Hence for every $a\in \scH$ and $x \in (V^{\ho n})^\inv$,
$$ a \cdot \tilde g(x) = \tilde g (a \cdot x)= \tilde g (\boldsymbol{\ve}(a) x) = \boldsymbol{\ve}(a)\tilde g ( x).$$
This shows $\tilde g(x)$ is invariant.
\end{proof}

\subsection{Left image of an element} \label{sec.leftmodule}
Let $V$ and $W$ be topologically free $\Ch$-modules.

Suppose $x\in V \ho W$. Choose a topological basis $ \{ e(i) \mid i
\in I\}$ of $W$. Then $x$ can be uniquely presented as an $h$-adically
convergent sum
\be \label{eq.sd8} x = \sum_{i\in I} x_i \otimes e(i),
\ee where  $\{x_i \in V \mid i \in I\}$ is 0-convergent.  The {\em left image}
$V_x$ of
$x\in V\ho W$ is the topological closure (in the $h$-adic topology of $V$) of the $\Ch$-span of $ \{ x_i
\mid i\in I\}$.
It is easy to show that $V_x$ does not depend on the
choice of the topological basis $ \{ e(i) \mid i \in I\}$ of $W$.

\begin{proposition}
\label{r.stable}
Suppose $V,W$ are topologically free $\scH$-modules.  Let $x \in V \ho
W$ and let $V_x\subset V$  be the left image of $x$.

 (a) If $x$ is $\scH$-invariant, then $V_x$ is $\scH$-stable, i.e. $\scH \cdot V_x \subset V_x$.

 (b) If $(f\ho g)(x)=x$, where $f:V\to V$ and $g: W \to W$ are $\Ch$-module isomorphisms, then $f(V_x)=V_x$.
\end{proposition}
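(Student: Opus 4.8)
The plan is to work with the coordinate presentation $x=\sum_{i\in I}x_i\otimes e(i)$ of $x$ with respect to a topological basis $\{e(i)\mid i\in I\}$ of $W$, to pick out individual coordinates using the continuous coordinate functionals $e^*(k)\colon W\to\Ch$ (which exist since $W$ is topologically free), and to exploit the basis-independence of the left image $V_x$ noted just before the statement.

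For part (a), I would first invoke Proposition~\ref{r.inva}(a): $\scH$-invariance of $x$ means that $(S(a)\otimes 1)\cdot x=(1\otimes a)\cdot x$ in $V\ho W$ for every $a\in\scH$. Applying the continuous $\Ch$-module map $\id_V\ho e^*(k)\colon V\ho W\to V$ to both sides, the left side becomes $S(a)\cdot x_k$ and the right side becomes $\sum_{i\in I}e^*(k)(a\cdot e(i))\,x_i$; this sum converges because $(x_i)_{i\in I}$ is $0$-convergent in $V$, and its limit lies in the $h$-adic closure of $\Span_\Ch\{x_i\mid i\in I\}$, which is exactly $V_x$. Hence $S(a)\cdot x_k\in V_x$ for all $a\in\scH$ and all $k\in I$, and since the antipode is bijective this gives $\scH\cdot x_k\subseteq V_x$. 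Finally, for each fixed $b\in\scH$ the map $v\mapsto b\cdot v$ is a $\Ch$-module homomorphism, hence continuous, so it carries $\Span_\Ch\{x_k\}$ into $V_x$ and therefore its closure $V_x$ into $V_x$; thus $\scH\cdot V_x\subseteq V_x$.

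For part (b), the key observation is that $\{g(e(i))\mid i\in I\}$ is again a topological basis of $W$, since $g$ is a $\Ch$-module isomorphism. Applying the continuous map $f\ho g$ to the convergent sum $x=\sum_i x_i\otimes e(i)$ gives $(f\ho g)(x)=\sum_i f(x_i)\otimes g(e(i))$; combining this with the hypothesis $(f\ho g)(x)=x$ and the uniqueness of the expansion of $x$ in the topological basis $\{g(e(i))\}$, we see that the coordinates of $x$ in this basis are precisely the $f(x_i)$. By basis-independence of the left image, $V_x$ is simultaneously the $h$-adic closure of $\Span_\Ch\{x_i\}$ and the $h$-adic closure of $\Span_\Ch\{f(x_i)\}$. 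Since a $\Ch$-module isomorphism is a homeomorphism for the $h$-adic topology, $f$ commutes with topological closure, so
\[
  f(V_x)=f\bigl(\overline{\Span_\Ch\{x_i\}}\bigr)=\overline{\Span_\Ch\{f(x_i)\}}=V_x .
\]

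I do not anticipate a real obstacle: the statement is essentially careful bookkeeping with $0$-convergent families in topologically free $\Ch$-modules, together with the facts (already used or adopted in the paper) that $\Ch$-module homomorphisms are continuous, that the antipode is bijective, and that $\Ch$-module isomorphisms are homeomorphisms. The one point I would be sure to spell out is that applying the coordinate functional $\id_V\ho e^*(k)$ to the convergent sum $\sum_i x_i\otimes(a\cdot e(i))$ yields the convergent sum $\sum_i e^*(k)(a\cdot e(i))\,x_i$ and that this limit lies in the closed submodule $V_x$.
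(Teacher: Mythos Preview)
Your proposal is correct and follows essentially the same approach as the paper's proof. The only cosmetic difference is that for part (a) the paper acts by $a$ on the first tensorand and $S^{-1}(a)$ on the second (so the conclusion $a\cdot x_k\in V_x$ comes out directly), whereas you act by $S(a)$ on the first and $a$ on the second and then invoke bijectivity of the antipode; for part (b) your argument is virtually identical to the paper's, with the extra clarifying remark that $f$ is a homeomorphism.
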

\begin{proof} Let $ \{ e(i) \mid i \in I\}$ be a topological basis of $W$, and $x_i$ be as in \eqref{eq.sd8}.

(a)
By Proposition \ref{r.inva}(a), the $\scH$-invariance of $x$ implies that for every $a \in \scH$,
\be
\label{eq.sd8a}
\sum_{i\in I} a \cdot x_i \ot e(i)  = \sum_{j\in I}  x_j \ot S^{-1}(a) \cdot e(j).
\ee
Using the topological basis $\{ e(i)\}$, we have the structure constants
$$ S^{-1}(a) \cdot e(j) = \sum_{i\in I}  a_j^i \, e(i),$$
where $a_j^i \in \Ch$. Using this expression in \eqref{eq.sd8a},
$$ \sum_{i\in I} a \cdot x_i \ot e(i) = \sum_{i\in I} \sum_{j\in I} a_j^i x_j \ot e(i).$$
The uniqueness of expression of the form \eqref{eq.sd8} shows that
$$ a \cdot x_i = \sum_{j\in I} a_j^i x_j \in V_x.$$
Since the $\Ch$-span of $x_i$ is dense in $V_x$ and the action of $a$ is continuous in the $h$-adic topology of $V$, we have $a \cdot V_x \subset V_x$.

(b) Using $x= (f\ho g)(x)$, we have
$$ x= \sum_i f(x_i) \ot g(e_i).$$
Since $g$ is a $\Ch$-module isomorphism, $\{ g(e_i)\}$ is a
topological basis of $W$. It follows that $V_x$ is the closure of the $\Ch$-span of $\{f(x_i) \mid i\in I\}$. At the same time $V_x$ is the closure of the $\Ch$-span of $\{x_i \mid i\in I\}$.
Hence, we have $f(V_x)= V_x$.
\end{proof}

\subsection{Adjoint action and ad-invariance}\label{sec:adjaction}
 Suppose $\mathscr H$ is a  topological ribbon Hopf algebra.
The  (left) adjoint action
\begin{gather*}
  \ad\col \mathscr H \ho \mathscr H \rightarrow \mathscr H
\end{gather*}
of $\mathscr H $ on itself is defined by
\begin{gather*}
  \ad(x\otimes y) = \sum x_{(1)} y S(x_{(2)}).
\end{gather*}
It is convenient to use an infix notation for $\ad$:
\begin{gather*}
  x\trr y = \ad(x\otimes y).
\end{gather*}
We regard $\mathscr H $ as a (topologically free) $\mathscr H $-module  via
the  adjoint action, unless otherwise stated. Then $\mathscr H ^{\ho
  n}$ becomes a topologically free $\scH$-module, for every $n \ge 0$. The action of $x\in \scH$ on $y\in \scH^{\ho n}$ is denoted by $x\trr_n y$.

To emphasize the adjoint action, we say that a $\Ch$-submodule $V \subset \scH^{\ho n}$ is {\em ad-stable} if $V$ is an $\scH$-submodule of $\scH^{\ho n}$.
An element $x\in \scH^{\ho n}$ is {\em ad-invariant} if it is an
invariant element of $\scH^{\ho n}$ under the adjoint actions. For
example, an element of $\scH$ is ad-invariant if and only if it is central.

For ad-stable submodules $V  \subset \scH^{\ho n}$ and $W \subset \scH^{\ho m}$, a $\Ch$-module homomorphism $f : V \to W$ is {\em ad-invariant} if $f$ is an $\scH$-module homomorphism.

In particular, a linear functional $f: V\to \Ch$, where $V \subset \scH^{\ho n}$, is ad-invariant if $V$ is ad-stable and for $x\in\sH$, $y\in V$,
\begin{gather*}
  f(x\trr_n  y) = \boldsymbol\epsilon(x)f(y).
\end{gather*}

The main source of ad-invariant linear functionals comes from quantum traces.
Here the quantum trace
$\trq^V\col \mathscr H \rightarrow \modk $ for a finite-dimensional representation $V$
(i.e. a topologically free $\mathscr H $-module  of finite
topological rank)
is
defined by
\begin{gather*}
  \trq^V(x) =\tr^V(\modg x)\quad \text{for $x\in \mathscr H $},
\end{gather*}
where $\tr^V$ denotes the trace in $V$. It is known that $\trq^V\colon \sH \to \BC[[h]]$ is ad-invariant.

\subsection{Bottom tangles}
\label{sec:bottom-tangles}
Here we recall the definition of bottom tangles from
\cite[Section 7.3]{H:bottom}.

An $n$-component {\em bottom tangle} $T=T_1\cup \cdots \cup T_n$ is a framed
tangle in a cube consisting of $n$ arc components $T_1,\ldots ,T_n$ such that
all the endpoints of the $T_i$ are in a bottom line and that for each
$i$, the component $T_i$ runs from the $2i$th endpoint to the $(2i-1)$th endpoint,
where the endpoints are counted from the left.  See Figure \ref{fig:bt} (a)
\FIG{bt}{(a) A $3$-component bottom tangle $T=T_1\cup T_2\cup T_3$.  (b) Its
closure $\cl(T)=L_1\cup L_2\cup L_3$.}{height=30mm} for an
example.  In figures, framings are specified by the blackboard framing
convention.

The {\em closure} $\cl(T)$ of $T$ is the $n$-component, oriented,
ordered framed link in $S^3$, obtained from $T$ by pasting a
``$\cup $-shaped tangle'' to each component of $L$, as depicted in Figure
\ref{fig:bt} (b).  For any oriented, ordered framed link $L$, there is
a bottom tangle whose closure is isotopic to $L$.

The {\em linking matrix} of a bottom tangle $T=T_1\cup \cdots\cup T_n$ is
defined as that of the closure $T$.  Thus the linking number of $T_i$
and $T_j$, $i\neq j$, is defined as the linking number of the
corresponding components in $\cl(T)$, and the framing of $T_i$ is
defined as the framing of the closure of $T_i$.

A link or a bottom tangle is called {\em algebraically-split} if the
linking matrix is diagonal.

\subsection{Universal invariant and quantum link invariants}
\label{sec:uni11}
Reshetikhin and Turaev \cite{RT1} constructed  quantum invariants of
framed links colored by finite dimensional representations of a ribbon
Hopf algebra, e.g. the quantum group $\Uh(\mathfrak g)$.  Lawrence,
Reshetikhin, Ohtsuki and Kauffman
\cite{Lawrence:90,Reshetikhin,Ohtsuki:colored,Kauffman} constructed
``universal quantum invariants'' of links and tangles with values
in (quotients of) tensor powers of the ribbon Hopf algebra, where the
links and tangles are not colored by representations.  We recall here
construction of link invariants via the universal invariant of bottom
tangles. We refer the readers to \cite{H:bottom} for details.

Fix a ribbon Hopf algebra $\sH$.
Let $T$ be a bottom tangle with $n$ components. We choose a
  diagram for $T$, which is obtained from copies of {\em fundamental
  tangles}, see Figure \ref{fig:fundamental}, by pasting horizontally
  and vertically.
\FIG{fundamental}{Fundamental tangles: vertical line, positive and
  negative crossings, local minimum and local maximum.  Here the
  orientations are arbitrary.}{height=12mm}
For each copy of fundamental tangle in the diagram of $T$, we put
elements of $\sH$ with the rule described in Figure
\ref{fundamental2}.
\begin{figure}
    \begin{center}\input{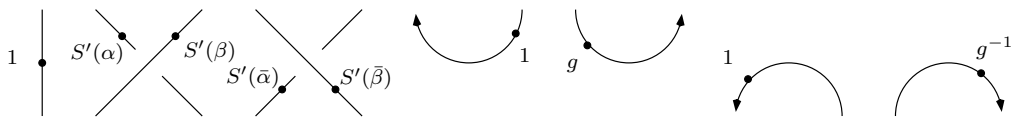}\end{center}
    \caption{How to put elements of $\sH$
on the strings.  Here $R= \sum \al \ot \beta$ and $R^{-1} = \sum \bar \al \ot \bar \beta$. For each string in the positive and the negative
crossings, ``$S'$'' should be replaced by $\id$ if the string is
oriented downward, and by $S$ otherwise.}
    \label{fundamental2}
  \end{figure}

We set
\begin{equation*}
   J_T := \sum x_1\otimes \cdots \otimes x_n\in \mathscr H ^{\ho n},
\end{equation*}
where each $x_i$ is the product of the elements put on the $i$-th
component $T_i$, with product taken in the order reversing the order of the orientation.  The (generally infinite) sum comes from the decompositions of  $\cR^{\pm 1}$ as
(infinite) sums of tensor products.  It is known that $J_T$
gives an isotopy invariant of
bottom tangles, called the universal invariant of $T$.
Moreover, $J_T$ is ad-invariant
 (\cite{Kerler}, see also \cite{H:bottom}).

Let
$\chi _1,\ldots ,\chi _n \col \mathscr H \rightarrow \modk $ be ad-invariant. In other words, $\chi _1,\ldots ,\chi _n $ are $\mathscr H $-module homomorphisms.
As explained
in \cite{H:bottom}, the quantity
\begin{gather*}
  (\chi _1\ho \cdots \ho \chi _n)(J_T)\in \modk
\end{gather*}
is a {\em link invariant} of the closure link $\cl(T)$ of $T$.

In particular, if $\chi _1,\ldots ,\chi _n$ are the quantum traces
$\trq^{V_1},\ldots ,\trq^{V_n}$ in finite-dimensional representations
$V_1,\ldots ,V_n$, respectively, then
\begin{gather*}
  (\trq^{V_1}\ho \cdots \ho \trq^{V_n})(J_T) \in \modk
\end{gather*}
is the quantum link invariant for $\cl(T)$ colored by the
representations $V_1,\ldots ,V_n$.

\subsection{Mirror image of bottom tangles} \label{sec.mirror}
\begin{definition}\label{d.mirror}
  A {\em mirror homomorphism} of a topological ribbon Hopf algebra $\scH$ is
an $h$-adically continuous $\BC$-algebra homomorphism $\tphi: \scH \to \scH$ satisfying
\begin{align}
(\tphi \ho \tphi) \cR  &= \cR_{21}^{-1} \label{eq.mi1} \\
(\tphi \ho \tphi)^2 \cR &= \cR  \label{eq.mi2}\\
\tphi (\mathbf g)&= \mathbf g.  \label{eq.mi3}
\end{align}

\end{definition}
In general, such a $\tphi$ is not a $\Ch$-algebra homomorphism. In fact, what we will have in the future is $\tphi(h)=-h$.

For a bottom tangle $T$ with diagram $D$ let the {\em mirror image of\ } $T$ be the bottom tangle whose diagram is obtained from $D$ by switch over/under crossing at every crossing.

\begin{proposition}\label{r.mirror1}
Suppose $\tphi$ is a mirror homomorphism of a ribbon Hopf algebra $\scH$. If $T'$ is the mirror image of an $n$-component bottom tangle $T$, then
$$ J_{T'} = \tphi^{\ho n} (J_T).$$
\end{proposition}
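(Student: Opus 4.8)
The plan is to verify the identity by induction on the structure of a tangle diagram $D$ for $T$, built from fundamental tangles by horizontal and vertical pasting, and to track how the label assignment of Figure~\ref{fundamental2} transforms under switching all crossings. The mirror image $T'$ has a diagram $D'$ obtained from $D$ by replacing every positive crossing by a negative one and vice versa, while leaving the vertical lines, local maxima and local minima unchanged. Thus the only place where $D$ and $D'$ differ is at the crossing boxes, and the labels put on strands at a crossing in $D$ come from $\cR^{\pm1}=\sum\alpha\otimes\beta$ or $\sum\bar\alpha\otimes\bar\beta$, possibly with an $S$ or $S^{-1}$ inserted depending on strand orientation. The key computational lemma will be that, at each crossing, applying $\tphi$ to the labels of $D$ produces exactly the labels prescribed by $D'$ at the corresponding switched crossing, up to reordering the sum. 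This is precisely what conditions \eqref{eq.mi1} and \eqref{eq.mi2} are designed to guarantee: \eqref{eq.mi1} says $(\tphi\ho\tphi)\cR=\cR_{21}^{-1}$, which is the label data of a negative crossing read off from a positive one with strands swapped, and \eqref{eq.mi2} controls the ambiguity coming from $\cR$ versus $\cR_{21}$ (equivalently from applying $\tphi$ twice); together with \eqref{eq.SR}, i.e. $(S\ot S)(\cR)=\cR$, and the fact that $\tphi$ is a $\BC$-algebra homomorphism commuting appropriately with $S$ and $S^{-1}$ in the relevant sense, one obtains that $\tphi$ transports the Figure~\ref{fundamental2} labels of a crossing to those of the opposite crossing regardless of the four possible strand orientations.

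Concretely, I would proceed as follows. First, recall that $J_T=\sum x_1\otimes\cdots\otimes x_n$, where $x_i$ is the ordered product of the elements deposited along the $i$th component. Since $\tphi$ is an $h$-adically continuous $\BC$-algebra homomorphism, $\tphi^{\ho n}(J_T)=\sum \tphi(x_1)\otimes\cdots\otimes\tphi(x_n)$, and $\tphi(x_i)$ is the ordered product of the $\tphi$-images of the individual labels along $T_i$. So it suffices to check, label by label, that $\tphi$ sends the contribution of each fundamental tangle in $D$ to the contribution of the corresponding fundamental tangle in $D'$. For vertical lines the label is $1$ (or a placeholder), for local maxima and minima the labels involve $\mathbf g^{\pm1}$ or $1$, and \eqref{eq.mi3}, $\tphi(\mathbf g)=\mathbf g$, handles these immediately since $\tphi$ is an algebra map hence preserves $\mathbf g^{-1}$ as well. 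The only substantive case is the crossings, handled by the lemma above: a positive crossing in $D$ becomes a negative crossing in $D'$, and one checks using \eqref{eq.mi1} that $(\tphi\ho\tphi)$ applied to the pair $(\alpha,\beta)$ (with the appropriate $S$ or $\id$ inserted on each strand according to orientation, as in Figure~\ref{fundamental2}) yields the data $(\bar\alpha,\bar\beta)$ of the negative crossing with the two strands in the swapped roles — and the swap is exactly accounted for by the geometric effect of switching the crossing. A symmetric argument treats a negative crossing in $D$ becoming a positive crossing in $D'$, using $\cR^{-1}=(S\ot\id)(\cR)$ and \eqref{eq.mi2}.

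The main obstacle will be the bookkeeping of the $S$ and $S^{-1}$ insertions dictated by strand orientations together with the swap of tensor factors at each crossing: one must verify that, in all $2^2=4$ orientation patterns at a crossing, conjugating the universal $R$-matrix data by $\tphi^{\ho 2}$ and swapping factors is consistent with inserting antipodes the way Figure~\ref{fundamental2} prescribes for the opposite crossing. This is where \eqref{eq.mi2} does the real work: it reconciles the two ways the antipode can enter (via $\cR^{-1}=(S\ot\id)(\cR)=(\id\ot S^{-1})(\cR)$, using \eqref{eq.SR}) once $\tphi$ has been applied, and it is the condition that fails for a naive guess. I would also remark that the ordering of the (generally infinite) sums is harmless because $\tphi$ is $h$-adically continuous and each sum is $0$-convergent, so the rearrangements required to match $D'$ converge in the $h$-adic topology of $\sH^{\ho n}$. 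Once the crossing case is established, the induction on the number of fundamental tangles in $D$ closes, giving $J_{T'}=\tphi^{\ho n}(J_T)$, and well-definedness (independence of $D$) is inherited from that of the universal invariant itself.
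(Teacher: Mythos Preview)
Your overall strategy---checking, fundamental tangle by fundamental tangle, that applying $\tphi$ to each label in a diagram $D$ of $T$ yields the corresponding label in the mirror diagram $D'$---is exactly the paper's approach. The difference is that the paper inserts one preliminary normalization step that you miss, and this step eliminates precisely the ``main obstacle'' you flag.

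The paper first observes that by rotating crossings (an isotopy that trades antipode insertions at a crossing for extra cups and caps nearby) one may assume that \emph{both} strands at every crossing are oriented downward. With this normalization, no $S$ or $S^{-1}$ ever appears in the crossing labels: a positive crossing carries $(\alpha,\beta)$ and a negative crossing carries $(\bar\beta,\bar\alpha)$ at the same spots. Conditions \eqref{eq.mi1} and \eqref{eq.mi2} then translate directly into
\[
\sum \tphi(\alpha)\otimes\tphi(\beta)=\sum \bar\beta\otimes\bar\alpha,
\qquad
\sum \tphi(\bar\beta)\otimes\tphi(\bar\alpha)=\sum \alpha\otimes\beta,
\]
so $\tphi$ sends each crossing's labels to those of the switched crossing. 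Condition \eqref{eq.mi3} handles the (now possibly more numerous) cups and caps. Since $\tphi$ is a $\BC$-algebra homomorphism, the products along each component transform as claimed.

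By contrast, your plan to treat all four orientation patterns at a crossing forces you to deal with labels like $S(\alpha)$, and you then appeal to ``$\tphi$ commuting appropriately with $S$ and $S^{-1}$ in the relevant sense.'' This is not part of Definition~\ref{d.mirror}: a mirror homomorphism is only required to satisfy \eqref{eq.mi1}--\eqref{eq.mi3}, and no compatibility with the antipode is assumed. (In the later concrete case $\scH=\Uh$, the map $\tphi$ happens to commute with $S$ by Proposition~\ref{r.phi5}, but that is irrelevant here.) So as written your argument has a gap at exactly the point you identify as delicate; the paper's rotation trick is what closes it.
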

\begin{proof} Let $D$ be a diagram of $T$.
By rotations if necessary at  crossings, we can assume that the two strands at each crossing of $D$ are oriented downwards. Then at each  crossing, we assign $\al$ and $\beta$ to the strands if the  it is positive, and we assign $\bar\beta$ and $\bar \al$ to the strands if it
is negative,  at the same spots where we would assign $\al$ and $\beta$ if the  crossing were positive, see Figure \ref{fig:mirrorproof}.
\FIG{mirrorproof}{Assignments on positive and negative crossings.}{height=12mm}
Here $\cR= \sum \al \ot \beta$ and $\cR^{-1} = \sum   \bar \al \ot \bar \beta$.
Conditions \eqref{eq.mi1}, \eqref{eq.mi2} implies that
$$ \sum \bar \beta \ot \bar \al = \sum \tphi( \al) \ot \tphi(\beta), \quad \sum  \al \ot \beta = \sum \tphi( \bar \beta) \ot \tphi(\bar \al).$$
Together with \eqref{eq.mi3} this shows that the assignments to strands of diagram $D'$ of $T'$ can be obtained by applying $\tphi$ to the corresponding assignments to strands of $D$. Since $\tphi$ is a $\BC$-algebra homomorphism, we get $ J_{T'} = \tphi^{\ho n} (J_T).$
\end{proof}

\subsection{Braiding and transmutation}
\label{transmutation}
Let $\cR=\sum \al\otimes \beta$ be the $R$-matrix. The {\em braiding} for $\mathscr H $ and its inverse
\begin{gather*}
  {\boldsymbol{\psi}} ^{\pm 1}\col \mathscr H \ho \mathscr H \rightarrow \mathscr H \ho \mathscr H
\end{gather*}
are given by
\begin{gather}
  \label{e12}
  {\boldsymbol{\psi}} (x\otimes y) = \sum (\beta \trr y) \otimes (\alpha \trr x), \quad
  {\boldsymbol{\psi}}^{-1} (x\otimes y) = \sum (S(\alpha )\trr y)\otimes (\beta \trr x).
\end{gather}

The maps ${\boldsymbol \mu} ,{\boldsymbol \eta} ,{\boldsymbol \epsilon} $ are $\mathscr H $-module homomorphisms.  In particular,
we have
\begin{gather}
  \label{e7}
  x\trr yz= \sum (x_{(1)}\trr y)(x_{(2)}\trr z)\quad \text{for $x,y,z\in \mathscr H $}.
\end{gather}
 In general, $\Delta $ and $S$ are not  $\mathscr H $-module homomorphisms, but
the following twisted versions of $\Delta $ and $S$ introduced
by Majid (see \cite{Majid1,Majid2})
\begin{gather*}
  \bD\col \mathscr H \rightarrow \mathscr H \ho \mathscr H ,\quad \bS\col \mathscr H \rightarrow \mathscr H
\end{gather*}
defined by
\begin{gather}
  \label{e10}
  \bD(x)
  = \sum x_{(1)}S(\beta )\otimes  (\alpha \trr x_{(2)})
  = \sum (\beta \trr x_{(2)})\otimes  \alpha  x_{(1)},\\
  \label{e11}
  \bS(x)
  = \sum \beta S(\alpha \trr x)
  = \sum S^{-1}(\beta \trr x)S(\alpha ),
\end{gather}
for $x\in \mathscr H $, are $\mathscr H $-module homomorphisms.  Geometric interpretations of
$\bD$ and $\bS$ are given in \cite{H:bottom}.

\begin{remark}
$\bH:=(\mathscr H ,{\boldsymbol \mu} ,{\boldsymbol \eta} ,\bD,{\boldsymbol \epsilon} ,\bS)$ forms a braided
Hopf algebra in the braided category of topologically free $\sH$-modules, called the {\em
  transmutation} of $\mathscr H $ \cite{Majid1,Majid2}.
\end{remark}

\subsection{Clasp bottom tangle}
\label{sec:clasp-bottom-tangle}

Let $C^+$ be the {\em clasp tangle} depicted in Figure
\ref{fig:c+}.
\FIG{c+}{The clasp tangle $C^+$.}{height=16mm}
We call $\modc =J_{C^+}\in \mathscr H ^{\ho2}$ the {\em clasp element} for $\mathscr H $.  With $\cR=\sum \al \otimes \beta= \sum \al' \otimes \beta'$, we have
\begin{gather}
  \label{e43}
  \modc =(S\ho \id)(\cR_{21}\cR)=\sum S(\alpha )S(\beta ')\otimes \alpha '\beta .
\end{gather}

Let $C^-$ be the mirror image of $C^+$, see Figure \ref{fig:c-}, and $\modc ^-=J_{C^-}\in \mathscr H ^{\ho2}$.

Let $(C^+)'$ be the tangle obtained by reversing the orientation of the second component of $C^+$, and $(C^+)''$ be the result of putting $(C^+)'$ on top of
the tangle $\ \raisebox{-2mm}{\incl{6mm}{bS}}$ , see Figure
\ref{fig:c-}.
\FIG{c-}{ The negative clasp $C^-$ (on the left) and $(C^+)''$ are
 isotopic.}{height=20mm}
By the geometric interpretation of $\bS$, see \cite[Formula 8--10]{H:bottom}, we have
$$ J_{(C^+)''} = (\id \ho \bS) J_{C^+}.$$

Since $(C^+)''$ is isotopic to $C^-$, we have
\begin{gather}
  \label{e77}
  \modc ^-=
  (\id\ho \bS)(\modc ).
\end{gather}

\subsection{Hoste moves} \label{sec.hoste}
It is known that every integral homology $3$-sphere can be obtained by surgery on $S^3$ along
an algebraically split link with $\pm 1$ framings.

The following refinement of the Kirby--Fenn--Rourke theorem on framed
links was first essentially conjectured by Hoste \cite{Hoste}.  (Hoste
stated it in a more general form related to Rolfsen's calculus for
rationally framed links.)

\begin{theorem}[\cite{H:kirby}]
  \label{r2}
  Let $L$ and $L'$ be two (non-oriented, non-ordered)
  algebraically-split $\pm 1$-framed links in $S^3$.  Then $L$ and
  $L'$ give orientation-preserving homeomorphic results of surgery if
  and only if $L$ and $L'$ are related by a sequence of ambient
  isotopy and {\em Hoste moves}.  Here a Hoste move is a Fenn--Rourke (FR)
  move between two algebraically-split, $\pm 1$-framed links, see
  Figure \ref{fig:FR}.
\FIG{FR}{A Hoste move (including the case when there are no vertical
  strands).  Here both these two framed links are algebraically-split
  and $\pm1$-framed.}{height=30mm}
\end{theorem}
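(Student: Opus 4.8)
Only one direction is nontrivial: if $L$ and $L'$ are related by Hoste moves then $S^3_L\cong S^3_{L'}$ orientation-preservingly, since each FR move preserves the surgered manifold. So the plan is to prove the converse, taking the classical Fenn--Rourke theorem as input. That theorem gives a chain $L=L^{(0)}\to L^{(1)}\to\cdots\to L^{(m)}=L'$ in which consecutive links differ by an ambient isotopy or a single FR move, but the intermediate links $L^{(i)}$ are arbitrary framed links and the FR moves are performed along arbitrarily-punctured $\pm1$-framed unknots. The task then becomes local: show that each FR move in such a chain can be simulated by a finite sequence of Hoste moves, possibly after first adjoining split $\pm1$-framed unknots on both sides — which is itself a Hoste move (the empty FR move, i.e. a blow-up) and changes neither the surgered manifold nor the algebraically-split $\pm1$-framed property.

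First I would reduce a general FR move — blowing down a $\pm1$-framed unknot $U$ whose spanning disk $D$ is punctured by $k$ strands — to FR moves along less-punctured disks, splitting strands off one at a time by handle slides, all the while tracking the effect on framings and on pairwise linking numbers. Blowing down an $\epsilon$-framed unknot $U$ with $\mathrm{lk}(U,K_j)=a_j$ shifts the framing of $K_j$ by $-\epsilon a_j^2$ and the linking number $\mathrm{lk}(K_j,K_l)$ by $-\epsilon a_j a_l$; the heart of the argument is to cancel both side effects while staying in the target class, by placing next to $U$ an auxiliary family of $\pm1$-framed split unknots and correcting framings and linking numbers via handle slides over them. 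Since the manifolds at both ends of the chain are integral homology spheres, the linking form is trivial and $H_1$ vanishes, so there is no lattice obstruction to arranging such corrections; what remains is to realize them geometrically and to verify that the corrected link is again algebraically split with all framings in $\{\pm1\}$.

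The hard part, I expect, is exactly this simultaneous bookkeeping of framings and linking numbers across a general FR move: one blow-down entangles strands of many components at once, so the damage to the linking matrix is correlated and cannot be repaired independently component-by-component. One needs a global argument — essentially a double induction on the number of punctures and on the complexity of the intersection pattern of the strands with $D$ — showing that the accumulated change in the linking matrix always lies in the span of the corrections realizable by admissible handle slides over auxiliary $\pm1$-framed unknots. It is also essential to treat both signs symmetrically (a positive blow-up paired with a negative blow-down realizes a handle slide with no change to the surgered manifold) and to reorganize the ``isotopy'' steps of the Fenn--Rourke chain, which may pass through non-algebraically-split configurations, into moves within the class. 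An alternative I would keep in reserve is to avoid the Fenn--Rourke chain entirely and instead present $L$ and $L'$ as closures of admissible bottom tangles, rephrase ``same integral homology sphere'' as a relation in a suitable monoid of such tangles, and verify that the generating relations of that monoid are realized by Hoste moves — trading the geometric bookkeeping for algebraic manipulation of claspers.
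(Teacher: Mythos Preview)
The paper does not prove this theorem; it is quoted verbatim from \cite{H:kirby} and used as a black box (the surrounding text says ``Theorem \ref{r2} implies that, to construct an invariant of integral homology spheres, it suffices\ldots''). So there is no proof in the present paper to compare your proposal against.

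For what it is worth, your overall architecture --- take the Fenn--Rourke theorem as input, then show that each step of an FR chain connecting two algebraically-split $\pm1$-framed links can be replaced by a sequence of Hoste moves, using auxiliary split $\pm1$-unknots to absorb the damage to framings and linking numbers --- is indeed the shape of the argument in \cite{H:kirby}. But your write-up is, as you yourself flag, a plan rather than a proof: the substance lies entirely in the ``hard part'' paragraph, and there you only name the difficulty (correlated changes to the linking matrix under a single blow-down) without resolving it. In \cite{H:kirby} the resolution is not a bare induction on puncture count; one first proves that within the class of \emph{admissible} links any handle slide can be realised by Hoste moves, and then uses this to normalise an arbitrary FR chain. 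The clasper alternative you mention at the end is not how that paper proceeds. If you want to supply an actual proof here you will need to either reproduce that argument or cite \cite{H:kirby}, as the present paper does.
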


 Theorem \ref{r2} implies that, to construct an invariant of integral
homology spheres, it suffices to construct an invariant of
algebraically-split, $\pm 1$-framed links which is invariant under the
Hoste moves.

\begin{lemma} \label{r2a}
Suppose $f$ is an invariant of oriented, unordered, algebraically-split $\pm 1$-framed links which is invariant under Hoste moves. Then $f(L)$ does not depend on the orientation of the link $L$. Consequently,
 $f$ descends to an invariant of integral homology 3-spheres, i.e. if the results of surgery along two oriented, unordered, algebraically-split $\pm 1$-framed links $L$ and $L'$ are homeomorphic integral homology 3-spheres, then $f(L)= f(L')$.
\end{lemma}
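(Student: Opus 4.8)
The plan is to reduce the entire statement to the orientation-independence of $f$, and then to obtain the descent to integral homology spheres formally from Theorem \ref{r2}. Since $f$ is already assumed to be an invariant of \emph{unordered} links, reordering costs nothing. Granting that $f(L)$ is unchanged under reversing the orientation of any component, $f$ may be regarded as an invariant of non-oriented, unordered, algebraically-split $\pm1$-framed links, and it is still invariant under Hoste moves: given a non-oriented Hoste move $\bar L\to\bar L'$, pick any orientation of $\bar L$; it propagates through the Fenn--Rourke move (orienting any newly created unknot arbitrarily) to an orientation of $\bar L'$, giving an oriented Hoste move, so the common value is unchanged. By Theorem \ref{r2}, if $S^3_L$ and $S^3_{L'}$ are orientation-preserving homeomorphic, then $L$ and $L'$ are related by a sequence of ambient isotopy and Hoste moves, whence $f(L)=f(L')$; and since every integral homology $3$-sphere is $S^3_L$ for some algebraically-split $\pm1$-framed $L$, this realizes $f$ as a function of the homeomorphism type of $S^3_L$, i.e.\ an invariant of integral homology spheres. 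This is exactly the ``Consequently'' clause.

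It therefore remains to prove that $f(L)$ is independent of the orientation. As any change of orientations is a composition of single-component reversals, it suffices to show $f(L)=f(L^{(i)})$, where $L^{(i)}$ is obtained from $L$ by reversing the orientation of the $i$-th component $K_i$. One cannot in general realize this by an ambient isotopy of $S^3$ — a non-invertible component $K_i$ obstructs it — so one is forced to use Hoste moves, which do change the isotopy type of the link. The plan is to exhibit a finite sequence of Hoste moves carrying $L$ to $L^{(i)}$: introduce auxiliary split $\pm1$-framed unknots, perform Fenn--Rourke twists along them, and finally delete the auxiliary components, keeping the link algebraically split and $\pm1$-framed at every stage. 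The cleanest way to organize this is to push the problem into the normal-form reduction behind Theorem \ref{r2}: the argument of \cite{H:kirby} reduces every algebraically-split $\pm1$-framed link, by Hoste moves, to a normal form depending only on $S^3_L$, and one expects this reduction to run so as also to place the orientation of $K_i$ into a standard position; applying it to $L$ and to $L^{(i)}$, which have the same surgery manifold, then gives $f(L)=f(L^{(i)})$. Equivalently, one may appeal to the refinement of Theorem \ref{r2} keeping track of orientations, which should follow from the methods of \cite{H:kirby}.

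The main obstacle is precisely this last step, the single-component orientation reversal. It is neither a local modification nor an ambient isotopy, so realizing it by Hoste moves necessarily changes the link type, and the delicate point is that algebraic-splitness and the $\pm1$-framings must be preserved at \emph{every} intermediate stage — the same rigidity that makes Theorem \ref{r2} itself nontrivial. Everything else — the reduction of an arbitrary orientation change to single-component reversals, and the passage from Hoste-invariance to an invariant of integral homology spheres via Theorem \ref{r2} — is routine.
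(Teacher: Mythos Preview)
Your reduction is fine: once single-component orientation reversal is known, the descent via Theorem~\ref{r2} is immediate, exactly as you say. The gap is that you never actually prove the reversal. Saying it ``should follow from the methods of \cite{H:kirby}'' or from a hypothetical orientation-tracking refinement of Theorem~\ref{r2} is not an argument; you have identified the obstacle and then deferred it.

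The paper's proof of this step is short and direct, and does not appeal to \cite{H:kirby} beyond the statement of Theorem~\ref{r2}. Write $L=L_1\cup K$ and induct on the unknotting number of $K$. If $K$ is an unknot, apply the Hoste move that deletes $K$ (twisting the strands through it), then apply the inverse Hoste move to reintroduce the same $\pm1$-framed unknot but with the opposite orientation; this exhibits a two-step Hoste sequence from $L_1\cup K$ to $L_1\cup(-K)$, so $f$ agrees on them. For general $K$, a self-crossing change of $K$ can be realized by Hoste moves: introduce a small split $\pm1$-framed unknot, isotope it to encircle the two (antiparallel) strands at the crossing --- linking number with $K$ is then $0$, so the link stays algebraically split --- and perform the Hoste move removing it, which twists the strands and changes the crossing while preserving the framing of $K$. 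This lowers the unknotting number, and induction finishes the argument. You were looking for exactly such a sequence of Hoste moves; this is it.
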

\begin{proof}
Suppose $K$ is a component of $L$ so that $L=  L_1 \cup K$. We will show that $f$ does not depend on the orientation of $K$ by induction on the unknotting number of $K$.

First assume that $K$ is an unknot. We first apply the Hoste move to $K$, then apply the
Hoste move in the reverse way, obtaining  $L_1\cup (-K)$, where $-K$ is the orientation-reversal of $K$.
This shows $f(L_1 \cup K) = f (L_1 \cup (-K))$.

Suppose $K$ is an arbitrary knot with positive unknotting number.
We can use a Hoste move to realize a self-crossing change of $K$, reducing the unknotting number. Induction on the unknotting number shows that $f$ does not depend on the orientation of $K$.
\end{proof}

\subsection{Definition of the invariant $J_M$ for the case when the ground ring is a field} \label{sec.simple}
In this subsection, we explain a construction of an invariant of
integral homology spheres associated to a ribbon Hopf algebra over a
field $\bfk$, equipped with ``full twist forms''. In this section (Section \ref{sec.simple}), and only in this section, we will assume that $\sH$ is
 a ribbon Hopf algebra over a field
$\bfk$.  This assumption simplifies the definition of the invariant.

\subsubsection{Full twist forms}
\label{sec:twist-forms}
Recall that $\modc =J_{C^+}\in \sH \ot \sH$ is the universal invariant of the clasp bottom tangle, and $\br$ is the ribbon element.

A pair
of ad-invariant linear functionals $\cT_+,\cT_-\col \mathscr H
\rightarrow \bfk$ are called {\em full twist forms} for $\mathscr H $
if
\begin{gather}
  \label{e15}
  (\cT_\pm \ot \id)  (\modc) = \modr ^{\pm 1}.
\end{gather}

 The following lemma essentially shows how the universal link invariant behaves under the Hoste move, if there are full twist forms.
\begin{lemma}  \label{r47}
 Suppose that a ribbon Hopf algebra $\mathscr H $ admits full twist forms $(\cT_+, \cT_-)$.
  Let $T=T_1\cup \cdots \cup T_n$ be an $n$-component bottom tangle ($n\ge 1$)
  such that the first component
  $T_1$ of $T$ is a $1$-component trivial bottom tangle (see Figure
  \ref{trivial}).
\begin{figure}
    \begin{center}\input{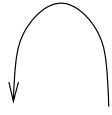}\end{center}
    \caption{The trivial bottom tangle.}
    \label{trivial}
  \end{figure}
  Let $T'=T'_2\cup \cdots \cup T'_n$ be the
  $(n-1)$-component bottom tangle obtained from $T\setminus T_1=T_2\cup \cdots \cup T_n$ by
  surgery along the closure of $T_1$ with framing $\pm 1$ (see Figure
  \ref{twist1}).
\begin{figure}
    \begin{center}\input{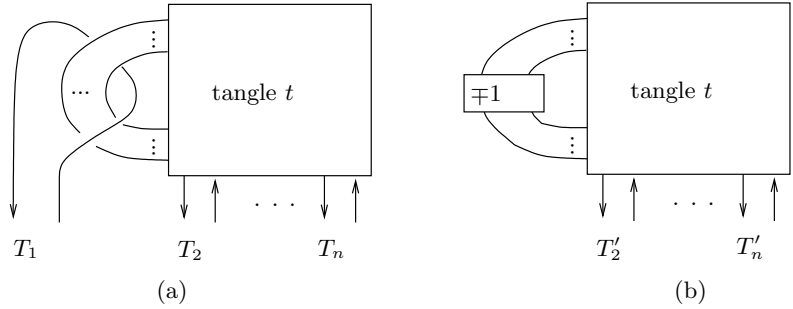}\end{center}
    \caption{(a) The tangle $T$.  (b) The tangle $T'$.}
    \label{twist1}
  \end{figure}
  Then we have
  \begin{gather}
    \label{e17}
    J_{T'} = (\cT_\pm \ot \id^{\ot (n-1)})(J_T).
  \end{gather}

\end{lemma}

\begin{proof}
  In this proof we use the universal invariant for tangles that are not bottom tangles.  For details, see \cite[Section 7.3]{H:bottom}.
  
 If $T_{p,q}$ is a $(p+q+1)$-component tangle as depicted in
  Figure~\ref{twist2}(a) with $p,q\ge 0$, then we have
  \begin{gather*}
    J_{T_{p,q}} = (\id\ot \id^{\ot p}\ot S^{\ot
    q})(\id\ot \Delta ^{[p+q]})(\modc ).
  \end{gather*}
\begin{figure}
    \begin{center}\input{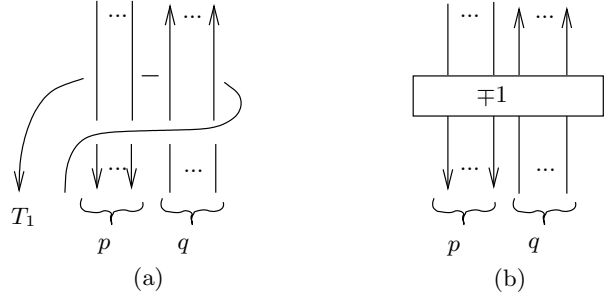}\end{center}
    \caption{(a) The tangle $T_{p,q}$.  (b) The tangle
  $T'_{p,q;\pm1}$.}
    \label{twist2}
  \end{figure}

  The tangle $T'_{p,q;\pm 1}$ obtained from $T_{p,q}\setminus T_1$ by surgery
  along the closure of the first component $T_1$ of $T_{p,q}$ with
  framing $\pm 1$  (see Figure \ref{twist2}(b)) has the universal invariant
  \begin{gather*}
    J_{T'_{p,q;\pm 1}} = (\id^{\ot p}\ot S^{\ot q})\Delta ^{[p+q]}(\modr ^{\pm 1}).
  \end{gather*}
  Since $\cT_\pm $ is a full twist form, it follows that
  \begin{gather*}
    (\cT_\pm \ot \id^{\ot (p+q)})(J_{T_{p,q}})=J_{T'_{p,q}}.
  \end{gather*}

  The general case follows from the above case and functoriality of
  the universal invariant, since $T$ can be obtained from some
  $T_{p,q}$ by tensoring and composing appropriate tangles.
\end{proof}

\subsubsection{Invariant of integral homology 3-spheres} We will show here that
a ribbon Hopf algebra $\mathscr H $ with full twist forms $\cT_\pm$ gives rise to an
invariant of integral homology spheres.

Suppose $T$ is an $n$-component bottom tangle with zero linking matrix
and $\ve _1,\ldots ,\ve _n\in \{1,-1\}$. Let $M=M(T;\ve _1,\ldots ,\ve
_n)$ be the oriented 3-manifold obtained by surgery on $S^3$ along the
framed link $L=L(T;\ve _1,\ldots ,\ve _n)$, which is the closure link
of $T$ with the framing on the $i$-th component switched to
$\ve_i$. Since $L$ is an algebraically split link with $\pm 1$ framing
on each component, $M$ is an integral homology 3-sphere. Every
integral homology 3-sphere can be obtained in this way.

\begin{proposition} \label{r44}
 Suppose $\scH$ is a ribbon Hopf algebra with full twist forms $\cT_\pm$, and $M=M(T;\ve _1,\ldots ,\ve _n)$ is an integral homology 3-sphere. Then
 $$ J_M:=(\cT_{\epsilon _1}\ot \dots \ot \cT_{\epsilon _n})(J_T)\in \bk$$
 is an invariant of $M$. In other words, if $M(T;\ve _1,\ldots ,\ve _n)\cong M(T';\ve'_1,\dots,\ve'_{n'})$, then
 $$ (\cT_{\epsilon _1}\ot \dots \ot \cT_{\epsilon _n})(J_T) = (\cT_{\epsilon '_1}\ot \dots \ot \cT_{\epsilon' _{n'}})(J_{T'}).$$
\end{proposition}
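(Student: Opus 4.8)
The plan is to invoke Theorem~\ref{r2} (the Hoste move theorem) to reduce the statement to showing that $J_M$, as a function of the framed link $L = L(T;\ve_1,\ldots,\ve_n)$, is unchanged under (i) ambient isotopy of $L$, (ii) reordering and reorienting the components of $L$, and (iii) a single Hoste move. Combined with Lemma~\ref{r2a}, this will establish that $J_M$ depends only on the homeomorphism type of the integral homology sphere $M$. Throughout, I use that $J_T \in \sH^{\ho n}$ is an isotopy invariant of the bottom tangle $T$ and is ad-invariant, and that each $\cT_\pm$ is an ad-invariant linear functional, so that $\cT_{\ve_1}\ho\cdots\ho\cT_{\ve_n}$ descends to a well-defined map on the relevant invariant submodule by Proposition~\ref{r.inva}(d).

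First I would check independence of the choice of bottom tangle $T$ presenting a given framed link $L$ (with framings reset to $0$): any two such $T, T'$ differ by isotopy together with the move that slides the ``$\cup$-shaped'' closure arcs around, and functoriality of the universal invariant shows $J_T = J_{T'}$, hence the specializations agree. Next, invariance under reordering components follows because permuting the tensor factors of $\sH^{\ho n}$ intertwines the reordering of the $\cT_{\ve_i}$, and invariance under orientation reversal of a component uses the standard fact that reversing a strand replaces the corresponding tensor factor by one obtained via the antipode-type symmetry, under which the ad-invariant functional $\cT_{\pm}$ is unchanged (this is the algebraic shadow of the geometric argument already given in Lemma~\ref{r2a}). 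These steps are essentially bookkeeping.

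The heart of the proof is invariance under a single Hoste move, and for this I would use Lemma~\ref{r47}. A Hoste move is a Fenn--Rourke move performed on an unknotted $\pm1$-framed component (with some strands passing through it), between two algebraically-split $\pm1$-framed links. Set things up so that the component being surgered is the first component $T_1$ of a bottom tangle $T$, arranged to be a trivial $1$-component bottom tangle as in Figure~\ref{trivial}; then by Lemma~\ref{r47}, applying $\cT_{\pm}\ot\id^{\ot(n-1)}$ to $J_T$ computes $J_{T'}$, where $T'$ is obtained by $\pm1$-surgery along the closure of $T_1$. The Fenn--Rourke move says precisely that the two links related by the move have homeomorphic surgery results, realized by this surgery-along-an-unknot operation; so applying the remaining functionals $\cT_{\ve_2}\ho\cdots\ho\cT_{\ve_n}$ to both sides and using that $\cl(T')$ is the link on the other side of the Hoste move yields $J_M$ for both links. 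The key point making this work is that $\cl(T_1)$ being $0$-framed before the twist is reset to $\pm1$ is exactly what $\cT_{\pm}$ records, and that the linking matrix being diagonal (algebraically-split hypothesis) is what keeps all intermediate tangles within the class where the functionals are defined and ad-invariant.

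The main obstacle I anticipate is not any single computation but the careful matching of the combinatorial data across a Hoste move: one must arrange the bottom-tangle presentation so that Lemma~\ref{r47} applies on both sides of the move simultaneously, check that the algebraically-split condition is preserved so that all the universal invariants involved lie in the submodule on which $\cT_{\ve_1}\ho\cdots\ho\cT_{\ve_n}$ is a well-defined ad-invariant functional, and handle the degenerate case of the move with no vertical strands. Once the move is normalized to the form in Lemma~\ref{r47}, the conclusion is immediate; the work is entirely in the reduction. I would also remark that in the field case of this subsection no convergence issues arise, so the only subtlety is geometric/combinatorial rather than analytic — the analytic subtleties (defining $\cT_{\ve_1}\ho\cdots\ho\cT_{\ve_n}$ on a suitable $\Zqh$-submodule $\tK_n$) are deferred to the later sections dealing with $\Uh(\g)$ itself.
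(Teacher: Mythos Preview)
Your overall strategy matches the paper's: reduce to a framed-link invariant, check Hoste invariance via Lemma~\ref{r47}, then invoke Lemma~\ref{r2a}. However, two of your ``bookkeeping'' steps contain genuine gaps.

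First, independence of the choice of bottom tangle: it is \emph{not} true that two bottom tangles $T,T'$ with $\cl(T)=\cl(T')$ satisfy $J_T=J_{T'}$. Sliding the $\cup$-shaped arcs around is not an isotopy of bottom tangles, and $J_T$ genuinely changes under it. What is true (and what the paper cites from \cite[Section~11.1]{H:bottom}) is that for ad-invariant functionals $\chi_i$, the value $(\chi_1\otimes\cdots\otimes\chi_n)(J_T)$ depends only on $\cl(T)$. This is precisely where the ad-invariance of $\cT_\pm$ is used; it is not a triviality.

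Second, reordering components: swapping components $i$ and $i+1$ of a bottom tangle is \emph{not} the tensor-factor flip. Geometrically you must braid the bands past each other, and the paper computes (via \cite[Proposition~8.1]{H:bottom}) that $J_{T'}=(\id^{\otimes i-1}\otimes\boldpsi\otimes\id^{\otimes n-i-1})(J_T)$ with $\boldpsi(x\otimes y)=\sum(\beta\trr y)\otimes(\alpha\trr x)$. The conclusion then follows from ad-invariance of $\cT_\pm$ together with $(\boldep\otimes\boldep)(\cR)=1$, not from a naive symmetry of the tensor product. Once these two steps are corrected---both hinging on the ad-invariance you mentioned but did not actually deploy---the rest of your argument (Hoste invariance via Lemma~\ref{r47}, orientation-independence via Lemma~\ref{r2a}) is exactly the paper's proof.
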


\begin{proof}
  Since $\cT_\pm $ are ad-invariant, $(\cT_{\epsilon _1}\ot \dots
  \ot \cT_{\epsilon _n})(J_T)$ depends only on
  $\epsilon_1,\ldots,\epsilon_n$ and the oriented, ordered framed link $\cl(T)$, but not on the choice of $T$, see
  e.g. \cite[Section 11.1]{H:bottom}. This shows $(\cT_{\epsilon _1}\ot \dots \ot \cT_{\epsilon _n})(J_T)$ is an invariant of framed link $L(T;\ve_1,\dots,\ve_n)$.

  We now show that $(\cT_{\epsilon _1}\ot \dots \ot \cT_{\epsilon _n})(J_T)$ does not depend on the order of the components of $L$. Suppose $L=L(T;\ve_1,\dots,\ve_n)$ and $L'$ is the same $L$, with the orders of the $(i+1)$-th and $(i+2)$-th switched. Then $L'= L(T';\ve'_1,\dots,\ve'_{n})$, where $T'$ is  $T$ on top of a simple braid of bands which switches the $i+1$ and $i+2$ components, see Figure \ref{fig:braid1}.
  Also, $\ve'_j= \ve_j$ for $j\neq i+1, i+2$, and $\ve_{i+1}'=\ve_{i+2}$, $\ve_{i+2}'= \ve_{i+1}$.
\FIG{braid1}{Modification of a bottom tangle with a braid of bands.}{height=20mm}

According to the geometric interpretation of the braiding \cite[Proposition 8.1]{H:bottom},
$$ J_{T'} = (\id^{\ot i} \ot \boldpsi \ot \id^{\ot n-i-2}) (J_T).$$

 By \eqref{e12},
 $$\boldpsi(x \otimes y) = \sum  (\beta \tri y) \ot (\al \tri x),\quad   \text{ where  }\cR= \sum \al \ot \beta.$$
    Since $(\boldep \ot  \boldep)(\cR)= 1$ and $\cT_\pm$ are  ad-invariant,
    $$(\cT_{\epsilon _1}\ot \dots \ot \cT_{\epsilon _n})(J_T) = (\cT_{\epsilon _1'}\ot \dots \ot \cT_{\epsilon _n'})(J_{T'}).$$
     Thus, $(\cT_{\epsilon _1}\ot \dots \ot \cT_{\epsilon _n})(J_T)$ is an invariant of
 oriented, unordered framed links.

  By Proposition
  \ref{r47}, $(\cT_{\epsilon _1}\ot \dots \ot \cT_{\epsilon _n})(J_T)$ is invariant under the Hoste moves. Lemma \ref{r2a} implies that $(\cT_{\epsilon _1}\ot \dots \ot \cT_{\epsilon _n})(J_T)$ descends to an invariant of integral homology 3-spheres.
\end{proof}

\subsubsection{Examples of full twist forms: Factorizable case} \label{sec:factorizable-case}
A finite-dimensional, quasitriangular Hopf algebra over a field $\bfk$
is said to be  {\em factorizable} if the clasp element $\bc \in  \scH \otimes_\bk \scH$ is non-degenerate in the sense that there exist bases $\{\bc'(i), i \in I \}$ and $\{ \bc''(i), i \in I\}$ of $\scH$ such that
$$ \bc = \sum _{i \in I} \bc'(i) \otimes \bc''(i).$$
 This
definition of factorizability is
equivalent to the original definition by Reshetikhin and
Semenov-Tian-Shansky \cite{Reshetikhin-SemenovTianShansky}.

Suppose $\scH$ is a factorizable ribbon Hopf algebra. The non-degeneracy condition shows that
there is a unique bilinear form, called the {\em clasp form},
$$\sL \col \mathscr H \otimes
\mathscr H \rightarrow \bfk $$
such that for every $x\in \scH$,
\begin{align}
 (\sL \ot \id) (x \ot \bc)  = x  \label{e15a}, \quad  (\id \ot \sL) (\bc \ot x) = x.
\end{align}
    Using the ad-invariance of
$\modc$, one
can show that  $\sL \col \mathscr H \otimes
\mathscr H \rightarrow \bfk $ is ad-invariant. Since $\br^{\pm 1}$ are ad-invariant, the
form $\cT_\pm\col\scH\to \bk$ defined by
 \begin{gather}
  \label{e16}
  \cT_\pm (x) :=  \sL( \modr ^{\pm 1} \ot x),
\end{gather}
is ad-invariant and satisfies \eqref{e15} due to \eqref{e15a}.  Hence
 $\cT_+$ and $\cT_-$ are full twist forms for $\scH$,
 and defines an invariant of integral homology 3-sphere according to Proposition  \ref{r44}.

\begin{remark}
  \label{r38}
  Given a finite-dimensional, factorizable, ribbon Hopf algebra
  $\mathscr H $, one can construct the {\em Hennings invariant} for
  closed $3$-manifolds
  \cite{Hennings,Kauffman-Radford,Ohtsuki:3mfds,Kerler,Lyubashenko,Sawin,Virelizier,H:bottom}.
  The invariant given in Proposition  \ref{r44} constructed from the
  full twist forms in \eqref{e16} is equal to the Hennings invariant.
\end{remark}

\subsection{Partially defined twist forms and invariant $J_M$}\label{sec.partii}
 Let us return to the case when $\scH$ is a ribbon Hopf algebra over
 $\Ch$. Recall that $\scH$ is a topologically free $\scH$-module with
 the adjoint action. In general $\scH$ does not admit full twist forms.

In the construction of the invariant of integral homology 3-spheres in Proposition \ref{r44}, one first constructs the universal invariant of algebraically split tangles $J_T$, then feeds the result to the functionals $\cT_{\epsilon _1}\otimes \cdots \otimes \cT_{\epsilon _n}$
 which come from the twist forms $\cT_\pm$. We will show that the
 conclusion of Proposition \ref{r44} holds true if the twist forms $\cT_\pm$ are defined on a submodule large enough so that the domain of $\cT_{\epsilon _1}\otimes \cdots \otimes \cT_{\epsilon _n}$ contains all the values of $J_T$, with $T$ algebraically split bottom tangles.

 \subsubsection{Partially defined twist forms} Suppose $\sX\subset \scH$ is a topologically free $\Ch$-submodule. By Proposition  \ref{s.a2} all the natural maps  $\sX^{\ot n} \to \sX^{\ho n}\to \scH^{\ho n}$ and
 $ \sX \ho \scH^{\ho (n-1)} \to \scH^{\ho n}$ are injective. Hence we will consider $\sX^{\ot n}$, $\sX^{\ho n}$, and $\sX \ho \scH^{\ho (n-1)}$ as submodules of $\scH^{\ho n}$. This will explain the meaning of statements like ``$ \bc \in \sX \ho \scH$''.

 \begin{definition}
 A twist system $\cT=(\cT_\pm, \sX)$ of a topological ribbon Hopf algebra $\scH$ consists of a topologically free $\Ch$-submodule %
  $\sX \subset \scH$ and a pair of $\Ch$-linear functionals $\cT_\pm : \sX \to \Ch$
 satisfying the following conditions.

 (i) $\sX$ is ad-stable (i.e. $\sX$ is stable under the adjoint action of $\scH$) and $\cT_\pm$ are ad-invariant.

  (ii)  $\bc \in \sX \ho \scH$.

 (iii) One has
 $$ (\cT_\pm \ho \id)(\bc) = \br^{\pm 1}.$$

 \end{definition}

Recall that for an $n$-component bottom tangle $T$ with zero linking matrix and $\ve _1,\ldots ,\ve _n\in \{1,-1\}$, $M(T;\ve _1,\ldots ,\ve _n)$ is the integral homology sphere obtained by surgery on $S^3$ along the framed link $L(T;\ve _1,\ldots ,\ve _n)$, which is the closure link of $T$ with the framing on the $i$-th component switched to $\ve_i$.

 \begin{proposition}\label{rah1}
 Suppose $\cT=(\cT_\pm, \sX)$ is a twist system of a topological ribbon Hopf algebra $\scH$ such that
 $J_T \in \sX^{\ho n}$ for any $n$-component algebraically split 0-framed bottom tangle $T$. Let $M=M(T;\ve _1,\ldots ,\ve _n)$ be an integral homology 3-sphere. Then
 $$ J_M:=(\cT_{\epsilon _1}\ho \dots \ho \cT_{\epsilon _n})(J_T)\in \Ch$$
 is an invariant of $M$. In other words, if $M(T;\ve _1,\ldots ,\ve _n)= M(T';\ve'_1,\dots,\ve'_{n'})$, then
 $$ (\cT_{\epsilon _1}\ho \dots \ho \cT_{\epsilon _n})(J_T) = (\cT_{\epsilon '_1}\ho \dots \ho \cT_{\epsilon' _{n'}})(J_{T'}).$$
 \end{proposition}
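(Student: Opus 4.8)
The plan is to run the proof of Proposition \ref{r44} essentially unchanged, with the full twist forms there replaced by the partial data $(\cT_\pm,\sX)$ of a twist system; the only genuinely new task is to check at each step that every element to which a $\cT_\pm$ is applied lies in $\sX$, which is exactly what conditions (i)--(iii) together with the hypothesis $J_T\in\sX^{\ho n}$ are arranged to guarantee. First, the composite $\cT_{\epsilon_1}\ho\cdots\ho\cT_{\epsilon_n}$ is well defined on $\sX^{\ho n}$: each $\cT_{\epsilon_i}\col\sX\to\Ch$ is $\Ch$-linear, hence $h$-adically continuous, so $\cT_{\epsilon_1}\ot\cdots\ot\cT_{\epsilon_n}\col\sX^{\ot n}\to\Ch$ is $\Ch$-linear and extends uniquely by continuity to the $h$-adic completion $\sX^{\ho n}$. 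By Proposition \ref{s.a2} we regard $\sX^{\ho n}$, and more generally $\sX\ho\scH^{\ho(n-1)}$, as submodules of $\scH^{\ho n}$, as in the discussion preceding the definition of a twist system; since $J_T\in\sX^{\ho n}$, the element $J_M\in\Ch$ is well defined.

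I would then show, following Proposition \ref{r44}, that $L(T;\ve_1,\dots,\ve_n)\mapsto(\cT_{\epsilon_1}\ho\cdots\ho\cT_{\epsilon_n})(J_T)$ is a well-defined invariant of oriented, unordered, algebraically-split $\pm1$-framed links. Independence of the choice of the bottom tangle $T$ presenting a given oriented ordered framed link is the argument of \cite[Section 11.1]{H:bottom}, which uses only ad-invariance of $J_T$ and of the $\cT_{\epsilon_i}$; by condition (i) the submodule $\sX$ is ad-stable, so all the intermediate elements stay in the relevant tensor power of $\sX$. For independence of the ordering, if $T'$ is the reordering produced by a braiding of bands then $J_{T'}=(\id^{\ho i}\ho\boldpsi\ho\id^{\ho(n-i-2)})(J_T)$, and $T'$ is again algebraically split and $0$-framed, so $J_{T'}\in\sX^{\ho n}$; a direct computation using \eqref{e12}, ad-stability of $\sX$, ad-invariance of the $\cT_\pm$, and $(\boldep\ot\boldep)(\cR)=1$ shows that applying the two relevant functionals to the pair of slots produced by $\boldpsi$ merely transposes them, giving the equality of the two surgery formulae. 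Orientation independence then follows from Lemma \ref{r2a}.

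The remaining point is invariance under Hoste moves, which reduces to the partial analogue of Lemma \ref{r47}: if $T=T_1\cup\cdots\cup T_n$ is an algebraically split $0$-framed bottom tangle with $T_1$ trivial, and $T'$ is obtained from $T\setminus T_1$ by surgery on $\cl(T_1)$ with framing $\pm1$, then $J_{T'}=(\cT_\pm\ho\id^{\ho(n-1)})(J_T)$. Here $T'$ is again algebraically split and $0$-framed because $T_1$ has linking number $0$ with the other components, so both sides lie in $\sX^{\ho(n-1)}$, and $J_T\in\sX\ho\scH^{\ho(n-1)}$ by hypothesis, so the right-hand side is defined. To prove this I would follow the proof of Lemma \ref{r47}: for the tangle $T_{p,q}$ of Figure \ref{twist2} one has $J_{T_{p,q}}=(\id\ot\id^{\ot p}\ot S^{\ot q})(\id\ot\Delta^{[p+q]})(\modc)$, and since $\modc\in\sX\ho\scH$ by condition (ii) and all operations act on the second tensor factor only, $J_{T_{p,q}}\in\sX\ho\scH^{\ho(p+q)}$; applying $\cT_\pm\ho\id^{\ho(p+q)}$, which commutes with the operations on the other factors, and using condition (iii), $(\cT_\pm\ho\id)(\modc)=\modr^{\pm1}$, yields $(\cT_\pm\ho\id^{\ho(p+q)})(J_{T_{p,q}})=(\id^{\ot p}\ot S^{\ot q})\Delta^{[p+q]}(\modr^{\pm1})=J_{T'_{p,q;\pm1}}$. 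The general case follows by functoriality of the universal invariant exactly as in the proof of Lemma \ref{r47}, the point being that $\cT_\pm$ is always evaluated on the tensor slot carried by $T_1$, which originates from the first factor of $\modc\in\sX\ho\scH$ and therefore never leaves $\sX$. A Hoste move is then treated by presenting one of the two links as $\cl(L(T;\ve_1,\dots,\ve_n))$ with $T_1$ trivial and $\ve_1=\pm1$ and the other as $\cl(L(T';\ve_2,\dots,\ve_n))$ with $T'$ as above; the analogue of Lemma \ref{r47} gives $(\cT_{\ve_1}\ho\cdots\ho\cT_{\ve_n})(J_T)=(\cT_{\ve_2}\ho\cdots\ho\cT_{\ve_n})(J_{T'})$, and with Lemma \ref{r2a} this shows $J_M$ is an invariant of integral homology $3$-spheres.

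I expect the whole proof to consist of exactly this bookkeeping; there is no new topology or algebra beyond Proposition \ref{r44} and Lemma \ref{r47}. The one step that needs a moment's care is the functoriality argument in the analogue of Lemma \ref{r47}: one must track that the tensor slot on which $\cT_\pm$ acts stays inside $\sX$ throughout the composition and tensoring of tangles, which is precisely why condition (ii) requires $\modc\in\sX\ho\scH$ rather than merely $\modc\in\scH\ho\scH$.
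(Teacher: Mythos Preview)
Your proposal is correct and follows essentially the same approach as the paper: first establish the partial analogue of Lemma \ref{r47} (this is exactly the Claim inside the paper's proof, proved in the same way via $T_{p,q}$, condition (ii) to get $J_{T_{p,q}}\in\sX\ho\scH^{\ho(p+q)}$, condition (iii) to evaluate, and functoriality), and then repeat the argument of Proposition \ref{r44} with $\ho$ in place of $\ot$. The paper's proof is terser but the logical content is identical, including the point you single out as needing care, that the first tensor slot stays in $\sX$ throughout the functoriality argument.
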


 \begin{proof} First we show the following claim, which is a refinement of
 Lemma \ref{r47}.

 {\em Claim.}
   \label{r4}
    Let $T$ and $T'$ be tangles as in Lemma \ref{r47}.   Then $J_T \in {\sX \ho \mathscr H^{\ho (n-1)}}$, and
   \begin{gather}
     \label{e18}
     J_{T'} = (\cT_\pm \ho \id^{\ho (n-1)})(J_T)\in \mathscr H ^{\ho(n-1)}.
   \end{gather}

 \begin{proof}[Proof of Claim] If $T_{p,q}$ is a $(p+q+1)$-component tangle as depicted in
  Figure~\ref{twist2}(a) with $p,q\ge 0$, then we have
  \begin{gather*}
    J_{T_{p,q}} = (\id\ho \id^{\ho p}\ho S^{\ho
      q})(\id\ho \Delta ^{[p+q]})(\modc ).
  \end{gather*}
 Since $\bc \in \sX \ho \scH$, we have
 $$ J_{T_{p,q}} \in \sX \ho \scH^{\ho p+q}.$$

 Since $T$ is obtained from $T_{p,q}$  by tensoring and composing appropriate tangles which do not involve
 the first component, we also have
 $$ J_T \in \sX \ho \scH^{\ho n}.$$

  The remaining part of the proof follows exactly the proof of
  Proposition \ref{r47}.  One first verifies
    the case of $T_{p,q}$ using conditions (ii) and (iii) in the definition of twist system, from which the general case follows. This proves the claim.
 \end{proof}

 Using the ad-invariance of $\cT_\pm$ and \eqref{e18}, one can repeat verbatim the proof of Proposition \ref{r44}, replacing $\ot$ by $\ho$ everywhere, to get  Proposition \ref{rah1}.
\end{proof}

\subsubsection{Values of the universal invariant of algebraically split tangles} In Proposition \ref{rah1}, we need  $J_T\in \sX^{\ho n}$ for an $n$-component bottom tangle $T$ with zero linking matrix. To help proving statement like that, we use the following result.

Let $\modK _n\subset \mathscr \scH^{\ho n}$, $n\ge 0$, be a family of subsets. A $\BC [[h]]$-module homomorphism $f\col \Uhx a\rightarrow \Uhx b$,
$a,b\ge 0$, is said to be {\em $(\modK _n)$-admissible} if we have
\begin{gather}
  \label{e34}
  f_{(i,j)}(\modK_{i+j+a})\subset \modK_{i+j+b}.
\end{gather}
for all $i,j\ge 0$. Here $f_{(i,j)}:= \id^{\ho i} \ho f \ho \id^{\ho j}$.

\begin{proposition}[Cf. Corollary 9.15 of \cite{H:bottom}]
  \label{r26}
  Let $\modK _n\subset \mathscr H ^{\ho n}$, $n\ge 0$, be a family of subsets such that

 (i) $1_\modk \in \modK _0$, $1_{\scH}\in \modK_1$, $\modb \in \modK_3$,

 (ii) for $x \in \modK_n$  and $y\in \modK_m$ one has $ x\ot y \in\modK_{n+m}$, and

 (iii)
 each of ${\boldsymbol \mu}
  ,{\boldsymbol{\psi}} ^{\pm 1},\bD,\bS$
 is $(\modK_n)$-admissible.

Then, we have $J_T\in \modK _n$ for any $n$-component algebraically split, $0$-framed bottom tangle $T$.
\end{proposition}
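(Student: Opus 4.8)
The plan is to reduce the statement to a purely geometric fact from \cite{H:bottom} about how algebraically-split, $0$-framed bottom tangles are built, and then run a short induction on the Hopf-algebra side. Recall from Section~\ref{sec:uni11} (and \cite{H:bottom}) that the universal invariant $J$ is \emph{functorial} for the standard operations on bottom tangles, in the following sense: disjoint juxtaposition of tangles corresponds to $\ot$; inserting a trivial $1$-component bottom tangle corresponds to the unit (whose invariant is $1_\scH$); merging two adjacent bands corresponds to $\boldmu$; doubling a band corresponds to $\bD$; reversing the orientation of a band corresponds to $\bS$; and crossing two adjacent bands corresponds to $\boldpsi^{\pm1}$. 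Performing one such operation on a block of consecutive components of a larger tangle corresponds, after applying $J$, to applying the localized map $g_{(i,j)}=\id^{\ho i}\ho g\ho\id^{\ho j}$ with $g\in\{\boldmu,\boldpsi^{\pm1},\bD,\bS\}$ (or to a tensor product, or to tensoring in $1_\scH$); this is precisely the content of the geometric interpretations of $\bD$, $\bS$ and the braiding recalled above and established in \cite[Sections~8--9]{H:bottom}.

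Next I would invoke the generation result behind \cite[Corollary~9.15]{H:bottom}: every $n$-component algebraically-split, $0$-framed bottom tangle $T$ is isotopic to a tangle obtained, starting from finitely many copies of the trivial $1$-component bottom tangle and of the $3$-component \emph{Borromean bottom tangle} $B$ (which is itself algebraically split and $0$-framed, so that its mirror image, being of the same type, is also covered by the statement), by a finite sequence of the operations listed in the previous paragraph. After conjugating by braidings $\boldpsi^{\pm1}$ where needed, we may assume every operation acts on a block of consecutive tensor factors, so that the localized forms $g_{(i,j)}$ make sense. Applying $J$ and using functoriality, it follows that $J_T$ is obtained from finitely many copies of $1_\modk$, $1_\scH$ and $\modb=J_B$ by a finite sequence of steps, each of which is either a tensor product of two previously constructed elements or an application of some $g_{(i,j)}$ with $g\in\{\boldmu,\boldpsi^{\pm1},\bD,\bS\}$.

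It then remains to argue by induction on the length of such an expression that it lies in the appropriate $\modK_n$. The base cases $1_\modk\in\modK_0$, $1_\scH\in\modK_1$ and $\modb\in\modK_3$ are exactly hypothesis~(i). For the inductive step, a tensor product of an element of $\modK_n$ with an element of $\modK_m$ lies in $\modK_{n+m}$ by~(ii); and if $g\col\scH^{\ho a}\to\scH^{\ho b}$ is one of $\boldmu,\boldpsi^{\pm1},\bD,\bS$, then by the $(\modK_n)$-admissibility assumption~(iii) the map $g_{(i,j)}$ sends $\modK_{i+a+j}$ into $\modK_{i+b+j}$. Hence each step sends an element of some $\modK_N$ to an element of some $\modK_{N'}$, where $N'$ is the number of tensor factors of the output. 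Since the final output $J_T$ has exactly $n$ tensor factors, we conclude $J_T\in\modK_n$.

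The only genuinely substantive ingredient is the geometric input from \cite{H:bottom}: it is precisely the hypotheses ``algebraically split'' and ``$0$-framed'' that allow the Borromean tangle, together with trivial components and the (braided) Hopf-algebra operations, to serve as generators --- a nonzero linking number would force in a Hopf-link (clasp) generator, and a nonzero framing a twist operation, neither of which appears among the maps in~(iii). The Hopf-algebra side is pure bookkeeping; the minor care needed there is to match each geometric move with the correct algebraic map (orientation reversal with $\bS$, transposition of adjacent components with $\boldpsi^{\pm1}$) and to serialize the construction of $T$ so that every algebraic operation is applied to a block of consecutive tensor slots, which is routine in a braided monoidal category.
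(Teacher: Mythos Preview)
Your proposal is correct and follows exactly the intended line: the paper itself does not give a proof of this proposition but refers to \cite[Corollary~9.15]{H:bottom}, and your argument --- reduce to the generation theorem for algebraically-split $0$-framed bottom tangles (trivial tangle plus Borromean tangle as generators, with $\boldmu,\boldpsi^{\pm1},\bD,\bS$ as operations), invoke functoriality of $J$, and induct on the length of the word --- is precisely the proof behind that corollary. Your identification of the roles of the hypotheses (algebraically split rules out the clasp generator, $0$-framed rules out a twist operation) is also the correct reading of why those assumptions are needed.
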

Here $\modb\in \Uh^{\ho 3}$ is the universal invariant of the Borromean bottom tangle depicted in Figure~\ref{fig:borromean}\FIG{borromean}{The Borromean tangle
    }{height=15mm}.

\subsection{Core subalgebra} \label{sec:coresub}
We define here  a  {\em core subalgebra of a topological ribbon Hopf algebra}, and show that every core subalgebra gives rise to an invariant of integral homology 3-spheres.

In the following we use overline to denote the closure in the $h$-adic topology of $\scH^{\ho n}$.

A {\em topological Hopf subalgebra} of a topological Hopf
algebra $\scH$ is a $\Ch$-subalgebra $\scH'\subset\scH$ such that
$\scH'$ is topologically free as a $\Ch$-module and
\begin{gather*}
  \Delta(\scH')\subset \scH'\ho\scH',\quad
  S^{\pm1}(\scH')\subset \scH'.
\end{gather*}
In general, $\scH'$ is not closed in $\scH$.

\begin{definition}
A topological Hopf subalgebra $\sX \subset \scH$ of a topological
ribbon Hopf algebra $\scH$ is called a {\em core subalgebra} of $\scH$
if

(i) $\sX$ is $\scH$-ad-stable, i.e. it is an $\scH$-submodule of $\scH$,

(ii) $\cR \in \oXtwo$ and $\mathbf g \in \oX$,  and

(iii) The clasp element $\bc$, which is contained in $\oXtwo$ by (ii)
(see below),
has a presentation
\be
 \bc = \sum_{i\in I} \bc'(i) \otimes \bc''(i),
 \label{eq.clasp5}
 \ee
where each  of the two sets  $\{\bc'(i) \mid i \in I\}$ and $\{\bc''(i) \mid i \in I \}$  is
\begin{itemize}
\item 0-convergent in $\scH$, and
\item a topological basis of $\sX$.
\end{itemize}
\end{definition}

Some clarifications are in order. As
 a topological Hopf subalgebra, $\sX$ is topologically free as a $\Ch$-module. By Proposition \ref{s.a2}, all the natural maps $\sX^{\ot n}\to \sX^{\ho n} \to \scH^{\ho n} $ are injective. We will  consider $\sX^{\ot n}$ as a $\Ch$-submodule of $\scH^{\ho n}$ in (ii) above when we take its closure in the $h$-adic topology of $\scH^{\ho n}$. Furthermore, since $\cR^{-1} =(S \ho \id)(\cR)$ and $\mathbf g^{-1}= S(\mathbf g)$, condition (ii) implies that $\cR^{\pm 1} \in \oXtwo$ and $\mathbf g^{\pm1} \in \oX$.
  Since $J_T$, the universal invariant of an $n$-component bottom tangle $T$, is built from $\cR^{\pm1}$ and $\mathbf g^{\pm1}$, condition (ii) implies that $J_T \in \overline{\sX^{\ot n}}$. In particular, $\bc \in \oXtwo$.

\begin{remark} A core subalgebra has properties similar to, but still
 different from,  those of both a minimal Hopf algebra \cite{Radford}
 and a factorizable Hopf algebra
 \cite{Reshetikhin-SemenovTianShansky}. Note that the notions of a
 minimal algebra and a factorizable algebra were introduced only for
 the case when the ground ring is a field. Over $\Ch$ the picture is
 much more complicated. For example, in \cite{Radford} it was shown
 that a minimal algebra over a field is always
 finite-dimensional. Here our core algebras are of infinite rank over $\Ch$.
\end{remark}

From now on we fix a core subalgebra $\sX$ of a topological ribbon
Hopf algebra $\scH$.

\begin{lemma}\label{r.inclusion} Suppose
$f: \scH\to \scH$ is a $\Ch$-module homomorphism such that $f(\sX)\subset \sX$. Then $f(\oX)\subset \oX$.
In particular, $\oX$ is ad-stable.
\end{lemma}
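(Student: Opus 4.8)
The plan is to use the fact that $f$ is continuous in the $h$-adic topology of $\scH$. Since $\scH$ is a $\Ch$-module and $f$ is a $\Ch$-module homomorphism, $f$ is automatically continuous for the $h$-adic topology, as noted at the beginning of Section \ref{sec:ChModules}. The closure $\oX$ of $\sX$ in $\scH^{\ho 1}=\scH$ is, by definition, the smallest $h$-adically closed subset of $\scH$ containing $\sX$. Now, $f^{-1}(\oX)$ is closed because $\oX$ is closed and $f$ is continuous, and it contains $\sX$ because $f(\sX)\subset\sX\subset\oX$. Hence $f^{-1}(\oX)$ is a closed set containing $\sX$, so it contains the closure $\oX$ of $\sX$; that is, $f(\oX)\subset\oX$.

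For the last sentence, I would apply this to the maps $f = (\ad(a\ot{-}))\col\scH\to\scH$, $x\mapsto a\trr x$, for each $a\in\scH$. Each such map is a $\Ch$-module homomorphism, and by part (i) of the definition of a core subalgebra, $\sX$ is ad-stable, i.e. $a\trr\sX\subset\sX$ for all $a\in\scH$. Applying the first part of the lemma gives $a\trr\oX\subset\oX$ for every $a\in\scH$, which is exactly the statement that $\oX$ is ad-stable.

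I do not expect any real obstacle here; the only point to be careful about is to invoke the correct notion of closure (the $h$-adic topological closure, consistent with the overline convention fixed just before the definition of core subalgebra) rather than the topological completion, and to make sure that the preimage argument is phrased in terms of closed sets rather than limits of sequences, since the $h$-adic topology need not be first countable on modules of uncountable rank. Everything needed — continuity of $\Ch$-module homomorphisms and ad-stability of $\sX$ — is already available from the excerpt.
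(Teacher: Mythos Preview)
Your proof is correct and follows essentially the same approach as the paper: the paper's proof is the single line ``Since $f$ is continuous in the topology of $\scH$, we have $f(\oX)\subset \oX$,'' which is exactly the continuity-plus-closure argument you spell out via the preimage $f^{-1}(\oX)$. Your explicit treatment of the ad-stability clause (applying the first part to each $x\mapsto a\trr x$) is left implicit in the paper but is precisely the intended justification.
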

\begin{proof}
Since $f$ is continuous in the topology of $\scH$, we have $f(\oX)\subset \oX$.
\end{proof}

\subsubsection{Clasp form associated to a core subalgebra}
Suppose $\sX \subset \scH$ is a core subalgebra with the  presentation \eqref{eq.clasp5} for $\bc$.
Since  $\{ \bc'(i)\}$ is a topological basis of  $\sX$,
every $y\in \sX$ has its coordinates $y'_i\in \Ch$ such that
$$ y= \sum_{i\in I} y'_i \bc'(i),$$
where $(y'_i)_{i\in I}$ is 0-convergent, i.e. $(y'_i)_{i\in I} \in
(\Ch^I)_0$. The map $y \mapsto (y'_i)$ is a $\Ch$-module isomorphism from $\sX$ to $(\Ch^I)_0$.

 The set $\{ \bc''(i)\}$ is a formal basis of  $\oX$, which is a formal series $\Ch$-module.  Every $x \in \oX$  has its coordinates $x''_i\in \Ch$ such that in the $h$-adic topology of $\scH$,
\be
\label{eq.xx}
 x= \sum_{i\in I} x''_i \bc''(i),
 \ee
where $(x''_i)_{i\in I} \in \Ch^I$. The map $x \mapsto (x_i'')$ is an
$\Ch$-module isomorphism from $\oX$ to $\Ch^I$.

Define a bilinear form $ \sL=\la ., . \ra : \oX \ot \sX \to \Ch$, called the {\em clasp form}, by
$$ \la x, y \ra := \sum_{i\in I} x''_i \, y'_i.$$
The sum on the right hand side is convergent since $(y'_i)_{i\in I}$ is 0-convergent.
The bilinear form is defined so that $\{ \bc''(i) \}$ and $\{ \bc'(i)\}$ are dual to each other:
\be
\label{eq.bracket}
\la \bc''(i) , \bc'(j) \ra = \delta_{ij}.
\ee

By continuity (in the $h$-adic topology), $\sL$ extends to a
$\Ch$-module map, also denoted by $\sL$,
$$ \sL: \oX \ho \sX \to \Ch.$$

The following lemma says that the above  bilinear form is dual to $\bc$.
\begin{lemma} \label{r.ccc}
(a) One has $ \bc \in (\sX \ho \scH) \cap (\scH \ho \sX)$.

(b) For every $x\in \oX$ and $ y\in \sX$ one has
\begin{align}
(\sL \ho \id)( x \ot \bc ) & = x   \label{eq.du1}\\
( \id \ho \sL)(  \bc  \ot y ) & = y
\label{eq.du2}
\end{align}
\end{lemma}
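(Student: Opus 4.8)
The plan is to read off both statements directly from the presentation $\bc=\sum_{i\in I}\bc'(i)\ot\bc''(i)$ supplied by condition~(iii) in the definition of a core subalgebra, working throughout in coordinates with respect to the two mutually dual families $\{\bc'(i)\}$ and $\{\bc''(i)\}$. Statement (b) is the $\Ch$-analogue of the clasp-form identities \eqref{e15a} of the factorizable case, and part (a) is exactly what is needed for the left-hand sides of (b) to be defined.

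For (a), I would fix a topological basis $\{e(l)\mid l\in L\}$ of $\scH$ and expand each $\bc''(i)=\sum_{l\in L}c_{il}\,e(l)$, with $(c_{il})_{l\in L}$ $0$-convergent in $\Ch$. Substituting into $\bc=\sum_i\bc'(i)\ot\bc''(i)$ gives, after rearrangement, $\bc=\sum_{(i,l)\in I\times L}c_{il}\,\bc'(i)\ot e(l)$, and the key point is that the full collection $(c_{il})_{(i,l)}$ is again $0$-convergent: for each $k$ only finitely many $i$ have $\bc''(i)\notin h^k\scH$ (since $\{\bc''(i)\}$ is $0$-convergent in $\scH$), and for each such $i$ the collection $(c_{il})_l$ lies outside $h^k\Ch$ only finitely often, so only finitely many pairs are exceptional. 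Since $\{\bc'(i)\ot e(l)\}$ is a topological basis of $\sX\ho\scH$, this exhibits $\bc$ as an element of $\sX\ho\scH$; running the same argument with the two tensor factors interchanged (now keeping the family $\{\bc''(i)\}$ rather than $\{\bc'(i)\}$) gives $\bc\in\scH\ho\sX$. The same $0$-convergence computation also shows that the series $\sum_i\bc'(i)\ot\bc''(i)$ actually converges in $\sX\ho\scH$ (resp.\ in $\scH\ho\sX$), not merely in $\scH^{\ho2}$, which is what makes the termwise computations in (b) legitimate.

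For (b), part (a) gives $x\ot\bc\in\oX\ho\sX\ho\scH$ and $\bc\ot y\in\scH\ho\oX\ho\sX$, so $(\sL\ho\id)(x\ot\bc)$ and $(\id\ho\sL)(\bc\ot y)$ make sense. Since $\Ch$-module homomorphisms are automatically continuous and $\sum_i\bc'(i)\ot\bc''(i)$ converges in the relevant completed tensor product, I can apply $\sL\ho\id$ (resp.\ $\id\ho\sL$) termwise to get $(\sL\ho\id)(x\ot\bc)=\sum_{i}\la x,\bc'(i)\ra\,\bc''(i)$ and $(\id\ho\sL)(\bc\ot y)=\sum_{i}\la\bc''(i),y\ra\,\bc'(i)$. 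Writing $x=\sum_j x''_j\bc''(j)$ in the formal basis $\{\bc''(j)\}$ of $\oX$ and $y=\sum_j y'_j\bc'(j)$ in the topological basis $\{\bc'(j)\}$ of $\sX$, the defining formula $\la x,y\ra=\sum_j x''_j y'_j$ of the clasp form gives $\la x,\bc'(i)\ra=x''_i$ and $\la\bc''(i),y\ra=y'_i$ (this is \eqref{eq.bracket}, extended from basis elements to arbitrary $x,y$ by the very definition of $\sL$). Substituting, $(\sL\ho\id)(x\ot\bc)=\sum_i x''_i\bc''(i)=x$ and $(\id\ho\sL)(\bc\ot y)=\sum_i y'_i\bc'(i)=y$, which is statement (b).

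There is no deep idea here, since the axioms of a core subalgebra are tailored precisely so that this computation goes through; the only real work is topological bookkeeping. One must carefully distinguish the $h$-adic topology of $\sX$ (in which $\{\bc'(i)\}$ is a topological basis) from the subspace topology induced from $\scH$ (in which $\{\bc'(i)\}$, and crucially $\{\bc'(i)\ot\bc''(i)\}$, is $0$-convergent), justify the double-sum rearrangements inside $\sX\ho\scH$ and $\scH^{\ho2}$, and keep track of which completed tensor products the various partially defined maps live on. I expect the step requiring the most care to be the $0$-convergence of $(c_{il})_{(i,l)}$ in part~(a) — equivalently, the convergence of $\sum_i\bc'(i)\ot\bc''(i)$ already in $\sX\ho\scH$ — with all subsequent manipulations being routine consequences of continuity.
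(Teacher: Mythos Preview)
Your proposal is correct and follows the same approach as the paper. The paper's proof is considerably terser---for (a) it simply notes that $\{\bc''(i)\}$ being $0$-convergent in $\scH$ while $\{\bc'(i)\}$ is a topological basis of $\sX$ gives $\bc\in\sX\ho\scH$ (and symmetrically for the other factor), and for (b) it observes $\la x,\bc'(i)\ra=x''_i$ directly from \eqref{eq.bracket} and substitutes---but your more detailed $0$-convergence bookkeeping for the double-indexed family $(c_{il})$ is exactly what underlies that one-line claim.
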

\begin{remark}
By part (a), $ \bc \in \sX \ho \scH$, hence $ x\otimes \bc \in \oX \ho\sX \ho \scH$. This is the reason why the left hand side of \eqref{eq.du1} is well-defined as an element of $\scH$. Similarly the left hand side of \eqref{eq.du2} is well-defined. With this well-definedness, all the proofs will be the same as in the case of finite-dimensional vector spaces over a field.
\end{remark}
\begin{proof} (a) Since $\{ \bc''(i)\}$ is 0-convergent in $\scH$, $\bc = \sum_i \bc'(i) \ot \bc''(i) \in \sX \ho \scH$. Similarly, $\bc \in \scH \ho \sX$.

(b)
Suppose $x$ has the presentation \eqref{eq.xx}. By \eqref{eq.bracket}, we have
\be
 \la x, \bc'(i) \ra = x''(i).
 \ee
 Thus, we have
 \be
 \label{eq.xx1}
 x = \sum_i \la x, \bc'(i) \ra \, \bc''(i),
 \ee
which  is \eqref{eq.du1}. The  identity \eqref{eq.du2} is proved similarly.
\end{proof}

Because $\br^{\pm 1} \in \oX$, one can define the
$\Ch$-module homomorphisms
\be
\label{eq.cTdef}
\cT_\pm: \sX \to \Ch \qquad \text{ by} \quad \cT_\pm(y) = \la \br^{\pm 1}, y \ra.
\ee

Since $\bc$ is ad-invariant, one can expect the following.
\begin{lemma} \label{r.invar}
(a) The clasp form  $\sL: \oX \ho \sX \to \Ch$ is ad-invariant, i.e. it  is an $\scH$-module homomorphism.

(b) The maps $\cT_\pm: \sX \to \Ch$ are ad-invariant.
\end{lemma}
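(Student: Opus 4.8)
The plan is to deduce ad-invariance of the clasp form $\sL$ from ad-invariance of the clasp element $\bc=J_{C^+}$ (which holds because every universal invariant of a bottom tangle is ad-invariant), and then to obtain (b) formally from (a) together with the centrality of the ribbon element $\br$. Note first that $\oX$ is ad-stable by Lemma~\ref{r.inclusion} and $\sX$ is ad-stable by the definition of a core subalgebra, so $\oX$, $\sX$, and $\oX\ho\sX$ are topologically free $\scH$-modules under the adjoint action, and Proposition~\ref{r.inva} is available for the $\Ch$-module homomorphism $\sL\col\oX\ho\sX\to\Ch$.

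For (a), by Proposition~\ref{r.inva}(b) and continuity (which reduces the verification to simple tensors $x\ot y$ with $x\in\oX$, $y\in\sX$), it suffices to prove
\begin{gather*}
  \sL\big((a\trr x)\ot y\big)=\sL\big(x\ot(S(a)\trr y)\big)\qquad\text{for all }a\in\scH,\ x\in\oX,\ y\in\sX.
\end{gather*}
Write $\bc=\sum_{i\in I}\bc'(i)\ot\bc''(i)$ as in \eqref{eq.clasp5}, where all $\bc'(i),\bc''(i)\in\sX$. Reading ad-invariance of $\bc$ through Proposition~\ref{r.inva}(a) gives, for each $a\in\scH$, the identity
\begin{gather*}
  \sum_{i\in I}(S(a)\trr\bc'(i))\ot\bc''(i)=\sum_{i\in I}\bc'(i)\ot(a\trr\bc''(i))
\end{gather*}
in $\sX\ho\scH$. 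Apply the $\Ch$-module homomorphism $\id_{\sX}\ho\,\sL(-\ot y)$ to both sides. By \eqref{eq.bracket}, $\sL(\bc''(i)\ot y)=\la\bc''(i),y\ra$ is the $i$-th coordinate of $y$ in the topological basis $\{\bc'(i)\}$, so by continuity of $S(a)\trr(-)$ the left-hand side collapses to $S(a)\trr y$, while the right-hand side becomes $\sum_i\la a\trr\bc''(i),y\ra\,\bc'(i)$. Hence
\begin{gather*}
  S(a)\trr y=\sum_{i\in I}\la a\trr\bc''(i),y\ra\,\bc'(i).
\end{gather*}
Pairing both sides against $x\in\oX$ by the continuous map $\sL(x\ot-)$ gives $\sL\big(x\ot(S(a)\trr y)\big)=\sum_i\la x,\bc'(i)\ra\,\la a\trr\bc''(i),y\ra$. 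On the other hand, expanding $x=\sum_i\la x,\bc'(i)\ra\,\bc''(i)$ via \eqref{eq.xx1} and using continuity of the adjoint action, $a\trr x=\sum_i\la x,\bc'(i)\ra\,(a\trr\bc''(i))$, so $\sL\big((a\trr x)\ot y\big)=\sum_i\la x,\bc'(i)\ra\,\la a\trr\bc''(i),y\ra$ as well. Comparing the two expressions establishes the displayed identity, proving (a).

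For (b), the ribbon element $\br$ is central in $\scH$, so $\br^{\pm1}$ is ad-invariant, and $\br^{\pm1}\in\oX$; since $\cT_\pm(y)=\sL(\br^{\pm1}\ot y)$ by \eqref{eq.cTdef}, Proposition~\ref{r.inva}(c), applied to the $\scH$-module homomorphism $\sL$ of part (a) and the invariant element $\br^{\pm1}\in\oX$, shows that $\cT_\pm\col\sX\to\Ch$ is an $\scH$-module homomorphism, that is, ad-invariant. The substance is entirely in part (a), where the only genuine obstacle is topological bookkeeping: one must keep track that each element produced lands in the appropriate completed submodule (so that $\sL$ and $\la-,-\ra$ are defined on it) and that every reindexing of the infinite sums is legitimate. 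These points are handled by the $0$-convergence of $\{\bc'(i)\}$ and $\{\bc''(i)\}$ built into the core-subalgebra axioms together with automatic continuity of $\Ch$-module maps; this is precisely the care already isolated in the remark after Lemma~\ref{r.ccc}, and with it in place the computation above is just the familiar ``pairing dual to an ad-invariant copairing is ad-invariant'' argument from the finite-dimensional theory.
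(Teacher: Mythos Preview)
Your proof is correct and follows essentially the same approach as the paper: both reduce (a) via Proposition~\ref{r.inva}(b) to the identity $\la a\trr x,y\ra=\la x,S(a)\trr y\ra$, obtain this from the ad-invariance of $\bc$ read through Proposition~\ref{r.inva}(a), and then deduce (b) from Proposition~\ref{r.inva}(c). The only cosmetic difference is the order of the pairings in the computation---the paper first pairs the identity $\sum_i S(a)\trr\bc'(i)\ot\bc''(i)=\sum_i\bc'(i)\ot a\trr\bc''(i)$ against $x$ (via $\sL\ot\id$) and then evaluates on the basis vectors $\bc'(j)$, whereas you first pair against $y$ (via $\id\ho\sL(-\ot y)$) and then against $x$; these are dual routes through the same diagram and yield the same conclusion.
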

\begin{proof} (a) By Proposition \ref{r.inva}(b), $\sL$ is ad-invariant if and only if for every $a \in \scH$, $x\in \oX$, and $y\in \sX$,
\be
\label{eq.da11}
\la a \tri x, y \ra = \la x, S(a) \tri y \ra,
\ee
which we will prove now.

Since $\bc= \sum_i \bc'(i) \ot \bc''(i)$ is ad-invariant, by Proposition \ref{r.inva}(a),
$$ \sum_i S(a) \tri \bc'(i) \ot \bc''(i) = \sum_i  \bc'(i) \ot a \tri \bc''(i).$$
Tensoring with $x$ on the left, and applying $\sL \otimes \id$,
\begin{align*}
\sum_i \la x, S(a) \tri \bc'(i) \ra \, \bc''(i)& = \sum_i \la x, \bc'(i) \ra \, a \tri \bc''(i)\\
&= \sum_i x''(i) \, a\tri \bc''(i) \\
&= a \tri \left(\sum_i x''(i)\,  \bc''(i)   \right) = a\tri x.
\end{align*}
Tensoring on the right with $\bc'(j)$ then applying $\sL$, one has
$$ \la x, S(a) \tri \bc'(j) \ra = \la a \tri x,  \bc'(j) \ra,$$
which is \eqref{eq.da11} with $y= \bc'(j)$. Since $\{ \bc'(j)\}$ is a topological basis of $\sX$, \eqref{eq.da11} holds for any $y \in \sX$.

(b) follows from Proposition \ref{r.inva}(c).
\end{proof}

\begin{proposition} \label{r.vv1}
Suppose $f: \scH \to \scH$ and $g: \scH \to \scH$ are $\Ch$-module isomorphisms such that $f(\sX)= \sX$, $g(\sX)=\sX$, and
$(f\ho g)(\bc) = \bc$. Then $g(\oX)=\oX$, and
for every $x \in \oX$, $y\in \sX$, one has
\be
\label{eq.oper}
 \la g(x), f(y) \ra = \la x, y \ra.
 \ee
\end{proposition}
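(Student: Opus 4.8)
The plan is to reduce everything to the defining duality property \eqref{eq.du1} of the clasp form together with uniqueness of coordinates in the formal basis $\{\bc''(i)\}$ of $\oX$, using the hypothesis $(f\ho g)(\bc)=\bc$ as the one substantive input.

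First I would settle the claim $g(\oX)=\oX$. Since $g$ is a $\Ch$-module isomorphism, both $g$ and $g^{-1}$ are $\Ch$-module homomorphisms, hence $h$-adically continuous, and both carry $\sX$ into $\sX$ (because $g(\sX)=\sX$). Lemma~\ref{r.inclusion} then gives $g(\oX)\subset\oX$ and $g^{-1}(\oX)\subset\oX$, whence $g(\oX)=\oX$. In particular, for $x\in\oX$ and $y\in\sX$ we have $g(x)\in\oX$ and $f(y)\in\sX$, so the left-hand side of \eqref{eq.oper} makes sense.

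For \eqref{eq.oper} itself, I would first exploit $(f\ho g)(\bc)=\bc$. Applying the continuous map $f\ho g$ termwise to the $h$-adically convergent expansion \eqref{eq.clasp5}, and using that $f(\sX)=\sX$ and that $\{g(\bc''(i))\}_{i\in I}$ is again $0$-convergent in $\scH$ (the image of a $0$-convergent family under the continuous map $g$), one obtains a second presentation
\[
 \bc=\sum_{i\in I}f(\bc'(i))\ot g(\bc''(i))\in\sX\ho\scH .
\]
Fix $x\in\oX$. Evaluating $(\sL\ho\id)(g(x)\ot\bc)$ by means of this presentation gives $\sum_{i\in I}\la g(x),f(\bc'(i))\ra\,g(\bc''(i))$, while \eqref{eq.du1} of Lemma~\ref{r.ccc}, applied to $g(x)\in\oX$, identifies this same element with $g(x)$. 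Hence
\[
 g(x)=\sum_{i\in I}\la g(x),f(\bc'(i))\ra\,g(\bc''(i)),
\]
and applying $g^{-1}$, which is a continuous $\Ch$-module map and therefore commutes with this $h$-adically convergent sum, yields
\[
 x=\sum_{i\in I}\la g(x),f(\bc'(i))\ra\,\bc''(i) .
\]
Comparing this with the expansion $x=\sum_{i\in I}\la x,\bc'(i)\ra\,\bc''(i)$ from \eqref{eq.xx1} and using uniqueness of coordinates in the formal basis $\{\bc''(i)\}$ of $\oX$, I conclude $\la g(x),f(\bc'(i))\ra=\la x,\bc'(i)\ra$ for every $i\in I$. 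Finally, expanding an arbitrary $y\in\sX$ as $y=\sum_{i\in I}y_i'\,\bc'(i)$ with $(y_i')_{i\in I}$ $0$-convergent, and using $\Ch$-linearity together with $h$-adic continuity of $f$ and of the clasp form in its second argument, this extends to $\la g(x),f(y)\ra=\la x,y\ra$ for all $y\in\sX$, which is \eqref{eq.oper}.

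I do not expect a genuine obstacle: this is just the finite-dimensional dual-basis computation run in the completed setting. The points needing (routine) care are that $\{g(\bc''(i))\}$ stays $0$-convergent, that $g^{-1}$ may be pulled inside the infinite sum, and that $(f\ho g)(\bc)=\bc$ may be re-expanded termwise --- all immediate from automatic $h$-adic continuity of $\Ch$-module homomorphisms and the fact that such maps send $0$-convergent families to $0$-convergent ones.
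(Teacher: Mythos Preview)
Your proof is correct and follows essentially the same route as the paper's: both first derive $g(\oX)=\oX$ from Lemma~\ref{r.inclusion}, then rewrite $\bc=\sum_i f(\bc'(i))\ot g(\bc''(i))$, apply \eqref{eq.du1} to $g(x)$, strip off $g$ (you via $g^{-1}$, the paper via injectivity of $g$), compare with \eqref{eq.xx1}, and extend from the basis $\{\bc'(i)\}$ to all of $\sX$. Your extra remarks on $0$-convergence and continuity only make explicit what the paper leaves implicit.
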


\begin{proof}By Lemma \ref{r.inclusion},  $g^{\pm1}(\oX)\subset \oX$. It follows that $g(\oX)= \oX$.
One has
$$ \bc = \sum_i \bc'(i) \ot \bc''(i) = \sum_i f(\bc'(i)) \ot g(\bc''(i)).$$
Since $g(x) \in \oX$, one can replace $x$ by $g(x)$ in \eqref{eq.du1},
\begin{align*}
g(x) & = (\sL \ho \id)( g(x) \ot \bc ) = \sum_i (\sL \ho \id)\left ( g(x) \ot f(\bc'(i)) \ot g(\bc''(i)) \right) \\
&= \sum_i \la g(x), f(\bc'(i)) \ra\,  g(\bc''(i))  \\
&= g \left( \sum_i \la g(x), f(\bc'(i)) \ra\,  g(\bc''(i)) \right)
\end{align*}
Injectivity of $g$ implies
$$ x =  \sum_i \la g(x), f(\bc'(i)) \ra\,  \bc''(i).$$
Comparing with \eqref{eq.xx1} we have, for every $i\in I$,
$$ \la g(x), f(\bc'(i)) \ra =   \la x, \bc'(i) \ra, $$
which shows that \eqref{eq.oper} holds for $y=\bc'(i)$, $i\in I$. Hence, \eqref{eq.oper} holds for every $y\in \sX$  since $\{\bc'(i) \}$ is a topological basis of $\sX$.
\end{proof}

\subsubsection{Twist system from core subalgebra}
\begin{proposition} \label{r.twistcore}
The collection $\cT=(\cT_\pm, \sX)$ is a twist system for $\scH$.
\end{proposition}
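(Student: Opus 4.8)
The plan is to verify the three defining conditions of a twist system (items (i)--(iii) in the definition of a twist system) for the pair $\cT=(\cT_\pm,\sX)$, drawing on the structural facts about a core subalgebra that have already been assembled in this subsection. Most of the work has in fact been done in the preceding lemmas, so this proof should be a short bookkeeping argument that assembles them.

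First I would recall that, since $\sX$ is a core subalgebra, it is by definition a topological Hopf subalgebra and in particular topologically free as a $\Ch$-module; moreover condition (i) of a core subalgebra says $\sX$ is ad-stable (an $\scH$-submodule of $\scH$ under the adjoint action). Lemma~\ref{r.invar}(b) gives that $\cT_\pm\colon\sX\to\Ch$, defined in \eqref{eq.cTdef} by $\cT_\pm(y)=\la\br^{\pm1},y\ra$, are ad-invariant. This establishes condition (i) of the twist system. For condition (ii), I need $\bc\in\sX\ho\scH$: this is exactly Lemma~\ref{r.ccc}(a), which states $\bc\in(\sX\ho\scH)\cap(\scH\ho\sX)$.

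For condition (iii) I need the identity $(\cT_\pm\ho\id)(\bc)=\br^{\pm1}$. Here I would combine the definition \eqref{eq.cTdef} of $\cT_\pm$ with the duality identity \eqref{eq.du1} of Lemma~\ref{r.ccc}(b). Indeed, applying $\cT_\pm\ho\id$ to $\bc$ and unwinding the definition $\cT_\pm(y)=\la\br^{\pm1},y\ra$, we get
\begin{gather*}
  (\cT_\pm\ho\id)(\bc)=(\la\br^{\pm1},\cdot\,\ra\ho\id)(\bc)=(\sL\ho\id)(\br^{\pm1}\ot\bc)=\br^{\pm1},
\end{gather*}
where the last equality is \eqref{eq.du1} applied to $x=\br^{\pm1}\in\oX$ (recall $\br^{\pm1}\in\oX$ since $\bu=\sum S(\beta)\alpha$, $\modg$, and hence $\br=\bu\modg^{-1}$ all lie in $\oX$ by condition (ii) of the core subalgebra together with \eqref{eq.SR}). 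One small point of care: in this chain of equalities the first tensor factor of $\bc$ genuinely lies in $\sX$ by Lemma~\ref{r.ccc}(a), so $\br^{\pm1}\ot\bc\in\oX\ho\sX\ho\scH$ and each application of $\sL$ is legitimate — this is the well-definedness remark following Lemma~\ref{r.ccc}.

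Since each of the three conditions has an immediate reference, I do not expect any serious obstacle; the only thing requiring a moment's attention is checking that $\br^{\pm1}\in\oX$ so that $\cT_\pm(\br^{\pm1})$ and the pairing in \eqref{eq.du1} make sense, and confirming that the element appearing in condition (iii) as written with $\ho$ matches the pairing-based definition of $\cT_\pm$. Both are routine once the earlier lemmas are in hand.
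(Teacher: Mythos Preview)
Your proof is correct and follows essentially the same approach as the paper: verify conditions (i)--(iii) of a twist system by invoking the ad-stability of $\sX$ from the definition of core subalgebra, Lemma~\ref{r.invar}(b) for ad-invariance of $\cT_\pm$, Lemma~\ref{r.ccc}(a) for $\bc\in\sX\ho\scH$, and identity~\eqref{eq.du1} with $x=\br^{\pm1}$ for condition~(iii). The only extra detail you add is a justification that $\br^{\pm1}\in\oX$, which the paper had already recorded just before~\eqref{eq.cTdef} (it follows from the observation that $J_T\in\overline{\sX^{\ot n}}$ for any bottom tangle $T$, applied to the kink tangles of Figure~\ref{fig:ribbon}).
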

\begin{proof}
By definition, $\sX$ is ad-stable. By Lemma \ref{r.invar}, $\cT_\pm$ are ad-invariant.
By Lemma \ref{r.ccc}(a), $\bc\in \sX \ho \scH$. Finally, Identity \eqref{eq.du1} with $x= \br^{\pm1}$ gives
$$ (\cT_\pm \ho \id) \bc = \br^{\pm1}.$$
This shows $\cT=(\cT_\pm, \sX)$ is a twist system.
\end{proof}

\subsection{From core subalgebra to invariant of integral homology 3-spheres}
\label{sec.JM}

\begin{theorem}
\label{r.JMdef}
Let $\sX$ be a core subalgebra of a topological ribbon Hopf algebra $\scH$, with its associated $\scH$-module homomorphisms $\cT_\pm:\sX \to \Ch$. Assume $T$ is an $n$-component bottom tangle with 0 linking matrix, $\ve_i\in \{\pm 1\}$ for $i=1,\dots, n$, and
$M=M(T;\ve_1,\dots,\ve_n)$ is the integral homology 3-sphere obtained from $S^3$ by surgery along $\cl(T)$, with framing of the $i$-th component changed to $\ve_i$.

Then  $J_T\in \sX^{\ho n}$, and
$$ J_M:= (\cT_{\ve_1} \ho \dots \ho \cT_{\ve_n}) (J_T) \in \Ch$$
 defines an invariant of integral homology 3-spheres.
\end{theorem}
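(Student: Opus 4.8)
The plan is to deduce the theorem by assembling three results already in place: Proposition~\ref{r.twistcore}, that $\cT=(\cT_\pm,\sX)$ is a twist system for $\scH$; Proposition~\ref{r26}, the criterion forcing $J_T$ into a prescribed family $\modK_\bullet$ of subsets of $\scH^{\ho\bullet}$; and Proposition~\ref{rah1}, that a twist system on which all the universal invariants $J_T$ of algebraically split $0$-framed bottom tangles land produces an invariant of integral homology $3$-spheres. Concretely, I would first invoke Proposition~\ref{r.twistcore} to get the twist system $\cT=(\cT_\pm,\sX)$, with $\cT_\pm\col\sX\to\Ch$ as in \eqref{eq.cTdef}. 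The only remaining input is the containment $J_T\in\sX^{\ho n}$ for every $n$-component algebraically split $0$-framed bottom tangle $T$; granting this, Proposition~\ref{rah1} applied to $\cT$ shows at once that $J_M=(\cT_{\ve_1}\ho\cdots\ho\cT_{\ve_n})(J_T)\in\Ch$ is well defined and is an invariant of $M=M(T;\ve_1,\dots,\ve_n)$, which is the assertion.

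To establish $J_T\in\sX^{\ho n}$ I would apply Proposition~\ref{r26} with the family $\modK_n:=\sX^{\ho n}\subset\scH^{\ho n}$ and check its hypotheses. The external-tensor-product closure is routine, since $\sX^{\ho n}\ho\sX^{\ho m}=\sX^{\ho(n+m)}$ for topologically free modules (via $\sX^{\ho n}\otimes\sX^{\ho m}\hookrightarrow\sX^{\ho n}\ho\sX^{\ho m}$); $1_\modk\in\sX^{\ho 0}=\Ch$ and $1_\scH\in\sX$ hold because $\sX$ is a $\Ch$-subalgebra; and $\boldmu$ is $(\modK_n)$-admissible because $\sX$ is a complete $\Ch$-subalgebra, so $\boldmu(\sX\ho\sX)\subseteq\sX$. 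That leaves the two substantial conditions: $\modb\in\sX^{\ho 3}$, and the $(\modK_n)$-admissibility of the braiding $\boldpsi^{\pm1}$ and of Majid's transmutation maps $\bD,\bS$ --- equivalently, that $\sX^{\ho\bullet}$ is stable under the braided-Hopf-algebra structure, namely $\boldpsi^{\pm1}(\sX\ho\sX)\subseteq\sX\ho\sX$, $\bD(\sX)\subseteq\sX\ho\sX$, and $\bS(\sX)\subseteq\sX$.

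These two conditions are where the definition of core subalgebra must be used in full, and I expect them to be the main obstacle. The naive attempt --- noting that $\boldpsi^{\pm1},\bD,\bS$ are $h$-adically continuous and manifestly carry $\sX^{\otimes\bullet}$ into itself, since $\sX$ is a $\scH$-ad-stable topological Hopf subalgebra --- only lands the images in the closures $\oX$ and $\oXtwo$, because $\cR$ is given to lie merely in $\oXtwo$ and not in $\sX\ho\sX$. To reach the strictly smaller topologically free modules $\sX^{\ho\bullet}$, the point is to exploit condition~(iii): the clasp element $\bc=(S\ho\id)(\cR_{21}\cR)$ has a presentation~\eqref{eq.clasp5} both of whose legs are $0$-convergent topological bases of $\sX$, a property $\cR$ itself lacks. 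Feeding this presentation, together with the geometric descriptions of $\bD,\bS$ via clasped tangles (compare $\modc^-=(\id\ho\bS)(\modc)$, \eqref{e77}), into the evaluation of $\boldpsi^{\pm1},\bD,\bS$ will show that each preserves $\sX^{\ho\bullet}$; the same input, applied to the Borromean tangle --- which, being algebraically split, is assembled from clasps rather than bare crossings --- yields $\modb\in\sX^{\ho 3}$. With every hypothesis of Proposition~\ref{r26} verified we obtain $J_T\in\sX^{\ho n}$, completing the proof along the lines of the first paragraph.
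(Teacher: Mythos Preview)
Your overall architecture is exactly the paper's: invoke Proposition~\ref{r.twistcore} to get the twist system, reduce via Proposition~\ref{rah1} to showing $J_T\in\sX^{\ho n}$, and verify the hypotheses of Proposition~\ref{r26} with $\modK_n=\sX^{\ho n}$. The easy hypotheses ($1\in\modK_0$, $1\in\modK_1$, tensor closure, admissibility of $\boldmu$) are as you say.

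The gap is in your treatment of the $(\sX^{\ho n})$-admissibility of $\boldpsi^{\pm1},\bD,\bS$. You correctly observe that naive continuity only lands the images in closures, but you then misdiagnose what extra input is needed. You propose to route through the clasp presentation~\eqref{eq.clasp5}, but $\boldpsi^{\pm1},\bD,\bS$ are built from $\cR$, not from $\bc$, and there is no evident way to rewrite, say, $\boldpsi(x\ot y)$ for arbitrary $x,y\in\sX$ in terms of the clasp legs; the identity $\modc^-=(\id\ho\bS)(\modc)$ tells you about $\bS$ applied to the second leg of $\bc$, not about $\bS$ on general elements. The paper's fix is much more direct and uses only condition~(ii): since $\cR\in\overline{\sX\ot\sX}$, one writes $\cR=\sum_{p}\cR_1(p)\ot\cR_2(p)$ as an $h$-adically convergent sum with $\cR_1(p),\cR_2(p)\in\sX$, and then each summand $\boldpsi_p$, $\bD_p$, $\bS_p$ of the induced decomposition visibly sends $\sX^{\ho a}$ into $\sX^{\ho b}$ (using only that $\sX$ is an ad-stable topological Hopf subalgebra). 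Lemma~\ref{r.admiss1} then upgrades this termwise containment to genuine $(\sX^{\ho n})$-admissibility. So condition~(iii) is not needed here at all; $\cR\in\overline{\sX\ot\sX}$ suffices.

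Where condition~(iii) \emph{is} essential is for $\modb\in\sX^{\ho3}$, and your instinct there is right. The paper uses the braided-commutator formula $\modb=\sum_{i,j}\bc'(i)\ot\bc'(j)\ot\Upsilon(\bc''(i)\ot\bc''(j))$ from \eqref{e42}, but the crucial step you do not mention is Lemma~\ref{upslionHX}: one shows $\Upsilon(\scH\ho\sX)\subset\sX$ and $\Upsilon(\sX\ho\scH)\subset\sX$ (via the two expressions \eqref{e41}, \eqref{e41a} for $\Upsilon$ and the identity $\radb(x\ot y)=S^{-1}(y)\trr x$). Writing $\bc''(i)=h^{k_i}\tilde\bc''(i)$ with $k_i\to\infty$ (this is exactly the $0$-convergence of the $\bc''(i)$ in $\scH$), these two containments give $\Upsilon(\bc''(i)\ot\bc''(j))\in h^{\max(k_i,k_j)}\sX$, which makes the double sum over $(i,j)$ converge in $\sX^{\ho3}$ rather than merely in its closure. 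Without isolating $\Upsilon(\scH\ho\sX)\subset\sX$ and its mirror, the clasp presentation alone does not obviously produce convergence in $\sX^{\ho3}$.
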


By Propositions \ref{rah1} and \ref{r.twistcore}, to prove Theorem \ref{r.JMdef},  it is sufficient to show the following
\begin{proposition}\label{r.Jvalue0}
Suppose $\sX$ is a core subalgebra of a topological ribbon Hopf algebra $\scH$ and $T$ is  an $n$-component bottom tangle with 0 linking matrix. Then $J_T \in \sX^{\ho n}$.
\end{proposition}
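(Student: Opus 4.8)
The plan is to deduce this from Proposition \ref{r26}, applied to the family $\modK_n := \sX^{\ho n}$, regarded as a $\Ch$-submodule of $\scH^{\ho n}$ via the injections of Proposition \ref{s.a2}. Note that the definition of a core subalgebra already guarantees $J_T\in\overline{\sX^{\otimes n}}$ for \emph{every} bottom tangle $T$; the real content of the proposition is the sharper statement that, for an algebraically split $0$-framed $T$, the coordinates of $J_T$ in the topological basis $\{\bc'(i_1)\otimes\cdots\otimes\bc'(i_n)\}$ of $\sX^{\ho n}$ are $0$-convergent, i.e.\ $J_T$ lies in the topologically free module $\sX^{\ho n}$ rather than merely in its closure. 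So it suffices to check that $(\modK_n)_{n\ge0}$ satisfies conditions (i)--(iii) of Proposition \ref{r26}.

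Condition (ii) is immediate: since $\sX$ is topologically free, $\sX^{\ho n}\ho\sX^{\ho m}=\sX^{\ho(n+m)}$, and the algebraic tensor product of elements of $\sX^{\ho n}$ and $\sX^{\ho m}$ already lies in $\sX^{\ho n}\otimes\sX^{\ho m}\subset\sX^{\ho(n+m)}$. Of the four maps in condition (iii), two are routine. One has $\boldmu(\sX\otimes\sX)\subset\sX$ because $\sX$ is a subalgebra, and, $\sX$ being complete, this extends by continuity to $\boldmu(\sX\ho\sX)\subset\sX$. For $\boldpsi^{\pm1}$ the point is that, by \eqref{e12}, the $R$-matrix enters only through the adjoint action: for fixed $x,y\in\sX$ the assignment $a\otimes b\mapsto(b\trr y)\otimes(a\trr x)$ is a $\Ch$-module homomorphism $\scH\ho\scH\to\sX\ho\sX$ (here ad-stability of $\sX$ is used), and $\boldpsi(x\otimes y)$ is its value on $\cR\in\scH^{\ho2}$; passing to completions gives $\boldpsi^{\pm1}(\sX^{\ho2})\subset\sX^{\ho2}$.

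The remaining, and genuinely delicate, part is the admissibility of the twisted maps $\bD$ and $\bS$ together with condition (i), and I expect this to be the crux. The naive estimate is not enough: in $\bD(x)=\sum x_{(1)}S(\beta)\otimes(\alpha\trr x_{(2)})$ the legs of $\cR$ enter through \emph{multiplication} rather than through the adjoint action, and since $\cR$ is known only to lie in the closure $\oXtwo$, not in $\sX^{\ho2}$, this puts $\bD(x)$ a priori in a completion strictly larger than $\sX^{\ho2}$, and likewise $\bS(x)$ only in something larger than $\sX$. To get the sharp bound one must exploit the special presentation \eqref{eq.clasp5}: the sets $\{\bc'(i)\}$ and $\{\bc''(i)\}$ are $0$-convergent topological bases of $\sX$, so the clasp form $\sL$ of Lemma \ref{r.ccc} together with the duality relations \eqref{eq.du1}--\eqref{eq.du2} can be used to re-express the problematic compositions in terms of the clasp element $\bc$ --- for which membership in $\sX\ho\scH$ and in $\scH\ho\sX$ is controlled --- and then to verify directly that the resulting coordinates form a $0$-convergent family; the ad-invariance of $J_T$ and the left-image analysis of Section \ref{sec.leftmodule} (in particular Proposition \ref{r.stable}) are the natural tools for controlling where these coordinates live. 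The same machinery handles condition (i): $1_{\Ch}\in\modK_0$ and $1_{\scH}\in\modK_1$ are clear since $\sX$ is a unital subalgebra, whereas $\modb\in\modK_3=\sX^{\ho3}$ has to be proved by writing the Borromean invariant through $\bc$ via the structure maps and applying the same estimates.
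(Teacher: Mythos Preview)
Your overall plan --- apply Proposition \ref{r26} with $\modK_n=\sX^{\ho n}$ --- is exactly the paper's, and your treatment of condition (ii) and of $\boldmu$ is fine. But there is a real gap in how you handle $\bD$, $\bS$ and the Borromean element.

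You claim that for $\bD$ and $\bS$ ``the naive estimate is not enough'' because one leg of $\cR$ enters through multiplication rather than the adjoint action. This is a misdiagnosis. The paper treats all of $\boldpsi^{\pm1},\bD,\bS$ by the same device (Lemma \ref{r.admiss1} and Proposition \ref{r.admiss5}): since $\cR\in\oXtwo$, one may write $\cR=\sum_p\cR_1(p)\otimes\cR_2(p)$ with each $\cR_1(p),\cR_2(p)\in\sX$, and then, e.g., $\uD_p(x)=\sum(\cR_2(p)\trr x_{(2)})\otimes\cR_1(p)x_{(1)}$. Because $\sX$ is a topological Hopf subalgebra, both $\cR_2(p)\trr x_{(2)}$ and $\cR_1(p)x_{(1)}$ lie in $\sX$, so $\uD_p(\sX)\subset\sX^{\ho2}$; the same works for $\bS_p$. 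Your proposed detour through the clasp form $\sL$ and the duality relations \eqref{eq.du1}--\eqref{eq.du2} is neither used nor needed here.

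The genuinely nontrivial step is $\modb\in\sX^{\ho3}$, and your sketch does not contain the idea that makes it work. The paper does start from \eqref{e42}, $\modb=\sum_{i,j}\bc'(i)\otimes\bc'(j)\otimes\Upsilon(\bc''(i)\otimes\bc''(j))$, but the crux is the pair of inclusions
\[
\Upsilon(\scH\ho\sX)\subset\sX\qquad\text{and}\qquad\Upsilon(\sX\ho\scH)\subset\sX
\]
(Lemma \ref{upslionHX}); merely knowing $\Upsilon(\sX\ho\sX)\subset\sX$ would not give convergence in $\sX^{\ho3}$. The first inclusion follows from the expression \eqref{e41} for $\Upsilon$ and ad-stability $\ad(\scH\ho\sX)\subset\sX$. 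The second requires the alternative expression \eqref{e41a} together with the computation $\radb(x\otimes y)=S^{-1}(y)\trr x$ (Lemma \ref{rr3}), giving $\radb(\sX\ho\scH)\subset\sX$. With both inclusions, writing $\bc''(i)=h^{k_i}\tilde\bc''(i)$ (using $0$-convergence) yields $\Upsilon(\bc''(i)\otimes\bc''(j))\in h^{\max(k_i,k_j)}\sX$, and hence the double sum for $\modb$ converges in $\sX^{\ho3}$. None of these ingredients appear in your proposal.
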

The rest of this section is devoted for a proof of this proposition, based on Proposition \ref{r26}.

\subsubsection{$(\sX^{\ho n})$-admissibility} The following lemma follows easily from the definition.
\begin{lemma}\label{r.admiss1}
Suppose $f: \scH^{\ho a} \to \scH^{\ho b}$ is a $\Ch$-module
homomorphism having a presentation as an $h$-adically convergent sum
$f= \sum_{p \in P}f_p$ such that for each $p$, $f_p(\sX^{\ho a}) \subset
\sX^{\ho b}$, where $P$ is a countable set.
(Here, the sum $f$ being $h$-adically convergent means that for each $j\ge0$
we have $f_p(\scH^{\ho a})\subset h^j\scH^{\ho b}$ for all but
finitely many $p\in P$.)
Then $f$
is $(\sX^{\ho n})$-admissible.
\end{lemma}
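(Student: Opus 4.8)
The plan is to unwind the definition of $(\sX^{\ho n})$-admissibility and reduce it to the single assertion $f(\sX^{\ho a})\subset\sX^{\ho b}$. One must show that $f_{(i,j)}=\id^{\ho i}\ho f\ho\id^{\ho j}$ carries $\sX^{\ho(i+j+a)}$ into $\sX^{\ho(i+j+b)}$ for all $i,j\ge 0$. I would first record the general ``tensoring'' step: if $g\colon\scH^{\ho a}\to\scH^{\ho b}$ is a $\Ch$-module homomorphism with $g(\sX^{\ho a})\subset\sX^{\ho b}$, then, by linearity, $g$ restricts to a $\Ch$-module homomorphism $\sX^{\ho a}\to\sX^{\ho b}$, so $g_{(i,j)}$ carries the algebraic tensor product $\sX^{\ho i}\otimes\sX^{\ho a}\otimes\sX^{\ho j}$ into $\sX^{\ho i}\otimes\sX^{\ho b}\otimes\sX^{\ho j}$; since the algebraic tensor product is dense in the completed one, every $\Ch$-module homomorphism is $h$-adically continuous, and $\sX^{\ho(i+j+b)}$ is complete, it follows that $g_{(i,j)}(\sX^{\ho(i+j+a)})\subset\sX^{\ho(i+j+b)}$. (Proposition~\ref{s.a2} is used here to view the completed tensor powers of $\sX$ as submodules of those of $\scH$.) So it suffices to prove $f(\sX^{\ho a})\subset\sX^{\ho b}$.

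To prove that, I would restrict the presentation $f=\sum_p f_p$ to $\sX^{\ho a}$: each $f_p$ restricts to a $\Ch$-module homomorphism $\sX^{\ho a}\to\sX^{\ho b}$, so for $y\in\sX^{\ho a}$ every finite partial sum of $\sum_p f_p(y)$ is a genuine element of $\sX^{\ho b}$, while the full sum converges $h$-adically in $\scH^{\ho b}$. It then remains to see that this limit lies in the submodule $\sX^{\ho b}$. For this I would fix a topological basis of $\sX^{\ho b}$ built from the distinguished topological basis $\{\bc'(i)\}_{i\in I}$ of the core subalgebra $\sX$, which by definition is $0$-convergent in $\scH$; then the chosen basis of $\sX^{\ho b}$ is $0$-convergent in $\scH^{\ho b}$, so (as in Example~\ref{exa:1}) the closure $\overline{\sX^{\ho b}}$ carries the product topology and $\sX^{\ho b}$ consists of those formal series whose coordinate family is $0$-convergent. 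Since $f(y)$ lies in the closed set $\overline{\sX^{\ho b}}$, the point is to see that its coordinate family is $0$-convergent; writing $f_p(y)=\sum_\gamma\lambda_{p,\gamma}e(\gamma)$, this is checked by fixing $k$, separating off the finitely many $p$ with $f_p(\scH^{\ho a})\not\subset h^k\scH^{\ho b}$, and using that each of those finitely many families $(\lambda_{p,\gamma})_\gamma$ is $0$-convergent. (In the applications in which this lemma is used the $f_p$ come from the $R$-matrix expansion and the adjoint action, and one can then argue more directly that $f_p(\sX^{\ho a})\subset h^k\sX^{\ho b}$ for all but finitely many $p$, using ad-stability of $\sX$, so that the restricted series already converges inside $\sX^{\ho b}$.)

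The delicate point --- and the only real obstacle --- is this last step. A core subalgebra $\sX$ is generally not closed in $\scH$ (Example~\ref{exa:1}), so the $h$-adic topology of $\sX^{\ho b}$ is strictly finer than the topology it inherits from $\scH^{\ho b}$; consequently an $h$-adically convergent series of elements of $\sX^{\ho b}$ that converges in $\scH^{\ho b}$ need not converge in $\sX^{\ho b}$ a priori, and one cannot simply invoke completeness of $\sX^{\ho b}$. Making the passage from $\scH^{\ho b}$-convergence to $\sX^{\ho b}$-convergence rigorous is where the structure of a core subalgebra --- the $0$-convergence in $\scH$ of its distinguished topological basis, and, in the applications, the ad-stability of $\sX$ --- must be brought to bear. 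The remainder (the tensoring step, and the verification that $\sum_p(f_p)_{(i,j)}$ is still an $h$-adically convergent presentation of $f_{(i,j)}$) is routine.
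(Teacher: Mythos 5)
You correctly reduce $(\sX^{\ho n})$-admissibility to the single assertion $f(\sX^{\ho a})\subset\sX^{\ho b}$, and you put your finger on exactly the right issue: $\sX^{\ho b}$ is not closed in $\scH^{\ho b}$, so a sum of elements of $\sX^{\ho b}$ that converges $h$-adically in $\scH^{\ho b}$ need not lie in $\sX^{\ho b}$. But the sketch you offer for closing that gap --- fixing $k$, separating off the finitely many $p$ with $f_p(\scH^{\ho a})\not\subset h^k\scH^{\ho b}$, and controlling each of those finitely many coordinate families --- does not work, because for the cofinitely many remaining $p$ the information $f_p(y)\in h^k\scH^{\ho b}\cap\sX^{\ho b}$ gives no bound on the $\sX$-coordinates $\lambda_{p,\gamma}$ once the basis element $e(\gamma)$ is itself divisible by a large power of $h$ in $\scH^{\ho b}$. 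In fact the hypotheses of the lemma as literally stated do not force $f(\sX^{\ho a})\subset\sX^{\ho b}$: take $a=0$, $b=1$, and let $f_i\colon\Ch\to\scH$ be $1\mapsto\bc'(i)$ for $i\in I$. Each $f_i(\Ch)\subset\sX$, and the family $\{f_i\}$ is $h$-adically convergent as a family of maps precisely because $\{\bc'(i)\}$ is $0$-convergent in $\scH$; yet $f(1)=\sum_i\bc'(i)$ has constant coordinate family $(1)_{i\in I}$ in the topological basis $\{\bc'(i)\}$ of $\sX$, which is not $0$-convergent when $I$ is infinite, so $f(1)\in\overline\sX\setminus\sX$. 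The obstruction is that $h^j\scH^{\ho b}\cap\sX^{\ho b}\supsetneq h^j\sX^{\ho b}$ in general, so one cannot transfer $h$-divisibility from $\scH^{\ho b}$ into $\sX^{\ho b}$.

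What actually carries the applications of this lemma is the strengthened hypothesis you yourself mention in parentheses: for each $j$, $f_p(\sX^{\ho a})\subset h^j\sX^{\ho b}$ for all but finitely many $p$. In Proposition~\ref{r.admiss5} and Lemma~\ref{r.Borro0} this is supplied by writing $\bc''(i)=h^{k_i}\tilde\bc''(i)$, $\cR_1(p)\otimes\cR_2(p)=h^{m_p}(z_1(p)\otimes z_2(p))$ with $z_k(p)\in\scH$ (not in $\sX$) and $m_p\to\infty$, and then using that $\sX$ is ad-stable under all of $\scH$, not merely under $\sX$, so that $z_k(p)\tri\sX\subset\sX$; this is exactly the maneuver $\Upsilon(\scH\ho\sX)\subset\sX$, $\Upsilon(\sX\ho\scH)\subset\sX$ spelled out in the proof of Lemma~\ref{r.Borro0}. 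With that strengthened hypothesis the lemma is an immediate consequence of the definitions, which is presumably what the authors mean by ``follows easily''; without it, the statement cannot be proved, and your argument correctly stalls at this point. (Your ``tensoring'' reduction to $i=j=0$ is correct in conclusion, but the cleanest justification is also coordinatewise: write $z=\sum_\alpha c_\alpha e_\alpha\in\sX^{\ho(i+a+j)}$ with $(c_\alpha)$ $0$-convergent, note $g_{(i,j)}(e_\alpha)\in\sX^{\ho(i+b+j)}$, and observe that $\{c_\alpha g_{(i,j)}(e_\alpha)\}$ is then $0$-convergent in $\sX^{\ho(i+b+j)}$ because the scalars $c_\alpha$ tend to $0$; an appeal to density, continuity, and completeness is circular here since it conflates the intrinsic topology of $\sX^{\ho(i+b+j)}$ with the one induced from $\scH^{\ho(i+b+j)}$.)
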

\begin{proposition} \label{r.admiss5}
Each of $\boldmu,\boldpsi^{\pm1}, \uD, \uS$ is $(\sX^{\ho n})$-admissible.
\end{proposition}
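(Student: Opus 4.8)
The plan is to derive Proposition \ref{r.admiss5} from Lemma \ref{r.admiss1}. Thus, for each of the four maps, viewed as a $\Ch$-module homomorphism $f\col\scH^{\ho a}\to\scH^{\ho b}$ with $(a,b)=(2,1),(2,2),(1,2),(1,1)$ for $\boldmu,\boldpsi^{\pm1},\uD,\uS$ respectively, I would produce an $h$-adically convergent presentation $f=\sum_{k}f_k$ (countably many terms) in which each $f_k$ satisfies $f_k(\sX^{\ho a})\subseteq\sX^{\ho b}$. For $\boldmu$ this is immediate: a single term suffices, since $\sX$ is a $\Ch$-subalgebra, so $\boldmu(\sX\otimes\sX)\subseteq\sX$, and, $\sX$ being complete, continuity upgrades this to $\boldmu(\sX\ho\sX)\subseteq\sX$. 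The other three are given by the Majid formulas \eqref{e12}, \eqref{e10}, \eqref{e11}, each of which is built from the structure maps $\Delta,S^{\pm1}$, the adjoint action $\trr$, multiplication, and exactly one inserted factor of $\cR$ or $\cR^{-1}$; all the difficulty is concentrated in that $R$-matrix factor.

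To handle it, I would use that $\cR\in\oXtwo$, and likewise $\cR^{-1}\in\oXtwo$ (this follows from axiom (ii) of a core subalgebra, as noted after the definition). By definition of the $h$-adic closure there is, for each $k\ge1$, some $r_k\in\sX\otimes\sX$ with $\cR-r_k\in h^k\scH^{\ho2}$; setting $r_0=0$ and $\cR^{(k)}:=r_k-r_{k-1}$ gives an $h$-adically convergent expansion $\cR=\sum_{k\ge1}\cR^{(k)}$ in which every $\cR^{(k)}$ is a \emph{finite} sum of simple tensors $\alpha\otimes\beta$ with $\alpha,\beta\in\sX$, and similarly $\cR^{-1}=\sum_k\bar\cR^{(k)}$. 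Substituting these into \eqref{e12}, \eqref{e10}, \eqref{e11} and collecting terms by $k$ presents, by $\Ch$-bilinearity of the adjoint action together with continuity, $\boldpsi=\sum_k\psi_k$, $\boldpsi^{-1}=\sum_k\psi^{-}_k$, $\uD=\sum_k\Phi_k$ and $\uS=\sum_k\Psi_k$, where $\psi_k,\psi^{-}_k,\Phi_k,\Psi_k$ are the same formulas with the single finite tensor $\cR^{(k)}$ (resp.\ $\bar\cR^{(k)}$) in place of $\cR$ (resp.\ $\cR^{-1}$); e.g.\ $\Phi_k(x)=\sum x_{(1)}S(\beta^{(k)})\otimes(\alpha^{(k)}\trr x_{(2)})$ for $\cR^{(k)}=\sum\alpha^{(k)}\otimes\beta^{(k)}$. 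Each term map keeps the relevant powers of $\sX$: apart from $\Delta$, for which $\Delta(\sX)\subseteq\sX\ho\sX$ since $\sX$ is a topological Hopf subalgebra, the operations occurring are left/right multiplication by $\beta$ or $S(\beta)$ with $\beta\in\sX$ (fine, as $\sX$ is a subalgebra and $S^{\pm1}(\sX)\subseteq\sX$), the adjoint action $\alpha\trr(-)$ with $\alpha\in\sX\subseteq\scH$ (fine, as $\sX$ is $\scH$-ad-stable by axiom (i)), and $S^{\pm1}$. So, checking on simple tensors and then extending by continuity, $\psi_k,\psi^{-}_k$ preserve $\sX^{\ho2}$, $\Phi_k$ sends $\sX$ to $\sX^{\ho2}$, and $\Psi_k$ sends $\sX$ to $\sX$. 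Convergence in the sense of Lemma \ref{r.admiss1} follows because the formulas are $\Ch$-linear in the inserted $R$-matrix factor, so $f_k$ maps $\scH^{\ho a}$ into $h^j\scH^{\ho b}$ whenever $\cR^{(k)}$ (resp.\ $\bar\cR^{(k)}$) lies in $h^j\scH^{\ho2}$, which holds for all but finitely many $k$ as $\cR^{(k)},\bar\cR^{(k)}\to0$.

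With these presentations in hand, Lemma \ref{r.admiss1} immediately yields all four admissibility statements. I expect the main obstacle to be nothing more than the careful bookkeeping of the middle paragraph: making precise that $\oXtwo$ supplies the finite-tensor, $h$-adically convergent expansions of $\cR^{\pm1}$ and that inserting them into the Majid formulas yields sums meeting the convergence hypothesis of Lemma \ref{r.admiss1}. Everything else — stability of each term under the required completed tensor powers of $\sX$ — is a formal consequence of the Hopf-algebra axioms together with the core subalgebra conditions used (namely, $\sX$ being a topological Hopf subalgebra, ad-stable, and $\cR^{\pm1}\in\oXtwo$); the clasp presentation (iii) and the balanced element $\mathbf g$ play no role here.
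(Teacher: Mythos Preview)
Your proposal is correct and follows essentially the same approach as the paper: both invoke Lemma~\ref{r.admiss1}, handle $\boldmu$ trivially by the subalgebra property, and for $\boldpsi^{\pm1},\uD,\uS$ expand $\cR$ (and/or $\cR^{-1}$) as an $h$-adically convergent sum of simple tensors in $\sX\otimes\sX$ coming from $\cR\in\oXtwo$, then verify that each resulting term map preserves the appropriate power of $\sX$ using only that $\sX$ is a topological Hopf subalgebra and is ad-stable. The only cosmetic differences are your grouping of terms into finite blocks $\cR^{(k)}=r_k-r_{k-1}$ and your use of a separate expansion for $\cR^{-1}$, whereas the paper indexes by individual simple tensors and handles $\boldpsi^{-1}$ via $S(\cR_1(p))$; neither affects the argument.
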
 \begin{proof}
(a) Because $\boldmu (\sX \ho \sX) \subset \sX$, by Lemma \ref{r.admiss1}, $\boldmu$ is $(\sX^{\ho n})$-admissible.

(b) Because $\cR\in \overline{\sX \ot \sX}$, $\cR$ has a presentation
$$ \cR = \sum_{p\in P} \cR_1(p) \ot \cR_2(p), \quad \cR_1(p), \cR_2(p) \in \sX,$$
where the sum is convergent in the $h$-adic topology of $\scH \ho \scH$.
Using the definitions \eqref{e12}--\eqref{e11}, we the following presentations as  $h$-adically convergent sums
\begin{align*}
\boldpsi &= \sum_{p \in P} \boldpsi^+_{p} , \quad &\text{where } & &  \boldpsi^+_{p}(x\ot y) &=\sum\cR_2(p) \tri y \ot \cR_1(p) \tri x \\
\boldpsi^{-1} &= \sum_{p \in P} \boldpsi^-_{p}  &\text{where }& &  \boldpsi^-_{p}(x\ot y) &=\sum S(\cR_1(p)) \tri y \ot \cR_2(p) \tri x \\
 \uD&= \sum_{p \in P} \uD_p  &\text{where }& &  \uD_p (x) &= \sum \cR_2(p) \tri x_{(2)} \ot \cR_1 x_{(1)}\\
  \uS&= \sum_{p \in P} \uS_p  &\text{where }& &  \uS_p (x) &= \sum \cR_2(p)\,  S(  \cR_1\tri  x).
\end{align*}
Since $\cR_1(p), \cR_2(p)\in \sX$, which is a topological
  Hopf algebra, we see that $\boldpsi^\pm_p(\sX \ho \sX) \subset \sX
  \ho \sX$, $\uD_p(\sX) \subset \sX \ho \sX$, and $ \uS_p(\sX) \subset
  \sX$. By Lemma \eqref{r.admiss1}, all $\boldpsi^{\pm1}, \uD, \uS$
  are $(\sX^{\ho n})$-admissible.
\end{proof}

\subsubsection{Braided commutator and Borromean tangle}
\label{sec:borr-tangle-braid}
We recall from \cite{H:claspers,H:bottom} the definitions and properties of
the braided commutator for a braided Hopf algebra and
a formula for  universal invariant of the Borromean tangle.

Define the {\em braided commutator} ${\Upsilon}\col \scH \ho \scH \rightarrow \scH $ (for the braided
Hopf algebra $\underline \scH$) by
\begin{gather*}
  {\Upsilon} = {\boldsymbol \mu} ^{[4]}(\id \ho {\boldsymbol{\psi}} \ho \id)(\id\ho \bS\ho \bS\ho \id)(\uD\ho \uD).
\end{gather*}
As noted in \cite[Section 9.5]{H:bottom}, with $\bc = \sum_i \bc'(i) \ot \bc''(i)$, we have
\begin{gather}
  \label{e42}
  \modb  =  \sum_{i,j\in I} (\id^{\ho 2} \ho \Upsilon)\left( \bc'(i)  \ot \bc'(j) \ot \bc''(j) \ot \bc''(i)\right)  =\sum_{i,j\in I}  \bc'(i) \ot \bc'(j)  \ot \Upsilon (  \bc''(j)\ot \bc''(i)).
\end{gather}
Let $\modb_{i,j}$ be the $(i,j)$-summand of the right hand side. Then each $\modb_{i,j}\in \sX^{\ho 3}$ and
$\modb= \sum_{i,j} \modb_{i,j}$, with the sum converging in the $h$-adic topology of $\scH^{\ho 3}$. We want to show that the sum $\sum_{i,j} \modb_{i,j}$ is convergent in the $h$-adic topology of $\sX^{\ho 3}$.

\subsubsection{Two definitions of braided commutator}
From \cite[Section 9.3]{H:bottom}, we have
\begin{align}
  \label{e41}
  {\Upsilon} &= {\boldsymbol \mu} (\ad\ho \id)(\id\ho (\bS\ho \id)\uD) \\
    &= {\boldsymbol \mu} (\id\ho \radb)((\id\ho \bS)\uD\ho \id), \label{e41a}
\end{align}
where $ \radb$ is the right adjoint action (of the braided
Hopf algebra $\underline \scH$) defined by
\begin{gather*}
  \radb:={\boldsymbol \mu} ^{[3]}(\bS\ho \id \ho \id )({\boldsymbol{\psi}}\ho \id )(\id \ho \uD).
\end{gather*}

\begin{lemma}
  \label{rr3}
    For $x,y\in \scH $, we have
  \begin{gather}
    \label{e40}
    \radb(x\otimes y)= S^{-1}(y)\trr x  .
  \end{gather}
\end{lemma}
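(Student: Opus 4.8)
The plan is to prove Lemma~\ref{rr3} by a direct computation: unwind the definition $\radb={\boldsymbol \mu}^{[3]}(\bS\ho\id\ho\id)(\boldpsi\ho\id)(\id\ho\uD)$ using the explicit formulas \eqref{e12} for $\boldpsi$, \eqref{e10} for $\uD$ and \eqref{e11} for $\bS$, and then collapse the resulting expression using the quasitriangularity axioms for $\cR$ together with the ordinary Hopf algebra axioms for $\scH$.

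First I would apply the four constituent maps to $x\otimes y$ in turn. Writing $\cR=\sum\alpha\otimes\beta$, using the second expression $\uD(y)=\sum(\beta\trr y_{(2)})\otimes\alpha y_{(1)}$, and using that $\scH$ acting on itself by $\trr$ is a module action (so $a\trr(b\trr z)=(ab)\trr z$) which is also compatible with products via \eqref{e7}, one is quickly led to an expression of the shape
\[
  \radb(x\otimes y)=\sum \bS\bigl((\beta'\beta)\trr y_{(2)}\bigr)\,(\alpha'\trr x)\,\alpha\, y_{(1)},
\]
where $\sum\alpha\otimes\beta$ is the copy of $\cR$ coming from $\uD$ and $\sum\alpha'\otimes\beta'$ is an independent copy coming from $\boldpsi$. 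I would then expand $\bS$ by its second form $\bS(z)=\sum S^{-1}(\beta''\trr z)S(\alpha'')$ with a third copy $\sum\alpha''\otimes\beta''$ of $\cR$, and use $S^{-1}(uvw)=S^{-1}(w)S^{-1}(v)S^{-1}(u)$ to turn everything into ordinary products of $S^{\pm1}$ applied to the Sweedler legs of $y$, to $x$, and to legs of the three copies of $\cR$.

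The heart of the argument is the contraction of this expression, and the tools are standard: the hexagon identities $(\Delta\otimes\id)(\cR)=\cR_{13}\cR_{23}$ and $(\id\otimes\Delta)(\cR)=\cR_{13}\cR_{12}$ to merge copies of $\cR$ and to pair their legs against the Sweedler legs of $x$ and $y$; the relations $\cR^{-1}=(S\otimes\id)(\cR)=(\id\otimes S^{-1})(\cR)$, $(S\ot S)(\cR)=\cR$ from \eqref{eq.SR} and $({\boldsymbol \epsilon}\otimes\id)(\cR)=1$ to cancel $\cR$-against-$\cR^{-1}$ pairs; and the antipode axioms ${\boldsymbol \mu}(S\otimes\id)\Delta={\boldsymbol \mu}(\id\otimes S)\Delta={\boldsymbol \eta}{\boldsymbol \epsilon}$ together with their $S^{-1}$-analogue $\sum S^{-1}(a_{(2)})a_{(1)}={\boldsymbol \epsilon}(a)1$ to contract the remaining Sweedler legs of $\alpha,\beta,\alpha',\beta',\alpha'',\beta''$. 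After these cancellations the only surviving term should be $\sum S^{-1}(y_{(2)})\,x\,y_{(1)}$; and since $\Delta(S^{-1}(y))=\sum S^{-1}(y_{(2)})\otimes S^{-1}(y_{(1)})$, this equals $\sum S^{-1}(y)_{(1)}\,x\,S\bigl(S^{-1}(y)_{(2)}\bigr)=S^{-1}(y)\trr x$, which is exactly \eqref{e40}.

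The obstacle I expect is organizational rather than conceptual: keeping track of the three copies of $\cR$ and of the precise order in which their legs and the Sweedler legs of $y$ occur inside the various products, so that the hexagon and antipode identities are applicable at each stage; and choosing at each step the appropriate one of the two available forms of $\uD$ and $\bS$ so that either no $S^2$ --- equivalently, no balanced element $\modg$ --- is introduced, or any balanced-element factors that do appear cancel before the end (this is why the $S^{-1}$-form of $\bS$ is the convenient one). Two sanity checks are worth recording: the special case $y=1$, where $\uD(1)=1\otimes1$, $\boldpsi(x\otimes1)=x\otimes1$ and $\bS(1)=1$ give $\radb(x\otimes1)=x=S^{-1}(1)\trr x$ at once; and the observation that both sides of \eqref{e40}, as functions of $(x,y)$, are $h$-adically continuous, are $\scH$-module maps for the adjoint actions, and satisfy the right-module identity $\radb(x\otimes yz)=\radb\bigl(\radb(x\otimes y)\otimes z\bigr)$ --- which, for a Hopf algebra $\scH$ topologically generated by a distinguished set of elements, would reduce the whole verification to the case where $y$ ranges over those generators.
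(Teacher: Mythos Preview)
Your proposal is correct and follows essentially the same route as the paper's proof: unfold $\radb$ via $\uD$, $\boldpsi$, $\bS$, $\boldmu^{[3]}$, then cancel the copies of $\cR$ in pairs using $(S\otimes\id)(\cR)=\cR^{-1}$ to arrive at $\sum S^{-1}(y_{(2)})\,x\,y_{(1)}=S^{-1}(y)\trr x$. The only organizational difference is that the paper first records the auxiliary identity $\boldpsi(x\otimes y)=\sum(\cR_2\cR_2'\trr y)\otimes\cR_1 x S(\cR_1')$ (i.e., it pre-expands $\alpha'\trr x$ via the hexagon relation), which gives four copies of $\cR$ and makes the two $\cR^{-1}\cR=1$ cancellations immediate, whereas you keep $\alpha'\trr x$ unexpanded and would perform that hexagon step during the cancellation.
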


\begin{proof} 
In what follows we use $\cR= \sum \cR_1 \otimes \cR_2 = \sum \cR_1' \otimes \cR_2' =\sum \cR_1'' \otimes \cR_2''=\sum \cR_1''' \otimes \cR_2'''$.
  One can easily verify
  \begin{gather}
    \label{e37}
    {\boldsymbol{\psi}} (x\otimes y):=\sum( \cR_2   \cR_2' \trr y)\otimes  \cR_1\ x\, S( \cR_1').
  \end{gather}
  We have
  \begin{gather*}
    \begin{split}
      \radb(x\otimes y)
      &={\boldsymbol \mu} ^{[3]}(\bS\ho \id \ho \id )({\boldsymbol{\psi}} \ho \id )(x\otimes \uD(y))\\
      &=\sum{\boldsymbol \mu} ^{[3]}(\bS\ho \id \ho \id )({\boldsymbol{\psi}} \ho \id )(x\otimes y_{\ul{(1)}}\otimes y_{\ul{(2)}})\\
      &=\sum{\boldsymbol \mu} ^{[3]}(\bS\ho \id \ho \id )({\boldsymbol{\psi}} (x\otimes y_{\ul{(1)}})\otimes y_{\ul{(2)}})\\
      &=\sum {\boldsymbol \mu} ^{[3]}(\bS\ho \id \ho \id )( \cR_2   \cR_2' \trr y_{\ul{(1)}}\otimes  \cR_1xS( \cR_1')\otimes y_{\ul{(2)}})
      \quad \text{by \eqref{e37}}\\
      &=\sum {\boldsymbol \mu} ^{[3]}(\bS( \cR_2   \cR_2' \trr y_{\ul{(1)}})\otimes  \cR_1xS( \cR_1')\otimes y_{\ul{(2)}})\\
      &=\sum \bS( \cR_2   \cR_2' \trr y_{\ul{(1)}}) \cR_1xS( \cR_1')y_{\ul{(2)}},
    \end{split}
  \end{gather*}
  where $\uD(y)=\sum y_{\ul{(1)}}\otimes y_{\ul{(2)}}$.  Using
  \begin{gather*}
    \sum y_{\ul{(1)}}\otimes y_{\ul{(2)}}=\sum ( \cR_2'' \trr y_{(2)})\otimes  \cR_1''y_{(1)},\\
    \bS(w)=\sum S^{-1}( \cR_2''' \trr w)S( \cR_1''' ),
  \end{gather*}
  we obtain
  \begin{gather*}
    \begin{split}
      \radb(x\otimes y)
      &=\sum \bS( \cR_2   \cR_2' \trr y_{\ul{(1)}}) \cR_1xS( \cR_1')y_{\ul{(2)}}\\
      &=\sum S^{-1}( \cR_2''' \trr ( \cR_2   \cR_2' \trr ( \cR_2'' \trr y_{(2)})))S( \cR_1''' ) \cR_1xS( \cR_1') \cR_1''y_{(1)}\\
      &=\sum S^{-1}( \cR_2'''  \cR_2   \cR_2'  \cR_2'' \trr y_{(2)})S( \cR_1''' ) \cR_1xS( \cR_1') \cR_1''y_{(1)}.
    \end{split}
  \end{gather*}
  Since $\sum \cR_2'''  \cR_2  \otimes S( \cR_1''' ) \cR_1=\sum \cR_2'  \cR_2'' \otimes S( \cR_1') \cR_1''=\cR_{21}^{-1}\cR_{21}=1\otimes 1$, we obtain
  \begin{gather*}
    \begin{split}
      \radb(x\otimes y)
      &=\sum S^{-1}(y_{(2)})xy_{(1)}= S^{-1}(y)\trr x.
    \end{split}
  \end{gather*}
  This completes the proof of the lemma.
\end{proof}

By Lemma \ref{rr3} and $\ad(\scH\ho\sX)\subset\sX$ we easily obtain
\begin{gather}
  \label{ee1}
  \radb(\sX\ho\scH)\subset \sX.
\end{gather}

\begin{lemma}
  \label{upslionHX}
  We have
\begin{gather}
  \label{ee2}
  \Upsilon(\scH\ho\sX)\subset\sX,\\
  \label{ee3}
  \Upsilon(\sX\ho\scH)\subset\sX.
\end{gather}

\end{lemma}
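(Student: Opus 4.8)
The plan is to prove each of \eqref{ee2} and \eqref{ee3} by a short diagram chase through one of the two factorizations of $\Upsilon$ recorded in \eqref{e41}--\eqref{e41a}, feeding in the $(\sX^{\ho n})$-admissibility of $\boldmu,\bS,\uD$ from Proposition \ref{r.admiss5}, the $\scH$-ad-stability of $\sX$, and the inclusion \eqref{ee1}. No new structural input should be required; everything substantive (the formula $\radb(x\otimes y)=S^{-1}(y)\trr x$ of Lemma \ref{rr3}, the inclusion $\ad(\scH\ho\sX)\subset\sX$, and \eqref{ee1}) is already in place.

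For \eqref{ee2} I would use $\Upsilon=\boldmu(\ad\ho\id)(\id\ho(\bS\ho\id)\uD)$ and track an element of $\scH\ho\sX$. First, $\id\ho(\bS\ho\id)\uD$ sends it into $\scH\ho\sX\ho\sX$, because $\uD(\sX)\subset\sX\ho\sX$ and $(\bS\ho\id)(\sX\ho\sX)\subset\sX\ho\sX$ by admissibility (Proposition \ref{r.admiss5}, applied to the family $\modK_n=\sX^{\ho n}$ for the appropriate small values of $i,j$). Next, $\ad\ho\id$ sends $\scH\ho\sX\ho\sX$ into $\sX\ho\sX$: since $\sX$ is an $\scH$-submodule under the adjoint action and is topologically free as a $\Ch$-module, the completed action map $\scH\ho\sX\to\sX$ exists and is the restriction of $\ad$, so $\ad(\scH\ho\sX)\subset\sX$ — the fact already invoked just before \eqref{ee1}. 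Finally $\boldmu(\sX\ho\sX)\subset\sX$ by admissibility of $\boldmu$, which yields \eqref{ee2}. For \eqref{ee3} I would run the mirror argument through $\Upsilon=\boldmu(\id\ho\radb)((\id\ho\bS)\uD\ho\id)$: on $\sX\ho\scH$ the map $(\id\ho\bS)\uD\ho\id$ lands in $\sX\ho\sX\ho\scH$ (admissibility of $\uD$ and $\bS$ again), then $\id\ho\radb$ lands in $\sX\ho\sX$ by \eqref{ee1}, and $\boldmu$ lands in $\sX$.

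The step most deserving of care — though it is a mild one — is ensuring these are honest statements about the completed tensor powers $\sX^{\ho k}$ sitting inside $\scH^{\ho k}$ (legitimate by Proposition \ref{s.a2}), since $\uD$, $\bS$, $\ad$, $\radb$ are all assembled from $h$-adically convergent sums built on $\cR$, so a priori one only controls dense submodules. This is precisely the content packaged into $(\sX^{\ho n})$-admissibility (Proposition \ref{r.admiss5}), into the existence of the completed adjoint action $\scH\ho\sX\to\sX$, and into \eqref{ee1}; once those are cited, no further analysis of convergence is needed and the chase above closes.
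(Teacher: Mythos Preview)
Your argument is correct and essentially identical to the paper's: the paper proves \eqref{ee2} by chasing $\scH\ho\sX$ through the factorization \eqref{e41} using $\uD(\sX)\subset\sX\ho\sX$, $\bS(\sX)\subset\sX$, $\ad(\scH\ho\sX)\subset\sX$, and $\boldmu(\sX\ho\sX)\subset\sX$, and then says \eqref{ee3} follows similarly from \eqref{e41a} and \eqref{ee1}. Your additional remarks about completed tensor products are fine but not needed beyond what the paper already packages into Proposition \ref{r.admiss5} and the line preceding \eqref{ee1}.
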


\def\uone{(\underline 1)}
\def\utwo{(\underline 2)}

\begin{proof}
  Using \eqref{e41} and $\ad(\scH\ho\sX)\subset\sX$, we have
  \begin{gather*}
    \begin{split}
    \Upsilon(\scH\ho \sX)
    &={\boldsymbol \mu} (\ad\ho \id)(\id\ho \bS\ho \id)(\id\ho\uD)(\scH\ho \sX)\\
    &\subset{\boldsymbol \mu} (\ad\ho \id)(\id\ho \bS\ho \id)(\scH\ho\sX\ho \sX)\\
    &\subset{\boldsymbol \mu} (\ad\ho \id)(\scH\ho\sX\ho \sX)\\
    &\subset{\boldsymbol \mu} (\sX\ho \sX)\\
    &\subset \sX.
    \end{split}
  \end{gather*}
  Using \eqref{e41a} and \eqref{ee1}, we can similarly check that $\Upsilon(\sX\ho\scH)\subset\sX$.
\end{proof}

\subsubsection{Borromean tangle}
 \begin{lemma} \label{r.Borro0}
 One has $\modb \in \sX^{\ho 3}$.
 \end{lemma}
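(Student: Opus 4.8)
The plan is to take the presentation \eqref{e42} of $\modb$ as the sum $\sum_{i,j\in I}\modb_{i,j}$ with $\modb_{i,j}=\bc'(i)\ot\bc'(j)\ot\Upsilon(\bc''(j)\ot\bc''(i))$, which we already know lies in $\sX^{\ho 3}$ term by term and converges $h$-adically in $\scH^{\ho 3}$, and to upgrade this convergence to the (a priori finer) $h$-adic topology of $\sX^{\ho 3}$. Since $\sX$ is topologically free with topological basis $\{\bc'(c)\mid c\in I\}$, the triples $\{\bc'(a)\ot\bc'(b)\ot\bc'(c)\}$ form a topological basis of $\sX^{\ho 3}$; moreover, because $\{\bc'(i)\}$ is $0$-convergent in $\scH$, this triple family is $0$-convergent in $\scH^{\ho 3}$. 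Hence a series $\sum_{a,b,c}d_{abc}\,\bc'(a)\ot\bc'(b)\ot\bc'(c)$ computed in $\scH^{\ho 3}$ represents an element of the submodule $\sX^{\ho 3}$ (in the sense of Proposition \ref{s.a2}) exactly when the coefficient family $(d_{abc})$ is $0$-convergent in $\Ch$. Writing $\Upsilon(\bc''(j)\ot\bc''(i))=\sum_c\gamma_{ij}^c\,\bc'(c)$, which is legitimate because $\Upsilon(\bc''(j)\ot\bc''(i))\in\sX$ by Lemma \ref{upslionHX}, the whole statement reduces to showing that the family $(\gamma_{ij}^c)_{i,j,c}$ is $0$-convergent.

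The heart of the argument is the estimate: for every $N\ge0$, one has $\Upsilon(\bc''(j)\ot\bc''(i))\in h^N\sX$ for all but finitely many pairs $(i,j)$. To see this, fix $N$ and let $F_N\subset I$ be the finite set outside of which $\bc''(k)\in h^N\scH$, which exists since $\{\bc''(k)\}$ is $0$-convergent in $\scH$. If $j\notin F_N$, then $\bc''(j)\in h^N\scH$, so $\bc''(j)\ot\bc''(i)\in(h^N\scH)\ho\sX=h^N(\scH\ho\sX)$; applying the $\Ch$-module homomorphism $\Upsilon\colon\scH\ho\sX\to\sX$, which is well defined by \eqref{ee2} and hence automatically continuous, we get $\Upsilon(\bc''(j)\ot\bc''(i))\in h^N\sX$. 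Symmetrically, if $i\notin F_N$, then $\bc''(j)\ot\bc''(i)\in\sX\ho(h^N\scH)=h^N(\sX\ho\scH)$, and applying $\Upsilon\colon\sX\ho\scH\to\sX$, well defined by \eqref{ee3}, gives the same conclusion. Since the pairs with $i\in F_N$ \emph{and} $j\in F_N$ are only $|F_N|^2$ in number, the estimate follows. I expect this to be the only real obstacle; the key insight is to split over which of $i,j$ is ``$h$-large'' and to use the matching one-sided invariance of $\Upsilon$ from Lemma \ref{upslionHX} on each horn.

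Granting the estimate, the $0$-convergence of $(\gamma_{ij}^c)_{i,j,c}$ is bookkeeping: for $(i,j)\notin F_N\times F_N$ every coordinate $\gamma_{ij}^c$ lies in $h^N\Ch$ (as $h^N\sX=\{\sum_c f_c\bc'(c):(f_c)\text{ $0$-convergent},\ f_c\in h^N\Ch\}$), while for the finitely many remaining pairs the family $(\gamma_{ij}^c)_c$ is itself $0$-convergent, so $\gamma_{ij}^c\in h^N\Ch$ for all but finitely many $c$; altogether only finitely many triples $(i,j,c)$ violate $\gamma_{ij}^c\in h^N\Ch$. Consequently the family $\{\gamma_{ij}^c\,\bc'(i)\ot\bc'(j)\ot\bc'(c)\}_{i,j,c}$ is $0$-convergent in $\scH^{\ho 3}$, its sum is a rearrangement of $\sum_{i,j}\modb_{i,j}=\modb$ (rearrangement being harmless for $0$-convergent families over a countable index set in a complete separated module), and its coefficient family is $0$-convergent; therefore $\modb=\sum_{i,j,c}\gamma_{ij}^c\,\bc'(i)\ot\bc'(j)\ot\bc'(c)\in\sX^{\ho 3}$, as asserted.
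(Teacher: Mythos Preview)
Your proof is correct and follows essentially the same approach as the paper's. Both arguments hinge on the same key estimate---that $\Upsilon(\bc''(j)\ot\bc''(i))\in h^N\sX$ for all but finitely many pairs $(i,j)$---proved by the same case split on which of $i,j$ makes $\bc''$ sufficiently $h$-divisible and by invoking the one-sided inclusions \eqref{ee2} and \eqref{ee3} from Lemma~\ref{upslionHX}; the paper phrases this as $\Upsilon(\bc''(i)\ot\bc''(j))\in h^{\max(k_i,k_j)}\sX$, writing $\bc''(i)=h^{k_i}\tilde\bc''(i)$, and then simply asserts that the sum defines an element of $\sX^{\ho3}$, whereas you spell out the coefficient bookkeeping in the topological basis $\{\bc'(a)\ot\bc'(b)\ot\bc'(c)\}$ to justify that last step more explicitly.
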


\begin{proof}

Since $\{ \bc''(i) \mid i \in I \}$ is 0-convergent in $\scH$, we have
$\bc''(i) = h^{k_i} \tilde \bc''(i)$, where $\tilde\bc''(i)\in\scH$
and for any $N\ge0$ we have $k_i\ge N$ for all but finitely many $i$.

Recall that, by \eqref{e42}, we have
\begin{gather}
  \label{ee42}
  \begin{split}
    \modb
    &=\sum_{i,j\in I} \modb_{i,j},\quad \text{where}\quad
    \modb_{i,j} =\bc'(i) \ot \bc'(j) \ot \Upsilon (\bc''(i) \ot \bc''(j)).
  \end{split}
\end{gather}
By \eqref{ee2} and \eqref{ee3}, we have
\begin{gather*}
  \Upsilon (\bc''(i) \ot \bc''(j))=h^{k_i}\Upsilon (\tilde\bc''(i) \ot
  \bc''(j))\in h^{k_i}\sX,\\
  \Upsilon (\bc''(i) \ot \bc''(j))=h^{k_j}\Upsilon (\bc''(i) \ot
  \tilde\bc''(j))\in h^{k_j}\sX,
\end{gather*}
respectively.  Hence
\begin{gather}
  \Upsilon (\bc''(i) \ot \bc''(j))\in h^{\max(k_i,k_j)}\sX.
\end{gather}
Since $\bc'(i),\bc'(j)\in\sX$, the sum \eqref{ee42} defines an element
of $\sX^{\ho3}$.
 \end{proof}

 \subsubsection{Proof of Proposition \ref{r.Jvalue0}} It is clear that $1 \in \sX^ 0= \Ch$, $1\in \sX$, and $\sX^{\ho n } \ot \sX^{\ho m} \subset \sX^{\ho n+m}$. By Proposition \ref{r.admiss5}, each of $\boldmu, \boldpsi^{\pm1}, \uD,\uS$ is $(\sX^{\ho n})$-admissible. By Lemma \ref{r.Borro0}, $\modb \in \sX^{\ho 3}$. Hence by Proposition \ref{r26}, $J_T \in \sX^{\ho n}$.

 This completes the proof of Proposition \ref{r.Jvalue0} and also the proof of Theorem \ref{r.JMdef}.

\subsection{Integrality of $J_M$}
\begin{theorem} \label{thm.construction}
 Suppose $\sX$ is a core subalgebra of a topological ribbon Hopf algebra $\scH$ with the associate  twist system $\cT_\pm: \sX \to \Ch$.
 Assume that there is a family of subsets
 $\tK _n\subset \mathscr \sX ^{\ho n}$, $n \ge 0$, such that
  \begin{itemize}
  \item[(AL1)] $1_\modk \in \tK _0$, $1_\scH \in \tK _1$, $\modb \in \tK_3$, each of
 ${\boldsymbol{\psi}} ^{\pm 1},{\boldsymbol \mu}
  ,\bD,\bS$ is $(\tK_n)$-admissible, and $ x\ot y\in \tK_{n+m}$ for any $x\in \tK_n, y \in \tK_m$.

  \item[(AL2)] For any $\ve_1,\dots, \ve_n \in \{ \pm \}$,
 $$ (\cT_{\ve_1} \ho \dots \ho \cT_{\ve_n}) (\tK_n) \subset \tK_0.$$
  \end{itemize}
  Then the invariant $J_M$ of integral homology 3-spheres has values in $ \tK_0$.
\end{theorem}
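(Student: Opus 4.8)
The plan is to assemble the results already established in this section with Proposition~\ref{r26}; no new idea is needed, and I expect no genuine obstacle.

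First I would fix an integral homology $3$-sphere $M$ and, invoking Theorem~\ref{r.JMdef} (together with the fact, recalled earlier, that every integral homology $3$-sphere arises from surgery on an algebraically split $\pm1$-framed link), write $M = M(T;\ve_1,\dots,\ve_n)$ with $T$ an $n$-component bottom tangle of zero linking matrix, $\ve_i\in\{\pm1\}$, and $J_M = (\cT_{\ve_1}\ho\cdots\ho\cT_{\ve_n})(J_T)$. Here $\cT_{\ve_1}\ho\cdots\ho\cT_{\ve_n}\colon \sX^{\ho n}\to\Ch$ is well defined by continuity, each $\cT_{\ve_i}\colon\sX\to\Ch$ being a $\Ch$-module homomorphism on the topologically free module $\sX$.

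The key step is to check that $J_T\in\tK_n$. Via the injections $\sX^{\ho n}\hookrightarrow\scH^{\ho n}$ of Proposition~\ref{s.a2} I would regard $(\tK_n)_{n\ge0}$ as a family of subsets of $\scH^{\ho n}$, and observe that hypothesis (AL1) is precisely the statement that this family satisfies conditions (i)--(iii) of Proposition~\ref{r26}: namely $1_\modk\in\tK_0$, $1_\scH\in\tK_1$, $\modb\in\tK_3$ (condition (i)); $x\ot y\in\tK_{n+m}$ for $x\in\tK_n,y\in\tK_m$ (condition (ii)); and the $(\tK_n)$-admissibility of each of $\boldmu,\boldpsi^{\pm1},\bD,\bS$ (condition (iii)). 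Proposition~\ref{r26} then yields $J_T\in\tK_n$ for every $n$-component algebraically split, $0$-framed bottom tangle $T$, in particular for the $T$ above; since $\tK_n\subset\sX^{\ho n}$, the expression $(\cT_{\ve_1}\ho\cdots\ho\cT_{\ve_n})(J_T)$ also makes sense.

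Finally I would apply hypothesis (AL2) to $J_T\in\tK_n$ to conclude
\[
J_M = (\cT_{\ve_1}\ho\cdots\ho\cT_{\ve_n})(J_T)\in\tK_0 .
\]
Since by Theorem~\ref{r.JMdef} the right-hand side is independent of all the choices involved, this shows that the invariant $J_M$ of integral homology $3$-spheres takes values in $\tK_0$, as desired. The only point requiring a moment's care---and the closest thing to an ``obstacle''---is the identification of $(\tK_n)$ as a family of subsets of $\scH^{\ho n}$ so that Proposition~\ref{r26} applies verbatim; everything else is bookkeeping.
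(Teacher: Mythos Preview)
Your proposal is correct and follows essentially the same approach as the paper's proof: invoke Proposition~\ref{r26} via (AL1) to get $J_T\in\tK_n$, then apply (AL2) to conclude $J_M\in\tK_0$. The paper's proof is even more terse than yours, omitting the bookkeeping about identifying $\tK_n\subset\sX^{\ho n}\subset\scH^{\ho n}$ and the well-definedness of $\cT_{\ve_1}\ho\cdots\ho\cT_{\ve_n}$, which you spell out.
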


\begin{proof}  Suppose $T$ is an $n$-component bottom tangle $T$ with zero linking matrix.
By Proposition~\ref{r26}, Condition (AL1) implies that $J_T \in \tK_n$. Condition (AL2) implies that
$$ J_M = (\cT_{\ve_1 }\ho \dots \ho\cT_{\ve_n})\left(J_T \right) \in \tK_0,$$
where $M= M(T;\ve_1,\dots,\ve_n)$.
\end{proof}
In  the paper we will  construct a core subalgebra $\sX$  and a
 sequence of $\widehat{\BZ[q]}$-submodules $\tK_n \subset \sX^{\ho  n}$ satisfying the assumptions (AL1) and (AL2) of Theorem \ref{thm.construction}
 for the quantized universal enveloping algebra (of a simple Lie algebra) with $\tK_0= \widehat {\BZ[q]}$. By Theorem \ref{thm.construction}, the corresponding invariant of
 integral homology 3-spheres takes values in $\Zqh$.
 We then show that this invariant specializes to the Witten-Reshetikhin-Turaev invariant at
 roots of unity. In a sense, the $(\tK_n)$ form an integral version
 of the $(\sX^{\ho n})$. The construction of the integral objects $\tK_n$ is much more complicated than that of $\sX$.

\np
\section{Quantized enveloping algebras}\label{sec:UhUq}
 In this section we  present  basic facts about  the quantized enveloping algebras associated to a simple Lie algebra $\fg$: the $h$-adic version $\Uh(\fg)$, the $q$-version $\Uq(\fg)$ and its simply-connected version $\brU_q(\fg)$.  We discuss the well-known braid group actions, various automorphisms of $\Uq$,
the universal $R$-matrix and ribbon structure, and Poincar\'e-Birkhoff-Witt bases. New materials include
gradings on the quantized enveloping algebras in Section \ref{even-grading}, the mirror automorphism $ \varphi$, and a calculation of the clasp element.

\subsection{Quantized enveloping algebras $\Uh$, $\Uq$, and $\brU_q$} \label{sec.UhUq}
\subsubsection{Simple Lie algebra} \label{sec.UhUq1}
Suppose  $\g$ is a finite-dimensional, simple Lie algebra over
$\BC$ of rank $\ell$.  Fix a Cartan subalgebra $\modh $ of $\g$ and a basis
$\Pi=\{\alpha _1,\ldots ,\alpha _\ell\}$ of simple roots in the dual space $\modh ^*$.
Set $\modh ^*_\BR=\BR\Pi \subset \modh ^*$.  Let $Y=\modZ \Pi\subset \modh ^*_\BR$ denote the root
lattice,  $\Phi\subset Y$  the set of all roots, and $\Phi_+\subset \Phi $
the set of all positive roots.  Denote by $t$ the number of positive roots, $t= |\Phi_+|$.
Let $(\cdot,\cdot)$ denote the
invariant inner product on $\modh ^*_\BR$ such that $(\alpha ,\alpha )=2$ for every
short root $\alpha $. For $\alpha \in \Phi $, set $d_\alpha =(\alpha ,\alpha )/2\in \{1,2,3\}$. %
  Let $X$ be the {\em weight lattice}, i.e. $X \subset \modh ^*_\BR$ is the $\BZ$-span of
the {\em fundamental weights}
$\bral _1,\ldots ,\bral _\ell\in \modh ^*_\BR$, which are defined by
$(\bral _i,\alpha _j) = \delta_{ij} d_{\al_i}$.

For $\gamma =\sum_{i=1}^\ell k_i \al_i\in Y$, let $\Ht(\gamma)= \sum_i k_i$.
Let $\rho$ be the half-sum of positive roots,
$\rho =\frac12\sum_{\alpha \in \Phi _+}\alpha $.
It
is known that $ \rho =\sum_{i=1}^\ell\bral _i $.

We list all simple Lie algebras and their constants in Table \ref{tab:1}.
\begin{table}[ht]
\begin{center}
\begin{tabular}{|c|c|c|c|c|c|c|c|c|c|}
\hline
     & $A_\ell$ & $B_\ell$     & $C_\ell$   & $D_\ell$   & $E_6$  &  $E_7$  &  $E_8$  & $F_4$  & $G_2$ \\ \hline
     $d$  & $1$      & $2$                   &  2                &   1        &   1    &    1    &    1    &   2    &   3   \\ \hline
    $D$  & $\ell+1$ & 2                 &    2      &   4              &   3    &   2     &   1     &   1    &   1   \\ \hline
    $ h^\vee$ & $\ell +1$& $2\ell-1$    & $\ell+1$   &$2\ell-2$    & 12     &  18     & 30      &   9    &   4  \\ \hline
\end{tabular}
\end{center}
\caption{Constants $d,D,h^\vee$ of simple Lie algebras}
\label{tab:1}
\end{table}

\subsubsection{Base rings} \label{sec.UhUq2}
Let $v$ be an indeterminate, and set $\modA := \modZ [v^{\pm1}]\subset \BC (v)$.  We
regard $\modA $ also as a subring of $\BC [[h]]$, with $v= \exp(h/2)$.  Set
$q=v^2$.
\begin{remark}
  \label{r33}
  We will follow mostly Janzen's book \cite{Jantzen}.  However, our
  $v$, $q$ and $h$ are equal to ``$q$'', ``$q^2$'' and ``$-h$'',
  respectively, of \cite{Jantzen}.  Since $q=v^2$, one could avoid
  using either $q$ or $v$.  We will use both $q$ and $v$ because on
  the one hand the use of half-integer powers of $q$ would be cumbersome, and
  on the other hand we would like to stress that many constructions in
  quantized enveloping algebras can be done over $\BZ[q^{\pm 1}]$.
\end{remark}

For $\alpha \in \Phi$ and integers $n,k\ge0$, set
\begin{gather*}
v_\alpha := v^{d_\alpha }, \quad q_\alpha :=q^{d_\alpha } = v_\alpha ^2,\\
 [n]_\alpha := \frac{v^n_\alpha -v^{-n}_\alpha }
{v_\alpha -v^{-1}_\alpha }, \quad [n]_\alpha ! := \prod_{i=1}^n [i]_\alpha , \quad
\qbinom{n}{k}_\alpha := \prod_{i=1}^k \frac{[n-i+1]_\alpha }{[i]_\alpha },\\
\{n\}_\alpha := v^n_\alpha -v^{-n}_\alpha ,  \quad \{n\}_\alpha ! := \prod_{i=1}^n
\{i\}_\alpha .
\end{gather*}
When $\alpha $ is a short root, we sometimes suppress the subscript $\alpha $ in
these expressions.

Recall that for $n\ge 0$ and for any element $x$ in a $\modZ [q]$-algebra,
\begin{gather*}
  (x;q)_n := \prod_{j=0}^{n-1} (1-x q^j).
\end{gather*}

\subsubsection{The algebra  $\Uh$}
\label{subsub.UU}

The quantized enveloping algebra $\Uh=\Uh(\g)$ is defined as the
$h$-adically  complete $\BC [[h]]$-algebra, topologically generated by
$E_\alpha ,F_\alpha ,H_\alpha $ for $\alpha \in \Pi$, subject to the relations
\begin{gather}
  \label{e45}
  H_\alpha H_\beta   =  H_\beta  H_\alpha ,\\
  \label{e46}
  H_\alpha E_\beta   - E_\beta  H_\alpha   = (\alpha ,\beta ) E_\beta , \qquad
  H_\alpha F_\beta   - F_\beta  H_\alpha   = - (\alpha ,\beta ) F_\beta , \\
  \label{e47}
  E_\alpha F_\beta  - F_\beta  E_\alpha  = \delta _{\alpha \beta }\frac{K_\alpha -K_\alpha ^{-1}}{v_\alpha
    - v_\alpha ^{-1}}, \quad \text{where } K_\alpha  = \exp(hH_\alpha /2),
\end{gather}
\begin{align}
  \label{e48}
  \sum_{s=0}^{r} (-1)^s \qbinom{r}{s}_\alpha \,  E_\alpha ^{r-s} E_\beta  E_\alpha ^s & =0, \quad \text{where } r=1- (\beta ,\alpha)/d_\al,\\
  \label{e49}  \sum_{s=0}^{r} (-1)^s \qbinom{r}{s}_\alpha \,  F_\alpha ^{r-s} F_\beta  F_\alpha ^s & =0, \quad \text{where } r=1- (\beta ,\alpha)/d_\al.
\end{align}

We also write $E_i,F_i,K_i$ respectively for $E_{\al_i}, F_{\al_i}, K_{\al_i}$, for $i = 1,\ldots, \ell$.

For every $\lambda = \sum_{\alpha \in \Pi } k_\alpha \alpha \in \modh ^*_\BR$, define $H_\lambda = \sum_\alpha
{k_\alpha }\, H_\al$ and
$K_\lambda  = \exp( \frac{h}{2}H_\lambda)$. In particular, one can define $\brK_\al:= K_{\bral}$, for $\al\in \Pi$.
\subsubsection{Hopf algebra structure}\label{sec.a1}
 The algebra $\Uh$  has
a structure of a complete Hopf algebra over $\BC [[h]]$, where the
comultiplication, counit and antipode are given by:
\begin{align*}
  \Delta (E_\alpha )& = E_\alpha \otimes 1 + K_\alpha \otimes E_\alpha ,
   & {\boldsymbol \epsilon} (E_\alpha )& =0,  & S(E_\alpha )& = -K_\alpha ^{-1}E_\alpha ,\\
    \Delta (F_\alpha )& = F_\alpha \otimes K_\alpha ^{-1} + 1 \otimes F_\alpha ,
    &{\boldsymbol \epsilon} (F_\alpha )& =0,  &  S(F_\alpha )& = -F_\alpha K_\alpha ,\\
    \Delta (H_\alpha )& = H_\alpha \otimes 1 + 1 \otimes H_\alpha ,
    & {\boldsymbol \epsilon} (H_\alpha )& =0,  & S(H_\alpha )& = -H_\alpha .
\end{align*}

\subsubsection{The algebra $\Uq$ and its simply-connected version $\brU_q$}\label{sec.UU2}
Let $\Uq$ denote the $\BC (v)$-subalgebra of
$\Uhh=\Uh\otimes _{\BC [h]}\BC [h,h^{-1}]$ generated by $E_\alpha , F_\alpha $, and
$K_\alpha ^{\pm 1}$ for all $\alpha \in \Pi$.  Alternatively, $\Uq$ is defined to be
the $\BC (v)$-subalgebra generated by the elements $K_\alpha $, $K_\alpha ^{-1}$,
$E_\alpha $, $F_\alpha $ ($\alpha \in \Pi $), with relations \eqref{e47}--\eqref{e49} and
\begin{gather}
  \label{e50}
  K_\alpha K_\alpha ^{-1}=K_\alpha ^{-1}K_\alpha =1,\\
  \label{e51}
  K_\beta E_\alpha =v^{(\beta ,\alpha )}E_\alpha K_\beta ,\quad
  K_\beta F_\alpha =v^{-(\beta ,\alpha )}E_\alpha K_\beta
\end{gather}
for $\alpha ,\beta \in  \Pi $.

The algebra $\Uq$ inherits a Hopf algebra structure from  $\Uh[h^{-1}]$, where
\begin{gather*}
  \Delta(K_\alpha )= K_\alpha \otimes K_\alpha ,\quad
  {\boldsymbol \epsilon} (K_\alpha )=1,\quad
  S(K_\alpha ) = K_{\alpha }^{-1}.
\end{gather*}

Similarly, the simply-connected version $\brU_q$ is the $\BC (v)$-subalgebra of
$\Uhh$ generated by $E_\alpha , F_\alpha $, and
$\brK_\alpha ^{\pm 1}$ for all $\alpha \in \Pi$. Again $\brU_q$ is a $\BC(v)$-Hopf algebra, which contains $\Uq$ as a Hopf subalgebra. Let $\brU_q^0$ be the $\BC(v)$-algebra generated by $\brK_\al^{\pm1}, \al \in \Pi$. Then
\be
\label{eq.brUq}
\brU_q= \brU_q^0 \Uq.
\ee

The simply-connected version $\brU_q$ has been studied in \cite{DKP,Gavarini,Caldero} in connection with quantum adjoint action and various duality results. We need the simply connected version $\brU_q(\fg)$ for a duality result, and also for the description of the $R$-matrix.

\subsection{Automorphisms} \label{sec.8147}
There are  unique $h$-adically continuous  $\BC$-algebra automorphisms
$\ibar, \tphi,\omega$ of $\Uh$ defined by
 \begin{align*}
 \ibar(h)&= -h, \quad &\ibar(H_\al)&= H_\al, \quad &\ibar(E_\al)&= E_\al , \quad  &\ibar(F_\al)&=  F_\al\\
 \omega(h)&=h, \quad &\omega(H_\al) &= - H_\al, \quad &\omega(E_\al) &= F_\al, \quad  &\omega(F_\al)&= E_\al \\
 \tphi(h)&= -h, \quad &\tphi(H_\al)& = -H_\al, \quad &\tphi(E_\al)&= -F_\al K_\al, \quad  &\tphi(F_\al)&= -K_\al^{-1} E_\al,
 \end{align*}
 and a unique $h$-adically continuous  $\BC$-algebra anti-automorphism $\tau$ defined by
$$  \tau(h)=h, \quad \tau(H_\al)= -H_\al, \quad \tau(E_\al)=E_\al, \quad \tau(F_\al)=F_\al.$$

The map $\ibar$ is the bar operator of \cite{Lusztig}, and $\tau, \omega$ are the same $\tau,\omega$ in \cite{Jantzen}. All three are involutive, i.e. $
\tau ^2=
\ibar^2= \omega^2=\id$.
The restrictions of $
\ibar, \tphi,\tau,\omega$ to $\Uh\cap\Uq$ naturally extend to maps from
$ \Uq$ to $ \Uq$, and we have
\begin{align*}
 \tau(v)&= \omega(v)=v, \quad &\tau(K_\al) &= \omega(K_\al)=K_\al^{-1}, \\
 \ibar(v)&= v^{-1},\quad  &\ibar(K_\al)&= K_\al^{-1},\\
 \tphi(v)&= v^{-1},\quad  &\tphi(K_\al)&= K_\al.
 \end{align*}
Unlike $\ibar,\tau,\omega$, the map $\tphi$ is a $\BC$-Hopf algebra homomorphism:
\begin{proposition}
\label{r.phi5}
The $\BC$-algebra automorphism $\tphi$ commutes with $S$ and $\Delta$, i.e.
$$ \tphi S = S \tphi, \quad (\tphi \ho \tphi) \Delta = \Delta \tphi.$$
Besides $\varphi= \ibar \tau \omega S =  \ibar \omega \tau  S= S \ibar \tau \omega $, and
\be
\label{eq.tphi2}
\tphi^2(x) = S^2(x) = K_{-2\rho} x K_{2\rho}.
\ee
\end{proposition}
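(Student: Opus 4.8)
The plan is to reduce the whole proposition to the single factorization $\tphi = \ibar\tau\omega S$ (and the variant with the factors reversed), plus a handful of one-line verifications on the topological generators $E_\al, F_\al, H_\al$ and on $h$. This works because every map occurring here ($\tphi,\ibar,\tau,\omega,S$) is an $h$-adically continuous $\BC$-algebra (anti-)homomorphism, hence is determined by its values on $E_\al, F_\al, H_\al$ and on $h$; note that $\ibar$, $\tau$ and $\tphi$ are not $\BC[[h]]$-linear, which is why one must also track the action on $h$.

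First I would verify $\tphi = \ibar\tau\omega S = S\ibar\tau\omega$ and $\tau\omega=\omega\tau$ (the latter then giving the remaining form $\ibar\omega\tau S$). Each identity is a direct computation on generators: applying $\ibar\tau\omega S$ to $E_\al$ gives $S(E_\al)=-K_\al^{-1}E_\al$, which $\omega$ turns into $-K_\al F_\al$, which the anti-automorphism $\tau$ turns into $-F_\al K_\al^{-1}$, which $\ibar$ turns into $-F_\al K_\al=\tphi(E_\al)$; the generators $F_\al, H_\al$, the element $h$, and the reversed order $S\ibar\tau\omega$ are handled identically. Since $\tau$ and $S$ are anti-automorphisms while $\ibar,\omega,\tphi$ are automorphisms, $\ibar\tau\omega S$ is genuinely an automorphism, consistent with $\tphi$.

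Given both factorizations, $\tphi S=(S\ibar\tau\omega)S=S(\ibar\tau\omega S)=S\tphi$, which proves $\tphi S=S\tphi$. For the square, a generator check shows $(\ibar\tau\omega)^2=\id$ (the map $\ibar\tau\omega$ interchanges $E_\al$ and $F_\al$, fixes $H_\al$, and sends $h\mapsto -h$), whence $\tphi^2=(S\ibar\tau\omega)(\ibar\tau\omega S)=S(\ibar\tau\omega)^2 S=S^2$. To identify $S^2$ explicitly, observe that $2\rho=\sum_{\al\in\Phi_+}\al$ lies in the root lattice $Y$, so $K_{2\rho}\in\Uh$, and that conjugation $x\mapsto K_{-2\rho}xK_{2\rho}$ fixes each $H_\al$ and scales $E_\al$, $F_\al$ by $v^{\mp 2(\rho,\al)}=v^{\mp 2d_\al}$, using $(\rho,\al)=d_\al$. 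Computing $S^2$ on generators: $S^2(H_\al)=H_\al$, $S^2(E_\al)=S(-K_\al^{-1}E_\al)=K_\al^{-1}E_\al K_\al=v^{-(\al,\al)}E_\al=v^{-2d_\al}E_\al$, and likewise $S^2(F_\al)=v^{2d_\al}F_\al$. The two automorphisms agree on generators, so $S^2(x)=K_{-2\rho}xK_{2\rho}$ for all $x\in\Uh$.

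Finally I would prove $(\tphi\ho\tphi)\Delta=\Delta\tphi$ by direct computation on generators: since $\tphi$ is an $h$-adically continuous algebra homomorphism, both $(\tphi\ho\tphi)\Delta$ and $\Delta\tphi$ are $h$-adically continuous algebra homomorphisms $\Uh\to\Uh\ho\Uh$, so it suffices to check on $E_\al, F_\al, H_\al$. For $E_\al$, using $\tphi(E_\al)=-F_\al K_\al$, $\tphi(K_\al)=K_\al$ and $\Delta(F_\al)=F_\al\otimes K_\al^{-1}+1\otimes F_\al$, both sides come out equal to $-(F_\al K_\al\otimes 1+K_\al\otimes F_\al K_\al)$; the cases $F_\al$ and $H_\al$ are analogous. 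The computations are uniformly short, so there is no serious obstacle; the only point requiring care is the product-reversal bookkeeping forced by the anti-automorphisms $S$ and $\tau$, which is exactly why I would pin down the factorizations $\tphi=\ibar\tau\omega S=S\ibar\tau\omega$ at the outset and deduce $\tphi S=S\tphi$ and $\tphi^2=S^2$ formally from them rather than computing each relation by hand.
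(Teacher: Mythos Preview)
Your proof is correct and follows essentially the same approach as the paper: the paper's own proof simply reads ``All the statements can be easily checked on generators $h, H_\al, E_\al, F_\al$.'' Your version supplies the details and organizes them nicely by first establishing the factorization $\tphi=\ibar\tau\omega S=S\ibar\tau\omega$ and then deducing $\tphi S=S\tphi$ and $\tphi^2=S^2$ formally rather than checking each on generators separately, but this is a minor elaboration of the same generator-checking strategy.
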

\begin{proof}
All the statements can be easily checked on generators $h, H_\al, E_\al, F_\al$.
\end{proof}

\subsection{Gradings by root lattice} \label{sec:Ygrading}
\subsubsection{$Y$-grading}
\label{sec:y-grading}

There are $Y$-gradings on $\Uh$ and $\Uq$ defined
by
\begin{gather*}
  |E_\alpha |=\alpha ,\quad  |F_\alpha |=-\alpha ,\quad |H_\alpha |=|K_\alpha |=0.
\end{gather*}
For a subset $A \subset \Uh$, denote by $A_\mu, \mu \in Y$, the set of all elements of $Y$-grading $\mu$ in $A$.

We frequently use the following simple fact: If $x$ is $Y$-homogeneous and $\beta \in Y$, then
\begin{gather}
  K_\beta \,  x = v^{(\beta , |x|)} x K_\beta .
  \label{eq.8157}
\end{gather}
 In the language of representation theory, $x\in \Uh$ has $Y$-grading $\beta\in Y$ if and only if it is
 an element of weight $\beta$ in the adjoint representation of $\Uh$.

\subsubsection{$(Y/2Y)$-grading and the even part of $\Uq$}
\label{even-grading}

\begin{proposition}
  \label{r48}
  There is a unique $(Y/2Y)$-grading on the $\BC (v)$-algebra $\Uq$ satisfying
  \begin{gather*}
    \deg(K_\alpha ) \equiv \alpha ,\quad
    \deg(E_\alpha ) \equiv 0,\quad
    \deg(F_\alpha ) \equiv \alpha \pmod{2Y}
  \end{gather*}
  for $\alpha \in \Pi $.
\end{proposition}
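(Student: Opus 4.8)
The plan is to deduce the grading from the presentation of $\Uq$ by generators and relations recalled in Section \ref{sec.UU2}: generators $K_\alpha^{\pm1},E_\alpha,F_\alpha$ ($\alpha\in\Pi$) and relations \eqref{e47}--\eqref{e51}. Write $\bar\mu\in Y/2Y$ for the class of $\mu\in Y$; the feature of $Y/2Y$ that makes everything work is that it is $2$-torsion, so $-\bar\mu=\bar\mu$. Grade the free $\BC(v)$-algebra on the symbols $K_\alpha^{\pm1},E_\alpha,F_\alpha$ by $Y/2Y$, declaring $\deg K_\alpha^{\pm1}=\bar\alpha$, $\deg E_\alpha=0$, $\deg F_\alpha=\bar\alpha$, so that a monomial is homogeneous of degree the sum of the degrees of its letters. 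It then suffices to show that the two-sided ideal $I$ generated by the defining relations (each written with all terms moved to one side) is a graded ideal; the quotient $\Uq$ will then inherit a $(Y/2Y)$-grading with the asserted degrees on the generators.

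The one real step is the verification that each defining relation is $(Y/2Y)$-homogeneous, which is immediate. In \eqref{e50}, $K_\alpha^{\pm1}K_\alpha^{\mp1}$ has degree $2\bar\alpha=0=\deg 1$. In \eqref{e51}, $K_\beta E_\alpha$ and $E_\alpha K_\beta$ have degree $\bar\beta$, while $K_\beta F_\alpha$ and $F_\alpha K_\beta$ have degree $\bar\alpha+\bar\beta$. In the $q$-Serre relation \eqref{e48}, every summand $E_\alpha^{r-s}E_\beta E_\alpha^{s}$ has degree $0$, and in \eqref{e49} every summand $F_\alpha^{r-s}F_\beta F_\alpha^{s}$ has degree $r\bar\alpha+\bar\beta$, independently of $s$. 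In \eqref{e47}, $E_\alpha F_\beta$ and $F_\beta E_\alpha$ have degree $\bar\beta$, and when $\alpha=\beta$ the right-hand side is a $\BC(v)$-linear combination of $K_\alpha$ and $K_\alpha^{-1}$, hence homogeneous of degree $\bar\alpha=\bar\beta$ as well (the scalar $(v_\alpha-v_\alpha^{-1})^{-1}$ and the $q$-binomial coefficients lie in $\BC(v)$, i.e.\ in degree $0$). Thus $I$ is generated by homogeneous elements and the grading descends to $\Uq$.

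Uniqueness is automatic: $\Uq$ is generated as a $\BC(v)$-algebra by $K_\alpha^{\pm1},E_\alpha,F_\alpha$, so any $(Y/2Y)$-grading with the prescribed values on these generators must assign to each monomial in them the sum of the degrees of its letters, and such monomials span $\Uq$ over $\BC(v)$; in particular $\deg K_\alpha^{-1}=-\deg K_\alpha=\bar\alpha$ is forced. I do not expect a genuine obstacle here: the only thing to watch is that passing from $Y$ to $Y/2Y$ is precisely what makes the Cartan relations \eqref{e50}, \eqref{e51} and the $\alpha=\beta$ case of \eqref{e47} homogeneous, so that the assignment $\deg K_\alpha\equiv\alpha$ — incompatible with $K_\alpha K_\alpha^{-1}=1$ over $Y$ itself — becomes legitimate after reduction mod $2Y$.
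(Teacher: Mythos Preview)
Your proof is correct and follows exactly the approach indicated in the paper: the paper's proof consists of the single sentence ``Using the defining relations \eqref{e47}--\eqref{e51} for $\Uq$, one checks that the $(Y/2Y)$-grading is well defined,'' and you have carried out precisely this check in detail, together with the routine uniqueness argument.
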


\begin{proof}
  Using the defining relations \eqref{e47}--\eqref{e51} for $\Uq$, one
   checks that the $(Y/2Y)$-grading is well defined.
\end{proof}

The degree $0$ part of $\Uq$ in the $(Y/2Y)$-grading, which is
generated by $K_\alpha ^{\pm 2}$, $E_\alpha $ and $F_\alpha K_\alpha $
for $\alpha \in \Pi $, is called the {\em even part} of $\Uq$ and
denoted by $\Uqv$.
Elements of $\Uqv$ are said to be {\em even}.

For each $\alpha \in Y$, the degree $(\alpha \mod 2Y)$ part of $\Uq$ is $K_\alpha \Uqv$.

\begin{lemma}\label{eq.evenD3}
(a) Suppose $\mu \in Y$. Let $(\Uq^\ev)_\mu$ be the grading $\mu$ part of $\Uq^\ev$. Then
\begin{align*}
S\left ((\Uq^\ev)_\mu \right) & \subset   K_{\mu}\, \Uq^\ev,
\\ \Delta \left ( (\Uq^\ev)_\mu \right)  & \subset \bigoplus_{\lambda \in Y}  K_\lambda\,  (\Uq^\ev)_{\mu -\lambda }\otimes (\Uq^\ev)_\lambda.
\end{align*}
In particular, $\Delta(\Uq^\ev) \subset \Uq \otimes \Uq^\ev$.

(b) The adjoint action preserves the even part, i.e.
 $\Uq \tri \Uq^\ev \subset \Uq^\ev$.

(c) Each of $\ibar,\tau$, and $\tphi$ leaves $\Uq^\ev$ stable, i.e. $f(\Uq^\ev) \subset \Uq^\ev$ for $f=\ibar,\tau,\tphi$.
\end{lemma}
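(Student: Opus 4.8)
The plan is to reduce every assertion to a short computation on algebra generators, using repeatedly the commutation rule \eqref{eq.8157}, the facts that $\Delta$ is an algebra homomorphism and $S$ an algebra anti-homomorphism, that the adjoint action $x\mapsto x\tri y$ is a $\Uq$-module action (so $(xy)\tri z=x\tri(y\tri z)$), and that $\ibar,\tphi$ are $\BC$-algebra homomorphisms while $\tau$ is a $\BC$-algebra anti-homomorphism. I will also use that $\Uqv$ is generated by $K_\al^{\pm2},E_\al,F_\al K_\al$ ($\al\in\Pi$), and that the $(Y/2Y)$-homogeneous component of $\Uq$ of degree the class of $\lambda$ equals $K_\lambda\Uqv$.

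Part (c) is the quickest: since $\ibar,\tau,\tphi$ are algebra (anti-)homomorphisms, it suffices to check that each sends the three types of generators of $\Uqv$ back into $\Uqv$. For $K_\al^{\pm2}$ and $E_\al$ this is immediate from the defining formulas (e.g. $\tphi(E_\al)=-F_\al K_\al$); for $F_\al K_\al$ one computes with \eqref{eq.8157} that $\tphi(F_\al K_\al)=-K_\al^{-1}E_\al K_\al$ is a nonzero scalar times $E_\al$, that $\ibar(F_\al K_\al)=F_\al K_\al^{-1}=(F_\al K_\al)K_\al^{-2}$, and that $\tau(F_\al K_\al)=K_\al^{-1}F_\al$ is a nonzero scalar times $(F_\al K_\al)K_\al^{-2}$, so all lie in $\Uqv$. (The absence of $\omega$ from the list is correct, since $\omega(E_\al)=F_\al\notin\Uqv$.) For part (b), the set $\{x\in\Uq : x\tri\Uqv\subseteq\Uqv\}$ is a unital $\BC(v)$-subalgebra of $\Uq$ (because $\tri$ is an action and $1\tri y=y$), so it is enough to check it contains the generators $E_\al,F_\al,K_\al^{\pm1}$. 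Expanding $x\tri y=\sum x_{(1)}\,y\,S(x_{(2)})$ on these gives $K_\al^{\pm1}\tri y=K_\al^{\pm1}yK_\al^{\mp1}$, $E_\al\tri y=E_\al y-(K_\al yK_\al^{-1})E_\al$, and $F_\al\tri y=F_\al yK_\al-yF_\al K_\al$; for $Y$-homogeneous $y\in\Uqv$, moving the $K$'s past $y$ by \eqref{eq.8157} exhibits each of these as an element of $\Uqv$ (using $E_\al,F_\al K_\al,K_\al^{\pm2}\in\Uqv$ and $K_\al yK_\al^{-1}\in\Uqv$), and the general $y$ follows by linearity.

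For part (a), first observe that $\Delta$ and $S$ both preserve the $Y$-grading, since they do so on the generators $E_\al,F_\al,K_\al^{\pm1}$ of $\Uq$; hence $S((\Uqv)_\mu)\subseteq(\Uq)_\mu$ and $\Delta((\Uqv)_\mu)\subseteq\bigoplus_{\lambda\in Y}(\Uq)_{\mu-\lambda}\otimes(\Uq)_\lambda$. For the antipode statement, take a monomial $g_1\cdots g_k\in(\Uqv)_\mu$ in the generators of $\Uqv$; then $S(g_1\cdots g_k)=S(g_k)\cdots S(g_1)$, and a generator-by-generator check — for instance $S(E_\al)=-K_\al^{-1}E_\al$ and $S(F_\al K_\al)$ equals a scalar times $F_\al$ — shows that each $S(g_i)$ has $(Y/2Y)$-degree equal to the class of the $Y$-degree of $g_i$; therefore $S(g_1\cdots g_k)$ has $(Y/2Y)$-degree the class of $\mu$, so $S((\Uqv)_\mu)\subseteq(\Uq)_\mu\cap K_\mu\Uqv\subseteq K_\mu\Uqv$. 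For the coproduct statement, I claim every $a\in\Uqv$ satisfies $\Delta(a)\in\sum_{\lambda\in Y}(K_\lambda\Uqv)\otimes(\Uqv)_\lambda$: this holds on the generators $K_\al^{\pm2},E_\al,F_\al K_\al$ by inspection of $\Delta$ (each coproduct has the shape $(\,\cdot\,)\otimes1+K_\al\otimes(\,\cdot\,)$ or $(\,\cdot\,)\otimes(\,\cdot\,)$ with even right factor), and it is stable under products because $(K_\lambda\Uqv)(K_\nu\Uqv)=K_{\lambda+\nu}\Uqv$ (the $K$'s normalize $\Uqv$) and $(\Uqv)_\lambda(\Uqv)_\nu\subseteq(\Uqv)_{\lambda+\nu}$; an induction on the length of a monomial then gives the claim. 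Intersecting with the $Y$-grading of $\Delta$ yields $\Delta((\Uqv)_\mu)\subseteq\bigoplus_\lambda K_\lambda(\Uqv)_{\mu-\lambda}\otimes(\Uqv)_\lambda$, and the ``in particular'' assertion is what remains upon forgetting the left grading, since every right tensor factor lies in $\Uqv$.

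The only step requiring genuine care is the coproduct assertion in part (a): there one must track the $Y$-degree and the $(Y/2Y)$-class of both tensor factors simultaneously and verify that this joint bookkeeping survives multiplication of coproducts. Everything else is a direct and short verification on generators.
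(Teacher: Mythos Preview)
Your proof is correct and follows essentially the same approach as the paper: for each part you reduce to the algebra generators $K_\al^{\pm2},E_\al,F_\al K_\al$ of $\Uqv$ (resp.\ $K_\al^{\pm1},E_\al,F_\al$ of $\Uq$) and verify the claim there, using multiplicativity to extend. The paper organizes (a) as a single induction on monomials proving the full statement directly, whereas you first establish the $(Y/2Y)$-degree part and then intersect with the $Y$-grading, but the content is the same.
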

\begin{proof} (a) Suppose $x \in (\Uq^\ev)_\mu$. We have to show that
\begin{align*}
S(x) & \in K_{\mu}\, \Uq^\ev, \\
\Delta(x) &\in \bigoplus_{\lambda \in Y}  K_\lambda\,  (\Uq^\ev)_{\mu -\lambda }\otimes (\Uq^\ev)_\lambda.
\end{align*}
If the statements hold for $x=x_1\in (\Uq^\ev)_{\mu_1}$ and $x= x_2 \in (\Uq^\ev)_{\mu_2}$, then they hold for $ x= x_1 x_2  \in (\Uq^\ev)_{\mu_1+\mu_2}$. Since $\Uq^\ev$ is generated as an algebra by $K_\al^{\pm2} \in (\Uq^\ev)_0$, $E_\al \in (\Uq^\ev)_\al$, and $F_\al K_\al \in (\Uq^\ev)_{-\al}$, it is enough to prove the statements when $x$ is one of $K_\al^{\pm 2}, E_\al$, or $F_\al K_\al$. For these special values of $x$, the explicit formulas of $S(x)$ and $\Delta(x)$ are given in subsection \ref{sec.a1}, from which the statements follow immediately.

(b) For  $x\in \Uq$, we have the following explicit formula for the adjoint actions
\begin{align}
K_\al \tri x &= K_\al \, x \, K_\al^{-1}  \notag \\
E_\al \tri x &= E_\al \,x  - K_\al \, x \, K_\al^{-1}\, E_\al  \label{eq.8161}\\
F_\al \tri x &= (F_\al \, x - x\, F_\al)\, K_\al.  \notag
\end{align}
If $x$ is even, then  all the right hand sides of the above are even. Since $\Uq$ is generated by $K_\al, E_\al, F_\al$, we have  $\Uq \tri \Uq^\ev \subset \Uq^\ev$.

(c) One can check directly that each of $\ibar,\tau$, and $\tphi$
maps any of the generators $K_\al^{\pm 2}, E_\al, F_\al K_\al$ of $\Uq^\ev$ to an element of $\Uq^\ev$.
\end{proof}
\begin{remark}
In Section \ref{sec:nonc-grad}, we refine the $Y/2Y$-grading of the $\BC(v)$-algebra $\Uq$
to a grading of the $\BC(v)$-algebra $\Uq$
 by a noncommutative $\modZ /2\modZ $-extension of $Y/2Y$.
\end{remark}

 By \eqref{eq.brUq}, $\brU_q= \brU_q^0 \Uq$, where $\brU_q^0= \BC(v)[\brK_1^{\pm 1}, \dots, \brK_\ell^{\pm1}]$.
Here we set $\brK_i=\brK_{\al_i}$ for $i=1,\dots,\ell$.
Let $\brU_q^{\ev,0}= \BC(v)[\brK_1^{\pm 2}, \dots, \brK_\ell^{\pm2}]$ and
 $$ \brU_q^\ev:= \brU_q^{\ev,0}\, \Uq^\ev.$$
 \begin{lemma}\label{eq.evenD3a}  One has
  $\brU_q \tri \brU_q^\ev  \subset \brU_q^\ev$ and
  $\brU_q \tri \Uq^\ev  \subset \Uq^\ev$.
 \end{lemma}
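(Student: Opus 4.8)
The plan is to reduce everything to a computation on algebra generators, using that the adjoint action makes $\brU_q$ a left module over itself, i.e. $(xy)\tri z=x\tri(y\tri z)$ for $x,y,z\in\brU_q$. First I would observe that $\brU_q$ is generated as a $\BC(v)$-algebra by the elements $\brK_i^{\pm1}$ ($1\le i\le\ell$) together with $E_\al,F_\al$ ($\al\in\Pi$): indeed $\brU_q=\brU_q^0\,\Uq$ by \eqref{eq.brUq}, the algebra $\brU_q^0$ is generated by the $\brK_i^{\pm1}$, and $\Uq$ is generated by the $K_\al^{\pm1},E_\al,F_\al$, but each $K_\al$ with $\al$ a simple root already lies in $\brU_q^0$, since $\al$ lies in the root lattice $Y\subset X$ and hence $K_\al$ is a monomial in the $\brK_i^{\pm1}$. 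In view of the module property, both asserted inclusions follow once I check that each of the three operators $\brK_i^{\pm1}\tri(-)$, $E_\al\tri(-)$, $F_\al\tri(-)$ sends $\brU_q^\ev$ into $\brU_q^\ev$ and $\Uq^\ev$ into $\Uq^\ev$.

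For this I would record two easy facts. (1) For $\mu\in X$ the inner automorphism $c_\mu\colon w\mapsto K_\mu wK_\mu^{-1}$ of $\brU_q$ (well defined since $K_\mu\in\brU_q^0$) stabilizes $\Uq$, $\Uq^\ev$ and $\brU_q^\ev$: on the respective sets of algebra generators it fixes $\brK_j^{\pm2}$ and $K_\beta^{\pm2}$ and multiplies $E_\beta$ and $F_\beta K_\beta$ by $v^{\pm(\mu,\beta)}\in\BC(v)$, using $(\mu,\beta)\in\BZ$ (which holds since $Y\subset X$). (2) $\Uq^\ev$, being the degree-$0$ part of the $(Y/2Y)$-graded algebra $\Uq$, is a subalgebra, and so is $\brU_q^\ev=\brU_q^{\ev,0}\Uq^\ev$, since each $\brK_i^{\pm2}$ commutes with the $K_\al^{\pm2}$ and commutes up to a power of $v$ with $E_\al$ and $F_\al K_\al$. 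Now, since $\brK_i$ is group-like, $\brK_i^{\pm1}\tri w=\brK_i^{\pm1}w\brK_i^{\mp1}$, and rewriting \eqref{eq.8161} together with its counterparts for $K_\al$ and $F_\al$ displayed alongside it, one gets
\[
\brK_i^{\pm1}\tri w=\brK_i^{\pm1}w\,\brK_i^{\mp1},\qquad
E_\al\tri w=E_\al w-(K_\al wK_\al^{-1})E_\al,\qquad
F_\al\tri w=(F_\al K_\al)(K_\al^{-1}wK_\al)-w(F_\al K_\al).
\]
In each formula $w$ occurs only bare or conjugated by a power of $K_\al$, the other factors being $1$, $E_\al$ or $F_\al K_\al$, all of which lie in $\Uq^\ev$. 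Hence, by facts (1) and (2), each right-hand side lies in $\brU_q^\ev$ whenever $w\in\brU_q^\ev$, and in $\Uq^\ev$ whenever $w\in\Uq^\ev$. Combined with the first paragraph this yields $\brU_q\tri\brU_q^\ev\subset\brU_q^\ev$ and $\brU_q\tri\Uq^\ev\subset\Uq^\ev$.

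I do not expect a genuine obstacle here; the argument is short and formal, and it runs in parallel for the two inclusions. The only two points needing a little attention are: (i) that $\brU_q$ is generated by $\brK_i^{\pm1},E_\al,F_\al$ rather than by $K_\al^{\pm1},E_\al,F_\al$ — this is where the inclusion $Y\subset X$ enters, and it is precisely what upgrades the known $\Uq\tri\Uq^\ev\subset\Uq^\ev$ of Lemma \ref{eq.evenD3}(b) to $\brU_q\tri\Uq^\ev\subset\Uq^\ev$; and (ii) that conjugation by the Cartan elements $\brK_i$ and $K_\al$ stabilizes $\Uq^\ev$ and $\brU_q^\ev$ even though these elements need not themselves lie in those subalgebras. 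Everything else is immediate from the relations \eqref{e50}--\eqref{e51} and the coproduct/antipode formulas of subsection \ref{sec.a1}. (Alternatively one could prove $\brU_q\tri\Uq^\ev\subset\Uq^\ev$ by writing $\brU_q=\brU_q^0\Uq$ and combining Lemma \ref{eq.evenD3}(b) with the remark that $\brK_\mu\tri(-)$ is conjugation by $\brK_\mu$, but handling both inclusions at once on generators is cleaner.)
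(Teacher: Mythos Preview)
Your proof is correct and takes essentially the same approach as the paper, which simply says ``The proof is similar to that of Lemma \ref{eq.evenD3}(b)'' and leaves the adaptation to the reader. You have spelled out exactly the needed adaptation: replacing the generators $K_\al^{\pm1}$ of $\Uq$ by the generators $\brK_i^{\pm1}$ of $\brU_q$, observing that conjugation by $\brK_i^{\pm1}$ still stabilizes both even parts, and checking that $\brU_q^\ev$ is a subalgebra.
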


 \begin{proof} The proof is similar to that of Lemma \ref{eq.evenD3}(b).
 \end{proof}

\subsection{Triangular decompositions and their even versions} \label{sec.tri1}

Let $\Uh^+$ (resp. $\Uh^-$, $\Uh^0$) be the $h$-adically closed
$\BC [[h]]$-subalgebra of $\Uh$ topologically generated by $E_\alpha $
(resp. $F_\alpha $, $H_\alpha $) for $\alpha \in \Pi$.

Let $\Uq^+$
(resp. $\Uq^-$, $\Uq^0$) denote the $\BC (v)$-subalgebra of $\Uq$ generated by
$E_\alpha $ (resp. $F_\alpha $, $K_\alpha ^{\pm 1}$) for $\alpha \in \Pi$.

It is known that the multiplication map
\begin{gather*}
  \label{e52}
  \Uq^-\otimes \Uq^0\otimes \Uq^+\longrightarrow\Uq,\quad x\otimes x'\otimes x''\mapsto xx'x''
\end{gather*}
is an isomorphism of $\BC (v)$-vector spaces.  This fact is called the
triangular decomposition of $\Uq$.  Similarly,
\begin{gather*}
  \Uh^-\ho\Uh^0\ho\Uh^+\longrightarrow \Uh,
  \quad x\otimes x'\otimes x''\mapsto xx'x'',
\end{gather*}
is an
isomorphism of $\BC [[h]]$-modules. These triangular decompositions
descend to various subalgebras of $\Uq$ and $\Uh$ which we will
introduce later.

We need also an even version of triangular decomposition for $\Uqv$. Although $\Uq^+\subset \Uqv$, the negative part $\Uq^-$ is not even.

   Let $\Uq^{\ev,-}:= \tphi (\Uq^+)$,  which is the $\BC (v)$-subalgebra of $\Uqv$ generated by $F_\alpha K_\alpha = -\tphi(E_\al)$, $\alpha \in \Pi $. Then $\Uq^{\ev,-} \subset \Uq^\ev$.
Let
$\Uq^{\ev,0}$ be the even part of $\Uq^{0}$, i.e.
\begin{gather*}
  \Uq^{\ev,0}:=\Uqv\cap \Uq^0
  =\BC (v)[K_1^{\pm 2}, \dots, K_\ell^{\pm 2}].
\end{gather*}

Using \eqref{eq.8157}, we obtain the following
isomorphisms of vector spaces
\begin{gather}
\label{e54a} \Uq^{\ev,-} \otimes  \Uq^{0} \otimes  \Uq^+ \congto \Uq,\quad x\otimes y\otimes z\mapsto xyz.\\
  \label{e54}
  \Uq^{\ev,-} \otimes  \Uq^{\ev,0} \otimes  \Uq^+ \congto \Uqv,\quad x\otimes y\otimes z\mapsto xyz.\\
   \label{e54b}\Uh^{\ev,-} \otimes  \Uh^{0} \otimes  \Uh^+ \congto \Uh^\ev,\quad x\otimes y\otimes z\mapsto xyz.
  \end{gather}
  where we set $\Uh^{\ev,-}= \tphi(\Uh^+)$, which is the $h$-adically closed $\BC[[h]]$-subalgebra of $\Uh$ topologically generated by $F_\al K_\al$, $\al\in\Pi$. We call \eqref{e54a}, \eqref{e54}, and \eqref{e54b} respectively the {\em even triangular decomposition} of $\Uq, \Uq^\ev$, and $\Uh$.

\subsection{Braid group action} \label{503}

\subsubsection{Braid group and Weyl group}

The {\em braid group} for the root system $\Phi $ has the presentation
with generators $T_\alpha $ for $\alpha \in \Pi $ and with relations
\begin{gather*}
  T_\alpha T_\beta =T_\beta T_\alpha \quad \text{for $\alpha ,\beta \in \Pi $, $(\alpha ,\beta )=0$},\phantom{-}\\
  T_\alpha T_\beta T_\alpha =T_\beta T_\alpha T_\beta \quad \text{for $\alpha ,\beta \in \Pi $, $(\alpha ,\beta )=-1$},\\
  T_\alpha T_\beta T_\alpha T_\beta =T_\beta T_\alpha T_\beta T_\alpha \quad \text{for $\alpha ,\beta \in \Pi $, $(\alpha ,\beta )=-2$},\\
  T_\alpha T_\beta T_\alpha T_\beta T_\alpha T_\beta =T_\beta T_\alpha T_\beta T_\alpha T_\beta T_\alpha \quad \text{for $\alpha ,\beta \in \Pi $, $(\alpha ,\beta )=-3$}.
\end{gather*}

The Weyl group $\fW $ of $\Phi $ is the quotient of braid group by the relations
$T_\alpha ^2=1$ for $\alpha \in \Pi $.  We denote the generator in $\fW $ corresponding
to $T_\alpha$ by $s_\alpha $.  We set $T_i=T_{\alpha _i}$, $s_i=s_{\alpha _i}$ for
$i=1,\ldots ,\ell$.

Suppose  $\mathbf i = (i_1,\ldots ,i_k)$ with
$i_j\in \{1,2,\ldots ,\ell\}$. Let
$
  w(\mathbf i)=s_{i_1}s_{i_2}\cdots s_{i_k}\in \fW $.
If there is no shorter
sequence $\modj $ such that $w(\modi )=w(\modj )$, then we say that the sequence $\modi $ is  {\em reduced}, and  {\em $w(\bi)$ has length $k$}.
It is known that the length of any reduced sequence is less than or equal to $t:=|\Phi_+|$, the number of positive roots of $\g$. A sequence $\modi $ is called {\em longest reduced} if $\modi $ is reduced
and has length $t$. There is a unique  element $w_0\in \fW $ such that for any longest reduced sequence $\bi$ one has $w(\bi)=w_0$.

\subsubsection{Braid group action}
As described in \cite[Chapter 8]{Jantzen}, there is an action of the
braid group on the $\BC(v)$-algebra $\Uq$.  For $\alpha \in
\Pi $, $T_\alpha\colon \Uq\rightarrow \Uq$ is
 $\BC(v)$-algebra automorphism
defined by
\begin{gather*}
  T_\alpha (K_\gamma ) = K_{s_\alpha (\gamma )},\quad
  T_\alpha (E_\alpha ) = -F_\alpha K_\alpha , \quad
  T_\alpha (F_\alpha ) = -K_\alpha ^{-1}E_\alpha ,
  \end{gather*}
  \begin{align*}
  T_\alpha (E_\beta )
  &= \sum_{i=0}^{r }(-1)^iv_\alpha ^{-i}E_\alpha ^{(r-i)}E_\beta E_\alpha ^{(i)}, \quad \text{with } r = -(\beta,\al)/d_\al,\\
  T_\alpha (F_\beta )
 & = \sum_{i=0}^{r }(-1)^iv_\alpha ^iF_\alpha ^{(i)}F_\beta F_\alpha ^{(r-i)}, \quad \text{with } r = -(\beta,\al)/d_\al,
  \end{align*}
  where $\gamma \in Y$, $\beta \in \Pi \setminus \{\alpha \}$.
The restriction of $T_\alpha$ to $\Uq\cap\Uh$ extends to a
continuous $\BC[[h]]$-algebra automorphism $T_\alpha$ of $\Uh$ by
setting
\begin{gather*}
T_\alpha (H_\gamma )= H_{s_\alpha (\gamma )} \quad \text{ for $\gamma \in Y$.}
\end{gather*}

\begin{remark} Our $T_\al$ is the same as $T_\al$ of \cite{Jantzen}. Our $T_i= T_{\al_i}$ is $T_{i,1}''$ of \cite{Lusztig}, or $\tilde T_i^{-1}$ of \cite{Lusztig02}.
\end{remark}

One can easily check that
\begin{gather}
  T_\alpha ^{\pm 1}(K_\beta \Uqv)\subset K_{s_\alpha (\beta )}\Uqv.
\end{gather}
for $\alpha \in  \Pi $, $\beta \in Y$.  In particular, the even
part $\Uqv$ is stable under $T_\alpha ^{\pm 1}$. Thus, we have
\begin{proposition}
The even part $\Uqv$ is stable under the action of the braid group.
\end{proposition}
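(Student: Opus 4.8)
The plan is to read the proposition straight off the displayed inclusion $T_\alpha^{\pm 1}(K_\beta\Uqv)\subset K_{s_\alpha(\beta)}\Uqv$ just recorded. First I would specialize $\beta=0$: since $K_0=1$ and $s_\alpha(0)=0$, that inclusion becomes $T_\alpha^{\pm 1}(\Uqv)\subset\Uqv$ for every $\alpha\in\Pi$, i.e. each generator $T_\alpha$ of the braid group, and also $T_\alpha^{-1}$, maps $\Uqv$ into itself. Since the braid group of the root system $\Phi$ is by definition generated by $\{T_\alpha\mid\alpha\in\Pi\}$, an arbitrary element $w$ is a finite word in the $T_\alpha^{\pm 1}$, so iterating the inclusion gives $w(\Uqv)\subset\Uqv$; applying the same to $w^{-1}$ gives the opposite inclusion, whence $w(\Uqv)=\Uqv$. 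Thus the braid group action preserves $\Uqv$, which is the claim. In other words, once the generators are known to preserve $\Uqv$, stability under the whole group is automatic.

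The only step that carries any actual content is the preliminary inclusion $T_\alpha^{\pm 1}(K_\beta\Uqv)\subset K_{s_\alpha(\beta)}\Uqv$ itself, which I would check on algebra generators. The algebra $\Uqv$ is generated by $K_\gamma^{\pm 2}$, $E_\gamma$ and $F_\gamma K_\gamma$ for $\gamma\in\Pi$, and by Proposition \ref{r48} these are exactly the degree-$0$ generators in the $(Y/2Y)$-grading. From the explicit formulas for $T_\alpha$ on $K_\gamma,E_\gamma,F_\gamma$ one sees that $T_\alpha$ sends a $Y$-homogeneous element of weight $\mu$ to one of weight $s_\alpha(\mu)$, hence carries $(Y/2Y)$-degree $\bar\mu$ to $\overline{s_\alpha(\mu)}$; since $T_\alpha$ is an algebra homomorphism and $T_\alpha(K_\beta)=K_{s_\alpha(\beta)}$, this yields the displayed inclusion (and the same computation with $T_\alpha^{-1}$, using $s_\alpha^2=\id$, covers the inverse). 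No genuine obstacle arises here — the argument is purely a matter of tracking the $Y$- and $Y/2Y$-weights through the Weyl reflection $s_\alpha$.
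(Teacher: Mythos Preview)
Your argument is correct and follows the paper's approach exactly: the paper records the inclusion $T_\alpha^{\pm 1}(K_\beta\Uqv)\subset K_{s_\alpha(\beta)}\Uqv$ (saying only that one can easily check it), notes that in particular $\Uqv$ is stable under each $T_\alpha^{\pm 1}$, and concludes. Your first paragraph is precisely this.

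One caution about your second paragraph: the clause ``hence carries $(Y/2Y)$-degree $\bar\mu$ to $\overline{s_\alpha(\mu)}$'' does \emph{not} follow from the $Y$-grading behavior, because the $(Y/2Y)$-grading of Proposition~\ref{r48} is not the reduction mod $2Y$ of the $Y$-grading (for instance $K_\alpha$ has $Y$-weight $0$ but $(Y/2Y)$-degree $\alpha$, while $F_\alpha$ has $Y$-weight $-\alpha$ but $(Y/2Y)$-degree $\alpha$). The conclusion you want is still true --- $T_\alpha$ does send $(Y/2Y)$-degree $\bar\nu$ to $\overline{s_\alpha(\nu)}$ --- but this must be verified by inspecting the explicit formulas for $T_\alpha(K_\gamma), T_\alpha(E_\gamma), T_\alpha(F_\gamma)$ separately and reading off their $(Y/2Y)$-degrees, not deduced from how $T_\alpha$ transforms $Y$-weights. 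Once that direct check is done (it is indeed routine), your argument goes through.
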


\subsection{PBW type bases} \label{sec.PBW}
\subsubsection{Root vectors}
Suppose $\modi =(i_1,\ldots ,i_t)$ is a longest reduced sequence.  For
$j\in \{1,\ldots ,t\}$, set
\begin{gather*}
  \gamma _j=\gamma _j(\modi ):=s_{i_1} s_{i_2} \cdots  s_{i_{j-1}}(\alpha _{i_j}).
\end{gather*}
 It is known that $\gamma _1, \ldots , \gamma _t$ are distinct positive roots and $\{\gamma _1, \ldots , \gamma _t\}=\Phi _+$.   The elements
\begin{eqnarray*}
E_{\gamma_j} (\modi ): = T_{\alpha _{i_1}} T_{\alpha _{i_2}}\cdots T_{\alpha _{i_{j-1}}}(E_{\alpha _{i_j}}),\quad
F_{\gamma_j} (\modi ) := T_{\alpha _{i_1}} T_{\alpha _{i_2}}\cdots T_{\alpha _{i_{j-1}}}(F_{\alpha _{i_j}})
\end{eqnarray*}
are called {\em root vectors corresponding to $\bi$}.
 The $Y$-grading of the root vectors are $|E_{\gamma_j} (\modi )|= {\gamma_j} =-|F_{\gamma_j} (\modi )|$. It is known that $E_{\gamma_j} (\modi )\in \Uq^+$ and $F_{\gamma_j} (\modi )  \in \Uq^-$.

  In general, $E_{\gamma_j}(\bi)$ and $ F_{\gamma_j}(\bi)$ depend on $\bi$, but if ${\gamma_j}$ is a simple root, i.e.  ${\gamma_j}=\al \in \Pi$, then we have
$E_{\gamma_j} (\modi )=E_\al $, $F_{\gamma_j} (\modi )=F_\al $.

\subsubsection{PBW type bases}
\label{s.PBW}
 Fix a longest reduced sequence $\bi$. In what follows,  we often suppress $\modi $ and write $E_\gamma =E_\gamma (\modi )$, $F_\gamma =F_\gamma (\modi )$ for all $\gamma \in \Phi _+$.

The divided powers $E_\gamma ^{(n)}$,  $F_\gamma ^{(n)}$  for $\gamma \in \Phi _+$, $n\ge 0$ are defined
by:
\begin{gather*}
  E_\gamma ^{(n)} :=E_\gamma ^n/[n]_\gamma !,\quad  F_\gamma ^{(n)}=F_\gamma ^n/[n]_\gamma !.
\end{gather*}

Following Bourbaki, we denote by $\BN$ the set of non-negative integers.
For $\bn\in \BN^t$, define
\begin{gather*}
  F^{(\bn)} =  \iprod_{\gamma_j\in \Phi_+}  F_{\gamma_j} ^{(n_j)}
 , \qquad E^{(\bn)} = \iprod_{\gamma_j\in \Phi_+}  E_{\gamma_j} ^{(n_j)}, \quad
\end{gather*}
Here $\iprod_{\gamma_j \in \Phi_+}$ means to take the product in the reverse order of $( \gamma_1,\gamma_2,\dots, \gamma_t)$. For example,
$$  F^{(\bn)} =  \iprod_{\gamma_j\in \Phi_+}  F_{\gamma_j} ^{(n_j)}= F_{\gamma _t}^{(n_t)}   \,
  F_{\gamma _{t-1}}^{(n_{t-1})}\cdots F_{\gamma _1}^{(n_1)}.$$

The set $\{ E^{(\modn) } \mid \modn \in \BN^t\}$ is a basis
of the $\BC(v)$-vector space $\Uq^+$, and a topological basis of $\Uh$.

Similarly, the set $\{ F^{(\modn )} \mid \modn \in \BN^t\} $ is a basis
of $\Uq^-$ and a topological basis of $\Uh^-$.

On the other hand, $\{K_\gamma\mid\gamma \in Y\}$ is a $\BC(v)$-basis of $\Uq^0$ and $\{ H^\bk \mid \bk \in \BN^\ell\}$, where  $H^\bk=\prod_{j=1}^\ell H_j^{k_j}$ for $\bk=(k_1,\dots,k_\ell)$, is a topological basis of $\Uh^0$.

Combining these bases and using the even triangular decompositions \eqref{e54a}--\eqref{e54b}, we get the  following proposition, which describes the Poincar\'e-Birkhoff-Witt bases of $\Uq$, $\Uqv$, $\Uh$:
\begin{proposition} \label{r.PBWbases}
For any longest reduced sequence $\bi$,
\begin{gather*}
 \{ F^{(\modm)} K_\bm K_\gamma E^{(\modn)} \mid  \modm ,\modn \in \BN^t, \gamma \in Y\}   \text{ is a $\BC(v)$-basis for } \Uq \\
 \{ F^{(\modm)} K_\modm K_\gamma ^2 E^{(\modn)}\mid \modm ,\modn \in \BN^t, \gamma \in Y\}  \text{ is a $\BC(v)$-basis for } \Uqv \\
 \{ F^{(\modm)} K_\modm H^{\bk} E^{(\modn)}\mid \modm ,\modn \in \BN^t, \bk \in \BN^\ell\} \text{ is a topological basis for } \Uh.
 \end{gather*}
where
\begin{gather}
\label{eq.Kn1}  K_\modn  := \prod_{j=1}^t K_{\gamma _j}^{n_j}=K_{-|F^{(\modn)}|} \quad \text{for
    $\modn =(n_1,\ldots ,n_t)\in \BN^t$.}
\end{gather}

\end{proposition}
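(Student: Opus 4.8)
The plan is to obtain all three statements from the triangular decompositions of Section~\ref{sec.tri1} together with the $Y/2Y$-grading of $\Uq$, using only the basis facts recorded earlier in this section and no separate analysis of $\Uq^{\ev,-}$. For $\Uq$: applying the triangular decomposition $\Uq^-\otimes\Uq^0\otimes\Uq^+\congto\Uq$ to the $\BC(v)$-bases $\{F^{(\bm)}\}_{\bm\in\BN^t}$ of $\Uq^-$, $\{K_\gamma\}_{\gamma\in Y}$ of $\Uq^0$ and $\{E^{(\bn)}\}_{\bn\in\BN^t}$ of $\Uq^+$ shows that $\{F^{(\bm)}K_\gamma E^{(\bn)}\}$ is a $\BC(v)$-basis of $\Uq$. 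Since $K_\bm=\prod_j K_{\gamma_j}^{m_j}=K_{-|F^{(\bm)}|}$, for each fixed $\bm$ the map $\gamma\mapsto\gamma-|F^{(\bm)}|$ is a bijection of $Y$, so $\{F^{(\bm)}K_\bm K_\gamma E^{(\bn)}\}$ is literally the same set of elements; this is the first statement.

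For $\Uqv$ I would first show that the PBW basis just produced is $Y/2Y$-homogeneous. Indeed $\deg E_\alpha\equiv 0$, $\deg F_\alpha\equiv\alpha$, and $T_\alpha^{\pm1}(K_\beta\Uqv)\subset K_{s_\alpha(\beta)}\Uqv$, so an induction along a reduced word gives $E_{\gamma_j}\in\Uqv$ and $F_{\gamma_j}\in K_{\gamma_j}\Uqv$; hence $F^{(\bm)}K_\gamma E^{(\bn)}$ is homogeneous of $Y/2Y$-degree $\sum_j m_j\gamma_j+\gamma\pmod{2Y}$. Since $\Uqv$ is by definition the degree-$0$ part of $\Uq$, a homogeneous basis restricts to a basis of it, so $\Uqv$ has $\BC(v)$-basis consisting of those $F^{(\bm)}K_\gamma E^{(\bn)}$ with $\gamma\equiv\sum_j m_j\gamma_j\pmod{2Y}$; writing such a $\gamma$ uniquely as $\sum_j m_j\gamma_j+2\mu$ with $\mu\in Y$, so that $K_\gamma=K_\bm K_\mu^2$, and renaming $\mu$ as $\gamma$, yields the second statement. (One could instead verify directly that $\{F^{(\bm)}K_\bm\}_{\bm}$ is a $\BC(v)$-basis of $\Uq^{\ev,-}$ and combine it with $\{K_\gamma^2\}$ and $\{E^{(\bn)}\}$ via \eqref{e54}.)

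For $\Uh$: the $h$-adic triangular decomposition $\Uh^-\ho\Uh^0\ho\Uh^+\congto\Uh$ and the topological bases $\{F^{(\bm)}\}$, $\{H^\bk\}_{\bk\in\BN^\ell}$, $\{E^{(\bn)}\}$ of its factors show that $\{F^{(\bm)}H^\bk E^{(\bn)}\}$ is a topological basis of $\Uh$. For each fixed $\bm$, $K_\bm=\exp(\tfrac h2 H_{-|F^{(\bm)}|})$ is a unit of $\Uh^0$, so left multiplication by $K_\bm$ is a ($h$-adically continuous, since $\Ch$-linear) $\Ch$-module automorphism of $\Uh^0$ carrying the topological basis $\{H^\bk\}$ to $\{K_\bm H^\bk\}$. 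Bundling these $\bm$-wise automorphisms of the middle factor into a single $\Ch$-module automorphism of $\Uh\cong\bigoplus_{\bm}\bigl(F^{(\bm)}\ho\Uh^0\ho\Uh^+\bigr)$ (topological direct sum) shows that $\{F^{(\bm)}K_\bm H^\bk E^{(\bn)}\}$ is again a topological basis of $\Uh$, which is the third statement.

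The step requiring the most care is the middle one: verifying that the ordinary PBW basis of $\Uq$ is $Y/2Y$-homogeneous — i.e. determining the $Y/2Y$-degrees of the non-simple root vectors through the braid-group action — and then the combinatorics identifying the even PBW monomials with those of the stated form. The two reindexings (for $\Uq$ and for $\Uh$) and the automorphism bundling in the $\Uh$ case are routine.
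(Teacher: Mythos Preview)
Your proof is correct and matches the paper's approach. The paper condenses everything to one line, invoking the even triangular decompositions \eqref{e54a}--\eqref{e54b} directly and tensoring the bases listed just before the proposition; your argument unwinds this by starting from the ordinary triangular decompositions, reindexing via $K_\bm$, and (for $\Uqv$) reading off the degree-$0$ part in the $Y/2Y$-grading --- which is exactly the content behind those even decompositions (cf.\ the sentence ``Using \eqref{eq.8157}'' preceding \eqref{e54a}) and behind the implicit basis $\{F^{(\bm)}K_\bm\}$ of $\Uq^{\ev,-}$.
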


\subsection{$R$-matrix} \label{sec:universal-r-matrix}
\subsubsection{Quasi-R-matrix} \label{Rmatrix}
Fix a longest reduced sequence $\bi$. Recall that $\{k\}_\al = v_\al^k - v_\al^{-k}$.

The {\em quasi-R-matrix} $\Theta \in \Uh^{\ho2}$ is defined by (see \cite{Jantzen,Lusztig})
\begin{equation}
  \label{e9}
  \Theta  = \sum_{\modn  \in \BN^t} F_{\modn}   \otimes  E_\modn,
\end{equation}
where for $\bn= (n_1,\dots, n_t) \in \BN^t$,
\begin{align}
\label{eq.En}
  E_\modn  &: =   E^{(\bn)} \prod_{j=1}^t  \{ n_j\}_{\gamma_j}!=  \iprod_{\gamma_j\in \Phi_+}\Big( (v_{\gamma_j} - v_{\gamma_j}^{-1}) E_{\gamma_j}\Big) ^{n_j}, \\
  \label{eq.Fn}
   F_\modn &:= \,  F^{(\bn)} \prod_{j=1}^t (-1)^{n_j} v_{\gamma _j}^{-n_j(n_j-1)/2}  = \iprod_{\gamma_j\in \Phi_+}  (-1)^{n_j} v_{\gamma _j}^{-n_j(n_j-1)/2}\,  F_{\gamma_j} ^{(n_j)}.
  \end{align}

It is known that $\Theta$ does not depend on $\bi$, and
\begin{gather}
  \label{e64}
  \Theta ^{-1}= (\ibar \ot \ibar) (\Theta) = \sum_{\modn  \in \BN^t}   F'_{\modn } \otimes  E'_\modn %
\end{gather}
where
$$ F'_{\modn } = \ibar( F_{\modn }), \ E'_{\modn } = \ibar( E_{\modn }).$$

\subsubsection{Universal $R$-matrix and ribbon element} \label{Rmatrix2}

Define an inner product on $\modh_\BR= \Span_\BR \{H_\alpha \zzzvert \alpha \in \Pi \}$ by
$(H_\alpha , H_\beta )=(\alpha ,\beta )$. Recall that $\bral$'s are the fundamental weights. Let $\brH_\al= H_{\bral}$.
Then $\{ \brH_\al/d_\al, \al \in \Pi\}$ is dual to $\{ H_\al, \al \in \Pi\}$
with respect to the inner product, i.e. $(H_\al, \brH_\beta/d_\beta)= \delta_{\al,\beta}$ for $\al,\beta \in \Pi$.
 Define the {\em diagonal part}, or the {\em Cartan part}, of the $R$-matrix by
\begin{gather}
\label{eq.diapart}  \cD=\exp\left (\frac{h}{2} \sum_{\al\in \Pi} (H_\al \otimes \brH_\al/d_\al) \right)\in (\Uh^0)^{\ho2}.
\end{gather}

We have $\cD=\cD_{21}$, where $\cD_{21}\in (\Uh^0)^{\ho2}$ is obtained from $\cD$ by permuting the first and the second tensorands.

A simple calculation shows that, for $Y$-homogeneous $x,y\in \Uh$, we have
\begin{gather}
 \label{eq.Dcommute}
  \cD(x\otimes y)\cD^{-1}= x \, K_{|y|}\otimes K_{|x|}\, y.
\end{gather}

The universal $R$-matrix and its inverse are given by
\begin{equation}
  \cR = \cD\Theta ^{-1},\quad  \cR^{-1} = \Theta  \, \cD^{-1},
  \label{501}
\end{equation}
Note that our $R$-matrix is the inverse of the $R$-matrix in
\cite{Jantzen}.

The quasitriangular Hopf algebra $(\UU,\cR)$ has a ribbon element $\modr $
whose corresponding balanced element (see Section
\ref{sec:ribbon-hopf-algebra}) is given by $\modg =K_{-2\rho }$. For $Y$-homogeneous $x\in \Uh$ we have
\begin{gather}
  S^2(x)=K_{-2\rho }xK_{2\rho }= q^{-(\rho ,|x|)}x.
  \label{e81}
\end{gather}

With $\cR=\sum \cR_1 \otimes \cR_2$, the
ribbon element and its inverse are given by
\begin{gather*}
\modr  = \sum S(\cR_1  )K_{-2\rho } \cR_2  ,\quad
\modr ^{-1} =  \sum \cR_1  K_{2\rho } \cR_2   = \sum \cR_2  K_{-2\rho }\cR_1.
\end{gather*}
One has $\br= J_T$ and $\br^{-1} = J_{T'}$, where $T$ and $T'$ are the bottom tangles in Figure \ref{fig:ribbon}.
\FIG{ribbon}{Tangles $T$ (left) and $T'$ determining the ribbon element $\br$ and its inverse}{height=20mm}

Using \eqref{e9} and \eqref{501}, we obtain
\begin{gather}
  \label{e80r}
  \modr  = \sum_{\modn \in \BN^t} F_\modn  K_\modn \modr _0 E_\modn, \quad
  \modr ^{-1} = \sum_{\modn \in \BN^t} F_\modn ' K_\modn ^{-1} \modr _0^{-1}E_\modn '
\end{gather}
where $K_\modn$ is given by \eqref{eq.Kn1} and
\begin{equation}
\modr _0
:=K_{-2\rho }\, {\boldsymbol \mu} (\cD^{-1})
=K_{-2\rho }\, \exp (-\frac{h}{2}\sum_{\al\in \Pi} H_\al \,\brH_\al/d_\al ). \notag
\end{equation}
We also have
\be \label{eq.rin1}
S(\br) = \uS(\br) = \br .
\ee

\subsection{Mirror homomorphism $\tphi$ } We defined the $\BC$-algebra homomorphism $\tphi$ in Section \ref{sec.8147}.
\begin{proposition} \label{r.mirror}
 The $\BC$-automorphism $\tphi$ is a mirror homomorphism for $\Uh$, i.e.
\begin{align}
\label{eq.mr1}   \tphi(K_{2\rho})&= K_{2\rho} \\
\label{eq.mr2}  (\tphi \ho \tphi) (\cR)  &= (\cR^{-1})_{21}\\
\label{eq.mr3}  (\tphi \ho \tphi)^2 (\cR)&=\cR.
\end{align}
 Consequently, if $T'$ is the mirror image of an $n$-component bottom tangle $T$, then
$J_{T'} = \tphi^{\ho n}( J_T)$.
\end{proposition}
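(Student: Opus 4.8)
The plan is to verify the three conditions \eqref{eq.mr1}--\eqref{eq.mr3}, which are exactly conditions \eqref{eq.mi3}, \eqref{eq.mi1}, \eqref{eq.mi2} of Definition \ref{d.mirror} for $\tphi$ (recall $\tphi$ is an $h$-adically continuous $\BC$-algebra automorphism by Section \ref{sec.8147}, and $\modg=K_{-2\rho}$ by Section \ref{Rmatrix2}); the last assertion then follows immediately from Proposition \ref{r.mirror1}. Two of the three conditions are cheap. For \eqref{eq.mr1}: since $\tphi(h)=-h$ and $\tphi(H_\lambda)=-H_\lambda$ for all $\lambda$ (by linearity from $\tphi(H_\al)=-H_\al$), continuity gives $\tphi(K_\lambda)=\tphi(\exp(hH_\lambda/2))=\exp(hH_\lambda/2)=K_\lambda$ for every $\lambda$, so in particular $\tphi(K_{2\rho})=K_{2\rho}$ and hence $\tphi(\modg)=\modg$. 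For \eqref{eq.mr3}: by Proposition \ref{r.phi5} we have $\tphi^2=S^2$ on $\Uh$, hence $(\tphi\ho\tphi)^2=(S\ho S)^2$ on $\Uh\ho\Uh$ by continuity, and applying \eqref{eq.SR} twice gives $(\tphi\ho\tphi)^2(\cR)=(S\ho S)^2(\cR)=(S\ho S)(\cR)=\cR$.

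The substance is \eqref{eq.mr2}. I would use the factorization $\cR=\cD\,\Theta^{-1}$ from \eqref{501} and handle the two factors separately. For the Cartan part: since $\tphi(h)=-h$ and $\tphi$ negates each of $H_\al$ and $\brH_\al$, continuity gives $(\tphi\ho\tphi)(\cD)=(\tphi\ho\tphi)\exp\!\big(\tfrac h2\sum_\al H_\al\ot\brH_\al/d_\al\big)=\exp\!\big(-\tfrac h2\sum_\al H_\al\ot\brH_\al/d_\al\big)=\cD^{-1}$. This reduces the problem to computing $(\tphi\ho\tphi)(\Theta)$. Here I would invoke the decomposition $\tphi=S\,\ibar\,\tau\,\omega$ from Proposition \ref{r.phi5} and compose the known transformation rules of the quasi-$R$-matrix: $(\ibar\ho\ibar)(\Theta)=\Theta^{-1}$ from \eqref{e64}, together with the behaviour of $\Theta$ under the anti-automorphism $\tau$ and the Chevalley-type automorphism $\omega$ (these can be read off from the braid-group description of the root vectors in Section \ref{sec.PBW}, cf.\ \cite{Jantzen}); since $\omega$ interchanges $\Uh^+$ and $\Uh^-$, these rules turn $\Theta$ into a $\cD$-conjugate of $\Theta_{21}^{\pm1}$, and one then has to reassemble $(\tphi\ho\tphi)(\cD\,\Theta^{-1})$ and match it against $\cR_{21}^{-1}=\Theta_{21}\,\cD^{-1}$, keeping careful track of the non-commutation of $\cD$ with $\Theta$ via \eqref{eq.Dcommute}.

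An alternative route, which I would use as a cross-check, avoids the explicit $\Theta$ computation: because $\tphi$ is an algebra automorphism with $(\tphi\ho\tphi)\Delta=\Delta\tphi$ and $\tphi$ commuting with $S$ and (trivially) with the flip, applying $\tphi\ho\tphi$ and $\tphi^{\ho3}$ to the quasitriangularity axioms $\cR\Delta(x)=\Delta^{\mathrm{op}}(x)\cR$, $(\Delta\ho\id)(\cR)=\cR_{13}\cR_{23}$, $(\id\ho\Delta)(\cR)=\cR_{13}\cR_{12}$ shows that $\cR':=(\tphi\ho\tphi)(\cR)$ is again a universal $R$-matrix for $(\Uh,\Delta)$; the same is classically true of $\cR_{21}^{-1}$. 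One then distinguishes the two possibilities by the lowest-order term in $h$: from \eqref{e9} and \eqref{501}, $\cR\equiv 1+h\big(\tfrac12\sum_\al H_\al\ot\brH_\al/d_\al+\sum_{\gamma\in\Phi_+}d_\gamma F_\gamma\ot E_\gamma\big)\pmod{h^2}$, and applying $\tphi\ho\tphi$ (using $\tphi(h)=-h$, $\tphi(E_\al)=-F_\al K_\al$, $\tphi(F_\al)=-K_\al^{-1}E_\al$, all $K_\al\equiv1$) yields an element congruent to $\cR_{21}^{-1}$ modulo $h^2$; rigidity of the quantized enveloping algebra as a quasitriangular Hopf algebra then forces $\cR'=\cR_{21}^{-1}$.

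I expect the main obstacle to be the bookkeeping in $(\tphi\ho\tphi)(\Theta)$ in the first approach (equivalently, invoking the rigidity statement cleanly in the second); everything else is routine. Once \eqref{eq.mr1}--\eqref{eq.mr3} are established, $\tphi$ is a mirror homomorphism by Definition \ref{d.mirror}, and Proposition \ref{r.mirror1} gives $J_{T'}=\tphi^{\ho n}(J_T)$ for the mirror image $T'$ of an $n$-component bottom tangle $T$, completing the proof.
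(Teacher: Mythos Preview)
Your approach to \eqref{eq.mr1} and \eqref{eq.mr3} matches the paper's exactly. For \eqref{eq.mr2} your first route is also essentially the paper's, but the paper organizes it more efficiently: rather than separating $\cD$ and $\Theta$ and then worrying about reassembly and the non-commutation in \eqref{eq.Dcommute}, it writes $\tphi=\ibar\,\omega\,\tau\,S$ and applies the four factors \emph{in sequence to $\cR$ as a whole}, citing one known identity at each step:
\[
\cR \xrightarrow{S\ho S} \cR \xrightarrow{\tau\ho\tau} \Theta^{-1}\cD \xrightarrow{\omega\ho\omega} \Theta_{21}^{-1}\cD \xrightarrow{\ibar\ho\ibar} \Theta_{21}\cD^{-1}=\cR_{21}^{-1},
\]
where the middle two steps are \cite[7.1(2),(3)]{Jantzen}. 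This sidesteps the bookkeeping you anticipated.

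Your alternative ``rigidity'' route is a genuinely different idea, but as stated it has a gap: knowing that $(\tphi\ho\tphi)(\cR)$ satisfies the quasitriangularity axioms and agrees with $\cR_{21}^{-1}$ modulo $h^2$ does \emph{not} by itself force equality, since one would still need a uniqueness theorem for universal $R$-matrices of $\Uh$ (up to the obvious ambiguities), which is not established in the paper. As a heuristic cross-check it is fine, but it is not a self-contained proof.
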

\begin{proof}
 Identity \eqref{eq.mr1} is part of the definition of $\tphi$. One could prove the other two  \eqref{eq.mr2} and \eqref{eq.mr3} by direct calculations. Here is an alternative proof using known identities.

By Proposition \ref{r.phi5},
$\tphi=  \ibar \omega \tau  S$.
Hence \eqref{eq.mr2} follows from the following four known identities:
\begin{align*}
(S\ho S) (\cR)& = \cR  \quad \text{by property of $R$-matrix, Equ. \eqref{eq.SR}} \\
(\tau \ho \tau) \cR&= (\tau \ho \tau) (\cD  \Theta^{-1})= \Theta^{-1} \cD   \quad \text{by \cite[7.1(2)]{Jantzen}}\\
(\omega \ho \omega) (\Theta^{-1} \cD ) &= \Theta_{21}^{-1} \cD   \quad \text{by \cite[7.1(3)]{Jantzen}}\\
(\ibar \ho \ibar)(\Theta_{21}^{-1} \cD )&= \Theta_{21} \cD ^{-1}  = \cR_{21}^{-1} \quad \text{by \eqref{e64}}.
\end{align*}

Identity \eqref{eq.mr3} follows from \eqref{eq.tphi2} and \eqref{eq.SR}:
$$ (\tphi^2 \ho \tphi^2)(\cR)= (S^2 \ot S^2) (\cR)=\cR.$$

 This shows $\tphi$ is a mirror homomorphism. By  Proposition \ref{r.mirror1},  $J_{T'} = \tphi^{\ho n}( J_T)$.
\end{proof}

Because the negative twist is the mirror image of the positive one, we have the following.
\begin{corollary} \label{r.phir}
One has $\tphi(\br)= \br^{-1}$.
\end{corollary}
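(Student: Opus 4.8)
Corollary \ref{r.phir} asserts that $\tphi(\br)=\br^{-1}$. The plan is to deduce this from Proposition \ref{r.mirror} together with the geometric description of $\br$ and $\br^{-1}$ as universal invariants of bottom tangles. Recall from Section \ref{Rmatrix2} that $\br = J_T$ and $\br^{-1}=J_{T'}$, where $T$ and $T'$ are the two $1$-component bottom tangles depicted in Figure \ref{fig:ribbon}. These two tangles are mirror images of one another: the negative twist (or negative kink) is obtained from the positive one by switching the single crossing. So the statement we want is exactly the $n=1$ case of the last assertion of Proposition \ref{r.mirror}, applied to this particular tangle.

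Concretely, I would first observe that $T'$ is the mirror image of $T$ in the sense of Section \ref{sec.mirror} (Definition before Proposition \ref{r.mirror1}): the diagram of $T'$ is obtained from that of $T$ by switching over/under at the unique crossing. This is immediate from the pictures in Figure \ref{fig:ribbon}, since a full positive twist and a full negative twist on a single strand differ precisely by a crossing change. Then, since Proposition \ref{r.mirror} has established that $\tphi$ is a mirror homomorphism for $\Uh$, its concluding sentence gives $J_{T'}=\tphi^{\ho 1}(J_T)=\tphi(J_T)$. Combining with $J_T=\br$ and $J_{T'}=\br^{-1}$ yields $\tphi(\br)=\br^{-1}$, as desired.

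The proof is therefore essentially a one-line invocation of Proposition \ref{r.mirror}; the only thing requiring any care is the geometric claim that the tangles of Figure \ref{fig:ribbon} are mirror images of each other, and this is visually evident. I do not anticipate a genuine obstacle here. One could alternatively give a purely algebraic verification using the formula $\modr = \sum S(\cR_1)K_{-2\rho}\cR_2$, the fact (Proposition \ref{r.phi5}) that $\tphi$ commutes with $S$, Identity \eqref{eq.mr1} $\tphi(K_{2\rho})=K_{2\rho}$, and Identity \eqref{eq.mr2} $(\tphi\ho\tphi)(\cR)=(\cR^{-1})_{21}$, matching the result against the expression $\modr^{-1}=\sum \cR_2 K_{-2\rho}\cR_1$; but the geometric argument via Proposition \ref{r.mirror1}/\ref{r.mirror} is cleaner and is the route I would take.
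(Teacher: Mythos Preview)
Your proposal is correct and takes essentially the same approach as the paper: the paper states the corollary immediately after Proposition \ref{r.mirror} with the single remark that the negative twist is the mirror image of the positive one, which is exactly your argument.
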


\subsection{Clasp element and quasi-clasp element}
\label{Rosso-form}
Here we calculate explicitly the value of the clasp element $\bc=J_{C^+} \in \Uh \ho \Uh$, which is
 the universal invariant of the clasp tangle $C^+$ of Figure \ref{fig:c+}.
 Recall that we have defined $E_\bn$, $ F_\bn$, and $ \cD$  in Section
\ref{sec:universal-r-matrix}.  We call
\be
\label{eq.quasiclasp}
\Gamma : = \bc \cD^2
\ee
the {\em quasi-clasp element}. Like the quasi-$R$-matrix, the quasi-clasp element enjoy better integrality than the clasp element itself.

\begin{lemma}
  \label{r5}
  Fix a longest reduced sequence $\bi$. We have
\begin{align}  
\modc &= \sum_{\modm ,\modn \in \Zt}  q^{-(\rho ,|E_\modn |)} \big(F_\bm K_\bm \otimes F_\bn K_\bn  \big)\,  \big( \cD^{-2} \big) \, \big( E_\bn \otimes E_\bm \big) \label{e80}\\
 \Gamma &=   \sum_{\modm ,\modn \in \Zt}  q^{-(\rho ,|E_\modn |) +(|E_\bm|, |E_\bn|)} \big(F_\bm K_\bm^{-1} E_\bn \otimes F_\bn K_\bn^{-1}\, E_\bm  \big)   \label{e80a} \\
   \modc &= ( \tphi \otimes \underline S ^{-1}\,  \tphi) (\modc)  \, . \label{e80b}
\end{align}

\end{lemma}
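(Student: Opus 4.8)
The plan is to prove the three formulas in sequence, deriving \eqref{e80} from the known expression for the $R$-matrix, then \eqref{e80a} as a purely algebraic rearrangement, and finally \eqref{e80b} from the mirror-image geometry combined with the $\underline S$-interpretation of the negative clasp.

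First I would establish \eqref{e80}. Start from the formula $\modc = (S\ho\id)(\cR_{21}\cR)$ in \eqref{e43}, with $\cR = \cD\Theta^{-1}$ and $\cR^{-1} = \Theta\cD^{-1}$ from \eqref{501}. Write $\cR = \sum \cR_1\otimes\cR_2$ and use the $Y$-homogeneity of the PBW root vectors together with the commutation rule \eqref{eq.Dcommute}: $\cD(x\otimes y)\cD^{-1} = xK_{|y|}\otimes K_{|x|}y$. Expanding $\Theta^{-1}$ via \eqref{e9} (or rather $\Theta$ and its $(\ibar\otimes\ibar)$-image) and moving all the Cartan factors $\cD$ to one side, one collects the root-vector contributions into $F_\bm K_\bm\otimes F_\bn K_\bn$ on the left and $E_\bn\otimes E_\bm$ on the right. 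The factor $q^{-(\rho,|E_\bn|)}$ comes from applying $S^2$ (equivalently the relation \eqref{e81}, $S^2(x) = q^{-(\rho,|x|)}x$) to the appropriate tensorand, since the $S$ in $(S\ho\id)$ acting after the antipode formulas for $E_\gamma$, $F_\gamma$ produces exactly such a scalar when normal-ordered. The bookkeeping here is the familiar ``move the $K$'s and $\cD$'s past homogeneous elements'' computation.

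Next, \eqref{e80a} follows from \eqref{e80} by definition: $\Gamma = \bc\,\cD^2$, so I would substitute \eqref{e80} and use $\cD^{-2}\cD^2 = 1$ after commuting $\cD^2$ leftward past $E_\bn\otimes E_\bm$ using \eqref{eq.Dcommute} again. Commuting $\cD^2$ past $E_\bn\otimes E_\bm$ produces the factor $K_{|E_\bm|}^{?}\otimes\cdots$ which converts $F_\bm K_\bm\otimes F_\bn K_\bn$ into $F_\bm K_\bm^{-1}E_\bn\otimes F_\bn K_\bn^{-1}E_\bm$ and contributes the extra exponent $(|E_\bm|,|E_\bn|)$; here one uses $|E_\bn| = -|F_\bn|$ and that $K_\bm = K_{-|F^{(\bm)}|}$ from \eqref{eq.Kn1}, so $K_\bm$ has $Y$-weight $-|F^{(\bm)}| = -|F_\bm|$. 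This is routine once the exponents are tracked carefully.

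Finally, for \eqref{e80b}: the negative clasp $C^-$ is the mirror image of $C^+$, so by Proposition \ref{r.mirror} (or Proposition \ref{r.mirror1}) we have $\modc^- = J_{C^-} = \tphi^{\ho 2}(\modc)$. On the other hand, \eqref{e77} gives $\modc^- = (\id\ho\underline S)(\modc)$. Applying $(\id\ho\underline S^{-1})$ to both of these — using that $\underline S$ is invertible (the antipode of the transmutation, invertible since $S$ is) — yields $(\id\ho\underline S^{-1})\tphi^{\ho 2}(\modc) = \modc$, i.e. $(\tphi\otimes\underline S^{-1}\tphi)(\modc) = \modc$, since $\tphi$ acts on the first tensorand unchanged and $\underline S^{-1}\circ\tphi$ on the second. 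One subtlety to check is that $\underline S$ commutes appropriately with $\tphi$, or more precisely that $(\id\ho\underline S^{-1})\circ\tphi^{\ho2} = \tphi\otimes(\underline S^{-1}\tphi)$ as operators on $\Uh^{\ho 2}$ — this is immediate from the tensor-factorwise definition.

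The main obstacle I anticipate is the exponent bookkeeping in \eqref{e80} and \eqref{e80a}: getting the precise scalar $q^{-(\rho,|E_\bn|)}$ and the correction term $(|E_\bm|,|E_\bn|)$ right requires careful use of \eqref{eq.Dcommute}, \eqref{e81}, and the antipode formulas, and it is easy to misplace a sign or a $\rho$-shift. Everything else is either a direct appeal to the geometry (Proposition \ref{r.mirror}, \eqref{e77}) or invertibility of $\underline S$.
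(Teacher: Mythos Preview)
Your plan is correct and matches the paper's proof essentially line for line: rewrite $\modc$ in terms of two copies of $\cR^{-1}$ via the antipode identities (so that only $\Theta$ appears, not $\Theta^{-1}$), expand $\cR^{-1}=\Theta\cD^{-1}$ in the PBW basis, commute the $\cD$-factors using \eqref{eq.Dcommute}, and pick up $q^{-(\rho,|E_\bn|)}$ from \eqref{e81}; then \eqref{e80a} is a direct rearrangement, and \eqref{e80b} follows by combining Proposition~\ref{r.mirror} with \eqref{e77} exactly as you describe. The only small clarification is that the paper first converts both $R$-matrix copies to $\cR^{-1}$ before expanding, which avoids ever needing the $\Theta^{-1}$ expansion you parenthetically mention.
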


\begin{proof}  Let $\cD^{-2}=\sum (\cD^{-2})_1\otimes (\cD^{-2})_2$,  and $\cR^{-1} = \sum\bar \cR_1  \otimes \bar \cR_2  =\sum\bar \cR_1  '\otimes \bar \cR_2  '$.
  By \eqref{e43}, we obtain
  \begin{gather*}
    \begin{split}
      \modc &=\sum S(\cR_1  )S(\cR_2  ')\otimes \cR_1  '\cR_2  \\
      &=\sum \bar \cR_1  S^2(\bar \cR_2  ')\otimes \bar \cR_1  '\bar \cR_2  .
    \end{split}
  \end{gather*}

  We have
  \begin{gather*}
      \cR^{-1}=\Theta \cD^{-1}
      =\sum_{\modm \in \Zt}F_\modm  (\cD^{-1})_1 \otimes E_\modm  (\cD^{-1})_2
      =\sum_{\modm \in \Zt}F_\modm K_\modm  (\cD^{-1})_1 \otimes (\cD^{-1})_2 E_\modm .
  \end{gather*}
  Substituting this into the formula for $\modc$,
  we obtain
  \begin{gather*}
    \begin{split}
      \modc &=\sum_{\modm ,\modn \in \Zt} F_\modm K_\modm (\cD^{-2})_1S^2(E_\modn )\otimes F_\modn K_\modn (\cD^{-2})_2E_\modm ,
    \end{split}
  \end{gather*}
   which using \eqref{e81} is \eqref{e80}. Identity \eqref{e80a} follows from \eqref{e80}, via \eqref{eq.Dcommute}.

  Since $C^-$ is the mirror image of $C^+$, by Proposition \ref{r.mirror},
  $
  \modc^- =(\tphi\otimes \tphi) (\modc),
$
  which, together with \eqref{e77}, gives \eqref{e80b}.
\end{proof}

 From  \cite[Proposition 2.1.14]{Majid2}, one has
\be
\label{eq.cflip}
 (\id \ot S^2)(\bc) = \bc_{21}.
 \ee

\np
\section{Core subalgebra of $\UUh$ and quantum Killing form}
\label{sec:corealgebra}
In this section we construct a core subalgebra $\Xh$ of the ribbon Hopf algebra
$$\UUh:= \Uh \ho_{\Ch}\Chh,$$
which is the extension of $\Uh$ when the ground ring is $\Chh$.
We will use the Drinfel'd dual $\Vh$ of $\Uh$ to construct $\Xh$. To show that $\Xh$ is a Hopf algebra we use a stability principle established in Section \ref{sec:topdil}, which also finds applications later. We then discuss the clasp form of $\Xh$ which turns out to coincide with the well-known quantum Killing form (or Rosso form) when restricted to $\Uq$. Thus, we get a geometric interpretation of the quantum Killing form.
\subsection{A dual of $\Uh$} \label{sec.Vh}
Fix a longest reduced sequence $\bi$.
For $\bn=(n_1,\dots,n_k) \in \BN^k$ let $$\|\bn\|= \sum_{j=1}^k n_j.$$

Let us recall the topological basis of $\Uh$ described in Proposition \ref{r.PBWbases}.
For  $\bn= (\bn_1, \bn_2, \bn_3)\in \BN^t \times \BN^\ell \times \BN^t$, let
$$ \bbe_h(\bn)= F^{(\bn_1)}K_{\bn_1}\,  H^{\bn_2} E^{(\bn_3)},$$
where $F^{(\bn_1)}, K_{\bn_1},  H^{\bn_2}, E^{(\bn_3)}$ are defined in Section \ref{s.PBW}.
By Proposition \ref{r.PBWbases},
$$ \{ \bbe_h(\bn) \mid \bn \in \BN^{t + \ell +t} \}$$
is a topological basis of $\Uh$.

Let $\Vh$ be closure (in the $h$-adic topology of $\Uh$)  of the $\Ch$-span of the set
\be
\label{eq.hbasis}
 \{h^{\| \bn\| } \bbe_h(\bn) \mid \bn \in \BN^{t + \ell +t} \}.
 \ee
 Then $\Vh$ is a formal series $\Ch$-module, having the above set \eqref{eq.hbasis} as a formal basis. (See Example~\ref{exa:1} of Section \ref{sec:ChModules}.)
Every  $x \in \Vh$ has a unique presentation of the form
$$
 x = \sum_{\bn \in \BN^{t+\ell+t}} x_\bn \, \left( h^{\| \bn\| } \bbe_h(\bn)\right)
$$
where $x_\bn \in \Ch$. The map $x \to (x_\bn)_{\bn \in I}$ is a $\Ch$-module isomorphism between $\Vh$ and $\Ch^I$, with $I = \BN^{t+\ell+t}$.

In the terminology of Drinfel'd \cite{Drinfeld}, $\Vh$ is a ``quantized
formal series Hopf algebra'' (QFSH-algebra), see also \cite{CP}. As part of his duality principle, Drinfel'd  associates a QFSH-algebra to every so-called ``quantum universal enveloping algebra'' (QUE-algebra).
Gavarini \cite{Gavarini} gave a detailed treatment of this duality, and showed that the above defined $\Vh$ is the QFSH-algebra associated to $\Uh$, which is a QUE-algebra.

For $n\ge 0$ let $\Vh^{\bo n}$ be the topological closure of $\Vh^{\ot n}$ in $\Uh^{\ho n}$.
Then $\Vh^{\bo n}$ is the $n$-th tensor power of $\Vh$ in the category of QFSH-algebras, see \cite[Section 3.5]{Gavarini}. The result of Drinfel'd, proved in details by Gavarini \cite{Gavarini}, says that
$\Vh$ is a Hopf algebra in the category of QFSH-algebras, where the Hopf algebra structure of $\Vh$
is the restriction of the Hopf algebra structure of $\Uh$. Thus, we have the following.

\begin{proposition}\label{r.41a}
 One has
$$\boldmu (\Vh^{\bo 2}) \subset \Vh, \quad \Delta(\Vh) \subset \Vh^{\bo 2}, \quad S(\Vh) \subset \Vh.$$
\end{proposition}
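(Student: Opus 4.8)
The plan is to identify $\Vh$ with the QFSH-dual of $\Uh$ in Drinfel'd's sense and then read the statement off the duality principle. Writing $\delta_n:=(\id-\boldsymbol\eta\boldsymbol\epsilon)^{\ho n}\circ\Delta^{[n]}\colon\Uh\to\Uh^{\ho n}$ for $n\ge1$, set
\[
  \Uh^\vee:=\bigl\{\,a\in\Uh\ \bigm|\ \delta_n(a)\in h^{\,n}\,\Uh^{\ho n}\ \text{ for all }n\ge1\,\bigr\}.
\]
Drinfel'd's quantum duality principle, worked out in detail by Gavarini \cite{Gavarini} and reproved independently in the Appendix, asserts precisely that $\Uh^\vee$ is a topological Hopf subalgebra of $\Uh$ in the QFSH sense, i.e.\ $\boldmu(\Uh^\vee\bo\Uh^\vee)\subseteq\Uh^\vee$, $\Delta(\Uh^\vee)\subseteq\Uh^\vee\bo\Uh^\vee$, and $S(\Uh^\vee)\subseteq\Uh^\vee$. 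Thus the proposition reduces to the identity $\Vh=\Uh^\vee$.

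For that identification I would argue through the PBW basis of Proposition \ref{r.PBWbases}. The inclusion $\Vh\subseteq\Uh^\vee$ rests on two structural facts: $H_i$ is primitive and $K_\gamma-1\in h\Uh^0$; and the coproduct of each root vector is triangular with $h$-divisible off-diagonal part, $\Delta(E_\gamma)\in E_\gamma\ot 1+K_\gamma\ot E_\gamma+h\,\Uh^{\ho2}$, and likewise for $F_\gamma$ — the point being that the quantum corrections to cocommutativity in the nilpotent parts always carry a factor $v_\gamma-v_\gamma^{-1}\in h\Ch$ (visible already from the defining relations, the braid action, and the shape of the quasi-$R$-matrix). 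Writing $h^{\|\bn\|}\bbe_h(\bn)$ as a product of the factors $h^{(\bn_1)_j}F_{\gamma_j}^{((\bn_1)_j)}$ (each differing from $(hF_{\gamma_j})^{(\bn_1)_j}$ by a unit of $\Ch$), a central factor in $\Uh^0$ weighted by $h^{\|\bn_2\|}$ (controlled by primitivity of the $H_i$ and by $K_\gamma\equiv1$), and the $E$-factors $h^{(\bn_3)_j}E_{\gamma_j}^{((\bn_3)_j)}$, one checks by induction on $n$ — via the Leibniz-type expansion of $\delta_n$ on a product together with the displayed coproduct formulas — that $\delta_n$ of this element lies in $h^{n}\Uh^{\ho n}$; since $\Uh^\vee$ is already known to be multiplicatively closed and $h$-adically closed, this gives $\Vh\subseteq\Uh^\vee$. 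For the reverse inclusion, given $a=\sum_\bk a_\bk\bbe_h(\bk)\in\Uh^\vee$ with $a_\bk\in\Ch$, one expands $\delta_{\|\bk\|}(\bbe_h(\bk))$ in the PBW basis of $\Uh^{\ho\|\bk\|}$ and isolates the leading monomial that occupies all $\|\bk\|$ tensor slots; it has unit coefficient modulo $h$ and is not produced by any $\bbe_h(\bk')$ with $\|\bk'\|<\|\bk\|$, so membership in $\Uh^\vee$ forces $a_\bk\in h^{\|\bk\|}\Ch$, i.e.\ $a\in\Vh$.

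The main obstacle is the identification $\Vh=\Uh^\vee$, and the genuinely nontrivial ingredient inside it is the $h$-divisible triangularity of the coproducts of the higher root vectors — equivalently, the closedness of $\Vh$ under multiplication, which is exactly the content Gavarini's treatment of the duality principle is devoted to. In the write-up I would therefore keep this short, citing \cite{Gavarini} (and the Appendix) for the statement that $\Uh^\vee$ is a QFSH Hopf subalgebra and supplying only the PBW bookkeeping for $\Vh=\Uh^\vee$. A more self-contained alternative is to prove directly that $\Vh$ equals the $h$-adically closed $\Ch$-subalgebra $A\subseteq\Uh$ generated by $hE_\gamma,hF_\gamma$ $(\gamma\in\Phi_+)$ and $hH_i$: then $\boldmu(\Vh^{\bo2})\subseteq\Vh$ is automatic, and $\Delta,S$ are verified on these generators using $K_\gamma^{\pm1}=\exp(\pm hH_\gamma/2)\in\Vh$ and then extended by (anti)multiplicativity and $h$-adic continuity; but proving $\Vh=A$ again requires the same PBW straightening estimates, so this route is no easier.
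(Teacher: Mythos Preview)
Your approach is correct but differs from the paper's independent proof in Appendix~\ref{sec:r.41a}. You go through Gavarini's QFSH-dual $\Uh^\vee=\{a\in\Uh\mid\delta_n(a)\in h^n\Uh^{\ho n}\ \forall n\}$ and then identify $\Vh=\Uh^\vee$ by PBW bookkeeping; this is precisely the Drinfel'd--Gavarini route the main text cites as the source of the result. The Appendix, however, does \emph{not} reprove the duality principle --- it bypasses $\Uh^\vee$ entirely. Instead it defines $A_L\subset\Uh^{\le0}$ and $A_R\subset\Uh^{\ge0}$ as the left and right images of the $R$-matrix $\cR\in\Uh^{\le0}\ho\Uh^{\ge0}$, and uses the quasitriangularity identities $(\Delta\ot\id)(\cR)=\cR_{13}\cR_{23}$, $(\id\ot\Delta)(\cR)=\cR_{13}\cR_{12}$, $(S\ot\id)(\cR)=\cR^{-1}$ to show directly (in the spirit of Radford) that $A_L,A_R$ are Hopf subalgebras with respect to the closed tensor product. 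A commutation argument using the identities $\cR(y\ot1)=\sum(y_{(2)}\ot y_{(1)})\cR(1\ot S(y_{(3)}))$ and its variants then gives $\overline{A_LA_R}=\overline{A_RA_L}$, and finally a PBW comparison identifies this with $\Vh$.

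The tradeoff: your route is conceptually cleaner but imports the nontrivial closure of $\Uh^\vee$ under $\Delta$ from Gavarini (or requires the $h$-divisibility estimates you sketch, which are the real work). The paper's $R$-matrix route is self-contained and fits the paper's theme --- the same quasitriangular structure that builds $J_T$ also carves out $\Vh$ --- and replaces the $\delta_n$ analysis by short duality computations with $\cR$; the only remaining PBW input is the elementary identification $\overline{A_LA_R}=\Vh$ at the end.
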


For completeness, we give an independent proof of Proposition \ref{r.41a} in  Appendix \ref{sec:r.41a}. Yet another proof can be obtained from Proposition \ref{r.VZVh}.

\begin{proposition}\label{r.phu1}
Fix a longest reduced sequence $\bi$.
Then  $\Vh$ is the topological closure (in the $h$-adic topology of $\Uh$) of the $\Ch$-algebra generated by $h H_\al, hF_\gamma(\bi), hE_\gamma(\bi)$ with $\al \in \Pi, \gamma\in \Phi_+$.
\end{proposition}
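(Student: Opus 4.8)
\textbf{Proof plan for Proposition \ref{r.phu1}.}

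The plan is to show two inclusions between $\Vh$ and the topological closure $\overline{A}$ of the $\Ch$-subalgebra $A$ generated by $hH_\al$ ($\al\in\Pi$) and $hE_\gamma(\bi), hF_\gamma(\bi)$ ($\gamma\in\Phi_+$). First I would prove $\overline{A}\subset\Vh$. Since $\Vh$ is closed in the $h$-adic topology of $\Uh$, it suffices to see that the generators lie in $\Vh$ and that $\Vh$ is closed under multiplication; the latter is exactly $\boldmu(\Vh^{\bo 2})\subset\Vh$ from Proposition \ref{r.41a}. The generators $hH_\al$, $hE_\gamma(\bi)$ are among the chosen formal basis elements $h^{\|\bn\|}\bbe_h(\bn)$ (take $\bn$ supported at a single coordinate with value $1$), hence lie in $\Vh$; for $hF_\gamma(\bi)$ one uses $F_\gamma^{(1)}=F_\gamma$ and $K_{\bn_1}$ differs from $1$ by an element of the form $\exp(hH_\mu/2)$, which expands as $1+h(\dots)$, so $hF_\gamma(\bi)=hF_\gamma K_0=hF_\gamma\cdot 1$ is again a basis element (here $K_0=1$). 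So $\overline{A}\subset\Vh$.

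For the reverse inclusion $\Vh\subset\overline{A}$, the key point is that every formal basis element $h^{\|\bn\|}\bbe_h(\bn)=h^{\|\bn\|}F^{(\bn_1)}K_{\bn_1}H^{\bn_2}E^{(\bn_3)}$ lies in $\overline{A}$, and then that arbitrary $h$-adically convergent $\Ch$-combinations of these also lie in $\overline{A}$ — the latter is automatic since $\overline{A}$ is $h$-adically closed. To handle a single basis element: write $h^{\|\bn\|}=h^{\|\bn_1\|}\cdot h^{\|\bn_2\|}\cdot h^{\|\bn_3\|}$ and distribute. The factor $h^{\|\bn_3\|}E^{(\bn_3)}$ is, up to a nonzero scalar in $\BC[[h]]$ coming from $[n_j]_{\gamma_j}!=\prod(\dots)$ whose leading term is $n_j!$, a product of $hE_\gamma(\bi)$'s (one uses that $h\cdot[n]_\gamma!/n!$ is a unit times $h$ in a suitable sense — more precisely $h^n E_\gamma^{(n)} = (hE_\gamma)^n/([n]_\gamma!)$ and $[n]_\gamma!\in 1+h\BC[[h]]$ up to the integer $n!$, so $h^n E_\gamma^{(n)}$ is $(hE_\gamma)^n$ times an element of $\Ch^\times\cdot(1/n!)\subset\BC[[h]]$). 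Wait — $1/n!$ is not in $\Ch=\BC[[h]]$ issues do not arise since $\BC\subset\Ch$, so $1/n!\in\Ch$. Thus $h^{\|\bn_3\|}E^{(\bn_3)}\in A$, and symmetrically $h^{\|\bn_1\|}F^{(\bn_1)}\in A$. Finally $h^{\|\bn_2\|}H^{\bn_2}=\prod_j(hH_j)^{(\bn_2)_j}\in A$ directly, and $K_{\bn_1}=\prod K_{\gamma_j}^{n_j}=\exp(\tfrac{h}{2}\sum n_j H_{\gamma_j})$ lies in $\overline{A}$ because its power series expansion $\sum_{k\ge0}\tfrac1{k!}(\tfrac12\sum n_j H_{\gamma_j})^k h^k$ is an $h$-adically convergent sum of elements of $A$ (each monomial $h^k(\dots H\dots)^k$ uses the generators $hH_\al$, after rewriting products of $H_{\gamma_j}$ — but $H_{\gamma_j}$ for non-simple $\gamma_j$ is an integer combination of the $H_{\al_i}$, so $hH_{\gamma_j}\in A$). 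Multiplying these four pieces and using that $\overline{A}$ is a subalgebra gives $h^{\|\bn\|}\bbe_h(\bn)\in\overline{A}$.

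The main obstacle is the bookkeeping around the $K_{\bn_1}$ factor and the divided powers: one must be careful that $h^{\|\bn\|}\bbe_h(\bn)$ genuinely decomposes into the claimed generators with coefficients in $\Ch$ (not requiring division by $h$), and that the rewriting of non-simple root vectors' Cartan elements and of the exponential $K_{\bn_1}$ stays inside the $h$-adically closed subalgebra. A clean way to package this is: let $A'$ be the $\Ch$-span of the formal basis $\{h^{\|\bn\|}\bbe_h(\bn)\}$; show $A\subset A'\subset\Vh$ and $A'\subset\overline A$ by the decomposition above; then take closures, using $\overline{A'}=\Vh$ (definition of $\Vh$) to conclude $\Vh=\overline{A'}\subset\overline A\subset\Vh$. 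This avoids convergence subtleties by doing all infinite sums only at the very end. I would also remark that the independence of $\bi$ follows since $\Vh$ itself is independent of $\bi$ (as is $\Uh$'s Hopf structure), even though the particular generating set depends on $\bi$.
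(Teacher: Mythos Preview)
Your approach is essentially the same as the paper's: both directions are handled by matching the formal basis $\{h^{\|\bn\|}\bbe_h(\bn)\}$ against products of the proposed generators, using that $\Vh$ is a closed subalgebra (Proposition~\ref{r.41a}) for one inclusion and the explicit factorisation \eqref{eq.vr1} for the other.

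There is one slip worth fixing. You claim that $hF_\gamma(\bi)$ is itself a formal basis element, writing ``$hF_\gamma(\bi)=hF_\gamma K_0=hF_\gamma\cdot 1$''. This is not right: if $\bn_1$ is the $j$th standard unit vector then $K_{\bn_1}=K_{\gamma_j}$, not $1$, so the basis element with $\|\bn\|=1$ coming from the $F$-part is $hF_{\gamma_j}K_{\gamma_j}$, not $hF_{\gamma_j}$. The remedy is immediate and is what the paper does implicitly: once you know $hH_\al\in\Vh$ for all $\al\in\Pi$ and that $\Vh$ is a closed algebra, you get $K_\gamma^{\pm1}=\exp(\pm\tfrac{h}{2}H_\gamma)\in\Vh$ by the same exponential expansion you use later for $K_{\bn_1}\in\overline{A}$; then $hF_\gamma=(hF_\gamma K_\gamma)\cdot K_\gamma^{-1}\in\Vh$. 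With this correction your argument goes through and coincides with the paper's proof.
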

\begin{proof} Let $\Vh'$ be the topological closure (in the $h$-adic topology of $\Uh$) of the $\Ch$-algebra generated by $h H_\al, hF_\gamma, hE_\gamma$ with $\al \in \Pi, \gamma\in \Phi_+$.
One can easily check  $K_\gamma\in\Vh'$ for $\gamma\in Y$.
The set $ \{h^{\| \bn\| } \bbe_h(\bn) \mid \bn \in \BN^{t + \ell +t} \}$ is a formal basis of $\Vh$.

When $\bn=(n_1,\dots,n_{t+\ell+t}) \in \BN^{t+\ell+t}$ is such that all $n_j=0$ except for one which is equal to 1, then the basis element  $h^{\| \bn\| } \bbe_h(\bn)$ is one of $h H_\al, hF_\gamma K_\gamma, hE_\gamma$.
It follows that $h H_\al, hF_\gamma, hE_\gamma\in \Vh$, and hence $\Vh' \subset \Vh$.

From the definition of $\bbe_h(\bn)$, for any $\bn=(\bm,\bk,\bu)\in \BN^t \times \BN^\ell \times \BN^t$,
 \be
 \label{eq.vr1}
 h^{\| \bn \|} \bbe_h(\bn) = a \, \iprod_{\gamma_j \in \Phi_+} (hF_{\gamma_j})^{m_{j}}
\prod_{\gamma_j\in\Phi_+}K_{\gamma_j}^{m_j}
\prod_{j=1}^\ell (h H_j)^{k_j} \iprod_{\gamma_j \in \Phi_+} (hE_{\gamma_j})^{u_{j}},
 \ee
where $\bm=(m_1,\dots,m_t), \bk=(k_1,\dots,k_\ell), \bu=(u_1,\dots,u_t)$ and
$$a = \frac 1{\prod_{j=1}^t \left( [m_j]_{\gamma_j}! \, [u_j]_{\gamma_j}!\right)}$$
 is a unit in $\Ch$. Since the right hand side of \eqref{eq.vr1} is in $\Vh'$, we have
  $\Vh \subset \Vh'$. Thus, $\Vh=\Vh'$.
\end{proof}

\subsection{Ad-stability and $\tphi$-stability of $\Vh$} Recall that we defined the left image of an element $x \in \Uh \ho \Uh$ in Section \ref{sec.leftmodule}.
\begin{proposition}
\label{r.42a} The module  $\Vh$ is the left image of the clasp element $\bc$ in $\Uh \ho \Uh$. Moreover, $\Vh$ is ad-stable, i.e.
 $\Uh \tri \Vh \subset \Vh$.
\end{proposition}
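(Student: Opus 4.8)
The plan is to establish the identity $\Vh = V_{\bc}$, where $V_{\bc}\subset\Uh$ is the left image of the clasp element $\bc = J_{C^+}\in\Uh\ho\Uh$; the ad-stability of $\Vh$ then follows immediately. Indeed, $\bc$ is the universal invariant of the clasp bottom tangle $C^+$ and so is ad-invariant (Section \ref{sec:uni11}), whence Proposition \ref{r.stable}(a) shows that $V_{\bc}$ is stable under the adjoint action of $\Uh$; granting $\Vh = V_{\bc}$, this is precisely the assertion $\Uh\tri\Vh\subset\Vh$.

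To compute $V_{\bc}$ I would start from the explicit formula \eqref{e80} of Lemma \ref{r5}. By definition, if we write $\bc = \sum_{\bj}\bc_{\bj}\otimes e(\bj)$ for a topological basis $\{e(\bj)\}$ of the second tensorand, then $V_{\bc}$ is the $h$-adic closure of the $\Ch$-span of the coordinates $\bc_{\bj}$. Expanding the Cartan part $\cD^{-2}=\exp\!\big(-h\sum_{\al}H_{\al}\otimes\brH_{\al}/d_{\al}\big)$ as an $h$-adically convergent series $\sum_{\bk\in\BN^{\ell}}h^{|\bk|}\lambda_{\bk}\,H^{\bk}\otimes\brH^{\bk}$ with $\lambda_{\bk}\in\BQ^{\times}$, and then rewriting each second-tensorand element in the PBW topological basis $\{\bbe_h(\bn)=F^{(\bn_1)}K_{\bn_1}H^{\bn_2}E^{(\bn_3)}\}$ of Proposition \ref{r.PBWbases} (which requires commuting Cartan elements past $E$- and $F$-monomials, producing only terms of lower $H$-degree), the matching first-tensorand coordinate $\bc_{\bj}$ becomes a $\Ch$-linear combination of monomials of the same PBW shape. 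The key bookkeeping is that $\{n\}_{\al}=v_{\al}^{n}-v_{\al}^{-n}$ lies in $h$ times a unit of $\Ch$; hence the normalizing scalars $\prod_{j}\{n_j\}_{\gamma_j}!$ built into $E_{\bn}$ and $F_{\bn}$, together with the factor $h^{|\bk|}$ from $\cD^{-2}$ and the powers of $h$ generated when re-expanding Cartan elements in the basis $\{H^{\bk}\}$ of $\Uh^{0}$, supply exactly the power $h^{\|\bn\|}$ attached to the monomial $\bbe_h(\bn)$ in the formal basis of $\Vh$. Thus every $\bc_{\bj}$ lies in the $h$-adic closure of the $\Ch$-span of $\{h^{\|\bn\|}\bbe_h(\bn)\}$, that is, in $\Vh$, which gives $V_{\bc}\subset\Vh$.

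For the reverse inclusion $\Vh\subset V_{\bc}$ I would use triangularity of this expansion: after the rewriting above, the coordinate of $\bc$ attached to a given PBW monomial equals that same rescaled monomial $h^{\|\bn\|}\bbe_h(\bn)$ up to a unit of $\Ch$, plus a $\Ch$-combination of monomials strictly lower in the filtration of $\Uh$ by total $\|\cdot\|$-degree (using the triangular commutation relations among the root vectors $E_{\gamma_j},F_{\gamma_j}$ and that $\brH^{\bk}$ is a polynomial of degree $|\bk|$ in the $H_{\al}$). Reducing modulo $h^{N}$, where only finitely many $\bbe_h$-monomials survive, this is a unipotent change of basis, so the coordinates $\bc_{\bj}$ and the monomials $h^{\|\bn\|}\bbe_h(\bn)$ have the same $\Ch$-span modulo $h^{N}$ for every $N$; since $\Vh$ is the $h$-adic closure of that span and $V_{\bc}$ is closed, we obtain $\Vh\subset V_{\bc}$, and together with the previous paragraph $\Vh = V_{\bc}$. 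One can use Proposition \ref{r.phu1} as an alternative way to recognize the resulting span as $\Vh$. I expect the main obstacle to be exactly this non-cancellation/triangularity step: one has to rule out that coincidences in the Cartan bookkeeping collapse the left coordinates onto a proper closed submodule of $\Vh$. This is the $h$-adic, infinite-rank analogue of the factorizability of a finite-dimensional ribbon Hopf algebra, and making the various powers of $h$ match exactly — those coming from the $\{n\}_{\al}$, from $\cD^{\pm2}$, and from the very definition of the formal basis of $\Vh$ — is the delicate part of the computation.
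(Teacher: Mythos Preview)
Your overall strategy matches the paper's: identify $\Vh$ as the left image of $\bc$ via the explicit formula \eqref{e80}, then invoke Proposition \ref{r.stable}(a) for the ad-stability. However, you are working harder than necessary, and the ``main obstacle'' you flag never arises. The paper avoids your triangularity/non-cancellation argument entirely by choosing a \emph{different} topological basis for the second tensorand: set $\bbe''_h(\bn) := F^{(\bn_3)}K_{\bn_3}\,\brH^{\bn_2}E^{(\bn_1)}$ for $\bn=(\bn_1,\bn_2,\bn_3)\in\BN^{t+\ell+t}$, using the dual Cartan generators $\brH_\al$ rather than $H_\al$ and swapping the roles of $\bn_1$ and $\bn_3$. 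Since $\{\brH_\al\}$ is another basis of $\modh$, this is again a topological basis of $\Uh$. With this choice, \eqref{e80} reads directly as
\[
\bc \;=\; \sum_{\bn\in\BN^{t+\ell+t}} u_h(\bn)\,h^{\|\bn\|}\,\bbe_h(\bn)\ \ot\ \bbe''_h(\bn),
\]
where each $u_h(\bn)$ is an explicit unit of $\Ch$ (built from $q^{-(\rho,|E_{\bn_3}|)}$, the $\{n_j\}_{\gamma_j}!$ hidden in $E_{\bn},F_{\bn}$, and the $1/k_j!$ from expanding $\cD^{-2}$). The expansion is \emph{diagonal}, so the left image is visibly the closure of the $\Ch$-span of $\{h^{\|\bn\|}\bbe_h(\bn)\}$, i.e.\ $\Vh$ by definition.

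Two minor inaccuracies in your sketch: no commutation of Cartan elements past $E$- or $F$-monomials is needed (the Cartan part already sits between $K_{\bn}$ and $E^{(\bn)}$ in both tensorands); and the passage from $\brH^{\bk}$ to the $H^{\bk'}$ basis is a block-diagonal change by total degree, not upper-triangular with strictly lower-degree terms. These do not invalidate your argument, but they indicate that the bookkeeping you anticipate is largely illusory once the right second-tensorand basis is used.
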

\begin{proof}
For $\bn=(\bn_1,\bn_2, \bn_3)\in \BN^{t+\ell+t}$ let
$$ \bbe''_h(\bn) = F^{(\bn_3)}K_{\bn_3} \brH^{\bn_2} E^{(\bn_1)},$$
where for $\bk= (k_1,\dots,k_\ell) \in \BN^\ell$, $ \brH^\bk= \prod_{j=1}^\ell \brH_{\al_j}^{k_j}.$

Then $ \{ \bbe_h''(\bn) \mid \bn \in \BN^{t + \ell +t} \}$
is a topological basis of $\Uh$. From \eqref{e80},
\be
\label{eq.bch}
 \bc= \sum_{\bn \in \BN^{t+\ell +t}} u_h(\bn) \,h^{\| \bn\| } \bbe_h(\bn) \ot \bbe''_h(\bn),
 \ee
where $u_h(\bn)$ is a unit $  \Ch$  for each $\bn\in \BN^{t+\ell +t}$. The  exact value of $u_h(\bn)$ is as follows: For $\bn=(\bn_1,\bn_2,\bn_3) \in \BN^{t+\ell+t}$,
 \be
 \label{eq.unith}
  u_h{(\bn_1,\bn_2,\bn_3)}= q^{-(\rho, |E_{\bn_3}|)}\, u''_h(\bn_1)\,  u_h'(\bn_2)\,  u''_h(\bn_3)\,
  \ee
 where for $ \bk=(k_1,\dots,k_\ell) \in \BN^\ell$ and $ \bm =(m_1,\dots,m_t) \in \BN^t$,
  $$ u'_h(\bk) = \prod_{j=1}^\ell \frac{(-1)^{k_j}}{ k_j!\,  d_{\al_j}^{k_j}}, \quad u''_h({\bm})= \prod_{j=1}^t \frac{v_{\gamma_j}^{-m^2_j}(q_{\gamma_j};q_{\gamma_j})_{m_j}}{h^{m_j}}.$$

By definition, the left image of $\bc$ is the topological closure of the $\Ch$-span of $\{ u_h(\bn) \,h^{\Vert \bn \Vert} \bbe_h(\bn) \}$, which is the same as $\Vh$, since the $u_h(\bn)$ are units in $\Ch$.

Since $\bc$ is ad-invariant, by Proposition \ref{r.stable}, we have $\Uh\tri \Vh \subset \Vh$.
\end{proof}

\begin{remark}
Proposition \ref{r.42a}  shows that $\Vh$ does not depend on the choice of the longest reduced sequence $\bi$.
\end{remark}

\begin{proposition}
\label{r.tphiX}
One has $\tphi(\Vh) \subset \Vh$, i.e. $\Vh$ is $\tphi$-stable.
\end{proposition}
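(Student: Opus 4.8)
The plan is to exploit the description of $\Vh$ as the left image of the clasp element $\bc$ in $\Uh \ho \Uh$ (Proposition~\ref{r.42a}), combined with the symmetry $\bc = (\tphi \ho \uS^{-1}\tphi)(\bc)$ established in \eqref{e80b}. Keep the topological basis $\{\bbe''_h(\bn)\mid \bn\in\BN^{t+\ell+t}\}$ of $\Uh$ and the expansion $\bc = \sum_{\bn} \bc_{\bn}\ot \bbe''_h(\bn)$ from the proof of Proposition~\ref{r.42a}, where $\bc_{\bn}\in\Vh$, so that by definition $\Vh$ is the $h$-adic closure in $\Uh$ of the $\Ch$-submodule generated by the $\bc_{\bn}$. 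Applying the $h$-adically continuous operator $\tphi\ho(\uS^{-1}\tphi)$ termwise to this expansion, \eqref{e80b} gives $\bc = \sum_{\bn} \tphi(\bc_{\bn})\ot \uS^{-1}\tphi(\bbe''_h(\bn))$.

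The key point to check is that $\{\uS^{-1}\tphi(\bbe''_h(\bn))\mid\bn\}$ is again a topological basis of $\Uh$. Indeed $\uS$ is an invertible $\Ch$-linear map (its inverse already occurs in \eqref{e80b}), and although $\tphi$ is only a $\BC$-algebra automorphism — it is $\Ch$-semilinear for the automorphism $h\mapsto -h$ of $\Ch$ — it is $h$-adically continuous with $h$-adically continuous inverse and satisfies $\tphi(h^k\Uh)=h^k\Uh$ for all $k$; consequently $\tphi$, hence $\uS^{-1}\tphi$, sends a topological basis to a topological basis (equivalently, it carries unique $0$-convergent expansions to unique $0$-convergent expansions). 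Since the left image of an element does not depend on the chosen topological basis of the second tensor factor (Section~\ref{sec.leftmodule}), computing the left image of $\bc$ with respect to the new basis $\{\uS^{-1}\tphi(\bbe''_h(\bn))\}$ shows that $\Vh$ equals the $h$-adic closure of the $\Ch$-submodule generated by the $\tphi(\bc_{\bn})$; in particular $\tphi(\bc_{\bn})\in\Vh$ for every $\bn$. Finally, $\tphi$ is a $\BC$-algebra homomorphism with $\tphi(\Ch)=\Ch$, so it maps the $\Ch$-submodule generated by the $\bc_{\bn}$ onto the $\Ch$-submodule generated by the $\tphi(\bc_{\bn})$, and being continuous it maps closures into closures; hence $\tphi(\Vh)$ is contained in the closure of the $\Ch$-submodule generated by the $\tphi(\bc_{\bn})$, which is $\Vh$. (Running the same argument with $\tphi^{-1}$ gives the reverse inclusion, so in fact $\tphi(\Vh)=\Vh$, but only $\tphi(\Vh)\subset\Vh$ is needed.)

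The one genuinely delicate step — and the reason Propositions~\ref{r.stable}(b) and \ref{r.vv1} cannot simply be quoted — is that $\tphi$ is not $\Ch$-linear, so I would re-run the short proof of Proposition~\ref{r.stable}(b) while keeping track of the $\Ch$-semilinearity of $\tphi$, the crux being the claim that a filtration-preserving, bicontinuous, $\Ch$-semilinear automorphism of $\Uh$ still sends topological bases to topological bases; everything else is formal bookkeeping. An essentially equivalent alternative would be to combine $\bc^- = (\id\ho\uS)(\bc)$ from \eqref{e77} with $\bc^- = \tphi^{\ho 2}(\bc)$ (Proposition~\ref{r.mirror1}, since $C^-$ is the mirror image of $C^+$) and compare the two computations of the left image of $\bc^-$.
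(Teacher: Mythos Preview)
Your argument is correct and is essentially the paper's own proof: the paper also invokes the identity $\bc=(\tphi\otimes\uS^{-1}\tphi)(\bc)$ from \eqref{e80b} together with the characterization of $\Vh$ as the left image of $\bc$ (Proposition~\ref{r.42a}), and then appeals to Proposition~\ref{r.stable}(b) to conclude $\tphi(\Vh)=\Vh$.

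The one point worth noting is that you are in fact \emph{more} careful than the paper. The paper's proof asserts that $\uS^{-1}\tphi$ is a $\Ch$-linear automorphism of $\Uh$ and then quotes Proposition~\ref{r.stable}(b) directly; but since $\tphi(h)=-h$ and $\uS^{-1}$ is $\Ch$-linear, the composite $\uS^{-1}\tphi$ is only $\Ch$-semilinear, so the hypotheses of Proposition~\ref{r.stable}(b) are not literally met. Your observation that a bicontinuous filtration-preserving $\Ch$-semilinear automorphism still sends topological bases to topological bases, and still carries $\Ch$-spans to $\Ch$-spans, is exactly the patch needed---and indeed the \emph{proof} of Proposition~\ref{r.stable}(b) goes through verbatim under this weaker hypothesis. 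So your write-up can be read as a minor correction of the paper's argument rather than a different route.
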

\begin{proof}
By Lemma \ref{r5}, $\bc= (\varphi \ot \uS^{-1} \varphi)(\bc)$. Note that  $\uS^{-1} \varphi$ is an $\Ch$-linear automorphism of $\Uh$. By Proposition \ref{r.stable}(b), $\varphi$ leaves stable the left image of $\bc$, i.e. $\varphi(\Vh) = \Vh$.
\end{proof}

\subsection{Extension of ground ring and stability principle}\label{sec:topdil}
 Let $\sqrt h$ be an intermediate such that $h= (\sqrt h)^2$. Then $\Ch\subset \Chh$. For a $\Ch$-module homomorphism $f:V \to V'$, we often use the same symbol $f$ to denote $f\ho \id: V\ho_{\Ch} \Chh \to V'\ho_{\Ch} \Chh$.

Suppose the following data are given

(i) a topologically free $\Ch$-module $V$ equipped with a topological base $\{ e(i) \mid i\in I \}$, and

(ii) a function $a: I \to \Ch$ such that $a(i) \neq 0$ and $\{ a(i), i\in I\}$ is 0-convergent.

Let $V(\sqrt a)$ be the topologically free $\Chh$-module with topological basis $\{ \sqrt {a(i)} \, e(i) \mid i \in I\}$, and $V(a)\subset V$ be the closure (in the $h$-adic topology of $V$) of the $\Ch$-span of
$\{ a(i) \, e(i) \mid i \in I\}$. We call $(V, V(\sqrt a), V(a))$ a {\em topological dilatation triple} defined by the data given in (i) and (ii).

\begin{proposition}[Stability principle]
\label{r.topdilat}
Suppose $(V, V(\sqrt a), V(a))$  and $(V', V'(\sqrt {a'}), V'(a'))$ are two topological dilatation triples and $f: V\to V'$ is
a $\Ch$-module homomorphism such that $f(V(a)) \subset V'(a')$. Then $f(V(\sqrt a)) \subset V'(\sqrt {a'})$.
\end{proposition}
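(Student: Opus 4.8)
The plan is to reduce the statement to a valuation bookkeeping with the matrix coefficients of $f$ in the two topological bases, and then to use a single elementary observation: inside $\Chh$, an element of $\Ch$ has $\sqrt h$-adic valuation equal to \emph{twice} its $h$-adic valuation, whereas a square root $\sqrt{a(i)}$ (any choice in $\Chh$ will do) has $\sqrt h$-adic valuation equal to the $h$-adic valuation of $a(i)$. So in passing from the datum $a$ to the datum $\sqrt a$ the ``$a$-side'' valuations get halved, while the contribution of $f$ is untouched; hence the divisibility and $0$-convergence conditions that encode $f(V(\sqrt a))\subset V'(\sqrt{a'})$ turn out to be \emph{weaker} than the ones the hypothesis $f(V(a))\subset V'(a')$ already supplies. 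In this sense the proposition is ``soft'': dilating by $\sqrt h$ is gentler than dilating by $h$.

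First I would fix the topological basis $\{e'(j)\mid j\in J\}$ of $V'$ and expand $f(e(i))=\sum_{j}f_{ij}\,e'(j)$ with $f_{ij}\in\Ch$ and, for each $i$, the family $(f_{ij})_{j}$ $0$-convergent. Since $a(i)e(i)\in V(a)$ and, by Example~\ref{exa:1}, $V'(a')$ is topologically free with topological basis $\{a'(j)e'(j)\}$, applying the hypothesis to $a(i)e(i)$ and comparing $e'(j)$-coordinates gives, for all $i,j$, that $a'(j)$ divides $a(i)f_{ij}$ in $\Ch$ and that the family $(a(i)f_{ij}/a'(j))_{j}$ is $0$-convergent in $\Ch$. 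Rewriting the divisibility in $h$-adic valuations and applying the observation above (using also $\nu_h(f_{ij})\ge 0$) yields $\sqrt{a'(j)}\mid\sqrt{a(i)}\,f_{ij}$ in $\Chh$, and the same comparison applied to the quotients shows that $(\sqrt{a(i)}\,f_{ij}/\sqrt{a'(j)})_{j}$ is $0$-convergent in $\Chh$ for every $i$.

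Next I would take an arbitrary $x\in V(\sqrt a)$, write $x=\sum_{i}d_i\,(\sqrt{a(i)}\,e(i))$ with $d_i\in\Chh$ and $(d_i)_i$ $0$-convergent, and, using $\Chh$-linearity and continuity of $f\ho\id$, compute $f(x)=\sum_{j}\phi_j\,e'(j)$ with $\phi_j=\sum_i d_i\,\sqrt{a(i)}\,f_{ij}$ (the family $(d_i\sqrt{a(i)}f_{ij})_{(i,j)\in I\times J}$ being $0$-convergent in $\Chh$, this reorganization is legitimate). By the previous step every summand of $\phi_j$ is divisible by $\sqrt{a'(j)}$ in $\Chh$, hence so is $\phi_j$; writing $\phi_j=\sqrt{a'(j)}\,\psi_j$ with $\psi_j=\sum_i d_i\,(\sqrt{a(i)}f_{ij}/\sqrt{a'(j)})$, it then remains only to verify that $(\psi_j)_j$ is $0$-convergent in $\Chh$, after which $f(x)=\sum_j\psi_j\,(\sqrt{a'(j)}e'(j))\in V'(\sqrt{a'})$, which is what we want.

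The main (and essentially only) work is this last $0$-convergence check, which is the familiar ``a $0$-convergent sum of $0$-convergent families is again $0$-convergent'' pattern, of the same type already used in Proposition~\ref{s.a2}: given $N$, only finitely many $i$ satisfy $d_i\notin(\sqrt h)^N\Chh$; for each such $i$, all but finitely many $j$ make the term $d_i\,(\sqrt{a(i)}f_{ij}/\sqrt{a'(j)})$ lie in $(\sqrt h)^N\Chh$ (this is where the $0$-convergence of $(\sqrt{a(i)}f_{ij}/\sqrt{a'(j)})_j$ enters); and for every other $i$ the term lies in $(\sqrt h)^N\Chh$ automatically, because $d_i\in(\sqrt h)^N\Chh$ and $\sqrt{a(i)}f_{ij}/\sqrt{a'(j)}\in\Chh$. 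Hence for all but finitely many $j$ each summand of $\psi_j$, and so $\psi_j$ itself, lies in $(\sqrt h)^N\Chh$. I do not expect any conceptual obstacle beyond this; everything else is formal manipulation of topologically free modules exactly in the spirit of Section~\ref{sec:ChModules}.
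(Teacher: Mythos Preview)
Your approach is the paper's: expand $f$ in the two topological bases and use the valuation inequality $k_1+k_2/2\ge(k_1+k_2)/2\ge k_3/2$ (the paper isolates exactly this as its preliminary ``Claim~1'') to upgrade $a(i)f_{ij}/a'(j)\in\Ch$ to $\sqrt{a(i)}\,f_{ij}/\sqrt{a'(j)}\in\Chh$. One caveat: in Section~\ref{sec:topdil} the module $V(a)$ is defined as the \emph{closure} of the span, hence a formal series module, not the topologically free ``$V(a)$'' of Example~\ref{exa:1}; so the hypothesis only yields $a(i)f_{ij}/a'(j)\in\Ch$ for each $j$, not $0$-convergence of that family. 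This is harmless for your argument, since the $0$-convergence you actually use later is that of $(\sqrt{a(i)}\,f_{ij}/\sqrt{a'(j)})_j$, and this follows directly from $\nu_{\sqrt h}\bigl(\sqrt{a(i)}\,f_{ij}/\sqrt{a'(j)}\bigr)=\nu_h(a(i))+2\nu_h(f_{ij})-\nu_h(a'(j))\ge\nu_h(f_{ij})\to\infty$, using only the divisibility and the $0$-convergence of $(f_{ij})_j$. Your explicit verification that $(\psi_j)_j$ is $0$-convergent makes rigorous a step the paper leaves tacit.
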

\begin{proof}
{\it Claim 1.} If $x_1,x_2,x_3\in \Ch$, $x_3\neq0$, such that $x_1 x_2 /x_3  \in \Ch$ then $x_1 \sqrt {x_2/x_3} \in \Chh$.\\
{\em Proof of Claim 1.} Let $x_i= h^{k_i} y^i$, where $y_i$ is invertible in $\Ch$. Assumption $x_1 x_2 /x_3  \in \Ch$ means $k_1 + k_2\ge k_3$. Then $k_1+ k_2/2 \ge (k_1 +k_2)/2 \ge k_3/2$, which implies the claim.

Let us now prove the proposition. The $\Ch$-module $V(a)$ is a formal series $\Ch$-module with formal basis
$\{ a(i)  e(i) \mid i\in I\}$, see Example \ref{exa:1}.
Every  $x\in V(a)$ has a unique presentation as an $h$-adically convergent sum
\be
 x= \sum_{i\in I} x_i \, \left ( a(i) e(i)\right), \quad \text{where $(x_i)_{i\in I} \in \Ch^I$.} \notag
 \ee

Using the topological bases $\{e(i) \mid i\in I\}$ of $V$ and $\{ e'(i') \mid i'\in I'\}$ of $V'$, we have
$$ f(e(i)) = \sum_{j \in I'}f_{i}^j  e'(j),$$
where $ f_{i}^j\in \Ch$, and for a fixed $i$, $\{ f_{i}^j \mid j\in I'\}$ is 0-convergent. Multiplying by appropriate powers of $\sqrt{a(i)}$, we get
\begin{align}
\notag   f(a(i) e(i)) &= \sum_{j\in I'}{\tilde f}_{i}^j \,  \left( a'(j) e'(j)\right) \quad  & \text{where } \tilde f_{i}^j &  =   \frac{a(i)}{a'(j)}   f_{i}^j\\
\label{eq.sd3}   f( \sqrt { a(i)}\, e(i)) &= \sum_{j\in I'}{\tilde \tilde f}_{i}^j   \left(\sqrt { a'(j)}\, e'(j)\right) \quad  &\text{where }
{\Tilde \Tilde f}_{i}^j &  =    \frac{\sqrt {a(i)}}{\sqrt{a'(j)}}  f_{i}^j.
\end{align}
The assumption $f(V(a)) \subset V'(a')$ implies that
 $ \tilde f_{i}^j \in \Ch$, which, together with $ f_{i}^j\in \Ch$ and Claim 1, shows that $ {\Tilde \Tilde f}_{i}^j \in \Chh$.
  Equation \eqref{eq.sd3} shows that $f(V(\sqrt a)) \subset V'(\sqrt {a'})$.
 This proves the proposition.
\end{proof}

\subsection{Definition of $\Xh$} \label{sec:sX}
Fix a longest reduced sequence $\bi$.
Recall that $\{\bbe_h(\bn) \mid \bn \in \BN^{t+\ell+t}\}$ is a topological basis of $\Uh$, see Section \ref{sec.Vh}. Let $a: \BN^{t+\ell+t} \to \Ch$ be the function defined by $a(\bn)= h^{\|\bn\|}$, and consider the topological dilatation triple $(\Uh, \Uh(\sqrt a), \Uh(a))$.
Denote the middle one by $\Xh$, $\Uh(\sqrt a)= \Xh$. Later we show that $\Xh$ does not depend on $\bi$.

By definition, $\Uh(a)$ is the  closure (in the $h$-adic topology of $\Uh$) of the $\BC[[h]]$-span of $\{h^{\|\bn\|}\bbe_h(\bn) \mid \bn \in \BN^{t+\ell+t}\}$. Thus, $\Uh(a)=\Vh$.

Also by definition, $\Xh$ is the topologically free $\Chh$-module with the topological basis
\be
\label{eq.basissX}
\{h^{\|\bn\|/2}\bbe_h(\bn) \mid \bn \in \BN^{t+\ell+t}\}.
\ee
 Note that $\Xh$ is a submodule of
$\Uhalf= \Uh \ho_{\Ch} \BC[[\sqrt h]] $.

The topological closure $\overline{\Xh}$ of $\Xh$ in $\UUH$ is a formal series $\Chh$-module with \eqref{eq.basissX} as a formal basis.

\begin{theorem} \label{r.sXalg}
The $\Chh$-module $\Xh$ is a topological Hopf subalgebra of $\UUH$.
 Moreover $\UUH \tri \Xh \subset \Xh$, i.e. $\Xh$ is ad-stable, and $\tphi(\Xh)\subset \Xh$.
\end{theorem}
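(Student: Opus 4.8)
The plan is to transport to $\Xh$ the corresponding facts about the QFSH-subalgebra $\Vh$ --- namely $\boldmu(\Vh^{\bo2})\subset\Vh$, $\Delta(\Vh)\subset\Vh^{\bo2}$, $S(\Vh)\subset\Vh$ (Proposition~\ref{r.41a}), ad-stability $\Uh\tri\Vh\subset\Vh$ (Proposition~\ref{r.42a}), and $\tphi(\Vh)\subset\Vh$ (Proposition~\ref{r.tphiX}) --- by means of the Stability Principle, Proposition~\ref{r.topdilat}. Recall that $\Vh=\Uh(a)$ and $\Xh=\Uh(\sqrt a)$ for the weight function $a(\bn)=h^{\|\bn\|}$ on $I=\BN^{t+\ell+t}$, which is nowhere zero and $0$-convergent, so $(\Uh,\Xh,\Vh)$ is a topological dilatation triple. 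For tensor powers, $\Uh^{\ho n}$ is topologically free over $\Ch$ with topological basis $\{\bbe_h(\bn_1)\ot\cdots\ot\bbe_h(\bn_n)\}$ and weight $a^{\ot n}$, and $(\Uh^{\ho n},\Xh^{\bo n},\Vh^{\bo n})$ is again a dilatation triple in which $\Vh^{\bo n}=(\Uh^{\ho n})(a^{\ot n})$ is the closure of $\Vh^{\ot n}$ in $\Uh^{\ho n}$, while $\Xh^{\bo n}=(\Uh^{\ho n})(\sqrt{a^{\ot n}})$ is, by the discussion of completed tensor products in Section~\ref{sec:ChModules}, precisely the $\Chh$-completed tensor power $\Xh\ho\cdots\ho\Xh$.

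\textbf{Hopf subalgebra structure.} Since $\bbe_h(0)=1$, we have $1\in\Xh$. Feeding the $\Ch$-module maps $\boldmu\colon\Uh^{\ho2}\to\Uh$ and $\Delta\colon\Uh\to\Uh^{\ho2}$ into Proposition~\ref{r.topdilat}, and using Proposition~\ref{r.41a}, gives $\boldmu(\Xh\ho\Xh)\subset\Xh$ and $\Delta(\Xh)\subset\Xh\ho\Xh$. For the antipode, $S(\Vh)\subset\Vh$ by Proposition~\ref{r.41a}; moreover $S^{\mp2}(x)=K_{\pm2\rho}\,x\,K_{\mp2\rho}=K_{\pm2\rho}\tri x$ by \eqref{e81}, and since $2\rho\in Y$ the ad-stability of $\Vh$ (Proposition~\ref{r.42a}) yields $S^{\pm2}(\Vh)\subset\Vh$, hence $S^{-1}=S^{-2}\circ S$ also preserves $\Vh$. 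As $S^{\pm1}$ are $\Ch$-linear, Proposition~\ref{r.topdilat} gives $S^{\pm1}(\Xh)\subset\Xh$. Being topologically free of countable rank, with the unit and counit of $\UUH$ restricting to it, $\Xh$ is then a topological Hopf subalgebra of $\UUH$.

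\textbf{Ad-stability.} For each fixed $a\in\Uh$ the map $\ad_a:=\ad(a\ot-)\colon\Uh\to\Uh$ is $\Ch$-linear and carries $\Vh$ into $\Vh$ by Proposition~\ref{r.42a}, so Proposition~\ref{r.topdilat} gives $\ad_a(\Xh)\subset\Xh$; since $\ad$ is $\Chh$-bilinear and $\Xh$ is complete, this extends to $\UUH\tri\Xh\subset\Xh$.

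\textbf{$\tphi$-stability and the main obstacle.} Because $\tphi(h)=-h$, $\tphi$ is not $\Ch$-linear and Proposition~\ref{r.topdilat} does not apply verbatim; the point --- and the one delicate step --- is that its proof remains valid for any $h$-adically continuous $\BC$-algebra homomorphism $f$ with $f(\Ch)\subseteq\Ch$ which multiplies each weight $a(i)$ by a unit of $\Ch$, since then the ``Claim~1'' estimate in that proof still delivers the required $\Chh$-coefficients; for $\tphi$ one has $\tphi(h^{\|\bn\|})=(-1)^{\|\bn\|}h^{\|\bn\|}$, and on extending $\tphi$ to $\UUH$ by $\sqrt h\mapsto i\sqrt h$ it multiplies each $\sqrt{a(i)}$ by a unit of $\Chh$ as well. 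Combined with $\tphi(\Vh)\subset\Vh$ (Proposition~\ref{r.tphiX}) this variant yields $\tphi(\Xh)\subset\Xh$. I expect the only real work to lie here: keeping the valuation bookkeeping of the Stability Principle straight in the presence of the two ground rings $\Ch\subset\Chh$ and of the non-$\Ch$-linear automorphism $\tphi$, and pinning down the identification $(\Uh^{\ho n})(\sqrt{a^{\ot n}})=\Xh\ho\cdots\ho\Xh$; once these are in place, the multiplication, comultiplication, antipode, and ad-stability assertions are immediate applications of Proposition~\ref{r.topdilat}.
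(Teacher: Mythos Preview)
Your proposal is correct and follows essentially the same route as the paper: apply the stability principle (Proposition~\ref{r.topdilat}) to the dilatation triples $(\Uh,\Xh,\Vh)$ and $(\Uh^{\ho2},\Xh^{\ho2},\Vh^{\bo2})$, using Propositions~\ref{r.41a}, \ref{r.42a}, and \ref{r.tphiX} as input. You are in fact more careful than the paper on two points it glosses over: you supply the argument for $S^{-1}$-stability of $\Vh$ via $S^{-2}=\ad_{K_{2\rho}}$ and ad-stability, and you correctly flag that $\tphi$ is not $\Ch$-linear so Proposition~\ref{r.topdilat} does not literally apply---your observation that the proof survives because $\tphi(h^{\|\bn\|})=(-1)^{\|\bn\|}h^{\|\bn\|}$ differs from $a(\bn)$ only by a unit is exactly the needed fix.
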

\begin{proof} We will show that $\Xh$ is closed under all the Hopf algebra operations of $\UUh$.

Let us first show that $\Xh$ is closed under the co-product. Both $(\Uh, \Xh, \Vh)$ and $(\Uh^{\ho 2}, \Xh^{\ho 2}, \Vh^{\Oo 2})$ are topological dilatation triples, and
$\Delta(\Uh) \subset \Uh^{\ho 2}$ and $\Delta(\Vh) \subset \Vh^{\Oo 2}$ (see Proposition \ref{r.41a}). Hence, by the stability principle (Proposition \ref{r.topdilat}), $\Delta(\Xh) \subset \Xh ^{\ho 2}$.

Similarly,
applying stability principle to all the operations of a Hopf algebra, namely $\boldsymbol \mu, \boldsymbol \eta, \boldsymbol \Delta, \boldsymbol \epsilon, S$ (using Proposition \ref{r.41a}), as well as  the adjoint actions (using Proposition \ref{r.42a}) and the map $\tphi$ (using Proposition \ref{r.tphiX})  we get the results.
\end{proof}

\begin{corollary} \label{r.phu2}
Fix a longest reduced sequence $\bi$.
The $\Chh$-algebra $\Xh$ is the topologically complete subalgebra of $\UUH$ generated by $\sqrt h H_\al, \sqrt h E_\gamma(\bi), \sqrt h F_\gamma(\bi)$, with $\al \in \Pi, \gamma \in \Phi_+$.
\end{corollary}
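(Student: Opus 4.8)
The plan is to show that the topologically complete $\Chh$-subalgebra $A$ of $\UUH$ generated by $\sqrt h H_\al$, $\sqrt h E_\gamma(\bi)$, $\sqrt h F_\gamma(\bi)$ (for $\al\in\Pi$, $\gamma\in\Phi_+$) coincides with $\Xh$. Both inclusions will be proved directly from the description of the topological basis of $\Xh$ in \eqref{eq.basissX}, mirroring the argument already given for $\Vh$ in Proposition \ref{r.phu1}.

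First I would prove $A\subset \Xh$. By Theorem \ref{r.sXalg}, $\Xh$ is a topological Hopf subalgebra of $\UUH$, in particular a topologically complete $\Chh$-subalgebra. So it suffices to check that each generator $\sqrt h H_\al$, $\sqrt h E_\gamma$, $\sqrt h F_\gamma$ lies in $\Xh$. Each of these is, up to a unit in $\Chh$, one of the basis elements $h^{\|\bn\|/2}\bbe_h(\bn)$ with $\bn\in\BN^{t+\ell+t}$ having exactly one nonzero coordinate equal to $1$: indeed $\|\bn\|=1$ in that case and $\bbe_h(\bn)$ is one of $H_\al$, $E_\gamma$, or $F_\gamma K_\gamma$. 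For the $F_\gamma$ case one also uses that $K_\gamma\in\Xh$ is invertible (since $K_\gamma\in\Vh\subset\Xh$ and $K_\gamma^{-1}=S(K_\gamma)$, or just note $K_\gamma=\exp(\tfrac h2 H_\gamma)$ with $h H_\gamma\in\Xh$), so $\sqrt h F_\gamma=(\sqrt h F_\gamma K_\gamma)K_\gamma^{-1}\in\Xh$. Hence $A\subset\Xh$.

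Next I would prove $\Xh\subset A$. It is enough to show every topological basis element $h^{\|\bn\|/2}\bbe_h(\bn)$ lies in $A$, since $A$ is topologically complete and $\Xh$ is the topological closure of the $\Chh$-span of these elements. Writing $\bn=(\bm,\bk,\bu)\in\BN^t\times\BN^\ell\times\BN^t$, one has, exactly as in \eqref{eq.vr1},
\begin{gather*}
 h^{\|\bn\|/2}\bbe_h(\bn)= a'\,\iprod_{\gamma_j\in\Phi_+}(\sqrt h F_{\gamma_j})^{m_j}\,\prod_{\gamma_j\in\Phi_+}K_{\gamma_j}^{m_j}\,\prod_{j=1}^\ell(\sqrt h H_j)^{k_j}\,\iprod_{\gamma_j\in\Phi_+}(\sqrt h E_{\gamma_j})^{u_j},
\end{gather*}
where $a'=1/\prod_{j=1}^t([m_j]_{\gamma_j}!\,[u_j]_{\gamma_j}!)$ is a unit in $\Chh$. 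Each factor $\sqrt h F_{\gamma_j}$, $\sqrt h H_j$, $\sqrt h E_{\gamma_j}$ is a generator of $A$; and each $K_{\gamma_j}=\exp(\tfrac h2 H_{\gamma_j})=\sum_{n\ge0}\tfrac1{n!}(\tfrac h2)^n H_{\gamma_j}^n$ is an $h$-adically convergent series in $h H_{\gamma_j}=(\sqrt h H_{\gamma_j})^2$, hence lies in the topological closure $A$. Therefore the whole product lies in $A$, giving $\Xh\subset A$.

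The only subtlety to watch is bookkeeping about which ground ring ($\Ch$ versus $\Chh$) each object lives over and that all the sums in question are $h$-adically (equivalently $\sqrt h$-adically) convergent, so that "topologically complete subalgebra" really absorbs the relevant limits — but this is the same kind of routine verification as in Proposition \ref{r.phu1}, and there is no genuine obstacle. Finally, since the description of $A$ by generators makes no reference to $\bi$ beyond the root vectors $E_\gamma(\bi),F_\gamma(\bi)$, combining this corollary with a comparison of the generators for two different longest reduced sequences also reconfirms that $\Xh$ is independent of $\bi$ (already known from Proposition \ref{r.42a} via the left-image characterization), so everything is consistent.
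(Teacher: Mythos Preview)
Your proof is correct and follows exactly the approach the paper indicates: the paper's proof of Corollary \ref{r.phu2} simply says ``Using the fact that $\Xh$ is an algebra, the proof is the same as that of Proposition \ref{r.phu1},'' and you have written out precisely that argument with $\sqrt h$ in place of $h$. Your handling of the minor points (that $K_\gamma\in\Xh$ via the exponential series, and the unit factor $a'$) is fine.
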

\begin{proof}
Using the fact that $\Xh$ is an algebra, the proof is the same as that of Proposition \ref{r.phu1}.
\end{proof}

\subsection{$\Xh$ is a core subalgebra of $\UUH$}
Recall that the definition of a core subalgebra is given in Section \ref{sec:coresub}.
\begin{theorem}\label{thm.coresub}
 The subalgebra $\Xh$ is a core subalgebra of the topological ribbon Hopf algebra $\UUH$.
\end{theorem}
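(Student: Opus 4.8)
The plan is to verify the three conditions in the definition of a core subalgebra (Section~\ref{sec:coresub}) for $\Xh\subset\UUH$. Theorem~\ref{r.sXalg} already tells us that $\Xh$ is a topological Hopf subalgebra of $\UUH$ which is ad-stable, so condition~(i) is done; what remains is (ii) $\cR\in\overline{\Xh\otimes\Xh}$ and $\mathbf g\in\overline{\Xh}$, and (iii) the presentation of the clasp element $\bc$ required in the definition.

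For (ii), first note that $\overline{\Xh\otimes\Xh}$ is a closed $\Chh$-subalgebra of $\UUH^{\ho2}$, since multiplication on $\UUH^{\ho2}$ is $h$-adically continuous and $\Xh\otimes\Xh$ is a subalgebra by Theorem~\ref{r.sXalg}. I would then check membership of the explicit formulas of Section~\ref{sec:universal-r-matrix} summand by summand, using Corollary~\ref{r.phu2}, which identifies $\Xh$ with the complete subalgebra of $\UUH$ generated by the $\sqrt h\,H_\al$, $\sqrt h\,E_\gamma(\bi)$ and $\sqrt h\,F_\gamma(\bi)$. Since $2\rho=\sum_{\beta\in\Phi_+}\beta\in Y$, the element $H_{2\rho}$ is a $\BZ$-combination of the $H_{\al_j}$, so in $\mathbf g=K_{-2\rho}=\exp(-\tfrac h2 H_{2\rho})$ the $n$-th term of the exponential series carries a factor $h^n$ in front of a $\BZ$-combination of degree-$n$ monomials $H_{j_1}\cdots H_{j_n}$, and $h^n H_{j_1}\cdots H_{j_n}=h^{n/2}\prod_k(\sqrt h\,H_{j_k})\in\Xh$ has $h$-adic order $n$; hence $\mathbf g\in\overline{\Xh}$. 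The same computation with a tensor factor inserted, using that each $\brH_\al/d_\al$ is a $\BQ$-combination of the $H_{\al_j}$, shows $\cD^{\pm1}=\exp(\pm\tfrac h2\sum_\al H_\al\otimes\brH_\al/d_\al)\in\overline{\Xh\otimes\Xh}$. For the quasi-$R$-matrix $\Theta=\sum_{\bn\in\BN^t}F_\bn\otimes E_\bn$ of \eqref{e9}, the identities $\{m\}_\gamma!=h^m\cdot(\text{unit})$ and $[m]_\gamma!=(\text{unit})$ give $F_\bn\otimes E_\bn=(\text{unit})\cdot(h^{\|\bn\|/2}F^{(\bn)})\otimes(h^{\|\bn\|/2}E^{(\bn)})$, whose two factors lie in $\Xh$ (being, up to units, $\iprod(\sqrt h\,F_{\gamma_j})^{n_j}$ and $\iprod(\sqrt h\,E_{\gamma_j})^{n_j}$) and which has $h$-adic order $\|\bn\|$; since $\BN^t$ has finitely many elements of each norm, this series converges in $\overline{\Xh\otimes\Xh}$. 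Therefore $\cR^{-1}=\Theta\,\cD^{-1}\in\overline{\Xh\otimes\Xh}$ by \eqref{501}, and then $\cR=(S^{-1}\ho\id)(\cR^{-1})\in\overline{\Xh\otimes\Xh}$ because $S^{-1}(\Xh)\subset\Xh$.

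For (iii), Proposition~\ref{r.42a}, equation \eqref{eq.bch}, gives $\bc=\sum_{\bn}u_h(\bn)\,h^{\|\bn\|}\bbe_h(\bn)\otimes\bbe''_h(\bn)$ with each $u_h(\bn)$ a unit in $\Ch$. Splitting $h^{\|\bn\|}=h^{\|\bn\|/2}\cdot h^{\|\bn\|/2}$ across the tensor product, I would take $\bc'(\bn)=h^{\|\bn\|/2}\bbe_h(\bn)$ and $\bc''(\bn)=u_h(\bn)\,h^{\|\bn\|/2}\bbe''_h(\bn)$, so that $\bc=\sum_{\bn}\bc'(\bn)\otimes\bc''(\bn)$. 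Both families are $0$-convergent in $\UUH$: the scalar $h^{\|\bn\|/2}$ forces $h$-adic order $\ge\lfloor\|\bn\|/2\rfloor$, and only finitely many $\bn$ have $\|\bn\|<2k$ for a given $k$. The family $\{\bc'(\bn)\}$ is exactly the defining topological basis \eqref{eq.basissX} of $\Xh$. Finally $\{\bc''(\bn)\}$ is a topological basis of $\Xh$: since the $u_h(\bn)$ are units it suffices that $\{h^{\|\bn\|/2}\bbe''_h(\bn)\}$ is one, and this holds because $\bbe''_h(\bn_1,\bn_2,\bn_3)=F^{(\bn_3)}K_{\bn_3}\brH^{\bn_2}E^{(\bn_1)}$ differs from the $\bbe_h$-monomials only by having $\brH^{\bn_2}$ in place of $H^{\bn_2}$; expanding $\brH^{\bn_2}$ in the $H^{\bl}$ with $\|\bl\|=\|\bn_2\|$ writes $h^{\|\bn\|/2}\bbe''_h(\bn)$ as a finite $\BQ$-combination of basis vectors $h^{\|\bm\|/2}\bbe_h(\bm)$ with $\|\bm\|=\|\bn\|$, by a transformation that is block-diagonal with finite invertible blocks (one per triple $(\bn_1,\bn_3,d)$, namely the change of basis between degree-$d$ monomials in the $H_{\al_j}$ and in the $\brH_{\al_j}$), and such a transformation sends a topological basis to a topological basis.

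The genuinely technical point is (ii), i.e.\ placing $\cR^{\pm1}$ into $\overline{\Xh\otimes\Xh}$. The difficulty is that $\Theta$ and $\cD$ are only $h$-adically convergent infinite sums, so one must track the $h$-adic orders of the PBW summands precisely: the $\sqrt h$-rescaling built into $\Xh$ forces each generator $E_{\gamma_j}$, $F_{\gamma_j}$ or $H_\al$ appearing in a monomial to be matched with exactly one $\sqrt h$, and this works only because $\{m\}_\gamma!$ contributes exactly $h^m$ times a unit and the coweights $\brH_\al/d_\al$ lie in the rational span of the $H_{\al_j}$. Everything else is quoted from Theorem~\ref{r.sXalg} or is a rearrangement of the formulas \eqref{e9}, \eqref{eq.diapart} and \eqref{eq.bch}.
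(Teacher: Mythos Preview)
Your proof is correct and follows essentially the same approach as the paper's: condition~(i) via Theorem~\ref{r.sXalg}, condition~(ii) by showing $\cD^{\pm1}$ and $\Theta$ lie in $\overline{\Xh\otimes\Xh}$ using the $\sqrt h$-rescaled generators from Corollary~\ref{r.phu2}, and condition~(iii) by splitting the factor $h^{\|\bn\|}$ in \eqref{eq.bch} as $h^{\|\bn\|/2}\cdot h^{\|\bn\|/2}$ and identifying the two resulting families as topological bases of $\Xh$. The paper argues (iii) more tersely (simply invoking that $\{\brH_\al\}$ is a basis of $\mathfrak h^*_\BR$), whereas you spell out the block-diagonal change of basis; note also that $\bbe''_h(\bn_1,\bn_2,\bn_3)$ swaps the roles of $\bn_1$ and $\bn_3$ relative to $\bbe_h$, so your blocks really go from the $(\bn_1,\bn_3,d)$-block on the $\bbe''_h$ side to the $(\bn_3,\bn_1,d)$-block on the $\bbe_h$ side, but this permutation of the index set does not affect your conclusion.
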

\begin{proof} For the convenience of the reader, we recall the definition of a  core subalgebra: $\Xh$ is a core subalgebra of $\UUH$ means that $\Xh$ is a topological Hopf subalgebra of $\UUH$ and the following (i)--(iii) holds.

(i) $\Xh$ is $\UUH$-stable,

(ii) $\cR\in \overline{\Xh \ot \Xh}$ and $K_{2 \rho} \in \overline{\Xh}$, and

(iii) The clasp element $\bc$ has a presentation
$$ \bc = \sum_{i\in I} \bc'(i) \ot \bc''(i),$$
where each of  $\{ \bc'(i) \}$ and $ \{ \bc''(i) \}$ is 0 convergent in $\UUH$ and is a topological basis of $\Xh$.

Let us look at all three statements.

(i)
By Theorem  \ref{r.sXalg}, $\Xh$ is a topological Hopf subalgebra of $\UUH$, and (i) holds.

(ii) Since $\sqrt h H_\al\in \Xh$ (see Corollary \ref{r.phu2}),
 $ K_{\pm 2\rho} =\exp(\pm \sum_{\al \in \Phi_+}hH_\al)\in \Xh \subset \overline{\Xh}$.

By \eqref{501},   $ \cR ^{-1}= \Theta \cD^{-1}$,
 where
 $$\Theta =  \sum_{\bn \in \BN^t} F_\bn \ot E_\bn \quad \text{and} \quad  \cD^{-1}=\exp(-\frac h2 \sum _{\al \in \Pi} H_\al \ot \brH_{\al}/d_\al).$$

 As $\sqrt h H_\al, \sqrt h \brH_\al\in \Xh$, one has $\cD^{-1}\in \overline{\Xh \ot \Xh}$.

Using the definition \eqref{eq.En}, \eqref{eq.Fn} of $E_\bn,F_\bn$, and Corollary \ref{r.phu2}, we have
$$ F_\bn \ot E_\bn \sim  \iprod_{\gamma_j\in \Phi_+} (hF_{\gamma_j} \ot E_{\gamma_j})^{n_j} \in \Xh \ot \Xh,$$
where $a\sim b$ means $a=ub$, where $u$ is a unit in $\Ch$. Hence
$ \Theta = \sum F_\bn \ot E_\bn \in  \overline{\Xh \ot \Xh}$. It follows that $\cR^{-1}= \Theta \cD^{-1} \in \overline{\Xh \ot \Xh}$. Since $\cR = (\id \ho S) (\cR^{-1})$, we also have $\cR\in \overline{\Xh \ot \Xh}$.
  Thus (ii) holds.

(iii) Let $I= \BN^{t+\ell+t}$, and for $\bn \in I$,
 \be
 \label{eq.za1}
 \bc'(\bn) = h^{\| \bn\| /2} \bbe_h(\bn) , \quad \bc''(\bn) =  u_h(\bn)  h^{\| \bn\| /2} \bbe''_h(\bn),
 \ee
 where $u_h(\bn)$ is the unit of $\Ch$ in \eqref{eq.unith}.
 By \eqref{eq.bch},
 $$ \bc = \sum_{\bn} \bc'(\bn) \ot \bc''(\bn).$$
By definition, $\{ \bc'(\bn)\}$ is a topological basis of $\Xh$. Since $\{ \brH_\al \mid \al \in \Pi\}$ is
a basis of ${\mathfrak h}^*_\BR$, $\{ \bc''(\bn)\}$ is also a topological basis of $\Xh$. The factors $h^{\| \bn\| /2}$
in \eqref{eq.za1} shows that each set  $\{ \bc'(\bn)\}$ and $\{ \bc''(\bn)\}$ is 0-convergent. Hence (iii) holds. This completes the proof of the theorem.
\end{proof}

By Theorem \ref{r.JMdef}, the core subalgebra $\Xh$ gives rise to an invariant $J_M\in \Chh$ of integral homology 3-spheres $M$, via the twists $\cT_\pm$ which we will study in the next subsections.

\subsection{Quantum Killing form}
\label{sec:rosso-form}
Since $\Xh$ is a core subalgebra of $\UUH$, according to Section \ref{sec:coresub}, one has a clasp form, which is a $\UUH$-module homomorphism
\be
\label{eq.formLL}
\sL : \overline{\Xh} \ho \Xh  \to \Chh,
\ee
defined by
\be
\label{eq.defform}
 \sL (\bc''(\bn) \ot  \bc'(\bm) ) =\delta_{\bn,\bm}, \quad \text{ for }\bn, \bm \in \BN^{t+\ell+t   }
\ee
where $\bc''(\bn)$ and $ \bc'(\bm)$ are given by \eqref{eq.za1}.
We also denote $\sL(x \ot y)$ by $\la x, y \ra$.

Let us calculate explicitly the form $\sL$.
Recall that $F_\bn,E_\bn\in \Uq$ were defined by \eqref{eq.En} and \eqref{eq.Fn}, which depend on a longest reduced sequence.
\begin{proposition} Fix a longest reduced sequence $\bi$.
For $\bm,\bn,\bn',\bm'\in \BN^t, \bk,\bk' \in \BN^\ell, \al, \beta \in Y, k,l \in \BN$, one has
\begin{align}
  \langle F_\modm K_\modm \,  h^{k/2} H^k_\al\,   E_\modn ,F_{\modn '}K_{\modn '} \,  h^{l/2} H^l_\beta \,  E_{\modm '}\rangle
  &=\delta_{k,l} \delta _{\modm ,\modm '}\delta _{\modn ,\modn '} \, q^{(\rho ,|E_\modn |)}\, (-1)^k \, k! \, (\al,\beta)^k   \label{e57a1}\\
  \langle F_\modm K_\modm K_{\mu} E_\modn ,F_{\modn '}K_{\modn '}K_{\mu '}E_{\modm '}\rangle
  &=\delta _{\modm ,\modm '}\delta _{\modn ,\modn '}q^{(\rho ,|E_\modn |)}v^{-(\mu ,\mu ')/2} \label{e57a}
\end{align}
\end{proposition}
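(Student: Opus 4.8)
The strategy is to read the form $\sL$ off directly from the explicit clasp element. Recall from the construction of the core subalgebra $\Xh$ (Theorem~\ref{thm.coresub}, together with \eqref{eq.za1}, \eqref{eq.defform}) that, with respect to the presentation \eqref{eq.bch} of the clasp element, $\bc=\sum_{\bn}u_h(\bn)\,h^{\|\bn\|}\,\bbe_h(\bn)\otimes\bbe''_h(\bn)$, the clasp form is the unique continuous $\Chh$-bilinear form with $\sL(\bbe''_h(\bn),\bbe_h(\bm))=\delta_{\bn,\bm}\,(u_h(\bn)h^{\|\bn\|})^{-1}$, where $u_h(\bn)$ is the unit of \eqref{eq.unith}. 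Since $\{h^{\|\bn\|/2}\bbe_h(\bn)\}$ and $\{h^{\|\bn\|/2}\bbe''_h(\bn)\}$ are topological bases of $\Xh$, the two identities will follow by expanding both arguments of $\sL$ in these bases and using bilinearity and continuity.

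I would first prove \eqref{e57a1}. By \eqref{eq.En}, \eqref{eq.Fn} the factors $F_\modm$ and $E_\modn$ are unit multiples of the divided powers $F^{(\modm)}$ and $E^{(\modn)}$, and $K_\modm$ is the distinguished $K$ attached to $F^{(\modm)}$. For the Cartan middle factor I would expand $H_\al=\sum_i\frac{(\al,\al_i)}{d_{\al_i}}\brH_{\al_i}$ and apply the multinomial theorem, so that $h^{k/2}H_\al^k$ becomes a finite combination of the basis vectors $h^{k/2}\brH^{\bn_2}$ with $\bn_2\in\BN^\ell$, $\|\bn_2\|=k$; similarly the second argument gets written over the monomials $H^{\bn_2}$. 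Substituting into $\sL$, the $F$- and $E$-parts are diagonal, so bilinearity forces $\modm=\modm'$, $\modn=\modn'$ and $k=l$, and matching the normalizers of \eqref{eq.En}, \eqref{eq.Fn} against the units $u''_h$ of \eqref{eq.unith} and the factor $q^{-(\rho,|E_\cdot|)}$ occurring there leaves exactly the scalar $q^{(\rho,|E_\modn|)}$. The only step carrying real content is then the Cartan sum: writing $u'_h(\bn_2)^{-1}=(-1)^k\prod_i (\bn_2)_i!\,d_{\al_i}^{(\bn_2)_i}$ and cancelling the powers of $d_{\al_i}$, it reduces to
\begin{gather*}
  \sum_{\|\bn_2\|=k}(-1)^k\,(k!)^2\prod_i\frac{\big[(\al,\al_i)(\beta)_i\big]^{(\bn_2)_i}}{(\bn_2)_i!}\\
  =(-1)^k\,k!\,\Big(\sum_i(\al,\al_i)(\beta)_i\Big)^{k}=(-1)^k\,k!\,(\al,\beta)^k,
\end{gather*}
by the multinomial theorem and $(\al,\beta)=\sum_i(\beta)_i(\al,\al_i)$, where $(\beta)_i$ denotes the coefficient of $\al_i$ in $\beta=\sum_i(\beta)_i\al_i$; this is \eqref{e57a1}.

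Next \eqref{e57a} would follow by exponentiation. Since $K_\mu=\exp(\tfrac h2 H_\mu)=\sum_{k\ge0}\tfrac{h^{k/2}}{2^k\,k!}\,(h^{k/2}H_\mu^k)$, bilinearity of $\sL$ together with \eqref{e57a1} gives
\begin{gather*}
  \la F_\modm K_\modm K_\mu E_\modn,\ F_{\modn'}K_{\modn'}K_{\mu'}E_{\modm'}\ra
  =\delta_{\modm,\modm'}\delta_{\modn,\modn'}\,q^{(\rho,|E_\modn|)}\sum_{k\ge0}\frac{\big(-h(\mu,\mu')/4\big)^k}{k!}\\
  =\delta_{\modm,\modm'}\delta_{\modn,\modn'}\,q^{(\rho,|E_\modn|)}\,v^{-(\mu,\mu')/2},
\end{gather*}
using $v=e^{h/2}$. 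Equivalently, one can rerun the expansion used for \eqref{e57a1} with $H_\al^k$ replaced by $K_\mu$ from the outset and sum the resulting exponential series.

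The step I expect to be the main obstacle is the scalar bookkeeping leading to the prefactor $q^{(\rho,|E_\modn|)}$: one has to keep simultaneous track of the normalizing units from \eqref{eq.En} and \eqref{eq.Fn}, the change of basis between the $\{H_{\al_i}\}$- and $\{\brH_{\al_i}\}$-monomials, the half-integer powers of $h$ built into the topological bases of $\Xh$ and $\overline{\Xh}$, and the explicit unit $u_h(\bn)$ of \eqref{eq.unith} (whose ingredients are spelled out in the proof of Theorem~\ref{thm.coresub}), and to verify that all of these cancel except for $q^{(\rho,|E_\modn|)}$. Once the bookkeeping is organized — most conveniently by treating the $F$-part, the Cartan part and the $E$-part as three independent factors, exactly as in Lemma~\ref{r5} and \eqref{e80} — everything reduces to the multinomial identity above and its exponentiated analogue, which are routine.
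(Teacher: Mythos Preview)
Your approach is correct and is essentially the same as the paper's: the paper derives \eqref{e57a1} directly from the defining orthogonality \eqref{eq.defform} together with the explicit formulas \eqref{eq.za1} and \eqref{eq.unith} (what it calls ``a simple calculation''), and then obtains \eqref{e57a} from \eqref{e57a1} by the exponential expansion $K_\mu=\exp(hH_\mu/2)=\sum_k h^kH_\mu^k/(2^kk!)$. You have simply spelled out that simple calculation in full, including the multinomial identity governing the Cartan part, which the paper leaves implicit.
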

\begin{proof}
 Formula \eqref{e57a1} is obtained from
\eqref{eq.defform} by a simple calculation, using the definition \eqref{eq.za1}
of $\bc'(\bn)$ and $\bc''(\bn)$. Formula \eqref{e57a} is obtained from \eqref{e57a1} using the expansion $K_\mu= \exp(h H_\mu/2) = \sum_{k} h^k H^k_\mu/(2^kk!)$.
\end{proof}

Suppose  $x,y\in \Uq$. There are non-zero $a,b\in \Cv$ such that $ax,by \in \Xh$.
By \eqref{e57a}, $\la ax, by \ra \in \BC[v^{\pm 1/2}]$. Hence we can define $\la x,y \ra = \frac{\la ax, by \ra}{ab} \in \BC(v^{1/2})$. Thus, we have a $\BC(v)$-bilinear form
\be
\label{eq.qformUq}
\la., . \ra : \Uq \ot \Uq \to \BC(v^{1/2}).
\ee

\begin{remark} The form we construct is not new. On $\Uq$ the form
$\sL$ is exactly the quantum Killing form (or the Rosso form)  \cite{Rosso,Tanisaki} (see \cite{Jantzen}),  which was constructed via an elaborate process. For example, if one defines the quantum Killing form by \eqref{e57a}, then it not easy to check
the ad-invariance of the quantum Killing form.
Essentially here we give a geometric characterization of the quantum Killing form:
 it is the dual of the clasp element $\modc$. The
ad-invariance of the quantum Killing form then follows right away
from the ad-invariance of $\modc$. We also determine the space $\Xh$, which in a sense is the biggest space for which the quantum Killing form can be defined (with values in $\Ch$).
\end{remark}

\subsection{Properties of quantum Killing form}
We again emphasize that the form $\sL$ is ad-invariant, i.e. the map $\sL$ in \eqref{eq.formLL} is a $\UUH$-module homomorphism, see Lemma \ref{r.invar}. It follows that the form \eqref{eq.qformUq} is $\Uq$-ad-invariant.

Since each of $\{ \bc'(\bn)\}$ and $\{ \bc''(\bn) \}$ is a topological basis of $\Xh$ and they are dual to each other,  the bilinear form $\langle .,.\rangle $ is non-degenerate.

From \eqref{e57a1}, we see that the quantum Killing form is {\em triangular} in the following sense.  Let $x,x' \in \Xh \cap \Uh^{\ev,-}$, $y,y'\in \Xh \cap \Uh^0 $, and $z,z'\in \Xh \cap \Uh^+$, then
\begin{gather}
  \label{e89}
  \langle xyz,x'y'z'\rangle =\langle x,z'\rangle \langle y,y'\rangle \langle z,x'\rangle .
\end{gather}

The quantum Killing form is uniquely determined up to a scalar
 by the ad-invariant, non-degenerate, and triangular
properties, see \cite[Theorem 4.8]{JL2}.

The quantum Killing form is not symmetric. In fact, for $x,y\in \Xh$, we have
\begin{gather*}
  \langle y,x\rangle = \langle x,S^2(y)\rangle =\langle S^{-2}(x),y\rangle,
\end{gather*}
which follows from the identity $(\id\ot  S^2) (\bc) = \bc_{21}$. If $y$ is central, then $S^2(y)=K_{-2\rho} y K_{2\rho}=y$. Hence
\be
\label{eq.qkcentral}
\la x, y \ra = \la y ,x \ra \quad \text{if $y$ is central}.
\ee

The quantum Killing form extends to a multilinear form
$$ \la., .\ra : \overline{\Xh ^{\ot n}} \, \ho \Xh^{\ho n} \to \Chh,$$
where $\overline{\Xh ^{\ot n}}$ is the topological closure of $\Xh ^{\ot n}$,
by $$\la  x_1\ot \dots \ot x_n, y_1\ot \dots \ot y_n \ra = \prod_{j=1}^n \la x_j, y_j \ra.$$

\begin{lemma}
Suppose $x,y,z$ are element of  $\Xh^0=\Xh \cap \UUH^0$. Then
\be
\label{eq.sd10}
\la xy , z \ra = \la x \otimes y, \Delta(z)\ra
\ee
\end{lemma}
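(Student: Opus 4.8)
The plan is to reduce \eqref{eq.sd10} by continuity to the topological basis of $\Xh^0$ coming from the PBW basis, to check the case of the group-like elements $K_\gamma$ directly, and then to spread that case over the whole basis by a generating-function argument. The content of the statement is simply that the quantum Killing form, restricted to the commutative and cocommutative Cartan part $\UUH^0$, is the exponential (Hopf) pairing attached to the bilinear form $-\tfrac h4(\cdot\,,\cdot)$ on $\modh^*$.

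First I record that all three pairings in \eqref{eq.sd10} are defined: since $\Xh$ is an algebra with $\Delta(\Xh)\subset\Xh^{\ho2}$ (Theorem~\ref{r.sXalg}), for $x,y,z\in\Xh^0$ we have $xy\in\Xh\subset\overline{\Xh}$, $x\ot y\in\Xh\ot\Xh\subset\overline{\Xh^{\ot2}}$, and $\Delta(z)\in\Xh^{\ho2}$. Both sides of \eqref{eq.sd10} are $\Chh$-trilinear in $(x,y,z)$, and they are $h$-adically continuous because $\sL$ is a $\Chh$-module homomorphism (Lemma~\ref{r.invar}) and the multiplication and $\Delta$ are continuous. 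By Corollary~\ref{r.phu2} together with Proposition~\ref{r.PBWbases}, the module $\Xh^0=\Xh\cap\UUH^0$ is topologically free with topological basis $\{b_\bn:=\prod_{i=1}^{\ell}(\sqrt h\,H_{\al_i})^{n_i}\mid\bn\in\BN^{\ell}\}$, the order of the factors being irrelevant since the $H_{\al_i}$ commute by \eqref{e45}. Hence it suffices to prove \eqref{eq.sd10} for $x=b_\bm$, $y=b_{\bm'}$, $z=b_\bn$.

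For $\gamma=\sum_i c_i\al_i\in Y$ put $K_\gamma=\exp(\tfrac h2 H_\gamma)$. A short computation (multinomial expansion of the exponential, together with $H^{\bn}=\prod_i H_{\al_i}^{n_i}=(\sqrt h)^{-\|\bn\|}b_\bn$) gives $K_\gamma=\sum_{\bn\in\BN^{\ell}}\tfrac{(\sqrt h)^{\|\bn\|}c^\bn}{2^{\|\bn\|}\bn!}\,b_\bn$, with $c^\bn=\prod_i c_i^{n_i}$ and $\bn!=\prod_i n_i!$; in particular $K_\gamma\in\Xh^0$. Since $H_\gamma$ is primitive, $K_\gamma$ is group-like, so $\Delta(K_\gamma)=K_\gamma\ot K_\gamma$ and $K_\gamma K_{\gamma'}=K_{\gamma+\gamma'}$. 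Combining these with $\la K_\gamma,K_{\gamma'}\ra=v^{-(\gamma,\gamma')/2}$ (from \eqref{e57a}) and the bilinearity of $(\cdot\,,\cdot)$,
\begin{align*}
\la K_\gamma K_{\gamma'},K_{\gamma''}\ra
&=\la K_{\gamma+\gamma'},K_{\gamma''}\ra=v^{-(\gamma,\gamma'')/2}v^{-(\gamma',\gamma'')/2}\\
&=\la K_\gamma,K_{\gamma''}\ra\,\la K_{\gamma'},K_{\gamma''}\ra=\la K_\gamma\ot K_{\gamma'},\Delta(K_{\gamma''})\ra,
\end{align*}
so \eqref{eq.sd10} holds for every triple $(K_\gamma,K_{\gamma'},K_{\gamma''})$ with $\gamma,\gamma',\gamma''\in Y$.

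It remains to substitute the expansion of $K_\gamma$ into this identity. Both sides become power series in $c,c',c''\in\BZ^{\ell}$ with coefficients in $\Chh$; the coefficient of $c^\bm c'^{\bm'}c''^{\bn}$ lies in $(\sqrt h)^{\|\bm\|+\|\bm'\|+\|\bn\|}\Chh$ (because $\la b_\bm,b_\bn\ra\in\Chh$), so modulo any $h^N$ only finitely many coefficients are nonzero and the reductions are polynomials; since these agree for all $(c,c',c'')\in\BZ^{3\ell}$ they agree coefficientwise. Comparing the coefficient of $c^\bm c'^{\bm'}c''^{\bn}$ and cancelling the common nonzero scalar $(\sqrt h)^{\|\bm\|+\|\bm'\|+\|\bn\|}/(2^{\|\bm\|+\|\bm'\|+\|\bn\|}\bm!\,\bm'!)$ yields
\[
\la b_{\bm+\bm'},b_\bn\ra=\sum_{\bk+\bl=\bn}\binom{\bn}{\bk}\,\la b_\bm,b_\bk\ra\,\la b_{\bm'},b_\bl\ra .
\]
This is precisely \eqref{eq.sd10} for $(b_\bm,b_{\bm'},b_\bn)$: indeed $b_\bm b_{\bm'}=b_{\bm+\bm'}$, while $\Delta(H_{\al_i})=H_{\al_i}\ot1+1\ot H_{\al_i}$ and $(\sqrt h)^{\|\bn\|}=(\sqrt h)^{\|\bk\|}(\sqrt h)^{\|\bn-\bk\|}$ give $\Delta(b_\bn)=\sum_{\bk\le\bn}\binom{\bn}{\bk}b_\bk\ot b_{\bn-\bk}$. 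Therefore \eqref{eq.sd10} holds on the topological basis of $\Xh^0$ and, by trilinearity and continuity, on all of $\Xh^0$. The one point requiring care — the main obstacle, such as it is — is purely bookkeeping: matching the various $h$-adic completions so that the three pairings are simultaneously defined, and justifying the coefficientwise comparison; there is no conceptual difficulty.
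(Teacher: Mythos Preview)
Your proof is correct. The paper's own proof is the one-liner ``This follows from \eqref{e57a1}, with $\bn=\bm=0$,'' which points at the explicit value of the pairing on the Cartan part and leaves the verification to the reader; you carry out that verification explicitly, but via the group-likes $K_\gamma$ (using \eqref{e57a}) rather than via the $H$-formula \eqref{e57a1}. The two routes are essentially equivalent: your group-like check is a one-line consequence of $\Delta(K_\gamma)=K_\gamma\ot K_\gamma$ and bilinearity of $(\cdot,\cdot)$, after which the generating-function extraction recovers the statement on the PBW basis $b_\bn$; conversely, \eqref{e57a1} only treats powers $H_\alpha^k$ of a single Cartan element, so turning the paper's hint into an actual proof on the basis $H^{\bn}$ would likewise require a polynomial-identity or interpolation step of the same flavour as yours. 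Your version has the merit of being fully spelled out, including the $h$-adic bookkeeping needed to justify the coefficient comparison.
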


\begin{proof}
This follows from \eqref{e57a1}, with $\bn=\bm=0$.
\end{proof}

Note that \eqref{eq.sd10} does not hold for general $x,y,z\in \Xh$.

\subsection{Twist system associated to $\Xh$ and invariant of integral homology 3-spheres} \label{sec.twist1}
According to the result of Section \ref{sec.partii}, the core subalgebra  $\Xh$ gives rise to a twist system
 $\cT_\pm: \Xh \to \Chh$, defined by
$$ \cT_\pm (x) = \la \br^{\pm 1}, x\ra$$
and an invariant $J_M\in \Chh$ of integral homology 3-spheres $M$. Recall that $J_M$ is defined as follows.
Suppose $T$  is an $n$-component bottom tangle with 0 linking matrix and $\ve_i \in \{ -1,1\}$
 and $M$ is obtained from $S^3$ by surgery along the closure link $\cl(T)$ with the framing of the $i$-th component switched to $\ve_i$. Then
$$ J_M= (\cT_{\ve_1} \ho \dots \ho \cT_{\ve_n})(J_T).$$
In the next few sections we will show that $J_M \in \Zqh$.

Let us calculate the values of $\cT_\pm$ on basis elements. Recall that
$$\br_0= K_{-2\rho} \exp(-\frac 12 \sum_{\al \in \Pi} H_\al \brH_\al/d_\al).$$
\begin{proposition} \label{p.989}

 (a) Fix a longest reduced sequence $\bi$. For $\bm,\bn\in \BN^t, \gamma\in Y, x\in \Xh^0$, one has
\begin{align}
\label{eq.989}  \cT_+(F_\bm K_\bm \, x \,  E_\bn) &=  \delta _{\modm ,\modn }\, q^{(\rho ,|E_\modn |)}\, \la \br_0 , x \ra\\
\label{eq.989a} \la \br_0 , K_{ \gamma} \ra &=   v^{(\gamma ,\rho )-\frac{1}{4}(\gamma ,\gamma )} \in \BZ[v^{\pm 1/2}].
\end{align}

(b) For every $x \in \Xh$, one has
\be \cT_-(x) =  \cT_+( \tphi(x)).
\label{eq.cTminus}
\ee
\end{proposition}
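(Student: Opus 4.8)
My plan for (a) is to combine the explicit expansion \eqref{e80r} of $\br$ with the explicit values \eqref{e57a}, \eqref{e57a1} of the quantum Killing form. For \eqref{eq.989}, write $\br=\sum_{\modn'\in\Zt}F_{\modn'}K_{\modn'}\,\br_0\,E_{\modn'}$ as in \eqref{e80r}. Since $F_{\modn'}K_{\modn'}\in\Uh^{\ev,-}$, $\br_0\in\Uh^0$ and $E_{\modn'}\in\Uh^+$, each summand is written in the even triangular decomposition \eqref{e54b}, and so is the argument $F_\bm K_\bm\,x\,E_\bn$ with $x\in\Xh^0$. Because $\br\in\overline{\Xh}$ and the clasp form is continuous, I would pair termwise and apply the triangularity \eqref{e89}:
\begin{gather*}
  \langle\br,\ F_\bm K_\bm\,x\,E_\bn\rangle=\sum_{\modn'\in\Zt}\langle F_{\modn'}K_{\modn'},\,E_\bn\rangle\;\langle\br_0,\,x\rangle\;\langle E_{\modn'},\,F_\bm K_\bm\rangle.
\end{gather*}
By \eqref{e57a} the first factor equals $\delta_{\modn',\bn}$ and the last equals $\delta_{\modn',\bm}\,q^{(\rho,|E_{\modn'}|)}$, so only $\modn'=\bm=\bn$ survives and the sum collapses to $\delta_{\bm,\bn}\,q^{(\rho,|E_\bn|)}\,\langle\br_0,x\rangle$, i.e. \eqref{eq.989}. (One first rescales $F_{\modn'}K_{\modn'}$ and $E_{\modn'}$ by suitable powers of $h$ to land in $\Xh$ before invoking \eqref{e89}; the rescalings cancel.)

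For \eqref{eq.989a}, note that $\br_0$ and $K_\gamma$ lie in the completed Cartan part $\widehat{\Uh^0}$. A direct power-series computation with \eqref{e57a1} shows that on $\widehat{\Uh^0}$ one has $\langle x,K_\gamma\rangle=$ the scalar by which $x$ acts on a vector of weight $-\tfrac12\gamma$, i.e. the image of $x$ under the algebra homomorphism sending $H_\al\mapsto(\al,-\tfrac12\gamma)$ for $\al\in\Pi$. Since $\br_0=K_{-2\rho}\,\boldmu(\cD^{-1})=K_{-2\rho}\exp\!\big(-\tfrac h2\sum_{\al\in\Pi}H_\al\brH_\al/d_\al\big)$ and $\sum_\al H_\al\otimes\brH_\al/d_\al$ is the canonical element of $\hh\otimes\hh$, that scalar at weight $-\tfrac12\gamma$ is $v^{-2(\rho,-\gamma/2)-(-\gamma/2,-\gamma/2)}=v^{(\gamma,\rho)-\frac14(\gamma,\gamma)}$, which is \eqref{eq.989a}. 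Finally $(\gamma,\rho)\in\BZ$ and $(\gamma,\gamma)\in2\BZ$ for $\gamma$ in the root lattice, so the value lies in $\BZ[v^{\pm1/2}]$.

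For (b) the plan is to extract a $\tphi$-symmetry of the quantum Killing form from \eqref{e80b}, $\bc=(\tphi\otimes\uS^{-1}\tphi)(\bc)$. Applying the duality argument of Proposition \ref{r.vv1} with $f=\tphi$ on the first tensor factor and $g=\uS^{-1}\tphi$ on the second (both stabilize $\Xh$ by Theorem \ref{r.sXalg}, and $g$ is bijective since $\uS,\tphi$ are) yields $\langle\uS^{-1}\tphi(a),\tphi(b)\rangle=\langle a,b\rangle$ for $a\in\overline{\Xh}$, $b\in\Xh$, equivalently $\langle a,\tphi(b)\rangle=\langle\tphi^{-1}\uS(a),b\rangle$. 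Taking $a=\br$, $b=x$, and using $\uS(\br)=\br$ from \eqref{eq.rin1}, $\tphi(\br)=\br^{-1}$ from Corollary \ref{r.phir}, and $\tphi^2(\br)=S^2(\br)=\br$ because $\br$ is central (see \eqref{eq.tphi2} and \eqref{e81}), we get $\tphi^{-1}\uS(\br)=\tphi^{-1}(\br)=\br^{-1}$, whence $\cT_+(\tphi(x))=\langle\br,\tphi(x)\rangle=\langle\br^{-1},x\rangle=\cT_-(x)$.

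The point needing care is (b): unlike the Hopf-algebra operations, $\tphi$ is only a $\BC$-algebra automorphism with $\tphi(h)=-h$, hence not $\Ch$-linear, so the computation behind Proposition \ref{r.vv1} must be rerun keeping track of the sign change of $h$ when scalars are pulled out of convergent sums — it closes up because $\tphi$ carries $\Ch$ bijectively onto $\Ch$. An alternative to the duality route, bypassing this subtlety, is to evaluate $\cT_-(F_\bm K_\bm x E_\bn)$ directly from the expansion of $\br^{-1}$ in \eqref{e80r}, exactly as in \eqref{eq.989}, and to compare it with $\cT_+\big(\tphi(F_\bm K_\bm x E_\bn)\big)$ after putting $\tphi(F_\bm K_\bm x E_\bn)$ into even triangular form, using that $\tphi$ interchanges $\Uh^+$ and $\Uh^{\ev,-}$ together with the quasi-cyclicity of $\cT_+$ coming from its ad-invariance; the remaining steps in both parts are routine bookkeeping.
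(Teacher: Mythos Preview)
Your proof follows essentially the same route as the paper's: for (a) you expand $\br$ via \eqref{e80r} and invoke the triangular property \eqref{e89} together with \eqref{e57a}--\eqref{e57a1}, exactly as the paper does; for (b) you use \eqref{e80b} and the duality principle of Proposition~\ref{r.vv1} to obtain $\langle y,x\rangle=\langle\uS^{-1}\tphi(y),\tphi(x)\rangle$ and then specialize to the ribbon element, which is precisely the paper's argument (the paper plugs in $y=\br^{-1}$ directly and uses $\uS^{-1}\tphi(\br^{-1})=\br$, whereas you plug in $a=\br$ after inverting the relation --- this is the same computation).

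Your caveat about $\tphi$ not being $\Ch$-linear is well taken: Proposition~\ref{r.vv1} is stated for $\Ch$-module isomorphisms, and $\tphi(h)=-h$, so strictly speaking the hypothesis is not met. The paper applies Proposition~\ref{r.vv1} without comment here; in effect the proof of Proposition~\ref{r.vv1} goes through verbatim for a continuous $\BC$-algebra automorphism that permutes $\Ch$ and stabilizes $\Xh$, since the only use of linearity is to pull scalars through $g$, and this is compatible with the twisted linearity $g(c\cdot z)=\ibar(c)\cdot g(z)$. Your remark that the argument ``closes up'' for this reason is the right justification, and your suggested alternative via direct computation from \eqref{e80r} is a legitimate bypass.
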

\begin{proof}
(a) By \eqref{e80r},
$$ \br= \sum_{\bn \in \BN^t} F_\bn K_\bn \br_0 E_\bn.$$
Identity \eqref{eq.989} follows from the triangular property of the quantum Killing form.
Identities in  \eqref{eq.989a} follow from a calculation using \eqref{e57a1} and the explicit expression of $\br_0$.

(b) By \eqref{e80b}, $\bc = (\tphi \ho \bS^{-1} \tphi)(\bc)$. By Proposition \ref{r.vv1},
for $y\in \overline{\Xh}$ and $x\in \Xh$, one has
\be  \la y, x \ra = \la \underline S^{-1} \tphi (y), \tphi(x) \ra. \label{eq.za2}
\ee

 By Corollary \ref{r.phir} and \eqref{eq.rin1},
 $ \underline S^{-1} \, \tphi (\br^{-1})= \br$. Using \eqref{eq.za2} with $y=\br^{-1}$, we get \eqref{eq.cTminus}.
\end{proof}

\subsection{Twist forms on $\Uq$}\label{twistonUq}
 By construction we have twist forms $\cT_\pm: \Xh \to \Chh$, with domain $\Xh$ and codomain $\Chh$. We can  change  the domain to get a better image space.

By Proposition \ref{p.989}, for $\bm,\bn \in \BN^t$ and $\gamma \in Y$,
\be
\label{eq.ba11}
\cT_+ (F_\bm K_\bm K_{2\gamma}  \,  E_\bn)= \delta _{\modm ,\modn }\, q^{(\rho ,|E_\modn |)}\,v^{2(\gamma,\rho) - (\gamma,\gamma)} \in \Zq \subset \BZ[v^{\pm 1}].
\ee

Because $\{ F_\bm K_\bm K_{2\gamma}  \,  E_\bn \mid \bm,\bn \in \BN^t,\gamma \in Y\}$ is a $\BC(v)$-basis of
$\Uq^\ev$, we have
$$ \cT_+(\Uq^\ev \cap \Xh) \subset \BC(v)\cap \Chh.$$
Using $\cT_-(x)= \cT_+(\varphi(x))$ (see Proposition \ref{p.989}), and the fact both $\Uq^\ev$ and $\Xh$ are $\varphi$-stable, we also have
$$ \cT_-(\Uq^\ev \cap \Xh) \subset \BC(v)\cap \Chh.$$

Because $\Uq^\ev \cap \Xh$ spans $\Uq^\ev$ over $\BC(v)$,
we can extend the restriction of $\cT_\pm$ on  $\Uq^\ev \cap \Xh$ to  $\BC(v)$-linear maps, also denoted by $\cT_\pm$:
$$ \cT_\pm: \Uq^\ev \to \BC(v).$$
The values of $\cT_+$ on the basis elements are given by \eqref{eq.ba11}. It is clear that
\be
\label{eq.ba12}
\cT_\pm(\UZ^\ev) \subset \BQ(v).
\ee

\np
\section{Integral core subalgebra}
\label{sec:integcore}

In Section \ref{sec:corealgebra} we constructed a core subalgebra $\Xh$ of $\UUh$ which gives rise to an invariant $J_M$ of integral homology 3-spheres with values in $\Chh$. To show that $J_M$ takes values in $\Zqh$ we need an integral version of the core algebra.
This section is devoted to an integral form $\XZ$ of the core algebra $\Xh$.

In order to construct $\XZ$ we first introduce  Lusztig's integral form $\UZ$  and De Concini-Procesi's integral form $\VZ$.
 Then we construct $\XZ$ so that $(\UZ,\XZ,\VZ)$ form an {\em  integral dilatation triple} corresponding to the topological dilatation triple $(\Uh, \Xh,\Vh)$.

Lusztig introduced  $\UZ$ in connection with his discovery (independently with Kashiwara) of canonical bases.
De Concini and Procesi introduced $\VZ$   in connection with their study of geometric aspects of quantized enveloping algebras. For the study of the integrality of quantum invariants,  Lusztig's integral form
$\UZ$ is too big: it does not have necessary integrality
properties. For example, the quantum Killing form $\la x, y\ra$ with
$x, y \in \UZ$ belongs to $\BQ(v^{1/2})$ but not to $\BZ[v^{\pm 1/2}]$
in general. On the other hand De Concini-Procesi's form $\VZ$ is too small in the
sense that completed tensor powers of $\VZ$ do not contain the
universal invariant of general bottom tangles.  (Recently, however,
Suzuki \cite{Suzuki1,Suzuki2} proved that, for $\mathfrak{g}=sl_2$,
the universal invariant of {\em ribbon} and {\em boundary} bottom
tangles is contained in completed tensor powers of $\VZ$.)  Our integral form $\XZ$ is the perfect middle ground since it is big enough to contain quantum link invariants and small enough to have the necessary integrality.
We believe that $\XZ$ is the right integral form for the study of quantum invariants of links and 3-manifolds.

We
will show that  De Concini-Procesi's  $\VZ$ is ``almost'' dual to Lusztig's $\UZ$ under the quantum Killing form, see the precise statement in Proposition~\ref{r.ortho3}.
 This fact can be interpreted as an integral version of the duality of Drinfel'd and  Gavarini \cite{Drinfeld,Gavarini}.
 Using the duality we then show that the even part  of $\VZ$ is invariant under the adjoint action of $\UZ$, an important result which will be used frequently later. We then show that the twist forms have nice integrality on $\XZ$.

\subsection{Dilatation of based free modules} \label{sec.dilita}
Let $\tA$ be the extension ring of $\cA=\BZ[ v ^{\pm 1}]$ obtained by adjoining all $\sqrt{\phi_n(q)}$, $n=1,2,\dots$, to
$\cA$.
 Here $\phi_n(q)$ is the $n$-th cyclotomic polynomial and $q=v^2$. One reason why working over $\tA$ is not too much a sacrifice is the following.

\begin{lemma}\label{r.60}
One has $\tA \cap \BQ(q)= \Zq$.
\end{lemma}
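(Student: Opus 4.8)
The plan is to prove this by a standard integrality argument, using the chain of ring extensions $\Zq \subset \cA \subset \tA$, where we regard $\tA$ as the subring of a fixed algebraic closure $\overline{\BQ(v)}$ generated over $\cA=\BZ[v^{\pm1}]$ by a chosen square root $\sqrt{\phi_n(q)}$ of each cyclotomic polynomial; with this convention $\BQ(q)\subset \BQ(v)\subset\overline{\BQ(v)}$, so the intersection $\tA\cap\BQ(q)$ makes literal sense, and the inclusion $\Zq\subseteq \tA\cap\BQ(q)$ is trivial. It remains to prove the reverse inclusion.

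First I would check that every element of $\tA$ is integral over $\Zq$. Indeed, $v$ is a root of the monic polynomial $X^2-q\in\Zq[X]$, and likewise $v^{-1}$ is a root of $X^2-q^{-1}\in\Zq[X]$, so $v^{\pm1}$ are integral over $\Zq$; and each $\sqrt{\phi_n(q)}$ is a root of the monic polynomial $X^2-\phi_n(q)\in\Zq[X]$, since $\phi_n(q)\in\BZ[q]\subset\Zq$. As the elements of $\overline{\BQ(v)}$ integral over $\Zq$ form a subring, and $\tA$ is generated over $\BZ$ by $v^{\pm1}$ and the elements $\sqrt{\phi_n(q)}$, it follows that $\tA$ consists of elements integral over $\Zq$. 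Second, $\Zq=\BZ[q^{\pm1}]$ is a localization of the polynomial ring $\BZ[q]$, which is a UFD by Gauss's lemma; hence $\Zq$ is a UFD, and in particular it is integrally closed in its fraction field $\BQ(q)$.

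Combining the two facts finishes the proof: if $x\in\tA\cap\BQ(q)$, then $x$ lies in $\BQ(q)$ and is integral over $\Zq$, so by integral closedness $x\in\Zq$. Hence $\tA\cap\BQ(q)=\Zq$. I do not anticipate any genuine obstacle here; the only points needing a moment's care are fixing the ambient field so that the intersection is well posed, and invoking that a Laurent polynomial ring over $\BZ$ is a UFD (hence normal) to supply the integral closedness used in the last step. (Alternatively, one could first show $\tA\cap\BQ(v)=\cA$ using that $\cA$ is a UFD, and then compute $\cA\cap\BQ(v^2)=\BZ[v^{\pm2}]=\Zq$ via the order-two automorphism $v\mapsto -v$ of $\BQ(v)/\BQ(v^2)$; but the route through $\Zq$ directly is shorter.)
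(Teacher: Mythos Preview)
Your proof is correct and follows essentially the same approach as the paper: show that $\tA$ is integral over $\Zq$ (since the adjoined generators $v^{\pm1}$ and $\sqrt{\phi_n(q)}$ satisfy monic quadratics over $\Zq$), and then use that $\Zq$ is integrally closed in its field of fractions $\BQ(q)$. The paper's proof is terser, but the argument is the same.
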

\begin{proof}Since  $\sqrt {\phi_k(q)}$ is integral over $\Zq$, $\tA$ is integral over $\Zq$.
Hence $\tA \cap \BQ(q)= \Zq$.
\end{proof}

Suppose $V$ is {\em  based free $\cA$-module}, i.e. a free $\cA$-module
equipped with a preferred  base $\{ e(i) \mid i\in I\}$.
Assume $a: I \to \cA$ is a function such that for every $i\in I$, $a(i)$ is a product of cyclotomic polynomials in $q$. In particular, $a(i) \neq 0$ and $\sqrt {a(i)} \in \tA$. The based free $\cA$-module
 $V(a)\sub V$, with preferred base
$\{ a(i) e(i) \mid i\in I\}$, is called a {\em dilatation of $V$}, with dilatation factors $a(i)$.
Let $V(\sqrt a)$ be the based free $\tA$-module with preferred base $\{\sqrt{ a(i)}\, e(i) \mid i\in I\}$.
We call $(V, V(\sqrt a), V(a))$ a {\em dilatation triple} determined by the based free $\cA$-module $V$ and the function $a$.

We will introduce the Lusztig integral form $\UZ$, the integral core algebra $\XZ$, and the De Concini-Procesi integral form $\VZ$ so that $(\UZ,\XZ,\VZ)$ is a dilatation triple.

\subsection{Lusztig's integral form $\UZ$} \label{sec:Lus}
Let $\UZ$ be the $\modA $-subalgebra of $\Uq$ generated by all
$E_\alpha ^{(n)}, F_\alpha ^{(n)}, K_\alpha ^{\pm 1}$, with $\alpha \in  \Pi $ and $n \in
\BN$.   Set
$\UZ^*=\UZ\cap \Uq^*$ for $*=-,0,+$.

 Let us collect some well-known facts about $\UZ$. Recall that $E^{(\bn)}$ and $F^{(\bn)}$, defined for $ \bn \in \BN^t$
  in Section \ref{s.PBW}, depend on the choice of a longest reduced sequence.

\begin{proposition} \label{prop.basis} Fix a longest reduced sequence $\modi $.

(a) The $\cA$-algebra $\UZ$ is a Hopf subalgebra of $\Uq$, and  satisfies the
triangular decomposition
$$\UZ^-\otimes\UZ^0\otimes\UZ^+\overset{\cong}{\longrightarrow}\UZ,\quad x\otimes y\otimes z\mapsto
 xyz.$$

Moreover, $\UZ$ is stable under the action of  $T_\al^{\pm1}, \al \in \Pi$.

(b)
The set $\{F^{(\bn)}\mid  \bn \in \BN^t\}$ is a free $\cA$-basis of the $\cA$-module $\UZ^-$. Similarly, $\{E^{(\bn)}\mid  \bn \in \BN^t\}$ is a free $\cA$-basis of $\UZ^+$.

(c) The Cartan part $\UZ^0$ is the $\cA$-subalgebra of $\Uq^0$ generated by $K_\al^{\pm1}, \frac{(
K^2_\al;q_\al)_n}{(q_\al;q_\al)_n}, \alpha \in \Pi, n\in \BN
$.

(d) The algebra $\UZ$ is stable under $\ibar, \tau$, and $ \varphi$. Moreover $\UZ^-$ is stable under $\ibar$ and $\tau$.
\end{proposition}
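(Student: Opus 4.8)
The plan is to treat parts (a)--(c) as classical results, quoting Lusztig, and to establish the assertions involving the automorphisms $\ibar$, $\tau$ and $\varphi$ by a direct computation on the algebra generators $E_\alpha^{(n)},F_\alpha^{(n)},K_\alpha^{\pm1}$ of $\UZ$. For (a), that $\UZ$ is a Hopf subalgebra of $\Uq$ is verified on generators: applying $\Delta$, $\epsilon$ and $S$ to a divided power one obtains the well-known quantum binomial formulas expressing $\Delta(E_\alpha^{(n)})$, $S(E_\alpha^{(n)})$ and their $F$-analogues as $\cA$-linear combinations of products of divided powers and of the $K_\alpha^{\pm1}$ (here $\cA=\BZ[v^{\pm1}]$); hence $\UZ$ is closed under all the Hopf operations. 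The triangular decomposition $\UZ^-\otimes\UZ^0\otimes\UZ^+\congto\UZ$ and the stability of $\UZ$ under each $T_\alpha^{\pm1}$ are theorems of Lusztig \cite{Lusztig}; see also \cite{Jantzen}.

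For (b) and (c), with a fixed longest reduced sequence $\bi$, the fact that $\{F^{(\bn)}\mid\bn\in\BN^t\}$ (resp.\ $\{E^{(\bn)}\mid\bn\in\BN^t\}$) is a free $\cA$-basis of $\UZ^-$ (resp.\ $\UZ^+$), and the description of $\UZ^0$ as the $\cA$-subalgebra of $\Uq^0$ generated by the $K_\alpha^{\pm1}$ and the Gaussian binomials $\frac{(K_\alpha^2;q_\alpha)_n}{(q_\alpha;q_\alpha)_n}$, are the standard PBW-type statements for Lusztig's integral form, for which I would cite \cite{Lusztig} (and \cite{Jantzen}).

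For (d): the maps $\ibar$ and $\tau$ fix $E_\alpha$ and $F_\alpha$ and fix every $[n]_\alpha!$ --- since $[n]_\alpha$ is symmetric under $v_\alpha\mapsto v_\alpha^{-1}$, and $\ibar(v)=v^{-1}$ while $\tau(v)=v$ --- hence they fix all $E_\alpha^{(n)}$ and $F_\alpha^{(n)}$ and send $K_\alpha^{\pm1}$ to $K_\alpha^{\mp1}\in\UZ$; therefore $\ibar(\UZ)\subseteq\UZ$ and $\tau(\UZ)\subseteq\UZ$, the anti-multiplicativity of $\tau$ being immaterial as $\UZ$ is closed under products, and likewise $\ibar,\tau$ preserve the subalgebra $\UZ^-$ generated by the $F_\alpha^{(n)}$. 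For $\varphi$, using $\varphi(E_\alpha)=-F_\alpha K_\alpha$, $\varphi(F_\alpha)=-K_\alpha^{-1}E_\alpha$, $\varphi(K_\alpha)=K_\alpha$, $\varphi(v)=v^{-1}$ together with the commutation rule $K_\alpha F_\alpha=v_\alpha^{-2}F_\alpha K_\alpha$ coming from \eqref{eq.8157}, one computes
\[
  \varphi(E_\alpha^{(n)})=(-1)^n v_\alpha^{-n(n-1)}\,F_\alpha^{(n)}K_\alpha^{n},
  \qquad
  \varphi(F_\alpha^{(n)})=(-1)^n v_\alpha^{n(n-1)}\,K_\alpha^{-n}E_\alpha^{(n)},
\]
and both right-hand sides lie in $\UZ$; since $\varphi$ is a $\BC$-algebra automorphism this already gives $\varphi(\UZ)\subseteq\UZ$, and applying the same inclusion to $\varphi^{-1}$, or using $\varphi^2=S^2$ (which preserves $\UZ$), yields $\varphi(\UZ)=\UZ$.

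No genuinely new difficulty arises here: the substantive ingredients --- the triangular decomposition of $\UZ$ and the PBW basis of $\UZ^{\pm}$ --- are Lusztig's, and the only new verifications, namely $\ibar$-, $\tau$- and $\varphi$-stability, reduce to one-line computations on generators. The single point deserving care is the bookkeeping of signs and powers of $v_\alpha$ in the divided-power identities, which is routine.
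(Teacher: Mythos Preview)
Your proposal is correct and follows essentially the same approach as the paper: parts (a)--(c) are attributed to Lusztig (the paper cites \cite{Lusztig01} and \cite[Proposition~41.1.3]{Lusztig}), and part (d) is handled by checking that each of $\ibar$, $\tau$, $\varphi$ sends the generators $E_\alpha^{(n)}$, $F_\alpha^{(n)}$, $K_\alpha^{\pm1}$ into $\UZ$ (and $F_\alpha^{(n)}$ into $\UZ^-$ for $\ibar,\tau$). Your explicit divided-power formulas for $\varphi$ are a harmless elaboration of what the paper leaves implicit.
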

\begin{proof}
Parts (a)--(c) are proved in  \cite{Lusztig01} and \cite[Proposition 41.1.3]{Lusztig}. Part (d) can be proved by noticing  that  each of $\ibar,\tau,\varphi$ maps each of the generators  $E_\alpha ^{(n)}, F_\alpha ^{(n)}, K_\alpha ^{\pm 1}$ of $\UZ$ into $\UZ$, and  each of $\ibar$ and $\tau$ maps each of the generators $F^{(n)}_\al$ of $\UZ^-$ into $\UZ^-$.
\end{proof}

We will consider $\UZ^-, \UZ^+$ as based free $\cA$-modules with preferred bases described in Proposition~\ref{prop.basis}(b). Later we will find a preferred base for the Cartan part $\UZ^0$.

Let $\UZ^\ev = \UZ \cap \Uq^\ev$ be the even part of $\UZ$. From the triangulation of $\UZ$ we have the following {\em even triangulation} of $\UZ$ and $\UZ^\ev$:
\begin{align}
\UZ^{\ev,-}\otimes\UZ^0\otimes\UZ^+\overset{\cong}{\longrightarrow}\UZ,\quad x\otimes y\otimes z\mapsto
 xyz\\
 \UZ^{\ev,-}\otimes\UZ^{\ev,0}\otimes\UZ^+\overset{\cong}{\longrightarrow}\UZ^\ev,\quad x\otimes y\otimes z\mapsto
 xyz.
\end{align}
Here
$\UZ^{\ev,0} = \UZ^\ev \cap \Uq^{\ev, 0}$, with $\Uq^{0}=\BC(v)[K_\al ^{\pm 2}, \ \al \in \Pi]$,  and $\UZ^{\ev,-} = \UZ^\ev \cap \Uq^{\ev,-} = \varphi(\UZ^+)$.

From Proposition \ref{prop.basis}(b) and $\UZ^{\ev,-}=\varphi(\UZ^+)$, we have the following.
\begin{proposition}\label{r.ba3}
The set $\{F^{(\bn)} K_{\bn} \mid  \bn \in \BN^t\}$ is a free $\cA$-basis of the $\cA$-module $\UZ^{\ev,-}$.
\end{proposition}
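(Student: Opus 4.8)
The idea is to transport the free $\cA$-basis $\{E^{(\bn)}\mid \bn\in\BN^t\}$ of $\UZ^{+}$ (Proposition~\ref{prop.basis}(b)) onto $\UZ^{\ev,-}$. Since $\varphi$ interchanges $v$ and $v^{-1}$ and its action on the higher root vectors $E_\gamma$ is not transparent, I would not use $\varphi$ directly, but the $\BC(v)$-linear ``$K$-shift''
\[
  \sigma\col \Uq^{-}\longrightarrow \Uq^{\ev,-},\qquad \sigma(x)=x\,K_{-|x|}\quad\text{for $Y$-homogeneous }x,
\]
extended $\BC(v)$-linearly over $Y$-homogeneous components; it is a $\BC(v)$-linear isomorphism, a two-sided inverse being $y\mapsto y\,K_{|y|}$, which lands in $\Uq^{-}$ by the even triangular decomposition \eqref{e54}. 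From \eqref{eq.8157} one gets, for $Y$-homogeneous $x,y$,
\[
  \sigma(xy)=v^{(|x|,|y|)}\,\sigma(x)\,\sigma(y),
\]
so $\sigma$ is multiplicative up to the factor $v^{(|x|,|y|)}$, which is a unit of $\cA=\modZ[v^{\pm1}]$.

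The first step is to prove $\sigma(\UZ^{-})=\UZ^{\ev,-}$. From $\UZ^{\ev,-}=\varphi(\UZ^{+})$, the fact that $\varphi$ is a ring automorphism with $\varphi(\cA)=\cA$ and $\varphi([n]_\alpha!)=[n]_\alpha!$, and $\varphi(E_\alpha)=-F_\alpha K_\alpha$, one obtains $\varphi(E_\alpha^{(n)})=(-1)^n(F_\alpha K_\alpha)^{(n)}$; hence $\UZ^{\ev,-}$ is the $\cA$-subalgebra of $\Uq$ generated by the elements $(F_\alpha K_\alpha)^{(n)}=q_\alpha^{-n(n-1)/2}F_\alpha^{(n)}K_\alpha^{n}$, equivalently by the elements $\sigma(F_\alpha^{(n)})=F_\alpha^{(n)}K_\alpha^{n}$ (they differ by the $\cA$-unit $q_\alpha^{-n(n-1)/2}$). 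On the other hand $\UZ^{-}$ is the $\cA$-subalgebra of $\Uq$ generated by the $F_\alpha^{(n)}$ (Proposition~\ref{prop.basis}; see also \cite{Lusztig}); since $\sigma$ is $\cA$-linear and carries any product $g_1\cdots g_k$ of $Y$-homogeneous elements to an $\cA$-unit times $\sigma(g_1)\cdots\sigma(g_k)$, the image $\sigma(\UZ^{-})$ is precisely the $\cA$-subalgebra generated by the $\sigma(F_\alpha^{(n)})$, that is $\sigma(\UZ^{-})=\UZ^{\ev,-}$.

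The second step is the trivial computation $\sigma(F^{(\bn)})=F^{(\bn)}K_{\bn}$: indeed $|F^{(\bn)}|=-\sum_{j}n_j\gamma_j$, so $K_{-|F^{(\bn)}|}=\prod_j K_{\gamma_j}^{n_j}=K_{\bn}$ by \eqref{eq.Kn1}. Therefore the $\cA$-linear isomorphism $\sigma|_{\UZ^{-}}\col\UZ^{-}\to\UZ^{\ev,-}$ carries the free $\cA$-basis $\{F^{(\bn)}\mid\bn\in\BN^t\}$ of $\UZ^{-}$ onto a free $\cA$-basis of $\UZ^{\ev,-}$, namely $\{F^{(\bn)}K_{\bn}\mid\bn\in\BN^t\}$, which proves the proposition. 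The one genuinely delicate point is the identity $\sigma(\UZ^{-})=\UZ^{\ev,-}$: it is where the non-$\cA$-linear, non-multiplicative nature of the passage from the negative part to its even analogue must be controlled, and it is handled by observing that the multiplicative defect of $\sigma$ is an $\cA$-unit and by reading off the generators of $\UZ^{\ev,-}$ from $\UZ^{\ev,-}=\varphi(\UZ^{+})$; everything else is routine bookkeeping with the $Y$-grading.
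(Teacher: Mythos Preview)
Your proof is correct. The paper's own argument is a single line — ``From Proposition~\ref{prop.basis}(b) and $\UZ^{\ev,-}=\varphi(\UZ^+)$'' — which presumably means: transport the basis $\{E^{(\bn)}\}$ of $\UZ^+$ through $\varphi$. You take a slightly different route by introducing the $\cA$-linear shift $\sigma\colon\UZ^-\to\UZ^{\ev,-}$, $x\mapsto xK_{-|x|}$, precisely to avoid the opaque computation of $\varphi(E_\gamma)$ for non-simple $\gamma$. Your identification $\sigma(\UZ^-)=\UZ^{\ev,-}$ via generators is sound (and could be shortened: since $\varphi(\UZ)=\UZ$ one has $\UZ^{\ev,-}=\UZ\cap\Uq^{\ev,-}$, whence $\sigma$ and $\sigma^{-1}$ visibly preserve the integral forms because $K_\gamma\in\UZ$). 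Either way the conclusion is the same; your version makes explicit the one step the paper leaves to the reader.
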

We will consider $\UZ^{\ev, -}$ as a based free $\cA$-module with the above preferred basis.

\subsection{De Concini-Procesi integral form $\VA$}
\label{sec:algebra-cv}

Let $\VA $  be the smallest $\modA $-subalgebra of $\UZ$ which is
invariant under the action of the braid group and contains
$(1-q_\al) E_\alpha $, $(1-q_\al) F_\alpha $ and $K_\alpha ^{\pm 1}$ for $\alpha \in \Pi $.
For $*=0,+,-$, set $\VA ^*=\VA \cap \Uq^*$.

\begin{remark}
In the original definition, De Concini and Procesi \cite[Section 12]{DeConcini-Procesi} used the ground ring $\BQ[v^{\pm 1}]$ instead of $\cA=\BZ[v^{\pm 1}]$. Our $\VZ$ is denoted by $A$ in \cite{DeConcini-Procesi}
\end{remark}

Fix  a longest reduced sequence $\bi$.
For  $ \bn=(n_1,\dots,n_t) \in \BN^t$, let
\be
\label{eq.qqn1}
 (q;q)_{\bn} = \prod_{j=1}^t (q_{\gamma_j}; q_{\gamma_j})_{n_j}.
 \ee
Note that $(q;q)_{\bn}$ depends on $\bi$ since $\gamma_j=\gamma_j(\bi)$ depends on $\bi$.

\begin{proposition} Fix a longest reduced sequence $\modi $.
  \label{r11}

(a) The $\cA$-algebra $\VA $ is a  Hopf subalgebra of $\UZ$.

(b)  We have $\VA ^0=\modA [K_1 ^{\pm 1}, \dots, K_\ell ^{\pm 1} ]$ and the triangular
  decomposition
  \begin{gather*}
    \VA ^-\otimes \VA ^0\otimes \VA ^+\congto\VA ,\quad x\otimes y\otimes z\mapsto xyz.
  \end{gather*}

  (c) The set $\{ (q;q)_\bn \, F^{(\bn)}\mid  \bn \in \BN^t\}$ is a free $\cA$-basis of the $\cA$-module $\VA^-$.
Similarly, $\{ (q;q)_\bn \, E^{(\bn)}\mid  \bn \in \BN^t\}$ is a free $\cA$-basis of $\VA^+$.
\end{proposition}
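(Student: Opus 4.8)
The plan is to first pin down the structure of $\VA$ explicitly, which gives (b) and (c), and then read off the Hopf property (a) from the list of generators that this description produces. Fix the longest reduced sequence $\bi$. For $*=+,-$ let $B^*\subset\UZ^*$ be the $\cA$-span of $\{(q;q)_\bn E^{(\bn)}\mid\bn\in\BN^t\}$, respectively of $\{(q;q)_\bn F^{(\bn)}\}$; set $B^0=\modA[K_1^{\pm1},\dots,K_\ell^{\pm1}]$ and $B=B^-B^0B^+$. The elementary identity $(q_\gamma;q_\gamma)_nF_\gamma^{(n)}=(\text{unit of }\cA)\cdot\big((1-q_\gamma)F_\gamma\big)^n$ shows that, up to units of $\cA$, the vector $(q;q)_\bn F^{(\bn)}$ equals the ordered monomial $\prod_j\big((1-q_{\gamma_j})F_{\gamma_j}\big)^{n_j}$ in the rescaled root vectors, and similarly for $E$. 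The target is the equality $B=\VA$.

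First I would show $B$ is an $\cA$-subalgebra and that $B^-\otimes B^0\otimes B^+\to B$ is an isomorphism of $\cA$-modules. Closure of $B^\mp$ under multiplication is the Levendorskii--Soibelman straightening computation: for $i<j$ in the convex order of $\bi$ the commutator $(1-q_{\gamma_i})F_{\gamma_i}\cdot(1-q_{\gamma_j})F_{\gamma_j}-v^{(\gamma_i,\gamma_j)}(1-q_{\gamma_j})F_{\gamma_j}\cdot(1-q_{\gamma_i})F_{\gamma_i}$ is an $\cA$-linear combination of ordered monomials in the rescaled $(1-q_{\gamma_k})F_{\gamma_k}$ with $i<k<j$; once divided powers are used, the factors $(q;q)_\bn$ absorb exactly the factorials in the classical structure constants, so all coefficients lie in $\BZ[v^{\pm1}]=\cA$. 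Closure of $B$ then follows using \eqref{e51} (which preserves $B^0$ and $B^\pm$ since $v^{(\beta,\alpha)}\in\cA$) and \eqref{e47}, rewritten as $(1-q_\alpha)E_\alpha\cdot(1-q_\alpha)F_\alpha-(1-q_\alpha)F_\alpha\cdot(1-q_\alpha)E_\alpha=(1-q_\alpha)^2\tfrac{K_\alpha-K_\alpha^{-1}}{v_\alpha-v_\alpha^{-1}}\in(1-q_\alpha)\modA[K_\alpha^{\pm1}]\subset B^0$, using $\tfrac{1-q_\alpha}{v_\alpha-v_\alpha^{-1}}\in\cA$. The module isomorphism is inherited from the triangular decomposition of $\UZ$ (Proposition~\ref{prop.basis}(a)), and $\cA$-linear independence of $\{(q;q)_\bn F^{(\bn)}\}$, $\{(q;q)_\bn E^{(\bn)}\}$ from that of $\{F^{(\bn)}\}$, $\{E^{(\bn)}\}$ (Proposition~\ref{prop.basis}(b)) together with $(q;q)_\bn\neq0$. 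Next, $B$ is stable under each $T_\alpha^{\pm1}$: $T_\alpha(K_\gamma)=K_{s_\alpha\gamma}\in B^0$, and since $T_\alpha$ is an $\cA$-algebra automorphism of $\UZ$ and $q_{s_\alpha\gamma}=q_\gamma$, it carries the rescaled root vectors into $B$ via the same straightening identities. As $B$ contains $(1-q_\alpha)E_\alpha$, $(1-q_\alpha)F_\alpha$, $K_\alpha^{\pm1}$, minimality of $\VA$ gives $\VA\subseteq B$; conversely $(1-q_{\gamma_j})F_{\gamma_j}=T_{i_1}\cdots T_{i_{j-1}}\big((1-q_{\alpha_{i_j}})F_{\alpha_{i_j}}\big)\in\VA$ and similarly for $E$, so $B\subseteq\VA$. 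Hence $\VA=B$, which yields (b) (with $\VA^0=B^0$, $\VA^\pm=B^\pm$, obtained by matching the triangular expression against that of $\UZ$) and (c).

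For (a): by the above $\VA$ is generated as an $\cA$-algebra by $\{(1-q_\gamma)E_\gamma,(1-q_\gamma)F_\gamma:\gamma\in\Phi_+\}\cup\{K_\alpha^{\pm1}:\alpha\in\Pi\}$, so it suffices to check $\Delta,S,\epsilon$ on these. For $K_\alpha^{\pm1}$ and for simple root vectors this is immediate from Section~\ref{sec.a1}, e.g.\ $\Delta((1-q_\alpha)F_\alpha)=(1-q_\alpha)F_\alpha\otimes K_\alpha^{-1}+1\otimes(1-q_\alpha)F_\alpha\in\VA\otimes\VA$, $S((1-q_\alpha)F_\alpha)=-(1-q_\alpha)F_\alpha K_\alpha\in\VA$, $\epsilon((1-q_\alpha)F_\alpha)=0$. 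For a non-simple $\gamma$ I would invoke the De Concini--Kac--Procesi comultiplication estimate: $\Delta(F_\gamma)-(F_\gamma\otimes K_\gamma^{-1}+1\otimes F_\gamma)$ is a sum of terms $c\cdot(\text{ordered monomial in }F_{\gamma_k})\otimes(\text{ordered monomial in }F_{\gamma_k})K$ involving only roots strictly between $\gamma$'s endpoints in the convex order, and tracking the $(1-q)$-rescaling one checks each such term lies in $B^-\otimes B^-B^0$; this $(1-q)$-divisibility is proved by induction on the height of $\gamma$ via the braid group, an induction that visibly stays over $\cA$ once divided powers are used. The antipode is handled identically (or deduced from braid-stability of $\VA$), and the counit is clear.

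The main obstacle is the $(1-q)$-bookkeeping in the two places above: closure of the $\cA$-span of the rescaled ordered monomials under multiplication, and the $(1-q)$-divisibility of $\Delta$ applied to a rescaled root vector. Both are essentially contained in the work of De Concini--Kac--Procesi, but that work is carried out over $\BQ[v^{\pm1}]$, so the one genuinely new point is the routine but slightly delicate verification that passing to divided powers cancels precisely the factorial denominators, leaving all structure constants in $\BZ[v^{\pm1}]=\cA$ --- that is, that the whole structure descends from $\BQ[v^{\pm1}]$ to $\cA$.
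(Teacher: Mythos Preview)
Your approach is essentially the same as the paper's: both rely on the De Concini--Procesi argument (Levendorskii--Soibelman straightening, braid stability, coproduct estimates on root vectors) carried out over $\BQ[v^{\pm1}]$, together with the observation that the argument descends to $\cA=\BZ[v^{\pm1}]$. The paper simply cites \cite[Section~12]{DeConcini-Procesi} and asserts that ``the proofs there remain valid for $\modA$'', whereas you have unpacked the content of that citation and identified the exact place where the passage from $\BQ[v^{\pm1}]$ to $\BZ[v^{\pm1}]$ needs a check; so your sketch is a faithful expansion of what the paper leaves implicit.
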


\begin{proof} The proofs for the case when
 $\modA =\modZ [v^{\pm 1}]$ is replaced by $\BQ[v^{\pm
1}]$, were given in \cite[Section 12]{DeConcini-Procesi}.  The proofs there remain valid for
$\modA $. Note that in \cite{DeConcini-Procesi}, our $\VZ$ is denoted by $A$.
\end{proof}

The even part $\VAev:=\VA \cap \Uqv$ is an $\modA $-subalgebra of $\VA $.
 From the triangular decomposition of $\VA$, we have the following {\em even triangular decompositions}
\begin{gather}
  \label{e84}
  \VA ^{\ev,-}\otimes \VA ^{\ev,0}\otimes \VA ^+\congto\VA^\ev,\quad\xyzxyz, \\
  \VA ^{\ev,-}\otimes \VA ^{0}\otimes \VA ^+\congto\VA,\quad\xyzxyz,
\end{gather}
where $\VA ^{\ev,0}:=\VA \cap \Uq^{\ev,0}=\modA [K_1 ^{\pm 2}, \dots, K_\ell ^{\pm 2} ]$ and
$\VA ^{\ev,-}:=\VA \cap \Uq^{\ev,-}=\varphi  (\VA ^+)$.

From Proposition \ref{prop.basis}(b) and $\Uq^{\ev,-}=\varphi(\Uq^+)$, we have the following.
\begin{proposition}\label{r.ba3a}
The set $\{(q;q)_\bn F^{(\bn)} K_{\bn} \mid  \bn \in \BN^t\}$ is a free $\cA$-basis of the $\cA$-module $\VA^{\ev,-}$.
\end{proposition}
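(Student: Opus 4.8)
The plan is to obtain the asserted basis by computing $\VA^{\ev,-}=\VA\cap\Uq^{\ev,-}$ directly from the triangular decompositions of $\VA$ and of $\Uq$; this is a formal manipulation once Proposition~\ref{r11} (for $\VA$) and Proposition~\ref{r.PBWbases} (for the PBW bases) are in hand. First I would record the description of $\Uq^{\ev,-}$ that is needed: combining the PBW basis of $\Uqv$ from Proposition~\ref{r.PBWbases} with the even triangular decomposition \eqref{e54}, the set $\{F^{(\bn)}K_\bn\mid\bn\in\BN^t\}$ is a $\BC(v)$-basis of $\Uq^{\ev,-}$. In particular $\Uq^{\ev,-}\subset\Uq^-\Uq^0$, and inside the $\BC(v)$-basis $\{F^{(\bm)}K_\mu\mid\bm\in\BN^t,\ \mu\in Y\}$ of $\Uq^-\Uq^0$ (from the ordinary triangular decomposition) the subspace $\Uq^{\ev,-}$ is spanned by exactly those basis vectors for which $K_\mu=K_\bm$; here one uses the rigid relation $K_\bm=K_{-|F^{(\bm)}|}$ of \eqref{eq.Kn1}.

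Next I would use Proposition~\ref{r11}: $\VA$ has the triangular decomposition $\VA^-\otimes\VA^0\otimes\VA^+\congto\VA$ with $\VA^0=\cA[K_1^{\pm1},\dots,K_\ell^{\pm1}]$, and $\{(q;q)_\bm F^{(\bm)}\mid\bm\in\BN^t\}$ is a free $\cA$-basis of $\VA^-$. Consequently $\VA^-\VA^0$ is free over $\cA$ on $\{(q;q)_\bm F^{(\bm)}K_\mu\mid\bm\in\BN^t,\ \mu\in Y\}$, and since the triangular decomposition of $\VA$ is the restriction of that of $\Uq$ one gets $\VA\cap\Uq^-\Uq^0=\VA^-\VA^0$. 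As $\Uq^{\ev,-}\subset\Uq^-\Uq^0$, this yields $\VA^{\ev,-}=\VA\cap\Uq^{\ev,-}=(\VA^-\VA^0)\cap\Uq^{\ev,-}$. Writing a general element of $\VA^-\VA^0$ as $\sum_{\bm,\mu}c_{\bm,\mu}\,(q;q)_\bm F^{(\bm)}K_\mu$ with $c_{\bm,\mu}\in\cA$ and comparing coefficients in the $\BC(v)$-basis $\{F^{(\bm)}K_\mu\}$ of $\Uq^-\Uq^0$, such an element lies in $\Uq^{\ev,-}$ precisely when $c_{\bm,\mu}=0$ whenever $K_\mu\neq K_\bm$. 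Hence $\VA^{\ev,-}=\bigoplus_{\bm\in\BN^t}\cA\,(q;q)_\bm F^{(\bm)}K_\bm$, and being a subfamily of a free $\cA$-basis of $\VA^-\VA^0$, the set $\{(q;q)_\bm F^{(\bm)}K_\bm\mid\bm\in\BN^t\}$ is a free $\cA$-basis of $\VA^{\ev,-}$, as claimed.

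I do not anticipate a real obstacle: the whole argument is a bookkeeping exercise with triangular decompositions, the only subtlety being that the $\Uq^0$-exponent of an element of $\Uq^{\ev,-}$ is tied to its $\Uq^-$-part through $K_\bm=K_{-|F^{(\bm)}|}$, which is what makes the intersection $(\VA^-\VA^0)\cap\Uq^{\ev,-}$ collapse onto the stated basis. An alternative, more in the spirit of the proof of Proposition~\ref{r.ba3}, would be to transport the free $\cA$-basis $\{(q;q)_\bn E^{(\bn)}\}$ of $\VA^+$ through the automorphism $\varphi$, using $\VA^{\ev,-}=\varphi(\VA^+)$, $\varphi\big((q;q)_\bn\big)\in\cA^{\times}(q;q)_\bn$, and $\varphi\big(E_\gamma(\bi)\big)=-F_\gamma(\bi)K_\gamma$ for $\gamma\in\Phi_+$; but that route additionally needs the compatibility of $\varphi$ with the braid operators $T_\alpha$, which the intersection argument above avoids entirely.
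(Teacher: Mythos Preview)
Your proof is correct. The paper takes precisely the alternative route you sketch at the end: it transports the $\cA$-basis $\{(q;q)_\bn E^{(\bn)}\}$ of $\VA^+$ (Proposition~\ref{r11}(c)) through the identification $\VA^{\ev,-}=\varphi(\VA^+)$, using that $\varphi$ is an $\cA$-semilinear ring automorphism (since $\varphi(v)=v^{-1}$). Your direct intersection argument via the compatible triangular decompositions of $\VA$ and $\Uq$ is more self-contained: it never needs to know that $\varphi(E_\gamma(\bi))$ equals a unit times $F_\gamma(\bi)K_\gamma$ for non-simple $\gamma$, a fact the paper leaves implicit and which ultimately rests on how $\varphi=\ibar\tau\omega S$ interacts with the braid operators $T_\alpha$. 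The paper's route is one line once that compatibility is granted; yours trades brevity for transparency and uses only the PBW data already on the table.
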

We will consider $\VA^{\ev, -}$ as a based free $\cA$-module with the above preferred basis. Then $\VA^{\ev,-}$ is a dilatation of $\UZ^{\ev,-}$.
Similarly, we consider $\VA^+$ as a based free $\cA$-module with preferred base given in Proposition \ref{r11}.
Then $\VA^+$ is a dilatation of $\UZ^+$.

\subsection{Preferred bases for $\UZ^0$ and $\VZ^0$} \label{sec:pre-bas}
We will equip $\UZ^0$ and $\VA^0$ with preferred $\cA$-bases such that $\VA^0$ is a dilatation of $\UZ^0$.
Recall that $K_j=K_{\al_j}$, $ q_j = q_{\al_j}$.

For $ \bn=(n_1,\dots,n_\ell) \in \BN^\ell$ and $\bode=(\delta_1, \dots, \delta_\ell) \in \{0,1\}^\ell$ let

\begin{align}
\label{eq.Qn14}
Q^\ev(\bn)  & := \prod_{j=1}^\ell  \frac{ K_j^{-2 \lfloor \frac {n_j}2 \rfloor}\left(q_j^{-\lfloor\frac{ {n_j-1}}2 \rfloor}K_j^2;q_j\right)_{n_j}}{(q_j;q_j)_{n_j}}, \\   Q(\bn,\bode) &:= Q^\ev(\bn) \prod_{j=1}^\ell K_j^{\delta_j }\\
\label{eq.qqn2}
(q;q)_\bn&:=\prod_{j=1}^\ell (q_j;q_j)_{n_j}.
\end{align}

\begin{proposition}
 \label{r.bases5}
(a)  The sets
$\{ Q^\ev(\bn) \mid \bn \in \BN^\ell\}$ and $\{ (q;q)_\bn Q^\ev(\bn) \mid \bn \in \BN^\ell\}$ are respectively $\cA$-bases of $\UZ^{\ev,0}$ and $\VA^{\ev,0}$.

(b) The sets  $\{ Q(\bn,\bode) \mid \bn \in \BN^\ell, \bode \in \{0,1\}^\ell\}$ and
$\{ (q;q)_\bn Q(\bn,\bode) \mid \bn \in \BN^\ell, \bode \in \{0,1\}^\ell\}$ are respectively $\cA$-bases of $\UZ^{\ev}$ and $\VA^{\ev}$.
\end{proposition}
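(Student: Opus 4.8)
The plan is to reduce both statements to an explicit rank-one computation. First, since the Cartan generators attached to distinct simple roots commute and satisfy no mixing relations, the Cartan subalgebras decompose as tensor products over $\Pi$: one has $\UZ^0\cong\bigotimes_{j=1}^{\ell}\UZ^0_{\al_j}$ and $\VA^0\cong\bigotimes_{j=1}^{\ell}\VA^0_{\al_j}$ as $\cA$-algebras, compatibly with the $(Y/2Y)$-grading (here $Y=\bigoplus_j\BZ\al_j$), with the even parts, and with the candidate families $\{Q^\ev(\bn)\}$, $\{(q;q)_\bn Q^\ev(\bn)\}$, $\{Q(\bn,\bode)\}$, $\{(q;q)_\bn Q(\bn,\bode)\}$, each of which is literally the product of its rank-one versions. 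Hence it suffices to prove the proposition for $\ell=1$. Second, part (b) will follow from part (a) by a grading argument: the $\cA$-algebra $\UZ^0\subset\Uq^0$ is graded by $Y/2Y$, with $K_\mu$ in degree $\mu\bmod 2Y$ and $(K_\al^2;q_\al)_n/(q_\al;q_\al)_n$ in degree $0$, so its degree-$\bar\mu$ component is $K_\mu\,\UZ^{\ev,0}$; since $\bode\mapsto\sum_j\delta_j\al_j$ is a bijection $\{0,1\}^{\ell}\to Y/2Y$, one gets $\UZ^0=\bigoplus_{\bode\in\{0,1\}^{\ell}}\bigl(\prod_j K_j^{\delta_j}\bigr)\,\UZ^{\ev,0}$ and likewise $\VA^0=\bigoplus_{\bode}\bigl(\prod_j K_j^{\delta_j}\bigr)\,\VA^{\ev,0}$; since $Q(\bn,\bode)=\bigl(\prod_j K_j^{\delta_j}\bigr)Q^\ev(\bn)$, feeding the bases of (a) into these direct-sum decompositions yields (b).

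For $\ell=1$, write $K=K_\al$, $q=q_\al$, $z=K^2$, so that $\Uq^{\ev,0}=\BC(v)[z^{\pm1}]$, $\VA^{\ev,0}=\VA^0\cap\Uq^{\ev,0}=\cA[z^{\pm1}]$ (by Proposition~\ref{r11}(b)), and $\UZ^{\ev,0}$ is the degree-$0$ part of $\UZ^0$, an $\cA$-lattice with $\cA[z^{\pm1}]\subset\UZ^{\ev,0}\subset\BC(v)[z^{\pm1}]$. By definition $Q^\ev(n)=z^{-\lfloor n/2\rfloor}(q^{-\lfloor(n-1)/2\rfloor}z;q)_n/(q;q)_n$, so up to the scalar $(q;q)_n^{-1}$ it is a Laurent polynomial in $z$ supported exactly on $z^{-\lfloor n/2\rfloor},\dots,z^{\lceil n/2\rceil}$ whose two extreme coefficients are units of $\cA$; consequently passing from $n-1$ to $n$ adjoins precisely one new monomial (either $z^{\lceil n/2\rceil}$ or $z^{-\lfloor n/2\rfloor}$) with a unit coefficient. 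This ``staircase'' at once gives $\cA$-linear independence of $\{Q^\ev(n)\}$ and of $\{(q;q)_nQ^\ev(n)\}$; and since $(q;q)_nQ^\ev(n)\in\cA[z^{\pm1}]$, it shows by induction that every monomial $z^{s}$ lies in the $\cA$-span of $(q;q)_0Q^\ev(0),\dots,(q;q)_{2|s|}Q^\ev(2|s|)$, so $\{(q;q)_nQ^\ev(n)\}$ is an $\cA$-basis of $\VA^{\ev,0}$. It also follows from Lusztig's description of $\UZ^0$ (Proposition~\ref{prop.basis}(c)) that each $Q^\ev(n)$ lies in $\UZ^{\ev,0}$, hence $\bigoplus_n\cA\,Q^\ev(n)\subseteq\UZ^{\ev,0}$.

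The one substantive point — and the main obstacle — is the reverse inclusion $\UZ^{\ev,0}\subseteq\bigoplus_n\cA\,Q^\ev(n)$, i.e.\ that $\{Q^\ev(n)\}$ actually spans $\UZ^{\ev,0}$. Here the plan is to compare with Lusztig's known $\cA$-basis of the rank-one $\UZ^0$ \cite{Lusztig}, namely the elements $K^{\delta}\qb{K;0}{t}$ with $\delta\in\{0,1\}$, $t\ge0$; restricting to the degree-$0$ part selects $\qb{K;0}{t}$ for $t$ even and $K\qb{K;0}{t}$ for $t$ odd, a family $\{B(n)\}_{n\ge0}$ whose $z$-supports match those of the $Q^\ev(n)$ monomial-for-monomial. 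Using the standard $\cA$-linear identities relating the shifted Cartan elements $\qb{K;c}{t}$ for varying $c$ (the $q$-Pascal/Vandermonde-type recursions, which also give $Q^\ev(n)\in\UZ^{\ev,0}$), one then checks that the transition matrix between $\{B(n)\}$ and $\{Q^\ev(n)\}$ is triangular for the staircase ordering of monomials with unit diagonal entries, hence invertible over $\cA$; this identifies $\bigoplus_n\cA\,Q^\ev(n)$ with $\UZ^{\ev,0}$ and completes the rank-one case of (a). Undoing the two reductions of the first paragraph then gives the full statement; as a by-product one sees that $\VA^{\ev,0}$ (resp.\ $\VA^0$) is a dilatation of $\UZ^{\ev,0}$ (resp.\ $\UZ^0$) in the sense of Section~\ref{sec.dilita}, with dilatation factors $(q;q)_\bn$.
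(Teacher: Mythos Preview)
Your proposal is correct and shares with the paper's proof (Appendix~\ref{sec:u0}) both reductions: the tensor decomposition $\UZ^{\ev,0}\cong\bigotimes_{\al\in\Pi}\cI_\al$ to rank one, and the $(Y/2Y)$-grading argument deriving (b) from (a). For the $\VA^{\ev,0}$ basis in rank one, both you and the paper use the same staircase/leading-term argument (the paper's ordering $0\prec 1\prec -1\prec 2\prec\cdots$ is exactly your ``adjoin one new monomial at each step'').

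The genuine divergence is in the one substantive step, showing $\{Q^\ev(n)\}$ spans the rank-one $\UZ^{\ev,0}$. The paper does \emph{not} compare with Lusztig's basis; instead it first characterizes $\cI_\al$ as the $\cA$-dual of $\cA[\brK_\al^{\pm2}]$ under the quantum Killing form (equivalently, as Laurent polynomials taking $\cA$-values at all $q_\al^k$, via \cite[Proposition~2.6]{BCL}), then computes the orthogonality $\la Q(\al;n),\brQ'(\al;m)\ra=\delta_{n,m}\cdot(\text{unit})$ directly, so that $\{Q(\al;n)\}$ is the basis dual to the already-established basis $\{\brQ'(\al;m)\}$ of $\cA[\brK_\al^{\pm2}]$. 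Your route---upper-triangular change of basis from Lusztig's $\{K^\delta\qb{K;0}{t}\}$---is exactly the alternative the paper acknowledges in the Remark following Proposition~\ref{r.bases5} (``Our basis can be obtained from Lusztig by an upper triangular matrix, and hence a proof of the proposition can be obtained this way''). Your approach is more self-contained, needing only a leading-coefficient check; the paper's duality approach is chosen because the orthogonality relation it produces is itself needed later (Lemma~\ref{r.117}, Proposition~\ref{r.ortho3}), so it kills two birds with one stone.
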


Since $\UZ^0,\VZ^0$ are $\cA$-subalgebras of the commutative algebra $\BQ(v)[K_1^{\pm1}, \dots, K_\ell^{\pm 1}]$, the proof is not difficult though involves some calculation.
We give a proof of Proposition \ref{r.bases5} in Appendix \ref{sec:u0}.
\begin{remark}
In \cite{Lusztig02}, Lusztig gave a similar, but different, basis of $\UZ^0$. Our basis can be obtained from Lusztig by an upper triangular matrix, and hence a proof of the proposition can be obtained this way.
We chose the basis in Proposition \ref{r.bases5} instead of Lusztig's one for orthogonality reason.
\end{remark}

\subsection{Preferred bases of $\UZ$ and $\VA$}\label{sec:basesUZ}
Recall that we have defined $(q;q)_\bn$ in two cases depending on the length of $\bn$, see \eqref{eq.qqn1} and \eqref{eq.qqn2}: either  $\bn=(n_1,\dots,n_t) \in \BN^t$, in which case,
$$ (q;q)_\bn= \prod_{j=1}^t (q_{\gamma_j};q_{\gamma_j})_{n_j},$$
or  $\bn=(n_1,\dots,n_\ell)\in \BN^\ell$, then
$$ (q;q)_\bn= \prod_{j=1}^\ell (q_{\al_j};q_{\al_j})_{n_j}.$$
The first one depends on a longest reduced sequence since $\gamma_j$ does, while the second one does not.

 Introduce another $(q;q)_\bn$, with length of $\bn$ equal $2t+\ell$.  For $\bn=(\bn_1, \bn_2, \bn_3) \in \BN^{t+ \ell +t}$, where $\bn_1, \bn_3 \in \BN^t$ and $\bn_2 \in \BN^\ell$,  define
 \be
 \label{eq.bbe5}
  (q;q)_\bn := (q;q)_{\bn_1} \, (q;q)_{\bn_2}\, (q;q)_{\bn_3}.
  \ee
 Further if $\bode \in \{ 0,1\}^\ell$, let
  \be
  \label{eq.bbe}
  \bbe^\ev (\bn):= F^{(\bn_1)} K_{\bn_1}\, Q^\ev(\bn_2) E^{(\bn_3)}, \quad \bbe (\bn,\bode) :=  F^{(\bn_1)} K_{\bn_1}\, Q(\bn_2,\bode) E^{(\bn_3)}.
  \ee

\begin{proposition}
\label{r.basesUZ}
(a) The set
$$\{ \bbe(\bn,\bode) \mid \bn \in \BN^{t+\ell+t}, \bode \in \{0,1\}^\ell \}$$
and its dilated set
 $$\{ (q;q)_\bn \, \bbe(\bn,\bode) \mid \bn \in \BN^{t+\ell+t}, \bode \in \{0,1\}^\ell \}$$
are respectively   $\cA$-bases of $\UZ$ and $\VZ$.

(b) The set $$\{ \bbe^\ev(\bn) \mid \bn \in \BN^{t+\ell+t} \}$$
and its dilated set
 $$\{ (q;q)_\bn\, \bbe^\ev(\bn) \mid \bn \in \BN^{t+\ell+t} \}$$
 are respectively   $\cA$-bases of $\UZ^\ev$ and  $\VA^\ev$.

\end{proposition}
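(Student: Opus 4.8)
The plan is to derive all four statements from the (even) triangular decompositions of $\UZ$ and $\VA$ together with the preferred bases of the negative, Cartan, and positive parts that have already been set up. The only general fact needed is this: if $A\otimes B\otimes C\congto D$ is an isomorphism of free $\cA$-modules induced by multiplication in an algebra, and $\{a_i\}$, $\{b_j\}$, $\{c_k\}$ are $\cA$-bases of $A$, $B$, $C$ respectively, then $\{a_ib_jc_k\}$ is an $\cA$-basis of $D$. Applying this to the relevant triangular decompositions will turn the products appearing in the definitions \eqref{eq.bbe} of $\bbe(\bn,\bode)$ and $\bbe^\ev(\bn)$ directly into basis elements.

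For part (a), I would use the triangular decomposition $\UZ^{\ev,-}\otimes\UZ^0\otimes\UZ^+\congto\UZ$. Here $\{F^{(\bn_1)}K_{\bn_1}\mid \bn_1\in\BN^t\}$ is an $\cA$-basis of $\UZ^{\ev,-}$ (Proposition \ref{r.ba3}), $\{E^{(\bn_3)}\mid \bn_3\in\BN^t\}$ is an $\cA$-basis of $\UZ^+$ (Proposition \ref{prop.basis}(b)), and $\{Q(\bn_2,\bode)\mid \bn_2\in\BN^\ell,\ \bode\in\{0,1\}^\ell\}$ is an $\cA$-basis of $\UZ^0$ (Proposition \ref{r.bases5}(b)). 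Multiplying in the order negative part, Cartan part, positive part produces exactly $\bbe(\bn,\bode)$, so $\{\bbe(\bn,\bode)\}$ is an $\cA$-basis of $\UZ$. For $\VA$, I would run the same argument on the decomposition $\VA^{\ev,-}\otimes\VA^0\otimes\VA^+\congto\VA$ from \eqref{e84}, using the dilated bases $\{(q;q)_{\bn_1}F^{(\bn_1)}K_{\bn_1}\}$ of $\VA^{\ev,-}$ (Proposition \ref{r.ba3a}), $\{(q;q)_{\bn_2}Q(\bn_2,\bode)\}$ of $\VA^0$ (Proposition \ref{r.bases5}(b)), and $\{(q;q)_{\bn_3}E^{(\bn_3)}\}$ of $\VA^+$ (Proposition \ref{r11}(c)); the product of the three dilatation factors is $(q;q)_{\bn_1}(q;q)_{\bn_2}(q;q)_{\bn_3}$, which is $(q;q)_\bn$ by the definition \eqref{eq.bbe5}, so the resulting basis of $\VA$ is $\{(q;q)_\bn\,\bbe(\bn,\bode)\}$.

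Part (b) is the even analogue, obtained by replacing the Cartan factor by its even part. From $\UZ^{\ev,-}\otimes\UZ^{\ev,0}\otimes\UZ^+\congto\UZ^\ev$ and the $\cA$-basis $\{Q^\ev(\bn_2)\mid \bn_2\in\BN^\ell\}$ of $\UZ^{\ev,0}$ (Proposition \ref{r.bases5}(a)), together with the bases of $\UZ^{\ev,-}$ and $\UZ^+$ used above, one gets that $\{\bbe^\ev(\bn)\mid \bn\in\BN^{t+\ell+t}\}$ is an $\cA$-basis of $\UZ^\ev$; the $\VA^\ev$ case uses $\VA^{\ev,-}\otimes\VA^{\ev,0}\otimes\VA^+\congto\VA^\ev$ from \eqref{e84} and the dilated bases, the Cartan one being $\{(q;q)_{\bn_2}Q^\ev(\bn_2)\}$, and again the three dilatation factors multiply to $(q;q)_\bn$, giving the basis $\{(q;q)_\bn\,\bbe^\ev(\bn)\}$ of $\VA^\ev$.

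There is no serious obstacle here: the entire content sits in the inputs — the triangular decompositions and Propositions \ref{prop.basis}, \ref{r11}, \ref{r.ba3}, \ref{r.ba3a}, and above all Proposition \ref{r.bases5}, whose own proof is deferred to Appendix \ref{sec:u0}. The only things requiring attention are purely clerical: matching the ordering negative part, Cartan part, positive part of the triangular decomposition with the order of the factors in the definitions \eqref{eq.bbe}, and keeping the three incarnations of the symbol $(q;q)_\bn$ (indexed by $\BN^t$, by $\BN^\ell$, and by $\BN^{t+\ell+t}$) consistent with the convention \eqref{eq.bbe5}.
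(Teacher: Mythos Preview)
Your proposal is correct and follows exactly the same route as the paper's proof: combine the even triangular decompositions of $\UZ$ and $\VA$ with the preferred bases of the three factors from Propositions \ref{prop.basis}, \ref{r11}, \ref{r.ba3}, \ref{r.ba3a}, and \ref{r.bases5}, and check that the dilatation factors multiply to $(q;q)_\bn$ via \eqref{eq.bbe5}. The paper states this in one sentence; you have simply filled in the bookkeeping.
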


\begin{proof}
The proposition follows from the even triangular decompositions of $\UZ$ and $\VA$, together with the bases of $\UZ^{\ev,-}$, $\UZ^0$, $\UZ^+$ and
$\VZ^{\ev,-}$, $\VZ^0$, $\VZ^+$ in Propositions \ref{prop.basis}, \ref{r11}, and \ref{r.bases5}.
\end{proof}

We will consider $\UZ,\UZ^\ev,\VA,\VA^\ev$ as based free $\cA$-modules with the preferred bases described in the above proposition. Then $\VA$ is a dilatation of $\UZ$, and $\VA^\ev$ is a dilatation of $\UZ^\ev$.

\subsection{Relation between $\VA$ and $\Vh$} %
\begin{proposition}
\label{r.VZVh}

 (a) One has $ \VAev \subset \VA \subset \Vh$.

(b) Moreover, $\Vh$ is the topological closure (in the $h$-adic topology of $\Uh$) of the $\Ch$-span of $\VAev$.
Consequently,  $\Vh$ is also the topological closure (in the $h$-adic topology of $\Uh$) of $\VA$.
\end{proposition}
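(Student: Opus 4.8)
The statement to prove is Proposition \ref{r.VZVh}: that $\VAev \subset \VA \subset \Vh$, that $\Vh$ is the $h$-adic closure of the $\Ch$-span of $\VAev$, and hence also of $\VA$. The plan is to work entirely in terms of the explicit generators and PBW-type bases established above.

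First I would treat the inclusions in (a). The inclusion $\VAev \subset \VA$ is immediate from the definitions ($\VAev = \VA \cap \Uqv$). For $\VA \subset \Vh$, I would use Proposition \ref{r.phu1}, which says $\Vh$ is the $h$-adic closure of the $\Ch$-subalgebra of $\Uh$ generated by $hH_\al$, $hE_\gamma(\bi)$, $hF_\gamma(\bi)$. So it suffices to check that the generators of $\VA$ lie in $\Vh$. The generators of $\VA$ (as an $\cA$-algebra stable under the braid group) are $(1-q_\al)E_\al$, $(1-q_\al)F_\al$, $K_\al^{\pm1}$, plus all braid-group translates. Since $q_\al = v^{2d_\al} = \exp(h d_\al)$, the element $1-q_\al = 1 - \exp(hd_\al)$ is of the form $h\cdot(\text{unit in }\Ch)$, so $(1-q_\al)E_\al \in h\Ch \cdot E_\al \subset \Ch\cdot (hE_\al) \subset \Vh$, and similarly for $F_\al$; and $K_\al^{\pm1} = \exp(\pm hH_\al/2) \in \Vh$ because it is a power series in $hH_\al$. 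Since $\Vh$ is a $\Ch$-subalgebra of $\Uh$ (Proposition \ref{r.41a} gives it is closed under multiplication, and it is topologically closed by construction) that is also stable under the braid group action $T_\al^{\pm1}$ (one checks $T_\al$ sends $hE_\gamma, hF_\gamma, hH_\gamma$ to elements of $\Vh$, e.g. using the PBW characterization in Proposition \ref{r.phu1}, since $T_\al$ permutes root vectors up to sign and divided-power corrections absorbed into units times $h$-powers), it contains the smallest such subalgebra containing the listed generators, namely $\VA$.

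Next, for (b), I would show the $h$-adic closure of the $\Ch$-span of $\VAev$ equals $\Vh$. One inclusion ($\subseteq$) follows from (a) since $\VAev \subset \Vh$ and $\Vh$ is $h$-adically closed and a $\Ch$-module. For the reverse inclusion I would use the preferred $\cA$-basis of $\VAev$ from Proposition \ref{r.basesUZ}(b), namely $\{(q;q)_\bn\,\bbe^\ev(\bn) \mid \bn \in \BN^{t+\ell+t}\}$, and compare it with the formal basis $\{h^{\|\bn\|}\bbe_h(\bn)\}$ of $\Vh$ from Section \ref{sec.Vh}. The key computation is that $(q;q)_{\bn}\,\bbe^\ev(\bn)$ differs from $h^{\|\bn\|}\bbe_h(\bn)$ (with $\bbe_h(\bn) = F^{(\bn_1)}K_{\bn_1}H^{\bn_2}E^{(\bn_3)}$) by a unit in $\Ch$: the $F$ and $E$ divided-power parts match exactly, while $(q;q)_{n_j} = \prod(1-q_{\gamma_j}^{i}) $ is $h^{n_j}$ times a unit, and on the Cartan side $Q^\ev(\bn_2)$ versus $H^{\bn_2}$ — here $Q^\ev(\bn_2)$, built from $\frac{(q^{-\lfloor\cdots\rfloor}K_j^2;q_j)_{n_j}}{(q_j;q_j)_{n_j}}$, can be expanded and shown to equal $H^{\bn_2}$ times a unit in $\Ch$ times the appropriate $(q;q)_{\bn_2}$ factor that is itself $h^{\|\bn_2\|}$ times a unit. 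Collecting, each basis element $(q;q)_\bn\bbe^\ev(\bn)$ equals (unit in $\Ch$)$\,\cdot\, h^{\|\bn\|}\bbe_h(\bn)$. Since the $h^{\|\bn\|}\bbe_h(\bn)$ form a formal basis of $\Vh$, the $h$-adic closure of their $\Ch$-span is all of $\Vh$; rescaling by units does not change the span or its closure, so the $h$-adic closure of the $\Ch$-span of $\VAev$ is $\Vh$. The final sentence (same for $\VA$) then follows because $\VAev \subset \VA \subset \Vh$ squeezes the closures together.

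The main obstacle I expect is the Cartan-part bookkeeping: showing that $Q^\ev(\bn_2)$ and $H^{\bn_2}$ agree up to a $\Ch$-unit after the natural $h$-power normalization. This requires carefully expanding $Q^\ev(\bn)=\prod_j \frac{K_j^{-2\lfloor n_j/2\rfloor}(q_j^{-\lfloor (n_j-1)/2\rfloor}K_j^2;q_j)_{n_j}}{(q_j;q_j)_{n_j}}$ as a power series in $h$ (using $K_j^2 = \exp(hH_j)$) and verifying that the leading term is a nonzero scalar multiple of $H_j^{n_j}$ — essentially that the Gaussian binomial / $q$-shifted factorial "quantum integer" collapses to the classical factorial $\times H_j^{n_j}$ at leading $h$-order, with the higher-order corrections being invertible. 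This is the one place where a genuine (if standard) $q\to1$ degeneration argument is needed rather than a formal manipulation; everything else is straightforward unit-tracking and the already-established structural facts about $\Vh$, $\VA$, and their bases.
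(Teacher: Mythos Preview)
Your plan has two genuine problems.

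\textbf{Part (a): circularity.} You invoke braid-group stability of $\Vh$ to absorb the braid translates of $(1-q_\al)E_\al$, $(1-q_\al)F_\al$. But in the paper this stability is Corollary~\ref{r.Vhbraid}, which is a \emph{corollary of the very proposition you are proving}. A direct check that $T_\al(hE_\beta)\in\Vh$ for simple $\beta\neq\al$ already requires knowing that $hE_\al^{r-i}E_\beta E_\al^{i}\in\Vh$, which is not at all clear from Proposition~\ref{r.phu1} alone (there is only one factor of $h$ but $r+1$ factors of $E$'s). The paper avoids this by observing, via the PBW basis of $\VA$ in Proposition~\ref{r11}(c), that $\VA$ is already generated as an $\cA$-algebra by $(1-q_\gamma)E_\gamma$, $(1-q_\gamma)F_\gamma$ for \emph{all} $\gamma\in\Phi_+$, together with $K_\al^{\pm1}$; each of these lies in $\Vh$ directly by Proposition~\ref{r.phu1}, and no braid stability is needed.

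\textbf{Part (b): the unit claim is false.} Your key assertion, that $(q;q)_{\bn}\,\bbe^\ev(\bn)$ equals $h^{\|\bn\|}\bbe_h(\bn)$ times a unit of $\Ch$, fails on the Cartan part. Already for $n=1$ one has $(q_\al;q_\al)_1 Q(\al;1)=1-K_\al^2=-hH_\al\bigl(1+\tfrac{hH_\al}{2}+\cdots\bigr)$, and the correction factor $1+\tfrac{hH_\al}{2}+\cdots$ is a unit in $\Uh^0$ but \emph{not} a scalar in $\Ch$. For $n=2$ one finds $(q_\al;q_\al)_2 Q(\al;2)=h^2(H_\al^2+d_\al H_\al)+O(h^3)$, so in the formal basis $\{h^kH_\al^k\}$ it has a nonzero coefficient at $hH_\al$ as well as at $h^2H_\al^2$. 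Hence ``rescaling by units'' cannot give the conclusion. What you actually need is that the transition matrix between the two families is (in a suitable sense) unitriangular, but establishing this is no simpler than the paper's argument. The paper instead exploits the algebra structure: the closure $\Vh'$ of the $\Ch$-span of $\VAev$ is itself a $\Ch$-algebra (since $\VAev$ is), and then one checks directly that the generators of $\Vh$ lie in $\Vh'$, namely $hH_\al=\log(K_\al^2)\in\Vh'$, hence $K_\al^{\pm1}=\exp(\pm hH_\al/2)\in\Vh'$, and then $hF_\gamma=u^{-1}(1-q_\gamma)(F_\gamma K_\gamma)K_\gamma^{-1}\in\Vh'$ and $hE_\gamma=u^{-1}(1-q_\gamma)E_\gamma\in\Vh'$.
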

\begin{proof}

(a) It is clear that $ \VAev \subset \VA$. Let us prove $\VA \subset \Vh$.

Fix a longest reduced sequence $\bi$.
By Proposition \ref{r.phu1}, $\Vh$ is the topological closure of the $\Ch$-subalgebra generated by $hH_\al, hF_\gamma, hE_\gamma$, with $\al\in \Pi, \gamma \in \Phi_+$.

For every $\gamma \in \Phi_+$,
there is a unit $u$ in $\Ch$ such that $1-q_\gamma= h u$, and
\be  \label{eq.bh2}
(1-q_\gamma) F_\gamma = u (h F_\gamma) \in \Vh.
\ee
 Similarly, $(1-q_\gamma)E_\gamma \in \Vh$. We already have $K_\al^{\pm 1}\in \Vh$. Since $(1-q_\gamma)F_\gamma, (1-q_\gamma)E_\gamma, K_\al^{\pm1}$ generate $\VA$ as $\cA$-algebra and $\Vh$ is an $\cA$-algebra, we have $\VA \subset \Vh$.

(b) Let $\Vh'$ be the topological closure
 of the $\Ch$-span of $\VAev$. We have to show that $\Vh'= \Vh$. From part (a) we now have that $\Vh' \subset \Vh$. It remains to show $\Vh \subset \Vh'$.
 It is easy to see that $\Vh'$ is a $\Ch$-algebra.

 Since $K_\al^2 \in \VA^\ev$ and
 $$ h H_\al = \log (K_\al^2) = -\sum_{n=1}^\infty \frac{(1-K_\al^2)^n}{n},$$
 we have $hH_\al \in \Vh'$ for any $\al \in \Pi.$ It follows that $K_\al^{\pm 1} = \exp(\pm hH_\al/2) \in \Vh'$.

 From \eqref{eq.bh2},
 $$hF_\gamma = u^{-1} (1-q_\gamma) (F_\gamma K_\gamma) K_{\gamma}^{-1}  \in \Vh', \quad hE_\gamma = u^{-1} (1-q_\gamma) E_\gamma \in \Vh'.$$

 Thus, $h H_\al, h F_\gamma, h E_\gamma$ are in $\Vh'$ for any $\al\in \Pi, \gamma \in \Phi_+$. Since $\Vh$ is the topological closure of the $\Ch$-algebra generated by $hH_\al, hF_\gamma, hE_\gamma$, we have $\Vh \subset \Vh'$.
 This completes the  proof of the proposition.
\end{proof}

\begin{corollary}\label{r.Vhbraid} The algebra
   $\Vh$ is stable under the braid group action, i.e. $T_\al^{\pm 1} (\Vh) \subset \Vh$ for any $\al \in \Pi$.

\end{corollary}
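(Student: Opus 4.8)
The plan is to deduce this directly from Proposition~\ref{r.VZVh}(b), using the braid-invariance that is built into the very definition of the De Concini--Procesi form $\VA$. Recall from Section~\ref{sec:algebra-cv} that $\VA$ was defined as the smallest $\cA$-subalgebra of $\UZ$ that is stable under the braid group action (and contains $(1-q_\al)E_\al$, $(1-q_\al)F_\al$, $K_\al^{\pm1}$ for $\al\in\Pi$); in particular $T_\al^{\pm1}(\VA)\subset\VA$ for every $\al\in\Pi$.

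Next I would invoke continuity: by Section~\ref{503} the automorphism $T_\al$ of $\Uh$ is a continuous $\BC[[h]]$-algebra automorphism, and its inverse $T_\al^{-1}$, being a $\Ch$-module homomorphism, is automatically $h$-adically continuous as noted in Section~\ref{sec:ChModules}. Hence a continuous self-map of $\Uh$ that carries $\VA$ into $\VA$ also carries the topological closure $\overline{\VA}$ of $\VA$ (in the $h$-adic topology of $\Uh$) into $\overline{\VA}$. By Proposition~\ref{r.VZVh}(b), $\overline{\VA}=\Vh$. Therefore $T_\al^{\pm1}(\Vh)\subset\Vh$ for every $\al\in\Pi$, which is the assertion; since $T_\al$ is an automorphism with $T_\al^{-1}(\Vh)\subset\Vh$ as well, one in fact gets $T_\al(\Vh)=\Vh$.

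There is essentially no genuine obstacle here: all the real work has already been done in the identification $\overline{\VA}=\Vh$ of Proposition~\ref{r.VZVh}. The only point requiring a moment of care is that the continuity argument must be applied both to $T_\al$ and to $T_\al^{-1}$; the former is stated outright in Section~\ref{503}, and the latter follows because every $\Ch$-module homomorphism is continuous. As an alternative one could route the proof through Proposition~\ref{r.phu1} (that $\Vh$ is the topological closure of the $\Ch$-subalgebra generated by $hH_\al$, $hF_\gamma$, $hE_\gamma$) together with the explicit formulas for $T_\al$ on the generators and root vectors, but passing through $\VA$ is cleaner, since braid-invariance is already part of how $\VA$ was constructed.
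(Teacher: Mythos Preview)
Your proof is correct and follows essentially the same approach as the paper's own proof: both rely on the braid-invariance of $\VA$ (built into its definition) together with Proposition~\ref{r.VZVh}(b) and the continuity of $T_\al^{\pm1}$ to pass to the closure. The paper's one-line argument says exactly this, using the $\Ch$-span of $\VA$ rather than $\VA$ itself; since $T_\al$ is $\Ch$-linear this makes no difference, and your citation of Proposition~\ref{r.VZVh}(b) already covers it.
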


\begin{proof}  Since $\VA$ is invariant under the braid group actions, and $\Vh$ is the topological closure of the $\Ch$-span of $\VA$,  $\Vh$ is also invariant under the braid group actions.
\end{proof}

\begin{remark}Using Corollary \ref{r.Vhbraid}
 one can easily prove that  $\Vh$ is the smallest $\Ch$-subalgebra of $\Uh$ which

(i) contains $hE_\al, h F_\al, hH_\al, \al \in \Pi$,

(ii) is stable under the action of the braid group.

(iii) is closed in the $h$-adic topology of $\Uh$.

\end{remark}

\subsection{Stability of $\VZ$ under $\ibar, \tau,\tphi$} By Proposition \ref{prop.basis}, $\UZ$ is stable under $\ibar, \tau$, and $\varphi$.
\begin{proposition}
\label{r.phistab}
The algebra $\VZ$ is stable under  each of  $\tau$,  $\tphi$, and  $\ibar$.
\end{proposition}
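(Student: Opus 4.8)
The plan is to exploit the definition of $\VZ$ as the \emph{smallest} $\cA$-subalgebra of $\UZ$ which is stable under the braid group action and contains the elements $(1-q_\al)E_\al$, $(1-q_\al)F_\al$, $K_\al^{\pm1}$ ($\al\in\Pi$); call this generating set $G_0$. Write $\mathrm{Ad}_{K_\gamma}(x)=K_\gamma xK_\gamma^{-1}$ for $\gamma\in Y$; since $K_\gamma^{\pm1}\in\VZ^0\subset\VZ$ and $\VZ$ is an algebra, $\VZ$ is stable under each $\mathrm{Ad}_{K_\gamma}$. Fix $f\in\{\tau,\tphi,\ibar\}$. By Proposition~\ref{prop.basis}(d) $f$, and also $f^{-1}$ (one has $\ibar^2=\tau^2=\id$, while $\tphi^{-1}=\mathrm{Ad}_{K_{2\rho}}\circ\tphi$ by \eqref{eq.tphi2}), restricts to an $\cA$-linear (anti-)automorphism of $\UZ$; hence $f(\VZ)$ and $f^{-1}(\VZ)$ are $\cA$-subalgebras of $\UZ$. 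I would then establish two facts: (i) $f^{\pm1}(G_0)\subset\VZ$; and (ii) $f(\VZ)$ and $f^{-1}(\VZ)$ are again braid-stable. Granting (i)--(ii), $f(\VZ)$ is a braid-stable $\cA$-subalgebra of $\UZ$ which contains $G_0=f(f^{-1}(G_0))$, so $\VZ\subset f(\VZ)$ by minimality; symmetrically $\VZ\subset f^{-1}(\VZ)$, and therefore $f(\VZ)=\VZ$.

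For (i) I would compute directly on the three families of generators, using the defining formulas for $\ibar,\tau,\tphi$ in Section~\ref{sec.8147}: for instance $\ibar((1-q_\al)E_\al)=-q_\al^{-1}(1-q_\al)E_\al$, $\tau((1-q_\al)E_\al)=(1-q_\al)E_\al$, $\tphi((1-q_\al)E_\al)=q_\al^{-1}(1-q_\al)F_\al K_\al$, with analogous formulas for $(1-q_\al)F_\al$, while each of $\tau,\tphi,\ibar$ sends $K_\al^{\pm1}$ to $K_\al^{\pm1}$. In every case the image is a unit of $\cA$ times a product of elements of $G_0$, hence lies in $\VZ$; the same computation applied to $f^{-1}$ (for $\tphi$ use $\tphi^{-1}=\mathrm{Ad}_{K_{2\rho}}\circ\tphi$) gives $f^{-1}(G_0)\subset\VZ$.

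For (ii) the idea is that each of $\tau,\tphi,\ibar$ normalizes the braid group action up to conjugation by a torus element. Checking on the algebra generators $E_\al,F_\al,K_\gamma$ of $\Uq$ — which suffices because all maps in sight are algebra (anti-)automorphisms — I expect identities of the shape $\tau T_\al\tau=T_\al^{-1}$, $\ibar\,T_\al\,\ibar=\mathrm{Ad}_{K_\al}\circ T_\al^{-1}$, and $\tphi\,T_\al\,\tphi^{-1}=\mathrm{Ad}_{K_\gamma}\circ T_\al^{\pm1}$ for a suitable $\gamma\in Y$. Since $\VZ$ is stable under $T_\al^{\pm1}$ by definition and under $\mathrm{Ad}_{K_\gamma}$ by the remark above, it follows that $f\,T_\al^{\pm1}\,f^{-1}$ maps $\VZ$ into $\VZ$; equivalently $T_\al^{\pm1}$ preserves $f(\VZ)$, and likewise $f^{-1}(\VZ)$, which is (ii). (As an immediate corollary, since $\ibar$ and $\tau$ also preserve the negative part $\Uq^-$, one gets that $\VZ^-$ is stable under $\ibar$ and $\tau$, the analogue for $\VZ$ of the last assertion of Proposition~\ref{prop.basis}(d).)

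The main obstacle is step (ii): pinning down the commutation relations between $\ibar,\tau,\tphi$ and the braid operators $T_\al$, together with the precise $K$-twist in each case. Everything else in the argument is formal; this step is a finite computation, readable off the explicit braid formulas of Section~\ref{503} (or extractable from \cite{Jantzen,Lusztig}), but the bookkeeping of the $v$-powers and $K$-factors is where the care will be needed.
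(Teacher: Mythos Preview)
Your strategy is exactly the paper's: exploit minimality of $\VZ$, check that $f^{-1}$ sends the generating set $G_0$ into $\VZ$, and check that $f(\VZ)$ is braid-stable; then $\VZ\subset f(\VZ)$, and the reverse inclusion follows from $f^2(\VZ)=\VZ$ (involutivity for $\tau,\ibar$; $\tphi^2=\mathrm{Ad}_{K_{-2\rho}}$). The only tactical difference is in step~(ii). The paper does not try to compute $f T_\al f^{-1}$ directly for $f=\ibar,\tphi$; instead it factors through the anti-automorphism $\btau:=\ibar\tau\omega$ of De~Concini--Procesi, which \emph{commutes exactly} with each $T_\al$, and writes $\tphi=S\btau$ and $\ibar=\btau\tau\omega$, reducing everything to the known relation $\tau T_\al=T_\al^{-1}\tau$ together with the fact that $T_\al\omega(x)$ and $\omega T_\al(x)$ agree up to a unit of $\cA$ on $Y$-homogeneous $x$ (which suffices since $\VZ$ has a $Y$-homogeneous $\cA$-basis).

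One caution about your direct route: the identities you expect, e.g.\ $\ibar\,T_\al\,\ibar=\mathrm{Ad}_{K_\al}\circ T_\al^{-1}$, are not literally correct --- already on $E_\al$ the two sides differ by a power of $v$. What is true is that $f T_\al f^{-1}$ agrees with $T_\al^{\pm1}$ (possibly composed with some $\mathrm{Ad}_{K_\gamma}$) only up to a unit scalar depending on the $Y$-degree, exactly as in the $\omega$ case above. This is enough for your argument, since $\VZ$ is $Y$-graded over $\cA$, but you should state it that way rather than as a strict operator identity. The paper's detour through $\btau$ avoids this bookkeeping by isolating one map that commutes on the nose.
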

\begin{proof} Recall that $\VZ$ is the smallest $\cA$-subalgebra of $\UZ$ containing $(1-q_\al)E_\al, (1-q_\al)F_\al, K_\al$ for $\al\in \Pi$, and is stable under the action of the braid group.
Let $f$ be one of $\tau,\varphi, \ibar$.

{\em Claim 1.}
 $f(\VZ)$ is stable under the braid group action.

{\em Proof of Claim 1.}  (i) The case $f=\tau$.
By  \cite[Formula 8.14.10]{Jantzen}, $\tau T_\al = T_\al^{-1} \tau$ for every $\al\in \Pi$. Since $T_\al$ generate the braid group, we conclude that, like $\VZ$,   $\tau(\VZ)$ is also stable under the braid group.

(ii) The case $f= \varphi$. Recall that $S$ is the antipode.
By Proposition \ref{r.phi5},
$ \varphi =  S \btau =\btau S$, where $\btau=\ibar \tau \omega$ is a $\BC$-anti-automorphism of $\Uh$.
Our $\btau$ is the same $\kappa$ in \cite{DeConcini-Procesi}, where it was observed that $\btau$ commutes with the action of the braid group, i.e.
$\btau T_\al = T_\al \btau$ for $\al \in \Pi$. It follows that $\btau(\VZ)$ is stable under the braid group. Since $\varphi(\VZ)= \btau S(\VZ)= \btau(\VZ)$, $\varphi(\VZ)$ is stable under the braid group.

(iii) The case $f=\ibar$.
Checking on the generators, one has $\ibar= \kappa \tau \omega$.

 By Formula 8.14.9 of \cite{Jantzen}, if $x\in \Uq$ is $Y$-homogeneous, then $T_\al(\omega(x)) \sim \omega T_\al(x)$, where $x \sim y $ means $x = u y$ for some unit $u \in \cA$. As $\VZ$ has an $\cA$-basis consisting of $Y$-homogeneous elements (see Proposition \ref{r.basesUZ}), we conclude that $\omega(\VZ)$ is stable under the braid group. The results of (i) and (ii) show that $\ibar(\VZ)= \kappa \tau \omega(\VZ)$ is stable under the braid group.

 This completes the proof of Claim 1.

{\em Claim 2.} One has $\VZ \subset f(\VZ)$.

{\em Proof of Claim 2.}
Using explicit formulas of $f^{-1}$  in Section \ref{sec.8147}, one sees that each of $f^{-1}((1-q_\al)E_\al), f^{-1}((1-q_\al)F_\al), f^{-1}(  K_\al)$ is in  $\VZ$. It follows that each of $(1-q_\al)E_\al, (1-q_\al)F_\al, K_\al$ is in $f(\VZ)$. Together with Claim 1, this implies $f(\VZ)$ is an algebra stable under the braid group and contains $f^{-1}((1-q_\al)E_\al), f^{-1}((1-q_\al)F_\al), f^{-1}(  K_\al)$. Hence $f(\VZ) \supset \VZ$.
This completes the proof of Claim 2.

Since $\tau$ and $\ibar$ are involutions and $\varphi^2(x)= K_{-2\rho} x K_{2\rho}$ (by Proposition \ref{r.phi5}), we have  $f^2(\VZ)=\VZ$.
 Applying $f$ to $\VZ \subset f(\VZ)$, we get $f(\VZ) \subset f^2(\VZ)= \VZ$.
 Hence, $\VZ= f(\VZ)$.
\end{proof}

\subsection{Simply-connected version of  $\UZ$} \label{sec:simply} Recall that the simply connected version $\brU_q$ is obtained from $\Uq$ by replacing the Cartan part $\Uq^0= \BC(v)[K^{\pm 1}_1, \dots,K^{\pm 1}_\ell]$ with the bigger $\brU_q^0= \BC(v)[\brK^{\pm 1}_1, \dots,\brK^{\pm 1}_\ell]$.
We introduce an analog of  Lusztig's integral form for $\brU_q$ here.

The $\BC(v)$-algebra homomorphism
$\briota: \Uq^0 \to \brU_q^0$, defined by $\briota(K_\al) = \brK_\al$, $\al\in\Pi$,
 is a Hopf algebra homomorphism. Let
 $$\brUA^0 := \briota(\UZ^0),\quad \brUA^{\ev,0} := \briota(\UZ^{\ev,0}).$$
 Then $\brUA^0, \brUA^{\ev,0}$ are $\cA$-Hopf-subalgebra of $\brU_q^{\ev,0}$.
 Define
 $$\brUA:= \brUA^0 \UZ, \quad \brUA^\ev:= \brUA^{\ev,0} \UZ^\ev.$$

 For $ \bm\in \BN^\ell, \bode=(\delta_1, \dots, \delta_\ell)\in \{0,1\}^\ell$, define
 \begin{align}
  \brQ^\ev(\bm) &:= \briota(Q^\ev(\bm)), \quad \brQ (\bm,\bode) := \briota(Q(\bm,\bode))   \notag
 \end{align}
and furthermore for $ \bn=(\bn_1, \bn_2, \bn_3)\in \BN^{t+\ell+t}$ define
\begin{align}
\label{eq.brbe}  \brbe^\ev(\bn)&:= F^{(\bn_3)} K_{\bn_3}\, \brQ^\ev(\bn_2) \, E^{(\bn_1)}, \quad  \brbe(\bn,\bode):=\brbe^\ev(\bn) \prod_{j=1}^\ell \brK_{\al_j}^{\delta_j}.
 \end{align}

\begin{proposition} \label{r.8aa}

 (a) $\brUA$ is an $\cA$-Hopf-subalgebra of $\brU_q$, and $\brUA^\ev$ is an $\cA$-subalgebra of $\brUA$. We also have the following even triangular decompositions
\begin{gather}
  \label{e84s}
  \UZ^{\ev,-}\otimes \brUA ^{\ev,0}\otimes \UZ ^+\congto\brUA^\ev,\quad\xyzxyz.
 \\ \UZ ^{\ev,-}\otimes \brUA ^{0}\otimes \UZ ^+\congto\brUA,\quad\xyzxyz,
\end{gather}

(b) The sets  $\{ \brbe(\bn,\bode) \mid \bn\in \BN^{t+\ell+t} , \bode \in \{0,1\}^\ell\}$ and $\{ \brbe^\ev(\bn) \mid \bn\in \BN^{t+\ell+t}\}$  are respectively $\cA$-bases of $\brUA$ and $\brUA^\ev$.

(c) One has $\brUA \tri \brUA^\ev \subset \brUA^\ev$. Consequently, $\UZ \tri \brUA^\ev \subset \brUA^\ev$.
\end{proposition}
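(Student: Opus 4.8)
The plan is to prove parts (a), (b), (c) of Proposition \ref{r.8aa} in that order; (a) and (b) are structural and are obtained by transporting the known description of Lusztig's form $\UZ$ along $\briota$, while (c) is the substantive assertion and becomes short once (a) and (b) are in place. For part (a): the map $\briota\colon\Uq^0\to\brU_q^0$, $K_\al\mapsto\brK_\al$, is a Hopf algebra homomorphism and in fact an isomorphism (it sends an algebraically independent generating set to one), so $\brUA^0=\briota(\UZ^0)$ and $\brUA^{\ev,0}=\briota(\UZ^{\ev,0})$ are $\cA$-Hopf-subalgebras of $\brU_q^0$. The one point needing attention is that $\brUA^0$ normalizes $\UZ$ and $\brUA^{\ev,0}$ normalizes $\UZ^\ev$: since $\al_j=\sum_i a_{ij}\bral_i$ with Cartan integers $a_{ij}\in\BZ$, conjugation of a $Y$-homogeneous element by a $\brK_i$ multiplies it by an \emph{integer} power of $v$ (by \eqref{eq.8157}), and conjugation by a $\brK$-binomial preserves $\UZ$ by the rank-one $q$-binomial shift identities; as $\UZ$ and $\UZ^\ev$ have $Y$-homogeneous $\cA$-bases, they are stable. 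Hence $\brUA=\brUA^0\UZ=\UZ\brUA^0$ is an $\cA$-subalgebra, it is a Hopf subalgebra because $\Delta$ and $S$ are algebra maps preserving $\brUA^0$ and $\UZ$, and $\brUA^\ev=\brUA^{\ev,0}\UZ^\ev$ is a subalgebra for the same reason. Feeding the even triangular decompositions of $\UZ$ and $\UZ^\ev$ (Section \ref{sec:Lus}) into $\brUA=\brUA^0\UZ$, commuting $\brUA^0$ past $\UZ^{\ev,-}$, and using the inclusions $\UZ^0\subseteq\brUA^0$, $\UZ^{\ev,0}\subseteq\brUA^{\ev,0}$ (a standard feature of Lusztig's Cartan form, carried by $\briota$) yields the two even triangular decompositions claimed; injectivity of the product maps comes from the $\BC(v)$-level decomposition of $\brU_q$.

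\emph{Part (b).} Apply $\briota$ to the $\cA$-bases of $\UZ^0$ and $\UZ^{\ev,0}$ in Proposition \ref{r.bases5} to obtain $\cA$-bases $\{\brQ(\bn,\bode)\}$ of $\brUA^0$ and $\{\brQ^\ev(\bn)\}$ of $\brUA^{\ev,0}$; combining these with the $\cA$-bases of $\UZ^{\ev,-}$ (Proposition \ref{r.ba3}) and $\UZ^+$ (Proposition \ref{prop.basis}(b)) through the even triangular decompositions of part (a) produces precisely $\{\brbe(\bn,\bode)\}$ and $\{\brbe^\ev(\bn)\}$ as $\cA$-bases of $\brUA$ and $\brUA^\ev$, exactly as in Proposition \ref{r.basesUZ}; in particular both are free $\cA$-modules. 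Moreover, using the $\BC(v)$-level even triangular decomposition $\brU_q\cong\Uq^{\ev,-}\otimes\brU_q^0\otimes\Uq^+$ together with $\brU_q^0=\bigoplus_{\bode\in\{0,1\}^\ell}\brU_q^{\ev,0}\prod_{j=1}^\ell\brK_j^{\delta_j}$, one sees that $\{\brbe(\bn,\bode)\}$ is also a $\BC(v)$-basis of $\brU_q$, with the subfamily $\{\brbe^\ev(\bn)\}=\{\brbe(\bn,0)\}$ a $\BC(v)$-basis of $\brU_q^\ev$.

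\emph{Part (c).} By the last observation of part (b), $\brUA$ is a based $\cA$-lattice in $\brU_q$ whose chosen basis refines that of $\brU_q^\ev$, so
\[
\brUA^\ev=\cA\text{-span}\{\brbe^\ev(\bn)\}=\brUA\cap\brU_q^\ev .
\]
Since $\brUA$ is a Hopf subalgebra of $\brU_q$ (part (a)), it is stable under its own adjoint action: for $x,y\in\brUA$ we have $\ad(x\otimes y)=\sum x_{(1)}yS(x_{(2)})\in\brUA$ because $\Delta$ and $S$ preserve $\brUA$ and $\brUA$ is closed under multiplication. On the other hand Lemma \ref{eq.evenD3a} gives $\brU_q\tri\brU_q^\ev\subseteq\brU_q^\ev$. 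Hence for $x\in\brUA$ and $y\in\brUA^\ev\subseteq\brU_q^\ev$ the element $x\tri y$ lies in both $\brUA$ and $\brU_q^\ev$, so $x\tri y\in\brUA\cap\brU_q^\ev=\brUA^\ev$; that is, $\brUA\tri\brUA^\ev\subseteq\brUA^\ev$. The ``consequently'' follows because $\UZ\subseteq\brUA$ (take the unit of $\brUA^0$), whence $\UZ\tri\brUA^\ev\subseteq\brUA\tri\brUA^\ev\subseteq\brUA^\ev$.

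\emph{Where the work is.} Given (a) and (b), statement (c) is essentially immediate, so the main obstacle is the integral Cartan bookkeeping behind (a) and (b): establishing the even triangular decomposition of $\brUA$ over $\cA$, which rests on the inclusion $\UZ^0\subseteq\brUA^0$, i.e. on the (standard but not wholly trivial) fact that Lusztig's $\cA$-form of the Cartan subalgebra survives the lattice change $Y\hookrightarrow X$ built into $\briota$, and on checking that $\{\brbe(\bn,\bode)\}$ is genuinely a $\BC(v)$-basis of $\brU_q$ compatible with $\brU_q^\ev$. An alternative to (c), modelled on the proof of Lemma \ref{eq.evenD3}(b), is to verify directly that the adjoint action of each algebra generator $E_\al^{(m)},F_\al^{(m)},\brK_i^{\pm1}$ and each $\brK$-binomial of $\brUA$ maps $\brUA^\ev$ into itself, using Lusztig's coproduct formulas for divided powers; the delicate point there is that the odd Cartan factors $K_\al^{\pm b}$ occurring in $\Delta(E_\al^{(m)})$, $S(E_\al^{(m)})$ and their $F$-analogues must cancel or recombine into even elements, which once more amounts to the even-triangular-decomposition content.
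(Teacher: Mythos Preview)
Your proof is correct and follows essentially the same route as the paper: the commutation $\brUA^0\,\UZ=\UZ\,\brUA^0$ plus the even triangular decomposition of $\UZ$ for part (a), transporting bases via $\briota$ for part (b), and for part (c) the identification $\brUA^\ev=\brUA\cap\brU_q^\ev$ combined with $\brUA\tri\brUA\subset\brUA$ and Lemma~\ref{eq.evenD3a}. You are slightly more explicit than the paper in isolating the inclusion $\UZ^0\subseteq\brUA^0$ as the ingredient needed to collapse $\UZ^0\,\brUA^0$ to $\brUA^0$ in the triangular decomposition; the paper passes over this without comment, so your ``Where the work is'' paragraph is a fair assessment rather than a departure.
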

\begin{proof}
(a) As an $\cA$-module,  $\UZ^0$ is spanned  by
$$f_{\al,m,n,k}:= \frac{K_\al^m (q_\al^n K_\al^2;q_\al)_k}{(q_\al;q_\al)_k},$$
 with $\al\in \Pi$, $ m,n, \in \BZ$ and  $k\in \BN$. Hence $\brUA^0= \briota(\UZ^0)$ is $\cA$-spanned by $\breve f_{\al,m,n,k}:=\briota(f_{\al,m,n,k}) $. If  $x\in \UZ$ is $Y$-homogeneous, then, using \eqref{eq.8157} which describes the commutation  between $\brK_\al$ and $x$,
 \be
 \label{eq.brcom}
  \breve f_{\al,m,n,k}\,  x =  v^{m(|x|,\bral)} x \breve f_{\al,m,n',k},
  \ee
 where $n'= n + (|x|,\al)/d_\al \in \BZ$. Hence, $\UZ$ commutes with $\brUA^0$ in the sense that
 $\UZ \brUA^0= \brUA^0 \UZ$. Since both $\UZ$ and $\brUA^0$ are $\cA$-Hopf-subalgebras of $\Uh$ and they commute in the above sense,  $\brUA= \brUA^0 \UZ$ is an $\cA$-Hopf-subalgebra of $\brU_q$.

 Identity \eqref{eq.brcom} also shows that each of $\brUA^{\ev,0}, \brUA^0$  commutes with each of $\UZ^-, \UZ^+, \UZ^0$. Hence, $\brUA^\ev= \brUA^{\ev,0} \UZ^\ev$ is an $\cA$-subalgebra of $\brUA$. The triangular decompositions  for  $\brUA^\ev$ and $\brUA$ follows from those  of  and $\UZ^\ev$ and $\UZ$.

 (b) Combining the base $\{ F^{(\bn_3)} K_{\bn_3} \}$ of $\UZ^{\ev,-}$ (see Proposition \ref{r.ba3}), $\{ \brQ^\ev(\bn_2) \}$ of $\brUA^{\ev,0}$ (by Proposition \ref{r.bases5} and isomorphism $\briota$), $\{ E^{(\bn_1)}\}$ of $\UZ^+$ (see Proposition \ref{prop.basis}), and the even triangular decompositions of $\brUA$ and $\brUA^\ev$, we get the bases of $\brUA$ and $\brUA^\ev$ as described.

 (c) Since $\brUA$ contains $E_\al^{(n)}, F_\al^{(n)}, K_\al^{\pm 1}$, which generate $\UZ$, we have $\UZ \subset \brUA$. Let us prove $\brUA \tri \brUA^\ev \subset \brUA^\ev$.
 From the triangular decomposition of $\brUA,\brUA^\ev, \brU_q$, we see that $\brUA^\ev = \brUA \cap \brU_q^\ev$.

 Since $\brUA$ is a Hopf algebra, we have $\brUA \tri \brUA^\ev \subset \brUA$.
 By Lemma \ref{eq.evenD3a},
 $$\brUA \tri \brUA^\ev \subset \brU_q \tri \brU_q^\ev \subset \brU_q^\ev.$$
  Hence
 $ \brUA \tri \brUA^\ev \subset \brUA \cap \brU_q^\ev = \brUA^\ev.$
 This finishes the proof of the proposition.
 \end{proof}

\subsection{Integral duality with respect to quantum Killing form}
\label{sec:duality-between-uzev}

Recall that $\{ \bbe^\ev(\bn) \mid \bn \in \BN^{t+\ell+t} \}$ is an $\cA$-basis of $\UZ^\ev$ (Proposition \ref{r.basesUZ}), and
$\{ \brbe^\ev(\bn) \mid \bn \in \BN^{t+\ell+t} \}$ is an  $\cA$-basis of $\brUZ^\ev$ (Proposition \ref{r.8aa}).
We will show that these two bases are orthogonal with each other with respect to the quantum Killing form.

 Recall that we  defined $(q;q)_\bn= (q;q)_{\bn_1} (q;q)_{\bn_2} (q;q)_{\bn_3}$, see Section \ref{sec:basesUZ}.

\def\bp{{\mathbf p}}
\begin{proposition}\label{r.ortho3}
(a) For $\bn,\bm\in \BN^{t+\ell+t}$, there exists a unit $u(\bn) \in \cA$ such that
$$ \la \bbe^\ev(\bn), \brbe^\ev(\bm) \ra = \delta_{\bn,\bm}\, \frac{  u(\bn)}{ (q;q)_\bn}.$$

(b) The $\cA$-module $\VA^\ev$ is the $\cA$-dual of $\ \brUA^\ev$ in $\Uq^\ev$ with respect to the quantum Killing form, i.e.
$$ \VA^\ev = \{ x \in \Uq^\ev \mid \la x, y\ra \in \cA \ \forall  y \in \brUA^\ev \}.$$
\end{proposition}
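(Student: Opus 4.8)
The strategy is to reduce everything to the explicit orthogonality formula in part (a), from which part (b) follows by a standard dualization argument using the triangularity of the quantum Killing form. For part (a), the key input is the triangular decomposition property \eqref{e89} of the form $\sL$, namely that $\langle xyz, x'y'z'\rangle = \langle x,z'\rangle\langle y,y'\rangle\langle z,x'\rangle$ for $x,x'\in\Xh\cap\Uh^{\ev,-}$, $y,y'\in\Xh\cap\Uh^0$, $z,z'\in\Xh\cap\Uh^+$. Writing $\bn=(\bn_1,\bn_2,\bn_3)$ and $\bm=(\bm_1,\bm_2,\bm_3)$, we have $\bbe^\ev(\bn)=F^{(\bn_1)}K_{\bn_1}\cdot Q^\ev(\bn_2)\cdot E^{(\bn_3)}$ and $\brbe^\ev(\bm)=F^{(\bm_3)}K_{\bm_3}\cdot \brQ^\ev(\bm_2)\cdot E^{(\bm_1)}$, so \eqref{e89} gives
\[
\langle\bbe^\ev(\bn),\brbe^\ev(\bm)\rangle
= \langle F^{(\bn_1)}K_{\bn_1}, E^{(\bm_1)}\rangle\,
  \langle Q^\ev(\bn_2), \brQ^\ev(\bm_2)\rangle\,
  \langle E^{(\bn_3)}, F^{(\bm_3)}K_{\bm_3}\rangle.
\]
So it suffices to compute each of the three factors separately and show each is $\delta$-supported with a unit-over-$(q;q)$ value on the diagonal.

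For the first and third factors, I would use formula \eqref{e57a} (or rather \eqref{e57a1} with the Cartan exponent zero), which already tells us that $\langle F_\bm K_\bm K_\mu E_\bn, F_{\bn'}K_{\bn'}K_{\mu'}E_{\bm'}\rangle$ vanishes unless $\bm=\bm'$ and $\bn=\bn'$, and equals $q^{(\rho,|E_\bn|)}v^{-(\mu,\mu')/2}$ on the diagonal. Recalling from \eqref{eq.En}--\eqref{eq.Fn} that $E_\bn$ and $F_\bn$ differ from $E^{(\bn)}$ and $F^{(\bn)}$ by explicit monomial factors in $\{n_j\}_{\gamma_j}!$ and signs/powers of $v$, one converts the pairing of divided powers: $\langle F^{(\bn_1)}K_{\bn_1}, E^{(\bm_1)}\rangle$ is, up to a unit in $\cA$, equal to $\delta_{\bn_1,\bm_1}$ times $q^{(\rho,\cdots)}$ divided by $\prod_j\{n_j\}_{\gamma_j}!$; since $\{n\}_\al! = (\text{unit})\cdot(q_\al;q_\al)_n$, this produces exactly the factor $u(\bn_1)/(q;q)_{\bn_1}$. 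The same computation, with $E$ and $F$ swapped, handles the third factor $\langle E^{(\bn_3)}, F^{(\bm_3)}K_{\bm_3}\rangle$; here one may also invoke the symmetry $\langle y,x\rangle=\langle x, S^2(y)\rangle$ together with $S^2$ being the identity up to conjugation by $K_{2\rho}$, which on the relevant $Y$-homogeneous pieces contributes only a unit.

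The middle factor $\langle Q^\ev(\bn_2),\brQ^\ev(\bm_2)\rangle$ is the genuinely new computation, and I expect it to be the main obstacle: one must pair the Cartan basis elements $Q^\ev(\bn_2)\in\UZ^{\ev,0}$ against their $\briota$-images $\brQ^\ev(\bm_2)$, using the formula $\langle K_\mu, K_{\mu'}\rangle = v^{-(\mu,\mu')/2}$ (from \eqref{e57a} with $\bm=\bn=0$, or equivalently \eqref{eq.989a}) extended bilinearly, and verify that the specific $q$-binomial-type combinations defining $Q^\ev$ in \eqref{eq.Qn14} were chosen precisely so that this Gram matrix is diagonal with entries $(\text{unit})/(q;q)_{\bn_2}$. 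This is where the remark after Proposition \ref{r.bases5} about choosing ``the basis in Proposition \ref{r.bases5} instead of Lusztig's one for orthogonality reason'' is cashed in. I would carry out this computation by first handling the rank-one ($\ell=1$) case — reducing to an identity about $q$-binomials $\binom{n}{k}$ and their pairings under $\langle K^a, K^b\rangle = v^{-ab/2}$ — and then taking products over $j=1,\dots,\ell$ using multiplicativity. The careful bookkeeping of the half-integer powers of $v$ (which must cancel, since the final answer lies in $\cA$, not $\tilde\cA$) and of the sign/unit factors is the delicate part; the appendix reference (Appendix \ref{sec:u0}) where Proposition \ref{r.bases5} is proved presumably contains the needed $q$-binomial lemmas, so I would lean on those.

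Granting part (a), part (b) is formal. Let $x\in\Uq^\ev$. Expand $x=\sum_\bn c_\bn\,\bbe^\ev(\bn)$ with $c_\bn\in\BC(v)$; by Proposition \ref{r.basesUZ}(b), $x\in\VA^\ev$ iff $(q;q)_\bn^{-1}c_\bn\in\cA$ for all $\bn$, i.e.\ iff $c_\bn\in (q;q)_\bn\,\cA$ for all $\bn$ (since $(q;q)_\bn\in\cA$ this is the statement that $x$ is an $\cA$-combination of the dilated basis $(q;q)_\bn\bbe^\ev(\bn)$). On the other hand, by part (a), $\langle x,\brbe^\ev(\bm)\rangle = c_\bm\cdot u(\bm)/(q;q)_\bm$, and since $u(\bm)$ is a unit in $\cA$, this lies in $\cA$ iff $c_\bm\in (q;q)_\bm\,\cA$. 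Because $\{\brbe^\ev(\bm)\}$ is an $\cA$-basis of $\brUA^\ev$ (Proposition \ref{r.8aa}(b)) and the form is $\BC(v)$-bilinear, $\langle x,y\rangle\in\cA$ for all $y\in\brUA^\ev$ iff $\langle x,\brbe^\ev(\bm)\rangle\in\cA$ for all $\bm$. Combining the three equivalences yields $x\in\VA^\ev\iff \langle x,y\rangle\in\cA\ \forall y\in\brUA^\ev$, which is exactly the claim.
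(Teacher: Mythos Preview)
Your proposal is correct and follows essentially the same route as the paper: reduce (a) via the triangular property \eqref{e89} to three factors, handle the outer two by converting $F^{(\bn)},E^{(\bn)}$ to $F_\bn,E_\bn$ up to units and invoking \eqref{e57a}, and delegate the Cartan factor $\langle Q^\ev(\bn_2),\brQ^\ev(\bm_2)\rangle$ to the rank-one orthogonality identity proved in the appendix (this is exactly Lemma~\ref{r.117}); part (b) is then the same formal dualization using the bases from Propositions~\ref{r.basesUZ} and~\ref{r.8aa}. One minor bookkeeping slip: the factor $q^{(\rho,|E_{\bn_3}|)}$ arises from the third pairing $\langle E^{(\bn_3)},F^{(\bm_3)}K_{\bm_3}\rangle$, not the first, since in \eqref{e57a} the $q^{(\rho,|E_\bn|)}$ comes from the $E$-part of the left argument.
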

\begin{proof} Define the following units in $\cA$. For $\bm = (m_1,\dots,m_t)\in \BN^t$ and $\bk=(k_1,\dots,k_\ell)\in \BN^\ell$ let
$$ u_1(\bm)= \prod_{j=1}^t v_{\gamma_j}^{m_j^2 }, \quad u_2(\bk)=\prod_{j=1}^\ell q_{\al_j}^{- \lfloor(k_j+1)/2 \rfloor^2}.$$
For $\bn=(\bn_1,\bn_2,\bn_3) \in \BN^{t+\ell+t}$, let
$$u(\bn) = q^{(\rho,|E_{\bn_3}|)}u_1(\bn_1) u_2(\bn_2) u_1(\bn_3).$$
(a) We will use the following lemma whose proof will be given in Appendix \ref{sec:u0}.
\begin{lemma} \label{r.117}
For $\bk,\bk'\in \BN^\ell$, one has
\be
\label{eq.117}  \quad \la Q^\ev(\bk), \brQ^\ev(\bk')\ra = \delta _{\bk,\bk'} u_2({\bk})/(q;q)_\bk.
\ee

\end{lemma}

For $\bp\in \BN^t$, using the definition of $E_\bp, F_\bp$ in Section \ref{Rmatrix}, we have
$$  F^{(\bp)} \ot E^{(\bp)}   = \frac{ u_1(\bp)}{    (q;q)_\bp} \left( F_\bp \ot E_\bp\right).$$
Suppose $\bn=(\bn_1,\bn_2,\bn_3)$ and $\bm=(\bm_1,\bm_2,\bm_3)$ are in $\BN^{t+\ell+t}$.  Using the definition of  $\bbe^\ev(\bn)$ and $\brbe^\ev(\bn)$ from  \eqref{eq.bbe} and \eqref{eq.brbe}, the triangular property of the quantum Killing form,
and Formulas \eqref{e57a} and \eqref{eq.117},
\begin{align*}
\la \bbe^\ev(\bn), \brbe^\ev(\bm) \ra & = \la F^{(\bn_1)} K_{\bn_1}, E^{(\bm_1)} \ra \la Q^\ev(\bn_2), \brQ^\ev(\bm_2)  \ra
\la  E^{(\bn_3)} , F^{(\bm_3)} K_{\bm_3}\ra
\\
&= \frac{\delta_{\bn_1,\bm_1}\, u_1({\bn_1}) }{(q;q)_{\bn_1}}\,  \frac{\delta_{\bn_2,\bm_2} \,u_2({\bn_2}) }{(q;q)_{\bn_2}}\,  \frac{\delta_{\bn_3,\bm_3}\, q^{(\rho,|E_{\bn_3}|)}\, \,  u_1({\bn_3}) }{(q;q)_{\bn_3}} = \frac{\delta_{\bn,\bm} \, u(\bn) }{(q;q)_\bn}.
\end{align*}
(b) By Proposition \ref{r.basesUZ}, $\{ (q;q)_\bn\, \bbe^\ev(\bn)\mid  \bn \in \BN^{t+\ell+t} \}$ is an $\cA$-basis of $\VA^\ev$ and a $\BC(v)$-basis of $\Uq^\ev$, and by Proposition \ref{r.8aa}, $\{ \brbe^\ev(\bn) \mid \bn \in \BN^{t+\ell+t} \}$ is an $\cA$-basis of $\brUA^\ev$. Part (b) follows from the orthogonality of part (a).
\end{proof}

\begin{remark}
From the orthogonality of Proposition \ref{r.ortho3}, we can show that
\be
\label{eq.claspUZ}
\bc = \sum _{\bn \in \BN^{t+\ell+t}} \frac{(q;q)_\bn }{u(\bn)} \brbe^\ev(\bn) \ot \bbe^\ev(\bn).
\ee
\end{remark}

\subsection{Invariance of $\VA ^\ev$ under adjoint action of $\UZ$}
\label{sec:stability-ev-under}
The adjoint action makes $\UZ$ a $\UZ$-module. The following  result, showing that $\VZ^\ev$ is a $\UZ$-submodule of $\UZ$, is important for us and will be used frequently.

\begin{theorem}
  \label{thm:8}
  We have  $\brUA \tri \VA^\ev  \subset \Vev$. In particular, $\UA \tri \VA^\ev  \subset \Vev$, i.e. $\Vev$ is $\UZ$-ad-stable.
\end{theorem}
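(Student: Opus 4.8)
The plan is to deduce the theorem from the integral duality of Proposition~\ref{r.ortho3}, which exhibits $\VA^\ev$ as the $\cA$-integral dual of $\brUA^\ev$ inside $\Uq^\ev$ with respect to the quantum Killing form, combined with the fact, already recorded in Proposition~\ref{r.8aa}(c), that $\brUA^\ev$ is ad-stable under $\brUA$. The governing principle is the familiar one that the integral dual of an ad-stable submodule is again ad-stable, provided the antipode is inserted in the right place, as prescribed by the ad-invariance of the form (Lemma~\ref{r.invar}).

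First I would fix $a\in\brUA$ and $x\in\VA^\ev$ and aim to show $a\tri x\in\VA^\ev$. Since $\VA^\ev\subset\Uq^\ev$ and $\brUA\subset\brU_q$, Lemma~\ref{eq.evenD3a} gives $a\tri x\in\brU_q\tri\Uq^\ev\subset\Uq^\ev$, so $a\tri x$ lies in the space for which the characterization of Proposition~\ref{r.ortho3}(b) applies; it then remains to check that $\langle a\tri x,y\rangle\in\cA$ for every $y\in\brUA^\ev$. Here $\langle\cdot,\cdot\rangle$ is the quantum Killing form, which by Lemma~\ref{r.invar}(a) (i.e.\ the ad-invariance of the clasp form $\sL$ on $\overline{\Xh}\ho\Xh$, extended to the $\brU_q$-side as in Section~\ref{sec:rosso-form}) satisfies
\begin{gather*}
  \langle a\tri x,\,y\rangle=\langle x,\,S(a)\tri y\rangle .
\end{gather*}
(Ad-invariance under the extra Cartan generators $\brK_\alpha^{\pm1}$, which are not in $\Uq$, is immediate: they act diagonally on $Y$-homogeneous elements and, by \eqref{e57a1}, the form vanishes off the zero $Y$-weight.) Now $\brUA$ is an $\cA$-Hopf subalgebra by Proposition~\ref{r.8aa}(a), so $S(a)\in\brUA$, and hence $S(a)\tri y\in\brUA\tri\brUA^\ev\subset\brUA^\ev$ by Proposition~\ref{r.8aa}(c). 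Finally $\langle\VA^\ev,\brUA^\ev\rangle\subset\cA$: on the $\cA$-bases $\{(q;q)_\bn\,\bbe^\ev(\bn)\}$ of $\VA^\ev$ (Proposition~\ref{r.basesUZ}(b)) and $\{\brbe^\ev(\bn)\}$ of $\brUA^\ev$ (Proposition~\ref{r.8aa}(b)), Proposition~\ref{r.ortho3}(a) gives $\langle(q;q)_\bn\,\bbe^\ev(\bn),\brbe^\ev(\bm)\rangle=\delta_{\bn,\bm}\,u(\bn)\in\cA$. Therefore $\langle a\tri x,y\rangle=\langle x,S(a)\tri y\rangle\in\cA$, and Proposition~\ref{r.ortho3}(b) yields $a\tri x\in\VA^\ev$, as desired.

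The special case $\UA\tri\VA^\ev\subset\VA^\ev$ is then immediate because $\UZ\subset\brUA$ (it is generated by the $E_\alpha^{(n)},F_\alpha^{(n)},K_\alpha^{\pm1}$, all of which lie in $\brUA$). I do not anticipate a real obstacle in this theorem itself: all the substance sits in Propositions~\ref{r.ortho3} and~\ref{r.8aa}. The only delicate point is bookkeeping of domains — making sure that $x$, $a\tri x$ and $S(a)\tri y$ lie in the subspaces on which both the quantum Killing form and its ad-invariance have been established — and, as a consequence, verifying that the ad-invariance identity survives when the second argument is allowed to range over the enlarged space $\brU_q^\ev$ rather than over $\Uq^\ev$; both of these come down to the $Y$-grading behaviour of the form together with the fact that $\brU_q$ is generated over $\Uq$ by the diagonal elements $\brK_\alpha^{\pm1}$.
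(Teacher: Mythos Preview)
Your proposal is correct and follows essentially the same approach as the paper: use Proposition~\ref{r.ortho3}(b) to characterize $\VA^\ev$ as the $\cA$-dual of $\brUA^\ev$, transfer the pairing across the adjoint action via ad-invariance to get $\langle a\tri x,y\rangle=\langle x,S(a)\tri y\rangle$, and then invoke Proposition~\ref{r.8aa}(c) to land $S(a)\tri y$ back in $\brUA^\ev$. You are in fact slightly more careful than the paper in flagging the domain issue (the form and its ad-invariance need to be extended from $\Uq$ to elements involving $\brK_\alpha$), and your remark that this extension is harmless because the extra Cartan generators act diagonally in the $Y$-grading is exactly the right justification.
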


\begin{proof} By Proposition \ref{r.8aa},  $\brUA \tri \brUA^\ev \subset \brUA^\ev$, and by Proposition \ref{r.ortho3}, $\VA^\ev$ is the $\cA$-dual of $\brUA^\ev$ with respect to the quantum Killing form. Besides, the quantum Killing form is ad-invariant. Hence, one also has
$\brUA \trr  \Vev \subset \Vev$, as the following argument shows. Recall that we already have $\brUA \tri \Uq^\ev \subset \Uq^\ev$ (see Lemma \ref{eq.evenD3a}).
Suppose $a \in \brUA$, $ x\in \Vev$. We will show $ a\tri x \in \Vev$. We have
\begin{align*}
a\tri x \in \Vev & \Leftrightarrow  \la a\tri x , y \ra \in \cA \quad \forall y \in \brUA^\ev  &\text{by duality, Proposition \ref{r.ortho3}}\\
 & \Leftrightarrow  \la x , S(a) \trr y \ra \in \cA \quad \forall y \in \brUA^\ev  &\text{by ad-invariance, Proposition  \eqref{r.inva}(b)}.\\
\end{align*}
Since $ S(a) \trr y \in  \brUA^\ev$, the last statement $\la x , S(a) \trr y \ra \in \cA$ holds true by Proposition \ref{r.ortho3}. Thus we have proved that $\UZ \trr  \Vev \subset \Vev$.
\no{
To see $\brUA \tri \VA^\ev  \subset \Vev$, by the definition of
$\brUA$, it suffices to show that $\brUA^0\tri\VA^\ev\subset\VA^\ev$,
which follows from
\begin{gather*}
  \frac{\breve{K}_\alpha^m(q_\alpha^n \breve{K}_\alpha^2; q_\alpha)_k}{(q_\alpha; q_\alpha)_k}\tri x
  =
  \frac{v_\alpha^{mk_\alpha}(q_\alpha^{n+k_\alpha}; q_\alpha)_k}{(q_\alpha; q_\alpha)_k}x
  \in \VA^\ev
\end{gather*}
for $\alpha\in\Pi$, $m,n\in\BZ$, $k\ge0$ and $x\in\VA^\ev$ homogeneous
with $|x|=\sum_{\beta\in\Pi}k_\beta \beta$, $k_\beta\in\BZ$.
}
\end{proof}

\begin{remark}
  \label{r41}
  We do {\em not} have $\UZ\trr\VA \subset \VA $ in general.  For example, when
  $\g=A_2$ and $\al \neq \beta\in \Pi$,
  \begin{gather*}
    E_\alpha \trr K_\beta = (v-1)K_\beta E_\alpha \not\in \VA.
  \end{gather*}

  However, when $\g=A_1$, we do have $\UZ\trr\VA \subset \VA$, as
  it easily follows from \cite[Proposition 3.2]{Suzuki1}, where a more
  refined statement is given.
\end{remark}

\subsection{Extension from $\cA$ to $\tA$: Stability principle} Recall that $\tA$ is obtained from $\cA$ by adjoining all square roots  $ \sqrt{\phi_k(q)},  k=1,2,\dots$ of cyclotomic polynomials $\phi_k(q)$.

Suppose $V$ is a based free $\cA$-module with preferred base $\{ e(i) \mid i \in I\}$ and $a:I \to \cA$ is a function such that for every $i\in I$, $a(i)$ is a product of cyclotomic polynomials in $q$. We already defined the dilatation triple $(V, V(\sqrt a), V(a))$ in Section \ref{sec.dilita}.
 Recall that $V(a)$ is the free $\cA$-module with base $\{a(i)  e(i) \mid i \in I\}$, and $V(\sqrt a)$ is the free $\tA$-module with base $\{\sqrt {a(i)}\,   e(i) \mid i \in I\}$.

 For any $\cA$-module homomorphism $f: V_1 \to V_2$ we also use the same notation $f$ to denote the linear extension $f\ot \id : V_1 \ot_\cA \tA \to V_2 \ot_\cA \tA$, which is an $\tA$-module homomorphism.

 \begin{proposition}[Stability principle] \label{r.closed} Let $(V_1, V_1(\sqrt {a_1}), V_1(a_1))$ and $(V_2, V_2(\sqrt {a_2}), V_2(a_2))$ be two dilatation triples, and $f:V_1\to V_2$ be an $\cA$-module homomorphism.
If $f(V_1(a_1)) \subset V_2(a_2)$, then $f(V_1(\sqrt {a_1}) ) \subset V_2(\sqrt {a_2})$.
\end{proposition}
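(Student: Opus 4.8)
The plan is to imitate the proof of the topological stability principle, Proposition \ref{r.topdilat}, with the local ring $\Ch$ replaced by the unique factorization domain $\cA = \BZ[v^{\pm 1}]$ and the $h$-adic valuation replaced by divisibility of cyclotomic polynomials.

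First I would isolate the arithmetic core, the analogue of ``Claim~1'' in the proof of Proposition \ref{r.topdilat}: \emph{if $b_1, b_2 \in \cA$ are each a finite product of cyclotomic polynomials $\phi_k(q)$ $($with $q = v^2)$ and $x \in \cA$ satisfies $b_1 x/b_2 \in \cA$, then $\sqrt{b_1}\,x/\sqrt{b_2} \in \tA$}, where for $b_i = \prod_k \phi_k(q)^{m_{i,k}}$ we set $\sqrt{b_i} := \prod_k \bigl(\sqrt{\phi_k(q)}\bigr)^{m_{i,k}} \in \tA$. The key input is that the $\phi_k(q) = \phi_k(v^2)$, $k \ge 1$, are pairwise coprime in $\cA$: from the classical identities $\phi_n(v^2) = \phi_n(v)\phi_{2n}(v)$ for odd $n$ and $\phi_n(v^2) = \phi_{2n}(v)$ for even $n$, each $\phi_j(v^2)$ is a product of distinct members of $\{\phi_j(v), \phi_{2j}(v)\}$, and for $j \ne k$ a small parity check shows $\{\phi_j(v), \phi_{2j}(v)\}$ and $\{\phi_k(v), \phi_{2k}(v)\}$ are disjoint; since the $\phi_m(v)$ are distinct monic irreducibles of the UFD $\cA$, coprimality follows. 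Granting this, write $b_1 = \prod_k \phi_k(q)^{m_k}$, $b_2 = \prod_k \phi_k(q)^{n_k}$ and $d = \prod_k \phi_k(q)^{\min(m_k, n_k)}$; then $d = \gcd(b_1, b_2)$, $\gcd(b_1/d, b_2/d) = 1$, the hypothesis says $(b_2/d) \mid (b_1/d)x$, hence $(b_2/d) \mid x$, say $x = (b_2/d)y$ with $y \in \cA$, and finally $\sqrt{b_1}\,x/\sqrt{b_2} = \bigl(\sqrt{b_1 b_2}/d\bigr) y = y\prod_k \bigl(\sqrt{\phi_k(q)}\bigr)^{|m_k - n_k|} \in \tA$.

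Granting the Claim, the rest is the bookkeeping of Proposition \ref{r.topdilat}. Let $\{e_1(i)\}_{i \in I_1}$ and $\{e_2(j)\}_{j \in I_2}$ be the preferred bases of $V_1$ and $V_2$ with dilatation factors $a_1(i)$ and $a_2(j)$, and write $f(e_1(i)) = \sum_j f_i^j e_2(j)$ with $f_i^j \in \cA$, a finite sum. Then $f(a_1(i)e_1(i)) = \sum_j \bigl(a_1(i) f_i^j/a_2(j)\bigr)\,(a_2(j)e_2(j))$, so $f(V_1(a_1)) \subset V_2(a_2)$ forces $a_1(i)f_i^j/a_2(j) \in \cA$ for all $i, j$. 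The Claim (with $b_1 = a_1(i)$, $b_2 = a_2(j)$, $x = f_i^j$) then gives $\sqrt{a_1(i)}\,f_i^j/\sqrt{a_2(j)} \in \tA$, whence $f(\sqrt{a_1(i)}\,e_1(i)) = \sum_j \bigl(\sqrt{a_1(i)}\,f_i^j/\sqrt{a_2(j)}\bigr)(\sqrt{a_2(j)}\,e_2(j)) \in V_2(\sqrt{a_2})$; since these elements form an $\tA$-basis of $V_1(\sqrt{a_1})$ and $f = f \otimes_\cA \id_{\tA}$ is $\tA$-linear, we get $f(V_1(\sqrt{a_1})) \subset V_2(\sqrt{a_2})$.

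The hard part is exactly the arithmetic Claim, and inside it the pairwise coprimality of the $\phi_k(q)$ in $\cA$ --- this is where one must be careful that $\cA$ is a UFD but not a PID, so ``coprime'' has to be read as ``no common irreducible factor'' rather than ``generating the unit ideal''. Once that is nailed down, the coordinate argument is routine and formally identical to the proof already given for Proposition \ref{r.topdilat}.
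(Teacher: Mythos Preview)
Your approach is essentially the paper's: isolate an arithmetic ``Claim'' about divisibility by products of cyclotomic polynomials, then run the same coordinate bookkeeping as in Proposition~\ref{r.topdilat}. The paper's version of the Claim simply says ``since $\cA$ is a UFD, one can assume $b$ and $c$ are coprime,'' and you are right to unpack this as pairwise coprimality of the $\phi_k(q)$ in $\cA$, which is what makes the reduced $b$ and $c$ again products of $\phi_k(q)$'s so that their square roots live in $\tA$.

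One small correction: your disjointness assertion is not literally true as stated. For $j=2$ and $k=4$ the sets $\{\phi_2(v),\phi_4(v)\}$ and $\{\phi_4(v),\phi_8(v)\}$ share $\phi_4(v)$. What you actually need (and what is true) is that the \emph{irreducible factors} of $\phi_j(v^2)$ and $\phi_k(v^2)$ are disjoint: for even $j$ only $\phi_{2j}(v)$ occurs, so the relevant sets are $\{\phi_j(v),\phi_{2j}(v)\}$ for odd $j$ and $\{\phi_{2j}(v)\}$ for even $j$, and these are indeed pairwise disjoint for distinct $j$. With this fix your argument goes through, and the rest of the proof matches the paper's exactly.
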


\begin{proof}First we prove the following.\\
{\it Claim.}
Suppose $a,b,c \in \cA$, where $b, c$ are products of  cyclotomic polynomials $\phi_k(q)$. If $ab/c \in \cA$ then $ a \sqrt{ b/c} \in \tA$.\\
{\it Proof of Claim.} Since $\cA$ is a unique factorization domain,
 one can assume that $b$ and $c$ are co-prime. Then $a$ must be divisible by $c$, $a= a' c$ with $a' \in \cA$. Then
 $ a \sqrt{ b/c} = a' \sqrt{b c'} \in \tA$, which proves the claim.

 The proof of  the proposition is now parallel to that in the topological case (Proposition \ref{r.topdilat}).
Using the bases $\{ e_1(i)\mid i\in I_1 \}$ and $\{ e_2(i) \mid i \in I_2\}$ of $V_1$ and $V_2$, we can write
\begin{align*}
f(e_1(i) )=  \sum_{k \in I_2}f_i^k  \, e_2(k)
\end{align*}
where $f_i^k =0$ except for a finite number of $k$ (when $i$ is fixed) and
$f_i^k \in \cA$.

Multiplying by $a_1(i)$  and $\sqrt{a_1(i)}$, we get
\begin{align}
\label{eq.zg3}
f\big( a_1(i) e_1(i) \big)& =  \sum_{k\in I_2 }f_i^k  \,  \frac{ a_1(i) }{a_2(k)}   \, \big(a_2(k)  e_2(k) \big)  \\
\label{eq.zg2} f\left ( \sqrt{a_1(i)} e_1(i) \right)& =  \sum_{k\in I_2 }f_i^k  \, \sqrt{ \frac{  a_1(i) }{ a_2(k)}}   \, \left( \sqrt{ a_2(k)}\,  e_2(k) \right).
\end{align}
Since  $f(V_1(a_1)) \subset V_2(a_2)$, \eqref{eq.zg3} implies that
$f_i^k  \,  \frac{ a_1(i) }{a_2(k)}  \in \cA$, which, together with $f_i^k \in \cA$ and the Claim, implies that
$$f_i^k  \, \sqrt{ \frac{  a_1(i) }{ a_2(k)}}  \in \tA.$$
Now \eqref{eq.zg2} shows that $f(V_1(\sqrt {a_1}) ) \subset V_2(\sqrt {a_2})$.
\end{proof}

\subsection{The integral core subalgebra $\XZ$} \label{sec:XZ}
By Proposition \ref{r.basesUZ}, we can consider
$\UZ$  as a based free $\cA$-module with the preferred base
$
\{ \bbe(\bn,\bode) \mid \bn \in \BN^{t+ \ell+t}, \bode \in \{0,1\}^\ell \}
$.

  Let $a: \BN^{t+ \ell+t} \times \{0,1\}^\ell \to \cA$ be the function defined by
 $a(\bn,\bode)= (q;q)_\bn$, where $(q;q)_\bn$ is defined by \eqref{eq.bbe5}.
 We will consider the dilatation triple $(\UZ,\UZ(\sqrt a), \UZ(a))$.
 By Proposition \ref{r.basesUZ}, $\UZ(a)$ is $\VZ$.

 Let $\XZ$ be $\UZ(\sqrt a)$, which by definition is the
 free $\tA$-module with basis
 \be
 \label{eq.basesXZ}
 \{\sqrt{(q;q)_\bn}\, \bbe(\bn,\bode) \mid \bn \in \BN^{t+ \ell+t}, \bode \in \{0,1\}^\ell \}.
 \ee

The even part $ \sXZev$ of $\sXZ$ is defined to be the $\tA$-submodule spanned by
\be
\label{eq.basisXZe}
\{\sqrt {(q;q)_\bn  }\, \bbe^\ev(\bn) \mid \bn \in \BN^{t+ \ell+t} \}.
\ee
Then $\XZ^\ev= \XZ \cap (\UZ^\ev \ot_\cA \tA)$, and $(\UZ^\ev, \XZ^\ev, \VZ^\ev)$ is a dilatation triple.

\begin{theorem}
\label{r.sXZ}
(a) The $\tA$-module $\sXZ$ is an  $\tA$-Hopf-subalgebra of $\UZ \ot_\cA \tA$.

(b)  The $\tA$-module $\sXZev$ is an $\tA$-subalgebra of $\UZ^\ev \ot_\cA \tA$. Besides, $\sXZev$ is
\begin{itemize}
\item[(i)] $\UZ$-ad-stable,
\item[(ii)] stable under the action of the braid group, and
\item[(iii)] stable under  $\ibar$ and $\tphi$.
\end{itemize}

(c) The core algebra $\Xh$ is the $\sqrt h$-adic completion of the $\Chh$-span of $\sXZ^\ev$ (or $\sXZ$) in $\UUH$.
\end{theorem}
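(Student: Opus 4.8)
The plan is to deduce part (c) from the corresponding relationship established over $\Ch$ between $\Vh$ and its integral counterpart $\VA$, namely Proposition~\ref{r.VZVh}, together with the fact that $\XZ$ and $\Xh$ are built by the \emph{same} dilatation recipe applied to $\UZ$ and $\Uh$ respectively. More precisely, recall that $\Xh$ is by definition $\Uh(\sqrt a)$ for the dilatation factor $a(\bn)=h^{\|\bn\|}$ on the topological PBW basis $\{\bbe_h(\bn)\}$ (Section~\ref{sec:sX}), while $\XZ=\UZ(\sqrt a)$ for the dilatation factor $a(\bn,\bode)=(q;q)_\bn$ on the algebraic PBW basis $\{\bbe(\bn,\bode)\}$ (Section~\ref{sec:XZ}). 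The key numerical observation, already used implicitly in the proof of Theorem~\ref{thm.coresub} via formula \eqref{eq.unith}, is that for each positive root $\gamma$ one has $1-q_\gamma = h\cdot(\text{unit of }\Ch)$, so that $(q;q)_\bn$ and $h^{\|\bn\|}$ differ by a unit of $\Ch$; hence $\sqrt{(q;q)_\bn}$ and $h^{\|\bn\|/2}$ differ by a unit of $\Chh$. This means the $\Chh$-span of the basis \eqref{eq.basissX} of $\Xh$ equals the $\Chh$-span of the image in $\UUH$ of the basis \eqref{eq.basesXZ} of $\XZ$.

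First I would record the containment $\XZ^\ev\otimes_{\tA}\Chh\hookrightarrow\Xh$ (and likewise for the non-even version), by comparing the two explicit bases element-by-element as above; this uses that $\bbe^\ev(\bn)$ and $\bbe_h(\bn)$ coincide up to a unit of $\cA\subset\Ch$ (the divided-power denominators $[m_j]!$ are units in $\Ch$, exactly as in the proof of Proposition~\ref{r.phu1}, cf.~\eqref{eq.vr1}), so each basis vector $\sqrt{(q;q)_\bn}\,\bbe^\ev(\bn)$ of $\XZ^\ev$ equals a unit-of-$\Chh$ multiple of the basis vector $h^{\|\bn\|/2}\bbe_h(\bn)$ of $\Xh$. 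Next I would take the $\sqrt h$-adic closure of the $\Chh$-span of $\XZ^\ev$ inside $\UUH$ and observe that, since this $\Chh$-span literally contains every topological basis vector of $\Xh$, its closure contains $\Xh$. For the reverse inclusion: every element of $\Xh$ is by definition an $\sqrt h$-adically convergent $\Chh$-linear combination of the vectors $h^{\|\bn\|/2}\bbe_h(\bn)$, each of which lies in the $\Chh$-span of $\XZ^\ev$; hence $\Xh$ is contained in the $\sqrt h$-adic completion of that span. Combining the two inclusions gives the claim for the even versions, and the identical argument with $\bode$-indices included handles the full $\XZ$.

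A small point needing care is to make sure the comparison of bases is valid: one must fix a single longest reduced sequence $\bi$ throughout (both $\Xh$ via Section~\ref{sec:sX} and $\XZ$ via Section~\ref{sec:XZ} are a priori defined with respect to such a choice), invoke Proposition~\ref{r.42a} / the remark after it to know $\Xh$ is independent of $\bi$, and note that the claimed equality of closures is manifestly $\bi$-independent once proved for one $\bi$. I would also spell out that the relevant topology on $\UUH=\Uh\,\ho_{\Ch}\Chh$ is the $\sqrt h$-adic one, under which $\Chh$ is complete and the $0$-convergence of $\{h^{\|\bn\|/2}\bbe_h(\bn)\}$ (immediate since $\|\bn\|\to\infty$ away from finite sets) guarantees convergence of the relevant sums.

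\textbf{Main obstacle.} The conceptual content is light — everything reduces to ``$1-q_\gamma$ is $h$ times a unit'' and ``divided-power denominators are units over $\Ch$'' — so the genuine work is bookkeeping: carefully matching the three-block index $\bn=(\bn_1,\bn_2,\bn_3)$ and the $\bode$-component between the algebraic basis $\{\bbe(\bn,\bode)\}$ of $\XZ$ and the topological basis $\{\bbe_h(\bn)\}$ of $\Xh$, in particular handling the Cartan block, where $\XZ$ uses the integral basis $Q(\bn_2,\bode)$ of $\UZ^0$ (Proposition~\ref{r.bases5}) while $\Xh$ uses the monomials $H^{\bn_2}$ in the $H_\al$'s, and checking that the base-change matrix between these two bases of the Cartan part is unitriangular over $\Ch$ — equivalently, that $h^{\|\bn_2\|}Q(\bn_2,\bode)$ and $h^{\|\bn_2\|}H^{\bn_2}\,(\prod K_j^{\delta_j})$ span the same $\Chh$-submodule and have $\sqrt h$-adically equivalent orders. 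This is routine given the explicit formula \eqref{eq.Qn14} for $Q^\ev(\bn_2)$, since $(q_j^{-\lfloor(n_j-1)/2\rfloor}K_j^2;q_j)_{n_j}/(q_j;q_j)_{n_j}$ has the form $H^{\bn_2}\cdot(\text{unit})+(\text{higher }h\text{-order})$, but it is the one place where the verification is not a one-line assertion.
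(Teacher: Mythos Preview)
Your proposal addresses only part~(c); parts~(a) and~(b) are left entirely untouched. The paper's argument for~(a) and~(b) is short but hinges on a tool you never invoke: the integral \emph{stability principle} (Proposition~\ref{r.closed}). Since $(\UZ,\XZ,\VZ)$ is a dilatation triple whose outer terms are both $\cA$-Hopf algebras (Propositions~\ref{prop.basis} and~\ref{r11}), applying Proposition~\ref{r.closed} to each Hopf structure map $\boldmu,\Delta,S$ in turn yields~(a). Part~(b) is proved the same way via the triple $(\UZ^\ev,\XZ^\ev,\VZ^\ev)$, using that $\VZ^\ev$ is $\UZ$-ad-stable (Theorem~\ref{thm:8}), braid-stable (from the definition of $\VZ$ together with $T_\al(\Uq^\ev)\subset\Uq^\ev$), and $\ibar,\tphi$-stable (Proposition~\ref{r.phistab}), together with the matching stabilities for $\UZ^\ev$. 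Without this sandwich argument, checking for instance $\Delta(\XZ)\subset\XZ\ot\XZ$ directly over $\tA$ would be substantially harder; your proposal contains no strategy for it.

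For part~(c) your plan is correct and in fact more explicit than the paper, which simply asserts that the $\tA$-basis $\{\sqrt{(q;q)_\bn}\,\bbe^\ev(\bn)\}$ of $\XZ^\ev$ is also a topological basis of $\Xh$ and stops. Your identification of the Cartan block as the one place needing genuine verification is exactly right: $\bbe^\ev(\bn)$ and $\bbe_h(\bn)$ agree in the $\pm$-factors but differ in the Cartan factor ($Q^\ev(\bn_2)$ versus $H^{\bn_2}$), so your earlier sentence that they ``coincide up to a unit of $\cA\subset\Ch$'' is not literally correct---it is precisely the unitriangular base change over $\Ch$ that you describe in your obstacle paragraph that does the work.
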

\begin{proof} (a) Let us show that $\Delta (\sXZ) \subset \sXZ \ot \sXZ$. Since $(\UZ,\XZ,\VZ)$ is a dilatation triple, $(\UZ \ot \UZ, \XZ \ot \XZ, \VZ \ot \VZ)$ is also a dilatation triple.
We have
$\Delta (\UA) \subset \UA \ot \UA$ and $\Delta(\VA) \subset \VA \ot \VA$. By the stability principle (Proposition \ref{r.closed}), we have $\Delta (\sXZ) \subset \sXZ \ot \sXZ$, i.e. $\sXZ$ is an $\tA$-coalgebra.

Similarly, applying the stability principle to all the operations of a Hopf algebra, we conclude that $\sXZ$ is an $\tA$-Hopf-subalgebra of $\UZ \ot_\cA \tA$.

(b) Because $\VA^\ev$ is an $\cA$-subalgebra of $\UA^\ev$, the stability principle for the dilatation triple $(\UZ^\ev, \XZ^\ev, \VZ^\ev)$ shows that $\sXZev$ is an $\tA$-algebra.

By Theorem  \ref{thm:8},  $\VA^\ev$ is $\UZ$-ad-stable;  and by Proposition \ref{r.phistab}, $\VA^\ev$ is stable under $\ibar$, $\tphi$. Since $\UZ^\ev$ is $\UZ$-ad-stable is stable under $\ibar$ and  $\tphi$ (by Proposition \ref{prop.basis}), the stability principle proves that  $\sXZev$ is (i) $\UZ$-ad-stable,
(ii) stable under the action of the braid groups, and
 (iii) stable under  $\ibar$ and $\tphi$.

(c) Each element of the basis \eqref{eq.basesXZ} of $\sXZ$ is in $\Xh$. Hence $\sXZ \subset \Xh$.
On the other hand, the $\tA$-basis \eqref{eq.basisXZe} of $\sXZev$ is also
a topological basis of $\Xh$. Hence  $\Xh$ is the $\sqrt h$-adic completion of the $\Chh$-span of $\sXZ^\ev$ in $\UUH$.
\end{proof}
\begin{corollary}
\label{r.corealgebra2}
 (a) The core algebra $\Xh$ is stable under the action of the braid group.

 (b) The core algebra $\Xh$  is a smallest $\sqrt h$-adically completed topological $\Chh$-subalgebra of $\UUH$ which  (i) is closed in the
 $\sqrt h$-adic topology, (ii) contains $\sqrt h E_\al, \sqrt h F_\al, \sqrt h H_\al$ for each $\al \in \Pi$, and (iii) is
 invariant under the action of the braid group.
\end{corollary}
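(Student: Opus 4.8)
The plan is to obtain both parts as short consequences of Theorem~\ref{r.sXZ} and Corollary~\ref{r.phu2}, in close parallel with the proof of Corollary~\ref{r.Vhbraid} and the Remark following it. All of the substantive work (ad-stability of $\VA^\ev$, braid-stability of $\sXZ^\ev$, and the integrality of the preferred bases) is already done in Theorems~\ref{thm:8} and~\ref{r.sXZ}, so what remains is essentially a topological closure argument.

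For part (a): by Theorem~\ref{r.sXZ}(b)(ii) the $\tA$-module $\sXZ^\ev$ is stable under every $T_\al^{\pm1}$, and by Theorem~\ref{r.sXZ}(c) the core algebra $\Xh$ is the $\sqrt h$-adic completion in $\UUH$ of the $\Chh$-span of $\sXZ^\ev$. Each $T_\al^{\pm1}$ extends to a $\Chh$-linear endomorphism of $\UUH$; hence it preserves the $\Chh$-span of $\sXZ^\ev$, and, being $\Chh$-linear, it is continuous for the $\sqrt h$-adic topology and therefore preserves the completion. Thus $T_\al^{\pm1}(\Xh)\subset\Xh$ for every $\al\in\Pi$. (This is exactly the argument of Corollary~\ref{r.Vhbraid}.)

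For part (b) I would first verify that $\Xh$ itself has the three listed properties: $\Xh$ is topologically free, hence $\sqrt h$-adically complete (property (i)); it contains $\sqrt h E_\al,\sqrt h F_\al,\sqrt h H_\al$ for each $\al\in\Pi$ by Corollary~\ref{r.phu2} (property (ii)); and it is braid-stable by part (a) (property (iii)). For minimality, let $B\subset\UUH$ be any $\Chh$-subalgebra with properties (i)--(iii). Since $B$ contains $\sqrt h E_{\al}$ and $\sqrt h F_{\al}$ for $\al\in\Pi$ and is braid-stable, and since the root vectors satisfy $E_{\gamma_j}(\bi)=T_{\al_{i_1}}\cdots T_{\al_{i_{j-1}}}(E_{\al_{i_j}})$ and $F_{\gamma_j}(\bi)=T_{\al_{i_1}}\cdots T_{\al_{i_{j-1}}}(F_{\al_{i_j}})$, the $\Chh$-linearity of the $T$'s gives $\sqrt h E_{\gamma_j}(\bi),\sqrt h F_{\gamma_j}(\bi)\in B$ for $j=1,\dots,t$; as $\{\gamma_1(\bi),\dots,\gamma_t(\bi)\}=\Phi_+$, this yields $\sqrt h E_\gamma(\bi),\sqrt h F_\gamma(\bi)\in B$ for every $\gamma\in\Phi_+$. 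Being a subalgebra, $B$ then contains the $\Chh$-subalgebra $A$ generated by $\{\sqrt h H_\al,\sqrt h E_\gamma(\bi),\sqrt h F_\gamma(\bi)\}$; and being $\sqrt h$-adically complete, $B$ contains the $\sqrt h$-adic completion of $A$, which by Corollary~\ref{r.phu2} is $\Xh$. Hence $\Xh\subset B$, proving minimality.

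The one point requiring care — and the only place I expect any subtlety — is the precise reading of the topological hypotheses. Unlike $\Vh$ in Corollary~\ref{r.Vhbraid}, the algebra $\Xh$ is \emph{not} closed as a subset of $\UUH$: its closure $\overline{\Xh}$ is the strictly larger formal-series module. So condition~(i), "closed in the $\sqrt h$-adic topology", must be understood as "complete in its own $\sqrt h$-adic topology" (equivalently: every $0$-convergent collection of elements of $B$ has its sum in $B$). With this reading the completeness step above goes through: following the computation in the proof of Corollary~\ref{r.phu2}, each basis vector $h^{\|\bn\|/2}\bbe_h(\bn)$ of $\Xh$ is a product of the generators $\sqrt h H_\al,\sqrt h E_{\gamma_j}(\bi),\sqrt h F_{\gamma_j}(\bi)$ (the Cartan factors $K_{\gamma_j}=\exp(\tfrac{h}{2}H_{\gamma_j})$ being realized as $\sqrt h$-adically convergent series in $\sqrt h H_{\gamma_j}$), hence lies in $B$, and a general element of $\Xh$ is a $0$-convergent $\Chh$-combination of such basis vectors, hence also lies in $B$.
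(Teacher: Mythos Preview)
Your proof is correct and follows essentially the same approach as the paper: part (a) is deduced from the braid-stability of $\sXZ^\ev$ together with the description of $\Xh$ as the completion of its $\Chh$-span (Theorem~\ref{r.sXZ}(b)(ii),(c)), and part (b) uses the braid action to produce all root vectors $\sqrt h E_\gamma,\sqrt h F_\gamma$ and then invokes Corollary~\ref{r.phu2}. Your discussion of the topological subtlety in (i)---that $\Xh$ is complete but not closed in $\UUH$, so ``closed'' must mean ``$\sqrt h$-adically complete''---is a worthwhile clarification that the paper leaves implicit.
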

\begin{proof}
(a) Since $\Xh$ is the $\sqrt h$-adic completion of the $\Chh$-span of $\sXZ^\ev$, which is stable under the action of the braid group, $\Xh$ is also  stable under the action of the braid group.

(b)
Let $\Xh'$ be the smallest completed subalgebra of $\UUH$ satisfying (i), (ii), and (iii). Since $\Xh$ satisfies (i), (ii), and (iii),  we have $\Xh' \subset \Xh$.

For each $\gamma\in \Phi_+$, $E_\gamma$ and $F_\gamma$ are obtained from $E_\al, F_\al, \al \in \Pi$  by actions of the braid group.
 Thus $\Xh'$ contains all $\sqrt h E_\gamma$, $\sqrt h F_\gamma$, $\gamma \in \Phi_+$ and $\sqrt hH_\al, \al \in \Pi$, which generate $\Xh$ as an algebra (after $h$-adic completion). It follows that $\Xh\subset\Xh'$. Hence $\Xh=\Xh'$.
 \end{proof}

\begin{remark}
\label{sec.XA}
The disadvantage of $\sXZ$ is its ground ring is $\tA$, not $\cA$. Let us define
$$\XA = \sXZ \cap \UZ.$$
Then $\XA$ is an $\cA$-algebra. However, $\XA$ is not an $\cA$-Hopf algebra in the usual sense, since
$$\Delta(\XA) \not \subset \XA \ot_\cA \XA.$$

Let us define a new tensor product
 \be
 \label{eq.XAsq}
 (\XA)^{\boxtimes n} := \sXZ^{\ot n} \cap \UZ^{\ot n}, \quad (\XA^\ev)^{\boxtimes n} := (\sXZev)^{\ot n} \cap (\UZ^\ev)^{\ot n}.
 \ee
Then we have
$$\Delta(\XA) = \Delta (\XZ \cap \UZ)\subset (\XZ \ot \XZ)  \cap (\UZ \ot \UZ) = \XA \boxtimes \XA.$$
Hence $\XA$, with this new tensor power, is a Hopf algebra, which is a Hopf subalgebra of both $\sXZ$ and $\UZ$.

What we will prove later implies that if $T$ is an $n$-component bottom tangle with 0 linking matrix, then
$$ J_T \in \varprojlim_k (\XA^\ev)^{\boxtimes n}/(( q;q)_k).$$
However, we will not use $\XA$ in this paper.
\end{remark}

\subsection{Integrality of twist forms $\cT_\pm$ on $\sXZ^\ev$} Recall that we have twist forms $\cT_\pm:\Xh \to \Chh$. By Theorem \ref{r.sXZ}, $\XZ \subset \Xh$.

The embedding $\cA \hookrightarrow \Ch$ by $v= \exp(h/2)$ extends to an embedding $\tA \to \Chh$.
Although there are many extensions, it is easy to see that the image of the extended embedding does not depend on the extension, because the two roots of $\phi_k(q)$ are inverse (with respect to addition) of each other.

\begin{proposition}\label{r.integ6}
One has $\cT_\pm(\sXZ^\ev) \subset \tA$.
\end{proposition}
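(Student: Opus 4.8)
The plan is to reduce the claim, via the explicit description of the twist form, to an integrality statement purely about the Cartan part, and then to obtain the $\cT_-$ case from the mirror automorphism. Starting from $\br=\sum_{\bp\in\BN^t}F_\bp K_\bp\,\br_0\,E_\bp$ (formula~\eqref{e80r}), the triangular property~\eqref{e89} of the quantum Killing form together with the orthogonality relations~\eqref{e57a} --- equivalently, the formula of Proposition~\ref{p.989}(a) --- shows that for a basis element $\bbe^\ev(\bn)=F^{(\bn_1)}K_{\bn_1}Q^\ev(\bn_2)E^{(\bn_3)}$ of $\UZ^\ev$ one has $\cT_+(\bbe^\ev(\bn))=\delta_{\bn_1,\bn_3}\,u\,(q;q)_{\bn_3}^{-1}\,q^{(\rho,|E_{\bn_3}|)}\,\langle\br_0,Q^\ev(\bn_2)\rangle$ for a unit $u=u(\bn)\in\cA$, the factor $(q;q)_{\bn_3}^{-1}$ appearing because $E^{(\bn_3)}$ is a $(q;q)_{\bn_3}^{-1}$-multiple of $E_{\bn_3}$ up to a unit. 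Multiplying by the normalization $\sqrt{(q;q)_\bn}=\sqrt{(q;q)_{\bn_1}(q;q)_{\bn_2}(q;q)_{\bn_3}}$ and using $\bn_1=\bn_3$, the factor $\sqrt{(q;q)_{\bn_1}(q;q)_{\bn_3}}=(q;q)_{\bn_3}$ cancels $(q;q)_{\bn_3}^{-1}$, so that
\[
 \cT_+\bigl(\sqrt{(q;q)_\bn}\,\bbe^\ev(\bn)\bigr)=\delta_{\bn_1,\bn_3}\,u\,q^{(\rho,|E_{\bn_3}|)}\,\sqrt{(q;q)_{\bn_2}}\,\langle\br_0,Q^\ev(\bn_2)\rangle .
\]
Since these elements form an $\tA$-basis of $\sXZ^\ev$, the proposition is reduced to showing $\sqrt{(q;q)_{\bn_2}}\,\langle\br_0,Q^\ev(\bn_2)\rangle\in\tA$ for every $\bn_2\in\BN^\ell$.

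This remaining point is an assertion about the commutative algebra $\UZ^{\ev,0}$. By~\eqref{eq.Qn14} one has $Q^\ev(\bn_2)=(q;q)_{\bn_2}^{-1}P$ with $P$ a Laurent polynomial in the $K_j^2$ having coefficients in $\cA$, and by~\eqref{eq.989a}, $\langle\br_0,K_{2\gamma}\rangle=v^{2(\gamma,\rho)-(\gamma,\gamma)}=q^{(\rho,\gamma)-\frac12(\gamma,\gamma)}\in\cA$ (here $(\rho,\gamma)\in\BZ$, and $(\gamma,\gamma)$ is even for $\gamma\in Y$). Hence $\langle\br_0,P\rangle\in\cA$ and $\langle\br_0,Q^\ev(\bn_2)\rangle\in(q;q)_{\bn_2}^{-1}\cA$, that is, $\langle\br_0,\VZ^{\ev,0}\rangle\subset\cA$. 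To upgrade this to $\sqrt{(q;q)_{\bn_2}}\,\langle\br_0,Q^\ev(\bn_2)\rangle\in\tA$ I would write $(q;q)_{\bn_2}$ as a unit of $\cA$ times $\prod_k\phi_k(q)^{e_k}$ and prove that $\langle\br_0,P\rangle$ is divisible in $\cA$ by $\prod_k\phi_k(q)^{\lceil e_k/2\rceil}$. This divisibility is exactly what the shifted exponents $K_j^{-2\lfloor n_j/2\rfloor}$ and $q_j^{-\lfloor(n_j-1)/2\rfloor}$ in the definition of $Q^\ev$ are designed to produce; it should come out of the $q$-binomial theorem applied to $(q_j^{-\lfloor(n_j-1)/2\rfloor}K_j^2;q_j)_{n_j}$, followed by a telescoping evaluation against $\br_0$ that collapses $\langle\br_0,Q^\ev(\bn_2)\rangle$ to a closed form whose denominator is a product of cyclotomic polynomials dividing $\prod_k\phi_k(q)^{\lceil e_k/2\rceil}$. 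A more conceptual route to the same end is to show that $\br_0$ lies in the $\sqrt h$-adic completion of the $\tA$-span of $\{\sqrt{(q;q)_\bm}\,\brQ^\ev(\bm)\}$, with coefficients in $\cA$, and then to invoke the orthogonality $\langle\sqrt{(q;q)_{\bn_2}}\,Q^\ev(\bn_2),\sqrt{(q;q)_\bm}\,\brQ^\ev(\bm)\rangle=\delta_{\bn_2,\bm}\,u_2(\bn_2)\in\cA^\times$ of Lemma~\ref{r.117}; this is the Cartan-part reflection of the way the ribbon element interacts with the core algebra $\Xh$ throughout the paper.

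The case of $\cT_-$ is then immediate: Proposition~\ref{p.989}(b) gives $\cT_-(x)=\cT_+(\tphi(x))$, and Theorem~\ref{r.sXZ}(b)(iii) gives $\tphi(\sXZ^\ev)\subset\sXZ^\ev$, so $\cT_-(\sXZ^\ev)=\cT_+(\tphi(\sXZ^\ev))\subset\cT_+(\sXZ^\ev)\subset\tA$. The one step I expect to demand real work is the Cartan-part divisibility above --- showing that $\langle\br_0,P\rangle$ carries enough cyclotomic factors to absorb the half-power $\sqrt{(q;q)_{\bn_2}}$ --- since the rest is bookkeeping with the even triangular decomposition and the orthogonality relations already established.
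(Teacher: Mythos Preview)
Your reduction via the triangular property to the Cartan-part statement
\[
\sqrt{(q;q)_{\bk}}\,\bigl\langle \br_0,\,Q^\ev(\bk)\bigr\rangle\in\tA
\qquad(\bk\in\BN^\ell)
\]
is exactly the paper's first move (this is Lemma~\ref{r.r50}), and your derivation of the $\cT_-$ case from $\cT_-=\cT_+\circ\tphi$ together with $\tphi$-stability of $\sXZ^\ev$ is identical to the paper's. So the architecture is right.

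The genuine gap is the Cartan-part divisibility itself, which you flag as ``the one step I expect to demand real work'' but whose suggested approaches do not match what is actually needed. Two points:
\begin{itemize}
\item The quantity $\langle\br_0,Q^\ev(\bk)\rangle$ does \emph{not} factor as a product over $\alpha\in\Pi$, because $\langle\br_0,K_{2\gamma}\rangle=v^{2(\gamma,\rho)-(\gamma,\gamma)}$ contains the full quadratic form $(\gamma,\gamma)$, which couples different simple directions. Your ``$q$-binomial theorem followed by telescoping'' route implicitly treats the coordinates as independent, and it is not clear how to make it work for $\ell>1$. The paper handles this by a Hopf-algebraic multiplicativity argument (Lemma~\ref{r.r50}): using $\Delta(\br_0)=(\br_0\otimes\br_0)\cD^{-2}$ and the Hopf-dual property of the form on $\Xh^0$, one gets
\[
\langle\br_0,xy\rangle=\sum\langle\br_0,x_{(1)}\rangle\,\langle\br_0,y_{(1)}\rangle\,\langle x_{(2)},y_{(2)}\rangle,
\]
which together with Lemma~\ref{r.Hopfsub} reduces the problem to each single $\alpha$-part $\sXZ^{\ev,0,\al}$.
\item For a single $\alpha$ the required divisibility is still nontrivial: the paper invokes \cite[Theorem~2.2]{BCL}, which gives $\cL_*(f)\in\frac{(q_\al;q_\al)_n}{(q_\al;q_\al)_{\lfloor n/2\rfloor}}\BZ[q_\al^{\pm1}]$ for $f$ in the ideal generated by the $(q_\al^mK_\al^2;q_\al)_n$. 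The paper explicitly notes (Remark after Lemma~\ref{r.al0}) that this is ``one of the main technical results of \cite{BCL} and is difficult to prove'', its proof using Andrews' generalization of the Rogers--Ramanujan identity. Your alternative ``conceptual route'' of expanding $\br_0$ in the basis $\{\sqrt{(q;q)_\bm}\,\brQ^\ev(\bm)\}$ with $\cA$-coefficients would amount to proving the same divisibility from the other side and is not obviously easier.
\end{itemize}
In short: the skeleton of your argument coincides with the paper's, but the load-bearing lemma (Lemmas~\ref{r.al0} and~\ref{r.r50}) requires both the Hopf-algebraic separation into $\alpha$-parts and the \cite{BCL} divisibility input, neither of which your sketch supplies.
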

The proof of this proposition will occupy the rest of this section (subsections \ref{sec:27}--\ref{sec:30}.)

\def\brXZ{\breve{\mathbf X}_\BZ}
\subsubsection{Integrality on the Cartan part}
\label{sec:27}
\begin{lemma}
\label{r.Hopfsub}
(a) The Cartan part $\sXZ^{\ev,0}$ of $\sXZe$ is a  $\tA$-Hopf-subalgebra of $\sXZ$.

(b) Suppose  $x, y \in \sXZ^{\ev,0}$ and $\lambda \in X$. Then  $\la x,  y \ra \in \tA$ and $\la x,  K_{2 \lambda} \ra \in \tA$.

\end{lemma}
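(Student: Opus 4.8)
The plan is to deduce both parts from the dilatation-triple machinery of Section~\ref{sec.dilita} (the stability principle, Proposition~\ref{r.closed}) together with the orthogonality Lemma~\ref{r.117}.

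\emph{Part (a).} I would first record that $(\UZ^{\ev,0},\sXZ^{\ev,0},\VZ^{\ev,0})$ is a dilatation triple with dilatation factors $(q;q)_\bn$, $\bn\in\BN^\ell$ (products of cyclotomic polynomials): by Proposition~\ref{r.bases5}, $\{Q^\ev(\bn)\}$ is the preferred $\cA$-basis of $\UZ^{\ev,0}$, $\{(q;q)_\bn Q^\ev(\bn)\}$ that of $\VZ^{\ev,0}=\cA[K_1^{\pm2},\dots,K_\ell^{\pm2}]$, and $\sXZ^{\ev,0}$ is the $\tA$-span of $\{\sqrt{(q;q)_\bn}\,Q^\ev(\bn)\}$. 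Next I would check that $\VZ^{\ev,0}$ and $\UZ^{\ev,0}$ are Hopf subalgebras of $\Uq$: for $\VZ^{\ev,0}$ this is clear since its generators $K_\al^{\pm2}$ are group-like; for $\UZ^{\ev,0}=\UZ\cap\Uq^{\ev,0}$ it follows because $\UZ$ and $\Uq^{\ev,0}$ are Hopf subalgebras and $\UZ$ is $(Y/2Y)$-homogeneous (its PBW basis $\{\bbe(\bn,\bode)\}$ of Proposition~\ref{r.basesUZ} is), so that $\Delta$ and $S$ carry $\UZ^{\ev,0}$ into $(\UZ\ot\UZ)\cap(\Uq^{\ev,0}\ot\Uq^{\ev,0})=\UZ^{\ev,0}\ot\UZ^{\ev,0}$, respectively into $\UZ\cap\Uq^{\ev,0}=\UZ^{\ev,0}$. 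Now apply Proposition~\ref{r.closed} to each structure map of $\UZ^{\ev,0}$ (and its tensor powers, whose dilatation factors $(q;q)_\bn(q;q)_\bm=(q;q)_{(\bn,\bm)}$ are again products of cyclotomic polynomials): each such $\cA$-homomorphism carries the relevant $\VZ^{\ev,0}$-lattices into one another, hence carries the $\sXZ^{\ev,0}$-lattices into one another. Since these maps are the restrictions of the Hopf operations of $\sXZ$ (Theorem~\ref{r.sXZ}(a)), this shows $\sXZ^{\ev,0}$ is an $\tA$-Hopf-subalgebra of $\sXZ$.

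\emph{Part (b): reduction to the simply-connected analogue.} The key point is that $\la\cdot,\cdot\ra$ does \emph{not} make $\sXZ^{\ev,0}$ self-dual --- already $\la Q^\ev(\bn),Q^\ev(\bm)\ra$ can carry $(q;q)$-denominators --- so I would compare $\sXZ^{\ev,0}$ with the simply-connected dilatation $\brXZ^{\ev,0}$, the $\tA$-span of $\{\sqrt{(q;q)_\bn}\,\brQ^\ev(\bn)\}$, which forms a dilatation triple $(\brUZ^{\ev,0},\brXZ^{\ev,0},\brVZ^{\ev,0})$ with the same dilatation factors. First, $\VZ^{\ev,0}\subseteq\brVZ^{\ev,0}$ explicitly, since $K_\al^2$ is a monomial with integer exponents in the $\brK_j^2$, hence $K_\al^2\in\cA[\brK_1^{\pm2},\dots,\brK_\ell^{\pm2}]=\brVZ^{\ev,0}$; and $\UZ^{\ev,0}\subseteq\brUZ^{\ev,0}$ follows from $\UZ\subseteq\brUZ$ (Proposition~\ref{r.8aa}(c)) by $\UZ^{\ev,0}=\UZ\cap\Uq^{\ev,0}\subseteq\brUZ\cap\breve{\mathbf U}_q^{\ev,0}=\brUZ^{\ev,0}$, using the triangular decomposition of $\brUZ$. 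Applying Proposition~\ref{r.closed} to the inclusion $\UZ^{\ev,0}\hookrightarrow\brUZ^{\ev,0}$ (an $\cA$-homomorphism carrying $\VZ^{\ev,0}$ into $\brVZ^{\ev,0}$) then yields $\sXZ^{\ev,0}\subseteq\brXZ^{\ev,0}$.

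\emph{Part (b): conclusion, and the main obstacle.} By Lemma~\ref{r.117},
\[
\big\la \sqrt{(q;q)_\bn}\,Q^\ev(\bn),\ \sqrt{(q;q)_\bm}\,\brQ^\ev(\bm)\big\ra=\delta_{\bn,\bm}\,u_2(\bn)\in\cA,
\]
so $\la\sXZ^{\ev,0},\brXZ^{\ev,0}\ra\subseteq\tA$. Combined with $\sXZ^{\ev,0}\subseteq\brXZ^{\ev,0}$ this gives $\la x,y\ra\in\tA$ for all $x,y\in\sXZ^{\ev,0}$; and for $\lambda\in X$ one has $K_{2\lambda}=\prod_j\brK_j^{2\langle\lambda,\alpha_j^\vee\rangle}\in\brVZ^{\ev,0}\subseteq\brXZ^{\ev,0}$, whence $\la x,K_{2\lambda}\ra\in\la\sXZ^{\ev,0},\brXZ^{\ev,0}\ra\subseteq\tA$ as well. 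I expect the genuine work to lie in the inclusion $\UZ^{\ev,0}\subseteq\brUZ^{\ev,0}$ and the bookkeeping it entails: one must distinguish the subalgebra inclusion $\Uq^0\hookrightarrow\breve{\mathbf U}_q^0$ (under which $K_\al$ becomes a monomial in the $\brK_j$) from the isomorphism $\briota$ used to define $\brUZ^{\ev,0}=\briota(\UZ^{\ev,0})$, and verify that the pairings appearing in Lemma~\ref{r.117} are exactly those induced on $\breve{\mathbf U}_q^{\ev,0}\ot_\cA\tA$; granting this, the remaining steps are routine manipulations with dilatation triples.
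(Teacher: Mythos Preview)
Your proposal is correct and, for part (b), matches the paper's argument almost line for line: the paper also introduces the simply-connected dilatation triple $(\brUZ^{\ev,0},\brXZ^{\ev,0},\brVZ^{\ev,0})$, applies the stability principle (Proposition~\ref{r.closed}) to the inclusion $\UZ^{\ev,0}\hookrightarrow\brUZ^{\ev,0}$ to get $\sXZ^{\ev,0}\subset\brXZ^{\ev,0}$, and then reads off both conclusions from the orthogonality of Lemma~\ref{r.117} together with $K_{2\lambda}\in\brVZ^{\ev,0}\subset\brXZ^{\ev,0}$. The concern you flag about distinguishing the subalgebra inclusion from $\briota$ is legitimate bookkeeping but causes no trouble, and the paper handles it just as you do.

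For part (a) the paper takes a shortcut you might find cleaner: rather than running the stability principle on each Hopf operation, it observes that $\sXZ^{\ev,0}$ is already an $\tA$-subalgebra of the (commutative, cocommutative) Hopf algebra $\sXZ^0$, so one only needs $\Delta(\sXZ^{\ev,0})\subset\sXZ^{\ev,0}\otimes\sXZ^{\ev,0}$, and this follows at once from the fact that the $K_\al^2$ are group-like (so the $(Y/2Y)$-degree~$0$ part is sent to degree~$0\otimes 0$). Your route via dilatation triples is equally valid and more in the spirit of Theorem~\ref{r.sXZ}; it just does a bit more work than needed here.
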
\begin{proof} (a) Since $\sXZ^{\ev,0}$ is an $\tA$-subalgebra of the commutative co-commutative Hopf algebra $\sXZ^0$, we need to check that
$
 \Delta (\sXZ^{\ev,0}) \subset \sXZ^{\ev,0} \otimes \sXZ^{\ev,0}.
 $
 This  follows from the fact that $\sXZ^0$ is an $\cA$-Hopf algebra, and $\Delta(K_\al^2) = K_\al^2 \ot K_\al^2$.

(b) Recall that $\briota\colon\;\Uq^0\to\brU_q^0$ is the algebra homomorphism %
defined by $\briota(K_\al) = \brK_\al$. Recall that
$(\UZ^{\ev,0}, \XZ^{\ev,0}, \VZ^{\ev,0})$ is a dilatation triple. We have $\brUZ^{\ev,0}= \briota(\UZ^{\ev,0})$. Define $\brXZ^{\ev,0}= \briota(\XZ^{\ev,0})$ and
 $\brVA^{\ev,0} = \briota(\VZ^{\ev,0})$. Then $(\brUZ^{\ev,0}, \brXZ^{\ev,0}, \brVZ^{\ev,0})$ is also a dilatation triple. Then  $\XZ^{\ev,0}$ and $\brXZ^{\ev,0}$ are free $\tA$-modules with respectively bases
 \be
 \label{eq.ba7}
 \{ \sqrt {(q;q)_\bn}\, Q(\bn) \mid  \bn \in \BN^\ell \}, \quad \{ \sqrt {(q;q)_\bn}\, \brQ(\bn) \mid  \bn \in \BN^\ell \}.
 \ee

Since the inclusion $\UZ^{\ev,0} \hookrightarrow \brUA^{\ev,0}$ maps $\VA^{\ev,0}$ into $\brVA^{\ev,0}$, the stability principle (Proposition~\ref{r.closed}) shows that $\sXZ^{\ev,0} \subset \sXZX^{\ev,0}$. In particular $y \in \sXZX^{\ev,0}$.

The orthogonality \eqref{eq.117} and bases \eqref{eq.ba7} show that if $x\in \sXZ^{\ev,0}$ and $y\in  \sXZX^{\ev,0}$ then $\la x, y \ra \in \tA$. Since $K_{2 \lambda} \in \sXZX^{\ev,0}$, we also have $\la x, K_{2 \lambda} \ra \in \tA$.
\end{proof}

\subsubsection{Diagonal part of the ribbon element} \label{sec:28}

The diagonal part $\br_0$ of the ribbon element
(see Section \ref{sec:universal-r-matrix}) is given by
 $$ \br_0 =   K_{-2\rho}  \exp(- h \sum_{\al\in \Pi }H_\al  \brH_{\al}/d_\al).$$
For $\al\in \Pi$ let the $\al$-part of $\sXZ^{\ev,0}$ be $ \sXZ^{\ev,0,\al} := \sXZ^{\ev,0} \cap \tA[K_\al^{\pm 2}]$.
\begin{lemma} \label{r.al0} (a) Each $\sXZ^{\ev,0,\al}$ is an $\tA$-Hopf-subalgebra of  $\sXZ^{\ev,0}$
and $
\sXZ^{\ev,0} = \bigotimes_{\al\in \Pi} \sXZ^{\ev,0,\al}
$.

(b)
For any $\al\in \Pi$,
$\la \br_0 ,  \sXZ^{\ev,0,\al} \ra \in
\tA.$
\end{lemma}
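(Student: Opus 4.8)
For part (a): dilating the basis of Proposition~\ref{r.bases5} produces an $\tA$-basis of $\sXZ^{\ev,0}$ indexed by $\bn=(n_\al)_{\al\in\Pi}\in\BN^\ell$, whose $\bn$-th element factors as the product over $\al\in\Pi$ of $b_\al(n_\al):=\sqrt{(q_\al;q_\al)_{n_\al}}\,Q^\ev_\al(n_\al)$, a Laurent polynomial in $K_\al$ alone (here $Q^\ev_\al(n)$ is the $\al$-factor of $Q^\ev(\bn)$ in \eqref{eq.Qn14}). From this one reads off that $\sXZ^{\ev,0,\al}$ is the $\tA$-span of $\{b_\al(n)\mid n\ge0\}$, that the multiplication map $\bigotimes_{\al\in\Pi}\sXZ^{\ev,0,\al}\to\sXZ^{\ev,0}$ carries the tensor product of these bases onto the above basis and is hence an isomorphism, and that every Hopf operation acts ``coordinatewise'' in this decomposition. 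Since $\sXZ^{\ev,0}$ is a $\tA$-Hopf-subalgebra (Lemma~\ref{r.Hopfsub}(a)) and $\Delta(K_\al^{2})=K_\al^{2}\ot K_\al^{2}$, $S(K_\al^{2})=K_\al^{-2}$, $\boldsymbol\epsilon(K_\al^{2})=1$, a comparison of the two compatible bases shows each $\sXZ^{\ev,0,\al}$ is itself a $\tA$-Hopf-subalgebra.

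For part (b): by Proposition~\ref{p.989}(a) with $\bm=\bn=0$ we have $\la\br_0,x\ra=\cT_+(x)$ for every $x\in\sXZ^{\ev,0,\al}\subseteq\Xh^0$ (note $\br_0\in\overline{\Xh}$, since $K_{\pm2\rho}\in\Xh$ and the exponential factor of $\br_0$ lies in $\overline{\Xh}$ by Corollary~\ref{r.phu2}), so it is enough to show $\cT_+(b_\al(n))\in\tA$ for each $n$. Writing $r=\lfloor n/2\rfloor$, $p=\lfloor(n-1)/2\rfloor$ and using $\cT_+(K_\al^{2k})=v_\al^{\,2k-2k^2}\in\cA$ (equation~\eqref{eq.ba11}, equivalently \eqref{eq.989a} with $\gamma=k\al$), I would expand $(q_\al;q_\al)_n\,Q^\ev_\al(n)=K_\al^{-2r}\,(q_\al^{-p}K_\al^{2};q_\al)_n$ by the $q$-binomial theorem and apply $\cT_+$, getting
\[
(q_\al;q_\al)_n\,\cT_+\big(Q^\ev_\al(n)\big)=\sum_{s=0}^{n}(-1)^s\,q_\al^{\,\binom s2-ps-(s-r)(s-r-1)}\,\qbinom ns_{q_\al}.
\]
A symmetry $s\leftrightarrow n-s$ (equality of the $q_\al$-exponents, $\qbinom ns_q=\qbinom n{n-s}_q$, and $(-1)^{\,n-s}=-(-1)^s$ for odd $n$) makes this sum vanish for odd $n$, so $\cT_+(b_\al(n))=0$ there; for $n=2m$ a short direct computation identifies the sum as a unit of $\cA$ times $(q_\al;q_\al)_{2m}/(q_\al;q_\al)_{m}$, whence $\cT_+(b_\al(2m))=(q_\al;q_\al)_{2m}\,\cT_+(Q^\ev_\al(2m))/\sqrt{(q_\al;q_\al)_{2m}}$ equals a unit of $\cA$ times $\sqrt{(q_\al;q_\al)_{2m}}/(q_\al;q_\al)_m=\sqrt{\qbinom{2m}{m}_{q_\al}}$. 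Since the central $q$-binomial $\qbinom{2m}{m}_q$ is a product of distinct cyclotomic polynomials in $q$, $\qbinom{2m}{m}_{q^{d_\al}}$ is a product of cyclotomic polynomials in $q$, so its square root lies in $\tA$; thus $\cT_+(b_\al(n))\in\tA$ for all $n$, which proves (b). (If the intersection in the definition of $\sXZ^{\ev,0,\al}$ is read so that $\sXZ^{\ev,0,\al}\subseteq\tA[K_\al^{\pm2}]$, part (b) is immediate, since the clasp form is $\tA$-linear and $\la\br_0,K_\al^{2k}\ra=v_\al^{\,2k-2k^2}\in\cA$.)

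The main obstacle is exactly the part-(b) computation: verifying the vanishing for odd $n$ and, for even $n$, that restoring the dilation factor $\sqrt{(q_\al;q_\al)_n}$ turns the answer into a square root of a product of cyclotomic polynomials. This is the Cartan-part instance of the dilatation/stability mechanism used throughout the paper — $\cT_+$ has $(q;q)$-denominators on Lusztig's $\UZ^{\ev,0}$, is already $\cA$-valued on De Concini--Procesi's $\VZ^{\ev,0}$, and the square-root dilation defining $\sXZ$ is calibrated precisely so that $\cT_+$ becomes $\tA$-valued on $\sXZ^{\ev,0}$ — but because the codomain here is not itself one of the standard integral forms, the step must be carried out by the explicit $q$-binomial evaluation rather than by quoting the general stability principle.
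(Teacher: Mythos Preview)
Your proof is correct, and for part (a) it coincides with the paper's argument. For part (b), however, you take a genuinely different route. The paper does not compute: it observes that $(q_\al;q_\al)_n\,Q(\al;n)$ lies in the ideal of $\BZ[q_\al^{\pm1},K_\al^{\pm2}]$ generated by the $(q_\al^mK_\al^2;q_\al)_n$, and then invokes \cite[Theorem~2.2]{BCL} to conclude that the Gaussian functional $K_\al^{2k}\mapsto q_\al^{k-k^2}$ sends this ideal into $\dfrac{(q_\al;q_\al)_n}{(q_\al;q_\al)_{\lfloor n/2\rfloor}}\,\BZ[q_\al^{\pm1}]$; the square-root bookkeeping is then identical to yours. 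The paper's own Remark immediately afterward notes that only a special case of that theorem is needed and that ``this special case can be proved using other methods'' --- your $q$-binomial expansion is exactly such a method.

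What your approach buys is self-containment: the odd-$n$ vanishing via the involution $s\leftrightarrow n-s$ is clean and complete, and the even-$n$ evaluation (which you state as ``a short direct computation'') is a single explicit $q$-series identity, namely
\[
\sum_{s=0}^{2m}(-1)^s\,q_\al^{\binom{s}{2}-(m-1)s-(s-m)(s-m-1)}\qbinom{2m}{s}_{q_\al}
\;=\;q_\al^{-m(m+1)}\,\frac{(q_\al;q_\al)_{2m}}{(q_\al;q_\al)_m},
\]
which can be checked by induction on $m$ or by standard $q$-hypergeometric manipulation. What the paper's approach buys is brevity and a pointer to the natural general statement; the BCL theorem handles a whole ideal at once rather than one explicit element, and it is this broader form that is reused elsewhere in the literature. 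Your final cyclotomic step (that $\qbinom{2m}{m}_{q_\al}$ is a product of cyclotomic polynomials in $q$, so its square root lies in $\tA$) is correct: any Gaussian binomial is squarefree in its cyclotomic factorization, and $\phi_k(q^{d_\al})$ factors as a product of $\phi_j(q)$'s.

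Your closing parenthetical is also apt: the paper's literal definition $\sXZ^{\ev,0,\al}:=\sXZ^{\ev,0}\cap\tA[K_\al^{\pm2}]$ cannot be read as an intersection inside $\tA[K_\al^{\pm2}]$ (the elements $\sqrt{(q_\al;q_\al)_n}\,Q(\al;n)$ for $n\ge1$ have leading $K_\al$-coefficient $\pm(q_\al;q_\al)_n^{-1/2}\notin\tA$), so the intended meaning --- made explicit in the paper's proof --- is the $\tA$-span of those basis elements, which you correctly work with.
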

\begin{proof}

By definition, $\sXZ^{\ev,0}$ has $\tA$-basis $\{\sqrt{(q;q)_\bn} \, Q^\ev(\bn) \mid \bn \in \BN^\ell \}$, where $Q^\ev(\bn)  = \prod_{j=1}^\ell Q(\al_j;n_j)$, with
$$Q(\al;n)  =  K_\al^{-2 \lfloor n/2 \rfloor} \frac{(q_\al ^{- \lfloor(n-1)/2   \rfloor} K_\al^2;q_\al)_n}{(q_\al;q_\al)_n} .$$

It follows that $\sXZ^{\ev,0,\al}$ is the $\tA$-module spanned by $\sqrt{(q_\al;q_\al)_n} Q(\al;n)$, and
$
\sXZ^{\ev,0} = \bigotimes_{\al\in \Pi} \sXZ^{\ev,0,\al}$.
 Because $\sXZ^{\ev,0}$ is an $\tA$-Hopf-algebra (Lemma \ref{r.Hopfsub}),  $\sXZ^{\ev,0,\al}$ is an $\tA$-Hopf-subalgebra of $\sXZ^{\ev,0}$.

 (b) We need to show that  for every $n \in \BN$,
$\la \br_0 , \sqrt{(q_\al;q_\al)_n} Q(\al;n) \ra \in
\tA$.
Fix such an $n$.

Let $\cI$ be
 the ideal of $\BZ[q_\al^{\pm 1}, K_\al^{\pm 2}]$ generated by elements of the form $(q^m K_\al^2;q_\al)_n$, $m \in \BN$. Then ${(q_\al;q_\al)_n} Q(\al;n)\in \cI$.
By \eqref{eq.989}
$$\la \br_0, K_{2\al}^k \ra = q_\al^{-k^2 +k}.$$
 With $z= K_{2 \a}$, the  $\BZ[q^{\pm1}_\al]$-linear map $\cL_*: \BZ[q_\al^{\pm 1}, K_\al^{\pm 2}] \to \BZ[q_\al^{\pm 1}]$ given by
$\cL_*(K_{2\al}^k)=\la \br_0, K_{2\al}^k \ra$ is equal to the map $\cL_{-x^2+x}:\BZ[q_\al^{\pm 1},z^{\pm 1}] \to \BZ[q_\al^{\pm 1}]$ of \cite{BCL}. By \cite[Theorem 2.2]{BCL}, for any $f\in \cI$,
$$ \cL_{*}(f ) \in \frac{(q_\al;q_\al)_n}{(q_\al;q_\al)_{\lfloor n/2 \rfloor}} \BZ[q_\al^{\pm1}].$$
As ${(q_\al;q_\al)_n} Q(\al;n)\in \cI$, one has
 \be  \label{eq.mk1}
 \la    \br_0 , (q_\al;q_\al)_n \, Q(\al;n) \ra = \cL_{*}((q_\al;q_\al)_n \, Q(\al;n)   ) \in \frac{(q_\al;q_\al)_n}{(q_\al;q_\al)_{\lfloor n/2 \rfloor}} \BZ[q_\al^{\pm1}].
 \ee
We have
$$ \left( \frac{\sqrt{ (q_\al;q_\al)_n} }{(q_\al;q_\al)_{\lfloor n/2 \rfloor}} \right)^2 = \left( \frac{ (q_\al;q_\al)_n }{(q_\al;q_\al)_{\lfloor n/2 \rfloor}\, (q_\al;q_\al)_{\lfloor n/2 \rfloor}} \right) \in \BZ[q_\al^{\pm1}],$$
where the last inclusion follows from the integrality of the quantum binomial coefficients. Hence, from
\eqref{eq.mk1},
$$ \la \br_0^\al , \sqrt {(q_\al;q_\al)_n}\,  \, Q(\al;n) \ra  \in \frac{\sqrt{ (q_\al;q_\al)_n} }{(q_\al;q_\al)_{\lfloor n/2 \rfloor}}  \BZ[q_\al^{\pm1}] \in \tA.$$
This completes the proof of the lemma.
\end{proof}

\begin{remark}
Theorem \cite[2.2]{BCL}, used in the proof of the above lemma, is one of the main technical results of \cite{BCL} and is difficult to prove.  Its  proof uses Andrews' generalization of the Rogers-Ramanujan identity. Actually, only a special case of Theorem \cite[2.2]{BCL} is used here. This special case can be proved using other methods.
\end{remark}

\subsubsection{Integrality of $\br_0$} \label{sec:29}
\begin{lemma}
\label{r.r50}
Suppose $x \in \sXZ^{\ev,0}$, then $\la \br_0 , x \ra \in \tA$.
\end{lemma}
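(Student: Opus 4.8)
\textbf{Proof plan for Lemma \ref{r.r50}.}

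The plan is to reduce the statement about the full diagonal part $\br_0$ to the per-root-index statement already established in Lemma \ref{r.al0}(b). Recall that
$$\br_0 = K_{-2\rho}\,\exp\!\Big(-h\sum_{\al\in\Pi}H_\al\,\brH_\al/d_\al\Big),$$
so $\br_0$ lies in $\Uh^0$ and is, in a suitable sense, ``group-like up to the Cartan twist''. First I would record the multiplicativity of $\br_0$ under the coproduct: since $\Delta(K_\al)=K_\al\otimes K_\al$ and $\Delta(H_\al)=H_\al\otimes1+1\otimes H_\al$, one checks directly on generators that $\Delta(\br_0)$ is \emph{not} simply $\br_0\otimes\br_0$ (because of the cross terms $H_\al\otimes\brH_\al$), but rather $\Delta(\br_0)=(\br_0\otimes\br_0)\,\cD^{-2}$, or more precisely that $\br_0$ is compatible with the factorization $\sXZ^{\ev,0}=\bigotimes_{\al\in\Pi}\sXZ^{\ev,0,\al}$ from Lemma \ref{r.al0}(a). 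Concretely, writing an element $x\in\sXZ^{\ev,0}$ as $x=\prod_{\al\in\Pi}x_\al$ with $x_\al\in\sXZ^{\ev,0,\al}$ (which is possible up to taking $\tA$-linear combinations, since the bases of the tensor factors multiply to the basis $\{\sqrt{(q;q)_\bn}\,Q^\ev(\bn)\}$ of $\sXZ^{\ev,0}$), I want to express $\la\br_0,x\ra$ as a product over $\al$ of quantities of the form $\la\br_0,x_\al\ra$ (possibly times extra Cartan factors $\la\br_0,K_{2\lambda}\ra$ which are in $\tA$ by Lemma \ref{r.Hopfsub}(b)).

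The key computational step is the ``triangularity'' of the quantum Killing form on the Cartan part, namely identity \eqref{eq.sd10}: for $x,y,z\in\Xh^0$ one has $\la xy,z\ra=\la x\otimes y,\Delta(z)\ra$. Applying this with the coproduct, and using $\la\br_0,-\ra$ in the form given by \eqref{eq.989}–\eqref{eq.989a}, I would expand $\la\br_0,\prod_\al x_\al\ra$. Because $\br_0$ is a product of a group-like piece $K_{-2\rho}$ and the exponential of a sum $-h\sum_\al H_\al\brH_\al/d_\al$ in which the $\al$-summands commute, the pairing factorizes: $\la\br_0,\prod_\al x_\al\ra$ becomes $\prod_\al\la\br_0^{(\al)},x_\al\ra$ where $\br_0^{(\al)}$ is the $\al$-component of $\br_0$ — essentially $K_{\text{(relevant weight)}}\exp(-hH_\al\brH_\al/d_\al)$ restricted to the $\al$-line — up to the transfer of some $K_{2\lambda}$ factors across tensor factors. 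Since the basis elements $Q^\ev(\bn_2)$ are themselves products $\prod_j Q(\al_j;n_j)$, each factor $\la\br_0^{(\al)},x_\al\ra$ is exactly of the form handled in Lemma \ref{r.al0}(b), hence lies in $\tA$; and the leftover group-like factors $\la\br_0,K_{2\lambda}\ra\in\tA$ by Lemma \ref{r.Hopfsub}(b). A product of elements of $\tA$ is in $\tA$, giving the claim.

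I expect the main obstacle to be bookkeeping rather than a deep idea: one must carefully track how $K_{-2\rho}$ and the commuting exponentials distribute across the tensor decomposition $\sXZ^{\ev,0}=\bigotimes_\al\sXZ^{\ev,0,\al}$, and verify that no ``mixed'' contribution survives that is not already covered by Lemmas \ref{r.Hopfsub} and \ref{r.al0}. A clean way to organize this is to first prove the factorization $\la\br_0,xy\ra=\la\br_0,x\ra\,\la\br_0,y\ra$ whenever $x,y$ come from distinct root-index components (using \eqref{eq.sd10} together with $\Delta(\br_0)=(\br_0\otimes\br_0)\cD^{-2}$ and the fact that $\cD^{-2}$ pairs trivially across distinct $\al$-lines on the relevant basis elements), and then invoke Lemma \ref{r.al0}(b) componentwise. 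Everything else is a routine verification on the explicit PBW-type bases of $\sXZ^{\ev,0}$.
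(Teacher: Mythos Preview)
Your overall strategy matches the paper's: reduce to the per-$\al$ components via Lemma \ref{r.al0} and the Hopf-algebra structure of $\sXZ^{\ev,0}$, using $\Delta(\br_0)=(\br_0\otimes\br_0)\cD^{-2}$ together with \eqref{eq.sd10}. The paper packages this as a single claim --- if $\langle\br_0,\sH_1\rangle,\langle\br_0,\sH_2\rangle\subset\tA$ for $\tA$-Hopf-subalgebras $\sH_1,\sH_2\subset\sXZ^{\ev,0}$, then $\langle\br_0,\sH_1\sH_2\rangle\subset\tA$ --- and then inducts over the tensor factors $\sXZ^{\ev,0,\al}$.

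One point needs correcting. The exact factorization $\langle\br_0,xy\rangle=\langle\br_0,x\rangle\,\langle\br_0,y\rangle$ for $x,y$ from distinct root components is \emph{false}, and $\cD^{-2}$ does not pair trivially across distinct $\al$-lines: already for $x=K_\al^2$, $y=K_\beta^2$ with $\al\neq\beta$ one picks up an extra factor $v^{-2(\al,\beta)}\neq1$. What one actually gets, after substituting $\Delta(\br_0)=(\br_0\otimes\br_0)\cD^{-2}$ into \eqref{eq.sd10} and using that $\sum\langle\delta_1,x\rangle\langle\delta_2,y\rangle=\langle x,y\rangle$ on the Cartan part, is
\[
\langle\br_0,xy\rangle=\sum\langle\br_0,x_{(1)}\rangle\,\langle\br_0,y_{(1)}\rangle\,\langle x_{(2)},y_{(2)}\rangle.
\]
The cross term $\langle x_{(2)},y_{(2)}\rangle$ lies in $\tA$ by the first assertion of Lemma \ref{r.Hopfsub}(b) (the statement $\langle x,y\rangle\in\tA$ for $x,y\in\sXZ^{\ev,0}$, not the $K_{2\lambda}$ statement you cited), and the argument then goes through exactly as you intended.
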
%

\begin{proof} We first prove the following claim.

 {\em Claim.}  If $\la \br_0 , x \ra \in \tA$ for  all $x\in \sH_1$ and for all  $x\in \sH_2$,
where $\sH_1, \sH_2$ are $\tA$-Hopf-subalgebras of $\sXZ^{\ev,0}$, then $\la \br_0 , x \ra \in \tA$ for all $x \in \sH_1 \sH_2$.

{\em Proof of Claim.} Suppose $x\in \sH_1, y \in \sH_2$.
Using the Hopf dual property of the quantum Killing on the Cartan part \eqref{eq.sd10}, we have
$$ \la \br_0, xy \ra  = \la \Delta(\br_0) ,   x\ot y \ra.$$
A simple calculation shows that $\Delta(\br_0)= (\br_0 \ot \br_0) \cD^{-2}$, where $\cD$ is the diagonal part of the $R$-matrix,
$$\cD^{-2}=\exp(-h \sum _\al H_\al \ot \brH_\al/d_\al).$$
Writing $\cD^{-2}= \sum \delta_1 \ot \delta_2$, we have
\begin{align}
\la \br_0, xy \ra & = \sum \la \br_0 \delta_1, x \ra \la \br_0 \delta_2, y \ra  \notag \\
&= \sum \la \br_0, x_{(1)} \ra \la  \delta_1, x_{(2)} \ra \la \br_0, y_{(1)} \ra \la  \delta_2, y_{(2)} \ra  \notag\\
&= \sum \la \br_0, x_{(1)} \ra  \la \br_0, y_{(1)} \ra \la  x_{(2)}, y_{(2)} \ra, \label{eq.xd1}
\end{align}
where in the last identity we use the fact that $\sum \la \delta_1, x\ra  \la \delta_2, y\ra = \la x, y \ra$, which is easy to prove. (Note that on the Cartan part $\Xh^0$, the quantum Killing form is the dual of $\cD^{-2}$, which is the Cartan part of the clasp element $\modc$.)

Since $x_{(1)}\in \sH_1$ and  $y_{(1)} \in \sH_2$, we have $\la \br_0, x_{(1)} \ra  \la \br_0, y_{(1)} \ra \in\tA$.
By Lemma  \ref{r.Hopfsub}(b),  $\la  x_{(2)}, y_{(2)} \ra \in \tA$. Hence, \eqref{eq.xd1} shows that $\la \br_0, xy\ra \in \tA$. This proves the claim.

By Lemma \ref{r.al0}, $\sXZ^{\ev,0} = \bigotimes_{\al\in \Pi} \sXZ^{\ev,0,\al}$, each $\sXZ^{\ev,0,\al}$ is a Hopf-subalgebra of $\sXZ^{\ev,0}$, and $\la \br_0, \sXZ^{\ev,0,\al} \ra \subset \tA$. Hence from the claim we have $\la \br_0, \sXZ^{\ev,0} \ra \subset \tA$.
\end{proof}

\subsubsection{Proof of Proposition \ref{r.integ6}}\label{sec:30}
\begin{proof} We have to show that for every $x\in \sXZe$, $\cT_\pm (x) \in \tA$.
 First  we will show $\cT_+(x)\in \tA$.

It is enough to consider the case when $x= \sqrt {(q;q)_\bn}\, \bbe^\ev(\bn)$, where $\bn=(\bn_1, \bk, \bn_3)\in \BN^{t+ \ell + t}$, since $\sXZe$ is $\tA$-spanned by elements of this form.
 By the triangular property \eqref{e89}  of the quantum Killing form and \eqref{eq.989},
 $$ \cT_+(x) =  \delta_{\bn_1, \bn_3}  q^{(\rho, | E_{\bn_1} |)} \,  \la  \br_0, \sqrt { (q;q)_\bk } Q(\bk) \ra \in \tA.$$
 Here the last inclusion follows from Lemma \ref{r.r50}. This proves the statement for $\cT_+$.

By  Theorem \ref{r.sXZ}, $\sXZe$ is $\tphi$-stable.
 By  \eqref{eq.cTminus}, we have
 $\cT_- (x) =  \cT_+(\tphi(x)) \in \tA$.
 \end{proof}

\subsection{More on integrality of $\br_0$}

\begin{lemma}
\label{r.halfinteg.new}
 Suppose $y\in \sXZ^{\ev,0}$. Then
  $\la \br_0^{\pm 1} , K_{2\rho}\,y \ra \in v^{(\rho,\rho)}\tA $.
\end{lemma}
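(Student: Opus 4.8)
The statement $\la \br_0^{\pm1}, K_{2\rho}\,y\ra \in v^{(\rho,\rho)}\tA$ for $y\in\sXZ^{\ev,0}$ is a twisted version of Lemma~\ref{r.r50}, where instead of pairing $\br_0$ against an element of $\sXZ^{\ev,0}$ we pair it against $K_{2\rho}\,y$. The natural approach is to factor the problem using the Hopf-dual property of the quantum Killing form on the Cartan part, exactly as in the proof of Lemma~\ref{r.r50}. First I would observe that $K_{2\rho}\,y$ sits in $\sXZX^{\ev,0}$ (the $\brK$-version of the Cartan part), since $\sXZ^{\ev,0}\subset\sXZX^{\ev,0}$ by the stability principle as in the proof of Lemma~\ref{r.Hopfsub}(b), and $K_{2\rho}\in\brU_q^{\ev,0}$ after noting $K_{2\rho}=\prod_\al K_\al^{2}$ is even (here $2\rho=\sum_\al 2\bral_\al$ wait — more carefully $2\rho\in 2X$, so $K_{2\rho}$ is a product of squares of the $K_\al$'s only if $2\rho\in 2Y$; since $\rho\in X$ but not necessarily in $Y$, I should instead work with $K_{2\rho}$ directly as an element of $\sXZX^{\ev,0}$, which is legitimate because $2\rho\in 2X$). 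So $\la\br_0^{\pm1}, K_{2\rho}\,y\ra$ makes sense as a pairing $\oX\ho\sX\to\Chh$ once both tensorands are appropriately placed.

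The key computational step is: using $\la x z, w\ra = \la x\otimes z, \Delta(w)\ra$ on the Cartan part (Equation~\eqref{eq.sd10}), write $\la\br_0^{\pm1}, K_{2\rho}\,y\ra = \la \Delta(\br_0^{\pm1}), K_{2\rho}\otimes y\ra$ — but actually I want the factorization the other way, expanding $K_{2\rho}\,y$ rather than $\br_0$. Since multiplication and comultiplication roles are symmetric on the commutative cocommutative Cartan part, I would instead use $\la\br_0^{\pm1}, K_{2\rho}\,y\ra = \la\Delta(\br_0^{\pm1}),\, K_{2\rho}\otimes y\ra$ together with the explicit formula $\Delta(\br_0^{\pm1}) = (\br_0^{\pm1}\otimes\br_0^{\pm1})\cD^{\mp 2}$ established in the proof of Lemma~\ref{r.r50}. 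Writing $\cD^{\mp2}=\sum\delta_1\otimes\delta_2$, this gives
\begin{gather*}
  \la\br_0^{\pm1}, K_{2\rho}\,y\ra = \sum \la\br_0^{\pm1}\delta_1, K_{2\rho}\ra\,\la\br_0^{\pm1}\delta_2, y\ra.
\end{gather*}
Then I expand $\delta_1,\delta_2$ against $K_{2\rho}$ and $y$ using the counit/Hopf manipulations (as in \eqref{eq.xd1}): the term $\la\br_0^{\pm1}, K_{2\rho}\ra$ produces, via \eqref{eq.989a}, the factor $v^{(2\rho,\rho)-\frac14(2\rho,2\rho)} = v^{2(\rho,\rho)-(\rho,\rho)} = v^{(\rho,\rho)}$ — this is precisely where the factor $v^{(\rho,\rho)}$ comes from. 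The remaining factors are of the form $\la\br_0^{\pm1}, y_{(1)}\ra$ times $\la(K_{2\rho})_{(2)}, y_{(2)}\ra$, which lie in $\tA$ by Lemma~\ref{r.r50} (for $\br_0$; for $\br_0^{-1}$ I would first check the $\br_0^{-1}$-analogue, which follows the same pattern since $\br_0^{-1}=K_{2\rho}\exp(\tfrac h2\sum H_\al\brH_\al/d_\al)$ and the pairing $\la\br_0^{-1}, K_{2\al}^k\ra = q_\al^{k^2+k}$ is again polynomial) and by Lemma~\ref{r.Hopfsub}(b) (since $K_{2\rho}\in\sXZX^{\ev,0}$ and $y\in\sXZ^{\ev,0}$, the pairing $\la\,\cdot\,,\,\cdot\,\ra$ of the second tensor components lies in $\tA$).

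The main obstacle I anticipate is the bookkeeping of where the half-integer power of $v$ lands: $\Delta(\br_0^{\pm1})$ involves $\cD^{\mp2}$, whose expansion into $\sum\delta_1\otimes\delta_2$ introduces powers of $h$ (equivalently half-integer exponents of $v$), and I must verify that these combine with the $\tA$-coefficients coming from Lemmas~\ref{r.r50} and~\ref{r.Hopfsub}(b) to give something in $v^{(\rho,\rho)}\tA$ rather than only in $v^{(\rho,\rho)}\tA[\sqrt h]$ or worse. Concretely, the cleanest route is probably to avoid the $\Delta(\br_0)$ expansion entirely: write $\br_0^{\pm1}K_{-2\rho} = \exp(\mp h\sum H_\al\brH_\al/d_\al)$ is grouplike-like on the Cartan, so $K_{2\rho}\,y$ can be absorbed by a shift, i.e. $\la\br_0^{\pm1}, K_{2\rho}\,y\ra = \la\br_0^{\pm1}, K_{2\rho}\ra\cdot(\text{shifted pairing of }\br_0^{\pm1}\text{ against }y)$, with the shifted pairing being $\la K_{-2\rho}\br_0^{\pm1}K_{2\rho}$-type expression which one checks stays in $\tA$ by rerunning the argument of Lemma~\ref{r.al0} with the $\cL_*$-operator from \cite{BCL} replaced by $\cL_{-x^2+x}$ composed with a monomial shift $z\mapsto q_\al^{c}z$ — such shifts preserve the $\BZ[q_\al^{\pm1}]$-structure and the divisibility conclusion of \cite[Theorem~2.2]{BCL}. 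I would carry out the $\al$-by-$\al$ reduction as in Lemma~\ref{r.al0}, using $\sXZ^{\ev,0}=\bigotimes_\al\sXZ^{\ev,0,\al}$ and the Claim-type multiplicativity from Lemma~\ref{r.r50}, so that the whole statement reduces to the single-variable estimate, where the explicit factor $v^{(\rho,\rho)}=\prod_\al v_\al^{(\rho,\al)\cdot(\text{stuff})}$ (more precisely $v^{(\rho,\rho)}$) is extracted from $\la\br_0^{\pm1},K_{2\rho}\ra$ and everything else is visibly in $\tA$.
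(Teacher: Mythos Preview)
Your approach for the $\br_0$ case is essentially the paper's: apply the identity \eqref{eq.xd1} with $x=K_{2\rho}$ (which is grouplike), factor out $\la\br_0,K_{2\rho}\ra=v^{(\rho,\rho)}$, and observe that the remaining factors $\la\br_0,y_{(1)}\ra$ and $\la K_{2\rho},y_{(2)}\ra$ lie in $\tA$ by Lemma~\ref{r.r50} and Lemma~\ref{r.Hopfsub}(b) (the latter applies directly since $\rho\in X$). The paper does exactly this, in three lines; your worries about where $K_{2\rho}$ lives and about stray half-integer powers of $v$ are unfounded --- $K_{2\rho}\in\Xh$ was already established in the proof of Theorem~\ref{thm.coresub}, and the factorization via \eqref{eq.xd1} separates the $v^{(\rho,\rho)}$ from the $\tA$-part cleanly, with no $\sqrt h$ bookkeeping needed.

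The real difference is in the $\br_0^{-1}$ case. You propose to rerun the BCL-type estimate of Lemma~\ref{r.al0} with a monomial shift of the $\cL_*$-operator; this would work but is laborious. The paper instead observes that on the Cartan part $\la\br_0^{-1},z\ra=\cT_-(z)=\cT_+(\varphi(z))=\la\br_0,\varphi(z)\ra$ (the restriction of \eqref{eq.cTminus}), and since $\varphi(K_{2\rho})=K_{2\rho}$ and $\sXZ^{\ev,0}$ is $\varphi$-stable (Theorem~\ref{r.sXZ}(b)), one gets $\la\br_0^{-1},K_{2\rho}y\ra=\la\br_0,K_{2\rho}\varphi(y)\ra$, reducing immediately to the case already done. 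This $\varphi$-trick is both shorter and conceptually cleaner than redoing the single-variable Gauss-sum estimate.
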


\newcommand{\Kt}{K_{2\rho}}
\newcommand{\calD}{\mathcal{D}}
\newcommand{\bmu}{\boldsymbol{\mu}}
\newcommand{\mD}{\bmu(\calD^{-1})}
\begin{proof}
Since $\sXZ^{\ev,0}$ is a Hopf-algebra (Lemma \ref{r.Hopfsub}), we have $\Delta(y)= \sum y_{(1)} \ot y_{(2)}$ with $y_{(1)}, y_{(2)} \in \sXZ^{\ev,0}$.
Using \eqref{eq.xd1} then \eqref{eq.989a}, we have
$$  \la \br_0 , K_{2\rho} y \ra = \sum \la \br_0, K_{2\rho} \ra  \la \br_0, y_{(1)} \ra \la  K_{2\rho}, y_{(2)} \ra= v^{(\rho,\rho)}\sum
\la\br_0, y_{(1)} \ra \,\la  K_{2\rho}, y_{(2)} \ra,$$
where we use $\la \br_0, K_{2\rho} \ra= v^{(\rho,\rho)}$, which follows from an easy calculation.
 The second factor $\la\br_0, y_{(1)} \ra$ is in $\tA$ by Lemma \ref{r.r50}. The last factor $\la  K_{2\rho}, y_{(2)} \ra$ is in $\tA$. Thus, we have $ \la \br_0 , K_{2\rho} y \ra \in v^{(\rho,\rho)} \tA $.

Using \eqref{eq.cTminus}, the fact that $\sXZ^{\ev,0}$ is $\varphi$-stable, and the above case for $\br_0$, we have
 $$ \la \br_0^{- 1} , K_{2\rho} y \ra =  \la \br_0, \varphi(K_{2\rho} y )\ra = \la \br_0, K_{2\rho}\varphi( y )\ra \in v^{(\rho,\rho)} \tA.$$
 This completes the proof of the lemma.
\end{proof}

\def\V{\VZ}

\np
\section{Gradings}
\label{sec:nonc-grad}

In Section \ref{sec:Ygrading}, we  defined the $Y$-grading and the
$Y/2Y$-grading on
$\Uq$.  In this section we define a grading of $\Uq$  by a group $G$, which is a (possibly noncommutative)
central $\modZ /2\modZ $-extension
of $Y\times (Y/2Y)$, thus refining both the two gradings by $Y$ and $Y/2Y$.
This grading is extended to the tensor powers of $\Uq$.

The reason for the introduction of the $G$-grading is the following.
The integral core $\XZ$ will be enough for us to show that the invariant $J_M$ of integral homology 3-spheres, a priori belonging to $\Chh$, is in
$$ \varprojlim_k \BZ[v^{\pm 1}]/((q;q)_k).$$
But we want to show that $J_M$ belongs to a smaller ring, namely $\Zqh= \varprojlim_k \BZ[q^{\pm 1}]/((q;q)_k)$, and the $G$-grading will be helpful in the proof. In the section \ref{sec:integralM} we will show that  quantum link invariants of algebraically split bottom tangles belong to a certain homogeneous part
of this $G$-grading.

\subsection{The groups $G$ and $\Gv$} \label{sec:grad1}

Let $G$ denote the group generated by the elements $\dv$, $\dK_\alpha $,
$\de_\alpha $ ($\alpha \in \Pi $) with the relations
\begin{gather*}
  \dv\quad \text{central},\quad
  \dv^2 = \dK_\alpha ^2 = 1,\quad
  \dK_\alpha \dK_\beta =\dK_\beta \dK_\alpha ,\\
  \dK_\alpha \de_\beta =\dv^{(\alpha ,\beta )}\de_\beta \dK_\alpha ,\quad
  \de_\alpha \de_\beta =\dv^{(\alpha ,\beta )}\de_\beta \de_\alpha .
\end{gather*}
Let $\Gv$ be the subgroup of $G$ generated by $\dv$, $\de_\alpha $ ($\alpha \in \Pi $).

\begin{remark}
  \label{r24}
  The groups $G$ and $\Gv$ are abelian if and only if $\mathfrak g$ is of type $A_1$ or $B_n$ ($n\ge 2$).
\end{remark}

Define a homomorphism $G\rightarrow Y$, $g\mapsto|g|$, by
\begin{gather*}
  |\dv|=|\dK_\alpha |=0,\quad |\de_\alpha |=\alpha \quad (\alpha \in \Pi ).
\end{gather*}

For $\gamma =\sum_{i}m_i\alpha _i\in Y$, set
\begin{gather*}
  \dK_\gamma =\prod_i\dK_{\alpha _i}^{m_i},\quad
  \de_\gamma =\prod_i\de_{\alpha _i}^{m_i}=\de_{\alpha _1}^{m_1}\cdots \de_{\alpha _l}^{m_l}.
\end{gather*}
Note that $\de_\gamma $ depends on the order of the simple roots
$\alpha _1,\ldots ,\alpha _l\in \Pi $.

One can easily verify the following commutation rules:
\begin{gather}
  \label{e28}
  g\dK_\lambda =\dv^{(|g|,\lambda )}\dK_\lambda g\quad \text{for $g\in G$, $\lambda \in Y$},\\
  \label{e29}
  gg'=\dv^{(|g|,|g'|)}g'g\quad \text{for $g,g'\in \Gv$}.
\end{gather}

Let $N$ be the subgroup of $G$ generated by $\dv$. Then $N$ has order $2$ and  is a
subgroup of the center of $G$.  Note that $G/N\cong Y\times  (Y/2Y)$ and
$\Gv/N\cong Y$.

\subsubsection{Tensor products of $G$ and $\Gv$}

By $G\otimes G=G\otimes _NG$, we mean the ``tensor product over $N$'' of two
copies of $G$, i.e.,
\begin{gather*}
  G\otimes G:= (G\times G)/( (\dv x, y)\sim (x,\dv y)).
\end{gather*}
Similarly, we can define $G\otimes \Gv$, $\Gv\otimes \Gv$, etc., which are
subgroups of $G\otimes G$.  Denote by $x\otimes y$ the element in $G\otimes G$
represented by $(x,y)$.  Thus we have $\dv x\otimes y=x\otimes \dv y$.

Similarly, we can also define the tensor powers $G^{\otimes n}=G\otimes \cdots \otimes G$,
$(\Gv)^{\otimes n}=\Gv\otimes \cdots \otimes \Gv\subset G^{\otimes n}$ (each with $n$ tensorands).
Define a homomorphism $\iota _n\col N\rightarrow G^{\otimes n}$ by
\begin{gather*}
  \iota _n(\dv^k)=\dv^k\otimes 1^{\otimes (n-1)},\quad k=0,1.
\end{gather*}
We have
\begin{gather*}
  G^{\otimes n}/\iota _n(N)\cong Y^n\times (Y/2Y)^n,\quad
  (\Gv)^{\otimes n}/\iota _n(N)\cong Y^n.
\end{gather*}
For $n=0$, we set
\begin{gather*}
  G^{\otimes 0}=(\Gv)^{\otimes 0}=N.
\end{gather*}

\subsection{$G$-grading of $\Uq$}
\label{sec:g1g.u}
By a $G$-grading of $\Uq$ we mean a direct sum decomposition of
$\mathbb{C}(q)$-vector spaces
$$ \Uq = \bigoplus_{g \in G} \left [ \Uq \right]_g$$
such that $1\in[\Uq]_1$ and $\left [ \Uq \right]_g\, \left [ \Uq
  \right]_{g'} \subset \left [ \Uq \right]_{gg'}$ for $g,g'\in G$. If  $x\in \left [ \Uq \right]_g$, we write $\deg_G(x)=g$.

\begin{proposition}
  \label{thm:2} There is a unique $G$-grading on $\Uq$ such that
  \begin{gather*}
  \deg_G(v) = \dv,\quad
  \deg_G(K_{\pm \alpha })=\dK_\alpha ,\quad
  \deg_G(E_\alpha )=\dv^{d_\alpha }\de_\alpha ,\quad
  \deg_G(F_\alpha )=\de_\alpha ^{-1}\dK_\alpha .
\end{gather*}
\end{proposition}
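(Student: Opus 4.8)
The statement asserts existence and uniqueness of a $G$-grading on $\Uq$ with prescribed degrees on the generators $v, K_{\pm\alpha}, E_\alpha, F_\alpha$. Uniqueness is immediate: since these elements generate $\Uq$ as a $\BC(q)$-algebra (with $\BC(q) = \BC(v^2)$, and $v$ itself a generator by fiat here), any $G$-grading agreeing with the prescribed values on generators is forced on all products, hence on all of $\Uq$. So the entire content is existence, i.e.\ well-definedness. The plan is to exhibit $\Uq$ as a quotient of the free algebra on the generators $v^{\pm1}, K_\alpha^{\pm1}, E_\alpha, F_\alpha$ ($\alpha\in\Pi$) by the defining relations \eqref{e47}--\eqref{e51}, equip the free algebra with the evident $G$-grading sending each generator to the value specified in the proposition, and check that each defining relation is $G$-homogeneous, i.e.\ both sides lie in the same graded component. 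Once every relator is homogeneous, the grading descends to the quotient $\Uq$.

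First I would record the degrees of the auxiliary elements: from $\deg_G(K_\alpha) = \dK_\alpha$ and $\deg_G(v)=\dv$ one gets $\deg_G(K_\gamma) = \dK_\gamma$ for $\gamma\in Y$, and $\deg_G(K_\alpha^{-1}) = \dK_\alpha^{-1} = \dK_\alpha$ (using $\dK_\alpha^2=1$), consistent with $\deg_G(K_{\pm\alpha}) = \dK_\alpha$. Then I would go relation by relation. For \eqref{e50}, $K_\alpha K_\alpha^{-1} = 1$: both sides have degree $\dK_\alpha^2 = 1$. For \eqref{e51}, $K_\beta E_\alpha = v^{(\beta,\alpha)} E_\alpha K_\beta$: the left side has degree $\dK_\beta \cdot \dv^{d_\alpha}\de_\alpha$, the right side has degree $\dv^{(\beta,\alpha)} \cdot \dv^{d_\alpha}\de_\alpha \cdot \dK_\beta$; these agree precisely by the commutation relation $\dK_\beta \de_\alpha = \dv^{(\beta,\alpha)}\de_\alpha\dK_\beta$ defining $G$. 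For \eqref{e47}, $E_\alpha F_\beta - F_\beta E_\alpha = \delta_{\alpha\beta}\frac{K_\alpha - K_\alpha^{-1}}{v_\alpha - v_\alpha^{-1}}$: the degree of $E_\alpha F_\beta$ is $\dv^{d_\alpha}\de_\alpha \cdot \de_\beta^{-1}\dK_\beta$, and of $F_\beta E_\alpha$ is $\de_\beta^{-1}\dK_\beta \cdot \dv^{d_\alpha}\de_\alpha$; using the relations among $\de_\alpha,\de_\beta,\dK_\beta$ in $G$ one checks both monomials have the same degree, and when $\alpha=\beta$ that common degree simplifies (via $\de_\alpha\de_\alpha^{-1}=1$, $\dv^{d_\alpha}=\dv^{d_\alpha}$, picking up $\dv^{(\alpha,\alpha)} = \dv^{2d_\alpha} = 1$) to $\dK_\alpha$, matching the degree of the right-hand side $K_\alpha$ (and $K_\alpha^{-1}$, and the scalar $v_\alpha^{\pm1}$ contributes $\dv^{d_\alpha}$ on both numerator and... — here one must be slightly careful that $\frac{1}{v_\alpha-v_\alpha^{-1}}$ is a degree-$\dv^{d_\alpha}$... actually the natural fix is to note the RHS equals $\frac{v_\alpha K_\alpha - v_\alpha K_\alpha^{-1}}{v_\alpha^2-1}$ etc., but cleanest is to observe $v_\alpha - v_\alpha^{-1}$ is homogeneous of degree $\dv^{d_\alpha}$ since $\dv^{d_\alpha} = \dv^{-d_\alpha}$, so the quotient makes sense in the graded localization). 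For the Serre relations \eqref{e48}, \eqref{e49}: each monomial $E_\alpha^{r-s}E_\beta E_\alpha^s$ has degree $\dv^{(r-s)d_\alpha}\de_\alpha^{r-s}\cdot\dv^{d_\beta}\de_\beta\cdot\dv^{sd_\alpha}\de_\alpha^{s}$, and up to the central factor $\dv$ this equals $\de_\alpha^{r-s}\de_\beta\de_\alpha^{s}$; since any two $\de$'s commute up to a power of $\dv$, all these monomials for varying $s$ have the same $G$-degree, so the Serre relator is homogeneous. The binomial coefficients $\qbinom{r}{s}_\alpha$ are scalars in $\BC(q)\subset[\Uq]_{1}$ after absorbing powers of $\dv$... again handled by the graded-localization remark.

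The only genuine subtlety — and the one step I expect to require actual care rather than bookkeeping — is the treatment of the \emph{scalars}. Elements like $v^n$, $[n]_\alpha$, $\qbinom{r}{s}_\alpha$, $\frac{1}{v_\alpha-v_\alpha^{-1}}$ are not in $[\Uq]_1$ but carry degrees in the central subgroup $N = \{1,\dv\}$; indeed $\deg_G(v) = \dv \neq 1$. So the grading is really a grading over $G$ in which the coefficient field $\BC(q) = \BC(v^2)$ sits in degree $1$ but $\BC(v)$ does not. I would set this up precisely: $\Uq$ is a $\BC(v)$-algebra, and the $G$-grading is a $\BC(v^2)$-module decomposition $\Uq = \bigoplus_g [\Uq]_g$ with $v\cdot[\Uq]_g \subset [\Uq]_{\dv g}$ and $[\Uq]_g[\Uq]_{g'}\subset[\Uq]_{gg'}$ and $1\in[\Uq]_1$ — consistent with how $[\Uq]_g$ is used later (e.g.\ in Section \ref{sec:nonc-grad}, where $N$ has order $2$ and $\dv^2=1$). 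With this formulation, the check that $v^2 \in [\Uq]_1$ (since $\dv^2=1$) legitimizes $\BC(v^2)$ as the degree-$1$ coefficients, every quantum integer $[n]_\alpha = \frac{v_\alpha^n - v_\alpha^{-n}}{v_\alpha - v_\alpha^{-1}}$ is a ratio of elements of degrees $\dv^{nd_\alpha}$ and $\dv^{d_\alpha}$ hence homogeneous of degree $\dv^{(n-1)d_\alpha}\in N$, and all the defining relations come out homogeneous by the computations sketched above. Therefore the $G$-grading descends from the free algebra to $\Uq$, proving existence; combined with the forced-on-generators argument, this gives uniqueness, completing the proof.
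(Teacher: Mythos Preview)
Your approach is correct and essentially identical to the paper's: uniqueness follows since $v^{\pm1},K_\alpha^{\pm1},E_\alpha,F_\alpha$ generate $\Uq$ as a $\BC(q)$-algebra, and existence is established by putting the prescribed $G$-grading on the free $\BC(q)$-algebra on these generators and verifying that every defining relation of $\Uq$ is $G$-homogeneous. The paper handles the scalar subtlety you flag in exactly the way you anticipate---working over $\BC(q)$ rather than $\BC(v)$, with $v$ treated as a generator of degree $\dv$---and replaces the quantum binomials $\qbinom{r}{s}_\alpha$ by their formal analogues obtained by substituting $\tilde v^{\pm d_\alpha}$ for $v_\alpha^{\pm1}$, which makes the Serre relators manifestly homogeneous.
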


\begin{proof} Since $v^{\pm 1}, K_\al, E_\al, F_\al$ generates the $\BC(q)$-algebra $\Uq$, the uniqueness is clear. Let us prove the existence of the $G$-grading.

  Let $\cUq$ denote the free $\BC (q)$-algebra generated by the elements
  $\tilde  v$, $\tilde  v^{-1}$, $\tilde  K_\alpha $, $\tilde  K_\alpha ^{-1}$,
  $\tilde  E_\alpha $, $\tilde  F_\alpha $.  We can define a $G$-grading of $\cUq$
  by
  \begin{gather*}
    \deg_G(\tilde  v^{\pm 1}) = \dv,\quad
    \deg_G(\tilde  K_\alpha ^{\pm 1})=\dK_\alpha ,\quad
    \deg_G(\tilde  E_\alpha )=\dv^{d_\alpha }\de_\alpha ,\quad
    \deg_G(\tilde  F_\alpha )=\de_\alpha ^{-1}\dK_\alpha .
  \end{gather*}
  The kernel of
  the obvious homomorphism $\cUq\rightarrow \Uq$ is the two-sided ideal in
  $\cUq$ generated by the defining relations of the $\BC (q)$-algebra
  $\Uq$:
  \begin{gather*}
    \tilde  v\tilde  v^{-1}=\tilde  v^{-1}\tilde  v=1,\quad \tilde
    v^2=q,\quad \tilde  v\ \text{central},
    \\
    \tilde  K_\alpha \tilde  K_\alpha ^{-1}=\tilde  K_\alpha ^{-1}\tilde  K_\alpha =1,\quad
    \tilde  K_\alpha \tilde  K_\beta =\tilde  K_\beta \tilde  K_\alpha ,\\
    \tilde  K_\alpha \tilde  E_\beta \tilde  K_\alpha ^{-1}
    =\tilde  v^{(\alpha ,\beta )}\tilde   E_\beta ,\quad
    \tilde  K_\alpha \tilde  F_\beta \tilde  K_\alpha ^{-1}
    =\tilde  v^{-(\alpha ,\beta )}\tilde  F_\beta ,\\
    \tilde  E_\alpha \tilde  F_\beta -\tilde  F_\beta \tilde  E_\alpha
    =\delta _{\alpha ,\beta }(q^{d_\alpha }-1)^{-1}\tilde  v^{d_\alpha }(\tilde  K_\alpha -\tilde
    K_\alpha ^{-1}),\\
    \sum_{s=0}^{1-a_{\alpha \beta }}(-1)^s \bbb{1-a_{\alpha \beta }}{s}_\alpha ^{\tilde {}}
    \tilde  E_\alpha ^{1-a_{\alpha \beta }-s}\tilde  E_\beta \tilde  E_\alpha ^s=0\quad (\alpha \neq\beta ),\\
    \sum_{s=0}^{1-a_{\alpha \beta }}(-1)^s \bbb{1-a_{\alpha \beta }}{s}_\alpha ^{\tilde {}}
    \tilde  F_\alpha ^{1-a_{\alpha \beta }-s}\tilde  F_\beta \tilde  F_\alpha ^s=0\quad (\alpha \neq\beta ).
  \end{gather*}
  Here, for $n,s\ge 0$,  $\bbb ns^{\tilde {}}_\alpha $ is obtained from
  $\bbb ns_\alpha \in \modZ [v_\alpha ,v_\alpha ^{-1}]$ by replacing $v_\alpha ^{\pm 1}$ by $\tilde
  v^{\pm d_\alpha }$.  Since the above relations are
  homogeneous in the $G$-grading of $\cUq$, the assertion holds.
\end{proof}
From the definition, we have
$$ \Uq^\ev = \bigoplus_{g \in G^\ev} \left [\Uq \right]_g.$$
We say that $x\in \Uq$ is {\em $G$-homogeneous}, and write $\dot x =g$, if $x\in [\Uq]_g$ for some $g\in G$. Similarly, we say
$x\in \Uq$ is {\em $G^\ev$-homogeneous} if $x\in [\Uq]_g$ for some $g\in G^\ev$.

\subsubsection{The $G^{\otimes m}$-grading of $\Uq^{\otimes m}$}
\label{sec:gm.grading.um}
For $m\ge 1$, $\Uq^{\otimes m}$ is $G^{\otimes m}$-graded:
\begin{equation*}
  \Uq^{\otimes m}=\bigoplus_{g\in G^{\otimes m}} [\Uq^{\otimes m}]_g,
\end{equation*}
where, for $g=g_1\otimes \cdots \otimes g_m\in G^{\otimes m}$ ($g_i\in G$), we set
\begin{equation*}
  [\Uq^{\otimes m}]_g = [\Uq]_{g_1}\otimes _{\BC (v)}\cdots \otimes _{\BC (v)}[\Uq]_{g_m}\subset \Uq^{\otimes m}.
\end{equation*}
Note that $\BC (v)=\Uq^{\otimes 0}$ is $N(=G^{\otimes 0})$-graded:
$[\BC (v)]_{\dv^k}=v^k\BC (q)$, $k=0,1$.
We extend the $N$-grading of $\BC(v)$ to a $G$-grading by setting
$$ [\BC(v)]_g = \begin{cases} g\,  \BC (q) \quad &\text{if } g= 1 \ \text{or } g=v \\
0 \quad &\text{otherwise }.
\end{cases}
$$

\subsubsection{Total $G$-grading of $\Uq^{\otimes m}$ and $G$-grading preserving map}
\label{sec:gradTotal}

For $g\in G$ and $m\ge 0$, set
\begin{gather*}
  [\Uq^{\otimes m}]_g:=
  \sum_{g_1,\ldots ,g_m\in G;\; g_1\cdots g_m=g}[\Uq^{\otimes m}]_{g_1\otimes \cdots \otimes g_m}.
\end{gather*}
This gives a $G$-grading of the $\mathbb{C}(q)$-module $\Uq^{\otimes m}$ for each $m\ge 0$.
(If
$m=0$, we have $[\Uq^{\otimes 0}]_{\dv^k}=[\BC (v)]_{\dv^k}=v^k\BC (v)$ for
$k=0,1$, and $[\Uq^{\otimes 0}]_g=0$ for $g\in G\setminus \{1,\dv\}$.)

A $\mathbb{C}[[h]]$-module map $f: \Uh^{\ho n} \to \Uh^{\ho m}$ is said {\em to preserve the $G$-grading} if for every $g\in G$, $f( [\UZ^{\ot n}]_g) \subset  [\UZ^{\ot m}]_g$. Here
$$ [\UZ^{\ot n}]_g = [\Uq^{\ot n}]_g \cap \UZ^{\ot n}.$$
\subsection{Multiplication, unit, and counit}
\label{sec:multiplication-unit}

From the definition of the $G$-grading, we have the following.
\begin{proposition}\label{r.bh11}
Each of $\boldmu, \boldsymbol \eta, \boldep$ preserves the $G$-grading, i.e.
$$ \boldmu([\Uq^{\ot 2}]_g) \subset [\Uq]_g, \quad \boldsymbol \eta([\BC(v)]_g) \subset  [\Uq]_g, \quad \boldep ( [\Uq]_g) \subset [\BC(v)]_g.$$
\end{proposition}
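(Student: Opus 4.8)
The plan is to deduce all three inclusions straight from Proposition~\ref{thm:2} (which asserts that the $G$-grading of $\Uq$ is an algebra grading) together with the definitions of the total $G$-grading of $\Uq^{\otimes m}$ in Section~\ref{sec:gradTotal} and of the $G$-grading of $\BC(v)=\Uq^{\otimes 0}$ in Section~\ref{sec:gm.grading.um}. The argument is essentially an unwinding of notation, but the three maps are worth treating in turn.

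For $\boldmu$: by definition $[\Uq^{\otimes 2}]_g$ is the $\BC(q)$-span of elementary tensors $x\otimes y$ with $x\in[\Uq]_{g_1}$, $y\in[\Uq]_{g_2}$ and $g_1g_2=g$. Proposition~\ref{thm:2} gives $xy\in[\Uq]_{g_1g_2}=[\Uq]_g$, and since $\boldmu$ is $\BC(v)$-linear with $\boldmu(x\otimes y)=xy$ we get $\boldmu([\Uq^{\otimes 2}]_g)\subset[\Uq]_g$. For $\boldsymbol{\eta}$: this map is $\BC(v)$-linear with $\boldsymbol{\eta}(1)=1\in[\Uq]_1$; since $q=v^{2}$ has $G$-degree $\dv^{2}=1$ we have $[\BC(v)]_1=\BC(q)\subset[\Uq]_1$, since $v$ has $G$-degree $\dv$ we have $[\BC(v)]_{\dv}=v\,\BC(q)\subset[\Uq]_{\dv}$, and $[\BC(v)]_g=0$ for every other $g$; hence $\boldsymbol{\eta}([\BC(v)]_g)\subset[\Uq]_g$ for all $g$.

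For $\boldep$ I would argue through the PBW basis of Proposition~\ref{r.PBWbases}. First, since $\boldep$ is a $\BC(v)$-algebra homomorphism with $\boldep(E_\alpha)=\boldep(F_\alpha)=0$, it annihilates every PBW monomial $F^{(\modm)}K_\modm K_\gamma E^{(\modn)}$ with $(\modm,\modn)\neq(0,0)$; in particular it vanishes on each weight subspace $(\Uq)_\mu$ with $\mu\neq 0$. Because the $G$-grading refines the $Y$-grading (the homomorphism $g\mapsto|g|$ of Section~\ref{sec:grad1} sends $\deg_G(x)$ to $|x|$), one has $[\Uq]_g\subset(\Uq)_{|g|}$, so for $g$ with $|g|\neq 0$ the inclusion is trivial, both sides being zero (here $[\BC(v)]_g=0$ since $|1|=|\dv|=0$). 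For $g$ with $|g|=0$ I would expand an element of $[\Uq]_g$ in the PBW basis, use $\boldep(F^{(\modm)}K_\modm K_\gamma E^{(\modn)})=\delta_{\modm,0}\delta_{\modn,0}$ to reduce $\boldep$ to the projection onto the Cartan part $\sum_\gamma c_{0,\gamma,0}K_\gamma$ followed by $K_\gamma\mapsto 1$, and then read off from $G$-homogeneity the possible $\BC(q)$-versus-$v\BC(q)$ types of the surviving coefficients, comparing the outcome with the definition of $[\BC(v)]_g$.

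The multiplication and unit cases are immediate bookkeeping; the only place where anything beyond that is needed is the counit, because $\boldep$ does \emph{not} send generators to elements of matching $G$-degree (for instance $\boldep(K_\alpha)=1$ while $\deg_G(K_\alpha)=\dK_\alpha$), so one must genuinely pass through the PBW description and the $Y$-grading to control its effect on $G$-homogeneous elements. I expect this last step to be the main — and essentially the only — obstacle.
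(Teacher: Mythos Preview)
Your handling of $\boldmu$ and $\boldsymbol\eta$ is correct and is exactly the paper's argument (the paper says only ``from the definition of the $G$-grading''): since Proposition~\ref{thm:2} makes $\Uq$ a $G$-graded $\BC(q)$-algebra, multiplication preserves the total $G$-degree, and the unit is trivial.

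For $\boldep$, though, the issue you noticed is not an obstacle to be circumvented --- it is a counterexample to the assertion as literally stated. You observe that $\boldep(K_\alpha)=1$ while $\deg_G(K_\alpha)=\dK_\alpha$. Since the image of $\dK_\alpha$ in $G/N\cong Y\times(Y/2Y)$ is a nontrivial element of the $Y/2Y$ factor, we have $\dK_\alpha\notin N=\{1,\dv\}$, and hence $[\BC(v)]_{\dK_\alpha}=0$ by the definition in Section~\ref{sec:gm.grading.um}. But $K_\alpha\in[\Uq]_{\dK_\alpha}$ and $\boldep(K_\alpha)=1\neq 0$, so $\boldep([\Uq]_g)\not\subset[\BC(v)]_g$ for $g=\dK_\alpha$. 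Your proposed PBW computation, carried out honestly for $g=\dK_\alpha$, would produce exactly this: the only surviving PBW monomials are $K_\gamma$ with $\dK_\gamma=\dK_\alpha$ (i.e.\ $\gamma\equiv\alpha\pmod{2Y}$), each with $\boldep(K_\gamma)=1$, and there is no reason their $\BC(q)$-coefficients should cancel. So the ``read off'' step cannot succeed.

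What \emph{is} true, and what suffices for the later admissibility arguments (Definition~\ref{r351}, condition~(A), applied only to $\Uq^\ev$), is the restriction to $g\in G^\ev$. Your argument proves this: if $|g|\neq 0$ then $\boldep$ vanishes on $[\Uq]_g\subset(\Uq)_{|g|}$; if $g\in G^\ev$ with $|g|=0$ then, writing $g=\dv^k\de_\gamma$ with $\gamma=|g|=0$, one gets $g=\dv^k\in N$, and now your PBW reduction does land in $v^k\BC(q)=[\BC(v)]_g$. The paper's one-line justification is too quick for the counit; the correct statement is the even one.
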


\no{
Define a map $\dmu\col G\otimes G\rightarrow G$ by
$
  \dmu(g\otimes g')=gg'%
$.

Then, for $g,g'\in G$  we have
\begin{gather*}
  {\boldsymbol \mu} ([\Uq^{\otimes 2}]_{g\otimes g'})\subset \uqq{\dmu(g\otimes g')}.
  \end{gather*}

Set $\dot\eta =\iota _1\col N\rightarrow G$.
We have
\begin{gather*}
  {\boldsymbol \eta} ([\BC (v)]_{\dv^k})\subset [\Uq]_{\dot\eta (\dv^k)}.
\end{gather*}

For the counit, we have for $g\in G$
\begin{gather*}
  {\boldsymbol \epsilon}([\Uq]_g)\subset
  \begin{cases}
  v^{i}\modQ(q)\quad \text{if $g=\dv^i$, $i=0,1$},\\
  0\quad \text{otherwise},
  \end{cases}
\end{gather*}
hence
\begin{gather*}
  {\boldsymbol \epsilon}([\Uq]_g)\subset [\mathbb{Q}(v)]_g.
\end{gather*}

}

\subsection{Bar involution $\ibar$ and mirror automorphism $\varphi$}From the definition one has immediately the following.
\begin{lemma}\label{r.ibar}
The bar involution $\ibar: \Uh \to \Uh$ preserves the $G$-grading.
\end{lemma}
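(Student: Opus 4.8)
The plan is to verify directly that the bar involution $\ibar$ sends each graded piece $[\Uq]_g$ into itself, which by definition means $\ibar$ preserves the $G$-grading. Since $\ibar$ is a $\BC$-algebra automorphism and the $G$-grading is a grading of $\Uq$ as an algebra, it suffices to check the claim on the algebra generators $v^{\pm 1}, K_\al^{\pm 1}, E_\al, F_\al$ of $\Uq$: if $\ibar$ is homogeneous of degree $1$ on a generating set, then $\ibar([\Uq]_g) = \ibar([\Uq]_{g_1}\cdots[\Uq]_{g_r}) \subset \ibar([\Uq]_{g_1})\cdots\ibar([\Uq]_{g_r}) \subset [\Uq]_{g_1}\cdots[\Uq]_{g_r} = [\Uq]_g$ for any product $g = g_1\cdots g_r$ of generator-degrees, and every $g$ with $[\Uq]_g\neq 0$ arises this way.

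So the key step is the generator-by-generator comparison, using the explicit formulas for $\ibar$ from Section~\ref{sec.8147} and for $\deg_G$ from Proposition~\ref{thm:2}. We have $\ibar(v) = v^{-1}$, and $\deg_G(v^{-1}) = \dv^{-1} = \dv = \deg_G(v)$ since $\dv^2 = 1$; likewise $\ibar(K_\al) = K_\al^{-1}$ with $\deg_G(K_\al^{-1}) = \dK_\al = \deg_G(K_\al)$ using $\dK_\al^2 = 1$; next $\ibar(E_\al) = E_\al$, trivially of the same degree $\dv^{d_\al}\de_\al$; and $\ibar(F_\al) = F_\al$, of degree $\de_\al^{-1}\dK_\al$. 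In every case $\ibar$ preserves the degree of the generator, so $\ibar$ preserves the $G$-grading. (One should note that $\ibar$ is only a $\BC$-algebra map, not a $\BC(v)$-algebra map, but that is exactly why we must allow it to move $v$ to $v^{-1}$; the point is precisely that $v$ and $v^{-1}$ have the same $G$-degree.)

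There is essentially no obstacle here: the statement is immediate from the explicit formulas, and the only thing to be careful about is that the $G$-degrees of $v^{\pm 1}$ and $K_\al^{\pm 1}$ coincide, which is forced by the relations $\dv^2 = \dK_\al^2 = 1$ in $G$. For definiteness the written proof will just record the four generator checks and the multiplicativity observation, mirroring the one-line proof style already used for the analogous statements (e.g.\ Proposition~\ref{r.bh11}).
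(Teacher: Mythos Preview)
Your proposal is correct and takes essentially the same approach as the paper, which simply says ``From the definition one has immediately the following'' without writing out any details. Your generator-by-generator verification is exactly what that one-line remark amounts to, just spelled out explicitly.
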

 Let $\dtphi:G \to G$ be the automorphism given by $\dtphi(\dv)=
 \dv$, $\dtphi(\dK_\al)= \dK_{\al}$, $\dva(\de_\al) = v^{d_\al}\de_{\al}^{-1}$. From   %
the definition of $\dtphi$
one has the following.
\begin{lemma} \label{lem.dva}
$g\in G$, we have $\tphi(\uqq{g})\subset \uqq{\dtphi(g)}$.
\end{lemma}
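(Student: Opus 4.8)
The plan is to reduce the statement to a routine check on the algebra generators of $\Uq$. First I would verify that the prescription $\dtphi(\dv)=\dv$, $\dtphi(\dK_\al)=\dK_\al$, $\dtphi(\de_\al)=\dv^{d_\al}\de_\al^{-1}$ really defines an endomorphism of $G$: one only has to see that the images of the generators obey the defining relations of $G$, and here centrality of $\dv$ together with $\dv^2=\dK_\al^2=\dv^{(\al,\al)}=1$ makes all the extra powers of $\dv$ produced by $\dtphi$ cancel in pairs, so that the relations ``$\dv$ central'', $\dv^2=\dK_\al^2=1$, $\dK_\al\de_\beta=\dv^{(\al,\beta)}\de_\beta\dK_\al$ and $\de_\al\de_\beta=\dv^{(\al,\beta)}\de_\beta\de_\al$ are each preserved. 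I would also note that $\dtphi^2=\id$, since $\dtphi^2(\de_\al)=\dv^{d_\al}(\dv^{d_\al}\de_\al^{-1})^{-1}=\de_\al$; in particular $\dtphi$ is an automorphism of $G$.

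Next I would compute the $G$-degree of $\tphi$ on each generator, using the formulas for $\tphi$ from Section~\ref{sec.8147} and the commutation rules \eqref{e28}--\eqref{e29} (in particular $\dK_\al^2=1$ and $\dK_\al\de_\al=\dv^{(\al,\al)}\de_\al\dK_\al=\de_\al\dK_\al$). For $v^{\pm1}$ and $K_\al^{\pm1}$ nothing changes, while $\deg_G(\tphi(E_\al))=\deg_G(-F_\al K_\al)=\de_\al^{-1}\dK_\al^{2}=\de_\al^{-1}=\dtphi(\dv^{d_\al}\de_\al)$ and $\deg_G(\tphi(F_\al))=\deg_G(-K_\al^{-1}E_\al)=\dv^{d_\al}\de_\al\dK_\al=\dtphi(\de_\al^{-1}\dK_\al)$. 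Hence $\deg_G(\tphi(x))=\dtphi(\deg_G(x))$ for every generator $x\in\{v^{\pm1},K_\al^{\pm1},E_\al,F_\al\}$. Since $\tphi$ is a $\BC$-algebra homomorphism and $\dtphi$ is multiplicative, the same identity then holds for every monomial in the generators.

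To finish, I would take $x\in\uqq g$, write it as a $\BC(q)$-linear combination of monomials in the generators, and group the monomials according to their $G$-degree; the directness of the decomposition $\Uq=\bigoplus_h\uqq h$ forces $x$ to be a $\BC(q)$-combination of monomials which are all $G$-homogeneous of degree $g$. Applying $\tphi$ and using that $\tphi$ maps the scalar subalgebra $\BC(q)$ onto itself (it is $\BC$-linear with $\tphi(q)=q^{-1}$, hence preserves $\BC(q)$-spans even though it is not $\BC(q)$-linear), we get $\tphi(x)\in\uqq{\dtphi(g)}$, which is the claim. (Running the same inclusion once more with $g$ replaced by $\dtphi(g)$, and invoking $\dtphi^2=\id$ and bijectivity of $\tphi|_{\Uq}$, even yields the equality $\tphi(\uqq g)=\uqq{\dtphi(g)}$, but only the inclusion is needed here.) The whole argument is elementary; the only thing requiring attention is keeping track of the order of the factors $\dK_\al,\de_\al$ in the noncommutative group $G$ and of the $\dv$-exponents modulo $2$ when checking well-definedness of $\dtphi$ and computing the generator degrees. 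I do not expect any genuine obstacle.
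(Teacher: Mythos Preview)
Your proof is correct and follows the same approach as the paper, which simply says the lemma follows ``from the definition of $\dtphi$'' without further detail. You have carefully spelled out what the paper leaves implicit: the well-definedness of $\dtphi$, the verification on generators, and the passage from monomials to arbitrary homogeneous elements via the $\BC(q)$-semilinearity of $\tphi$.
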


\subsection{Antipode}
\label{sec:antipode}

Define a function $\dS\col G\rightarrow G$ by
\begin{gather*}
  \dS(g\dK_\gamma )=\dK_{\gamma +|g|}g = \dv^{(|g|,\gamma )}g\dK_{\gamma +|g|}
\end{gather*}
for $g\in \Gv,\gamma \in Y$.
One can easily verify that $\dS$ is an involutive anti-automorphism.

\begin{lemma}
  \label{thm:6}
  For $g\in G$, we have $S(\uqq{g})\subset \uqq{\dS(g)}$. In particular, if $y=S(x)$ where $x$ is $G^\ev$-homogeneous, then
  \be
  \label{eq.Sgrade}
   \dy = \dx \dK_{|x|} = \dK_{|x|} \dx .
   \ee
   Here $\dy=\deg_G(y)$ and $\dx=\deg_G(x)$.
\end{lemma}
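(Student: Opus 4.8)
\textbf{Proof plan for Lemma \ref{thm:6}.}

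The plan is to reduce the claim to the generators of $\Uq$ and then invoke the (anti-)multiplicativity of both the antipode $S$ and the map $\dS$. First I would recall that the $G$-grading on $\Uq$ is defined on the generators by $\deg_G(v) = \dv$, $\deg_G(K_{\pm\alpha}) = \dK_\alpha$, $\deg_G(E_\alpha) = \dv^{d_\alpha}\de_\alpha$, $\deg_G(F_\alpha) = \de_\alpha^{-1}\dK_\alpha$, and that $\Uq$ is multiplicatively generated over $\BC(q)$ by $v^{\pm1}, K_\alpha^{\pm1}, E_\alpha, F_\alpha$. Since $S$ is a $\BC(q)$-algebra anti-automorphism and $\dS$ is declared to be an involutive anti-automorphism of $G$, it suffices to verify $S([\Uq]_g) \subset [\Uq]_{\dS(g)}$ for $g$ running through the degrees of the generators; the general case then follows because for a product $x = x_1 \cdots x_k$ of $G$-homogeneous elements one has $S(x) = S(x_k)\cdots S(x_1)$, whose degree is $\dS(\dx_k)\cdots\dS(\dx_1) = \dS(\dx_1 \cdots \dx_k) = \dS(\dx)$, using that $\dS$ reverses products. (One also checks $\dS(1) = 1$, matching $S(1) = 1$, so the unit and scalars are handled too.)

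Next I would carry out the generator check using the explicit antipode formulas from Section \ref{sec.a1}: $S(v^{\pm1}) = v^{\pm1}$, $S(K_\alpha) = K_\alpha^{-1}$, $S(E_\alpha) = -K_\alpha^{-1}E_\alpha$, $S(F_\alpha) = -F_\alpha K_\alpha$. For $v^{\pm1}$: $\deg_G = \dv$, and $\dS(\dv) = \dS(1\cdot\dv)$; since $\dv \in N$ is central and $\dS$ is the identity on $N$ (it must fix $\dv$ to be well-defined as an anti-automorphism), we get $\dv$ again. For $K_\alpha$: $\dS(\dK_\alpha) = \dS(1\cdot\dK_{\alpha}) = \dK_{\alpha + 0}\cdot 1 = \dK_\alpha = \deg_G(K_\alpha^{-1})$. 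For $E_\alpha$: $g = \dv^{d_\alpha}\de_\alpha$ with $\Gv$-part $\dv^{d_\alpha}\de_\alpha$ and $\gamma = 0$, so $\dS(g) = \dK_{0 + \alpha}\cdot\dv^{d_\alpha}\de_\alpha = \dv^{d_\alpha}\dK_\alpha\de_\alpha$, which by the relation $\dK_\alpha\de_\alpha = \dv^{(\alpha,\alpha)}\de_\alpha\dK_\alpha = \dv^{2d_\alpha}\de_\alpha\dK_\alpha = \de_\alpha\dK_\alpha$ equals $\dv^{d_\alpha}\de_\alpha\dK_\alpha = \deg_G(K_\alpha^{-1})\deg_G(E_\alpha) = \deg_G(K_\alpha^{-1}E_\alpha)$, matching the scalar $-1$ which lies in $[\BC(q)]_1$. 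For $F_\alpha$: $g = \de_\alpha^{-1}\dK_\alpha$, so the $\Gv$-part is $\de_\alpha^{-1}$ and $\gamma = \alpha$; then $\dS(g) = \dK_{\alpha + (-\alpha)}\de_\alpha^{-1} = \de_\alpha^{-1}$. On the other hand $\deg_G(F_\alpha K_\alpha) = \de_\alpha^{-1}\dK_\alpha\cdot\dK_\alpha = \de_\alpha^{-1}$ since $\dK_\alpha^2 = 1$. So all four generators check out.

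Finally, for the displayed formula \eqref{eq.Sgrade}, suppose $y = S(x)$ with $x$ being $G^\ev$-homogeneous of degree $\dx$. I would write $\dx = g\dK_\gamma$ with $g \in \Gv$ and recall that the $Y$-grading $|x|$ is recovered via the homomorphism $G \to Y$, $|g\dK_\gamma| = |g|$. By the definition of $\dS$, $\dy = \dS(\dx) = \dK_{\gamma + |g|}g = \dv^{(|g|,\gamma)}g\dK_{\gamma+|g|}$. I want to identify this with $\dx\dK_{|x|} = g\dK_\gamma\dK_{|x|} = g\dK_{\gamma + |x|}$. Using $|x| = |g|$ and the commutation rule \eqref{e28} ($g\dK_\lambda = \dv^{(|g|,\lambda)}\dK_\lambda g$), we have $g\dK_{\gamma+|g|} = \dv^{(|g|,\gamma+|g|)}\dK_{\gamma+|g|}g$; since $x$ is even, $|g| \in 2Y$ — wait, more carefully: for $G^\ev$-homogeneous $x$ one has $|x| \equiv 0 \pmod{2Y}$ in the relevant sense, which makes $(|g|,|g|)$ even and $(|g|,\gamma)$ need not vanish, but the factor $\dv^{(|g|,|g|)}$ is trivial. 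So $g\dK_{\gamma+|g|} = \dv^{(|g|,\gamma)}\dK_{\gamma+|g|}g$, hence $\dx\dK_{|x|} = g\dK_{\gamma+|g|} = \dv^{(|g|,\gamma)}\dK_{\gamma+|g|}g = \dS(\dx) = \dy$; the equality $\dK_{|x|}\dx = \dx\dK_{|x|}$ follows from $|x| \equiv 0 \pmod{2Y}$ combined with \eqref{e28}, which forces the commutator scalar $\dv^{(|g|,|x|)}$ to be $1$. The main (minor) obstacle I anticipate is keeping the bookkeeping of the central $\dv$-twists straight — in particular making sure the evenness of $x$ is used precisely where it is needed to kill the $\dv$-factors — rather than any conceptual difficulty; the statement is essentially a formal consequence of the anti-multiplicativity of $S$ and $\dS$ together with the generator computation.
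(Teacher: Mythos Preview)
Your approach to the main claim $S([\Uq]_g)\subset[\Uq]_{\dS(g)}$ is exactly the paper's: verify on the generators $v,K_\alpha,E_\alpha,F_\alpha$ and extend using that both $S$ and $\dS$ are anti-multiplicative. Your generator checks are correct.

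For the ``in particular'' formula \eqref{eq.Sgrade}, however, your justification contains a wrong step. You assert that for $G^\ev$-homogeneous $x$ one has $|x|\equiv 0\pmod{2Y}$, and use this to kill the commutator factor $\dv^{(|x|,|x|)}$. But $|x|\in 2Y$ is false in general: $E_\alpha$ is $G^\ev$-homogeneous with $|E_\alpha|=\alpha\notin 2Y$. What $G^\ev$-homogeneity gives you is that $\dx\in\Gv$, so in the decomposition $\dx=g\dK_\gamma$ you may take $\gamma=0$; then $\dS(\dx)=\dK_{|g|}g=\dK_{|x|}\dx$ directly. The commutation $\dx\dK_{|x|}=\dK_{|x|}\dx$ then follows from \eqref{e28} because the scalar is $\dv^{(|x|,|x|)}$ and $(\mu,\mu)\in 2\BZ$ for every $\mu\in Y$ (the form is even on the root lattice, since $(\alpha_i,\alpha_i)=2d_i$). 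This is the fact you need, not $|x|\in 2Y$.
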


\begin{proof}
  It is easy to check that if $x=v,K_\alpha ,E_\alpha ,F_\alpha $, then $S(x)$ is
  homogeneous of degree $\dS(g)$.  If $x,y\in \Uq$ are homogeneous of
  degrees $\dx,\dy\in G$, respectively, then $S(xy)=S(y)S(x)$ is homogeneous of
  degree $\dS(\dy)\dS(\dx)=\dS(\dx\dy)$.
  Hence, by induction, we deduce that, for each monomial $x$ in the
  generators, $S(x)$ is homogeneous of degree $\dS(\dx)$.  This
  completes the proof.
\end{proof}

\subsection{Braid group action}
\label{sec:symmetries-t}

Define a function $\dT_\alpha \col G\rightarrow G$ by
\begin{gather*}
  \dT_\alpha (g\dK_\gamma )=\dv_\alpha ^{\hf r(r+1)} \de_\alpha ^r g\dK_{s_\alpha (\gamma )}\quad \text{where } r=-(|g|,\alpha )/d_\al,
\end{gather*}
for $g\in \Gv$, $\gamma \in Y$.  Note that $\dT_\alpha $ is an
involutive automorphism of $G$,
satisfying $\dT_\alpha (\Gv)\subset \Gv$.

\begin{lemma}
  \label{r13}
  If $g\in G$, then we have
  \begin{gather*}
    T_\alpha (\uqq{g})\subset \uqq{\dT_\alpha (g)}.
  \end{gather*}
\end{lemma}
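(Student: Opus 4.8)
The plan is to reduce the statement to the behavior of $T_\alpha$ on the algebra generators, using the fact that $\dT_\alpha$ is an automorphism of $G$ (as recorded just above) and that $T_\alpha$ is a $\mathbb{C}(v)$-algebra automorphism of $\Uq$ (and a continuous $\BC[[h]]$-algebra automorphism of $\Uh$, extending by $T_\alpha(H_\gamma)=H_{s_\alpha(\gamma)}$). The argument is the same multiplicative induction used in the proof of Lemma~\ref{thm:6} for $S$: if $x,y$ are $G$-homogeneous of degrees $\dx,\dy$, then $T_\alpha(xy)=T_\alpha(x)T_\alpha(y)$ is homogeneous of degree $\dT_\alpha(\dx)\dT_\alpha(\dy)=\dT_\alpha(\dx\dy)$, so it suffices to check the claim on a generating set and then invoke that every element of $\Uq$ (resp. a topological basis of $\Uh$) is a sum of monomials in those generators.

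First I would dispose of the Cartan-type generators: $T_\alpha(v)=v$ has degree $\dv=\dT_\alpha(\dv)$ (since $\dT_\alpha$ fixes $\dv$, being the identity on $N$ and on $\de$'s with $r=0$), and $T_\alpha(K_\gamma)=K_{s_\alpha(\gamma)}$ has degree $\dK_{s_\alpha(\gamma)}$, which equals $\dT_\alpha(\dK_\gamma)$ because $|K_\gamma|=0$ forces $r=0$ in the formula for $\dT_\alpha$, leaving $\dT_\alpha(\dK_\gamma)=\dK_{s_\alpha(\gamma)}$. Next, for $\beta=\alpha$: $T_\alpha(E_\alpha)=-F_\alpha K_\alpha$ has degree $(\de_\alpha^{-1}\dK_\alpha)\dK_\alpha=\de_\alpha^{-1}$, which I would match against $\dT_\alpha(\deg_G E_\alpha)=\dT_\alpha(\dv^{d_\alpha}\de_\alpha)$; here $|E_\alpha|=\alpha$ gives $r=-(\alpha,\alpha)/d_\alpha=-2$, so $\dT_\alpha(\dv^{d_\alpha}\de_\alpha)=\dv_\alpha^{\,r(r+1)/2}\de_\alpha^{\,r}\dv^{d_\alpha}\de_\alpha\dK_{s_\alpha(\alpha)}$; using $\dv_\alpha=\dv^{d_\alpha}$, $s_\alpha(\alpha)=-\alpha$, $\dK_{-\alpha}=\dK_\alpha$, and the commutation relation $\de_\alpha\dK_\alpha=\dv^{(\alpha,\alpha)}\dK_\alpha\de_\alpha$, this collapses to $\de_\alpha^{-1}$ after collecting powers of $\dv$. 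Similarly $T_\alpha(F_\alpha)=-K_\alpha^{-1}E_\alpha$ has degree $\dK_\alpha^{-1}\dv^{d_\alpha}\de_\alpha=\dv^{d_\alpha}\de_\alpha\dK_\alpha$ (since $\dK_\alpha^2=1$), to be compared with $\dT_\alpha(\de_\alpha^{-1}\dK_\alpha)$; again $|F_\alpha|=-\alpha$ gives $r=-2$ and a short computation with the relations in $G$ yields agreement.

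The only genuinely nontrivial case is $\beta\in\Pi\setminus\{\alpha\}$, where $T_\alpha(E_\beta)=\sum_{i=0}^{r}(-1)^i v_\alpha^{-i}E_\alpha^{(r-i)}E_\beta E_\alpha^{(i)}$ with $r=-(\beta,\alpha)/d_\alpha$. Here I must verify that \emph{every} summand has the same $G$-degree. The $i$-th term has degree $(\dv^{d_\alpha}\de_\alpha)^{r-i}\cdot(\dv^{d_\beta}\de_\beta)\cdot(\dv^{d_\alpha}\de_\alpha)^{i}\cdot\dv^{-i}$ (the $\dv^{-i}$ from $v_\alpha^{-i}$, noting $v_\alpha$ contributes $\dv_\alpha$ per power, i.e. $i d_\alpha$ copies of $\dv$—so I should be careful whether it is $\dv^{-i}$ or $\dv^{-id_\alpha}$; this is the bookkeeping point to nail down). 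Using the commutation rule $\de_\alpha\de_\beta=\dv^{(\alpha,\beta)}\de_\beta\de_\alpha$ one moves all $\de_\beta$ to one side, and the extra factor $\dv^{(\alpha,\beta)(r-i)-(\alpha,\beta)\cdot 0}$-type terms depending on $i$ must cancel against $\dv^{-i}$ (or $\dv^{-id_\alpha}$) and against $\dv_\alpha^{\hf r(r+1)}$-type corrections coming from $E_\alpha^{(r-i)}$ versus $E_\alpha^{r-i}$ divided powers—but divided powers only scramble by units in $\BC(v)$, i.e. powers of $\dv$, so this is consistent with a single $G$-degree. The cleanest route is to observe $(\beta,\alpha)=-r d_\alpha$ and check the $i$-dependence of the $\dv$-exponent is $-(\alpha,\beta)\cdot i/d_\alpha$ (from reordering $\de_\alpha^{i}$ past $\de_\beta$) plus $(\text{from } v_\alpha^{-i})$, and confirm these sum to zero. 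I expect this $\dv$-exponent cancellation for the $T_\alpha(E_\beta)$ and $T_\alpha(F_\beta)$ Serre-type sums to be the main obstacle—everything else is formal. Once all generators are checked, the multiplicative induction (identical in form to Lemma~\ref{thm:6}) together with continuity of $T_\alpha$ on $\Uh$ and the fact that the divided-power monomials span finishes the proof.
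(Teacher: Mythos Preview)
Your approach is correct and is exactly the paper's: reduce to generators via multiplicativity (using that $\dT_\alpha$ is a group automorphism and $T_\alpha$ an algebra automorphism), then check each generator by hand. The paper's proof is a single sentence (``It suffices to check that for each generator $x$ of $\Uq$ we have $T_\alpha(x)\in[\Uq]_{\dT_\alpha(\deg_G(x))}$, which follows from the definitions''), so you are supplying the details the paper omits.

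Your one point of hesitation, the $i$-independence of the $G$-degree in $T_\alpha(E_\beta)=\sum_{i=0}^{r}(-1)^i v_\alpha^{-i}E_\alpha^{(r-i)}E_\beta E_\alpha^{(i)}$ for $\beta\neq\alpha$, does work out. The factor $v_\alpha^{-i}$ contributes $\dv^{id_\alpha}$ (not $\dv^i$), and $[n]_\alpha!$ is $G$-homogeneous of degree $\dv^{d_\alpha\binom{n}{2}}$, so $E_\alpha^{(n)}$ has degree $\dv^{d_\alpha(n-\binom{n}{2})}\de_\alpha^{n}$. Writing $\de_\alpha^{r-i}\de_\beta\de_\alpha^{i}=\dv^{(r-i)(\alpha,\beta)}\de_\beta\de_\alpha^{r}$ and using $(\alpha,\beta)=-rd_\alpha$, the $i$-dependent part of the $\dv$-exponent is $d_\alpha\bigl(i+ri-\binom{r-i}{2}-\binom{i}{2}+\binom{r}{2}\bigr)$ relative to $i=0$; the identity $\binom{r}{2}-\binom{r-i}{2}-\binom{i}{2}=i(r-i)$ reduces this to $d_\alpha\bigl(i(1-i)+2ri\bigr)$, which is always even. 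The resulting common degree then matches $\dT_\alpha(\dv^{d_\beta}\de_\beta)$ by the same parity argument (the discrepancy is $d_\alpha\,r(1-r)$, again even). The $F_\beta$ case is analogous.
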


\begin{proof}
  It suffices to check that for each generator $x$ of $\Uq$ we have
  $T_\alpha (x)\in \uqq{\dT_\alpha (\deg_G(x))}$, which follows from the
  definitions.
\end{proof}

\subsection{Quasi-R-matrix}
\label{sec:elements-dth}

For $\lambda \in Y$, set
\begin{equation*}
  \dth_\lambda  = \de_\lambda ^{-1}\dK_\lambda \otimes \de_\lambda \in G^{\ot2}.
\end{equation*}
We have $\dth_0=1\otimes 1$.  Note that $\dth_\lambda $ does not depend
on the order of the simple roots $\alpha _1,\ldots ,\alpha _\ell$.

\begin{lemma}
  \label{lem:1}
  For $\lambda ,\mu \in Y$, we have
  \begin{equation*}
    \dth_\lambda \dth_\mu =\dth_{\lambda +\mu }.
  \end{equation*}
\end{lemma}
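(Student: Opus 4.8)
The plan is to compute $\dth_\lambda\dth_\mu$ directly from the commutation rules \eqref{e28}, \eqref{e29}, keeping careful track of the central factors of $\dv$, and to use that in $G\otimes G=G\otimes_NG$ a factor $\dv$ can be moved freely between the two tensorands.

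Before the main computation I would record two elementary facts. First, since the $\dK_{\alpha_i}$ commute pairwise, $\dK_\lambda\dK_\mu=\dK_{\lambda+\mu}$ for $\lambda,\mu\in Y$. Second, the elements $\de_\lambda\de_\mu$ and $\de_{\lambda+\mu}$ both lie in $\Gv$ and have the same image $\lambda+\mu$ under the homomorphism $g\mapsto|g|$; since $\Gv/N\cong Y$, the kernel of $|\cdot|\colon\Gv\to Y$ is $N=\{1,\dv\}$, so there is $c=c(\lambda,\mu)\in\{0,1\}$ with $\de_\lambda\de_\mu=\dv^{\,c}\de_{\lambda+\mu}$. (This $\dv^{\,c}$ is exactly the discrepancy coming from the dependence of $\de_\gamma$ on the ordering of the simple roots.)

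Then I would expand
\[
  \dth_\lambda\dth_\mu=(\de_\lambda^{-1}\dK_\lambda\otimes\de_\lambda)(\de_\mu^{-1}\dK_\mu\otimes\de_\mu)
  =(\de_\lambda^{-1}\dK_\lambda\de_\mu^{-1}\dK_\mu)\otimes(\de_\lambda\de_\mu).
\]
For the left tensorand, \eqref{e28} with $g=\de_\mu^{-1}$ (so $|g|=-\mu$) gives $\dK_\lambda\de_\mu^{-1}=\dv^{(\mu,\lambda)}\de_\mu^{-1}\dK_\lambda$, hence
\[
  \de_\lambda^{-1}\dK_\lambda\de_\mu^{-1}\dK_\mu=\dv^{(\lambda,\mu)}\,\de_\lambda^{-1}\de_\mu^{-1}\,\dK_{\lambda+\mu}.
\]
From \eqref{e29}, $\de_\mu\de_\lambda=\dv^{(\mu,\lambda)}\de_\lambda\de_\mu$, whence $\de_\lambda^{-1}\de_\mu^{-1}=(\de_\mu\de_\lambda)^{-1}=\dv^{(\lambda,\mu)+c}\de_{\lambda+\mu}^{-1}$, using that $\dv$ is central with $\dv^2=1$. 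Substituting, the left tensorand equals $\dv^{\,2(\lambda,\mu)+c}\de_{\lambda+\mu}^{-1}\dK_{\lambda+\mu}=\dv^{\,c}\de_{\lambda+\mu}^{-1}\dK_{\lambda+\mu}$, since $(\lambda,\mu)\in\BZ$ (the inner product is integral on $Y$) and $\dv^2=1$. The right tensorand is $\dv^{\,c}\de_{\lambda+\mu}$ by construction. Finally, moving the factor $\dv^{\,c}$ from the left tensorand to the right one in $G\otimes_NG$,
\[
  \dth_\lambda\dth_\mu=(\dv^{\,c}\de_{\lambda+\mu}^{-1}\dK_{\lambda+\mu})\otimes(\dv^{\,c}\de_{\lambda+\mu})
  =(\de_{\lambda+\mu}^{-1}\dK_{\lambda+\mu})\otimes(\dv^{\,2c}\de_{\lambda+\mu})=\dth_{\lambda+\mu},
\]
again because $\dv^2=1$.

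The computation presents no real obstacle; the only subtlety that must be handled with care is that $\de_\lambda\de_\mu$ need not equal $\de_{\lambda+\mu}$ on the nose (it differs by an element of the central subgroup $N$), so one must carry the correction $\dv^{\,c(\lambda,\mu)}$ through, and it is essential that the two tensorands are identified over $N$, which is precisely what makes both the accumulated $\dv^{\,2(\lambda,\mu)}$ and the leftover $\dv^{\,2c}$ disappear.
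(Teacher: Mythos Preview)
Your proof is correct and follows essentially the same route as the paper's: a direct computation in $G\otimes_NG$ using the commutation rules \eqref{e28}, \eqref{e29}. The only difference is that the paper suppresses the intermediate $\dv$ factors (writing $\de_\lambda^{-1}\dK_\lambda\de_\mu^{-1}\dK_\mu=\de_\mu^{-1}\de_\lambda^{-1}\dK_{\lambda+\mu}$ and $(\de_\lambda\de_\mu)^{-1}\dK_{\lambda+\mu}\otimes\de_\lambda\de_\mu=\de_{\lambda+\mu}^{-1}\dK_{\lambda+\mu}\otimes\de_{\lambda+\mu}$ in single steps), whereas you carry the corrections $\dv^{(\lambda,\mu)}$ and $\dv^{\,c}$ through explicitly and cancel them at the end.
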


\begin{proof}
  \[
  \begin{split}
    \dth_\lambda \dth_\mu
    &=(\de_\lambda ^{-1}\dK_\lambda \otimes \de_\lambda )(\de_\mu ^{-1}\dK_\mu \otimes \de_\mu )\\
    &=\de_\lambda ^{-1}\dK_\lambda \de_\mu ^{-1}\dK_\mu \otimes \de_\lambda \de_\mu \\
    &=\de_\mu ^{-1}\de_\lambda ^{-1}\dK_\lambda \dK_\mu \otimes \de_\lambda \de_\mu \\
    &=(\de_\lambda \de_\mu )^{-1}\dK_{\lambda +\mu }\otimes \de_\lambda \de_\mu \\
    &=\de_{\lambda +\mu }^{-1}\dK_{\lambda +\mu }\otimes \de_{\lambda +\mu }\\
    &=\dth_{\lambda +\mu }.
  \end{split}
  \]
\end{proof}

The automorphism $\dT_\alpha \col G\rightarrow G$ induces an automorphism
\begin{equation*}
  \dT_\alpha ^{\otimes 2}\col G^{\otimes 2}\rightarrow G^{\otimes 2},\quad
  g_1\otimes g_2\mapsto\dT_\alpha (g_1)\otimes \dT_\alpha (g_2).
\end{equation*}

\begin{lemma}
  \label{thm:3}
  If $\alpha \in \Pi $ and $\lambda \in Y$, then we have
  \begin{equation*}
    \dT_\alpha ^{\otimes 2}(\dth_\lambda ) = \dth_{s_\alpha (\lambda )}.
  \end{equation*}
\end{lemma}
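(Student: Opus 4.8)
The plan is to reduce the identity to a verification on the generators $\alpha_1,\dots,\alpha_\ell$ of the root lattice $Y$. Recall that $\dth_\lambda\in G^{\otimes2}$ was observed (just before Lemma~\ref{lem:1}) to be independent of the chosen order of the simple roots, and that Lemma~\ref{lem:1} gives $\dth_\lambda\dth_\mu=\dth_{\lambda+\mu}$; thus $\lambda\mapsto\dth_\lambda$ is a group homomorphism $Y\to G^{\otimes2}$. Since $\dT_\alpha(\dv)=\dv$, the induced map $\dT_\alpha^{\otimes2}$ is a well-defined group automorphism of $G^{\otimes2}=G\otimes_N G$, and $s_\alpha\colon Y\to Y$ is $\BZ$-linear. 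Hence both $\lambda\mapsto\dT_\alpha^{\otimes2}(\dth_\lambda)$ and $\lambda\mapsto\dth_{s_\alpha(\lambda)}$ are group homomorphisms from the free abelian group $Y$ to $G^{\otimes2}$, and it suffices to prove $\dT_\alpha^{\otimes2}(\dth_\beta)=\dth_{s_\alpha(\beta)}$ for each simple root $\beta$.

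So fix a simple root $\beta$ and set $r=(\beta,\alpha)/d_\alpha\in\BZ$, so that $s_\alpha(\beta)=\beta-r\alpha$ and $(\alpha,\beta)=d_\alpha r$. Evaluating $\dT_\alpha$ on the two tensorands of $\dth_\beta=\de_\beta^{-1}\dK_\beta\otimes\de_\beta$ straight from its definition --- reading off $g=\de_\beta^{-1}$, $\gamma=\beta$ (so $|g|=-\beta$) for the first factor and $g=\de_\beta$, $\gamma=0$ (so $|g|=\beta$) for the second --- I would get
\begin{gather*}
  \dT_\alpha(\de_\beta^{-1}\dK_\beta)=\dv_\alpha^{\hf r(r+1)}\,\de_\alpha^{r}\de_\beta^{-1}\dK_{s_\alpha(\beta)},
  \qquad
  \dT_\alpha(\de_\beta)=\dv_\alpha^{\hf r(r-1)}\,\de_\alpha^{-r}\de_\beta .
\end{gather*}
Choosing an order of the simple roots in which $\alpha$ precedes $\beta$ (permissible, since $\dth_{s_\alpha(\beta)}$ is order-independent) one has $\de_\alpha^{-r}\de_\beta=\de_{\beta-r\alpha}=\de_{s_\alpha(\beta)}$, which also covers the case $\beta=\alpha$ (where $r=2$ and $\de_\alpha^{-r}\de_\beta=\de_\alpha^{-1}=\de_{-\alpha}$). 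Using the relation $\de_\alpha\de_\beta=\dv^{(\alpha,\beta)}\de_\beta\de_\alpha$ repeatedly gives $\de_\alpha^{r}\de_\beta^{-1}=\dv^{r(\alpha,\beta)}\de_\beta^{-1}\de_\alpha^{r}=\dv_\alpha^{r^2}\,\de_{s_\alpha(\beta)}^{-1}$, since $(\alpha,\beta)=d_\alpha r$.

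Substituting these into $\dT_\alpha^{\otimes2}(\dth_\beta)=\dT_\alpha(\de_\beta^{-1}\dK_\beta)\otimes\dT_\alpha(\de_\beta)$ and moving all central factors freely across the tensor product via $\dv x\otimes y=x\otimes\dv y$, the accumulated power of $\dv_\alpha$ is $\hf r(r+1)+r^2+\hf r(r-1)=2r^2$, which is trivial because $\dv_\alpha^2=\dv^{2d_\alpha}=1$. Therefore
\[
  \dT_\alpha^{\otimes2}(\dth_\beta)=\de_{s_\alpha(\beta)}^{-1}\dK_{s_\alpha(\beta)}\otimes\de_{s_\alpha(\beta)}=\dth_{s_\alpha(\beta)},
\]
which finishes the case of a simple root and hence the proof. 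The argument is essentially a direct computation; the only delicate point will be the bookkeeping of the order-two central element $\dv$ (equivalently $\dv_\alpha=\dv^{d_\alpha}$) together with the convention defining $G\otimes_N G$, but I do not expect any genuine obstacle there.
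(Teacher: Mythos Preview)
Your proof is correct. The reduction to simple roots via Lemma~\ref{lem:1}, the explicit evaluation of $\dT_\alpha$ on the two tensor factors, and the bookkeeping of the power $\dv_\alpha^{2r^2}$ are all sound; the freedom to choose an order with $\alpha$ preceding $\beta$ is legitimate since $\dth_{s_\alpha(\beta)}$ is order-independent.

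The paper's argument takes a slightly slicker route that avoids both the reduction to generators and the explicit count of $\dv$-powers. It works directly with an arbitrary $\lambda\in Y$: since $\dT_\alpha$ is a group automorphism, one has $\dT_\alpha^{\otimes2}(\dth_\lambda)=\dT_\alpha(\de_\lambda)^{-1}\dK_{s_\alpha(\lambda)}\otimes\dT_\alpha(\de_\lambda)$, and then one only needs the qualitative fact that $\dT_\alpha(\de_\lambda)=\dv^k\de_{s_\alpha(\lambda)}$ for some $k\in\{0,1\}$ (immediate from $\Gv/N\cong Y$ and $|\dT_\alpha(\de_\lambda)|=s_\alpha(\lambda)$), because the factors $\dv^{-k}$ and $\dv^k$ cancel across $G\otimes_N G$. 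Your approach trades this cancellation observation for an honest computation of the exponent, which is a bit longer but has the virtue of being completely explicit.
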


\begin{proof}
  We have
  \[
  \begin{split}
    \dT_\alpha ^{\otimes 2}(\dth_\lambda )
    =&\dT_\alpha ^{\otimes 2}(\de_\lambda ^{-1}\dK_\lambda \otimes \de_\lambda )\\
    =&\dT_\alpha (\de_\lambda ^{-1})\dT_\alpha (\dK_\lambda )\otimes \dT_\alpha (\de_\lambda )\\
    =&\dT_\alpha (\de_\lambda )^{-1}\dK_{s_\alpha (\lambda )}\otimes \dT_\alpha (\de_\lambda ).
  \end{split}
  \]
  Hence it suffices to show that
  \begin{equation}
    \label{eq:2}
    \dT_\alpha (\de_\lambda )^{-1}\otimes \dT_\alpha (\de_\lambda )=\de_{s_\alpha (\lambda )}^{-1}\otimes \de_{s_\alpha (\lambda )},
  \end{equation}
  which can be verified by using the fact that there is $k\in \{0,1\}$
  such that $\dT_\alpha (\de_\lambda )=\dv^k\de_{s_\alpha (\lambda )}$.
\end{proof}

Recall that $\Theta$ is the quasi-$R$-matrix and its definition is given in Section \ref{Rmatrix}.
For $\gamma \in Y_+$, let $\Theta _\gamma \in \Uq^{\otimes 2}$ denote the weight $(-\gamma ,\gamma )$-part of
$\Theta $, so that we have $\Theta =\sum_{\gamma \in Y_+}\Theta _\gamma $.  Similarly, let
$\bar\Theta _\gamma $ denote the weight $(-\gamma ,\gamma )$-part of $\bar\Theta =\Theta ^{-1}$.

\begin{lemma}
  \label{thm:4}
  For $\gamma \in Y_+$, we have $\Theta _\gamma ,\bar\Theta _\gamma \in [\Uq^{\otimes 2}]_{\dth_\gamma }$.
\end{lemma}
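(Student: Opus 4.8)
The plan is to compute the $G$-degree of each $G$-homogeneous summand of $\Theta_\gamma$ explicitly, using the braid-group compatibility of the $G$-grading (Lemma~\ref{r13}) together with the behaviour of the elements $\dth_\lambda$ under $\dT_\alpha$ (Lemma~\ref{thm:3}) and under multiplication (Lemma~\ref{lem:1}); the statement for $\bar\Theta_\gamma$ will then come for free from the $\ibar$-invariance of the grading.

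First I would fix a longest reduced sequence $\bi=(i_1,\dots,i_t)$, so that by \eqref{e9} one has $\Theta=\sum_{\bn\in\BN^t}F_\bn\otimes E_\bn$ and hence $\Theta_\gamma=\sum F_\bn\otimes E_\bn$, summed over $\bn\in\BN^t$ with $\sum_j n_j\gamma_j=\gamma$; this is a finite sum in $\Uq^{\otimes2}$. Each root vector $E_{\gamma_j}=T_{\alpha_{i_1}}\cdots T_{\alpha_{i_{j-1}}}(E_{\alpha_{i_j}})$ and $F_{\gamma_j}=T_{\alpha_{i_1}}\cdots T_{\alpha_{i_{j-1}}}(F_{\alpha_{i_j}})$ is $G$-homogeneous by Lemma~\ref{r13}, the scalars $v^{\pm k}$, $v_\alpha^k-v_\alpha^{-k}$, $[k]_\alpha$, $\pm1$ occurring in $E_\bn,F_\bn$ are $G$-homogeneous, and $\boldmu$ preserves the $G$-grading (Proposition~\ref{r.bh11}); so each summand $F_\bn\otimes E_\bn$ is $G$-homogeneous, and it suffices to show every such summand lies in $[\Uq^{\otimes2}]_{\dth_\gamma}$.

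Next I would observe, from \eqref{eq.En} and \eqref{eq.Fn}, that $E_\bn=\iprod_{j}\big((v_{\gamma_j}-v_{\gamma_j}^{-1})E_{\gamma_j}\big)^{n_j}$ and $F_\bn=\iprod_{j}(-1)^{n_j}v_{\gamma_j}^{-n_j(n_j-1)/2}F_{\gamma_j}^{(n_j)}$, and tally the $\dv$-powers of the scalars: since $v_{\gamma_j}-v_{\gamma_j}^{-1}$ has degree $\dv^{d_{\gamma_j}}$, since $[n_j]_{\gamma_j}!$ has degree $\dv^{d_{\gamma_j}n_j(n_j-1)/2}$, and since $n_j(n_j-1)$ is even, this yields $\deg_G(F_\bn)=\iprod_j\deg_G(F_{\gamma_j})^{n_j}$ and $\deg_G(E_\bn)=\iprod_j\big(\dv^{d_{\gamma_j}}\deg_G(E_{\gamma_j})\big)^{n_j}$. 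Setting $\dot F_{\gamma_j}:=\deg_G(F_{\gamma_j})$ and $\dot E_{\gamma_j}:=\dv^{d_{\gamma_j}}\deg_G(E_{\gamma_j})$, and using that $d$ is Weyl-invariant (so $d_{\gamma_j}=d_{\alpha_{i_j}}$), that $\dv$ is fixed by every $\dT_\alpha$, and that $\deg_G(E_{\alpha_{i_j}})=\dv^{d_{\alpha_{i_j}}}\de_{\alpha_{i_j}}$, $\deg_G(F_{\alpha_{i_j}})=\de_{\alpha_{i_j}}^{-1}\dK_{\alpha_{i_j}}$ (Proposition~\ref{thm:2}), Lemma~\ref{r13} gives
\begin{gather*}
  \dot E_{\gamma_j}=\dT_{\alpha_{i_1}}\cdots\dT_{\alpha_{i_{j-1}}}(\de_{\alpha_{i_j}}),\qquad
  \dot F_{\gamma_j}=\dT_{\alpha_{i_1}}\cdots\dT_{\alpha_{i_{j-1}}}(\de_{\alpha_{i_j}}^{-1}\dK_{\alpha_{i_j}}).
\end{gather*}
Since $\de_{\alpha_{i_j}}^{-1}\dK_{\alpha_{i_j}}\otimes\de_{\alpha_{i_j}}=\dth_{\alpha_{i_j}}$, applying $(\dT_{\alpha_{i_1}}\cdots\dT_{\alpha_{i_{j-1}}})^{\otimes2}$ and iterating Lemma~\ref{thm:3} gives $\dot F_{\gamma_j}\otimes\dot E_{\gamma_j}=\dth_{\gamma_j}$ for every $j$. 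Then, using $(a\otimes b)(a'\otimes b')=aa'\otimes bb'$ in $G\otimes G$ and Lemma~\ref{lem:1},
\begin{gather*}
  \deg_G(F_\bn\otimes E_\bn)=\Big(\iprod_j\dot F_{\gamma_j}^{\,n_j}\Big)\otimes\Big(\iprod_j\dot E_{\gamma_j}^{\,n_j}\Big)=\iprod_j\big(\dot F_{\gamma_j}\otimes\dot E_{\gamma_j}\big)^{n_j}=\iprod_j\dth_{\gamma_j}^{\,n_j}=\dth_{\sum_j n_j\gamma_j}=\dth_\gamma,
\end{gather*}
which gives $\Theta_\gamma\in[\Uq^{\otimes2}]_{\dth_\gamma}$. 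For $\bar\Theta_\gamma$ I would invoke \eqref{e64}, $\bar\Theta=(\ibar\otimes\ibar)(\Theta)$: since $\ibar$ preserves the $Y$-grading and, by Lemma~\ref{r.ibar}, the $G$-grading, it follows that $\bar\Theta_\gamma=(\ibar\otimes\ibar)(\Theta_\gamma)\in[\Uq^{\otimes2}]_{\dth_\gamma}$ as well.

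The only mildly delicate point is the $\dv$-bookkeeping in the third paragraph: one must verify that the power of $\dv$ coming from $v_{\gamma_j}^{-n_j(n_j-1)/2}$ exactly cancels the one coming from the denominator $[n_j]_{\gamma_j}!$ in $F_{\gamma_j}^{(n_j)}$ (modulo $\dv^2=1$), and that the extra factor $\dv^{d_{\gamma_j}}$ attached to each $E_{\gamma_j}$ is precisely what makes $\dot E_{\gamma_j}$ land in $\Gv$ with no residual $\dv$-ambiguity. Once this is checked, everything else is a formal consequence of the grading lemmas (Lemmas~\ref{r13}, \ref{thm:3}, \ref{lem:1}, \ref{r.ibar}) already established.
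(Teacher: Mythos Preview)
Your proof is correct and follows essentially the same strategy as the paper's: decompose $\Theta_\gamma$ via a PBW basis coming from a longest reduced sequence, compute the $G$-degree of each summand using the braid-group compatibility of the grading (Lemma~\ref{r13}) together with Lemmas~\ref{thm:3} and~\ref{lem:1}, and then handle $\bar\Theta_\gamma$ via $\ibar$ and Lemma~\ref{r.ibar}. The only organizational difference is that the paper packages the computation by first checking $(-1)^n v_\alpha^{-n(n-1)/2}F_\alpha^{(n)}\otimes\bar E_\alpha^{\,n}\in[\Uq^{\otimes2}]_{\dth_{n\alpha}}$ at the simple-root level and then applies $(T_{\alpha_{i_1}}\cdots T_{\alpha_{i_{j-1}}})^{\otimes2}$ and Lemma~\ref{thm:3} to the whole tensor factor at once, whereas you compute $\dot F_{\gamma_j}$ and $\dot E_{\gamma_j}$ separately and assemble afterwards; the $\dv$-bookkeeping you flag as ``mildly delicate'' is exactly what the paper's packaging hides, and your verification of it is correct.
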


\begin{proof} Suppose $\bi=(i_1,\ldots,i_t)$ is a longest reduced sequence.
  Note that
  \begin{gather*}
    \Theta _\gamma =\sum_{\modm =(m_1,\ldots ,m_t)\in \modZ ^t,\;|E_\modm(\bi)| =\gamma }
    \Theta ^{[t]}_{m_t}\cdots \Theta ^{[1]}_{m_1},
  \end{gather*}
  where we set
  \begin{gather*}
    \Theta ^{[i]}_{n}:=(T_{\alpha _{j_1}}\cdots T_{\alpha _{j_{i-1}}})^{\otimes 2}(
    (-1)^n v_{\alpha _{j_i}}^{-\hf n(n-1)} F_{\alpha _{j_i}}^{(n)}\otimes \bar E_{\alpha _{j_i}}^n).
  \end{gather*}

  For each $\alpha \in \Pi $, we have
  \begin{gather*}
    (-1)^n v_\alpha ^{-\hf n(n-1)} F_\alpha ^{(n)}\otimes \bar E_\alpha ^n\in [\Uq^{\otimes 2}]_{\dth_{n\alpha }}.
  \end{gather*}
  By Lemma \ref{thm:3}, we deduce that $\Theta ^{[i]}_{n}\in \Uq^{\otimes 2}$ is
  homogeneous of degree
  \begin{gather*}
    (\dT_{\alpha _{j_1}}\cdots \dT_{\alpha _{j_{i-1}}})^{\otimes 2}(\dth_{n\alpha _{j_i}})
    =\dth_{ns_{\alpha _{j_1}}\cdots s_{\alpha _{j_{i-1}}}(\alpha _{j_i})}.
  \end{gather*}
  Hence it follows that $\Theta _\gamma $ is homogeneous of degree $\dth_\gamma $.
  The case of $\bar\Theta _\gamma $ follows from $\Theta^{-1} = (\ibar\ot\ibar)(\Theta)$ and Lemma \ref{r.ibar} which says $\ibar$ preserves the $G$-grading.
\end{proof}

\begin{corollary} \label{cor.918}
 Fix a longest reduced sequence $\bi$. For $\bm \in \BN^t, \gamma \in Y$,
\begin{align}
 E_\bm \ot K_\bm F_\bm,\  E'_\bm \ot K_\bm F'_\bm & \in [\UZ^\ev \ot \UZ^\ev]_{\elm \ot \elm^{-1}}\subset [\UZ^\ev\ot\UZ^\ev]_1.
 \label{eq.nh1}\\
  F_\bm \, K_\bm\, K_{2\gamma} E_\bm \in \left[ \UZ \right]_1.  \label{eq.nh2}
 \end{align}
Here $\lambda_\bm = |E_\bm|= |E'_\bm|$.
\end{corollary}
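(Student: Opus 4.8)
\textbf{Proof proposal for Corollary \ref{cor.918}.}
The plan is to derive both \eqref{eq.nh1} and \eqref{eq.nh2} from the degree computation for the quasi-$R$-matrix (Lemma \ref{thm:4}) together with the behaviour of the $G$-grading under the antipode (Lemma \ref{thm:6}) and under $\ibar$ (Lemma \ref{r.ibar}), plus the multiplicativity of $\deg_G$ (Proposition \ref{thm:2}). First I would recall from \eqref{eq.En} and \eqref{eq.Fn} that for a fixed longest reduced sequence $\bi$ one has $F_\bm\otimes E_\bm$ (up to a unit in $\cA$) equal to the degree $(-\lambda_\bm,\lambda_\bm)$ homogeneous part $\Theta_{\lambda_\bm}$ of $\Theta$, so that by Lemma \ref{thm:4} we get $F_\bm\otimes E_\bm\in[\Uq^{\otimes2}]_{\dth_{\lambda_\bm}}$, where $\dth_{\lambda_\bm}=\de_{\lambda_\bm}^{-1}\dK_{\lambda_\bm}\otimes\de_{\lambda_\bm}$. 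Similarly $F'_\bm\otimes E'_\bm$ is (a unit times) the homogeneous part $\bar\Theta_{\lambda_\bm}$ of $\Theta^{-1}$, which by the same lemma also lies in $[\Uq^{\otimes2}]_{\dth_{\lambda_\bm}}$.

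For \eqref{eq.nh1} I would then compute the two factors separately. From Proposition \ref{thm:2}, $\deg_G(E_\bm)$ is obtained from $\deg_G(\Theta)$-considerations: writing $\lambda_\bm=\sum_i k_i\alpha_i$, the second tensorand of $\dth_{\lambda_\bm}$ is $\de_{\lambda_\bm}$, so $\deg_G(E_\bm)=\dv^s\de_{\lambda_\bm}$ for the appropriate power $s$ determined by the $v_{\gamma_j}$-factors in \eqref{eq.En}; call this element $\elm:=\deg_G(E_\bm)$. For the first tensorand, $\deg_G(F_\bm)=\de_{\lambda_\bm}^{-1}\dK_{\lambda_\bm}\dv^{-s}$; multiplying on the left by $K_\bm$, whose $G$-degree is $\dK_{-|F_\bm|}=\dK_{\lambda_\bm}$ wait — here I must be careful: $|F_\bm|=-\lambda_\bm$ so $\deg_G(K_\bm)=\dK_{\lambda_\bm}$, and using the relation $\dK_\alpha\dK_\beta=\dK_\beta\dK_\alpha$ and $\dK_\alpha^2=1$ one gets $\deg_G(K_\bm F_\bm)=\dK_{\lambda_\bm}\cdot\de_{\lambda_\bm}^{-1}\dK_{\lambda_\bm}\dv^{-s}=\de_{\lambda_\bm}^{-1}\dv^{-s}\cdot(\text{correction from }\dK\text{–}\de\text{ commutation})$. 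Running the commutation relation $\dK_\alpha\de_\beta=\dv^{(\alpha,\beta)}\de_\beta\dK_\alpha$ carefully, I expect $\deg_G(K_\bm F_\bm)=\elm^{-1}$ exactly (the $\dK$'s cancel in pairs and the $\dv$-factors from commuting $\dK_{\lambda_\bm}$ past $\de_{\lambda_\bm}^{-1}$ combine with $\dv^{-s}$ to give the inverse of $\elm$, since $\dv^2=1$). Hence $E_\bm\otimes K_\bm F_\bm\in[\UZ^\ev\otimes\UZ^\ev]_{\elm\otimes\elm^{-1}}$, and since $\elm\otimes\elm^{-1}=1\otimes1$ in $G\otimes_N G$ exactly when... actually the displayed claim only asserts this lies in $[\,\cdot\,]_1$ for the \emph{total} $G$-grading, i.e. under $\dmu$ the product $\elm\cdot\elm^{-1}=1$; that is immediate. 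The $E'_\bm\otimes K_\bm F'_\bm$ case is identical using $\bar\Theta_{\lambda_\bm}$ in place of $\Theta_{\lambda_\bm}$ and Lemma \ref{r.ibar} to transport degrees through $\ibar$. That these elements actually lie in $\UZ^\ev\otimes\UZ^\ev$ (not merely in $\Uq^{\otimes2}$) follows from $E_\bm\in\UZ^+\subset\UZ^\ev$ and $F_\bm K_\bm\in\UZ^{\ev,-}$ by Propositions \ref{prop.basis} and \ref{r.ba3}.

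For \eqref{eq.nh2} I would observe that $F_\bm K_\bm K_{2\gamma}E_\bm$ is a product of three $G$-homogeneous factors; its total $G$-degree is $\deg_G(F_\bm)\,\deg_G(K_\bm K_{2\gamma})\,\deg_G(E_\bm)$. Now $\deg_G(K_{2\gamma})=\dK_{2\gamma}=1$ since $\dK_\alpha^2=1$, and $\deg_G(K_\bm)=\dK_{\lambda_\bm}$, $\deg_G(F_\bm)=\de_{\lambda_\bm}^{-1}\dK_{\lambda_\bm}\dv^{-s}$, $\deg_G(E_\bm)=\dv^s\de_{\lambda_\bm}$. Multiplying in order and repeatedly using $\dv$ central, $\dv^2=\dK_\alpha^2=1$, and the commutation relation \eqref{e28}, the $\dK_{\lambda_\bm}$ factors cancel and $\de_{\lambda_\bm}^{-1}$ cancels against $\de_{\lambda_\bm}$ (the intervening $\dK$'s contribute only $\dv$-powers that square to $1$ and pair off), leaving total degree $1$. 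Thus $F_\bm K_\bm K_{2\gamma}E_\bm\in[\UZ]_1$, the membership in $\UZ$ itself being clear from Proposition \ref{r.PBWbases} / \ref{r.basesUZ}.

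The main obstacle I anticipate is purely bookkeeping: tracking the $\dv$-powers correctly through the non-commutative relations in $G$, since $\de_\alpha\de_\beta=\dv^{(\alpha,\beta)}\de_\beta\de_\alpha$ and $\dK_\alpha\de_\beta=\dv^{(\alpha,\beta)}\de_\beta\dK_\alpha$ mean that $\de_{\lambda_\bm}$ and $\dK_{\lambda_\bm}$ do not literally commute, so "cancellation" must be verified at the level of $G$, not $G/N$. The cleanest route is probably to work modulo $N$ first — where $G/N\cong Y\times(Y/2Y)$ is abelian and the cancellations are transparent — to see that the answer is $1$ up to $\dv$, and then pin down the $\dv$-ambiguity by a direct count of the quadratic $v_{\gamma_j}$-exponents in \eqref{eq.En}, \eqref{eq.Fn} against the relation exponents, which is exactly the same arithmetic already implicit in Lemma \ref{thm:4}'s proof. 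Everything else reduces to the lemmas quoted above.
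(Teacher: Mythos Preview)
Your approach is correct and is essentially the paper's own argument, just spelled out in more detail. The paper's proof is two lines: it cites Lemma~\ref{thm:4} for \eqref{eq.nh1} and then observes $\dK_{2\gamma}=1$ for \eqref{eq.nh2}. A few remarks that would streamline your write-up: (i) Lemma~\ref{thm:6} on the antipode is never actually needed here, so you can drop it from the plan; (ii) your statement that $F_\bm\otimes E_\bm$ is ``up to a unit equal to $\Theta_{\lambda_\bm}$'' is not quite right, since $\Theta_{\lambda_\bm}$ is the sum over all $\bm'$ with $|E_{\bm'}|=\lambda_\bm$ --- but the proof of Lemma~\ref{thm:4} actually shows each individual summand $F_\bm\otimes E_\bm$ lies in $[\Uq^{\otimes2}]_{\dth_{\lambda_\bm}}$, which is what you need; (iii) the $\dv$-bookkeeping you worry about is a non-issue, because the claim $E_\bm\otimes K_\bm F_\bm\in[\UZ^\ev\otimes\UZ^\ev]_{\elm\otimes\elm^{-1}}$ is a statement in the $G^{\otimes2}=G\otimes_N G$ grading, where $(\dv^s\elm)\otimes(\dv^{-s}\elm^{-1})=\elm\otimes\elm^{-1}$ automatically --- so you never have to pin down $s$; and (iv) for \eqref{eq.nh2} the computation is cleanest as $\deg_G(F_\bm)\cdot\dK_{\lambda_\bm}\cdot\dK_{2\gamma}\cdot\deg_G(E_\bm)=(\dv^b\de_{\lambda_\bm}^{-1}\dK_{\lambda_\bm})\dK_{\lambda_\bm}(\dv^{-b}\de_{\lambda_\bm})=\de_{\lambda_\bm}^{-1}\de_{\lambda_\bm}=1$, with the two $\dK_{\lambda_\bm}$'s adjacent so no commutation is needed.
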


\begin{proof}
 We have $\Theta= \sum_\bm F_\bm \otimes E_\bm$ and $\Theta^{-1} = \sum_\bm F'_\bm \otimes E'_\bm$.
Hence, \eqref{eq.nh1} follows from Lemma \ref{thm:4}. In turn,  \eqref{eq.nh2} follows from \eqref{eq.nh1}, because
 $\dK_{2 \gamma}=1$.
\end{proof}
\subsection{Twist forms} Recall that we have defined $\cT_\pm: \UZ^\ev \to \BQ(v)$, see Section \ref{twistonUq}.

\begin{proposition}\label{r.918a}
Both maps $\cT_\pm: \UZ^\ev   \to \BQ(v)$ preserve the $G$-grading, i.e.
$\cT_\pm \left([\UZ^\ev ]_g \right)
\subset [\BQ(v)]_g.$
\end{proposition}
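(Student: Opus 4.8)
The plan is to trace the definition of $\cT_\pm$ on $\UZ^\ev$ back to the explicit formula on the PBW-type basis and observe that everything is $G$-homogeneous of the correct degree. Recall from Section~\ref{twistonUq} that $\cT_+$ is the $\BC(v)$-linear extension of the functional defined on the basis $\{F_\bm K_\bm K_{2\gamma} E_\bn \mid \bm,\bn\in\BN^t,\ \gamma\in Y\}$ of $\Uq^\ev$ by formula \eqref{eq.ba11}:
\begin{gather*}
  \cT_+(F_\bm K_\bm K_{2\gamma} E_\bn) = \delta_{\bm,\bn}\, q^{(\rho,|E_\bn|)}\, v^{2(\gamma,\rho)-(\gamma,\gamma)}.
\end{gather*}
Since $\cT_\pm$ is $\BC(v)$-linear and the $G$-grading decomposes $\Uq^\ev$ into the direct sum of $[\Uq^\ev]_g$, it suffices to check that each basis element $F_\bm K_\bm K_{2\gamma} E_\bn$ is $G$-homogeneous and that its image under $\cT_+$ lies in $[\BQ(v)]_{\deg_G(F_\bm K_\bm K_{2\gamma} E_\bn)}$; for basis elements where $\cT_+$ vanishes ($\bm\neq\bn$) there is nothing to prove. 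So the two things I need are: (a) the degree of $F_\bm K_\bm K_{2\gamma} E_\bn$, and (b) the degree of a monomial $q^a v^b\in\BQ(v)$, namely $\dv^{\,b\bmod 2}$ (with the convention from Section~\ref{sec:g1g.u} that $[\BC(v)]_g$ is $g\,\BC(q)$ for $g\in\{1,\dv\}$ and $0$ otherwise).

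First I would compute the $G$-degree of the basis element when $\bm=\bn$. By Corollary~\ref{cor.918}, formula \eqref{eq.nh2}, we already have $F_\bm K_\bm K_{2\gamma} E_\bm \in [\UZ]_1$, i.e. its $G$-degree is $1$. Therefore I must show $\cT_+(F_\bm K_\bm K_{2\gamma} E_\bm) = q^{(\rho,|E_\bm|)} v^{2(\gamma,\rho)-(\gamma,\gamma)} \in [\BQ(v)]_1 = \BC(q)$. This amounts to checking that the exponent of $v$, namely $2(\rho,|E_\bm|)\cdot 2 + 2(\gamma,\rho)-(\gamma,\gamma)$ wait—more carefully: $q^{(\rho,|E_\bm|)} = v^{2(\rho,|E_\bm|)}$, so the total power of $v$ is $2(\rho,|E_\bm|) + 2(\gamma,\rho)-(\gamma,\gamma)$, which must be even. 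The term $2(\rho,|E_\bm|)+2(\gamma,\rho)$ is manifestly even (it is $2$ times an integer, since $(\rho,\mu),(\gamma,\rho)\in\BZ$ for $\mu\in Y$, $\gamma\in Y$—indeed $\rho\in X$ and the inner product pairs $X$ with $Y$ integrally). For $(\gamma,\gamma)$: since $\gamma\in Y=\BZ\Pi$ and the form satisfies $(\alpha,\alpha)\in\{2,4,6\}$ for $\alpha\in\Pi$ and $(\alpha,\beta)\in\BZ$ for $\alpha,\beta\in\Pi$, expanding $(\gamma,\gamma)=\sum_{i,j}m_im_j(\alpha_i,\alpha_j)$ shows $(\gamma,\gamma)\equiv\sum_i m_i^2(\alpha_i,\alpha_i)\equiv 0 \pmod 2$. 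Hence the power of $v$ is even, so $q^{(\rho,|E_\bm|)}v^{2(\gamma,\rho)-(\gamma,\gamma)}\in\BC(q)=[\BQ(v)]_1$, matching the degree $1$ of the basis element. This settles $\cT_+$.

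For $\cT_-$ I would use \eqref{eq.cTminus}, $\cT_-(x)=\cT_+(\tphi(x))$, together with the fact (Lemma~\ref{lem.dva}) that $\tphi$ shifts $G$-degrees by the automorphism $\dtphi$, and also with Lemma~\ref{r.ibar}/the relevant observation that $\cT_+$ preserves degrees up to $\dtphi$—actually the cleanest route is: $\cT_-$ preserves the $G$-grading iff for $x\in[\UZ^\ev]_g$ we have $\cT_-(x)\in[\BQ(v)]_g$; writing $\cT_-(x)=\cT_+(\tphi(x))$ with $\tphi(x)\in[\UZ^\ev]_{\dtphi(g)}$ (Lemma~\ref{lem.dva}), and using the just-proven statement for $\cT_+$, gives $\cT_-(x)\in[\BQ(v)]_{\dtphi(g)}$. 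So I need $[\BQ(v)]_{\dtphi(g)}=[\BQ(v)]_g$ whenever this can be nonzero, which holds because $\dtphi$ fixes $\dv$ (by definition $\dtphi(\dv)=\dv$) and hence fixes the only two degrees $\{1,\dv\}$ on which $[\BQ(v)]$ is supported. Alternatively—and perhaps more robustly—I would directly verify that $\tphi$ of a basis element $F_\bm K_\bm K_{2\gamma} E_\bn$ re-expands in the PBW basis with coefficients in $\BC(q)$ and apply the $\cT_+$ computation; but the degree-shift argument is shorter.

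The only real care needed—and the step I expect to be the mild obstacle—is the bookkeeping of parities of inner products to confirm that the scalar $q^{(\rho,|E_\bm|)}v^{2(\gamma,\rho)-(\gamma,\gamma)}$ has even total $v$-exponent, i.e. genuinely lies in $\BC(q)=[\BQ(v)]_1$ rather than in $v\,\BC(q)$. Everything else is a direct appeal to Corollary~\ref{cor.918}, \eqref{eq.ba11}, \eqref{eq.cTminus}, and Lemma~\ref{lem.dva}. I would also remark that one could avoid the parity computation entirely by invoking \eqref{eq.ba11} itself, which already asserts $\cT_+(F_\bm K_\bm K_{2\gamma}E_\bn)\in\Zq\subset\BZ[v^{\pm1}]$, i.e. the value is a Laurent polynomial in $q$—hence by definition an element of $[\BQ(v)]_1$—so the statement for $\cT_+$ is immediate from \eqref{eq.ba11} combined with \eqref{eq.nh2}, and $\cT_-$ then follows from \eqref{eq.cTminus} and Lemma~\ref{lem.dva} as above.
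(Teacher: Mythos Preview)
Your proposal is correct and follows essentially the same approach as the paper: verify the claim for $\cT_+$ on the PBW-type basis using Corollary~\ref{cor.918} (which gives $G$-degree~$1$ for $F_\bm K_\bm K_{2\gamma}E_\bm$) together with \eqref{eq.ba11} (which gives the value in $\Zq\subset[\BQ(v)]_1$), then deduce $\cT_-$ via \eqref{eq.cTminus} and Lemma~\ref{lem.dva}, noting $\dtphi$ fixes $\{1,\dv\}$. The paper phrases the $\cT_+$ step slightly differently by passing to the $\BQ(q)$-basis $\{v^\delta F_\bm K_\bm K_{2\gamma}E_\bn : \delta\in\{0,1\}\}$, which sidesteps your implicit use of the compatibility of $\BC(v)$-linearity with the $G$-grading, but this is only a cosmetic difference.
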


\begin{proof} (a) First we consider the case of $\cT_+$.
The set
$$\{    F_\bm \, K_\bm\, K_{2\gamma} E_\bn \mid \bn,\bm \in \BN^t, \gamma\in Y  \}$$
 is a $\BQ(v)$-basis of  $\UZ^\ev \ot_\cA \BQ(v)$. Hence,
 $$\{   v^\delta F_\bm \, K_\bm\, K_{2\gamma} E_\bn \mid \bn,\bm \in \BN^t, \gamma\in Y , \delta \in \{0,1\} \}$$
  is a $\BQ(q)$-basis of $\UZ^\ev \ot_\cA \BQ(v)$. Each element of this basis is $G$-homogeneous.
 By \eqref{eq.ba12},
 $$ \cT_+(v^\delta F_\bm \, K_\bm\, K_{2\gamma} E_\bn) = \delta_{\bn,\bm}\,
 v^\delta q^{(\gamma,\rho) - (\gamma,\gamma)/2} \in v^{\delta} \Zq= [\cA]_{\dv^\delta}.
 $$
 By  Corollary \ref{cor.918}, the $G$-grading of $v^\delta F_\bm \, K_\bm\, K_{2\gamma} E_\bm$ is $\dv^\delta$. Hence, we have
 \be
 \label{eq.gr1}
 \cT_+
 \left([\UZ^\ev \ot_\cA \BQ(v)]_g \right)
\subset [\BQ(v)]_g .
 \ee
(b) Now consider $\cT_-$.
  Using \eqref{eq.cTminus}, Lemma \ref{lem.dva}, and \eqref{eq.gr1}, we have
 $$ \cT_- \left([\UZ^\ev \ot_\cA \BQ(v)]_g \right) = \cT_+ \left(\varphi([\UZ^\ev \ot_\cA \BQ(v)]_g) \right) \subset \cT_+ \left( [ \UZ^\ev \ot_\cA \BQ(v)]_{\dtphi(g)} \right) \subset [\BQ(v)]_{\dtphi(g)}= [\BQ(v)]_{g},$$
 where the last identity follows from the fact that for the involution $\dtphi$, we have $\dtphi(1) =1$ and $\dtphi(v)=v$, and for any $g \not \in \{1,v\}$, we have $[\BQ(v)]_g=0$.
\end{proof}

\subsection{Coproduct}
\label{sec:coproduct}

\begin{lemma}
\label{r.coprod}
Suppose $x\in \Uq$ is $G^\ev$-homogeneous. There exists a presentation
  $$ \Delta(x)= \sum x_{(1)} \ot x_{(2)}$$
  such that each for each  $x_{(1)}\ot x_{(2)}$,

   (i) $x_{(1)}$ is $G$-homogeneous

   (ii) $x_{(2)}$ and $x_{(1)} K_{|x_{(2)}|}$ are
  $G^\ev$-homogeneous,
  and
  \be
  \dx _{(1)} \dK_{|x_{(2)}|} \dx_{(2)}=\dx =  \dx _{(2)} \dK_{|x_{(2)}|} \dx_{(1)}. \label{eq.evenD}
  \ee
  \end{lemma}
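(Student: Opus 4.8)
The plan is to prove Lemma \ref{r.coprod} by reducing to the generators of $\Uq^\ev$ and then propagating the statement through products, using the fact that $\Delta$ is an algebra homomorphism and that the $G$-grading is multiplicative. First I would record the base case: the even part $\Uq^\ev$ is generated as a $\BC(v)$-algebra by $K_\al^{\pm2}$, $E_\al$, and $F_\al K_\al$ for $\al\in\Pi$, and each of these is $G^\ev$-homogeneous by Proposition \ref{thm:2}. For each generator $x$ I would write $\Delta(x)$ explicitly from Section \ref{sec.a1}: $\Delta(K_\al^{\pm 2})=K_\al^{\pm2}\ot K_\al^{\pm2}$; $\Delta(E_\al)=E_\al\ot 1+K_\al\ot E_\al$; and $\Delta(F_\al K_\al)=F_\al K_\al\ot K_\al^{-1}\cdot K_\al + (\text{term}) = F_\al K_\al \ot 1 + \text{(the }1\ot F_\al K_\al\text{-type term twisted appropriately})$ — in each case one checks directly that in every summand $x_{(1)}$ is $G$-homogeneous, that $x_{(2)}$ is $G^\ev$-homogeneous, that $x_{(1)}K_{|x_{(2)}|}$ is $G^\ev$-homogeneous, and that the balancing identity \eqref{eq.evenD} holds. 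This last is essentially a consequence of Lemma \ref{thm:6} on how the antipode degree function $\dS$ interacts with the grading, together with the defining commutation relation \eqref{e28} between $\dK_\gamma$ and the rest of $G$; concretely $\dx_{(1)}\dK_{|x_{(2)}|}\dx_{(2)}$ collapses because the two $K$-shifts on the left and the comultiplication-induced $K$-factor on the right cancel in $G$.

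Next I would handle the inductive step: suppose $x$ and $y$ are $G^\ev$-homogeneous and each admits a presentation of the desired form; I want to show $xy$ does. Since $\Delta(xy)=\Delta(x)\Delta(y)=\sum x_{(1)}y_{(1)}\ot x_{(2)}y_{(2)}$, I would set $(xy)_{(1)}:=x_{(1)}y_{(1)}$ and $(xy)_{(2)}:=x_{(2)}y_{(2)}$. Then $(xy)_{(1)}$ is $G$-homogeneous as a product of $G$-homogeneous elements, $(xy)_{(2)}$ is $G^\ev$-homogeneous likewise, and $|(xy)_{(2)}|=|x_{(2)}|+|y_{(2)}|$, so $(xy)_{(1)}K_{|(xy)_{(2)}|}=x_{(1)}y_{(1)}K_{|x_{(2)}|+|y_{(2)}|}$; using the commutation rule \eqref{eq.8157} to move $K_{|y_{(2)}|}$ past $y_{(1)}$ up to a power of $v$ (which is central in $G$, degree $\dv$), this equals a scalar times $(x_{(1)}K_{|x_{(2)}|})(y_{(1)}K_{|y_{(2)}|})$, a product of $G^\ev$-homogeneous elements, hence $G^\ev$-homogeneous. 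The scalar $v^{\text{something}}$ does not affect $G^\ev$-homogeneity since $\dv\in G^\ev$. Finally the balancing identity \eqref{eq.evenD} for $xy$ follows by multiplying the identities for $x$ and for $y$ in the group $G$ and using that $\dK_{|x_{(2)}|+|y_{(2)}|}=\dK_{|x_{(2)}|}\dK_{|y_{(2)}|}$ together with the commutation relations \eqref{e28}, \eqref{e29} to rearrange factors; the $v$-powers introduced by rearrangement lie in $N$ and are harmless because they appear on both sides symmetrically.

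The main obstacle I anticipate is bookkeeping with the noncommutativity of $G$ and the precise placement of the $\dv$-powers in the balancing identity \eqref{eq.evenD} — in particular, verifying that the left-to-right and right-to-left versions $\dx_{(1)}\dK_{|x_{(2)}|}\dx_{(2)}$ and $\dx_{(2)}\dK_{|x_{(2)}|}\dx_{(1)}$ both equal $\dx$ requires that the two arrangements differ only by $\dv^{(|x_{(1)}|,|x_{(2)}|)}$-type factors that turn out to be trivial here. This is where I would be most careful: one shows $|x_{(1)}|+|x_{(2)}|=|x|$ (the $Y$-grading is additive under $\Delta$), and $\dK_{|x_{(2)}|}$ is central modulo $\dv$-powers determined by pairings, so the reordering is controlled by \eqref{e28} and \eqref{e29}; since $x$ is even the relevant parity contributions vanish. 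Everything else — the generator computations, the multiplicativity — is routine once the base case is pinned down, so I would present the generator check in a compact table-free paragraph and then give the two-line induction.
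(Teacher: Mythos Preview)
Your approach is essentially the same as the paper's: verify the statement on the generators $K_\al^{\pm2}$, $E_\al$, $F_\al K_\al$ of $\Uq^\ev$, then propagate through products using that $\Delta$ is an algebra homomorphism and the $G$-grading is multiplicative. The paper states this in two sentences and leaves the details as ``easy to check''; your outline fills in exactly those details.

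One small simplification worth noting: the obstacle you flag --- that both orderings in \eqref{eq.evenD} must equal $\dx$ --- is not really an obstacle. Once (i) and (ii) hold, the two sides $\dx_{(1)}\dK_{|x_{(2)}|}\dx_{(2)}$ and $\dx_{(2)}\dK_{|x_{(2)}|}\dx_{(1)}$ are automatically equal: since $\dx_{(1)}\dK_{|x_{(2)}|}$ and $\dx_{(2)}$ both lie in $G^\ev$, relation \eqref{e29} and then \eqref{e28} give that swapping them introduces a factor $\dv^{2(|x_{(1)}|,|x_{(2)}|)}=1$. So you only need to check one of the two equalities.
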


\begin{remark}
A presentation of $\D(x)$ as in Proposition Lemma \ref{r.coprod} is called a {\em $G$-good presentation}. When $x$ is $G^\ev$-homogeneous, we always use a $G$-good presentation for $\D(x)$.
\end{remark}

\begin{proof} Suppose $x,y$ are $G^\ev$-homogeneous.
If $\Delta(x) = \sum_i x'_i \ot x''_i$ and $\Delta(y) = \sum_j y'_j \ot y''_j$ are $G$-good presentation of $x$ and $y$ respectively, then it is easy to check that $\sum_{i,j} x'_i y'_j \ot x''_i y''_j$ is
a $G$-good presentation of $\Delta(xy)$. Hence, one needs only to check the statement for $x$ equal to  generators  $K_{2\al}, E_\al, F_\al K_\al$ of $\Uq^\ev$. For each of these generators, the defining formulas of $\Delta$ show that the statement holds.
\end{proof}

\subsection{Adjoint action}
\label{sec:adjoint-action}

Define a map
\begin{equation*}
  \dot\ad\col G\otimes \Gv\rightarrow \Gv
\end{equation*}
by
\begin{equation*}
  \dot\ad(g\dK_{\lambda }\otimes g') = \dv^{(\lambda ,|g'|)}gg'
\end{equation*}
for $\lambda \in Y$, $g,g'\in \Gv$.  Note that for $g,g'\in \Gv$ we have
$\dot\ad(g\otimes g')=gg'$.

\begin{lemma}
  \label{r18}
  For $g,g'\in \Gv$ and $\gamma \in Y$, we have
  \begin{gather*}
    \ad([\Uq\otimes \Uqv]_{g\dK_\gamma \otimes g'})\subset \uqvq{\dot\ad(g\dK_\gamma \otimes g')}.
  \end{gather*}
  In particular, if $z= x \tri y$ where both $x,y$ are $G^\ev$-homogeneous, then $z$ is $G^\ev$-homogeneous with
  \begin{gather*}
    \dot z = \dx \dy.
  \end{gather*}
\end{lemma}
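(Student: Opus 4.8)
The plan is to verify the statement first for the generators of $\Uq$ acting in the first tensor slot, and then propagate it to an arbitrary homogeneous $x$ by means of the $\Uq$-module structure on $\Uq$ given by the adjoint action (Section~\ref{sec:adjaction}). By $\BC(v)$-bilinearity of $\ad$ it suffices to treat a single elementary tensor $x\otimes y$ with $x\in\uqq{g\dK_\gamma}$ and $y\in\uqvq{g'}$, where $g,g'\in\Gv$ and $\gamma\in Y$; the ``in particular'' clause of the lemma is then the special case $\gamma=0$ with $x$ even, where $\dot\ad(g\otimes g')=gg'$. Since the $G$-grading of $\Uq$ is inherited from the monomial grading of the free algebra $\cUq$ used in the proof of Proposition~\ref{thm:2}, I may assume that $x$ is a monomial $x=z_1z_2\cdots z_k$ in the generators $v^{\pm1},K_\alpha^{\pm1},E_\alpha,F_\alpha$, so that $\deg_G(x)=\deg_G(z_1)\cdots\deg_G(z_k)$.

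The first step is the case $k=1$: for each such generator $z$ and each $G^\ev$-homogeneous $y$ of degree $g'$, I would check directly that $z\tri y$ is $G^\ev$-homogeneous of degree $\dot\ad(\deg_G(z)\otimes g')$. Its evenness is immediate from $\Uq\tri\Uqv\subset\Uqv$ (Lemma~\ref{eq.evenD3}(b)), so only the $G$-degree needs to be computed; this is short, using the explicit formulas \eqref{eq.8161} for $K_\alpha\tri y$, $E_\alpha\tri y$, $F_\alpha\tri y$ together with the commutation rules \eqref{e28}, \eqref{e29} in $G$. For example, both summands $E_\alpha y$ and $K_\alpha yK_\alpha^{-1}E_\alpha$ of $E_\alpha\tri y$ turn out to have $G$-degree $\dv^{d_\alpha}\de_\alpha g'$, the two powers of $\dv$ produced by commuting $\dK_\alpha$, resp.\ $\de_\alpha$, past $g'$ cancelling because $\dv^2=1$; the cases $z=v^{\pm1},K_\alpha^{\pm1},F_\alpha$ are entirely analogous.

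The second step is to record the compatibility of $\dot\ad$ with the multiplication of $G$: for $h_1,h_2\in G$ and $g'\in\Gv$,
\begin{gather*}
  \dot\ad(h_1h_2\otimes g')=\dot\ad\bigl(h_1\otimes\dot\ad(h_2\otimes g')\bigr),
\end{gather*}
which follows by writing $h_i=g_i\dK_{\gamma_i}$ with $g_i\in\Gv$, $\gamma_i\in Y$, and applying \eqref{e28} and \eqref{e29} to both sides. Combining the two steps, the module identity $\ad(z_1z_2\otimes y)=z_1\tri(z_2\tri y)$ gives $\ad(x\otimes y)=z_1\tri(z_2\tri(\cdots(z_k\tri y)))$, and an induction on $k$ finishes the proof: at each stage the partial result $z_j\tri(\cdots(z_k\tri y))$ is even and $G^\ev$-homogeneous, so the first step applies to the next generator $z_{j-1}$, and the second step assembles the degrees into $\dot\ad(\deg_G(z_1)\cdots\deg_G(z_k)\otimes g')=\dot\ad(g\dK_\gamma\otimes g')$.

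I do not expect a genuine obstacle. The only points needing care are the bookkeeping of the central order-two factors $\dv^{(\cdot,\cdot)}$ in $G$ and in the tensor products $G^{\otimes n}$ formed over $N$, and the fact that Lemma~\ref{eq.evenD3}(b) must be used from the start, so that every intermediate term in the induction is known to lie in $\Uqv$ before its $G^\ev$-degree can be tracked through $\dot\ad$.
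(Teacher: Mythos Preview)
Your approach is correct and is a genuinely different route from the paper's. The paper does not induct on generators; instead it takes an arbitrary $G^\ev$-homogeneous $x$, writes the general element of $[\Uq]_{g\dK_\gamma}$ as $xK_\gamma$, and computes $(xK_\gamma)\tri y=\sum v^{(\gamma,|y|)}x_{(1)}yS(x_{(2)})$ in one stroke, then reads off the $G$-degree of each summand using the $G$-good presentation of $\Delta(x)$ (Lemma~\ref{r.coprod}) together with the formula $\dS(g)=g\dK_{|g|}$ (Lemma~\ref{thm:6}) and the identity $\dx_{(2)}\dK_{|x_{(2)}|}\dx_{(1)}=\dx$. This makes the proof short and avoids your compatibility identity $\dot\ad(h_1h_2\otimes g')=\dot\ad(h_1\otimes\dot\ad(h_2\otimes g'))$ altogether. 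Your argument, in exchange, is more self-contained: it does not invoke Lemmas~\ref{r.coprod} or~\ref{thm:6}, and the generator check you sketch (together with the module property $(ab)\tri y=a\tri(b\tri y)$) is entirely elementary. Both approaches rely on Lemma~\ref{eq.evenD3}(b) to guarantee the intermediate results stay in $\Uqv$ so that $\dot\ad$ can be iterated.
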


\begin{proof} Suppose $x,y$ are $G^\ev$-homogeneous, and $\gamma \in Y$. Choose a $G$-good presentation  $\Delta(x) = \sum x_{(1)} \ot x_{(2)}$ (see Section \ref{sec:coproduct}).
By definition,
$$ (x K_\gamma) \tri y = \sum x_{(1)} K_\gamma y S( x_{(2)} K_\gamma) = \sum x_{(1)} K_\gamma \, y K_{\gamma}^{-1} \,  S( x_{(2)}) =   \sum v^{(\gamma,|y|)}\, x_{(1)} \, y  \,  S( x_{(2)}).$$
A term of the last sum is in $[\Uq]_u$, where
\begin{align*}
u & = \dv^{(\gamma,|y|)}\, \dx_{(1)} \dy \,  \dS( \dx_{(2)}) \\
&= \dv^{(\gamma,|y|)}\, \dx_{(1)}\,  \dy  \, \dK_{|x_{(2)}|}\,  \dx_{(2)}\quad \text{by Lemma \ref{thm:6}} \\
&= \dv^{(\gamma,|y|)}\, \dx_{(1)} \, \dK_{|x_{(2)}|}\,  \dx_{(2)} \,  \dy\\
&= \dv^{(\gamma,|y|)}\, \dx \,  \dy \quad \text{by \eqref{eq.evenD}}.
\end{align*}
  Hence we have the assertion.
\end{proof}

\np
\section{Integral values of $J_M$}\label{sec:integralM}
By Theorem \ref{r.JMdef}, the core subalgebra $\Xh$, constructed in Section \ref{sec:corealgebra}, gives rise to an invariant $J_M$ of integral homology 3-spheres. A priori, $J_M \in \Chh$.
The main result of this section is to show that $J_M\in \Zqh$ for any integral homology 3-sphere $M$. To prove this fact we will construct a family of $\cA$-submodules $\tK_n \subset \Xh^{\ho n}$ satisfying Conditions (AL1) and (AL2) of Theorem \ref{thm.construction}, with $\tK_0= \Zqh$. Then by Theorem  \ref{thm.construction}, $J_M \in \tK_0= \Zqh$.

\subsection{Module
$\tK_n$}
\label{sec:Kn1}
For $n \ge 0$  let $\left[(\UZ^\ev)^{\ot n}\right]_1$ denote the $G$-grading 1 part of $(\UZ^\ev)^{\ot n}$.
Define
\begin{align}
 \modK_n & := (\sXZe)^{\ot n} \cap \left[(\UZ^\ev)^{\ot n}\right]_1
 \subset \left( \Xh^{\ho n} \cap \Uh^{\ho n} \right).
\end{align}

In the notation of \eqref{eq.XAsq}, $\modK_n = [\XA^{\boxtimes n}]_1$. For example, $\modK_0= \Zq$.
Define filtrations on $\modK_n$ by
$$ \cF_k(\modK_n) := (q;q)_k \modK_n \subset \left( h^k \Xh^{\ho n} \cap h^k  \Uh^{\ho n} \right).$$

Let $\tK_n$ be the completion of $\modK_n$ by the filtrations $\cF_k(\modK_n)$ in $\Uh^{\ho n}$, i.e.
\be
\notag
\tK_n :=\left \{ x=\sum_{k=0}^\infty  x_k  \ \bigg| \ x_k \in \cF_k(\modK_n)\right \} \subset \left(  \Xh^{\ho n} \cap  \Uh^{\ho n} \right).
\ee
Since $\modK_0=\Zq$, we have  $\tK_0=  \Zqh$.
Each $\tK_n$ has the structure of a complete $\Zqh$-module.

\begin{proposition} \label{r.integ10} The family $(\tK_n)$ satisfies Condition (AL2) of Theorem \ref{thm.construction}. Namely,
if  $\ve_1, \dots, \ve_n \in \{ \pm1\}$ and $x\in \tK_n$ then
 $$ (\cT_{\ve_1} \ho \dots \ho \cT_{\ve_n}) (x) \in \tK_0= \Zqh.$$
\end{proposition}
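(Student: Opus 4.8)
The plan is to reduce to the finite level $\modK_n$ by continuity, and then to intersect two integrality statements that are already available above.

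First I would note that $\cT_{\ve_1}\ho\cdots\ho\cT_{\ve_n}\colon\Xh^{\ho n}\to\Chh$ is $\Chh$-linear, hence $h$-adically continuous, and that $\tK_n\subset\Xh^{\ho n}$ lies in its domain. Since $\tK_n$ is the completion of $\modK_n$ with respect to the filtration $\cF_k(\modK_n)=(q;q)_k\,\modK_n$, and $(q;q)_k\in\Zq$, it is enough to prove
\[
  (\cT_{\ve_1}\ho\cdots\ho\cT_{\ve_n})(\modK_n)\subset\Zq .
\]
Granting this, for $x=\sum_{k\ge0}x_k$ with $x_k\in\cF_k(\modK_n)$ we get $(\cT_{\ve_1}\ho\cdots\ho\cT_{\ve_n})(x_k)\in(q;q)_k\Zq$, so the partial sums of the ($h$-adically convergent) series $\sum_k(\cT_{\ve_1}\ho\cdots\ho\cT_{\ve_n})(x_k)$ form a Cauchy sequence in $\Zqh$; its limit, which is $(\cT_{\ve_1}\ho\cdots\ho\cT_{\ve_n})(x)$, therefore lies in $\Zqh$.

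Next, fix $x\in\modK_n=(\sXZe)^{\ot n}\cap\bigl[(\UZ^\ev)^{\ot n}\bigr]_1$ and write $t:=(\cT_{\ve_1}\ho\cdots\ho\cT_{\ve_n})(x)\in\Chh$. On one hand, expressing $x$ as a finite sum of pure tensors with entries in $\sXZe$ and using Proposition~\ref{r.integ6} (that $\cT_\pm(\sXZe)\subset\tA$) gives $t\in\tA$. On the other hand, using the compatible description of $\cT_\pm$ as the $G$-grading preserving map $\UZ^\ev\to\BQ(v)$ of Proposition~\ref{r.918a}, the componentwise functional sends the total $G$-degree $1$ part of $(\UZ^\ev)^{\ot n}$ into the total $G$-degree $1$ part of $\BQ(v)^{\ot n}$, which equals $[\BQ(v)]_1=\BQ(q)$ — because each tensor factor $[\BQ(v)]_g$ vanishes unless $g\in\{1,\dv\}$, with $[\BQ(v)]_1=\BQ(q)$ and $[\BQ(v)]_{\dv}=v\BQ(q)$, so a product of such factors of total $G$-degree $1$ involves an even number of $v$'s and hence lies in $\BQ(q)$. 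Thus $t\in\tA\cap\BQ(q)=\Zq$ by Lemma~\ref{r.60}. This establishes the displayed inclusion, and together with the first step, the proposition.

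The computational substance is entirely contained in Propositions~\ref{r.integ6} and \ref{r.918a} and Lemma~\ref{r.60}, so the remaining work is organizational: one must carefully match up the several tensor products and ground rings in play — the algebraic tensor powers of $\sXZe$ over $\tA$ and of $\UZ^\ev$ over $\cA$, the $\BQ(v)$-linear tensor powers that carry the $G$-grading, and the $h$-adically completed tensor power inside $\Xh^{\ho n}$ — verifying that the two descriptions of each $\cT_{\ve_i}$ (on $\sXZe$, landing in $\tA$, and on $\UZ^\ev$, landing in $\BQ(v)$) compute the same scalar $t$. The point I expect to be most delicate is exactly this reconciliation; its upshot is that the $G$-degree $1$ condition built into the definition of $\modK_n$ is precisely what forces $t$ into $\Zq=\BZ[q^{\pm1}]$ rather than only into $\tA$ or into $v\,\BZ[q^{\pm1}]$, i.e. it excludes half-integer powers of $q$.
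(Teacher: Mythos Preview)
Your proof is correct and follows essentially the same approach as the paper: reduce by continuity to $\modK_n$, then for $x_k\in\modK_n$ combine Proposition~\ref{r.integ6} (giving $t\in\tA$) with Proposition~\ref{r.918a} (giving $t\in[\BQ(v)]_1=\BQ(q)$), and conclude via Lemma~\ref{r.60} that $t\in\tA\cap\BQ(q)=\Zq$. Your elaboration on why the total $G$-degree $1$ condition forces the product of the componentwise values into $\BQ(q)$ rather than $v\BQ(q)$ makes explicit a step the paper leaves tacit.
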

\begin{proof} By definition, $x$ has a presentation $x = \sum_{k=0}^\infty (q;q)_k x_k$, where $x_k \in \modK_n \subset \Xh$.
Since $\cT_\pm$ are continuous on the $h$-adic topology of $\Xh^{\ho n}$, we have
 \be
 \label{eq.e118}
 (\cT_{\ve_1} \ho \dots \ho \cT_{\ve_n})(x)=   \sum_{k=0}^\infty (q;q)_k \, (\cT_{\ve_1} \ho \dots \ho \cT_{\ve_n}) (x_k) \in \Chh.
 \ee
Since $x_k \in \modK_n \subset (\XZ^\ev)^{\ot n}$, by Proposition \ref{r.integ6}, $(\cT_{\ve_1} \ho \dots \ho \cT_{\ve_n}) (x_k) \subset \tA$.

Since $x_k \in  \left[(\UZ^\ev)^{\ot n}\right]_1$, by Proposition \ref{r.918a}, $(\cT_{\ve_1} \ho \dots \ho \cT_{\ve_n}) (x_k) \subset  [\BQ(v)]_1= \BQ(q).$
Hence,
$$ (\cT_{\ve_1} \ho \dots \ho \cT_{\ve_n}) (x_k) \in \tA \cap \BQ(q)= \Zq,$$
where the last identity is Lemma \ref{r.60}. From \eqref{eq.e118} we have $(\cT_{\ve_1} \ho \dots \ho \cT_{\ve_n})(x) \in \Zqh$.
\end{proof}

\subsection{Finer version of
$\tK_n$} \label{sec:Kn2}
 We will show that for an $n$-component bottom tangle $T$ with 0 linking matrix, $J_T \in \tK_n$. Then Proposition \ref{r.integ10} will show that $J_M\in \Zqh$ for any integral homology 3-spheres.

For the purpose of proving that $J_M$ recovers the Witten-Reshetikhin-Turaev invariant, we want $J_T$ to belong to smaller subsets of $\tK_n$, which we will describe here.

Suppose $\cU$ is  an $\cA$-Hopf-subalgebra of $\UZ$.
Define (with convention $\Ur^{\ot 0} = \cA$)
$$ \modK_n(\cU):= \modK_n \cap \cU^{\ot n}, \quad \cF_k(\modK_n(\cU)):= \cF_k(\modK_n) \cap \cU^{\ot n}.$$

Let $\tK_n(\cU)$ be completion of $\modK_n(\Ur)$ with respect to the filtration $(\cF_k(\modK_n))$ in $\Uh$, i.e.
\be
\notag
\tK_n(\cU) := \left \{ x=\sum_{k=0}^\infty x_k \mid  x_k \in \cF_k(\modK_n(\cU)) \right\}.
\ee

Since $\cF_k(\modK_n(\cU)) \subset \cF_k(\modK_n) $
we have
$\tK_n(\cU) \subset \tK_n \subset \Xh^{\ho n}$. %
We always  have $\tK_0(\cU)= \tK_0= \Zqh$.

\subsection{Values of universal invariant of algebraically split bottom tangle} Throughout we fix a longest reduced sequence $\bi$.

Recall that $\Gamma = \bc \cD^2$ is the quasi-clasp element, see Section \ref{Rosso-form}.  By Lemma \ref{r5},
$$ \Gamma = \sum_{\bn\in \BN^{2t}} \Gamma_1(\bn) \ot \Gamma_2(\bn),$$
where for $\bn=(\bn_1, \bn_2)\in \BN^t \times \BN^t$,
\be
\label{eq.Gn}
\Gamma_1(\bn)= q^{-(\rho- |E_{\bn_1}|,|E_{\bn_2}|) } F_{\bn_1} K_{\bn_1}^{-1} E_{\bn_2}, \quad \Gamma_2(\bn)= F_{\bn_2} K_{\bn_2}^{-1} E_{\bn_1}.
\ee
\begin{proposition}\label{r.Jvalues1}
 Suppose $\cU$ is an $\cA$-Hopf-subalgebra of $\UZ$ such that $K_\al \in \Ur$ for all $\al \in \Pi$,
 and $F_\bm \ot E_\bm, F'_\bm \ot E'_\bm \in  \cU \ot \cU$ for all $\bm \in \BN^t$.

 Then the family $(\tK_n(\cU))$ satisfies Condition (AL1) of Theorem \ref{thm.construction}. Namely, the following statements hold.

(i) $1_{\Ch} \in \tK_0(\cU)$, $1_{\Uh} \in \tK_1(\cU)$, and $x\ot y \in \tK_{n+m}(\cU) $ whenever $x\in\tK_n(\cU)$ and $y \in  \tK_m(\cU) $.

(ii) Each of  ${\boldsymbol \mu} $, ${\boldsymbol{\psi}} ^{\pm1}$,  $\uD$, $\bS$  is $(\tK_n(\cU))$-admissible.

(iii) The Borromean element $\modb $ belongs to $\tK_3(\cU)$.
\end{proposition}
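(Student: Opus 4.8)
The statement asserts the three clauses packaged in Condition (AL1) of Theorem \ref{thm.construction}, and the plan is to verify (i), (ii), (iii) by first working with the uncompleted modules $\modK_n(\cU)=(\sXZe)^{\ot n}\cap[(\UZ^\ev)^{\ot n}]_1\cap\cU^{\ot n}$ and then passing to the completions $\tK_n(\cU)$ by tracking the filtrations $\cF_k=(q;q)_k(-)$, in the spirit of Lemma \ref{r.admiss1}. Clause (i) is a direct check: $1\in\modK_0(\cU)=\Zq$, so $1\in\tK_0(\cU)=\Zqh$; the unit $1_{\Uh}$ lies in $\sXZe$ (with trivial prefactor), has $G$-degree $1$, and lies in $\cU$, so $1_{\Uh}\in\modK_1(\cU)$; and for the tensor statement one uses $(\sXZe)^{\ot n}\ot(\sXZe)^{\ot m}=(\sXZe)^{\ot(n+m)}$, $\cU^{\ot n}\ot\cU^{\ot m}=\cU^{\ot(n+m)}$, multiplicativity of the total $G$-grading (so $[\,\cdot\,]_1\ot[\,\cdot\,]_1\subset[\,\cdot\,]_1$), and divisibility of $(q;q)_k(q;q)_l$ by $(q;q)_{\max(k,l)}$, which gives the required $h$-adic convergence.

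For clause (ii), since $\modK_n(\cU)$ is an intersection of three subspaces, it suffices to check, for each $f\in\{\boldmu,\boldpsi^{\pm1},\uD,\bS\}$, that $f_{(i,j)}$ (a) preserves membership in $(\sXZe)^{\ot\bullet}$, (b) preserves the total $G$-grading (hence preserves $[(\UZ^\ev)^{\ot\bullet}]_1$), (c) preserves membership in $\cU^{\ot\bullet}$, and (d) carries $\cF_k$ into $\cF_k$. For $\boldmu$ all four are immediate, using that $\sXZe$ and $\cU$ are algebras, Proposition \ref{r.bh11} for the grading, and $(q;q)_k$-linearity. For $\boldpsi^{\pm1},\uD,\bS$ one expands the defining formulas \eqref{e12}, \eqref{e10}, \eqref{e11} with $\cR=\cD\Theta^{-1}$, $\Theta=\sum_\bn F_\bn\ot E_\bn$, $\Theta^{-1}=\sum_\bn F'_\bn\ot E'_\bn$, treating the diagonal part $\cD^{\pm1}$ separately: (a) is the integral refinement of Proposition \ref{r.admiss5}, carried out over $\sXZe$ using Theorem \ref{r.sXZ} ($\sXZe$ is an $\tA$-algebra which is $\UZ$-ad-stable, braid-stable, and $\ibar$- and $\tphi$-stable) together with Theorem \ref{thm:8} ($\UZ\tri\VA^\ev\subset\VA^\ev$), the diagonal part being absorbed via $\cD(u\ot w)\cD^{-1}=uK_{|w|}\ot K_{|u|}w$, see \eqref{eq.Dcommute}, and $\sqrt hH_\al\in\Xh$; (b) follows by assembling the grading results of Section \ref{sec:nonc-grad} — $\Theta_\gamma,\bar\Theta_\gamma\in[\Uq^{\ot2}]_{\dth_\gamma}$ (Lemma \ref{thm:4}), $\boldmu$ and $\boldep$ $G$-graded (Proposition \ref{r.bh11}), the antipode shifting degrees by $\dS$ (Lemma \ref{thm:6}), the adjoint action controlled by $\dot\ad$ (Lemma \ref{r18}), coproducts of $G^\ev$-homogeneous elements admitting $G$-good presentations (Lemma \ref{r.coprod}) — from which one reads off that each of $\boldpsi^{\pm1},\uD,\bS$ preserves the total $G$-degree, Corollary \ref{cor.918} handling the bookkeeping of the $K$-shifts; (c) uses the hypotheses $F_\bn\ot E_\bn,F'_\bn\ot E'_\bn\in\cU\ot\cU$, $K_\al\in\cU$, and $\cU$ a Hopf subalgebra, noting that $\cD^{\pm1}$ only contributes factors $K_\mu\in\cU$ ($\mu\in Y$) and scalars that are integer powers of $v$, hence in $\cA$; and (d) holds because the factors $(q;q)_\bn$ inside $\Theta^{\pm1}$ (as $E_\bn$ is a unit multiple of $(q;q)_\bn E^{(\bn)}$) make the expansions $h$-adically convergent with tails in arbitrarily deep $\cF_k$, so each map extends continuously to the completions and is $(\tK_n(\cU))$-admissible.

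Clause (iii) is handled as in Lemma \ref{r.Borro0}, now over $\sXZe$ and keeping track of the $G$-grading: starting from $\modb=\sum_{\bn,\bm}\modb_{\bn,\bm}$ with $\modb_{\bn,\bm}=\bc'(\bn)\ot\bc'(\bm)\ot\Upsilon(\bc''(\bm)\ot\bc''(\bn))$ coming from \eqref{e42} and the integral clasp decomposition \eqref{eq.claspUZ}, the $\sqrt{(q;q)_\bn}$ prefactors of $\bc'(\bn),\bc''(\bn)$ together with the integral $\Upsilon$-stability $\Upsilon(\UZ\ho\VA^\ev)\subset\VA^\ev$ and $\Upsilon(\VA^\ev\ho\UZ)\subset\VA^\ev$ (the analogue of Lemma \ref{upslionHX}, deduced from Theorem \ref{thm:8} via \eqref{e41}, \eqref{e41a}) give convergence of the sum in $(\sXZe)^{\ho3}$ with tails in deep $\cF_k$; each $\modb_{\bn,\bm}$ has total $G$-degree $1$ by Lemma \ref{thm:4} and the $G$-gradedness of $\Upsilon$ from (b); and $\modb$ lies in the completion of $\cU^{\ot3}$ because it is built from the $\Theta^{\pm1}$-components (in $\cU\ot\cU$) and the balanced element $K_{-2\rho}\in\cU$, the diagonal contributions reducing to trivial $K$-powers since the Borromean tangle is algebraically split and $0$-framed. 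Hence $\modb\in\tK_3(\cU)$.

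The main obstacle is step (b) of clause (ii): showing that the twisted maps $\boldpsi^{\pm1},\uD,\bS$ are $G$-graded. This is a genuine computation in the generically noncommutative group $G$, where one must commute products of the $\de_\gamma$'s and $\dK_\gamma$'s and the central generator $\dv$ past the $R$-matrix and the antipode, all the while carrying along the diagonal part $\cD$, which lives in neither $\UZ$ nor $\cU$ and enters only through $K$-powers and powers of $v$. The remaining verifications are bookkeeping built on the stability statements of Sections \ref{sec:integcore} and \ref{sec:nonc-grad} and on the $(q;q)_\bn$-divisibilities that control the filtrations.
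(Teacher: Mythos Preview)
Your treatment of (i) and (ii) is essentially the paper's: the paper formalizes your four checks (a)--(d) into the notion of an \emph{admissibility decomposition} (Definition~\ref{r351}, Lemma~\ref{r36}) and then writes each of $\boldmu,\boldpsi^{\pm1},\uD,\bS$ as an explicit sum $\sum_\bm f_\bm$ in which the diagonal part $\cD$ has been completely absorbed into $v$-powers and $K$-factors (Lemmas~\ref{r.mu}--\ref{r.bS}). The convergence in (C) comes from the divisibility $E_\bm,E'_\bm\in o(\bm)\UZ^\ev$ and $K_\bm F_\bm\ot E_\bm\in o(\bm)(\sXZe\ot\UZ^\ev)$ (equations~\eqref{eq.eprime}, \eqref{eq.bg2}), which you should invoke explicitly rather than appealing to Theorem~\ref{thm:8}.

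For (iii) there is a genuine gap. Your sketch relies on two ingredients that do not work as stated. First, the integral clasp decomposition~\eqref{eq.claspUZ} has its left tensorand $\brbe^\ev(\bn)\in\brUZ^\ev$, the simply-connected form, which need not lie in $\cU$; so writing $\modb_{\bn,\bm}=\bc'(\bn)\ot\bc'(\bm)\ot\Upsilon(\bc''(\bm)\ot\bc''(\bn))$ with these components does not give elements of $\cU^{\ot3}$. Second, the claimed integral $\Upsilon$-stability $\Upsilon(\UZ\ho\VA^\ev)\subset\VA^\ev$ is not an analogue of Lemma~\ref{upslionHX}: that lemma uses $\uD(\sX)\subset\sX\ho\sX$ and $\bS(\sX)\subset\sX$, but $\uD$ and $\bS$ carry $\cD$ inside them and do \emph{not} preserve $\VA^\ev$ (or any finite completion of it), so the argument does not transfer. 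Your final appeal --- ``the diagonal contributions reduce to trivial $K$-powers since the Borromean tangle is algebraically split and $0$-framed'' --- is the correct heuristic but is precisely the content that needs proof.

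The paper's route is substantially more elaborate and different in structure. It replaces $\bc$ by the quasi-clasp $\Gamma=\bc\cD^2$, whose components $\Gamma_1(\bn)\ot\Gamma_2(\bn)$ genuinely lie in $\modK_2\cap(\cU\ot\cU)$ (Lemma~\ref{r.Gamma}), and rewrites $\modb_{\bn,\bm}$ as a composition $\fmu\circ\fad_\bm\circ\fbS\circ\fuD$ of \emph{modified} versions of $\boldmu,\ad,\bS,\uD$ in which the stray $\cD^{\pm2}$ factors are explicitly conjugated away (equations~\eqref{eq.fuDdef}--\eqref{eq.fmudef}). Each modified map is then shown to be $(\tK_n(\cU))$-admissible (Lemmas~\ref{r.fuD}, \ref{r.fbS}, \ref{r.fmu}), and the key $\fad_\bm$ step is controlled via a new \emph{extended adjoint action} $a\btri(y\ot x)$ (Lemma~\ref{r.ediv}) that replaces your hoped-for $\Upsilon$-stability. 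This machinery is what makes the ``diagonal cancellation'' rigorous term by term; it is not bypassed by the global observation that the tangle is algebraically split.
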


Note that under the assumption of Proposition \ref{r.Jvalues1}, we have
$\Gamma_1(\bn) \ot \Gamma_2(\bn) \in \cU \ot \cU$ for all $\bn \in \BN^{2t}$.

Before embarking on the proof of the proposition, let us record some consequences.
\begin{theorem}\label{r.Jvalues}  Suppose $\cU$ is an $\cA$-Hopf-subalgebra of $\UZ$ satisfying the assumption of Proposition \ref{r.Jvalues1}. Then
(a) For any $n$-component bottom tangle $T$ with 0 linking matrix, $J_T \in \tK_n(\cU)$.
In particular, $J_T \in \tK_n$.

(b) For any integral homology 3-sphere $M$, $J_M\in \Zqh$.
\end{theorem}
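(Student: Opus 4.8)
\textbf{Proof proposal for Theorem \ref{r.Jvalues}.}

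The plan is to assemble this theorem from the machinery already built in the excerpt, with the real content lying in Proposition \ref{r.Jvalues1}, whose proof I would carry out first. For part (a), once Proposition \ref{r.Jvalues1} is established, I would invoke Theorem \ref{thm.construction}: the family $(\tK_n(\cU))$ satisfies both (AL1) (this is exactly parts (i)--(iii) of Proposition \ref{r.Jvalues1}) and (AL2) (Proposition \ref{r.integ10}, which in turn only uses Propositions \ref{r.integ6} and \ref{r.918a} and Lemma \ref{r.60}, all already in place). Then Proposition \ref{r26}, applied with the family $\tK_n(\cU)$, yields $J_T \in \tK_n(\cU)$ for every $n$-component algebraically split $0$-framed bottom tangle $T$; since $\modK_n(\cU) \subset \modK_n$ we get $J_T \in \tK_n$ as well. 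For part (b), I would write $M = M(T;\ve_1,\dots,\ve_n)$ for a suitable such $T$ (every integral homology $3$-sphere arises this way, as recalled before Theorem \ref{r.JMdef}), note $J_M = (\cT_{\ve_1}\ho\cdots\ho\cT_{\ve_n})(J_T)$ by Theorem \ref{r.JMdef}, and apply Proposition \ref{r.integ10} to conclude $J_M \in \tK_0 = \Zqh$. Well-definedness of $J_M$ (independence of the choice of $T$ and invariance under Hoste moves) is already furnished by Theorem \ref{r.JMdef}.

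For Proposition \ref{r.Jvalues1} itself, part (i) is immediate from the definitions of $\tK_n(\cU)$ and the filtrations $\cF_k$: $1_\Ch \in \Zq = \tK_0(\cU)$, $1_\Uh$ lies in $\sXZ^\ev \cap [\UZ^\ev]_1 \cap \cU$, and the tensor-compatibility follows from $(q;q)_k \mid (q;q)_{k'}(q;q)_{k''}$ whenever $k \le k'+k''$ together with multiplicativity of the $G$-grading and of $\sXZ^\ev$, $\cU$ under tensor product. For part (iii), the Borromean element: by Lemma \ref{r.Borro0} (and its integral refinement via the orthogonality formula \eqref{eq.claspUZ} and the expansion \eqref{e42}) one has $\modb \in \sXZ^{\ev,\ho 3}$; the $G$-homogeneity $\modb \in [\UZ^\ev{}^{\ot 3}]_1$ should follow from writing $\modb$ via $\bc = \sum (q;q)_\bn/u(\bn)\, \brbe^\ev(\bn)\otimes\bbe^\ev(\bn)$ and applying Corollary \ref{cor.918} together with Lemma \ref{r18} (adjoint action multiplies $G$-degrees) and Lemma \ref{r.coprod} (coproduct respects a $G$-good presentation), since the braided commutator $\Upsilon$ is built from $\boldmu,\boldpsi,\uD,\bS$, each of which respects the $G$-grading; one also checks the Borromean tangle has linking matrix zero so the relevant $K$-powers cancel, forcing total degree $1$. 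Finally, membership of $\modb$ in the $\cU^{\ot 3}$-completion uses $\Gamma_1(\bn)\otimes\Gamma_2(\bn) \in \cU\otimes\cU$ (noted right after the proposition statement), the fact $\Gamma = \bc\cD^2$ and $\cD \in (\UZ^0)^{\ho 2}$, and the admissibility established in part (ii).

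The crux is part (ii): showing each of $\boldmu$, $\boldpsi^{\pm 1}$, $\uD$, $\bS$ is $(\tK_n(\cU))$-admissible, i.e. $f_{(i,j)}(\cF_k(\modK_{i+j+a}(\cU))) \subset \cF_k(\modK_{i+j+b}(\cU))$ for all $i,j,k$. I would factor each map as a convergent sum over the quasi-$R$-matrix expansion $\cR = \cD\Theta^{-1}$, $\cR^{-1} = \Theta\cD^{-1}$ with $\Theta = \sum_\bn F_\bn \otimes E_\bn$, exactly as in the proof of Proposition \ref{r.admiss5}, but now tracking three things simultaneously for each summand: (1) stability of $\sXZ^\ev$ — this is Theorem \ref{r.sXZ}(b), which gives $\UZ$-ad-stability, braid-group stability, and $\ibar,\tphi$-stability of $\sXZ^\ev$, hence stability under $\boldmu$ (it is an algebra), $\bS$, $\uD$ (the twisted operations, built from $\ad$, $\bS$, $\cR$), and $\boldpsi^{\pm 1}$; (2) the $G$-grading staying in degree $1$ — via Lemmas \ref{thm:6}, \ref{r13}, \ref{thm:4}, \ref{r.coprod}, \ref{r18} and Corollary \ref{cor.918}, all of which say these operations respect $G$-degree (noting the crucial point that $F_\bn \otimes E_\bn \in [\UZ^\ev \otimes \UZ^\ev]_1$ when linking matrices cancel the $\dK$-contributions); and (3) the $(q;q)_k$-divisibility and the $\cU^{\ot n}$-membership — handled by the hypotheses $F_\bm\otimes E_\bm, F'_\bm\otimes E'_\bm \in \cU\otimes\cU$, $K_\al \in \cU$, and by the $0$-convergence of the relevant expansions which ensures $h$-adic (equivalently $(q;q)_\bullet$-adic) convergence, so that the infinite sums defining $\boldpsi^{\pm1}$, $\uD$, $\bS$ land in the completed module $\tK_n(\cU)$. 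The main obstacle I anticipate is the bookkeeping in (2)--(3) for $\uD$ and $\bS$: these twisted maps mix left and right tensor factors through $\cR$, so the degree computation and the divisibility estimate have to be done term-by-term on a $G$-good presentation and then recombined, and one must verify that the "defect" powers of $h$ introduced by the $\tilde\bc''(i) = h^{-k_i}\bc''(i)$ rescaling (as in the proof of Lemma \ref{r.Borro0}) exactly cancel against the $(q;q)$-factors in the dilated basis of $\sXZ^\ev$, so that no spurious denominators survive. Once this term-wise control is in hand, Lemma \ref{r.admiss1}-style arguments finish each case.
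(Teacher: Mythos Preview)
Your derivation of Theorem \ref{r.Jvalues} from Proposition \ref{r.Jvalues1} is correct and matches the paper exactly: part (a) is Proposition \ref{r26} applied to $(\tK_n(\cU))$, and part (b) is Theorem \ref{thm.construction} via Propositions \ref{r.Jvalues1} and \ref{r.integ10}. Your outline for parts (i) and (ii) of Proposition \ref{r.Jvalues1} also captures the paper's method: the paper packages your three-fold tracking as an ``admissibility decomposition'' (Definition \ref{r351}, Lemma \ref{r36}) with conditions (A) $G^\ev$-grading preservation, (B) $\cU$-stability, (C) filtration shift on $(\sXZe)^{\ot n}$, verified summand-by-summand for $\boldmu,\boldpsi^{\pm1},\uD,\bS$ in Lemmas \ref{r.mu}--\ref{r.bS}. (A minor slip: your divisibility $(q;q)_k\mid(q;q)_{k'}(q;q)_{k''}$ for $k\le k'+k''$ is false --- take $k=2$, $k'=k''=1$ --- but all one needs is $(q;q)_{\max(k',k'')}\mid(q;q)_{k'}(q;q)_{k''}$, which is trivial.)

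The genuine gap is part (iii), the Borromean element. Your assertion $\cD\in(\UZ^0)^{\ho 2}$ is false: $\cD=\exp(\frac{h}{2}\sum_\al H_\al\otimes\brH_\al/d_\al)$ lies only in $(\Uh^0)^{\ho2}$, not in any integral form. Consequently, even after passing to the quasi-clasp $\Gamma=\bc\cD^2$, the expression \eqref{eq.bnm} for $\modb_{\bn,\bm}$ still carries factors $\cD_{14}^{-2}\cD_{23}^{-2}$ that sit outside $\UZ^{\ot4}$, so you cannot simply feed $\Gamma_1(\bn)\otimes\Gamma_1(\bm)\otimes\Gamma_2(\bm)\otimes\Gamma_2(\bn)$ into the admissible maps from part (ii) and conclude. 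The paper's fix (Section \ref{sec:decompo} onward) is to introduce \emph{modified} maps $\fuD,\fbS,\fad_\bm,\fmu$ (and a second decomposition with $\tfad_\bn,\tfmu$) that conjugate by carefully placed $\cD^{\pm2}$-insertions so that the composition reproduces $\modb_{\bn,\bm}$ while each modified map, taken individually, stays in the integral world. Proving admissibility of these modified maps requires a new ingredient, the extended adjoint action $a\btri(y\ot x)=\sum yK_{2|a_{(2)}|}\otimes a_{(1)}xS(a_{(2)})$ of \eqref{eq.eadj}, and the nontrivial computation (Lemma \ref{r.ediv}(c)) that $\UZ\btri(\sXZe\otimes\sXZe)\subset\sXZe\otimes\sXZe$. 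Without this maneuver the $\cD$-factors never cancel at the level of individual summands, and your argument that $\modb\in\tK_3(\cU)$ ``because $\Upsilon$ is built from admissible maps'' does not go through.
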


\begin{proof}
(a) By Proposition \ref{r26}, (i)--(iii) of Proposition \ref{r.Jvalues1} imply that  $J_T \in  \tK_n(\cU) \subset \tK_n$.

(b)  By Propositions \ref{r.Jvalues1} and \ref{r.integ10}, $(\tK_n(\cU))$ satisfies both conditions (AL1) and (AL2) of Theorem~\ref{thm.construction}. By Theorem~\ref{thm.construction}, $J_M \in \tK_0(\cU)= \Zqh$.
\end{proof}

\def\tQ{\tilde \cQ}
The remaining part of the section is devoted to the proof of Proposition  \ref{r.Jvalues1}.
Statement (i) of Proposition \ref{r.Jvalues1} follows trivially from the definitions. We will prove (ii) and (iii) in this section.
We fix $\cU$ satisfying the assumptions of Proposition \ref{r.Jvalues1}.

\begin{remark}
(a) One can relax the assumption of the Proposition \ref{r.Jvalues1}, requiring only that $K_\al \in \Ur$ for all $\al\in \Pi$ and both $\Theta$ are in the topological closure of $\Ur \ot \Ur$ (in the $h$-adic topology of $\Uh\ho \Uh$).

(b) Almost identical proof shows that
Theorem \ref{r.Jvalues}  holds true if
$\cU$ is an  $\tA$-Hopf-subalgebra of $\XZ$ instead of $\cU \subset \UZ$.
\end{remark}
\subsection{Quasi-$R$-matrix}\label{sec:Kn4}
Recall that $\Theta = \sum_{\bn \in \BN^t} F_\bn \ot E_\bn$, see Section \ref{sec:universal-r-matrix}.
For a multiindex $\bn=(n_1,\dots,n_k) \in \BN^k$ let $\max(\bn) = \max_{j}(n_j)$ and
\be
\label{eq.oorder}
o(\bn):= (q;q)_{\lfloor \max(\bn)/2 \rfloor} \in \BZ[q^{\pm1}].
\ee
\begin{lemma} For each $\bn\in \BN^t$, we have
\begin{align}
 \label{eq.eprime}   E_{\bn},\  E'_{\bn} \ & \in \  o(\bn)\,   \UZ^\ev  \\
  \label{eq.bg2}   \ K_\bn \, F_{\bn} \ot E_{\bn},\ K_\bn \, F'_{\bn} \ot E'_{\bn}
  \ & \in \ o(\bn) (\sXZe \ot \UZe) .
 \end{align}
\end{lemma}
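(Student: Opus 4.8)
\emph{Proof idea.} The plan is to reduce both inclusions to a single elementary divisibility of cyclotomic polynomials, and then to deduce the primed cases from the unprimed ones via the bar involution $\ibar$. First I would strip the two vectors down to divided-power PBW monomials. Since $\{i\}_\gamma=v_\gamma^i-v_\gamma^{-i}=-v_\gamma^{-i}(1-q_\gamma^i)$, the factorial $\{n\}_\gamma!=\prod_{i=1}^n\{i\}_\gamma$ equals $(q_\gamma;q_\gamma)_n$ times a unit of $\cA$, so the defining formulas \eqref{eq.En} and \eqref{eq.Fn} (together with \eqref{eq.8157}, to move $K_\bn$ past $F_\bn$) give
\[
  E_\bn=u_\bn\,(q;q)_\bn\,E^{(\bn)},\qquad K_\bn F_\bn=u_\bn'\,F^{(\bn)}K_\bn
\]
with $u_\bn,u_\bn'$ invertible in $\cA$. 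By Proposition~\ref{r11}(c) the element $(q;q)_\bn E^{(\bn)}$ is a basis vector of $\VA^+\subset\UZ^+\subset\UZe$; by \eqref{eq.basisXZe} the element $\sqrt{(q;q)_\bn}\,F^{(\bn)}K_\bn$ is the basis vector of $\sXZe$ of multi-index $(\bn,0,0)$ (for which $(q;q)_{(\bn,0,0)}=(q;q)_\bn$ and $\bbe^\ev((\bn,0,0))=F^{(\bn)}K_\bn$); and $E^{(\bn)}$ is a basis vector of $\UZ^+\subset\UZe$ by Proposition~\ref{prop.basis}(b).

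The arithmetic core is the claim that, writing $m=\max_j n_j$, one has $o(\bn)^2=(q;q)_{\lfloor m/2\rfloor}^2$ divides $(q;q)_\bn=\prod_j(q_{\gamma_j};q_{\gamma_j})_{n_j}$ in $\cA$. I would prove this by comparing, for each cyclotomic polynomial $\phi_e(q)$, its multiplicity on the two sides, using $q^N-1=\prod_{e\mid N}\phi_e(q)$: the multiplicity of $\phi_e$ in $(q_\gamma;q_\gamma)_n=\prod_{i=1}^n(1-q^{d_\gamma i})$ equals $\#\{1\le i\le n:e\mid d_\gamma i\}=\lfloor n\gcd(e,d_\gamma)/e\rfloor\ge\lfloor n/e\rfloor$, so choosing the index $j$ with $n_j=m$ the multiplicity on the right is at least $\lfloor m/e\rfloor$, whereas on the left it is $2\lfloor\lfloor m/2\rfloor/e\rfloor$, and the elementary chain $2\lfloor\lfloor m/2\rfloor/e\rfloor\le\lfloor 2\lfloor m/2\rfloor/e\rfloor\le\lfloor m/e\rfloor$ finishes it. Consequently $(q;q)_\bn/o(\bn)\in\cA$ and $\sqrt{(q;q)_\bn}/o(\bn)=\sqrt{(q;q)_\bn/o(\bn)^2}\in\tA$, since $(q;q)_\bn/o(\bn)^2$ is a product of cyclotomic polynomials up to a unit.

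Granting this, the unprimed inclusions are immediate: $E_\bn=o(\bn)\bigl(u_\bn\,\tfrac{(q;q)_\bn}{o(\bn)}E^{(\bn)}\bigr)\in o(\bn)\,\cA\,\UZ^+\subset o(\bn)\UZe$, which is \eqref{eq.eprime} for $E_\bn$; and rewriting $K_\bn F_\bn\ot E_\bn=u_\bn'u_\bn\sqrt{(q;q)_\bn}\,\bigl(\sqrt{(q;q)_\bn}F^{(\bn)}K_\bn\bigr)\ot E^{(\bn)}$, the bracketed tensor lies in $\sXZe\ot\UZe$ and $\sqrt{(q;q)_\bn}/o(\bn)\in\tA$, giving \eqref{eq.bg2} for $K_\bn F_\bn\ot E_\bn$. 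For the primed vectors I would use that $o(\bn)\UZe$ and $o(\bn)(\sXZe\ot\UZe)$ are stable under $\ibar$, resp. $\ibar\ot\ibar$: indeed $\ibar$ preserves $\UZ$ (Proposition~\ref{prop.basis}(d)) and $\Uq^\ev$ (Lemma~\ref{eq.evenD3}(c)), hence $\UZe$; it preserves $\sXZe$ (Theorem~\ref{r.sXZ}(b)); and $\ibar(o(\bn))$ is $o(\bn)$ up to a unit. Since $E_\bn'=\ibar(E_\bn)$ and $F_\bn'=\ibar(F_\bn)$ by \eqref{e64}, we get $E_\bn'\in\ibar(o(\bn)\UZe)\subset o(\bn)\UZe$ at once; and, using $\ibar(K_\bn)=K_\bn^{-1}$, a direct check gives $K_\bn F_\bn'\ot E_\bn'=(K_\bn^2\ot1)(\ibar\ot\ibar)(K_\bn F_\bn\ot E_\bn)$. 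A short computation with $Q(\al;1)$ and $Q(\al;2)$ shows $K_\al^{\pm2}\in\sXZe$, hence $K_\bn^2\in\sXZe$; since $\sXZe$ is an $\tA$-algebra, multiplication by $K_\bn^2\ot1$ preserves $o(\bn)(\sXZe\ot\UZe)$, and combined with the $(\ibar\ot\ibar)$-stability this yields \eqref{eq.bg2} for the primed vectors.

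The main obstacle is the arithmetic claim, and in particular pinning down the exponent $\lfloor m/2\rfloor$ uniformly over $d_\gamma\in\{1,2,3\}$; the other delicate point is the bookkeeping in \eqref{eq.bg2}, namely distributing the two $\sqrt{(q;q)_\bn}$-type factors so that one is absorbed into the $\sXZe$-slot while the other leaves the $\UZe$-slot genuinely integral, with no half-integral denominators leaking in. The verification $K_\al^{\pm2}\in\sXZe$ is a minor but necessary auxiliary step.
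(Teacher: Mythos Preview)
Your proof is correct and follows essentially the same route as the paper: rewrite $E_\bn$ and $K_\bn F_\bn$ in terms of divided-power monomials times $(q;q)_\bn$, split $(q;q)_\bn=\sqrt{(q;q)_\bn}\cdot\sqrt{(q;q)_\bn}$ to land in $\sXZe\ot\UZe$, and deduce the primed cases via $\ibar$ together with the correction factor $K_\bn^2\ot 1$. The paper does exactly this, but it states the inclusions $(q;q)_\bn\UZe\subset o(\bn)\UZe$ and $\sqrt{(q;q)_\bn}\,\sXZe\subset o(\bn)\sXZe$ without justification; your cyclotomic-multiplicity computation showing $o(\bn)^2\mid(q;q)_\bn$ is precisely the missing ingredient and is carried out correctly. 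One small simplification: rather than computing with $Q(\al;1)$ and $Q(\al;2)$, you can observe directly that $K_\al^{\pm2}\in\VA^{\ev,0}\subset\VA^\ev\subset\sXZe$, the last inclusion holding because each $\VA^\ev$-basis vector $(q;q)_\bn\bbe^\ev(\bn)$ equals $\sqrt{(q;q)_\bn}$ times an $\sXZe$-basis vector.
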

\begin{proof}
We write $x \sim y$ if $x =uy$ with $u$ a unit in $\cA$.
From the definition of $E_\bn$ (see Section \ref{sec:universal-r-matrix}),
$$E_\bn \sim (q;q)_{\bn} E^{(\bn)} \in (q;q)_{\bn} \UZe \subset o(\bn) \UZe.$$
Recall that $E'_\bn= \ibar(E_\bn), F'_\bn= \ibar(F_\bn)$. Since $\ibar$ preserves the even part (Proposition \ref{eq.evenD3}), and $\ibar$ leaves both $\UZ$  and $\XZ$ stable (Proposition \ref{prop.basis} and  Theorem \ref{r.sXZ}), $\ibar$ leaves both $\UZ^\ev$  and $\XZ^\ev$ stable. Hence, we have
$$ E'_\bn= \ibar(E_\bn) \subset o(\bn) \ibar(\UZe) = o(\bn)\UZe,$$
which proves \eqref{eq.eprime}. Let us now prove  \eqref{eq.bg2}.
We have
\begin{align*}
 K_\bn \, F_{\bn} \ot E_{\bn}    %
& \sim (q;q)_{\bn} \, F^{(\bn)}  K_\bn
\ot E^{(\bn)} \sim \sqrt {(q;q)_{\bn} }  \Big(\sqrt {(q;q)_{\bn}} \,  F^{(\bn)}  K_\bn  \Big ) \ot E^{(\bn)} \\
& \in \sqrt {(q;q)_{\bn} } \, \sXZe \ot \UZe \subset   o(\bn)\,  \sXZe \ot \UZe.
\end{align*}
Applying $\ibar$, we get
$$K_\bn^{-1} \, F_{\bn}' \ot E_{\bn}'\in \ o(\bn) (\sXZe \ot \UZe).$$
 Since $K_{\bn}^2\in \sXZe$, we also have $K_\bn \, F_{\bn}' \ot E_{\bn}'\in \ o(\bn) (\sXZe \ot \UZe)$.
\end{proof}

\subsection{On $\cF_k(\modK_n)$}

\begin{lemma}
For any $k,n \in \BN$, one has
\begin{gather}
\label{eq.sosanh}
\cF_k(\modK_n) =(q;q)_k (\XZ^\ev)^{\ot n} \cap \left[ (\UZ^\ev)^{\ot n}  \right]_1 = (q;q)_k (\XZ^\ev)^{\ot n} \cap \left[ (\UZ)^{\ot n}  \right]_1\\
\label{eq.sosanh1} (q;q)_k (\XZ)^{\ot n} \cap  ((\UZ^\ev)^{\ot n} \ot_\cA \tA) = (q;q)_k (\XZ^\ev)^{\ot n}.
\end{gather}

\end{lemma}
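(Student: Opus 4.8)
The statement to prove consists of two identities about the filtration pieces $\cF_k(\modK_n)$ and their relation to dilatations. The plan is to reduce everything to the structure of dilatation triples and the $G$-grading, both of which have been set up carefully in earlier sections.

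For \eqref{eq.sosanh}, the first equality is essentially a matter of unwinding definitions: by definition $\modK_n = (\sXZe)^{\ot n} \cap [(\UZ^\ev)^{\ot n}]_1$ and $\cF_k(\modK_n) = (q;q)_k \modK_n$, so I need to commute the scalar multiplication by $(q;q)_k$ past the intersection. Multiplication by the nonzerodivisor $(q;q)_k$ is injective on the torsion-free modules involved, and I would check that $(q;q)_k \left( A \cap B \right) = (q;q)_k A \cap B$ whenever $B$ is a $\Zq$-submodule stable under multiplication by $(q;q)_k^{-1}$ in the ambient space — here $B = [(\UZ^\ev)^{\ot n}]_1$, and since $(q;q)_k \in \Zq$ acts invertibly on $\BQ(v) \otimes (\UZ^\ev)^{\ot n}$ and preserves the $G$-grading, an element of $[(\UZ^\ev)^{\ot n}]_1$ divisible by $(q;q)_k$ in the ambient space is divisible by $(q;q)_k$ within $[(\UZ^\ev)^{\ot n}]_1$. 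The second equality in \eqref{eq.sosanh} is the claim that intersecting $(q;q)_k (\XZ^\ev)^{\ot n}$ with the degree-$1$ part of $(\UZ)^{\ot n}$ gives the same thing as intersecting with the degree-$1$ part of $(\UZ^\ev)^{\ot n}$. For this I would argue: an element of $(\XZ^\ev)^{\ot n}$ lies in $(\UZ^\ev)^{\ot n} \otimes_\cA \tA$ by construction (the even part is spanned by the $\sqrt{(q;q)_\bn}\,\bbe^\ev(\bn)$), and if moreover it lies in $(\UZ)^{\ot n}$, then since $(\UZ^\ev)^{\ot n} = (\UZ)^{\ot n} \cap ((\UZ^\ev)^{\ot n}\otimes_\cA \tA)$ inside $(\Uq)^{\ot n}$, it actually lies in $(\UZ^\ev)^{\ot n}$. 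So the two intersections coincide; this is really \eqref{eq.sosanh1} applied with $k$ and then restricted to degree $1$, so I would prove \eqref{eq.sosanh1} first and deduce \eqref{eq.sosanh} from it.

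For \eqref{eq.sosanh1}, the content is the identity
\[
(q;q)_k (\XZ)^{\ot n} \cap \bigl((\UZ^\ev)^{\ot n} \ot_\cA \tA\bigr) = (q;q)_k (\XZ^\ev)^{\ot n}.
\]
The inclusion $\supseteq$ is immediate since $(\XZ^\ev)^{\ot n} \subseteq (\XZ)^{\ot n}$ and $(\XZ^\ev)^{\ot n} \subseteq (\UZ^\ev)^{\ot n}\otimes_\cA\tA$. For $\subseteq$, I would use the explicit $\tA$-bases: $(\XZ)^{\ot n}$ has $\tA$-basis the $n$-fold tensors of the $\sqrt{(q;q)_\bn}\,\bbe(\bn,\bode)$ from \eqref{eq.basesXZ}, and $(\XZ^\ev)^{\ot n}$ has $\tA$-basis the $n$-fold tensors of the $\sqrt{(q;q)_\bn}\,\bbe^\ev(\bn)$ from \eqref{eq.basisXZe}; the latter tensors form a subset of a basis of $(\UZ)^{\ot n}\otimes_\cA\tA$ that is a refinement of the basis $\{\bbe(\bn,\bode)\}$ adapted to the even/odd splitting in each tensor factor given by the $Y/2Y$-grading. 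Since the $(\UZ^\ev)^{\ot n}$ is precisely the span of those $\bbe^\ev$-basis elements, an element of $(q;q)_k(\XZ)^{\ot n}$ lying in $(\UZ^\ev)^{\ot n}\otimes_\cA\tA$ has vanishing coordinates on all the basis elements involving an odd factor; dividing by $(q;q)_k$ (a nonzerodivisor), the remaining coordinates lie in $\tA$ and multiply basis elements of $(\XZ^\ev)^{\ot n}$, so the element is in $(q;q)_k(\XZ^\ev)^{\ot n}$.

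I expect the main obstacle to be bookkeeping rather than anything conceptual: one must be careful that "$(\XZ^\ev)^{\ot n}$ sits inside $(\XZ)^{\ot n}$ compatibly with the chosen bases" and that the $Y/2Y$-splitting of each tensor factor really does split the preferred $\cA$-basis of $\UZ$ into the even part plus a complement, so that the coordinate argument is legitimate. This uses Proposition~\ref{r.basesUZ} and Proposition~\ref{s.a2} (to view $(\XZ^\ev)^{\ot n}$, $(\XZ)^{\ot n}$, $(\UZ^\ev)^{\ot n}$ as submodules of $(\UZ)^{\ot n}\otimes_\cA\tA$ without ambiguity). Once the bases are lined up, both identities reduce to the trivial observation that over an integral domain, if $\sum c_i b_i$ with $c_i$ in the fraction field lies in the span of a sub-collection $\{b_j\}$ of a basis, then $c_i = 0$ for $i$ outside that sub-collection, combined with the injectivity of multiplication by $(q;q)_k$.
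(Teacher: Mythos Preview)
Your argument for \eqref{eq.sosanh1} and for the second equality in \eqref{eq.sosanh} is correct and matches the paper's approach: set up the $\tA$-basis $\{a(i)e(i)\}$ of $(\XZ)^{\ot n}$ coming from the $\cA$-basis $\{e(i)\}$ of $(\UZ)^{\ot n}$, single out the index subset $I^\ev$ carrying the even parts, and read off membership from coordinate conditions.

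There is, however, a genuine gap in your argument for the \emph{first} equality of \eqref{eq.sosanh}. You assert that $(q;q)_k(A\cap B)=(q;q)_kA\cap B$ because $B=[(\UZ^\ev)^{\ot n}]_1$ is ``stable under multiplication by $(q;q)_k^{-1}$ in the ambient space'', justified by ``$(q;q)_k$ acts invertibly on $\BQ(v)\otimes(\UZ^\ev)^{\ot n}$ and preserves the $G$-grading''. But this only tells you that $z:=y/(q;q)_k$ has $G$-degree~$1$ in $\BQ(v)\otimes(\UZ^\ev)^{\ot n}$; it does \emph{not} show $z\in(\UZ^\ev)^{\ot n}$. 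Indeed $B$ is not stable under division by $(q;q)_k$ in $\BQ(v)\otimes(\UZ^\ev)^{\ot n}$ (e.g.\ $1\in B$ but $1/(q;q)_k\notin B$). What saves the situation is that $z$ also lies in $A=(\XZ^\ev)^{\ot n}$, so its coordinates $z_i$ lie in $\tA$; combined with $(q;q)_kz_i\in\cA$ (from $y\in(\UZ)^{\ot n}$) one gets $z_i\in\tA\cap\BQ(v)=\cA$, whence $z\in(\UZ^\ev)^{\ot n}$. This integrality step---essentially Lemma~\ref{r.60}---is the substantive ingredient, and it is exactly what the paper invokes. Your final paragraph gestures at a coordinate argument but does not isolate this point; the ``trivial observation'' you describe handles the even/odd splitting, not the passage from $\tA$-coefficients back to $\cA$-coefficients.
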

\begin{proof}
  The preferred basis \eqref{eq.basesXZ} of $\XZ$  is a dilatation of the preferred basis of $\UZ$ (described in Proposition \ref{r.basesUZ}). The basis of  $\UZ$
 gives rise in a natural way to an  $\cA$-basis $\{ e(i) \mid i \in I\}$ of $\UZ^{\ot n}$. By construction,   there is a function $a: I \to \tA$ such that $\{ a(i) e(i) \mid i\in I\}$ is an $\tA$-basis of $\XZ^{\ot n}$. Besides, there is subset $I^\ev \subset I$ such that $\{ e(i) \mid i \in I^\ev\}$ is an $\cA$-basis of $(\UZ^\ev)^{\ot n}$ and  $\{ a(i) e(i) \mid i \in I\}$ is an $\tA$-basis of $(\XZ^\ev)^{\ot n}$.

Using the $\tA$-basis $\{ e(i) \mid i\in I\}$, every $x\in (\UZ^{\ot n}\ot_\cA \tA)$ has unique presentation
$$x = \sum _{I\in I} x_i e(i), \quad x_i \in \tA.$$
Then
\begin{itemize}
\item[(a)] $x\in \UZ^{\ot n}$ if and only if $x_i \in \cA$ for all $i\in I$.
\item[(b)] $x \in (\UZ^\ev)^{\ot n}$ if and only if $x_i \in \cA$ for all $i\in I$ and $x_i=0$ for $i\not \in I^\ev$.
    \item[(c)] $x \in ((\UZ^\ev)^{\ot n} \ot_\cA \tA)$ if and only if %
      $x_i=0$ for $i\not \in I^\ev$.
    \item[(d)] $x \in (q;q)_k (\XZ)^{\ot n}$ if and only if $x_i \in (q;q)_k a(i) \tA$ for all $i \in I$.
    \item[(e)] $x \in (q;q)_k (\XZ^\ev)^{\ot n}$ if and only if $x_i \in (q;q)_k a(i) \tA$ for all $i \in I$ and $x_i=0$ for $i\not \in I^\ev$.
\end{itemize}
Proof of \eqref{eq.sosanh1}. By (c) and (d),  $x \in (q;q)_k (\XZ)^{\ot n} \cap  ((\UZ^\ev)^{\ot n} \ot_\cA \tA)$ if and only if $x_i \in (q;q)_k a(i) \tA$ for all $i \in I$ and $x_i=0$ for $i\not \in I^\ev$, which, by (e), is equivalent to $x \in (q;q)_k (\XZ^\ev)^{\ot n}$. Hence we have \eqref{eq.sosanh1}.

Proof of \eqref{eq.sosanh}. Since $\cF_k(\modK_n) = (q;q)_k \Big( (\XZ^\ev)^{\ot n} \cap \left[ (\UZ^\ev)^{\ot n}  \right]_1\Big)$, we have
$$ \cF_k(\modK_n) \subset (q;q)_k (\XZ^\ev)^{\ot n} \cap \left[ (\UZ^\ev)^{\ot n}  \right]_1 \subset (q;q)_k (\XZ^\ev)^{\ot n} \cap \left[ (\UZ)^{\ot n}  \right]_1.$$
It remains to prove the last term is a subset of the first, i.e. if
 $y\in (q;q)_k (\XZ^\ev)^{\ot n} \cap \left[ (\UZ)^{\ot n}  \right]_1$, then  $x:= y/
(q;q)_k $   belongs to $ (\XZ^\ev)^{\ot n} \cap \left[ (\UZ^\ev)^{\ot n}  \right]_1$.
By definition,
$$x \in (\XZ^\ev)^{\ot n} \cap \frac1{(q;q)_k}\left[ (\UZ)^{\ot n}  \right]_1,$$ and we need to show
$x \in \left[ (\UZ^\ev)^{\ot n}  \right]_1$. Since  both $y$ and $ (q;q)_k$ have $G$-grading 1, $x= y/(q;q)_k$ is an element of $(\Uq)^{\ot n}$ has $G$-grading 1. It remains to show that $x\in (\UZ^\ev)^{\ot n} $.

Because $x\in (\XZ^\ev)^{\ot n}$, (e) implies  $x_i \in \tA$  and $x_i=0$ if $i\not\in I^\ev$. Because $x\in \frac1{(q;q)_k} (\UZ)^{\ot n}$, (a) implies $x_i \in \BQ(v)$. It follows that
$x_i \in \tA \cap \BQ(v)= \cA$ and $x_i=0$ if $i\not\in I^\ev$. By (b),  $x\in (\UZ^\ev)^{\ot n} $.
\end{proof}

\subsection{Admissibility decomposition}
Suppose  $f\col (\Uh) ^{\ho a}\rightarrow (\Uh) ^{\ho b}$ is a  $\BC [[h]]$-module homomorphism. We also use $f$ to denote its natural extension $f\col (\UUh) ^{\ho a}\rightarrow (\UUh) ^{\ho b}$, where $\UUh= \Uh \ho_{\Ch} \Chh$.

Recall that $f$ {\em preserves the $G^\ev$-grading} if for every $g\in G^\ev$,
   $$ f\left(\left[(\UZ^\ev)^{\ot a}\right]_g\right) \subset \left[(\UZ^\ev)^{\ot b}\right]_g,$$
  and  $f$ is $(\tK_n(\cU))$-admissible if for every $i, j\in \BN$,
   $$ f_{(i,j)}(\tK_{i+a+j}(\cU)) \subset \tK_{i+b+j}(\cU),$$
   where $f_{(i,j)} = \id^{\ho i} \ho f \ho \id^{\ho j}$.

The following definition is useful in showing a map is $(\tK_n(\cU))$-admissible.

\begin{definition}
  \label{r351}
  Suppose  $f\col (\Uh) ^{\ho a}\rightarrow (\Uh) ^{\ho b}$ is a  $\BC [[h]]$-module homomorphism. An {\em admissibility
  decomposition} for $f$ is a decomposition $f=\sum_{p\in P_f}f_p$
   as an $h$-adically
  converging sum  of $\BC [[h]]$-module
  homomorphisms $f_p\col (\Uh) ^{\ho a}\rightarrow (\Uh) ^{\ho b}$ over a set $P_f$, satisfying the
  following conditions (A)--(C).
  \begin{enumerate}
  \item[(A)]
  For $p\in P_f$, $f_p$ preserves the $G^\ev$-gradings.

  \item[(B)] For $p\in P_f$, $f_p\left( \cU^{\ot a}\right) \subset \cU^{\ot b}$.

  \item[(C)] There are $m_p\in\BN$ for $p\in P_f$ such that $\lim_{p\in P_f} m_p=\infty$ and for each $p\in P_f$ we have
  \be
  f_p( (\sXZe)^{\ot a} )\  \subset \ (q;q)_{m_p}  (\sXZe)^{\ot b}.
  \notag
  \ee
  \end{enumerate}
\end{definition}
Here, $\lim_{p\in P_f} m_p=\infty$ means if $n \ge 0$, then $m_p \ge n$ for all but a finite number of $p\in P_f$. By definition, if $P_f$ is finite, then we always have
 $\lim_{p\in P_f} k_p=\infty$.

\begin{lemma}
  \label{r36}
  If $f\col (\Uh)^{\ho a}\rightarrow(\Uh)^{\ho b}$ has an admissibility decomposition then $f$
  is $(\tK_n(\cU))$-admissible.
\end{lemma}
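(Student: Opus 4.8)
\textbf{Proof plan for Lemma \ref{r36}.}
The plan is to unpack the definition of $(\tK_n(\cU))$-admissibility and verify the required inclusion $f_{(i,j)}(\tK_{i+a+j}(\cU))\subset\tK_{i+b+j}(\cU)$ directly, using the three properties (A), (B), (C) of an admissibility decomposition together with the characterizations of $\tK_n(\cU)$ and $\cF_k(\modK_n)$ established in the preceding lemmas. First I would fix $i,j\ge0$ and write $f_{(i,j)}=\sum_{p\in P_f}(f_p)_{(i,j)}$, which is again an $h$-adically convergent sum, since tensoring with identities and the sum being $h$-adically convergent are preserved. Then it suffices to track what each summand $(f_p)_{(i,j)}=\id^{\ho i}\ho f_p\ho\id^{\ho j}$ does to a typical element $x=\sum_{k\ge0}(q;q)_k x_k$ of $\tK_{i+a+j}(\cU)$, where $x_k\in\cF_k(\modK_{i+a+j}(\cU))$.

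The core of the argument is the following chain of inclusions for each fixed $p$ and each fixed $k$. Since $x_k\in\cF_k(\modK_{i+a+j}(\cU))\subset(q;q)_k(\sXZe)^{\ot(i+a+j)}\cap[(\UZ^\ev)^{\ot(i+a+j)}]_1\cap\cU^{\ot(i+a+j)}$, property (C) applied in the middle $a$ tensor factors gives $(f_p)_{(i,j)}(x_k)\in(q;q)_k(q;q)_{m_p}(\sXZe)^{\ot(i+b+j)}$; property (B) gives $(f_p)_{(i,j)}(x_k)\in\cU^{\ot(i+b+j)}$; and property (A), i.e. that $f_p$ preserves the $G^\ev$-grading (and the identity maps trivially preserve it), together with $x_k$ having total $G$-grading $1$, gives $(f_p)_{(i,j)}(x_k)\in[(\UZ^\ev)^{\ot(i+b+j)}]_1$. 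Here I would invoke the identity $\cF_{k'}(\modK_n)=(q;q)_{k'}(\XZ^\ev)^{\ot n}\cap[(\UZ^\ev)^{\ot n}]_1$ from \eqref{eq.sosanh}, which lets me combine the three conclusions: noting $(q;q)_k(q;q)_{m_p}$ is divisible by $(q;q)_{\max(k,m_p)}$ up to a unit in $\cA$ (in fact $(q;q)_k(q;q)_{m_p}\in(q;q)_{k+m_p}\cA$), we get $(f_p)_{(i,j)}(x_k)\in\cF_{k+m_p}(\modK_{i+b+j}(\cU))$. Strictly, since $(q;q)_k(q;q)_{m_p}$ need only be a multiple of $(q;q)_{\max(k,m_p)}$, I would phrase this as landing in $\cF_{\max(k,m_p)}(\modK_{i+b+j}(\cU))$, which is all that is needed.

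Finally I would assemble the pieces into a convergent series. For a fixed target filtration level $N$, the sum $f_{(i,j)}(x)=\sum_{p,k}(f_p)_{(i,j)}((q;q)_k x_k)$ has all but finitely many terms in $\cF_N(\modK_{i+b+j}(\cU))$: indeed if $k\ge N$ the term is already in $\cF_k\subset\cF_N$; and for the finitely many $k<N$, the hypothesis $\lim_{p\in P_f}m_p=\infty$ forces $m_p\ge N$ for all but finitely many $p$, so those terms also lie in $\cF_N$. Hence the double sum, suitably grouped, is of the form $\sum_{\ell\ge0}y_\ell$ with $y_\ell\in\cF_\ell(\modK_{i+b+j}(\cU))$, which by definition is an element of $\tK_{i+b+j}(\cU)$. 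This shows $f$ is $(\tK_n(\cU))$-admissible.

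The only mildly delicate point — the ``main obstacle'' such as it is — is the bookkeeping in the last step: one must be careful that the regrouping of the doubly-indexed convergent sum into a single filtration-indexed sum is legitimate, which it is because all the relevant modules $\cF_k(\modK_n(\cU))$ are $h$-adically closed in $\Uh^{\ho n}$ (the filtration $(q;q)_k$ being comparable to the $h$-adic one, as recorded when $\tK_n$ was defined) and $f_{(i,j)}$ is $h$-adically continuous. Everything else is a routine application of properties (A)--(C) and the two identities \eqref{eq.sosanh}, \eqref{eq.sosanh1}.
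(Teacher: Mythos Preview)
Your proof is correct and follows essentially the same route as the paper's: apply (A), (B), (C) to each $(f_p)_{(i,j)}(x_k)$, combine via the identity \eqref{eq.sosanh} to land in $\cF_{\max(k,m_p)}(\modK_{i+b+j}(\cU))$, and use $\lim_p m_p=\infty$ for convergence. Two minor slips to clean up: in your presentation $x=\sum_{k}(q;q)_k x_k$ with $x_k\in\cF_k$ the factor $(q;q)_k$ is already absorbed in $\cF_k$, so write simply $x=\sum_k x_k$; and the parenthetical ``in fact $(q;q)_k(q;q)_{m_p}\in(q;q)_{k+m_p}\cA$'' has the divisibility backwards (the $q$-binomial identity gives $(q;q)_{k+m_p}\in(q;q)_k(q;q)_{m_p}\cA$), though as you note only the trivial divisibility by $(q;q)_{\max(k,m_p)}$ is needed.
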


\begin{proof} Recall that $\cF_k(\modK_n(\cU)) = \cF_k(\modK_n) \cap \Ur^{\ot n}$.
From \eqref{eq.sosanh},
\be
\label{eq.bh2a}
\cF_k(\modK_n(\cU))= (q;q)_k (\XZ^\ev)^{\ot n} \cap \left[ (\UZ^\ev)^{\ot n}  \right]_1 \cap \cU^{\ot n}.
\ee

 Let $f= \sum_{p \in P} f_p$ be an admissibility decomposition of $f$.
Suppose $x \in \tK_{i+a+j}(\cU)$ with presentation
$$ x = \sum  x_k, \quad x_k \in \cF_k(\modK_{i+a+j}(\cU)).$$
Then, with the $h$-adic topology, we have
$$ f_{(i,j)}(x) = \sum_{k,p} (f_p)_{(i,j)}(x_k).$$
We look at each term of the right hand side.
Since $x_k \in \left[ (\UZ^\ev)^{\ot i+a+j}  \right]_1$,  (A) implies that
\be
\label{eq.bh1}
(f_p)_{(i,j)}(x_k) \in \left[ (\UZ^\ev)^{\ot i+b+j}  \right]_1.
\ee
Since $x_k \in \cU^{\ot i+a+j}$, Condition (B) implies that
\be
\label{eq.bh1a}
(f_p)_{(i,j)}(x_k) \in \cU^{\ot i+a+j}.
\ee

We have $x_k = \ooo_k \, y_k$ with $y_k \in(\sXZe)^{\ot i+a+j}$.
By Condition (C),
\be
(f_p)_{(i,j)}(x_k) = \ooo_k(f_p)_{(i,j)}(y_k)  \in \ooo_k \ooo_{m_p} (\sXZe)^{\ot i+b+j} \subset \ooo_{m(k,p)}\, (\sXZe)^{\ot i+b+j},
\notag
\ee
where $m(k,p) = \max(k, m_p)$. Together with \eqref{eq.bh1}, \eqref{eq.bh1a}, and \eqref{eq.bh2a}, this implies
$$
(f_p)_{(i,j)}(x_k) \in \cF_{m(k,p)}(\modK_{i+b+j}).
$$

Condition (C) implies that
$$
 \lim_{(k,p)\in \BN \times P_f} m({k,p})=\infty.
 $$
Hence,
$ f_{(i,j)}(x) = \sum_{k,p} (f_p)_{(i,j)}(x_k)$
belongs to $\tK_{i+b+j}$.
\end{proof}

\begin{remark}
  \label{r34}
  It is not difficult to show that the set of $(\tK_n)$-admissible maps
  are closed under composition and tensor product.  Thus there is a
  monoidal category whose objects are nonnegative integers and whose
  morphisms from $m$ to $n$ are $(\tK_n)$-admissible $\BC [[h]]$-module
  homomorphisms from $\Uhx m$ to $\Uhx n$.
  According to
  Lemma \ref{r.psi}, this category is braided with $\psi _{1,1}={\boldsymbol{\psi}} $,
  and contains a braided Hopf algebra structure.
\end{remark}

 \subsection{Admissibility of
 ${\boldsymbol \mu} $}
 \begin{lemma}\label{r.mu}The multiplication $\boldmu: \Uh \ho \Uh \to \Uh$
  is $(\tK_n)$- admissible.
 \end{lemma}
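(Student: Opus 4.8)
The plan is to exhibit an \emph{admissibility decomposition} for $\boldmu$ in the sense of Definition~\ref{r351} and then apply Lemma~\ref{r36}. First I would record the harmless bookkeeping point that $\tK_n=\tK_n(\UZ)$: the choice $\cU=\UZ$ is an $\cA$-Hopf-subalgebra of itself, and $\modK_n\subset(\sXZe)^{\ot n}\cap[(\UZ^\ev)^{\ot n}]_1\subset\UZ^{\ot n}$, so $\modK_n(\UZ)=\modK_n$ and the two filtered completions coincide. Hence $(\tK_n(\UZ))$-admissibility is exactly the assertion of the lemma. (In fact the same argument below proves $\boldmu$ is $(\tK_n(\cU))$-admissible for every $\cU$ as in Proposition~\ref{r.Jvalues1}, since condition (B) of Definition~\ref{r351} then becomes $\boldmu(\cU\ot\cU)\subset\cU$, which holds because $\cU$ is an algebra.)

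The decomposition I would take is the trivial one: a one-element index set $P_{\boldmu}=\{*\}$, the single summand $\boldmu_*=\boldmu$, and $m_*=0$. Condition (A) --- preservation of the $G^\ev$-grading --- follows from Proposition~\ref{r.bh11}, which gives $\boldmu([\Uq^{\ot2}]_g)\subset[\Uq]_g$; intersecting with $\UZ^\ev$ and using that $\boldmu$ maps $(\UZ^\ev)^{\ot2}$ into $\UZ^\ev$ (a subalgebra of $\UZ$) yields $\boldmu([(\UZ^\ev)^{\ot2}]_g)\subset[\UZ^\ev]_g$ for all $g\in G^\ev$. Condition (B), namely $\boldmu(\UZ\ot\UZ)\subset\UZ$, is immediate because $\UZ$ is a $\BC$-algebra (Proposition~\ref{prop.basis}). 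Condition (C) holds with $m_*=0$ because $\sXZe$ is an $\tA$-subalgebra of $\UZ^\ev\ot_\cA\tA$ by Theorem~\ref{r.sXZ}(b), so $\boldmu((\sXZe)^{\ot2})\subset\sXZe=\ooo_0\,\sXZe$; and a one-point index set trivially satisfies $\lim_{p}m_p=\infty$. With (A)--(C) verified, Lemma~\ref{r36} gives that $\boldmu$ is $(\tK_n)$-admissible.

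There is essentially no real obstacle in this lemma: it is a direct consequence of the facts, established in the earlier sections, that multiplication respects the $G^\ev$-grading, Lusztig's integral form $\UZ$, and the dilated integral core $\sXZe$, all three \emph{strictly} (no spurious factor of $\ooo$ is produced), together with the $h$-adic continuity of $\boldmu$ which is already absorbed into Lemma~\ref{r36}. The only thing to be careful about is citing each of the three conditions of Definition~\ref{r351} from the correct earlier statement and making the identification $\tK_n=\tK_n(\UZ)$ explicit so that Lemma~\ref{r36} applies directly.
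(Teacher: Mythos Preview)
Your proof is correct and follows exactly the same approach as the paper: take the trivial one-term admissibility decomposition for $\boldmu$, verify (A) via Proposition~\ref{r.bh11}, (B) because $\cU$ (or $\UZ$) is an algebra, (C) because $\sXZe$ is an $\tA$-algebra (Theorem~\ref{r.sXZ}(b)), and conclude by Lemma~\ref{r36}. Your added bookkeeping on $\tK_n=\tK_n(\UZ)$ is a harmless clarification; the paper simply works with a fixed $\cU$ throughout this stretch and does not spell this out.
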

\begin{proof}  We show that the trivial decomposition, $P_{\boldmu}=\{0\}$ and $\boldmu_0=\boldmu$, is an admissibility decomposition for $\boldmu$.

(A)  The fact that $\boldmu$ preserves the $G^\ev$-grading is part of
 Proposition \ref{r.bh11}.

(B) Since $\cU$ is a subalgebra of $\UZ$, $\boldmu(\cU\ot \cU) \subset \cU$.

(C)  Since $\sXZe$ is an $\tA$-algebra, we have
$ \quad {\boldsymbol \mu} (\sXZe \ot \sXZe) \subset \sXZe,$ which proves~(C).

By Lemma \ref{r36}, ${\boldsymbol \mu} $  is $(\tK_n)$- admissible.
\end{proof}

\subsection{Admissibility of ${\boldsymbol{\psi}} $}

\begin{lemma}  \label{r.psi}
Each of
 $\boldsymbol \psi^{\pm 1}$ %
 is  $(\tK_n)$-admissible.
 \end{lemma}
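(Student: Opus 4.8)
The plan is to produce, for the $\cU$ fixed in this section, an admissibility decomposition of $\boldpsi$ and of $\boldpsi^{-1}$ in the sense of Definition \ref{r351}; by Lemma \ref{r36} this yields the required admissibility. Starting from the defining formula \eqref{e12}, $\boldpsi(x\ot y)=\sum(\beta\trr y)\ot(\alpha\trr x)$ with $\cR=\sum\alpha\ot\beta$, I would substitute the factorization $\cR=\cD\Theta^{-1}$ of \eqref{501} together with the weight decomposition $\Theta^{-1}=(\ibar\ot\ibar)(\Theta)=\sum_{\gamma\in Y_+}\bar\Theta_\gamma$ from \eqref{e64}. The Cartan factor $\cD=\cD_{21}$ acts on a $Y$-homogeneous tensor $a\ot b$ by the scalar $v^{(|a|,|b|)}\in\cA$ (sum the exponential series), so rather than expanding $\cD$ one simply absorbs this scalar; with $\bar\Theta_\gamma=\sum_{\bn:\,|E_\bn|=\gamma}F'_\bn\ot E'_\bn$ a \emph{finite} sum, one obtains an $h$-adically convergent decomposition
\[
  \boldpsi=\sum_{\gamma\in Y_+}\boldpsi_\gamma,\qquad
  \boldpsi_\gamma(x\ot y)=v^{c(\gamma,x,y)}\!\!\sum_{\bn:\,|E_\bn|=\gamma}(E'_\bn\trr y)\ot(F'_\bn\trr x)
\]
on $Y$-homogeneous $x\ot y$, where $c(\gamma,x,y)\in\BZ$ depends only on $|x|,|y|$; extend each $\boldpsi_\gamma$ by $\Ch$-linearity to $\Uh^{\ho 2}$. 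Convergence holds because $E_\bn$, hence $E'_\bn$, carries an $h$-factor of order $\ge\|\bn\|\ge\Ht(\gamma)/C$ with $C:=\max\{\Ht(\delta):\delta\in\Phi_+\}$, so $\{\gamma:\text{$h$-order of }\boldpsi_\gamma<N\}$ is finite for every $N$. For $\boldpsi^{-1}$ one argues identically, using $(S\ot\id)(\cR)=\cR^{-1}=\Theta\cD^{-1}$ (so the \emph{unprimed} $F_\bn,E_\bn$ and $\cD^{-1}$ occur); the extra antipode is harmless because $S$ stabilizes $\UZ$, $\sXZ$ and $\cU$, and its effect on the $G$-grading is controlled by Lemma \ref{thm:6}.

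I expect conditions (A) and (B) of Definition \ref{r351} to be routine. For (B), $\cU$ is an $\cA$-Hopf-subalgebra of $\UZ$, so $\ad(\cU\ot\cU)\subset\cU$; combined with the hypotheses $F_\bn\ot E_\bn,\ F'_\bn\ot E'_\bn\in\cU\ot\cU$, with $K_\al\in\cU$, and with the $\cA$-valued $\cD$-scalars, this gives $\boldpsi_\gamma(\cU^{\ot 2})\subset\cU^{\ot 2}$. For (A), that $\boldpsi_\gamma$ preserves the $G^\ev$-grading: $\bar\Theta_\gamma$ is $G^{\ot 2}$-homogeneous of degree $\dth_\gamma$ by Lemma \ref{thm:4}, and $E'_\bn\ot K_\bn F'_\bn$ is $G^\ev$-homogeneous by \eqref{eq.nh1}; regrouping so that the $E'_\bn$-action falls on the first tensorand and the $K_\bn F'_\bn$-action on the second, Lemma \ref{r18} computes the two output degrees, and a short congruence (valid because $(\gamma,\gamma)$ is even) shows that the $\cD$-scalar makes the total $G^\ev$-degree agree with that of $x\ot y$; landing in $(\UZ^\ev)^{\ot 2}$ uses that $\UZ^\ev$ is $\UZ$-ad-stable (Lemma \ref{eq.evenD3}(b) intersected with $\UZ$, since $\UZ$ is a Hopf algebra).

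The crux is condition (C): integers $m_\gamma$ with $m_\gamma\to\infty$ and $\boldpsi_\gamma\bigl((\sXZ^\ev)^{\ot 2}\bigr)\subset(q;q)_{m_\gamma}(\sXZ^\ev)^{\ot 2}$. This is exactly where grouping by $\gamma$ rather than by the multi-index $\bn$ is essential: the divisibility $o(\bn)=(q;q)_{\lfloor\max(\bn)/2\rfloor}$ supplied by \eqref{eq.eprime}--\eqref{eq.bg2} does \emph{not} tend to $\infty$ over $\bn\in\BN^t$ (e.g.\ $\bn=(1,\dots,1)$ gives $o(\bn)=1$), but after grouping it does: for $\Ht(\gamma)$ large, every $\gamma=\sum_j n_j\gamma_j$ with $\bn\in\BN^t$ forces $\max_j n_j\ge\Ht(\gamma)/(tC)$, so
\[
  m_\gamma:=\min_{\bn:\,|E_\bn|=\gamma}\ \lfloor\max(\bn)/2\rfloor\ \longrightarrow\ \infty
\]
as $\Ht(\gamma)\to\infty$, while only finitely many $\gamma\in Y_+$ have bounded height. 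For the inclusion, take a $Y$-homogeneous pure tensor $x\ot y\in(\sXZ^\ev)^{\ot 2}$ and a summand $(E'_\bn\trr y)\ot(F'_\bn\trr x)$. On the first factor, \eqref{eq.eprime} gives $E'_\bn\in o(\bn)\UZ^\ev$, hence $E'_\bn\trr y\in o(\bn)\bigl(\UZ^\ev\trr\sXZ^\ev\bigr)\subset o(\bn)\sXZ^\ev$ by the $\UZ$-ad-stability of $\sXZ^\ev$ (Theorem \ref{r.sXZ}(b)(i)), extended $\tA$-linearly. On the second factor, moving a $K_\bn$ across via \eqref{eq.8157} gives $F'_\bn\trr x=(\text{scalar in }\cA)\cdot(K_\bn F'_\bn)\trr x$, where $K_\bn F'_\bn\in\UZ^\ev$ and $K_\bn F'_\bn\in o(\bn)\sXZ^\ev$ (both from \eqref{eq.bg2} and its proof, using that $\sqrt{(q;q)_\bn}\,F^{(\bn)}K_\bn$ is an $\sXZ^\ev$-basis element and $\sqrt{(q;q)_\bn}\in o(\bn)\tA$), so with $w:=o(\bn)^{-1}K_\bn F'_\bn\in\sXZ^\ev$ one gets $(K_\bn F'_\bn)\trr x=o(\bn)\,(w\trr x)\in o(\bn)\sXZ^\ev$. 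Thus each summand lies in $o(\bn)^2(\sXZ^\ev)^{\ot 2}\subset(q;q)_{m_\gamma}(\sXZ^\ev)^{\ot 2}$, and since $(\sXZ^\ev)^{\ot 2}$ has an $\tA$-basis of $Y$-homogeneous elements this establishes (C); the same decomposition handles $\boldpsi^{-1}$. The points requiring care in the write-up are the consistent use of the $\tA$-linear extension of $\UZ\trr\sXZ^\ev\subset\sXZ^\ev$ (legitimate since $w\in\UZ^\ev\otimes_\cA\tA$) and the check that every scalar produced by $\cD^{\pm1}$, by $K_\bn$, and by the $\{n_j\}$-type factors lies in $\cA$.
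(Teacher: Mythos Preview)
Your overall strategy—build an admissibility decomposition of $\boldpsi^{\pm1}$ and apply Lemma~\ref{r36}—is exactly the paper's, and your checks of (A) and (B) are essentially the same. The point of departure is condition (C), and here your motivating claim is mistaken: you assert that $m_\bn=\lfloor\max(\bn)/2\rfloor$ does \emph{not} tend to $\infty$ over $\BN^t$, citing $\bn=(1,\dots,1)$. But the meaning of $\lim_{p\in P_f}m_p=\infty$ in Definition~\ref{r351} is that $m_p\ge N$ for all but \emph{finitely many} $p$, and $\{\bn\in\BN^t:\max(\bn)<2N\}=\{0,\dots,2N-1\}^t$ is a finite box. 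So the paper simply takes $P_{\boldpsi}=\BN^t$ with $\boldpsi_\bm(x\ot y)=v^{(|y|+\lambda_\bm,|x|-\lambda_\bm)}(E'_\bm\trr y)\ot(F'_\bm\trr x)$ and $m_\bm=\lfloor\max(\bm)/2\rfloor$; your regrouping by $\gamma\in Y_+$ is correct but unnecessary.

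The paper's verification of (C) is also shorter than yours: from \eqref{eq.eprime} one has $E'_\bm\in o(\bm)\,\UZ^\ev$, while $F'_\bm\in\UZ$ (it is a unit multiple of $F^{(\bm)}$); hence
\[
\boldpsi_\bm(\sXZe\ot\sXZe)\subset o(\bm)\,(\UZ\trr\sXZe)\ot(\UZ\trr\sXZe)\subset o(\bm)\,\sXZe\ot\sXZe
\]
by the $\UZ$-ad-stability of $\sXZe$ (Theorem~\ref{r.sXZ}(b)(i)). A single factor of $o(\bm)$ already suffices; there is no need to rewrite $F'_\bn\trr x$ via $K_\bn F'_\bn$ and extract a second $o(\bn)$ from the $\sXZe$-side. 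Thus your proof works, but the paper's direct $\BN^t$-indexing avoids both the regrouping and the extra manipulation of the second factor.
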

 \begin{proof} First  consider  $\boldpsi$.
 Using \eqref{501} and \eqref{e64},
we obtain ${\boldsymbol{\psi}} = \sum_{\modm\in P_{\boldsymbol{\psi}} } {\boldsymbol{\psi}} _\modm $, where $P_{\boldsymbol{\psi}} =\BN^t$
and
\begin{gather}
\label{e38a}
  {\boldsymbol{\psi}} _\modm (x\otimes y)= v^{(|y|+\lambda _\modm ,|x|-\lambda _\modm )} (E'_\modm \trr y)\otimes (F'_\modm \trr x),
\end{gather}
with $\lambda_\bm = |E'_\bm|$. We will show this is an admissibility decomposition of $\boldpsi$.

(A) Suppose $x,y\in \UZ^\ev$ are $G^\ev$-homogeneous. By Lemma \ref{thm:4},
$$E'_\bm \ot F'_\bm \in [\UZ \ot \UZ]_{\elm \ot \elm^{-1}\dK_{\bm} }.$$
From  \eqref{e38a} and  Lemma \ref{r18},
we have ${\boldsymbol{\psi}} _\modm (x\otimes y)\in [(\UZ^\ev)^{\otimes 2}]_u$, where
\begin{gather*}
  \begin{split}
    u&=\dv^{(|y|+\lambda _\modm ,|x|-\lambda _\modm )}\,  \dad(\de_{\lambda _\modm }\ot  \dy)\,
    \dad  (\de_{\lambda _\modm }^{-1}\dK_{\lambda _\modm }\ot  \dx)\\
    &=\dv^{(|y|+\lambda _\modm ,|x|-\lambda _\modm )+(\lambda _\modm , |x|)}
    \de_{\lambda _\modm }\dy
    \, \de_{\lambda _\modm }^{-1}\dx\\
    &=\dv^{(|y|+\lambda _\modm ,|x|-\lambda _\modm )+(\lambda _\modm ,|x|)+(\lambda _\modm ,|y|)+(|x|,|y|)}
    \dx\dy\\
    &=\dx\dy.
  \end{split}
\end{gather*}
This shows that $\boldpsi_\bm$ preserves the $G^\ev$-grading.

(B)  By assumptions on $\Ur$,  $E'_\bm \ot F'_\bm \in \Ur \ot \Ur$ and $\Ur$ is a Hopf algebra. Now \eqref{e38a} shows that $\boldpsi_\bm (\Ur \ot \Ur) \subset \Ur \ot \Ur$. This proves (B).

(C)
By \eqref{eq.eprime}, $E'_\bm \ot F'_\bm \in o(\bm) \UZ \ot \UZ$.  Hence, from \eqref{e38a},
$$  {\boldsymbol{\psi}} _\modm (\sXZe\otimes \sXZe) \subset o(\bm) (\UZ \tri \sXZe) \ot (\UZ \tri \sXZe) \subset o(\bm) \sXZe \ot \sXZe,$$
where for the last inclusion we use Theorem  \ref{r.sXZ}(a), which in particular says $\sXZe$ is $\UZ$-stable.
This establishes   (C) of Lemma \ref{r36}.

 By Lemma \ref{r36},
 $\boldpsi$ is $(\tK_n(\Ur))$-admissible.

Now consider the case $\boldpsi^{-1}$. By computation, we obtain ${\boldsymbol{\psi}} ^{-1}= \sum_{\bm\in\BN^t } ({\boldsymbol{\psi}} ^{-1})_\modm $, where
\begin{gather*}
  ({\boldsymbol{\psi}} ^{-1})_\modm (x\otimes y) =  v^{-(|x|,|y|)} (F_\modm \trr y)\otimes (E_\modm \trr x),
\end{gather*}
for homogeneous $x,y\in \Uh$.  The proof is similar to the case of ${\boldsymbol{\psi}} $.
\end{proof}

\begin{remark}
One can check that ${\boldsymbol{\psi}} ^{-1} = (\tphi \ho \tphi) {\boldsymbol{\psi}} (\tphi^{-1}\ho \tphi^{-1})$. Hence, the admissibility of ${\boldsymbol{\psi}} ^{-1}$ can also be derived from that of
${\boldsymbol{\psi}}$.
\end{remark}

\subsection{Admissibility of \underline{$\Delta $}}
\label{sec:admiss-underl}
\begin{lemma} \label{r.uD}
The braided co-product $\uD$
is $(\tK_n(\Ur))$-admissible.
\end{lemma}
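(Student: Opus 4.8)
The plan is to produce an admissibility decomposition for $\uD$ in the sense of Definition \ref{r351}, and then invoke Lemma \ref{r36}. Recall from \eqref{e10} that $\uD(x) = \sum (\beta \trr x_{(2)}) \otimes \alpha x_{(1)}$, where $\cR = \sum \alpha \otimes \beta$. Using \eqref{501} and \eqref{e64}, I would expand $\cR^{-1} = \Theta \cD^{-1}$ (it is $\cR$ versus $\cR^{-1}$ that is most convenient here; one should double check against the formula for $\uD$ actually used, but the shape is the same) to get a decomposition $\uD = \sum_{\modm \in \BN^t} \uD_\modm$ indexed by $P_{\uD} = \BN^t$, where $\uD_\modm(x)$ is, up to an explicit power of $v$ coming from the Cartan part $\cD^{\pm 1}$ acting by \eqref{eq.Dcommute}, of the form $(E'_\modm \trr x_{(2)}) \otimes F'_\modm x_{(1)}$ (or with $E_\modm, F_\modm$, depending on orientation conventions), summed over a $G$-good presentation $\Delta(x) = \sum x_{(1)} \otimes x_{(2)}$ as in Lemma \ref{r.coprod}. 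The sum is $h$-adically convergent since $\Theta$ is.

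Then I would verify conditions (A), (B), (C) for each $\uD_\modm$, in close parallel with the proof of Lemma \ref{r.psi}. For (A): take $x \in \UZ^\ev$ that is $G^\ev$-homogeneous, use a $G$-good presentation $\Delta(x) = \sum x_{(1)} \otimes x_{(2)}$ (Lemma \ref{r.coprod}), note $E'_\modm \otimes F'_\modm \in [\UZ \otimes \UZ]_{\elm \otimes \elm^{-1} \dK_\modm}$ by Lemma \ref{thm:4}, and then compute the $G$-degree of $(E'_\modm \trr x_{(2)}) \otimes F'_\modm x_{(1)}$ using Lemma \ref{r18} for the adjoint action on the first tensorand, Lemma \ref{thm:6} together with \eqref{eq.evenD} for the interplay of $\dS$, $\dK_{|x_{(2)}|}$ and the good presentation, and the commutation rule \eqref{e29}; the explicit $v$-power in front of $\uD_\modm$ must cancel all the $\dv$-contributions so that $\uD_\modm(x) \in [(\UZ^\ev)^{\otimes 2}]_{\dx}$, exactly as in the $\boldpsi$ case. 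For (B): $\cU$ is a Hopf subalgebra of $\UZ$ containing all $K_\alpha$, and $E'_\modm \otimes F'_\modm \in \cU \otimes \cU$ by hypothesis on $\cU$; hence $\Delta(\cU) \subset \cU \otimes \cU$, $\ad(\cU \otimes \cU) \subset \cU$, and $\uD_\modm(\cU) \subset \cU \otimes \cU$. For (C): by \eqref{eq.eprime} we have $E'_\modm, F'_\modm \in o(\modm)\UZ^\ev$, so using Theorem \ref{r.sXZ}(a) ($\sXZ^\ev$ is $\UZ$-ad-stable and an $\tA$-algebra) and Lemma \ref{r.coprod}(ii) (the good presentation keeps $x_{(2)}$ and $x_{(1)} K_{|x_{(2)}|}$ in $\XZ^\ev$, with $x_{(1)}$ then differing from something even by a Cartan factor that can be absorbed), one gets $\uD_\modm((\sXZ^\ev)^{\otimes a}) \subset o(\modm)(\sXZ^\ev)^{\otimes b}$ with $m_\modm = \lfloor \max(\modm)/2 \rfloor \to \infty$. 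Then Lemma \ref{r36} gives $(\tK_n(\cU))$-admissibility.

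The main obstacle I anticipate is the bookkeeping in step (A): one must choose the right convention ($\cR$ vs $\cR^{-1}$, the left vs right form of $\uD$ in \eqref{e10}, and whether $E_\modm$ or $E'_\modm$ appears) so that the prefactor $v$-power produced by pushing $\cD^{\pm1}$ past the homogeneous pieces via \eqref{eq.Dcommute} exactly cancels against the $\dv$-exponents arising from $\dot\ad$ (Lemma \ref{r18}), $\dS$ (Lemma \ref{thm:6}), and the good-presentation identity \eqref{eq.evenD}. This is the same kind of computation carried out in detail for $\boldpsi_\modm$ in the proof of Lemma \ref{r.psi}, and I expect it to go through similarly, but it requires care because the braided coproduct mixes the coproduct $\Delta(x) = \sum x_{(1)} \otimes x_{(2)}$ with the adjoint action, so one genuinely needs the $G$-good presentation of $\Delta(x)$ rather than an arbitrary one. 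A secondary, purely notational, point is that $x_{(1)}$ itself is only $G$-homogeneous (not $G^\ev$-homogeneous) while $x_{(1)} K_{|x_{(2)}|}$ is $G^\ev$-homogeneous; one must track the Cartan factor $K_{|x_{(2)}|}$ carefully when passing between $\UZ$-membership and $\UZ^\ev$-membership, using that $K_{2\gamma} \in \sXZ^\ev$ and the triangular structure.
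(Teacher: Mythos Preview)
Your plan is essentially the paper's proof: decompose $\uD=\sum_{\modm\in\BN^t}\uD_\modm$ using $\cR^{-1}=\Theta\cD^{-1}$, then verify (A)--(C) of Definition~\ref{r351} and invoke Lemma~\ref{r36}. The paper's explicit formula is
\[
\uD_\modm(x)=\sum v^{-(|x_{(2)}|,\lambda_\bm)}\bigl(E'_\modm\trr x_{(2)}\bigr)\otimes \bigl(K_\bm F'_\modm\bigr)\bigl(K_{|x_{(2)}|}x_{(1)}\bigr),
\]
and your outline for (A) (via Corollary~\ref{cor.918}, Lemma~\ref{r18}, and \eqref{eq.evenD}) and (B) matches it.

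There is one concrete slip in your step (C). The inclusion $F'_\modm\in o(\modm)\UZ^\ev$ is \emph{false}: equation~\eqref{eq.eprime} covers only $E_\bn,E'_\bn$, since $E_\bn\sim (q;q)_\bn E^{(\bn)}$ carries the factor $(q;q)_\bn$, whereas $F_\bn\sim F^{(\bn)}$ (up to a unit) does not. Consequently $K_\bm F'_\modm\sim F^{(\modm)}K_\bm$ lies in $\UZ^{\ev,-}$ but \emph{not} in $\sXZe$, so the product $(K_\bm F'_\modm)(K_{|x_{(2)}|}x_{(1)})$ cannot be placed in $\sXZe$ by ``$\UZe\cdot\sXZe\subset\sXZe$'' (which is false). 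The paper's fix is to use \eqref{eq.bg2}, which gives the \emph{tensor} statement
\[
E'_\bm\otimes K_\bm F'_\bm\in o(\bm)\bigl(\UZe\otimes\sXZe\bigr),
\]
obtained by splitting $(q;q)_\bn=\sqrt{(q;q)_\bn}\cdot\sqrt{(q;q)_\bn}$: one square root promotes $F^{(\bn)}K_\bn$ to a preferred basis element of $\sXZe$, the other supplies the $o(\bm)$ factor. With this in hand, the first tensorand lands in $\UZ\trr\sXZe\subset\sXZe$ and the second in $\sXZe\cdot\sXZe\subset\sXZe$, yielding $\uD_\modm(\sXZe)\subset o(\bm)(\sXZe)^{\otimes 2}$. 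Replace your appeal to \eqref{eq.eprime} for $F'_\modm$ by \eqref{eq.bg2} and the proof goes through exactly as you sketch.
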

\begin{proof} Suppose $x\in \UZ^\ev$ is $G^\ev$-homogeneous. %
By a simple calculation, we have
\be
\uD=\sum_{\modm \in \BN^t}\uD_\modm , \label{eq.decompD}
\ee
where, with  $\lambda_\bm:= |E'_\bm|$, %
\begin{gather}
  \label{e35}
  \uD_\modm (x)= \sum  v^{-(|x_{(2)}|, \lambda_\bm)} \Big(E'_\modm \trr x_{(2)}\Big)\otimes  \Big(K_{\modm}\, F'_\modm \Big)\, \Big(K_{|x_{(2)}|} \, x_{(1)}\Big).
\end{gather}

(A) By Corollary \ref{cor.918}, $E'_\modm \otimes  K_{\modm}\, F'_\modm \in [\UZ^\ev \ot \UZ^\ev]_{\de_\bm \ot \de_\bm^{-1}}$. We will use a $G$-good presentation $\Delta(x) = \sum x_{(1)} \ot x_{(2)}$ (see Section \ref{sec:coproduct}).
From Lemma \ref{r18}, each summand of the right hand side of \eqref{e35} is in $[(\UZe)^{\otimes 2}]_u$, where
\begin{gather*}
  \begin{split}
      u&= \dv^{-(|x_{(2)}|, \lambda_\bm) }\, \elm \,  \dx_{(2)} \,   \elm^{-1} \,  \dK_{|x_{(2)}|}\,  \dx_{(1)}\\
      &=\dx_{(2)} \,   \dK_{|x_{(2)}|}\,  \dx_{(1)} = \dx.
  \end{split}
\end{gather*}
Here the last identity is \eqref{eq.evenD}.  Thus, $\uD_\bm$ preserves the $G^\ev$-grading.

(B) Since $K_\al \in \Ur$ and $E'_\bm \ot K_\bm F'_\bm \in \Ur\ot \Ur$, \eqref{e35} shows that  $\uD_\bm(\Ur) \subset\Ur \ot \Ur$.

(C) Let $x\in \sXZe$. By an argument similar to the proof of
  Lemma \ref{r.coprod}, we see that $$x_{(2)} \ot K_{|x_{(2)}|} x_{(1)} \in \sXZe \ot \sXZe.$$
By  \eqref{eq.bg2},
$$ E'_\bm \ot K_\bm F'_\bm \in \ o(\bm)\,  \UZe \ot \sXZe.$$

Hence, from \eqref{e35},
\begin{gather*}
  \uD_\modm (x) \in  o(\bm)\,\, (\UZe \tri \sXZe)\, \sXZe \subset
  o(\bm)\, \sXZe,
\end{gather*}
where for the last inclusion we again use the fact the $\sXZe$ is $\UZ$-stable (Theorem \ref{r.sXZ}). This shows (C) of Lemma \ref{r36} holds.
By Lemma \ref{r36}, $\uD$ is $(\tK_n(\Ur))$-admissible.
\end{proof}

\subsection{Admissibility of \underline{$S$}}

\begin{lemma} \label{r.bS}
The braided antipode
$\bS$
is $(\tK_n(\Ur))$-admissible.
\end{lemma}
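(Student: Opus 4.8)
The plan is to exhibit an \emph{admissibility decomposition} of $\bS$ in the sense of Definition \ref{r351} and then apply Lemma \ref{r36}, following exactly the pattern of Lemmas \ref{r.mu}, \ref{r.psi} and \ref{r.uD}.

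First I would compute the decomposition. Starting from \eqref{e11}, $\bS(x)=\sum\beta\,S(\alpha\trr x)$ with $\cR=\sum\alpha\otimes\beta=\cD\Theta^{-1}$ (see \eqref{501}) and $\Theta^{-1}=\sum_{\bm\in\BN^t}F'_\bm\otimes E'_\bm$ (see \eqref{e64}), I would expand $\cR$ accordingly, push the Cartan part $\cD$ through using \eqref{eq.Dcommute}, and use that a monomial in the $H_\alpha$'s acts on a $Y$-homogeneous element by a scalar. A direct calculation, entirely parallel to the derivation of \eqref{eq.decompD}--\eqref{e35} for $\uD$, should produce $\bS=\sum_{\bm\in\BN^t}\bS_\bm$ with, for $Y$-homogeneous $x$, a formula of the shape
\begin{gather*}
  \bS_\bm(x)= v^{c(\bm,x)}\,K_{\mu(\bm,x)}\,E'_\bm\,S\bigl(F'_\bm\trr x\bigr)
\end{gather*}
(or the analogous expression with $S^{-1}$ and $E_\bm,F_\bm$ dictated by the second form in \eqref{e11}), for an integer exponent $c(\bm,x)$ and a weight $\mu(\bm,x)\in Y$ that I would pin down from the calculation; the essential feature is that $\bS_\bm$ is built from $E'_\bm$, $F'_\bm$, the adjoint action, $S$, and $K$'s. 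Since $\Theta^{-1}$ converges $h$-adically and the remaining factors are bounded in $h$-order, this sum is $h$-adically convergent. Note that, unlike $\uD$, the formula \eqref{e11} for $\bS$ does not involve the coproduct of $x$, so no $G$-good presentation is needed here.

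Then I would verify conditions (A), (B), (C) of Definition \ref{r351}. For (A): by Corollary \ref{cor.918} we have $E'_\bm\otimes K_\bm F'_\bm\in[\UZ^\ev\otimes\UZ^\ev]_{\de_{\lambda_\bm}\otimes\de_{\lambda_\bm}^{-1}}$, Lemma \ref{thm:6} gives the $G$-degree of $S$, and Lemma \ref{r18} gives the $G$-degree of the adjoint action; feeding these into the formula for $\bS_\bm$ and collecting the $v$- and $K$-contributions, the $G$-degrees cancel and $\bS_\bm$ maps $[\UZ^\ev]_g$ into $[\UZ^\ev]_g$ for every $g\in\Gv$ --- the same kind of cancellation as in the proof of Lemma \ref{r.uD}, but shorter. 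For (B): $E'_\bm,F'_\bm\in\cU$ by the hypothesis of Proposition \ref{r.Jvalues1}, $K_\alpha\in\cU$, and $\cU$ is a Hopf subalgebra of $\UZ$, so $S(\cU)\subset\cU$ and $\cU\trr\cU\subset\cU$; hence $\bS_\bm(\cU)\subset\cU$. For (C): $E'_\bm\in o(\bm)\UZ^\ev$ by \eqref{eq.eprime}, and likewise $F'_\bm\in o(\bm)\UZ$ (from $F_\bm\sim(q;q)_\bm F^{(\bm)}$, divisibility of $(q;q)_\bm$ by $o(\bm)$, and $\ibar$-stability of $\UZ$); since $\sXZ^\ev$ is $\UZ$-ad-stable and $\sXZ$ is a Hopf subalgebra containing all $K_\gamma$ (Theorem \ref{r.sXZ}), one gets $\bS_\bm(x)\in o(\bm)\,\sXZ$ for $x\in\sXZ^\ev$, and combining with the $\Gv$-homogeneity from (A) and \eqref{eq.sosanh1} gives $\bS_\bm(\sXZ^\ev)\subset o(\bm)\,\sXZ^\ev$. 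Since $o(\bm)=(q;q)_{\lfloor\max(\bm)/2\rfloor}$ by \eqref{eq.oorder}, taking $m_\bm=\lfloor\max(\bm)/2\rfloor$ makes $\lim_{\bm}m_\bm=\infty$, so (C) holds. With (A)--(C) established, Lemma \ref{r36} shows that $\bS$ is $(\tK_n(\cU))$-admissible.

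I expect the main obstacle to be the first step: pinning down the exact closed form of $\bS_\bm$ --- in particular the exponent $c(\bm,x)$ and the twist $\mu(\bm,x)$ --- so that the $\Gv$-degree cancellation in (A) comes out exactly to the identity and so that the composite in (C) genuinely returns to the even integral form $\sXZ^\ev$ (note $S$ alone does not preserve the even part, so the $K$-decoration must do the correcting). This requires care with how $S$ and $\ibar$ interact with the $Y/2Y$-grading and with the stability statements of Theorems \ref{r.sXZ} and \ref{thm:8}, together with checking that $c(\bm,x)$ is an integer, so that the $v$-power lies in $\cA$; once the formula is nailed down, the rest is bookkeeping parallel to Lemmas \ref{r.mu}, \ref{r.psi} and \ref{r.uD}.
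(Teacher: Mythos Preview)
Your overall plan --- exhibit an admissibility decomposition $\bS=\sum_{\bm}\bS_{\bm}$ and invoke Lemma~\ref{r36} --- is exactly what the paper does, and your treatment of (A) and (B) is fine.  There is, however, a concrete gap in (C), and a factual slip that obscures it.  The slip: by \eqref{eq.En}--\eqref{eq.Fn} one has $E_{\bm}\sim(q;q)_{\bm}E^{(\bm)}$ but $F_{\bm}\sim F^{(\bm)}$; the $(q;q)_{\bm}$ lives on the $E$-side only, so your claim ``$F'_{\bm}\in o(\bm)\UZ$'' is false.  The gap: in your first-form formula $\bS_{\bm}(x)=v^{c}K_{\mu}E'_{\bm}\,S(F'_{\bm}\trr x)$ the factor $E'_{\bm}$ enters \emph{multiplicatively}, and knowing only $E'_{\bm}\in o(\bm)\UZ^{\ev}$ does not give $\bS_{\bm}(x)\in o(\bm)\sXZ$, because $\UZ^{\ev}\not\subset\sXZ$ (for instance $E_{\alpha}^{(2)}\notin\sXZ$), so $\UZ^{\ev}\cdot\sXZ\not\subset\sXZ$.

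The paper avoids this by choosing the second form of \eqref{e11} and computing $\bS_{\bm}(x)=S^{-1}(E_{\bm}\trr x)\,F_{\bm}K_{-|x|}$, which it rewrites as $v^{\ast}\,S^{-1}(E_{\bm}\trr x)\,K_{-|E_{\bm}|-|x|}\cdot(K_{\bm}F_{\bm})$.  Now $E_{\bm}$ sits in the \emph{adjoint} slot (where $\UZ$-ad-stability of $\sXZ^{\ev}$ applies) and $K_{\bm}F_{\bm}$ is the \emph{multiplicative} factor; the crucial input is the joint tensor statement \eqref{eq.bg2}, $E_{\bm}\otimes K_{\bm}F_{\bm}\in o(\bm)(\UZ\otimes\sXZ^{\ev})$, which immediately yields $E_{\bm}\trr x\otimes K_{\bm}F_{\bm}\in o(\bm)(\sXZ^{\ev}\otimes\sXZ^{\ev})$ and hence $\bS_{\bm}(x)\in o(\bm)\sXZ^{\ev}$.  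Your first-form route can in fact be rescued by the stronger observation $E'_{\bm}\in o(\bm)\sXZ^{\ev}$ (write $E'_{\bm}\sim\sqrt{(q;q)_{\bm}}\cdot[\sqrt{(q;q)_{\bm}}\,\ibar(E^{(\bm)})]$ with the bracket in $\sXZ^{\ev}$ and $\sqrt{(q;q)_{\bm}}\in o(\bm)\tA$), but that is not what you wrote, and the paper's second-form route via \eqref{eq.bg2} is cleaner.
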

\begin{proof}
By computation, we obtain $\bS=\sum_{\modm \in \BN^t}\bS_\modm$,  where
\be
\label{eq.decompS}
\bS_\modm (x)=   S^{-1}(E_\modm \trr x)F_\modm K_{-|x|}
\ee
for $Y$-homogeneous $x\in \Uh$. We will assume $x$ is $G^\ev$-homogeneous.

(A) By Lemma \ref{thm:6}, we have $\bS_\modm (x)\in [\UZ]_g$, where
\begin{gather*}
  \begin{split}
    g&=\dS^{-1}(\dad (\elm  \ot \dx))\, \elm^{-1}\klm\dK_{-|\dx|}
    =\dS^{-1}(\elm\dx)\elm^{-1}\klm\dK_{-|\dx|}\\
    &\quad =\dK_{|\dx|}\dx \klm\elm\elm^{-1}\klm\dK_{-|\dx|}
    =\dx.
  \end{split}
\end{gather*}

(B)  Since $K_\al \in \Ur$ and $E_\bm \ot F_\bm \in \Ur \ot \Ur$,  \eqref{eq.decompS} shows that  $\bS_\modm(\Ur) \subset \Ur$.

(C) We rewrite \eqref{eq.decompS} as
  \be
  \label{e31a}
  \bS_\modm (x)=  v^{-(|x|, |E_\bm|)} S^{-1} (E_\modm \trr x) K_{-|E_\bm| -|x|}   \, K_{\bm}   F_\modm.
  \ee
By \eqref{eq.bg2}, $E_\bm \ot K_\bm F_\bm \in o(\bm) \, (\UZ \ot  \sXZe)$.
Since $\sXZe$ is $\UZ$-stable,
$$ E_\modm \trr x \ot  K_{\bm}   F_\modm \in o(\bm) \, (\UZ\tri \sXZe \ot  \sXZe) \subset  o(\bm) \, (\sXZe \ot  \sXZe).$$

Hence, from \eqref{e31a} we have
$$ \bS_\modm (x) \in o(\bm) \, \sXZe,$$
which proves property (C).

By Lemma \ref{r36}, $\bS$ is $(\tK_n(\Ur))$-admissible.
\end{proof}

Thus, statement (ii) of Proposition \ref{r.Jvalues1} holds.

\subsection{Borromean tangle} The goal now is to establish  (iii) of Proposition \ref{r.Jvalues1}. Namely, we will show that $\modb \in \tK_3$, where $\modb$ is the universal invariant of the Borromean bottom tangle.

First we recall Formula \eqref{e42} which expresses  $\modb$ through the clasp element $\bc$ using braided commutator. With
$ \bc = \sum_{\bn\in \BN^{2t}} [\Ga_1(\bn) \ot \Ga_2(\bn)] \cD^{-2},$ Formula \eqref{e42} says
$$ \modb = \sum_{\bn,\bm \in \BN^{2t}} \modb _{\bn,\bm},$$
where
for $\bn, \bm\in \BN^{2t}$,
\be
\label{eq.bnm}
\modb_{\bn,\bm}:= (\id^{\ho 2}\ho \Upsilon)\Big ([\Ga_1(\bn) \ot \Ga_1(\bm) \ot \Ga_2(\bm) \ot \Ga_2(\bn)]\cD_{14}^{-2} \cD_{23}^{-2} \Big).
\ee
Here if $x=\sum x' \ot x''$ then $x_{14}= \sum x'\ot 1 \ot 1 \ot x''$, $x_{23}= \sum 1 \ot x'\ot x''\ot 1$.

\begin{lemma}\label{r.Borrom}
For $\bn, \bm \in \BN^{2t}$ one has
$\modb_{\bn,\bm} \in
o(\bn,\bm)\tK_3(\cU).$
Consequently, $\modb\in\tK_3(\cU)$. \\
\rm{(Recall that $o(\bn,\bm)=(q;q)_{\lfloor\max(\bn,\bm)/2\rfloor}$.)}
\end{lemma}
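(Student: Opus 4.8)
\textbf{Proof plan for Lemma \ref{r.Borrom}.} The strategy is to track the three relevant quantities through the formula \eqref{eq.bnm} for $\modb_{\bn,\bm}$: the $G^\ev$-grading (must be $1$ to land in $\tK_3$), the membership in $\cU^{\ot 3}$, and the power of $(q;q)_\bullet$ that factors out. All three have already been set up in the admissibility arguments for $\boldmu$, $\boldpsi^{\pm1}$, $\uD$, $\bS$ (Lemmas \ref{r.mu}--\ref{r.bS}), so the point is to assemble them rather than to discover anything new. Recall from \eqref{eq.bh2a} that $\cF_k(\modK_3(\cU)) = (q;q)_k (\XZ^\ev)^{\ot 3} \cap [(\UZ^\ev)^{\ot3}]_1 \cap \cU^{\ot3}$, so it suffices to show $\modb_{\bn,\bm} \in o(\bn,\bm)(\XZ^\ev)^{\ot3}$, that $\modb_{\bn,\bm}$ has $G^\ev$-grading $1$, and that $\modb_{\bn,\bm}\in\cU^{\ot3}$; then, since $o(\bn,\bm)=(q;q)_{\lfloor\max(\bn,\bm)/2\rfloor}$ and $\lim_{\bn,\bm} \lfloor\max(\bn,\bm)/2\rfloor = \infty$, the sum $\modb = \sum_{\bn,\bm}\modb_{\bn,\bm}$ converges in $\tK_3(\cU)$.

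\emph{First}, I would record the integrality and grading properties of the building blocks $\Ga_1(\bn),\Ga_2(\bn)$. From \eqref{eq.Gn} and the estimates \eqref{eq.eprime}, \eqref{eq.bg2} (applied to $F_\bn, E_\bn$ and after conjugating by $K_\bn^{\pm1}\in\XZ^\ev$), one gets, for $\bn=(\bn_1,\bn_2)\in\BN^t\times\BN^t$, that both $\Ga_1(\bn)$ and $\Ga_2(\bn)$ lie in $o(\bn)\,\XZ^\ev$ up to the extra $\XZ^\ev$-factors coming from $K_{\bn_i}^{-1}$; more precisely the pair $\Ga_1(\bn)\ot\Ga_2(\bn)$ lies in $o(\bn)(\XZ^\ev\ot\XZ^\ev)$, and under the hypotheses on $\cU$ (which contains $K_\al$ and all $F_\bm\ot E_\bm$, $F'_\bm\ot E'_\bm$) it lies in $\cU\ot\cU$. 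For the $G$-grading, \eqref{eq.claspUZ} together with Corollary \ref{cor.918} shows $\bc\in[(\UZ^\ev)^{\ot2}]_1$, hence each $\Ga_1(\bn)\ot\Ga_2(\bn)$, being a $G$-homogeneous piece of $\bc\cD^2$ and $\cD^2$ having $G$-grading $1$, has total $G$-grading $1$ (with $\deg_G\Ga_1(\bn) = (\deg_G\Ga_2(\bn))^{-1}$ up to a $\dK$-twist that one reads off from $\dth$). The Cartan factors $\cD^{-2}_{14},\cD^{-2}_{23}$ are in $\overline{(\XZ^{\ev,0})^{\ot\bullet}}$ and preserve all gradings, and lie in the completion of $\cU^{\ot\bullet}$ since $\cU$ contains $K_\al^{\pm1}$, hence contributes nothing problematic.

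\emph{Second}, I would feed this into $\id^{\ho2}\ho\Upsilon$. The map $\Upsilon$ is built from $\boldmu^{[\bullet]}$, $\boldpsi$, $\bS$, $\uD$ (its definition in Section \ref{sec:borr-tangle-braid}); each of these has an admissibility decomposition whose summands (a) preserve the $G^\ev$-grading, (b) send $\cU^{\ot\bullet}$ to $\cU^{\ot\bullet}$, and (c) send $(\XZ^\ev)^{\ot\bullet}$ into $(q;q)_{m_p}(\XZ^\ev)^{\ot\bullet}$ with $m_p\to\infty$. Composing finitely many such, $\Upsilon$ restricted to the relevant graded pieces maps $o(\bn,\bm)(\XZ^\ev)^{\ot\bullet}\cap\cU^{\ot\bullet}\cap[\cdot]_1$ into $o(\bn,\bm)(\XZ^\ev)^{\ot\bullet}\cap\cU^{\ot\bullet}\cap[\cdot]_1$ — the key input being Lemma \ref{upslionHX}, i.e. $\Upsilon(\scH\ho\sX)\subset\sX$ and $\Upsilon(\sX\ho\scH)\subset\sX$, which with the $\XZ$-level stability of Theorem \ref{r.sXZ}(b)(i) gives $\Upsilon$ applied to one $\XZ^\ev$ argument stays in $\XZ^\ev$, so the $o(\bn)$- and $o(\bm)$-factors carried by the arguments survive and combine to $o(\bn,\bm)$. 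Actually one only needs $\Upsilon$ applied to $\Ga_2(\bm)\ot\Ga_2(\bn)$, and since $\Ga_2(\bm)\in o(\bm)\XZ^\ev$ and $\Ga_2(\bn)\in o(\bn)\XZ^\ev$, after using $\Upsilon(\sX\ho\scH)\subset\sX$ one extracts $o(\bn)o(\bm)$ and lands in $\XZ^\ev$; this divides $o(\bn,\bm)$, so one obtains $\modb_{\bn,\bm}\in o(\bn,\bm)(\XZ^\ev)^{\ot3}$. The grading computation is exactly as in the proof of Lemma \ref{r.uD}/\ref{r.bS} using Lemmas \ref{thm:6}, \ref{r18}, \ref{thm:4}, and \eqref{eq.evenD}.

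\emph{The main obstacle} will be bookkeeping the $G^\ev$-grading through $\Upsilon$ carefully enough to conclude that $\modb_{\bn,\bm}$ genuinely has grading $1$ (not merely some fixed $g$): one must verify that the $\dth_\gamma$-twists introduced by the two $\Ga$'s cancel against each other and against the $\dK$-shifts produced by $\bS$ inside $\Upsilon$, precisely as the cancellations in the proofs of Lemmas \ref{r.psi} and \ref{r.uD}. The cleanest route is to invoke the already-proven $(\tK_n(\cU))$-admissibility of $\boldmu,\boldpsi^{\pm1},\uD,\bS$ (Lemmas \ref{r.mu}--\ref{r.bS}) so that $\id^{\ho2}\ho\Upsilon$ is a composite of $(\tK_n(\cU))$-admissible maps and hence itself $(\tK_n(\cU))$-admissible (Remark \ref{r34}), apply it to $[\Ga_1(\bn)\ot\Ga_1(\bm)\ot\Ga_2(\bm)\ot\Ga_2(\bn)]\cD_{14}^{-2}\cD_{23}^{-2}$ which lies in $o(\bn)o(\bm)\cdot\cF_0(\modK_4(\cU))$ after normalizing, and read off the output in $o(\bn)o(\bm)\cF_0(\modK_3(\cU))\subset o(\bn,\bm)\tK_3(\cU)$. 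Then $\modb=\sum_{\bn,\bm}\modb_{\bn,\bm}$ converges $h$-adically with $o(\bn,\bm)\to 0$ in the $(q;q)_\bullet$-filtration, so $\modb\in\tK_3(\cU)$, completing the proof of Proposition \ref{r.Jvalues1}(iii).
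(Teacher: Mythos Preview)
Your proposal has a genuine gap: the element $[\Ga_1(\bn)\ot\Ga_1(\bm)\ot\Ga_2(\bm)\ot\Ga_2(\bn)]\cD_{14}^{-2}\cD_{23}^{-2}$ does \emph{not} lie in $\cF_0(\modK_4(\cU))$, nor in $\tK_4(\cU)$, nor in any $(q;q)_k$-completion of $(\cU^\ev)^{\ot4}$. The diagonal factor $\cD^{-2}=\exp(-h\sum_\al H_\al\ot\brH_\al/d_\al)$ involves $H_\al$ and $\brH_\al$, neither of which belongs to $\UZ$ (indeed $\brH_\al$ is not even in $\Uq$); its power-series terms are never in $(\UZ^\ev)^{\ot2}$, so your assertion that ``$\cD^{-2}_{14},\cD^{-2}_{23}$ lie in the completion of $\cU^{\ot\bullet}$ since $\cU$ contains $K_\al^{\pm1}$'' is simply false. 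Consequently you cannot feed this input to the $(\tK_n(\cU))$-admissible map $\id^{\ho2}\ho\Upsilon$ and conclude anything. A secondary but related error: $\Ga_2(\bn)=F_{\bn_2}K_{\bn_2}^{-1}E_{\bn_1}$ by itself is \emph{not} in $o(\bn)\,\XZ^\ev$ (take $\bn_1=0$; then $\Ga_2(\bn)\sim F^{(\bn_2)}K_{\bn_2}^{-1}$ needs $\sqrt{(q;q)_{\bn_2}}$ in the \emph{denominator} to enter $\XZ^\ev$). Only the \emph{pair} $\Ga_1(\bn)\ot\Ga_2(\bn)$ has the balanced distribution of $(q;q)$-factors that Lemma~\ref{r.Gamma} exploits, and even then one tensorand is merely in $\UZ^\ev$, not $\XZ^\ev$.

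The paper's proof is organized precisely to sidestep both obstacles. It rewrites $\modb_{\bn,\bm}$ via \eqref{eq.deco}--\eqref{eq.deco1} as a composite $\fmu\circ\fad_\bm\circ\fbS\circ\fuD$ (or the $\sim$-variant) applied to $\Ga_1(\bn)\ot\Ga_2(\bn)\in\modK_2(\cU)$, where each map $\fuD,\fbS,\fad_\bm,\fmu$ is defined with $\cD^{\pm2}$-conjugations built in so that, after using the Hopf identities $(\Delta\ot1)(\cD)=\cD_{13}\cD_{23}$, $(S\ot1)(\cD)=\cD^{-1}$, the $\cD$'s cancel and the explicit formulas \eqref{eq.ba1}, \eqref{eq.mj2}, \eqref{eq.mj6} involve only $K_\gamma$ with $\gamma\in Y$ and the extended adjoint action $\btri$ of Lemma~\ref{r.ediv}. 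This is what makes the outputs genuinely land in $\cF_k(\modK_\bullet(\cU))$. The admissibility of $\boldmu,\boldpsi^{\pm1},\uD,\bS$ alone is not enough; the new ingredient is absorbing $\cD^{\pm2}$ into the maps and verifying (Lemmas~\ref{r.fuD}--\ref{r.fmu}) that the resulting $\cD$-free maps remain $(\tK_n(\cU))$-admissible, with $\fad_\bm$ supplying the crucial factor $o(\bm)$.
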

The remaining part of this section is devoted to the proof of Lemma \ref{r.Borrom}.

\subsubsection{Quasi-clasp element} Recall that $\Ga_1(\bn), \Ga_2(\bn)$ are  given by~\eqref{eq.Gn}, for $\bn\in \BN^{2t}$.

\begin{lemma} \label{r.Gamma}
Suppose $\bn=(\bn_1,\bn_2) \in \BN^{t}\times \BN^t$. Then
\begin{align} \label{eq.Ga1}
\Gamma_1(\bn) \ot \Gamma_2(\bn) & \in \modK_2 = (\sXZe)^{\ot 2} \cap [(\UZe)^{\ot 2}]_1
\\
\label{eq.Ga2} \Gamma_1(\bn) \ot \Gamma_2(\bn) & \in o(\bn) \sXZe \ot \UZ^\ev.
\end{align}
\end{lemma}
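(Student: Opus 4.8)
The plan is to prove the two claimed memberships \eqref{eq.Ga1} and \eqref{eq.Ga2} directly from the explicit formula \eqref{eq.Gn} for $\Gamma_1(\bn)$ and $\Gamma_2(\bn)$, combining the basis descriptions of $\UZ^\ev$, $\VZ^\ev$, $\sXZ^\ev$ with the $G$-grading computations already established. Recall from \eqref{eq.Gn} that, writing $\bn=(\bn_1,\bn_2)$,
\begin{gather*}
  \Gamma_1(\bn)= q^{-(\rho-|E_{\bn_1}|,|E_{\bn_2}|)}\, F_{\bn_1} K_{\bn_1}^{-1} E_{\bn_2},\qquad
  \Gamma_2(\bn)= F_{\bn_2} K_{\bn_2}^{-1} E_{\bn_1}.
\end{gather*}
The prefactor $q^{-(\rho-|E_{\bn_1}|,|E_{\bn_2}|)}$ is a unit in $\cA$, so it can be ignored for integrality purposes.

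First I would handle \eqref{eq.Ga2}. The key point is that $F_{\bn_1} K_{\bn_1}^{-1} E_{\bn_2}$ and $F_{\bn_2} K_{\bn_2}^{-1} E_{\bn_1}$ should be rewritten in terms of the divided powers $F^{(\bn_i)}$, $E^{(\bn_i)}$. From the definitions \eqref{eq.En}, \eqref{eq.Fn} we have $E_\bn \sim (q;q)_\bn E^{(\bn)}$ and $F_\bn \sim F^{(\bn)} \prod(-1)^{n_j}v_{\gamma_j}^{\cdots}$, so up to units $F_{\bn_1} K_{\bn_1}^{-1} E_{\bn_2} \sim (q;q)_{\bn_2}\, F^{(\bn_1)} K_{\bn_1}^{-1} E^{(\bn_2)}$. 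Now $F^{(\bn_1)} K_{\bn_1}^{-1}$ differs from the even-triangular basis element $F^{(\bn_1)} K_{\bn_1}$ only by a factor $K_{\bn_1}^{-2}\in\UZ^{\ev,0}$, so $F^{(\bn_1)} K_{\bn_1}^{-1} E^{(\bn_2)} \in \UZ^\ev$ by the even triangular decomposition and Propositions \ref{r.ba3}, \ref{prop.basis}. To extract the square-root factor I would split one copy of $(q;q)_{\bn_2}$ as $\sqrt{(q;q)_{\bn_2}}\cdot\sqrt{(q;q)_{\bn_2}}$ and attach one $\sqrt{(q;q)_{\bn_2}}$ to $\Gamma_1(\bn)$ to land in $\sqrt{(q;q)_{\bn_2}}\,\UZ^\ev \subset \sXZ^\ev$ (using the basis \eqref{eq.basisXZe}), exactly as in the proof of \eqref{eq.bg2}. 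Since $\sqrt{(q;q)_{\bn_2}}$ divides $o(\bn)=(q;q)_{\lfloor\max(\bn)/2\rfloor}$ up to a unit — here one needs $\lfloor\max(\bn_2)/2\rfloor$ terms to absorb $\sqrt{(q;q)_{\bn_2}}$, which follows from integrality of quantum binomials as in the proof of Lemma \ref{r.al0} — and $(q;q)_{\bn_1}$ divides $o(\bn)$ similarly after absorbing it into $\Gamma_2(\bn)\in\UZ^\ev$, we obtain $\Gamma_1(\bn)\otimes\Gamma_2(\bn)\in o(\bn)\,\sXZ^\ev\otimes\UZ^\ev$.

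Next I would prove \eqref{eq.Ga1}. The $\sXZ^\ev$-membership of $\Gamma_1(\bn)$ follows from the computation just done (take $\bn_1=\bn_2$'s full $(q;q)$ factors to both sides symmetrically); similarly $\Gamma_2(\bn)\in\sXZ^\ev$. For the $G$-grading claim, I would invoke \eqref{eq.nh2} of Corollary \ref{cor.918}, which says $F_\bm K_\bm K_{2\gamma} E_\bm \in [\UZ]_1$; a minor variant handles $F_{\bn_1}K_{\bn_1}^{-1}E_{\bn_2}$. More directly, from \eqref{eq.claspUZ} we already know $\bc=\sum (q;q)_\bn / u(\bn)\,\brbe^\ev(\bn)\otimes\bbe^\ev(\bn)$ lies in $[(\UZ^\ev)^{\otimes 2}]_1$ (the clasp element has total $G$-degree $1$ since it is $J_{C^+}$ and each $\cR$-factor has total degree $1$ by Lemma \ref{thm:4}, and $\cD$ has degree $1$ too), so $\Gamma=\bc\cD^2$ also has total $G$-degree $1$, and hence each summand $\Gamma_1(\bn)\otimes\Gamma_2(\bn)$ — being $Y$-homogeneous with $|\Gamma_1(\bn)|=-|\Gamma_2(\bn)|$ — lies in $[(\UZ^\ev)^{\otimes 2}]_1$. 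Combined with the $\sXZ^\ev$-membership this gives $\Gamma_1(\bn)\otimes\Gamma_2(\bn)\in(\sXZ^\ev)^{\otimes 2}\cap[(\UZ^\ev)^{\otimes 2}]_1=\modK_2$.

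The main obstacle I anticipate is the careful bookkeeping of how many factors of $(q;q)_k$ (and their square roots) can be extracted and how they compare to $o(\bn)=(q;q)_{\lfloor\max(\bn)/2\rfloor}$; this is precisely the kind of estimate that uses integrality of quantum binomial coefficients and the trick $\bigl(\sqrt{(q;q)_n}/(q;q)_{\lfloor n/2\rfloor}\bigr)^2\in\BZ[q^{\pm1}]$ appearing in the proof of Lemma \ref{r.al0}. Everything else — rewriting root-vector products in terms of divided powers, invoking the even triangular decomposition, and reading off $G$-degrees from Lemma \ref{thm:4} and Corollary \ref{cor.918} — should be routine given the results already in place.
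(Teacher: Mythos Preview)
Your overall strategy matches the paper's: rewrite $\Gamma_1(\bn)\ot\Gamma_2(\bn)$ in divided-power form with scalar $(q;q)_{\bn_1}(q;q)_{\bn_2}$, then distribute square roots to land in $\sXZe$ and compute the $G$-grading via Corollary~\ref{cor.918}. The $G$-grading computation and the proof of \eqref{eq.Ga1} are essentially the same as the paper's. (Your alternative ``global'' argument for the grading, via $\bc$ having total $G$-degree $1$, does not quite work as stated: $\bc$ itself is not in $(\UZe)^{\ot 2}$ because of the Cartan exponential $\cD^{-2}$, and even for $\Gamma$ one cannot deduce that each summand has $G$-degree $1$ from the total degree alone without first knowing each summand is $G$-homogeneous. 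But your first argument via \eqref{eq.nh1} is correct and is what the paper uses.)

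There is, however, a genuine gap in your argument for \eqref{eq.Ga2}. You write that $\Gamma_1(\bn)\sim (q;q)_{\bn_2}\,F^{(\bn_1)}K_{\bn_1}^{-1}E^{(\bn_2)}$ and then claim that attaching one factor $\sqrt{(q;q)_{\bn_2}}$ gives an element of $\sXZe$, effectively asserting $\sqrt{(q;q)_{\bn_2}}\,\UZ^\ev\subset\sXZe$. This is false: the basis \eqref{eq.basisXZe} requires a factor $\sqrt{(q;q)_{\bn_1}}$ attached to $F^{(\bn_1)}K_{\bn_1}$ as well, so $\sqrt{(q;q)_{\bn_2}}\,F^{(\bn_1)}K_{\bn_1}^{-1}E^{(\bn_2)}\notin\sXZe$ when $\bn_1\neq 0$. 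The analogy with the proof of \eqref{eq.bg2} breaks down precisely because there the $F$-part and the available $(q;q)$-factor have the \emph{same} index, whereas here they do not.

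The fix (and this is what the paper does) is to treat $(q;q)_{\bn_1}(q;q)_{\bn_2}$ as a single scalar sitting in front of the whole tensor $F^{(\bn_1)}K_{\bn_1}^{-1}E^{(\bn_2)}\ot F^{(\bn_2)}K_{\bn_2}^{-1}E^{(\bn_1)}$. Then $\sqrt{(q;q)_{\bn_1}(q;q)_{\bn_2}}\,F^{(\bn_1)}K_{\bn_1}^{-1}E^{(\bn_2)}$ factors as $\bigl(\sqrt{(q;q)_{\bn_1}}\,F^{(\bn_1)}K_{\bn_1}\bigr)\cdot K_{\bn_1}^{-2}\cdot\bigl(\sqrt{(q;q)_{\bn_2}}\,E^{(\bn_2)}\bigr)\in\sXZe$, while the second tensorand $F^{(\bn_2)}K_{\bn_2}^{-1}E^{(\bn_1)}$ is in $\UZe$. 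The leftover scalar is $\sqrt{(q;q)_{\bn_1}(q;q)_{\bn_2}}$, and showing it lies in $o(\bn)\,\tA$ is exactly the quantum-binomial estimate you anticipated: if $m=\max(\bn)$ occurs as some coordinate $(\bn_i)_j$, then $(q_{\gamma_j};q_{\gamma_j})_m$ is divisible by $(q;q)_m$, which is divisible by $(q;q)_{\lfloor m/2\rfloor}^2$.
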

\begin{proof} We write $x\sim y$ if $x=uy$ with $u$ a unit in $\cA$.
   Note that $ { \sqrt{(q;q)_{\bn_1}}} \, {F^{(\bn_1)} ,  { \sqrt{(q;q)_{\bn_2}}} \,E^{(\bn_2)} }$  are in $\sXZe$, as they are among the preferred basis elements.
Using the definition \eqref{eq.Gn} of $\Gamma_1(\bn), \Gamma_2(\bn)$, we have
\be
\label{eq.e313}
\Gamma_1(\bn) \ot \Gamma_2(\bn) \sim   (q;q)_{\bn_1} (q;q)_{\bn_2} {F^{(\bn_1)} K_{\modn_1}^{-1} E^{(\bn_2)} } \ot {F^{(\bn_2)} K_{\modn_2}^{-1} E^{(\bn_1)} } \in (\sXZe)^{\ot 2}.
\ee
From  Corollary \ref{cor.918}, $\Gamma_1(\bn) \ot \Gamma_2(\bn)$ , which is in $(\UZe)^{\ot 2}$, has $G$-grading equal to
$$ (\de_{\bn_1}^{-1} \de_{\bn_2})  (\de_{\bn_2}^{-1} \de_{\bn_1})= 1.   $$
This shows $ \Gamma_1(\bn) \ot \Gamma_2(\bn)  \in  (\sXZe)^{\ot 2} \cap [(\UZe)^{\ot 2}]_1 = \modK_2$. This proves  \eqref{eq.Ga1}.

Because $\sqrt{(q;q)_{\bn_1} (q;q)_{\bn_2}} {F^{(\bn_1)} K_{\modn_1}^{-1} E^{(\bn_2)} } \in \sXZ^\ev$ and ${F^{(\bn_2)} K_{\modn_2}^{-1} E^{(\bn_1)} }\in \UZ^\ev$, from \eqref{eq.e313}, we have
$$ \Gamma_1(\bn) \ot \Gamma_2(\bn)  \in \sqrt{(q;q)_{\bn_1} (q;q)_{\bn_2}} \, (\sXZe)\ot \UZe  \subset o(\bn)
 (\sXZe)\ot \UZe.$$
 This proves \eqref{eq.Ga2}.
\end{proof}

\subsubsection{
Decomposition of $\modb_{\bn,\bm}$} \label{sec:decompo}
Recall that $\cD= \exp(\frac h 2 \sum_{\al \in \Pi} H_\al \otimes \brH_\al/d_\al)$ is the diagonal part of the $R$-matrix.
We will freely use the following well-known properties of $\cD$:
\begin{gather*}
  (\Delta \otimes 1)(\cD)=\cD_{13}\cD_{23},\quad
  ({\boldsymbol \epsilon} \otimes 1)(\cD)=1,\quad
  (S\otimes 1)(\cD)=\cD^{-1},
\end{gather*}
where $\cD_{13}=\sum \cD_1\otimes 1\otimes \cD_2,\cD_{23}=1\otimes \cD\in \Uh^{\ho3}$.
In the sequel we set
\begin{gather*}
  \cD^{-2}=\sum \d_1\otimes \d_2 = \sum \d'_1\otimes \d'_2.
\end{gather*}

Recall
  \eqref{eq.bnm}
$$ \modb_{\bn,\bm}= ( \id^{\ot 2} \ot \Upsilon) \Big ([\Gamma_1(\bn) \otimes \Gamma_1(\bm) \otimes \Ga_2(\bm)  \otimes \Ga_2(\bn)] \cD_{14}^{-2} \cD_{23}^{-2} \Big).$$
By \eqref{e41}, $\Upsilon$ is the composition of four maps:
$$ \Upsilon = \boldmu \circ (\ad \ho \id) \circ (\id \ho \bS \ho \id ) \circ (\id \ho \uD).$$
Using the above decomposition, one gets
\be
\label{eq.deco}
\modb_{\bn,\bm} =  \fmu \circ \fad_\bm \circ \fbS  \circ  \fuD \Big( \Ga_1(\bn)\ot \Ga_2(\bn)  \Big)  ,
\ee
where
 \begin{align}
\fuD & : \Uh^{\ho2} \to \Uh^{\ho 3}, \
\fuD (x)= \left[ (\id \ho \uD)(x \cD^{-2}) \right] \cD_{12}^2 \cD_{13}^2  \label{eq.fuDdef}\\
\fbS &: \Uh^{\ho 3} \to \Uh^{\ho 3}, \ \fbS(x) = \left[ (\id \ho \bS \ho \id)(x \cD^{-2}_{12}) \right] \cD_{12}^{-2}   \label{eq.fbSdef}\\
\fad_\bm&: \Uh^{\ho 3} \to \Uh^{\ho 4},  \fad_\bm (x)= %
(\id^{\ho 2} \ho \ad \ho \id) \Big([ x_1  \ot \Ga_1(\bm)  \ot \Ga_2(\bm) \ot x_2 \ot x_3] \cD^{-2}_{23} \cD_{14}^2   \Big)
\cD_{13}^{-2}
\label{eq.faddef}\\
\fmu&: \Uh^{\ho 4} \to \Uh^{\ho 3}, \ \fmu(x) = (\id^{\ho 2} \ho \boldmu) (x \cD_{13}^2 \cD_{14}^{-2}).
\label{eq.fmudef}
\end{align}

Similarly, using \eqref{e41a} instead of \eqref{e41}, we have
\be
\label{eq.deco1}
\modb_{\bn,\bm} =  \tfmu \circ \tfad_\bn  \circ \fbS  \circ \fuD \Big( \Ga_1(\bm)\ot \Ga_2(\bm)  \Big)  ,
\ee
where $\fuD, \fbS$ are as above, and
 \begin{align}
\tfad_\bn&: \Uh^{\ho 3} \to \Uh^{\ho 4}, \ \tfad_\bn (x)= \left[(\id^{\ho 3} \ho \uad^r ) \Big([ \Ga_1(\bn)  \ot x_1  \ot  x_2 \ot x_3 \ot \Ga_2(\bn) ]  \cD^{-2}_{24}\cD_{15}^{-2}    \Big)\right] \cD_{24}^{2}
\label{eq.tfaddef}\\
\tfmu&: \Uh^{\ho 4} \to \Uh^{\ho 3}, \ \fmu(x) = (\id^{\ho 2} \ho \boldmu) (x \cD_{23}^2 \cD_{24}^{-2}). \label{eq.tfmudef}
\end{align}

We will prove that each of $\fuD,\fbS,\fmu $ is $(\tK_n)$-admissible, while each of ${\fad_\bn}, {\tfad_\bn}$ maps $\tK_3$ to ${o(\bn)} \tK_4$.
From here Lemma \ref{r.Borrom} will follow easily.

\subsubsection{Extended adjoint action} To study the maps $\fuD,\fbS,{\fad_\bm}, {\tfad_\bn}$, we need the following extended adjoint action.
For $a\in \UUh= \Uh \ho_{\Ch}\Chh$ and $Y$-homogeneous  $x,y\in \UUh$ define
\begin{align}
 a \btri (y\ot x) &:= \left[ (\id \ot \ad_a)\left ( ( y\ot x) \cD^{2}\right)\right] \cD^{-2} \notag\\
 &= y K_{2|a_{(2)}|} \ot a_{(1)} x S(a_{(2)}).   \label{eq.eadj}
 \end{align}
It is easy to check that  $(a\ot x\ot y) \to a \btri (x \ot y)$ gives rise to an action of  $\UUH$ on $\UUH \ho \UUH$.

 \begin{lemma}\label{r.ediv} (a) Suppose $a,x,y \in \UZ^\ev$ are $G^\ev$-homogeneous, then
 \begin{align}
 a \btri (y\ot x) \in [\UZ^\ev\ot \UZ^\ev]_{\dy \ot \dot a \dx} \\
 S^{-1}a \btri (y\ot x) \in [\UZ^\ev \ot \UZ^\ev]_{\dy \ot  \dx \dot a}
 \end{align}

(b) One has $\Ur \btri (\Ur \ot \Ur) \subset \Ur \ot \Ur$.

(c) One has
 $$ \UZ \btri (\sXZe \ot \sXZe) \subset \sXZe \ot \sXZe.$$

\end{lemma}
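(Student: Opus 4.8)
Lemma \ref{r.ediv} is a direct analogue of the admissibility lemmas for $\uD$, $\bS$, and $\ad$ proved just above, so the plan is to imitate those proofs verbatim, with the ordinary adjoint action $\ad$ replaced by the extended action $\btri$. The key observation that makes everything work is the explicit formula \eqref{eq.eadj}: $a \btri (y \ot x) = y K_{2|a_{(2)}|} \ot a_{(1)} x S(a_{(2)})$. Once this formula is in hand, each of the three parts is a bookkeeping exercise. I would first verify \eqref{eq.eadj} itself by expanding the definition: apply $\id \ot \ad_a$ to $(y\ot x)\cD^2$, use the standard identities $(\Delta \ot 1)(\cD) = \cD_{13}\cD_{23}$ and $(S\ot 1)(\cD) = \cD^{-1}$ together with the commutation rule \eqref{eq.Dcommute} to move the $\cD^2$ factors past the $Y$-homogeneous pieces, and collect the surviving power of $K$ on the first tensorand. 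I would also check the monoid-action property $(ab)\btri z = a\btri(b\btri z)$ as asserted, which follows from $\Delta$ being an algebra map and from $S$ being an anti-automorphism, exactly as for the ordinary adjoint action.

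For part (a), I would take $a, x, y \in \UZ^\ev$ to be $G^\ev$-homogeneous and use a $G$-good presentation $\Delta(a) = \sum a_{(1)} \ot a_{(2)}$ in the sense of Lemma \ref{r.coprod}. Then from \eqref{eq.eadj} each summand $y K_{2|a_{(2)}|} \ot a_{(1)} x S(a_{(2)})$ lies in $[\UZ^\ev \ot \UZ^\ev]_u$, where the $G$-degree of the first tensorand is $\dy$ (since $\dK_{2|a_{(2)}|} = 1$ because $2|a_{(2)}| \in 2Y$), and the $G$-degree of the second tensorand is computed via Lemma \ref{thm:6} as $\dot a_{(1)}\, \dx\, \dK_{|a_{(2)}|}\,\dot a_{(2)} = \dot a_{(1)}\,\dK_{|a_{(2)}|}\,\dot a_{(2)}\,\dx = \dot a\,\dx$, using \eqref{eq.evenD} from Lemma \ref{r.coprod} and the commutation relation \eqref{e29}. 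That $y K_{2|a_{(2)}|}$, $a_{(1)}x S(a_{(2)})$ stay in $\UZ^\ev$ follows from Lemma \ref{eq.evenD3} and the fact that $K_\al^2 \in \UZ^\ev$. The variant with $S^{-1}a$ is handled the same way, using $S^{-1}$-analogues of the degree formula; here the order $\dx\,\dot a$ (rather than $\dot a\,\dx$) appears because the roles of $a_{(1)}$ and $a_{(2)}$ in the formula for $S^{-1}a \btri(y\ot x)$ are swapped.

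Part (b) is immediate: if $a, x, y \in \Ur$, then since $\Ur$ is an $\cA$-Hopf-subalgebra of $\UZ$ containing all $K_\al$ (hence all $K_{2|a_{(2)}|}$), the formula \eqref{eq.eadj} exhibits $a\btri(y\ot x)$ as a sum of products of elements of $\Ur$, so it lies in $\Ur \ot \Ur$. For part (c), the crucial input is Theorem \ref{r.sXZ}(b)(i), which says $\sXZ^\ev$ is $\UZ$-ad-stable, i.e.\ $\UZ \tri \sXZ^\ev \subset \sXZ^\ev$. Given $Y$-homogeneous $y, x \in \sXZ^\ev$ and $a \in \UZ$, the first tensorand $y K_{2|a_{(2)}|}$ is in $\sXZ^\ev$ because $K_\al^2 \in \sXZ^\ev$ and $\sXZ^\ev$ is an algebra (Theorem \ref{r.sXZ}(b)); the second tensorand $a_{(1)} x S(a_{(2)}) = a \tri x$ is in $\sXZ^\ev$ by ad-stability. (For a general, not necessarily homogeneous $a \in \UZ$ one decomposes into $Y$-homogeneous pieces.) I do not anticipate a real obstacle here — the only mild subtlety is to make sure the $G$-degree computation in part (a) is carried out with a genuinely $G$-good presentation of $\Delta(a)$ so that \eqref{eq.evenD} is available, and to keep track of the fact that $2|a_{(2)}| \in 2Y$ so that $\dK_{2|a_{(2)}|}$ is trivial; everything else is a transcription of the already-established proofs of Lemmas \ref{r.uD}, \ref{r.bS} and \ref{r18}.
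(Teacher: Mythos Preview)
Your arguments for parts (a) and (b) are correct and match the paper's proof essentially verbatim.

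Part (c), however, has a genuine gap. You write ``the second tensorand $a_{(1)} x S(a_{(2)}) = a \tri x$ is in $\sXZ^\ev$ by ad-stability,'' but this is not right: the adjoint action is $a\tri x = \sum a_{(1)} x S(a_{(2)})$, the \emph{sum} over Sweedler terms, not an individual summand. In the formula \eqref{eq.eadj} the first tensorand $y K_{2|a_{(2)}|}$ varies with the term (because $|a_{(2)}|$ varies), so you cannot factor it out and invoke ad-stability on the second slot alone. Concretely, take $a = E_\al^{(n)}$: the summand with $a_{(2)} = E_\al^{(j)}$ has second tensorand involving the factor $E_\al^{(n-j)}$, which lies in $\UZ$ but \emph{not} in $\sXZe$ for $n-j\ge 1$.

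The paper's proof of (c) proceeds differently and this difference is essential. Using the module property $(ab)\btri z = a\btri(b\btri z)$ one reduces to the generators $K_\al^{\pm1}, E_\al^{(n)}, F_\al^{(n)}$ of $\UZ$. For $a = E_\al^{(n)}$ an explicit inductive computation produces
\[
E_\al^{(n)} \btri (y \otimes x) = \sum_{j=0}^n (\text{unit})\;\, y\,(K_{\al}^{2};q_\al)_{n-j} \;\otimes\; E_\al^{(n-j)}\,(E_\al^{(j)} \tri x),
\]
and the crux is a \emph{redistribution of factors} between tensorands: one writes the $j$-th term as
\[
\Big[\, y\,\tfrac{(K_\al^2;q_\al)_{n-j}}{\sqrt{(q_\al;q_\al)_{n-j}}}\,\Big]\;\otimes\;\Big[\sqrt{(q_\al;q_\al)_{n-j}}\,E_\al^{(n-j)}\Big]\Big[E_\al^{(j)}\tri x\Big],
\]
and now each bracketed factor lies in $\sXZe$ (the first two by the construction of the preferred basis of $\XZ^\ev$, the last by ad-stability). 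The case $a = F_\al^{(n)}$ is then deduced via the mirror automorphism $\varphi$. The appearance of the factor $(K_\al^2;q_\al)_{n-j}$ on the first tensorand, which compensates exactly for the failure of $E_\al^{(n-j)}$ to lie in $\sXZe$, is invisible from the general formula \eqref{eq.eadj} and requires the explicit computation.
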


\begin{proof} (a)
 The right hand side of \eqref{eq.eadj} shows that $a \btri (y\ot x)$ has $G^{\ot 2}$-grading equal to
$$\dy \ot \dot a_{(1)}  \dot x \dot S(\dot a_{(2)}) = \dy \ot \dot a_{(1)}  \dot x \dot a_{(2)} \dK_{|a_{(2)}|}= \dy \ot \dot a_{(1)}  \dot a_{(2)} \dK_{|a_{(2)}|}  \dot x=  \dy \ot \dot a \dot x,$$
where we use $\dot a_{(1)}  \dot a_{(2)} \dK_{|a_{(2)}|}=\dot a$ from \eqref{eq.evenD}.
This shows the first identity. The second one is proved similarly.

(b) By assumptions on $\Ur$,  $K_\al^{\pm 1}\in \Ur$ and $\Ur$ is a Hopf algebra. Hence, (b) follows from \eqref{eq.eadj}.

(b) Suppose $a\in \UZ, x, y\in \sXZe$, we need to show that  $a \btri (y\ot x) \in \sXZe \ot \sXZe$.
Because $ (ab)\btri (y\ot x) = a \btri ( b \btri (y \ot x))$, it is sufficient to consider the case when $a$ is one of the generator $E_\al^{(n)}, F_\al^{(n)}, K_\al^{\pm1}$ of $\UZ$, where $\al\in \Pi$ and $n \in \BN$. The cases $a= K_\al^{\pm1}$ are trivial.

For $a= E_\al^{(n)}$, a  calculation by induction on $n$ shows that
\begin{align*}
E_\al^{(n)} \btri (y \otimes x ) & =  \sum_{j=0}^n (-1)^n\, v_\al^{2jn + \binom {n+1}2} \,  \, y \, \left(K_{\al}^{2};q_\al\right)_{n-j} \otimes E_\al^{(n-j)} \,  \left ( E_\al^{(j)} \tri x \right) \\
& =  \sum_{j=0}^n (-1)^n\, v_\al^{2jn + \binom {n+1}2}  \left[ y \, \frac{\left(K_{\al}^{2};q_\al\right)_{n-j}}{ \sqrt{(q_\al;q_\al)_{n-j}}}\right] \otimes   \left[ \sqrt{(q_\al;q_\al)_{n-j}}\, E_\al^{(n-j)} \right]    \left[  E_\al^{(j)} \tri x \right]
\end{align*}
The right hand side belongs to $\sXZe \ot \sXZe$, since each factor in square brackets is in~$\sXZe$.

The case $a= F_\al^{(n)}$ can be handled by a similar calculation, or can be derived from the already proved case $a= \tphi(F_\al)= K_\al^{-1} E_\al$, using
$$ (\tphi\ot\tphi) \left( a \btri (y\ot x) \right) = \tphi(a) \btri (\tphi( y)\ot \tphi(x)),$$
which follows from the fact that $\tphi$ commutes with $S, \Delta$ and $\tphi(K_\al)= K_\al$.
\end{proof}

\subsubsection{The map $f^{\uD}$}
\begin{lemma}\label{r.fuD}
The map $\fuD: \Uh^{\ho 2} \to \Uh^{\ho 3}$ is $(\tK_n(\Ur))$-admissible.
\end{lemma}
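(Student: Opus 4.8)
The plan is to produce, in the spirit of Definition~\ref{r351}, an explicit admissibility decomposition for $\fuD$ and then invoke Lemma~\ref{r36}. Recall from \eqref{eq.fuDdef} that
$$\fuD(x) = \bigl[(\id \ho \uD)(x\,\cD^{-2})\bigr]\,\cD_{12}^2\,\cD_{13}^2.$$
First I would unwind this using the admissibility decomposition \eqref{eq.decompD} of $\uD$, namely $\uD = \sum_{\modm \in \BN^t} \uD_\modm$ with $\uD_\modm$ given by \eqref{e35}, to write $\fuD = \sum_{\modm \in \BN^t} \fuD_\modm$, where $\fuD_\modm(x) = \bigl[(\id \ho \uD_\modm)(x\,\cD^{-2})\bigr]\,\cD_{12}^2\,\cD_{13}^2$. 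The role of the $\cD^{\pm2}$ factors is to repackage $(\id\ho\uD)$ into the extended adjoint action $\btri$ of Section~\ref{sec:decompo}: one checks directly, using $(\Delta\ot 1)(\cD)=\cD_{13}\cD_{23}$ and \eqref{eq.Dcommute}, that $\fuD_\modm$ can be rewritten so that its action on a $Y$-homogeneous pair $y\ot x$ is expressed through the extended adjoint action of $E'_\modm$ (coming from $\Theta^{-1}$) together with multiplication by $K_\modm F'_\modm$ and powers of $K_{|x_{(2)}|}$. The point of this rewriting is that Lemma~\ref{r.ediv} then controls all three required properties.

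Next I would verify conditions (A), (B), (C) of Definition~\ref{r351} for each $\fuD_\modm$. For (A), the $G^\ev$-grading preservation: by Corollary~\ref{cor.918} we have $E'_\modm \ot K_\modm F'_\modm \in [\UZ^\ev\ot\UZ^\ev]_{\de_\modm\ot\de_\modm^{-1}}$, and the $\cD^{\pm2}$ factors (which are $G$-homogeneous of degree $1$, since $\dK_{2\gamma}=1$) do not change the grading; combining this with Lemma~\ref{r.ediv}(a) and the grading identity \eqref{eq.evenD} for a $G$-good presentation of $\Delta(x)$, a computation identical in structure to the one in the proof of Lemma~\ref{r.uD} shows the total degree is $\dx$. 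For (B), since $K_\al^{\pm1}\in\cU$ and $E'_\modm\ot K_\modm F'_\modm \in \cU\ot\cU$ (by the hypotheses on $\cU$, $\cD^{\pm2}$ being expressible through $K_\al$'s, and $\cU$ a Hopf algebra) and by Lemma~\ref{r.ediv}(b), we get $\fuD_\modm(\cU^{\ho2})\subset\cU^{\ho3}$. For (C), the divisibility: by \eqref{eq.bg2} we have $K_\modm F'_\modm \ot E'_\modm \in o(\modm)\,(\sXZe\ot\UZe)$, hence $E'_\modm \ot K_\modm F'_\modm \in o(\modm)\,(\UZe\ot\sXZe)$; feeding this into the extended adjoint action and using Lemma~\ref{r.ediv}(c) (which says $\UZ\btri(\sXZe\ot\sXZe)\subset\sXZe\ot\sXZe$, together with the fact from the proof of Lemma~\ref{r.coprod} that a $G$-good coproduct keeps $x_{(2)}\ot K_{|x_{(2)}|}x_{(1)}$ in $\sXZe\ot\sXZe$) yields $\fuD_\modm(\sXZe{}^{\ho2}) \subset o(\modm)\,\sXZe{}^{\ho3} \subset (q;q)_{m_\modm}\sXZe{}^{\ho3}$ with $m_\modm = \lfloor \max(\modm)/2\rfloor$. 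Since $m_\modm\to\infty$ as $\modm$ ranges over $\BN^t$ (when finitely many of them are bounded), and since the sum $\sum_\modm \fuD_\modm$ converges $h$-adically (inherited from the $h$-adic convergence of $\Theta^{-1}$), this is a genuine admissibility decomposition.

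The main obstacle I anticipate is bookkeeping the four $\cD^{\pm2}$ factors correctly when converting $(\id\ho\uD)(x\cD^{-2})\cD_{12}^2\cD_{13}^2$ into the extended adjoint action form — one must track which tensor slot each $\cD^{\pm2}$ acts on and use $(\Delta\ot\id)(\cD)=\cD_{13}\cD_{23}$ and \eqref{eq.Dcommute} in the right order so that the spurious Cartan factors cancel and only the clean $\btri$-action plus the harmless $K_{|x_{(2)}|}$, $K_\modm F'_\modm$ terms remain. Once this normal form is in hand, parts (A)--(C) are essentially mechanical applications of Corollary~\ref{cor.918}, \eqref{eq.bg2}, and Lemma~\ref{r.ediv}, exactly parallel to the treatment of $\uD$, $\bS$, $\boldpsi^{\pm1}$ in the preceding lemmas, and the conclusion follows from Lemma~\ref{r36}.
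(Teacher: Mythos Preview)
Your proposal is correct and follows essentially the same route as the paper: decompose $\fuD=\sum_{\modm}\fuD_\modm$ via the decomposition of $\uD$, rewrite each $\fuD_\modm$ so that the $\cD^{\pm2}$ factors are absorbed into the extended adjoint action $\btri$, and then verify (A)--(C) using Corollary~\ref{cor.918}, \eqref{eq.bg2}, Lemma~\ref{r.ediv}, and the $G$-good presentation. The paper carries out exactly the computation you flag as the main obstacle, arriving at the explicit normal form $\fuD_\modm(y\ot x)=\sum v^{(|x_{(2)}|,\lambda_\modm)}\bigl(E'_\modm\btri(y\ot x_{(2)})\bigr)\ot(K_\modm F'_\modm)(K_{|x_{(2)}|}x_{(1)})$; once this is in hand your verification of (A)--(C) matches the paper's line for line. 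One minor remark: you need not worry about ``$\cD^{\pm2}$ being expressible through $K_\al$'s'' for part (B) --- the point is that after the explicit computation the $\cD$ factors vanish entirely from the formula, so (B) follows directly from $K_\al\in\cU$, $E'_\modm\ot K_\modm F'_\modm\in\cU\ot\cU$, and Lemma~\ref{r.ediv}(b).
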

\begin{proof}Using the definition \eqref{eq.fuDdef} and the decomposition \eqref{eq.decompD} of $\uD$
we have $\fuD= \sum_{\bu \in \BN^t} \fuD_\bu$, where
  $$ \fuD_\bu (y\ot x) = [\sum y\delta_1 \ot \uD_\bu(x \delta_2)] \cD_{12}^2 \cD_{13}^2.$$

We will show that  $\fuD= \sum_{\bu \in \BN^t} \fuD_\bu$ is an admissible decomposition. Using  definitions, we have
  \begin{gather*}
    \begin{split}
     \sum y\delta_1 \ot \uD_\bu(x \delta_2) &=\sum y\d_1\otimes E'_\modu \trr (x_{(2)}(\d_2)_{(2)})\otimes K_{\lambda_\modu +|x_{(2)}|}F'_\modu x_{(1)}(\d_2)_{(1)}\\
      &=\sum y\d_1\d_1'\otimes E'_\modu \trr (x_{(2)}\d_2)\otimes K_{\lambda_\modu +|x_{(2)}|}F'_\modu x_{(1)}\d_2'\\
      &=\sum y\d_1\d_1'\otimes (E'_\modu )_{(1)} x_{(2)}\d_2S((E'_\modu )_{(2)})\otimes K_{\lambda_\modu +|x_{(2)}|}F'_\modu x_{(1)}\d_2'\\
      &=\sum y K_{2|(E'_\modu )_{(2)}|}\d_1\d_1'\otimes (E'_\modu )_{(1)} x_{(2)}S((E'_\modu )_{(2)})\d_2\otimes K_{\lambda_\modu +|x_{(2)}|}F'_\modu x_{(1)}\d_2'\\
      &=\left(\sum yK_{2|(E'_\modu )_{(2)}|}\otimes (E'_\modu )_{(1)} x_{(2)}S((E'_\modu )_{(2)})\otimes K_{\lambda_\modu +|x_{(2)}|}F'_\modu x_{(1)}\right)\cD^{-2}_{12}\cD^{-2}_{13}\\
      &= \sum \left(v^{(|x_{(2)}|,\lambda_\bu)} E'_\bu \btri( y \ot x_{(2)}) \ot (K_\bu F'_\bu) K_{|x_{(2)}|} x_{(1)} \right)\cD^{-2}_{12}\cD^{-2}_{13}.
    \end{split}
  \end{gather*}
  This shows that
  \begin{align} \label{eq.ba1}
  \fuD_\bu (y\ot x)& = \sum  v^{(|x_{(2)}|,\lambda_\bu)} \Big( E'_\bu \btri( y \ot x_{(2)})\Big) \ot \Big(K_\bu F'_\bu\Big) \Big(K_{|x_{(2)}|} x_{(1)}\Big)
  \end{align}
(A) Suppose $x, y \in \UZ^\ev$ are $G^\ev$-homogeneous. By $G$-good presentation (see Section \ref{sec:coproduct}) and Lemma \ref{r.ediv}, all the factors in parentheses on the right hand side of \eqref{eq.ba1} are in $\UZe$.

  From \eqref{eq.ba1} and Lemma \ref{r.ediv}, $\fuD_\bu (y\ot x)\in [(\UZ^\ev)^{\ot 3}]_{g}$, where
  $$ g= \dv^{(|x_{(2)}|,\lambda_\bu)}\dy \, \de_{\lambda _\bu} \dx_{(2)} \de_{\lambda _\bu}^{-1} \dK_{|x_{(2)}|} \dx_{(1)} = \dy \,  \dx_{(2)}  \dK_{|x_{(2)}|} \dx_{(1)}=\dy \dx, $$
 with the  last equality obtained from \eqref{eq.evenD}. This shows $\fuD_\bu$ preserves the $G^\ev$-grading.

(B) Suppose $x,y\in \Ur$. By assumptions on $\Ur$, $K_\al \in \Ur$ and  $E'_\bu \ot K_\bu F'_\bu \in \Ur \ot \Ur$. Now Lemma~\ref{r.ediv} shows that the right hand side of \eqref{eq.ba1} is in $\Ur^{\ot 3}$. Thus, $\fuD_\bu(\Ur^{\ot 2} ) \subset \Ur^{\ot 3}$.

(C)  By \eqref{eq.bg2}, $ {E'_\bu} \ot K_\bu F'_\bu \in o(\bu)(\UZ^\ev \ot \sXZe)$.  Lemma \ref{r.ediv} and \eqref{eq.ba1} show that
  $$ \fuD_\bu \left( (\sXZe)^{\ot 2}\right) \in  o(\bu)   \, (\sXZe)^{\ot 3}.$$

 By Lemma \ref{r36}, $\fuD$ is $(\tK_n)$-admissible.
\end{proof}

\subsubsection{The map $\fbS$}
\begin{lemma} \label{r.fbS}
The map $\fbS: \Uh^{\ho 3} \to \Uh^{\ho 3}$ is $(\tK_n(\Ur))$-admissible.
\end{lemma}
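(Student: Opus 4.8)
The plan is to mimic the proof of Lemma~\ref{r.fuD} almost verbatim, since $\fbS$ has the same structure: it is built from the braided antipode $\bS$ conjugated by the diagonal part $\cD^{-2}$. First I would use the decomposition $\bS = \sum_{\modm\in\BN^t}\bS_\modm$ from \eqref{eq.decompS} to obtain $\fbS = \sum_{\modm\in\BN^t}\fbS_\modm$, where
\begin{gather*}
  \fbS_\modm(x) = \big[(\id\ho\bS_\modm\ho\id)(x\cD_{12}^{-2})\big]\cD_{12}^{-2}
\end{gather*}
for $x\in\Uh^{\ho3}$. Then I would carry out the same kind of $\cD$-commutation manipulation as in the proof of Lemma~\ref{r.fuD}, pushing the Cartan factors $\delta_1\otimes\delta_2$ past the $Y$-homogeneous pieces using \eqref{eq.Dcommute}, so as to rewrite $\fbS_\modm$ in terms of the extended adjoint action $\btri$ of \eqref{eq.eadj} on the first two tensorands: the upshot should be a closed formula of the shape
\begin{gather*}
  \fbS_\modm(y\ot z\ot x) = \sum v^{\ast}\, \big(E_\modm \btri (y\ot z')\big)\ot S^{-1}(\cdots)\,(K_\modm F_\modm)K_{\ast}\cdots \ot x,
\end{gather*}
where the precise exponents and the precise way $z$ splits under $\uD$ are recovered by a routine computation; the key point is that $E_\modm\otimes K_\modm F_\modm$ again appears, together with factors $K_\gamma$, $K_{-|x|}$, acting by $\btri$ and by multiplication.

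With such a formula in hand, verifying that $\fbS = \sum\fbS_\modm$ is an admissibility decomposition reduces to checking conditions (A)--(C) of Definition~\ref{r351} exactly as before. For (A), I would combine Lemma~\ref{thm:6}, Lemma~\ref{thm:4} (to locate the $G$-grading of $E_\modm\otimes K_\modm F_\modm$ as $\de_\modm\ot\de_\modm^{-1}$), the $G$-good presentation of $\uD(z)$ from Lemma~\ref{r.coprod}, and Lemma~\ref{r.ediv}(a), so that the total $G^\ev$-degree telescopes to $\dy\,\dot z\,\dx$ using \eqref{eq.evenD} — i.e.\ $\fbS_\modm$ preserves the $G^\ev$-grading. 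For (B), since $K_\alpha\in\cU$ and $E_\modm\otimes K_\modm F_\modm\in\cU\ot\cU$ and $\cU$ is a Hopf algebra, Lemma~\ref{r.ediv}(b) gives $\fbS_\modm(\cU^{\ot3})\subset\cU^{\ot3}$. For (C), the integrality input is \eqref{eq.bg2}, which gives $K_\modm F_\modm$ a factor of $o(\modm)$ while $E_\modm$ stays in $\UZ^\ev$; combined with $\UZ$-stability of $\sXZe$ (Theorem~\ref{r.sXZ}(a)) and the fact that $\sXZe$ is an algebra, this yields $\fbS_\modm((\sXZe)^{\ot3})\subset o(\modm)(\sXZe)^{\ot3}$, and $\lim_\modm o\text{-order}=\infty$ because $o(\modm)=(q;q)_{\lfloor\max(\modm)/2\rfloor}$.

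Finally, invoking Lemma~\ref{r36}, the existence of an admissibility decomposition gives that $\fbS$ is $(\tK_n(\cU))$-admissible, which is the claim. The main obstacle, as in the previous lemmas, is purely bookkeeping: getting the $\cD^{-2}$-conjugations and the nested coproducts to rearrange cleanly into the $\btri$-form, keeping track of which coproduct components are $G$-homogeneous versus $G^\ev$-homogeneous, and confirming that all the $v$-power prefactors cancel so that the $G^\ev$-grading really is preserved rather than merely shifted by a power of $\dv$. No new conceptual ingredient is needed beyond what was used for $\fuD$; the argument is a direct adaptation.
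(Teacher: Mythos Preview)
Your overall strategy---decompose $\fbS=\sum_{\bu}\fbS_\bu$ via $\bS=\sum_\bu\bS_\bu$, then verify (A)--(C) and invoke Lemma~\ref{r36}---is the paper's strategy. But two points in your sketch are off, and one of them is a real gap.

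First, the formula: $\bS$ involves no coproduct of its argument, so there is no ``splitting of $z$ under $\uD$'' and Lemma~\ref{r.coprod} plays no role. The paper's computation (with middle variable $x$) gives
\[
  \fbS_\bu(y\ot x\ot z)
  = (y\ot 1\ot 1)\Big(\big[(1\ot K_{-|x|}F_\bu)\,(S^{-1}\ot S^{-1})\big(E_\bu\btri(K_{-2|x|}\ot x)\big)\big]\ot z\Big),
\]
obtained by expanding $\bS_\bu(x\delta_2)=S^{-1}(E_\bu\trr(x\delta_2))F_\bu K_{-|x|}$ and pushing the $\delta$'s through. The $\btri$ appears with $K_{-2|x|}\ot x$, not with $y$ and a coproduct component of the middle factor.

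Second, and more seriously, your argument for (C) does not go through as stated. The formula contains $S^{-1}$ applied to each tensorand of $E_\bu\btri(K_{-2|x|}\ot x)$, and the antipode does \emph{not} preserve the even part (e.g.\ $S(E_\alpha)=-K_\alpha^{-1}E_\alpha$ has $Y/2Y$-degree $\alpha$). So Lemma~\ref{r.ediv}(c) plus the algebra/ad-stability of $\sXZe$ only yield
\[
  \fbS_\bu\big((\sXZe)^{\ot 3}\big)\subset o(\bu)\,(\sXZ)^{\ot 3},
\]
with the full $\sXZ$, not $\sXZe$. The paper closes this gap by feeding back the conclusion of (A): since $\fbS_\bu$ preserves the $G^\ev$-grading, its image on $(\sXZe)^{\ot 3}$ also lies in $(\UZ^\ev)^{\ot 3}\ot_\cA\tA$, and then the identity \eqref{eq.sosanh1},
\[
  (q;q)_k(\sXZ)^{\ot n}\cap\big((\UZ^\ev)^{\ot n}\ot_\cA\tA\big)=(q;q)_k(\sXZe)^{\ot n},
\]
lets one intersect down to $o(\bu)(\sXZe)^{\ot 3}$. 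This extra step is genuinely needed here (it was not for $\fuD$, where no antipode intervenes), and your sketch omits it.
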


\begin{proof}Using the definition \eqref{eq.fbSdef} and the decomposition \eqref{eq.decompS} of $\bS$,
 we have $\fbS= \sum_{\bu \in \BN^t} \fbS_\bu$, where
  $$ \fbS_\bu(y \ot x \ot z)= \left[\sum y\delta_1 \ot \bS_\bu(x\d_2) \ot z\right] \cD_{12}^{-2}= (y \ot 1 \ot z) (\sum \d_1\otimes\bS_\bu(x\d_2) \ot 1) \cD_{12}^{-2}.$$
  Using  the definitions, we have
  \begin{gather*}
    \begin{split}
      (1\otimes\bS_\bu)(\sum \d_1\otimes x\d_2)
      &=\sum \d_1\otimes\bS_\bu(x\d_2)\\
      &=\sum \d_1\otimes S^{-1}(E_\bu\trr (x\d_2))F_\bu K_{-|x|}\\
      &=\sum \d_1\otimes S^{-1}((E_\bu)_{(1)}
      (x\d_2)S((E_\bu)_{(2)}))F_\bu K_{-|x|}\\
      &=\sum
      \d_1\otimes(E_\bu)_{(2)}S^{-1}(\d_2)S^{-1}(x)S^{-1}((E_\bu)_{(1)})F_\bu K_{-|x|}\\
      &=\sum
      K_{2(|x|+|(E_\bu)_{(1)}|+|F_\bu|)}\d_1\otimes(E_\bu)_{(2)}S^{-1}(x)S^{-1}((E_\bu)_{(1)})F_\bu K_{-|x|}S^{-1}(\d_2)\\
      &=\left(\sum
      K_{2(|x|+|(E_\bu)_{(1)}|+|F_\bu|)}\otimes(E_\bu)_{(2)}S^{-1}(x)S^{-1}((E_\bu)_{(1)})F_\bu K_{-|x|}\right)\cD^2\\
      &=  \sum \big[  ( 1  \ot K_{-|x|} F_\bu) \,   (S^{-1} \ot S^{-1})\left(  E_\bu \btri (K_{ -2|x| }\ot x)     \right)\big] \cD^2.
    \end{split}
  \end{gather*}
  It follows that
  \begin{align} \label{eq.mj2}
  \fbS_\bu(y \ot x \ot z) &= \sum (y \ot 1 \ot 1) \Big( \big[  ( 1  \ot K_{-|x|} F_\bu) \,   (S^{-1} \ot S^{-1})\left(  E_\bu \btri (K_{ -2|x| }\ot x)     \right)\big] \ot z \Big).
  \end{align}

 Assume that $x,y,z \in \UZ^\ev$ are
  $G^\ev$-homogeneous.
By Lemma \ref{thm:4},
$$F_\bu \ot E_\bu \in [\UZ^\ev\ot \UZ^\ev]_{\de_{\lambda_\bu}^{-1} \dK_{\lambda_\bu} \ot \de_{\lambda_\bu}}.$$ Hence from Lemma \ref{r.ediv}(a),
$\fbS_\bu(y \ot x \ot z) \in
[(\UZ^\ev)^{\ot 3}]_g$,  where
$$g=    \dy \ot   \dK_{|x|} \de_{\lambda_\bu}^{-1} \dK_{\lambda_\bu}  \dS^{-1}(\de_{\lambda_\bu} \dx) \ot \dot z= \dy\ot  \dx \ot \dot z, $$
where the last equality follows from a simple calculation. Thus,
\be
\label{eq.mj5}
\fbS_\bu(y \ot x \ot z) \in
[(\UZ^\ev)^{\ot 3}]_{\dy\ot  \dx \ot \dot z}
\ee

(A) From \eqref{eq.mj5}, $\fbS_\bu$ preserves the $G^\ev$-grading.

(B) Assume that $x,y,z \in \Ur$.
Since $K_\al \in \Ur$, $F_\bu \ot E_\bu \in \Ur \ot \Ur$,
Lemma \ref{r.ediv}(b) shows that the right hand side of \eqref{eq.mj2} is in $\Ur^{\ot 3}$.

(C)   By \eqref{eq.bg2}, $F_\bu \ot E_\bu \in  o(\bu) \sXZ\ot \UZ$. Lemma \ref{r.ediv} and  \eqref{eq.mj2}
show that
 $$ \fbS_\bu((\sXZe)^{\ot 3}) \subset o(\bu) (\sXZ)^{\ot 3}.$$
On the other hand, \eqref{eq.mj5} shows that  $$\fbS_\bu((\sXZe)^{\ot 3}) \subset ((\UZ^\ev)^{\ot 3} \ot_\cA\tA).$$
 Because
 $$ o(\bu) (\sXZ)^{\ot 3} \cap ((\UZ^\ev)^{\ot 3} \ot_\cA\tA) =  o(\bu) (\sXZe)^{\ot 3}$$
by \eqref{eq.sosanh1}, we have  $\fbS_\bu((\sXZe)^{\ot 3}) \subset o(\bu) (\sXZe)^{\ot 3}$.
\end{proof}

\subsubsection{The maps $\fad$ and $\tfad$}
\begin{lemma} \label{r.fad}
For  $f=\fad_\bm$ or $f=\tfad_\bm$, one  has
$ f(\modK_3(\Ur)) \subset \cF_{\lfloor  \max\bm/2\rfloor}(\modK_4(\Ur)).$

\end{lemma}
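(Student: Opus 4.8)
\textbf{Proof plan for Lemma \ref{r.fad}.}
The statement concerns the maps $\fad_\bm$ and $\tfad_\bm$ defined in \eqref{eq.faddef} and \eqref{eq.tfaddef}; each is built from the extended adjoint action together with multiplications by powers of the diagonal element $\cD^{\pm2}$ and by the quasi-clasp factors $\Ga_1(\bm), \Ga_2(\bm)$ (respectively $\Ga_1(\bn),\Ga_2(\bn)$), and it inserts these factors into two of the tensor slots. The plan is to prove, for $f=\fad_\bm$, the three analogues of the conditions in Lemma \ref{r36}: that $f$ preserves the $G^\ev$-grading, that $f$ maps $\cU^{\ot3}$ into $\cU^{\ot4}$, and that $f$ sends $(\sXZe)^{\ot3}$ into $o(\bm)(\sXZe)^{\ot4}$; the conclusion then follows by intersecting with $[(\UZ^\ev)^{\ot4}]_1\cap\cU^{\ot4}$, using \eqref{eq.bh2a} to identify the result with $\cF_{\lfloor\max\bm/2\rfloor}(\modK_4(\cU))$. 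I would first rewrite $\fad_\bm$ as an explicit combination of the extended adjoint action $\btri$ of \eqref{eq.eadj} on the two inner slots together with the prefactors $\Ga_1(\bm),\Ga_2(\bm)$, in the same style as the rewritings of $\fuD$ and $\fbS$ in Lemmas \ref{r.fuD} and \ref{r.fbS}; this is a routine (though lengthy) manipulation of the diagonal elements using $(\Delta\ot1)(\cD)=\cD_{13}\cD_{23}$, $(S\ot1)(\cD)=\cD^{-1}$, and \eqref{eq.eadj}.

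For the $G^\ev$-grading claim I would combine Lemma \ref{r.ediv}(a), which controls the grading of $a\btri(y\ot x)$, with Corollary \ref{cor.918} and Lemma \ref{r.Gamma}\eqref{eq.Ga1}, which gives that $\Ga_1(\bm)\ot\Ga_2(\bm)$ has $G$-grading $1$ (more precisely $\de_{\bm_1}^{-1}\de_{\bm_2}\ot\de_{\bm_2}^{-1}\de_{\bm_1}$ before multiplying, which cancels); the $\dv$-exponents appearing from the $\cD^{\pm2}$ prefactors will cancel exactly as in the computation of $u$ in Lemma \ref{r.fuD}, using \eqref{eq.evenD} and \eqref{e29}. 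For the $\cU$-stability claim I would use that $\cU$ is a Hopf algebra containing $K_\al^{\pm1}$, that $\Ga_1(\bm)\ot\Ga_2(\bm)\in\cU\ot\cU$ (this is noted just after Proposition \ref{r.Jvalues1}, since $F_\bm\ot E_\bm, F'_\bm\ot E'_\bm\in\cU\ot\cU$), and Lemma \ref{r.ediv}(b) which says $\cU\btri(\cU\ot\cU)\subset\cU\ot\cU$. For the integrality claim I would feed in Lemma \ref{r.Gamma}\eqref{eq.Ga2}, which yields $\Ga_1(\bm)\ot\Ga_2(\bm)\in o(\bm)(\sXZe\ot\UZ^\ev)$ — this is the sole source of the divisibility factor $o(\bm)=(q;q)_{\lfloor\max\bm/2\rfloor}$ — together with Lemma \ref{r.ediv}(c), $\UZ\btri(\sXZe\ot\sXZe)\subset\sXZe\ot\sXZe$, and the fact that $\sXZe$ is $\UZ$-ad-stable and multiplicatively closed (Theorem \ref{r.sXZ}); as in the proof of Lemma \ref{r.fbS}, combining the integrality with the grading/evenness via \eqref{eq.sosanh1} pins the image down in $o(\bm)(\sXZe)^{\ot4}$ rather than merely $o(\bm)(\sXZ)^{\ot4}$.

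The case $f=\tfad_\bm$ is entirely parallel: one uses \eqref{e41a} in place of \eqref{e41}, the right extended adjoint action (obtained from \eqref{eq.eadj} and the formula $\radb(x\ot y)=S^{-1}(y)\trr x$ of Lemma \ref{rr3}) in place of the left one, and the second identity in Lemma \ref{r.ediv}(a) for the grading bookkeeping; alternatively one can deduce it from the $\fad$ case by applying the mirror automorphism $\tphi$, exploiting \eqref{e80b} relating $\bc$ to $(\tphi\ot\uS^{-1}\tphi)(\bc)$ and the $\tphi$-equivariance of $\btri$ noted at the end of the proof of Lemma \ref{r.ediv}. The main obstacle I anticipate is purely bookkeeping: carrying out the rewriting of $\fad_\bm$ into a clean $\btri$-form and then tracking all the $\cD^{\pm2}$ contributions and $\dv$-exponents so that they cancel precisely; the conceptual input (the three properties of $\btri$ in Lemma \ref{r.ediv}, the grading lemmas, and the divisibility in Lemma \ref{r.Gamma}) is already in place, so once the explicit formula is written the three verifications are mechanical. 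Finally, the concluding sentence ``Consequently, $\modb\in\tK_3(\cU)$'' of Lemma \ref{r.Borrom} follows by combining this lemma with the admissibility of $\fuD,\fbS,\fmu$ (Lemmas \ref{r.fuD}, \ref{r.fbS}, and the analogue for $\fmu$), Lemma \ref{r.Gamma}, and the decompositions \eqref{eq.deco}, \eqref{eq.deco1}, summing over $\bn,\bm$ and using that $o(\bn,\bm)\to\infty$.
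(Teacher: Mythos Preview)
Your plan is correct and matches the paper's proof: rewrite $\fad_\bm$ explicitly in terms of the extended adjoint action $\btri$ (the paper obtains the closed formula \eqref{eq.mj6}), then check the three properties using Lemma \ref{r.ediv}(a)--(c) together with Lemma \ref{r.Gamma}, and conclude via \eqref{eq.bh2a}. Two minor remarks: in the paper the explicit formula \eqref{eq.mj6} already has every tensor factor manifestly in $\sXZe$, so the evenness-forcing trick via \eqref{eq.sosanh1} is not needed here (unlike for $\fbS$); and the paper treats $\tfad_\bm$ by the direct analogue \eqref{eq.ks1} rather than via the mirror automorphism.
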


\begin{proof} Assume $x\ot y \ot z\in \modK_3(\Ur)= (\sXZ^\ev)^{\ot 3}\cap [(\UZ^\ev)^{\ot 3}]_1 \cap \Ur^{\ot 3}$.
First assume $f=\fad_\bm$.
 Recall that
 \begin{align*}
 \fad_\bm (x_1 \ot x_2 \ot x_3) & = \left[\sum (\id^{\ot 2} \ot \ad \ot \id) \left( x_1 S(\d_1) \ot \Ga_1(\bm) \d_1'  \ot \Ga_2(\bm)\d'_2 \ot x_2 \d_2 \ot x_3     \right)\right] \cD_{13}^{-2}\\
  & = (x_1\ot \Ga_1(\bm) \ot 1 \ot x_3) \left[\sum (\id^{\ot 2} \ot \ad ) \left( S(\d_1) \ot  \d_1'  \ot \Ga_2(\bm)\d'_2 \ot x_2 \d_2 \right) \ot 1    \right] \cD_{13}^{-2}.
 \end{align*}

  We have
  \begin{gather*}
    \begin{split}
      (\id\otimes \id\otimes\ad)\left(\sum S(\d_1)\otimes \d_1'\otimes x\d_2'\otimes y\d_2\right)
      &=\sum S(\d_1)\otimes \d_1'\otimes (x\d_2')\trr  (y\d_2)\\
      &=\sum S(\d_1)\otimes \d_1'\otimes x_{(1)}(\d_2')_{(1)}y\d_2S((\d_2')_{(2)})S(x_{(2)})\\
      &=\sum S(\d_1)\otimes K_{-2|y|}\otimes x_{(1)}y\d_2S(x_{(2)})\\
      &=\sum K_{2|x_{(2)}|}S(\d_1)\otimes K_{-2|y|}\otimes x_{(1)}yS(x_{(2)})\d_2\\
      &=\left[\sum K_{2|x_{(2)}|}\otimes K_{-2|y|}\otimes x_{(1)}yS(x_{(2)})\right]\cD^2_{13}\\
      &= \left[( x \btri y)_{13} (1 \ot  K_{-2|y|}\otimes 1)    \right]\cD^2_{13}.
    \end{split}
  \end{gather*}
  It follows that
    \begin{align} \label{eq.mj6}
  \fad_\bm (x_1 \ot x_2 \ot x_3) &= \left[\big(x_1\ot \Ga_1(\bm) \ot 1 \big) \,\left ( \Ga_2(\bm) \btri x_2\right)_{13}\, \big (1 \ot  K_{-2|x_2|}\otimes 1\big)\right]  \ot x_3.
  \end{align}
Since  $\Ga_1(\bm) \ot \Ga_2(\bm) \in  [(\UZ^\ev)^{\ot 2}]_1$, Lemma \ref{r.ediv}(a) shows that
$$\fad_\bm (x_1 \ot x_2 \ot x_3)\in [(\UZe)^{\ot 4}]_g,$$
where $g= \dx_1 \dot \Gamma_1 (\bm) \dot \Gamma_2 (\bm) \dx_2 \dx_3 = \dx_1 \dx_2 \dx_3=1$. Thus, the right hand side of \eqref{eq.mj6} is in $[(\UZe)^{\ot 4}]_1$.

Since $x\ot y \ot z\in \Ur^{\ot 3}$, Lemma \eqref{r.ediv}(b) shows that the right hand side of \eqref{eq.mj6} is in $\Ur^{\ot 4}$.

Since $\Ga_1(\bm) \ot \Ga_2(\bm) \in {o(\bm)} \sXZe \ot \UZ^\ev$ by \eqref{eq.Ga2}, Lemma \eqref{r.ediv}(c) shows that
$$\fad_\bm (x_1 \ot x_2 \ot x_3) \in  {o(\bm)}(\sXZe)^{\ot 4}.$$
Hence
$$\fad_\bm (x_1 \ot x_2 \ot x_3) \in {o(\bm)}(\sXZe)^{\ot 4} \cap [(\UZe)^{\ot 4}]_1  \cap \Ur^{\ot 4} = \cF_{\lfloor  \max\bm/2\rfloor}(\modK_4(\Ur)),$$
which proves the statement for $f= \fad_\bm$.

The proof for $f=\tfad_\bm$ is similar: Using the definition and Formula \eqref{e40} for $\underline{\ad^r}$, one gets
 \be \label{eq.ks1}
 \tfad_\bm(x_1 \ot x_2 \ot x_3)=  \Big( \Gamma_1(\bm) \ot x_1 \ot x_2 \ot 1\Big) \Big( K_{2|x_3| + 2 \lambda_\bm}
\ot (S \ot \id)( S^{-1}(\Ga_2(\bm) \btri x_3)) \Big)_{24}.\ee
By Lemma \ref{r.ediv}(a), the right hand side of \eqref{eq.ks1} is in $(\UZ^\ev)^{\ot 4}$ having $G$-grading equal to
$$ \dot \Ga_1(\bm)\,  \dx_1 \dx_2 \dx_3  \dot \Ga_2(\bm) = \dot \Ga_1(\bm)  \dot \Ga_2(\bm)=1.$$
Again, Lemma \eqref{r.ediv}(b) shows that the right hand side of \eqref{eq.ks1} is in $\Ur^{\ot 4}$, and
Lemma \eqref{r.ediv}(c) shows that it is in ${o(\bm)}(\sXZe)^{\ot 4}$. Hence $\tfad_\bm(x_1 \ot x_2 \ot x_3) \in
\cF_{\lfloor  \max\bm/2\rfloor}\modK_4(\Ur)$.
\end{proof}
\subsubsection{The maps $\fmu$ and $\tfmu$}
\begin{lemma}\label{r.fmu}
Both  $\fmu$ and $\tfmu$ are $(\tK_n(\Ur))$-admissible.
\end{lemma}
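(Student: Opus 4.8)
The plan is to treat $\fmu$ and $\tfmu$ the same way the previous maps were handled, namely by exhibiting an admissibility decomposition (Definition \ref{r351}) and invoking Lemma \ref{r36}. Looking at the definitions \eqref{eq.fmudef} and \eqref{eq.tfmudef}, both maps are built from the ordinary multiplication $\boldmu$ (applied in the last two tensor slots) composed with multiplication by finitely many Cartan-type factors $\cD_{ij}^{\pm2}$. The key observation is that $\cD^{-2}=\exp(-h\sum_\al H_\al\ot\brH_\al/d_\al)$ expands as an $h$-adically convergent sum $\cD^{-2}=\sum_{\bk}\d^{(1)}_\bk\ot\d^{(2)}_\bk$ whose terms are (up to units in $\Ch$) products of the $\sqrt h H_\al$'s, hence lie in $\sXZ^{\ev,0}\ot\sXZ^{\ev,0}$; and the same for $\cD^{2}$ after using $(S\ot\id)(\cD)=\cD^{-1}$ together with stability of $\sXZ^\ev$ under $S$. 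So I would write $\fmu=\sum_{\bk,\bl}\fmu_{\bk,\bl}$ where $\fmu_{\bk,\bl}$ is $\boldmu$ in the last two slots composed with left/right multiplication by the $(\bk,\bl)$-th terms of $\cD^2_{13}$ and $\cD^{-2}_{14}$, and check conditions (A), (B), (C).

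The verification of (A)–(C) should be routine. For (A): $\boldmu$ preserves the $G^\ev$-grading by Proposition \ref{r.bh11}, and left/right multiplication by a $G^\ev$-homogeneous element shifts and then unshifts the grading — more precisely, since $\deg_G$ of each $\cD^{\pm2}$-term is trivial (it is built from $H_\al$'s, which have $G$-degree $1$ by the commutation rules defining $G$; indeed $\cD^{-2}$ is the Cartan part of $\bc$ and $\bc\in[\UZ^\ev\ot\UZ^\ev]_1$-type considerations apply), each $\fmu_{\bk,\bl}$ preserves the $G^\ev$-grading. For (B): $\Ur$ is a Hopf subalgebra of $\UZ$ containing all $K_\al$, hence containing the $H_\al$-power terms is not automatic — but here one only needs the relevant $\cD$-terms to act so that $\Ur^{\ot n}$ is preserved; since the $\cD$-terms lie in $\Ur\ot\Ur$ (as $K_\al\in\Ur$ forces, via $hH_\al=\log K_\al^2\in$ the $h$-adic completion, that the relevant completed powers stay in $\Ur$ after completion — compare the argument for $\Vh$ in Proposition \ref{r.VZVh}), multiplication preserves $\Ur^{\ot n}$, as does $\boldmu$. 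For (C): $\sXZ^\ev$ is a $\tA$-algebra (Theorem \ref{r.sXZ}), the Cartan $\cD$-terms lie in $\sqrt h$-multiples of $\sXZ^{\ev,0}\ot\sXZ^{\ev,0}$ with the $\sqrt h$-valuations of the $\bk$-th term tending to infinity, so setting $m_{\bk,\bl}$ to a suitable function of $\|\bk\|+\|\bl\|$ (using $\sqrt{(q;q)_{m}}\mid h^{m}$-type comparisons) gives $\fmu_{\bk,\bl}((\sXZ^\ev)^{\ot4})\subset(q;q)_{m_{\bk,\bl}}(\sXZ^\ev)^{\ot3}$ with $m_{\bk,\bl}\to\infty$.

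The analogous decomposition works for $\tfmu$ with the slots shifted (multiplication by $\cD^2_{23}$ and $\cD^{-2}_{24}$ and $\boldmu$ again in the last two slots). Once both admissibility decompositions are in place, Lemma \ref{r36} immediately yields that $\fmu$ and $\tfmu$ are $(\tK_n(\Ur))$-admissible, completing the proof.

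The main obstacle I anticipate is the bookkeeping in condition (C): one must track precisely how the powers of $\sqrt h$ (equivalently, divisibility by cyclotomic-polynomial products $(q;q)_m$) coming from the infinitely many $\cD^{\pm2}$-terms interact with the dilatation structure of $\sXZ^\ev$, so that the resulting $m_{\bk,\bl}$ genuinely go to infinity; the grading computation in (A) for the $\cD$-conjugation is the only other place requiring care, since one must confirm the Cartan factors contribute trivially to the $G$-grading rather than merely to the $Y$- or $Y/2Y$-grading.
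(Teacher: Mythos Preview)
Your proposed decomposition has a genuine gap: the individual summands of the expansion of $\cD^{\pm2}=\exp\bigl(\pm h\sum_{\al}H_\al\ot\brH_\al/d_\al\bigr)$ are (up to scalars) of the form $H_\al^{k}\ot\brH_\al^{k}$, and these do \emph{not} lie in $\UZ\ot\UZ$, in $\Ur\ot\Ur$, or in $\sXZ^\ev\ot\sXZ^\ev$. All three of these are \emph{algebraic} $\cA$- or $\tA$-modules, not $h$-adically complete ones; the element $H_\al$ lives only in $\Uh$, and $\brH_\al$ only in the simply-connected extension. Consequently each of your proposed components $\fmu_{\bk,\bl}$ takes $(\Ur^\ev)^{\ot4}$ outside $(\Ur^\ev)^{\ot3}$ (so (B) fails), takes $[\UZ^{\ot4}]_g$ outside $\UZ^{\ot3}$ altogether (so (A) cannot even be formulated), and takes $(\sXZ^\ev)^{\ot4}$ outside $(\sXZ^\ev)^{\ot3}$ (so (C) fails). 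The argument ``$K_\al\in\Ur$ forces $hH_\al=\log K_\al^2\in\Ur$'' is exactly where the confusion lies: that identity is valid in the completed module $\Vh$, not in $\Ur$.

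The paper's route sidesteps this entirely by first \emph{computing} $\fmu$ and $\tfmu$ in closed form. Using $(S\ot\id)(\cD^{-2})=\cD^{2}$ and the commutation rule \eqref{eq.Dcommute}, one finds that the two Cartan factors $\cD_{13}^{2}$ and $\cD_{14}^{-2}$ (respectively $\cD_{23}^{2}$ and $\cD_{24}^{-2}$) telescope after applying $\boldmu$ in the last two slots, leaving only a single factor $K_{2|x_4|}$:
\[
\fmu(x_1\ot x_2\ot x_3\ot x_4)=x_1K_{2|x_4|}\ot x_2\ot x_3x_4,\qquad
\tfmu(x_1\ot x_2\ot x_3\ot x_4)=x_1\ot x_2K_{2|x_4|}\ot x_3x_4.
\]
Now the \emph{trivial} decomposition (a single term) is an admissibility decomposition: multiplication by $K_{2\gamma}$ with $\gamma\in Y$ and the product $\boldmu$ both preserve the $G^\ev$-grading, send $\Ur$ into $\Ur$, and send $\sXZ^\ev$ into $\sXZ^\ev$. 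This is the key simplification you are missing; once you see it, the lemma is immediate.
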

\begin{proof} By definition
\begin{align*}
\fmu(x_1 \ot x_2 \ot x_3 \ot x_4)& = (\id^{\ot 2} \ot \boldmu)(\sum x_1 \d_1S(\d_1')\otimes x_2 \ot x_3 \d'_2 \ot x_4 \d_2) \\
&= (x_1 \ot x_2 \ot 1)\, \left[ (\id^{\ot 2} \ot \boldmu)(\sum \d_1S(\d_1')\otimes 1 \ot x_3 \d'_2 \ot x_4 \d_2)\right].
\end{align*}
  We have
  \begin{gather}
    \begin{split}
      (\id\otimes{\boldsymbol \mu})(\sum \d_1S(\d_1')\otimes x\d_2'\otimes y\d_2)
      &=\sum \d_1S(\d_1')\otimes x\d_2'y\d_2\\
      &=\sum \d_1S(\d_1')K_{2|y|}\otimes xy\d_2'\d_2\\
      &= K_{2|y|}\otimes xy.
    \end{split}
    \notag
  \end{gather}
 It follows that $\fmu$ has a very simple expression
 \begin{align*}
 \fmu(x_1 \ot x_2 \ot x_3 \ot x_4)& =  (x_1 \ot x_2 \ot 1) ( K_{2|x_4|}\otimes 1 \ot x_3 x_4)\\&
 = x_1 K_{2|x_4|}\otimes   x_2 \ot x_3 x_4.
 \end{align*}
 The trivial decomposition for $\fmu$ is admissible. Hence,  $\fmu$ is $(\tK_n(\Ur))$-admissible.

 Similarly, a simple computation shows that
 $$ \tfmu(x_1 \ot x_2 \ot x_3 \ot x_4)=  x_1  \ot x_2 K_{2|x_4|} \ot x_3 x_4. $$
 The trivial decomposition for $\tfmu$ is admissible. Hence,  $\tfmu$ is $(\tK_n(\Ur))$-admissible.
\end{proof}

\subsubsection{Proof of Lemma \ref{r.Borrom}}
\begin{proof} First suppose $\max(\bm) \ge \max(\bn)$. By \eqref{eq.deco}
$$
\modb_{\bn,\bm} =  \fmu \circ \fad_\bm \circ \fbS  \circ  \fuD \Big( \Ga_1(\bn)\ot \Ga_2(\bn)  \Big)  ,
$$
By \eqref{eq.Ga1}, $\Ga_1(\bn)\ot \Ga_2(\bn)\in \modK_2$. Lemmas \ref{r.fuD}, \ref{r.fbS}, \ref{r.fad}, and \ref{r.fmu} show that  $\modb_{\bn,\bm} \in o(\bm) \tK_3$.

Suppose  $\max(\bn) > \max(\bm)$. Using \eqref{eq.deco1} instead of  \eqref{eq.deco},
 we have $\modb_{\bn,\bm} \in o(\bn) \tK_3$.

Hence $ \modb_{\bn,\bm} \in o(\bn,\bm) \tK_3$. As a consequence, $\modb = \sum \modb_{\bn,\bm} \in \tK_3$.
\end{proof}

\subsection{Proof of Proposition \ref{r.Jvalues1}} As noted, statement (i) follows trivially from the definition of $\tK(\Ur)$. Statement (ii) follows from Lemmas \ref{r.mu}, \ref{r.psi}, \ref{r.uD}, \ref{r.bS}.
Finally, statement (iii) is Lemma \ref{r.Borrom}.
 \qed

\subsection{Integrality of the quantum link invariant}
\label{integrality2}

In \cite{Le_Duke}, the second author proved that, for a framed link $L=L_1\cup\dots\cup L_n$ in $S^3$, the quantum $\mathfrak g$ link invariant $J_L(V_{\lambda_1},\dots,V_{\lambda_n})$, up to multiplication by a fractional power of $q$, is contained in $\BZ[q,q^{-1}]$.  Here we sketch an alternative proof using Theorem \ref{r.Jvalues} of the following special case for algebraically split framed links.

\begin{theorem}[\cite{Le_Duke}]
  \label{r1}
  Let $L=L_1\cup\dots\cup L_n$ be an algebraically split $0$-framed link in $S^3$.  Let $\lambda_1,\dots,\lambda_n\in X_+$ be dominant integral weights.
  Then we have
  \begin{gather*}
    J_L(V_{\lambda_1},\dots,V_{\lambda_n})\in q^p\BZ[q,q^{-1}].
  \end{gather*}
  where $p=(2\rho,\lambda_1+\dots+\lambda_n)$.
\end{theorem}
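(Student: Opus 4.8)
The plan is to deduce Theorem~\ref{r1} from Theorem~\ref{r.Jvalues} by choosing an appropriate $\cA$-Hopf-subalgebra $\cU$ of $\UZ$ and by relating the quantum link invariant $J_L(V_{\lambda_1},\dots,V_{\lambda_n})$ to the pairing of the universal invariant $J_T$ of a bottom tangle $T$ with closure $L$ against the quantum traces $\trq^{V_{\lambda_i}}$. First I would take $\cU=\UZ$ itself, which satisfies the hypotheses of Theorem~\ref{r.Jvalues}: it is an $\cA$-Hopf-subalgebra of $\UZ$ containing all $K_\alpha$, and by the definition of the quasi-$R$-matrix together with \eqref{eq.En}, \eqref{eq.Fn} we have $F_\bm\ot E_\bm,\ F'_\bm\ot E'_\bm\in\UZ\ot\UZ$ (each $E_\bm$, $F_\bm$ being, up to a unit of $\cA$ and a factor $(q;q)_\bm$, a product of divided powers, hence in $\UZ$). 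Thus Theorem~\ref{r.Jvalues}(a) gives $J_T\in\tK_n(\UZ)\subset\tK_n$ for any $n$-component bottom tangle $T$ with $0$ linking matrix whose closure is $L$.

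The second step is to express $J_L(V_{\lambda_1},\dots,V_{\lambda_n})$ as $(\trq^{V_{\lambda_1}}\ho\cdots\ho\trq^{V_{\lambda_n}})(J_T)$; this is exactly the formula recalled in Section~\ref{sec:uni11}, since the quantum traces $\trq^{V_{\lambda_i}}$ are ad-invariant. I would then analyze the restriction of each quantum trace $\trq^{V_{\lambda_i}}$ to the relevant homogeneous and integral pieces. The key integrality input is: on $\UZ^\ev$, the quantum trace in an integrable highest weight module $V_\lambda$ with $\lambda\in X_+$ takes values, up to the overall scalar coming from the balanced element $\modg=K_{-2\rho}$ acting on $V_\lambda$ by $q^{-(2\rho,\lambda)}$ on the highest weight line (and by algebraic-integer eigenvalues on the other weight spaces), in $\BZ[q^{\pm1}]$ — this is because the matrix entries of the action of Lusztig's integral form on the Lusztig/canonical basis of $V_\lambda$ lie in $\BZ[q^{\pm1}]$, and the character of $V_\lambda$ is a finite sum of terms $q^{(\mu,\cdot)}$ with $\mu\in X$, whose values on the $\UZ$-part $\tK_n$ are controlled. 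Concretely I would show: there is $p=(2\rho,\lambda_1+\dots+\lambda_n)$ such that $q^{-p}\,\trq^{V_{\lambda_i}}$ maps $[\UZ]_g\cap\UZ$ into $\BZ[q^{\pm1}]$ when restricted to the $G$-grading components appearing, and maps each filtration layer $(q;q)_k\UZ^\ev\cap[\cdots]$ into $(q;q)_k\BZ[q^{\pm1}]$, i.e. each $q^{-(2\rho,\lambda_i)}\trq^{V_{\lambda_i}}$ is ``$(\tK_n)$-compatible'' in the same bookkeeping sense as the twist forms $\cT_\pm$ in Propositions~\ref{r.integ6} and~\ref{r.918a}. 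Assembling the $n$ factors and using that $J_T\in\tK_n$ is an $h$-adically convergent sum of elements of $(q;q)_k\modK_n$, one gets $q^{-p}J_L(V_{\lambda_1},\dots,V_{\lambda_n})\in\varprojlim_k\BZ[q^{\pm1}]/((q;q)_k)=\Zqh$; but the quantum link invariant is manifestly a Laurent polynomial in $q^{1/D}$, hence $q^{-p}J_L(\dots)\in\Zqh\cap\BQ(q)=\Zq$ by Lemma~\ref{r.60} (or rather its evident $\BQ(q)$-analogue), giving the claim.

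The main obstacle will be the second step: making precise and proving the integrality and $G$-grading behaviour of the quantum traces $\trq^{V_\lambda}$ on the pieces $\tK_n$, parallel to what Propositions~\ref{r.integ6}, \ref{r.integ6} do for $\cT_\pm$. This requires knowing that $\trq^{V_\lambda}$, after removing the predictable power $q^{(2\rho,\lambda)}$, sends Lusztig's integral form into $\BZ[q^{\pm1}]$ and respects the filtration by powers of $(q;q)_k$ — facts that follow from the structure of $\UZ$-modules (matrix coefficients in $\BZ[q^{\pm1}]$ in the canonical basis) and from the weight-space decomposition, but need to be checked carefully; the $G$-grading part is then a routine computation using that $\trq^{V_\lambda}$ only sees the $Y$-grading-zero part and that on such elements the $G$-grading reduces to the power of $v$ as in Proposition~\ref{r.918a}. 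The remaining steps — choosing $T$, invoking Theorem~\ref{r.Jvalues}, and the final descent from $\Zqh$ to $\Zq$ — are then straightforward.
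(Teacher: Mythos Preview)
Your proposal follows essentially the same route as the paper's sketch proof: choose a bottom tangle $T$ with closure $L$, invoke Theorem~\ref{r.Jvalues}(a) to get $J_T\in\tK_n$, express $J_L$ via quantum traces applied to $J_T$, and then show that each $\trq^{V_\lambda}$ carries $\tK_n$ into $q^{(2\rho,\lambda)}\tK_{n-1}$ (the paper's inclusion \eqref{eee2}); you correctly identify this last point as the main step requiring work.

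One small correction to your final descent step: you write that $J_L$ is ``manifestly a Laurent polynomial in $q^{1/D}$'' and then intersect $\Zqh$ with $\BQ(q)$, but a Laurent polynomial in $q^{1/D}$ need not lie in $\BQ(q)$, so the intersection as stated does not apply. The paper handles this by first invoking the easier, separately-known fact \eqref{eee1} that $J_L\in q^p\BZ[v,v^{-1}]$, and then using $\BZ[v,v^{-1}]\cap\Zqh=\BZ[q,q^{-1}]$. With that adjustment your argument matches the paper's.
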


It is much easier to prove
\begin{gather}
  \label{eee1}
  J_L(V_{\lambda_1},\dots,V_{\lambda_n})\in q^p\BZ[v,v^{-1}],
\end{gather}
and the difficult part of the proof is to show that the normalized invariant
$q^{-p}J_L(V_{\lambda_1},\dots,V_{\lambda_n})\in\BZ[v,v^{-1}]$ is contained in
$\BZ[q,q^{-1}]$.  In \cite{Le_Duke}, a result of Andersen \cite{An} on quantum
groups at roots of unity is involved in the proof.  The main idea of the proof below is implicitly the use of
the $G$-grading of the quantum group $\Uq$ as $\BC(q)$-module described in
Section \ref{sec:nonc-grad}.

\begin{proof}[Sketch proof of Theorem \ref{r1}]
  Let $T$ be an algebraically split $0$-framed bottom tangle such that the
  closure link of $T$ is $L$.  Recall that the quantum invariant
  $J_L(V_{\lambda_1},\dots,V_{\lambda_n})$ can be defined by using quantum
  traces
  \begin{gather}
    \label{e2}
    J_L(V_{\lambda_1},\dots,V_{\lambda_n})=(\tr_q^{V_{\lambda_1}}\ot\dots\ot\tr_q^{V_{\lambda_n}})(J_T).
  \end{gather}

  It is not difficult to prove that
  for
$1\le i\le n$, $\lambda\in X_+$, we have
  \begin{gather}
    \label{eee2}
    (\id^{\ot i-1}\ot \tr_q^{V_\lambda}\ot \id^{\ot n-i})(\tK_n)\subset q^{(2\rho,\lambda)}\tK_{n-1}.
  \end{gather}
  Using \eqref{eee2}, one can prove that
  \begin{gather}
    \label{eee3}
    (\tr_q^{V_{\lambda_1}}\ot\dots\ot \tr_q^{V_{\lambda_n}})(\tK_n)\subset
    q^p\tK_{0} =q^p\Zqh.
  \end{gather}
  Hence, using \eqref{e2}, \eqref{eee3} and Theorem \ref{r.Jvalues}(a), we have
  \begin{gather*}
      J_L(V_{\lambda_1},\dots,V_{\lambda_n})
      \in(\tr_q^{V_{\lambda_1}}\ot\dots\ot\tr_q^{V_{\lambda_n}})(\tK_n)
      \subset q^p\Zqh,
  \end{gather*}
  which, combined with \eqref{eee1}, yields
  $J_L(V_{\lambda_1},\dots,V_{\lambda_n})\in q^p\BZ[q,q^{-1}]$ since we have
  $\BZ[v,v^{-1}]\cap\Zqh=\BZ[q,q^{-1}]$.
\end{proof}

\np

\renewcommand{\cD}{D}
\def\cD{D}

\section{Recovering the Witten-Reshetikhin-Turaev invariant}
\label{sec.WRT}
In Section \ref{sec:integralM} we showed that $J_M\in \Zqh$, where $J_M$ is the invariant (associated to a simple Lie algebra $\fg$) of an integral homology 3-sphere $M$. Hence we can evaluate $J_M$ at any root of unity. Here we show that by evaluating of $J_M$ at a root of unity we recover the Witten-Reshetikhin-Turaev invariant. We also prove Theorem \ref{r40a} and Proposition \ref{r10} of Introduction.

\subsection{Introduction}\label{sec.WRT01}

Recall that $\fg$ is a simple Lie algebra and
$\ZZ$ is the set of all roots of unity. Suppose $\zeta \in \ZZ$ and
$M$ is closed oriented $3$-manifold.
Traditionally the Witten-Reshetikhin-Turaev (WRT) invariant (cf. \cite{RT2,BK}) $\tau_M^\fg(\xi;\zeta)\in \BC$ is defined when $\zeta$ is a root  of unity of order
  $2\cD dk $ with $k> h^\vee$, where $h^\vee$ is the dual Coxeter number, $d\in\{1,2,3\}$  is defined as in Section \ref{sec.UhUq}, and $\cD=|X/Y|$. Here $\xi= \zeta^{2D}$.
  In this case, $k-h^\vee$ is called the level of the theory. The definition of  $\tau_M^\fg(\xi;\zeta)$ can be extended to a bigger set $\ZZ'_\fg$ which is more than all roots
  of unity of order divisible by $2d\cD$, see subsection \ref{WRT}. For values of $d,D,h^\vee$ of simple Lie algebras, see Table \ref{tab:1} in Section \ref{sec.UhUq}.

This section is devoted to the proofs of the following theorem and its generalizations.
\begin{theorem}
Suppose $M$ is an integral homology $3$-sphere and $\zeta\in \ZZ'_\fg$. Then
$$ \tau_M^\fg(\xi;\zeta) = J_M \big|_{q= \xi}.$$
\label{thm030}
\end{theorem}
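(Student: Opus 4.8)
The strategy is to reduce the identity $\tau^\fg_M(\xi;\zeta)=\ev_\xi(J_M)$ to a computation comparing, on one side, the definition of the WRT invariant as a normalized sum of quantum link invariants colored by representations, and on the other side, the definition $J_M=(\cT_{\ve_1}\ho\cdots\ho\cT_{\ve_n})(J_T)$ together with Theorem~\ref{r.Jvalues}, which guarantees $J_T\in\tK_n(\cU)$ and hence that all the objects involved are defined at $q=\xi$. First I would fix an $n$-component algebraically split bottom tangle $T$ with framings $\ve_i\in\{\pm1\}$ realizing $M$, write $L=\cl(T)$ with those framings, and recall the standard Reshetikhin--Turaev surgery formula
\begin{gather*}
  \tau^\fg_M(\xi;\zeta)=\frac{\bigl(\sum_{\mu}[\dim_q V_\mu]\, J_{L(T;0,\dots,0)}(\dots,V_\mu\ \text{on the $i$-th component},\dots)\bigr)}{(\text{normalization factors } \tau^{\pm}_{\text{unknot}})},
\end{gather*}
where the sum runs over the finite set of weights $\mu$ in the relevant Weyl alcove at level $k-h^\vee$. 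The core point is that each twist form $\cT_{\ve_i}$, evaluated at $q=\xi$, acts on $J_T$ in exactly the same way as the operation ``$\ve_i$-framed surgery on the $i$-th component'', which at a root of unity becomes the finite weighted sum over the alcove divided by the normalization factor. Concretely, I would show that for $x\in\tK_1(\cU)$, $\ev_\xi(\cT_{\pm}(x))$ equals $\ev_\xi$ of the weighted colored sum $\bigl(\sum_\mu [\dim_q V_\mu]\,\tr_q^{V_\mu}(\br^{\pm1} x)\bigr)/(\sum_\mu[\dim_q V_\mu]\,\tr_q^{V_\mu}(\br^{\pm1}))$, using that $\br^{\pm1}$ is the universal invariant of the $\pm1$-framed unknot bottom tangle.

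The mechanism behind that comparison is the relation between the clasp form / quantum Killing form $\la\cdot,\cdot\ra$ and the quantum traces. Recall that $\cT_{\pm}(x)=\la\br^{\pm1},x\ra$, where $\la\cdot,\cdot\ra$ is dual to the clasp element $\bc$, and that $\bc=J_{C^+}$ is the universal invariant of the clasp tangle. On the representation-theoretic side, at a root of unity $\zeta$ the ``Kirby color'' $\omega=\sum_\mu[\dim_q V_\mu]\,[\text{color }V_\mu]$ has the property that the colored clasp, $(\tr_q^{V_\mu}\ot\id)(\bc)$ summed against the Kirby color, collapses onto a multiple of the ribbon element acting on the other strand; this is precisely the statement that encircling by the Kirby color is a ``killing'' projection onto the trivial isotypic piece, which is the semisimplification phenomenon at roots of unity. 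So the key lemma I would isolate is: for $x\in\tK_1$, after evaluation at $q=\xi$,
\begin{gather*}
  \sum_{\mu}[\dim_q V_\mu]\,(\tr_q^{V_\mu}\ho\id)\bigl(\bc\,(\br^{\pm1}\ot 1)\bigr)\Big|_{q=\xi}
  = c_{\pm}(\xi)\cdot(\text{projection of }\br^{\pm1}\text{ acting})\Big|_{q=\xi},
\end{gather*}
for an explicit scalar $c_{\pm}(\xi)$, and that dividing by the corresponding unknot normalization reproduces $\ev_\xi\la\br^{\pm1},x\ra$. Granting this for one component, I would then iterate over the $n$ components: since $J_T\in\tK_n(\cU)$ and each partial application $\cT_{\ve_i}$ is $(\tK_n)$-admissible, the tensor product $(\cT_{\ve_1}\ho\cdots\ho\cT_{\ve_n})$ can be applied one tensor factor at a time, matching the RT formula's iterated surgery, and the normalization factors assemble into the global WRT normalization (the $b_+,b_-$-powers of the Gauss sum), which is trivial for an integral homology sphere up to a factor that cancels exactly because $L$ is algebraically split with $\pm1$ framings and $\sigma(L)=b_+-b_-$.

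The main obstacle, and where most of the real work will go, is the root-of-unity analysis: the objects $J_T$, $\br$, the quantum Killing form all live over $\Zqh$ (resp.\ $\tA$-completions), and one must justify that evaluation $\ev_\xi$ commutes with all the completed sums and with the infinite expansions of $\Theta$, $\cD$, and of the clasp form in the PBW-type basis. This requires controlling convergence: the sum $\bc=\sum_\bn \bc'(\bn)\ot\bc''(\bn)$ becomes, at $q=\xi$ of finite order, effectively finite modulo $(q;q)_n$ on the relevant graded pieces, so $\ev_\xi$ is well-defined; but matching it to the genuinely finite alcove sum on the WRT side needs the linkage/quantum-group-at-roots-of-unity input (the fact that over the root of unity the relevant quotient of $\Uq$ is, on the pieces that matter, governed by the restricted/small quantum group and the Weyl alcove). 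I would handle this by working modulo the ideal generated by $(q;q)_{r}$ where $r=\ord(\xi)$, reducing everything to a statement in $\modZ[\xi]$, and then invoking the standard identification (as in Le's work \cite{Le:quantum} and the $sl_2$ treatment in \cite{H:unified}) of the truncated universal invariant with the colored one. Once Theorem~\ref{thm030} is established on $\ZZ'_\fg$, Theorem~\ref{r40a} follows: existence of $J_M\in\Zqh$ is Theorems~\ref{r.JMdef} and \ref{r.Jvalues}, the specialization property is Theorem~\ref{thm030}, and uniqueness follows because $\ZZ'_\fg$ contains infinitely many roots of unity of prime power order, which form a determining set for $\Zqh$ by the results of Section~\ref{sec.Habiro}; Proposition~\ref{r10} is then the special case $\xi=1$ via $\ev_1(J_M)=T_1(J_M)|_{q=1}$ and the fact that the empty surgery (or a $0$-term) gives $1$.
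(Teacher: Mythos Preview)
Your overall architecture is sound: reduce to a componentwise comparison between the twist forms $\cT_{\pm}$ and suitably normalized quantum traces $\tT_{\pm}=\tr_q^{\Omega_\pm}$ at $q=\xi$, iterate using $(\tK_n)$-admissibility, and invoke Theorem~\ref{r.Jvalues} for the integrality needed to evaluate at $\xi$. This is exactly the scaffolding of Section~\ref{sec.WRT}, and your reduction of Theorem~\ref{r40a} and Proposition~\ref{r10} at the end is correct.

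The gap is in your ``key lemma.'' You assert that summing $(\tr_q^{V_\mu}\ho\id)(\bc)$ against the Kirby color collapses to a multiple of $\br^{\pm1}$ at $q=\xi$, citing the semisimplification/killing-projection heuristic. That heuristic is not a proof, and making it rigorous on the level of $\tK_1$ (rather than inside a modular category) is precisely the hard point. The paper does \emph{not} prove any such collapse directly. Instead, it proceeds in three moves that you are missing. First, it reduces to \emph{ad-invariant} elements: since $J_T\in(\tK_m)^{\inv}$ and $\cT_\pm,\tT_\pm$ are ad-invariant, one can peel off components and at each stage the remaining element is central, so it suffices to prove $\cT_\ve(x)\eqz\tT_\ve(x)$ for $x\in\Zc(\tK'_1)$. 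Second, on the $\cA$-span of the Drinfel'd central elements $z_\lambda$ ($\lambda\in Y$), the identity $\cT_\ve(z_\lambda)\eqz\tT_\ve(z_\lambda)$ is established by a \emph{geometric handle-slide argument}: one computes the colored invariant of $U_{-\ve}\sqcup U_\ve$ (colors $V_\lambda,\Omega$), slides to get the framed Hopf link, and equates the two sides using only the hypothesis that $\Omega$ is a strong Kirby color (Proposition~\ref{60011}, Step~1). This is the actual mechanism replacing your ``Kirby color collapse.'' Third, to pass from $z_\lambda$ to arbitrary $x\in\Zc(\tK'_1)$, the paper needs a structural description of the center: it shows $\Zc(\VA^\ev)$ is the $\cA$-span of the $z_\lambda$ (Proposition~\ref{40003}), computes $\chi(\Zc(\Xh))=(\Xh^0)^\fW$ via the stability principle (Proposition~\ref{r.isomor}), and then decomposes any central $x$ as $\sum(q;q)_k x_k$ with $(q;q)_kx_k\in\Zc(\VA^\ev\otimes\cB)$ (Lemma~\ref{40006}). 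The Harish-Chandra formula~\eqref{eq.KHC} and the integrality Lemma~\ref{r.halfinteg.new} control $\ddd\,\cT_\pm(x_k)$ and $\ddd\,\tT_\pm(x_k)$ so that one can truncate the sum at $k=r-1$ when evaluating at $\zeta$.

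In short: your proposal identifies the right endpoints but not the bridge. The bridge is the handle-slide computation on $z_\lambda$ together with the Harish-Chandra/Chevalley description of the centers of $\VA^\ev$ and $\Xh$; without these, the step from your ``key lemma'' statement to its proof remains a restatement of the problem.
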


\begin{remark}  (a) Although $\xi$ is determined by $\zeta$, we use the notation $\tau_M(\xi;\zeta)$ since in many cases, $\tau_M^\fg(\xi;\zeta)$  depends only on $\xi$, but not a $2D$-th root $\zeta$ of $\xi$. In that case, we write $\tau_M(\xi)$ instead of $\tau_M(\xi;\zeta)$. The set $\ZZ_\fg$ in Section \ref{sec:intro} is defined by $\ZZ_\fg =\{\zeta^{2D} \mid \zeta \in \ZZ'_\fg\}$.

  (b)
 The theorem implies that for an {\em integral homology $3$-sphere}, $\tau_M^\fg(\xi;\zeta)$  depends only on $\xi$, but not a $2D$-th root $\zeta$ of $\xi$. This does not hold true for general $3$-manifolds.
\end{remark}
In subsections \ref{WRT1} and \ref{WRT} we recall the definition of the WRT invariant and define the set $\ZZ'_\fg$.
Subsection \ref{proof01} contains the proof of a stronger version of Theorem \ref{thm030}, based on results proved in later subsections. To prove the main results we introduce an integral form $\Ur$ of $\Uq$ which is sandwiched between Lusztig's integral form $\UZ$ and De Concini-Procesi's integral form $\VZ$. For  $\fg = sl_2$, the algebra
$\cU$ was considered by the first author
\cite{H:integralform,H:unified}.
A large part of the proof is devoted to the determination of the center of a certain completion of $\cU$.
For this part we use, among other things, integral  bases of $\UZ$-modules, the quantum Harish-Chandra isomorphism, and Chevalley's theorem in invariant theory.
In Section \ref{sec:Drin}, we give a geometric interpretation of Drinfel'd's construction of central elements.

\subsection{Finite-rank $\Uh$-modules} \label{sec:UhMod}
Suppose $V$ is a topologically free $\Uh$-module.
For $\mu \in X$ the {\em  weight $\mu$ subspace} of $V$ is defined by
$$ V_{[\mu]} = \{ e \in V \mid H_\al(e) = (\al,\mu) e \quad \forall \al \in \Pi \},$$
and $\mu\in X$ is called a {\em weight of $V$} if $V_{[\mu]}\neq 0$.
We call  $V$ a {\em highest weight module} if $V$ is generated by a non-zero
element $1_\mu\in V_{[\mu]}$ for some $\mu\in X$ such that $E_\alpha 1_\mu=0$ for $\alpha\in\Pi$.  Then $1_\mu$ is called a {\em highest
  weight vector} of $V$, and $\mu$ the {\em highest weight}.

By {\em a finite-rank
   $\Uh$-module}, we mean a $\Uh$-module which is
   (topologically) free of finite rank as a $\mathbb{C}[[h]]$-module.
The theory of finite-rank $\Uh$-modules is well-known and is parallel to
that of finite-dimensional $\fg$-modules, see e.g. \cite{CP,Jantzen,Lusztig}:
Every finite-rank  $\Uh$-module is the direct sum of irreducible finite-rank $\Uh$-modules. For every dominant integral weight $\lambda \in X_+:= \{ \sum _{\al \in \Pi} k_\al \bral \mid k_\al \in \BN\}$, there exists a unique finite-rank irreducible $\Uh$-module with highest weight $\lambda$, and every finite-rank irreducible $\Uh$-module is one of $V_\lambda$. The  Grothendieck ring of finite-rank  $\Uh$-modules is naturally isomorphic to that of  finite-dimensional $\fg$-modules.

\subsection{Link invariants and symmetries at roots of unity} \label{WRT1}
\subsubsection{Invariants of colored links}

Suppose $L$ is the closure link of  a framed bottom tangle $T$, with  $m$  components.  Let $V_1,\ldots,V_m$ be  finite-rank $\Uh$-modules.
 Recall that the  quantum link invariant \cite{RT1} can be defined by
 $$ J_L(V_1,\ldots,V_m) = (\tr_q^{V_1}\otimes\cdots \otimes\tr_q^{V_m})(J_T) \in \BC[[h]].$$
  Actually, $J_L(V_1,\ldots,V_m)$ belongs to a subring $\BZ[v^{\pm 1/\cD}]$ of $\BC[[h]]$, where $\cD=|X/Y|$,
 see \cite{Le_Duke}.  ($\cD$ is also equal to the determinant of the Cartan matrix.) We say that $V_j$ is the color of the $j$-th component, and consider $J_L(V_1,\ldots,V_m)$ as an invariant of colored links, which  is a generalization of the famous Jones polynomial \cite{Jones}.

Let $U$ be the trivial knot with
0 framing. For a finite-rank $\Uh$-modules $V$, $\dim_q(V):=J_U(V)$ is called  as the {\em quantum dimension} of $V$.
It is known that for $\lambda\in X_+$,
\be
 \dim_q(V_\lambda)= \frac{\sum_{w\in \fW} \sgn(w) v^{-(2(\lambda+\rho), w(\rho))}}{\sum_{w\in \fW} \sgn(w) v^{-(2(\rho), w(\rho))}}=
 q^{-(\lambda,\rho)}\prod_{\al \in \Phi_+}\frac{q^{(\lambda+ \rho, \al)}-1}{q^{(\rho, \al)}-1}.
\label{eq.unknot}
\ee
Here $\fW$ is the Weyl group and $\sgn(w)$ is the sign of $w$ as a linear transformation.

One has $\max_{\al\in \Phi_+} (\rho, \al) = d(h^\vee-1)$, where $h^\vee$ is the dual Coxeter number of $\fg$. Hence, if $\xi$ is root of unity with
\be
\ord(\xi) > d(h^\vee-1), \label{eq.epower}
\ee
then the denominator
of the right hand side of \eqref{eq.unknot} is not $0$ under the evaluation $q= \xi$. For this reason we often make the assumption \eqref{eq.epower}.

\subsubsection{Evaluation at a root of unity}
Throughout we  fix  a  root  of unity $\zeta\in \BC$. Let $\xi=\zeta^{2 \cD}$ and $r=\ord(\zeta^{2\cD})$.

For $f\in \BC[v^{\pm1/\cD}]$ let $ \ev_{v^{1/\cD}=\zeta}(f)$ be the value of $f$ at $v^{1/\cD}=\zeta$.
Note that if $v^{1/2\cD}=\zeta$, then $q= \xi$. If $f\in \BC[q^{\pm 1}]$, then $ \ev_{v^{1/\cD}=\zeta}(f)$ is the value of $f$ at $q=\xi$.

Suppose  $f,g \in\BC[v^{\pm 1/\cD}]$. If
$ \ev_{v^{1/\cD}=\zeta}(f)= \ev_{v^{1/\cD}=\zeta}(g)$, then
 we say $f=g$ at $\zeta$ and write
 $$f \eqz g.$$

We say that $\mu \in X$ is a {\em $\zeta$-period} if for every link $L$,  $\ez(J_L)$ does not change when the color of
a component changes from $V_\lambda$ to $V_{\lambda+ \mu}$ for arbitrary $\lambda \in X_+$ such that $\lambda + \mu \in \X_+$ (the colors of other components remain unchanged).

The set of all $\zeta$-periods is a subgroup of $X$. It turns out that if  $\ord(\xi) > d(h^\vee-1)$, then the group of $\zeta$-periods has finite index in $X$: in \cite{Le_Duke} it was proved that
the group of $\zeta$-periods contains $2rY$, which, in turn, contains  $(2r\cD)X$ (because $\cD X \subset Y$).

When $\ord(\xi)\le d(h^\vee-1)$, the behavior of $\ez(J_L)$ is quite different. For example, when $\zeta=1$, from \eqref{eq.unknot}  and the Weyl dimension formula, one can see that $\dim_q(V_\lambda)$ is the dimension
of the classical $\fg$-module of highest weight $\lambda$. When $\zeta=1$, the action of the ribbon element on any $V_\lambda$ is the identity, and the braiding action ${\boldsymbol{\psi}}$ is trivial on any pair
of $\Uq$-modules. Hence, we have the following.

\begin{proposition}
For any framed oriented link $L$ with $m$ ordered components and $\mu_1,\ldots, \mu_m \in X_+$,
$$ \ev_{v^{1/\cD} =1}(J_L(V_{\mu_1}, \ldots, V_{\mu_m}))= \prod_{j=1}^m \dim (V_{\mu_j}).$$
Here $\dim (V_{\mu_j})$ is the dimension of the irreducible $\fg$-module  with highest weight $\mu_j$.
\label{pro.color1}
\end{proposition}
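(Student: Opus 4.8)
The goal is to show that at $\zeta = 1$ (equivalently $v^{1/\cD} = 1$, $q = 1$) the quantum link invariant $J_L(V_{\mu_1},\dots,V_{\mu_m})$ degenerates to the product of classical dimensions. The strategy is to exploit that the entire ribbon Hopf algebra structure of $\Uh$ collapses to the (trivial, cocommutative) structure of $U(\fg)$ modulo $h$. First I would recall that $\Uh/h\Uh \cong U(\fg)$ as Hopf algebras, and that under this reduction $\cR \equiv 1 \otimes 1 \pmod h$ (since $\cR = \cD \Theta^{-1}$ with $\cD \equiv 1 \otimes 1$ and $\Theta \equiv 1 \otimes 1 \pmod h$, both being exponentials/sums of terms each carrying a positive power of $h$ or a factor $F_\bn \otimes E_\bn$ with $\|\bn\|\ge 1$ for $\bn \neq 0$). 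Likewise the balanced element $\modg = K_{-2\rho}$ satisfies $\modg \equiv 1 \pmod h$, and the ribbon element $\modr$ satisfies $\modr \equiv 1 \pmod h$, since $\modr = \sum_\bn F_\bn K_\bn \modr_0 E_\bn$ with $\modr_0 = K_{-2\rho}\,\boldsymbol\mu(\cD^{-1}) \equiv 1 \pmod h$ and the $\bn\neq 0$ terms vanishing mod $h$.

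The key point to extract from this is twofold. (i) For any finite-rank $\Uh$-module $V$, the quantum trace reduces modulo $h$ to the ordinary trace: $\trq^V(x) = \tr^V(\modg x) \equiv \tr^V(x) \pmod h$, and in particular $\ev_{v=1}(\dim_q V_\mu) = \tr^{V_\mu}(\id) = \dim V_\mu$, which is exactly formula \eqref{eq.unknot} evaluated at $q=1$ via the Weyl dimension formula (as noted in the text preceding the statement). (ii) Since $\cR \equiv 1\otimes 1 \pmod h$, the braiding $\boldsymbol\psi$ reduces mod $h$ to the flip $x \otimes y \mapsto (\boldep(\beta)y)\otimes(\boldep(\alpha)x) = y \otimes x$ — more precisely, the $\Uh$-action of $\cR$ on $V \otimes W$ for any finite-rank modules becomes the trivial action, so the $R$-matrix acting on tensor products of modules reduces to the identity operator. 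Hence all crossings in a diagram of $T$ become trivial modulo $h$, and $J_T$ reduces mod $h$ to the universal invariant of the "uncrossed" tangle obtained by forgetting all crossing information: since $L$ is a link, each component $T_i$ of the bottom tangle, read mod $h$, contributes just $1 \in \Uh/h\Uh$ (the crossings involving $T_i$ and the framing, which is $0$, all become trivial; the cap/cup contribute the balanced element which is $1$ mod $h$). Concretely I would argue $J_T \equiv 1^{\otimes m} \pmod h$ in $\Uh^{\ho m}$.

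Putting these together: $J_L(V_{\mu_1},\dots,V_{\mu_m}) = (\trq^{V_{\mu_1}} \ho \cdots \ho \trq^{V_{\mu_m}})(J_T)$, and evaluating at $\zeta = 1$ we get, using that $J_T \equiv 1^{\otimes m}$ and each $\trq^{V_{\mu_j}}$ reduces to $\tr^{V_{\mu_j}}$ mod $h$,
\[
  \ev_{v^{1/\cD}=1}\bigl(J_L(V_{\mu_1},\dots,V_{\mu_m})\bigr)
  = \prod_{j=1}^m \tr^{V_{\mu_j}}(1)
  = \prod_{j=1}^m \dim(V_{\mu_j}),
\]
where $\dim(V_{\mu_j})$ is the dimension of the finite-rank $\Uh$-module $V_{\mu_j}$, which equals the dimension of the classical irreducible $\fg$-module of highest weight $\mu_j$ because the Grothendieck ring of finite-rank $\Uh$-modules is isomorphic to that of finite-dimensional $\fg$-modules (as recalled in Section \ref{sec:UhMod}). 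This is the claimed identity.

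\textbf{Main obstacle.} The only genuinely non-routine point is making precise the claim that "$J_T \equiv 1^{\otimes m} \pmod h$" — i.e.\ that the universal invariant of a $0$-framed bottom tangle for a \emph{link} (all components arcs whose closures are unknots-up-to-crossings) becomes trivial modulo $h$. The clean way is to observe that the assignment of $\Uh$-elements to a tangle diagram, composed with the reduction $\Uh \to \Uh/h\Uh = U(\fg)$, is exactly the universal invariant for the cocommutative ribbon Hopf algebra $U(\fg)$ with $\cR = 1\otimes 1$, $\modr = 1$; for such a (trivially quasitriangular, trivially ribbon) Hopf algebra the universal invariant of any bottom tangle whose closure is a link depends only on the underlying $4$-valent graph with crossings forgotten, and since every component of a link bounds (after forgetting crossings) it evaluates to $1$ on each strand. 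I would either cite the functoriality/naturality of the universal invariant from \cite{H:bottom} for this, or give a two-line direct check: forgetting crossings, the diagram of $T_i$ is a sequence of caps and cups with the $S$/$\id$ decorations, whose product telescopes using $\boldep(\cR) = 1$ and $S(1) = 1$, $\modg \equiv 1$, to the element $1$. Everything else is a matter of tracking that $\cR$, $\modg$, $\modr$ are $\equiv 1 \pmod h$, which follows immediately from the explicit formulas \eqref{e9}, \eqref{501}, \eqref{e80r} in Section \ref{sec:universal-r-matrix}.
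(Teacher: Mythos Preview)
Your proof is correct and follows essentially the same idea as the paper's argument, which is given in the paragraph immediately preceding the proposition: at $\zeta=1$ the ribbon element acts as the identity on each $V_\lambda$ and the braiding $\boldsymbol\psi$ is trivial on any pair of $\Uq$-modules, so the invariant reduces to the product of quantum dimensions, which by the Weyl dimension formula equal the classical dimensions. The only difference is one of perspective: the paper phrases the triviality at the level of module actions, whereas you work upstream in $\Uh$, showing $\cR\equiv 1\otimes 1$, $\modg\equiv 1$, $\modr\equiv 1 \pmod h$ and deducing $J_T\equiv 1^{\otimes m}\pmod h$ before applying the quantum traces; both routes express the same collapse of the ribbon structure at $h=0$. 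One small wording issue: you write ``the framing, which is $0$'' while the proposition allows arbitrary framing, but since you have already observed $\modr\equiv 1\pmod h$ this is harmless and the argument goes through for any framing.
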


\subsection{The WRT invariant of 3-manifolds}
\label{WRT}
Here we recall the definition of the WRT invariant.
\subsubsection{$3$-manifolds and Kirby moves} Suppose $L$ is a framed link in the standard $3$-sphere $S^3$. Surgery along $L$ yields an oriented $3$-manifold $M = M(L)$. Surgeries along
two framed links $L$ and $L'$ give the same $3$-manifold if and only $L$ and $L'$ are related by a finite sequence of Kirby moves:
handle slide move and stabilization move, see e.g. \cite{Kirby,KM}.
If one can find an invariant of unoriented framed links which is invariant under the two Kirby moves, then the link invariant descends to an invariant of $3$-manifolds.

\subsubsection{Kirby color} \label{sec.Kirby}
Let $\cB:=\cA\otimes_\BZ \BC= \BC[v^{\pm 1}]$. We call any $\cB$-linear combination of $V_\lambda, \lambda \in X_+$,  {\em a color}.
By linear extension we can define $J_L(V_1,\ldots,V_m)\in \BC[v^{\pm 1/\cD}]$ when each $V_j$ is a color.

A color $\Omega$ is called a {\em handle-slide color at level $v^{1/\cD}=\zeta$} if

(i)
$\ez \left( J_L(\Omega, \ldots, \Omega)\right) $ is an invariant of non-oriented links, and

(ii)  $\ez \left( J_L(\Omega, \ldots, \Omega)\right) $ is invariant under the handle slide move.

Let  $U_\pm$ be the unknot  with framing $\pm 1$. A handle-slide color is called {\em a Kirby color} (at level $v^{1/\cD}=\zeta$) if it satisfies the {\em non-degeneracy condition}
\be J_{U_\pm}(\Omega) \neqz 0.
\label{em2}
\ee
Suppose $\Omega$ is a Kirby color at level $v^{1/\cD}=\zeta$, and $M= M(L)$ is the $3$-manifold obtained by surgery on $S^3$ along a framed link $L$. Then
\be
\tau_{M}(\Omega) := \ev_\zeta \left( \frac{J_L(\Omega,\ldots, \Omega)}{ (J_{U_\pm}(\Omega)) ^{\sigma_+} \, (J_{U_\pm}(\Omega)) ^{\sigma_-}}\right)
\label{n4011}
\ee
 is invariant under both Kirby moves, and hence defines an invariant of $M$. Here $\sigma_+$ (resp.  $\sigma_-$) is the number of positive (resp. negative) eigenvalues of the linking matrix of $L$.

\subsubsection{Strong Kirby color} All the known Kirby colors satisfy a stronger condition on the invariance under the handle slide move as described below.

A {\em root color} is any $\cB$-linear combinations of $V_\lambda$ with $\lambda\in Y\cap X_+$.
A handle-slide color $\Omega$ at level $v^{1/\cD}=\zeta$ is a {\em strong handle-slide color} if it satisfies the following: Suppose the first component of $L_1$ is
colored by $\Omega$ and other components are  colored by {\em arbitrary root colors} $V_1, \ldots, V_m$. Then a handle slide of any other component over the first component
does not change the value of the quantum link invariant, evaluated at $v^{1/D}=\zeta$, i.e. if $L_2$ is the resulting link after the handle slide, then
\be  J_{L_1} (\Omega, V_1,\ldots, V_m)\eqz  J_{L_2} (\Omega, V_1,\ldots, V_m)
\label{em1}.
\ee
A non-degenerate strong handle-slide color is called a {\em strong Kirby color}.

\subsubsection{Strong Kirby color exists}\label{sec:st.Kirby}

Let $P_\zeta$ be the following half-open parallelepiped, which is a domain of translations of  $X$ by elements of the lattice $(2r\cD)X$,
$$ P_\zeta := \left\{ \lambda =\sum_{i=1}^\ell k_i \bral_i \in X_+ \mid    0\le k_i < 2r\cD \right \}.$$
Let

$$\Omega^\fg(\zeta):= \sum _{\lambda \in P_\zeta}\dim_q(V_\lambda) \, V_\lambda, \quad \Omega^{P\fg}(\zeta)= \sum _{\lambda \in P_\zeta\cap Y}\dim_q(V_\lambda) \, V_\lambda.$$

In \cite{Le:quantum}, it was proved that both $\Omega^\fg(\zeta)$ and $\Omega^{P\fg}(\zeta)$ are handle-slide colors at level $v^{1/\cD}=\zeta$ if $\ord(\zeta^{2\cD}) > d(h^\vee-1)$. Actually, the proof there shows that $\Omega^\fg(\zeta)$ and $\Omega^{P\fg}(\zeta)$ are strong handle-slide colors at level $v^{1/\cD}=\zeta$. Hence, assuming $\ord(\zeta^{2\cD}) > d(h^\vee-1)$, $\Omega^\fg(\zeta)$ (resp. $\Omega^{P\fg}(\zeta)$) is a strong Kirby color at $v^{1/\cD}=\zeta$ if and only
 $\Omega^\fg(\zeta)$ (resp. $\Omega^{P\fg}(\zeta)$) is non-degenerate at $v^{1/\cD}=\zeta$. There are many cases of $v^{1/\cD}=\zeta$ when both $\ord(\zeta^{2 \cD})$ and  $ \Omega^\fg(\zeta)$ are strong Kirby colors, and there are many cases when one of them is not. Let $\ZZ'_\fg$ (resp. $\ZZ'_{P\fg}$) be the set of all roots of unity $\zeta$ such that $\Omega^\fg(\zeta)$ (resp. $\Omega^{P\fg}(\zeta)$) is a strong Kirby color.

For $\zeta\in \ZZ'_\fg$ the $\fg$ WRT invariant of an oriented closed 3-manifold $M$  is defined by
$$ \tau_M^\fg(\xi;\zeta) = \tau_M(\Omega^\fg(\zeta)).$$
Similarly, for $\zeta\in \ZZ'_{P\fg}$ the $P\fg$ WRT invariant of an oriented closed 3-manifold $M$ is defined by
$$ \tau_M^{P\fg}(\xi;\zeta) = \tau_M(\Omega^{P\fg}(\zeta)).$$

\begin{proposition} Suppose $\zeta$     %
is a root of unity
with  $\ord(\zeta^{2\cD}) > d(h^\vee -1)$. Then $\zeta \in \ZZ'_\fg \cup \ZZ'_{P\fg}$. More specifically,
if $\ord(\zeta^{2 \cD})$ is odd then  $\zeta \in \ZZ'_{P\fg}$ and if  $\ord(\zeta^{2 \cD})$ is even then  $\zeta \in \ZZ'_{\fg}$.
\label{pKirby}
\end{proposition}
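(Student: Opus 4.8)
\textbf{Proof proposal for Proposition \ref{pKirby}.}

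The plan is to reduce the statement to the non-degeneracy condition \eqref{em2} and then to invoke the existing non-degeneracy results of the second author, splitting according to the parity of $\ord(\zeta^{2\cD})$. By the construction recalled in Section \ref{sec:st.Kirby}, once $\ord(\zeta^{2\cD}) > d(h^\vee-1)$ we already know from \cite{Le:quantum} that both $\Omega^\fg(\zeta)$ and $\Omega^{P\fg}(\zeta)$ are strong handle-slide colors at level $v^{1/\cD}=\zeta$; thus each of them is a strong Kirby color as soon as it satisfies the non-degeneracy condition $J_{U_\pm}(\Omega)\neqz 0$. So the whole proposition amounts to: for $\ord(\zeta^{2\cD})$ odd, $\Omega^{P\fg}(\zeta)$ is non-degenerate, and for $\ord(\zeta^{2\cD})$ even, $\Omega^{\fg}(\zeta)$ is non-degenerate. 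In particular at least one of the two colors is always non-degenerate, which gives $\zeta\in\ZZ'_\fg\cup\ZZ'_{P\fg}$.

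First I would compute $J_{U_\pm}(\Omega)$ explicitly. Since $U_\pm$ is the unknot with framing $\pm1$ and the color is a linear combination of $V_\lambda$, we have $J_{U_\pm}(\Omega)=\sum_\lambda \dim_q(V_\lambda)\,\theta_\lambda^{\pm1}\dim_q(V_\lambda)$, where $\theta_\lambda=q^{(\lambda,\lambda+2\rho)/2}$ (up to a fixed overall normalization) is the ribbon/twist eigenvalue on $V_\lambda$, and $\dim_q(V_\lambda)$ is given by the Weyl-type formula \eqref{eq.unknot}. Restricting the sum to the parallelepiped $P_\zeta$, this becomes, after evaluation at $v^{1/\cD}=\zeta$, a Gauss-type sum over $X/(2r\cD)X$ (respectively over $(Y\cap X_+)/(\text{lattice})$ for the $P\fg$ case) with a quadratic-Gaussian weight times the squared Weyl denominator. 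These are exactly the sums whose non-vanishing is analyzed in \cite{Le:quantum} (and, for the prime-order $Psl_2$ case, going back to \cite{KM}). The key arithmetic input is that a nondegenerate Gauss sum attached to the quadratic form $(\lambda,\lambda+2\rho)/2$ on $X$ (resp. on $Y$) is nonzero precisely when the relevant modulus has the right parity relative to the lattice and form: for $\fg$ one needs an even modulus so that the form is well-defined modulo the lattice, which is the case handled by $\Omega^\fg$; for $P\fg$ the form restricted to $Y$ behaves well modulo an odd modulus, which is the case handled by $\Omega^{P\fg}$. I would cite the precise non-degeneracy statements from \cite{Le:quantum} rather than reprove them, organizing the argument as: (1) recall the closed formula for $J_{U_\pm}(\Omega)$; (2) identify it with the Gauss sum in \cite{Le:quantum}; (3) invoke the non-vanishing of that Gauss sum under the stated parity hypothesis; (4) conclude non-degeneracy, hence ``strong Kirby color,'' hence membership in $\ZZ'_\fg$ or $\ZZ'_{P\fg}$.

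The main obstacle I anticipate is bookkeeping rather than conceptual: matching conventions (the precise power of $\zeta$, the factor $\cD=|X/Y|$, the role of $d$, and whether one works with $\zeta$ or $\xi=\zeta^{2\cD}$) between the present paper's normalization and the Gauss-sum computations in \cite{Le:quantum}, and making sure the parity dichotomy in the statement ($\ord(\zeta^{2\cD})$ odd $\Rightarrow$ $P\fg$, even $\Rightarrow$ $\fg$) lines up exactly with the non-vanishing range of those sums. A secondary subtlety is that the hypothesis $\ord(\zeta^{2\cD}) > d(h^\vee-1)$ is used twice: once (as already recorded in Section \ref{sec:st.Kirby}) to guarantee the strong handle-slide property via \cite{Le:quantum}, and once to guarantee that the denominator in \eqref{eq.unknot} does not vanish at $q=\xi$, so that $\dim_q(V_\lambda)$ is a well-defined nonzero quantity for $\lambda$ in the interior of the Weyl alcove; I would make both uses explicit. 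Modulo these citations and normalization checks, the proof is short.
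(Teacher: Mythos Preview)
Your overall strategy matches the paper's exactly: reduce to non-degeneracy (the strong handle-slide property being already in \cite{Le:quantum}), express $J_{U_\pm}(\Omega)$ as a Gauss sum divided by the Weyl denominator product, and then argue the Gauss sum is nonzero under the stated parity hypothesis. The paper's formula \eqref{e825} from \cite[Section 2.3]{Le:quantum} is precisely your step (1)--(2), and the hypothesis $\ord(\xi)>d(h^\vee-1)$ is used exactly where you say (denominator nonzero and handle-slide invariance).

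The one substantive point where you diverge is step (3): you propose to cite \cite{Le:quantum} for the non-vanishing of the Gauss sums themselves. The paper does \emph{not} do this; instead it proves the non-vanishing directly (Lemma \ref{l825}) using a criterion of Deloup \cite{Deloup} for when a lattice Gauss sum $\mathfrak G_\phi(m)$ vanishes. For $r$ odd the $P\fg$ sum is handled by the blanket fact that Gauss sums at odd level never vanish; for $r$ even the $\fg$ sum requires a short computation with the dual lattice $X^*$ and a $2$-adic check (Lemmas \ref{l822}--\ref{l823}). You should verify whether \cite{Le:quantum} actually contains the non-vanishing in the full generality needed here (arbitrary $\zeta$ with $\ord(\zeta^{2D})>d(h^\vee-1)$, not just orders divisible by $2dD$); the paper's choice to supply its own argument, together with its remark that only the $2dD\mid\ord(\zeta)$ case was previously well-known, suggests the citation alone may not suffice. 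If not, you will need the Deloup-type argument the paper gives.
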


 We will give a  proof of the proposition in Appendix \ref{sec:ppKirby}. Actually, in Appendix we will describe precisely the sets  $\ZZ'_\fg$ and $\ZZ'_{P\fg}$
(for $\ord(\zeta^{2\cD}) > d(h^\vee -1)$).

The  proposition shows that  $\ZZ'_\fg \cup \ZZ'_{P\fg}$ is all $\ZZ$ except for a finite number of elements. This means,  $\tau_M^\fg(\xi;\zeta)$ or $\tau_M^{P\fg}(\xi;\zeta)$ can always be defined except for a finite number of $\zeta$.

\begin{remark} (1) If  $\ord(\zeta)$ is divisible by $2d\cD$, the proposition had been well-known, since in this case a modular category, and hence a Topological Quantum Field Theory (TQFT), can be constructed, see e.g. \cite{BK}. The rigorous construction of the WRT invariant and the corresponding TQFT was first
given by Reshetikhin and Turaev  \cite{RT2} for $\fg=sl_2$.
The construction of TQFT for higher rank Lie algebras (see e.g. \cite{BK,Turaev}) uses Andersen's theory of tilting modules \cite{AP}. In \cite{Le:quantum}, the WRT invariant was constructed without
 TQFT (and no tilting modules theory).
 Here we are  interested only in the invariants of $3$-manifolds, but not the stronger structure -- TQFT. We don't know if a modular category -- the basis ground of a TQFT -- can be constructed for every root $\zeta$ of unity with $\ord(\zeta^{2\cD}) > d(h^\vee -1)$. At least for $\fg=sl_{n}$, if the order of $\zeta$ is $2\pmod 4$ and $n$ is even, then according to \cite{Bruguieres}, the corresponding pre-modular category is not modularizable.

(2)  In general, different strong Kirby colors  give different $3$-manifold invariants. The invariant corresponding to $\Omega^{P\fg}$, called the projective version of the WRT invariant, was first defined in \cite{KM} for $\fg=sl_2$,
then in \cite{KT} for $sl_n$, and then in \cite{Le:quantum} for general Lie algebras.
When both $\Omega^\fg(\zeta)$ and $\Omega^{P\fg}(\zeta)$ are non-degenerate, the relation between the two invariants $\tau_M(\Omega^\fg)$ and $\tau_M(\Omega^{P\fg})$ is simple if
$\ord(\zeta^{2\cD})$ is co-prime with $d\cD$, but in general  the relation is more complicated, see \cite{Le:quantum}.

(3) It is clear that in the definition of $\Omega^\fg(\zeta)$ and $\Omega^{P\fg}(\zeta)$, instead of $P_\zeta$ one can take any fundamental domain of any group of $\zeta$-periods which has
finite index in $Y$.

\end{remark}

\subsubsection{Dependence on $\xi=\zeta^{2\cD}$}

When components of a framed link $L$ are colored by $\Omega^{P\fg}(\zeta)$, $J_L$ takes values in $\BC[q^{\pm1}] \subset \BC[q^{\pm1/2\cD}]$, see \cite{Le:quantum}.
Hence, the $P\fg$ WRT invariant $\tau_M^{P\fg}(\xi;\zeta)$, if defined,  depend only on $\xi= \zeta^{2\cD}$, but not on any choice of a $2D$-th root $\zeta$ of $\xi$.

The $\fg$ WRT invariant $\tau_M^{\fg}(\xi;\zeta)$ does depend on a choice of a $2D$-th root $\zeta$ of $\xi$, even in the case $\fg=sl_2$.
We will see that when $M$ is an integral homology $3$-sphere, the $\fg$ WRT invariant of $M$  depends only on $\xi= \zeta^{2\cD}$, but not on any choice of a $2D$-th root $\zeta$ of $\xi$.
However, there are cases when $\zeta^{2\cD}=\xi= (\zeta')^{2 \cD}$, but $\zeta\in \ZZ'_\fg$ and $\zeta' \not \in \ZZ'_\fg$. For example, suppose $\fg= sl_2$ and $\xi= \exp(2pi/(2k+1))$, a root of unity of odd order.
Then $\zeta = \exp(2pi/(8k+4))$ and $\zeta'= i \zeta$ are both $4$-th roots of $\xi$ (in this case $2 \cD=4$). But $\zeta \in \ZZ'_\fg$ and $\zeta' \not \in \ZZ'_\fg$.

\subsubsection{Trivial color at  $\zeta=1$ and the case when $\ord(\zeta) \le d(h^\vee -1)$}
\begin{proposition} Let $\Omega=\Ch$ be the trivial $\Uh$-module. Then $\Omega$
 is a strong Kirby color at level $\zeta =1$ and $\tau_M(\Omega)=1$.
\label{p.color1a}
\end{proposition}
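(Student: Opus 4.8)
The claim has two parts: that the trivial module $\Omega = \Ch$ is a strong Kirby color at $\zeta = 1$, and that $\tau_M(\Omega) = 1$. The plan is to exploit the fact that at $\zeta = 1$ the entire ribbon structure of $\Uq$ degenerates to the classical (cocommutative, symmetric) one. First I would record the relevant specializations at $v^{1/\cD} = 1$: by the formula \eqref{e9} for the quasi-$R$-matrix $\Theta = \sum_{\bn} F_\bn \otimes E_\bn$, each $E_\bn$ (for $\bn \neq 0$) is divisible by a positive power of $(v_{\gamma_j} - v_{\gamma_j}^{-1})$, hence vanishes at $\zeta = 1$; and the Cartan part $\cD = \exp(\tfrac h2 \sum H_\al \otimes \brH_\al/d_\al)$ specializes to $1 \otimes 1$. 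Therefore $\cR \eqz 1 \otimes 1$, the braiding $\boldpsi$ is the trivial flip at $\zeta = 1$, and by the formula $\modr = \sum S(\cR_1) K_{-2\rho}\cR_2$ together with $K_{-2\rho} \eqz 1$ we get $\modr \eqz 1$, so the ribbon element acts by the identity on every $V_\lambda$ at $\zeta = 1$. This is exactly what the discussion just before Proposition \ref{pro.color1} asserts, and I would cite Proposition \ref{pro.color1} itself for the consequence that $\ev_{v^{1/\cD}=1}(J_L(V_{\mu_1},\ldots,V_{\mu_m})) = \prod_j \dim(V_{\mu_j})$ for any framed oriented link $L$.

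Next I would verify the defining properties of a strong Kirby color for $\Omega = \Ch = V_0$. Since $V_0$ is the trivial one-dimensional module, $\dim_q(V_0) \eqz \dim(V_0) = 1$, so $\Omega$ is simply the single summand $V_0$. Adding a $0$-framed or $\pm1$-framed unknot component colored by the trivial module to a link $L$ multiplies the invariant by $J_{U_\pm}(V_0)$, which at $\zeta = 1$ equals $\dim(V_0) = 1$ by Proposition \ref{pro.color1}; in particular the non-degeneracy condition \eqref{em2}, $J_{U_\pm}(\Omega) \neqz 0$, holds since the value is $1 \neq 0$. For the handle-slide invariance \eqref{em1}: a handle slide of a component colored by a root color $V_i$ over the component colored by $\Omega = V_0$ replaces $V_i$ by a summand of $V_i \otimes V_0 \cong V_i$, so the link and all colorings are literally unchanged up to this canonical isomorphism, and \eqref{em1} holds trivially (one does not even need the evaluation at $\zeta = 1$ for this, only that $V_0$ is the tensor unit). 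Orientation independence of $\ez(J_L(\Omega,\ldots,\Omega)) = \ez(J_L(V_0,\ldots,V_0))$ follows because, by Proposition \ref{pro.color1}, this equals $1$ regardless of orientation. Hence $\Omega = \Ch$ is a strong handle-slide color, and being non-degenerate it is a strong Kirby color at $\zeta = 1$.

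Finally, for the value of $\tau_M(\Omega)$: if $M = M(L)$ with $L$ an $m$-component framed link, then by \eqref{n4011}
\[
\tau_M(\Omega) = \ev_1\!\left( \frac{J_L(\Omega,\ldots,\Omega)}{(J_{U_+}(\Omega))^{\sigma_+}(J_{U_-}(\Omega))^{\sigma_-}} \right).
\]
By Proposition \ref{pro.color1}, $J_L(V_0,\ldots,V_0) \eqz \prod_{j=1}^m \dim(V_0) = 1$, and likewise $J_{U_\pm}(V_0) \eqz 1$, so both numerator and denominator evaluate to $1$ and $\tau_M(\Omega) = 1$. I anticipate no serious obstacle here: the only point requiring a little care is that the handle-slide and Kirby-move invariance properties, as well as the identification of the specialized invariant with products of classical dimensions, are genuinely used in the form packaged by Proposition \ref{pro.color1} (and the preceding paragraph about the triviality of $\boldpsi$ and $\modr$ at $\zeta = 1$), so the proof is essentially a matter of assembling these already-established facts and checking that $V_0$ is the tensor unit.
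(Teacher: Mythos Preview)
Your approach is essentially the paper's: cite Proposition~\ref{pro.color1} and plug into~\eqref{n4011}. The expanded detail is fine except for one step. Your justification of the strong handle-slide property via ``a handle slide \ldots\ replaces $V_i$ by a summand of $V_i \otimes V_0 \cong V_i$'' is incorrect: a handle slide changes the link (band-summing the sliding component with a parallel copy of the other) and leaves all colors unchanged; it does not tensor colors, and the claim that this holds ``without even needing $\zeta = 1$'' is false in general. The correct and shorter argument is exactly the one you already use for orientation-independence and for $\tau_M(\Omega)=1$: by Proposition~\ref{pro.color1}, at $\zeta=1$ the evaluated invariant $\ev_1\big(J_L(V_{\mu_0},\ldots,V_{\mu_m})\big)=\prod_j \dim(V_{\mu_j})$ depends only on the list of colors and not on $L$ at all, so invariance under handle slides (indeed under any change of the link) is automatic.
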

This follows immediately from Proposition \ref{pro.color1} and the defining formula \eqref{n4011} of $\tau_M(\Omega)$.

It is not true that the trivial color  is a strong Kirby color for all $\zeta$ with $\ord(\zeta^{2\cD}) \le d(h^\vee -1)$. For example, if $\fg= sl_6$ and $\ord(\zeta^{2\cD})=4$, then  the trivial color is not a strong Kirby color.
One can prove that if $n\equiv 0, \pm 1 \pmod r$, then  the trivial color is a strong Kirby color for $sl_n$ at level $\zeta$ with $r= \ord(\zeta^{2\cD})$.
\begin{remark}
If $\ord(\zeta)= 2 d\cD k$, then the level of the corresponding TQFT is $k-h^\vee$. Hence, if the level is non-negative as assumed by physics, we automatically have $\ord(\zeta^{2\cD}) > d(h^\vee -1)$.
\end{remark}

\subsection{Stronger version of Theorem \ref{thm030}} Proposition \ref{pKirby} shows that strong Kirby colors exist at every level $\zeta$, if the order of $\zeta$ is big  enough.
Although different Kirby colors at level $\zeta$
might define different $3$-manifold invariants, we have the following result for {\em integral homology $3$-spheres}, which is more general than Theorem \ref{thm030}.

\begin{theorem} Suppose $\Omega$ is a strong Kirby color at level $v^{1/\cD}=\zeta$ and  $M$ is an integral homology $3$-sphere. Then
\begin{gather*}
  \label{r31}
  \tau_{M}(\Omega)\  = \ \ez(J_M)\ =\  \ev_{q=\xi}(J_M).
\end{gather*}
\label{thm03}
\end{theorem}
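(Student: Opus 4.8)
The plan is to reduce the theorem to a computation with the universal invariant $J_T$ of a bottom tangle presenting $M$, using the surgery formula for both sides. First I would fix an $n$-component algebraically split bottom tangle $T$ with framings $\varepsilon_1,\dots,\varepsilon_n\in\{\pm1\}$ such that $M=M(T;\varepsilon_1,\dots,\varepsilon_n)$, and let $L=L(T;\varepsilon_1,\dots,\varepsilon_n)$ be the associated surgery link. Since $M$ is an integral homology sphere, the linking matrix of $L$ is $\mathrm{diag}(\varepsilon_1,\dots,\varepsilon_n)$, so $\sigma_\pm$ in the normalization \eqref{n4011} are just the numbers of $+1$ and $-1$ framings. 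The left-hand side $\tau_M(\Omega)$ is, by definition, $\ez$ of $J_{\cl(T^0)}(\Omega,\dots,\Omega)$ with the framings restored to $\varepsilon_i$ and divided by $\bigl(J_{U_+}(\Omega)\bigr)^{\sigma_+}\bigl(J_{U_-}(\Omega)\bigr)^{\sigma_-}$, where $T^0$ is $T$ with framings zeroed. The key is to recognize that feeding the Kirby color $\Omega$ into one component together with a $\pm1$ framing change is, at the level of the universal invariant, exactly the operation computed by $\cT_\pm$: I would show that the functional $x\mapsto \ez\bigl(\trq^\Omega(x)\bigr)/\ez\bigl(J_{U_\varepsilon}(\Omega)\bigr)$, a priori defined on $\Uq^\ev$ (or on a suitable integral/ complete form), coincides with $\ev_{q=\xi}\circ\cT_\varepsilon$ on the relevant submodule.

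The heart of the argument, and what I expect to be the main obstacle, is this identification of $\cT_\pm$ at a root of unity with the root-of-unity quantum trace against the Kirby color. This is where the ``center of a completion of $\cU$'' machinery announced in Section \ref{sec.WRT01} comes in: one wants to say that the element $\sum_{\lambda\in P_\zeta}\dim_q(V_\lambda)\,(\text{projector-type element acting as }\cdot\text{ on }V_\lambda)$ lies in a completion of $\cU$ whose center, at $q=\xi$, is spanned by such quantities, so that the ad-invariant functional $\cT_\varepsilon$ — which is built from the quantum Killing form paired against $\br^{\pm1}$ — evaluates on $J_T$ in the same way as the sum of quantum traces weighted by $\dim_q(V_\lambda)$. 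Concretely I would: (i) recall from Section \ref{sec:uni11} that $J_L(\Omega,\dots,\Omega)=(\trq^\Omega\ho\cdots\ho\trq^\Omega)(J_{T^0})$; (ii) use that $\trq^{V_\lambda}(J_{T_i'})$ with a $\pm1$-framed surgery on $T_i$ equals the scalar by which $\br^{\pm1}$ acts on $V_\lambda$, namely a power of $q$ in $(\lambda,\lambda+2\rho)$, hence is computed by pairing with $\br^{\pm1}$; (iii) assemble this over $\lambda\in P_\zeta$ and show the resulting ``twist form at $\zeta$'' agrees with $\ev_{q=\xi}\circ\cT_\varepsilon$ on $\sXZ^\ev$, using that $J_{T^0}\in\tK_n(\cU)$ by Theorem \ref{r.Jvalues}(a) and that $\tK_n(\cU)$ is spanned by elements whose $\cT$-values and whose $\trq^\Omega$-values can be matched term-by-term. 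The strong Kirby property \eqref{em1} is needed to make the handle-slide/surgery manipulations legitimate after specialization, and the non-degeneracy \eqref{em2} guarantees the denominators are nonzero.

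Once the one-component identity $\ez\bigl(\trq^\Omega(\cdot)\bigr)/\ez\bigl(J_{U_\varepsilon}(\Omega)\bigr)=\ev_{q=\xi}\circ\cT_\varepsilon$ is established on the submodule containing all values $J_{T^0}$ for algebraically split $T$, the theorem follows by applying it to each of the $n$ tensor factors:
\begin{align*}
\tau_M(\Omega)
&=\ez\!\left(\frac{J_{\cl(T)}(\Omega,\dots,\Omega)}{\bigl(J_{U_+}(\Omega)\bigr)^{\sigma_+}\bigl(J_{U_-}(\Omega)\bigr)^{\sigma_-}}\right)\\
&=\ez\bigl((\trq^\Omega\ho\cdots\ho\trq^\Omega)(J_{T^0})\bigr)\Big/\prod_{i=1}^n \ez\bigl(J_{U_{\varepsilon_i}}(\Omega)\bigr)\\
&=\ev_{q=\xi}\bigl((\cT_{\varepsilon_1}\ho\cdots\ho\cT_{\varepsilon_n})(J_{T^0})\bigr)
=\ev_{q=\xi}(J_M),
\end{align*}
where the second equality uses that the framing-$\varepsilon_i$ closure absorbs the $U_{\varepsilon_i}$-normalization component by component (this is exactly the surgery formula, valid because the linking matrix is diagonal), the third is the key identification applied $n$ times together with continuity of $\ev_{q=\xi}$ on $\tK_n$, and the last is the definition \eqref{e8} of $J_M$. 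Theorem \ref{thm030} is then the special case $\Omega=\Omega^\fg(\zeta)$, $\zeta\in\ZZ'_\fg$, and Proposition \ref{r10} (that $\ev_1(J_M)=1$) would drop out by taking $\zeta=1$ with the trivial Kirby color of Proposition \ref{p.color1a}, since $\tau_M(\Ch)=1$. The remaining technical work — deferred, as the paper says, to later subsections — is the determination of the center of the completion of $\cU$ and the verification that the root-of-unity quantum trace against $\Omega^\fg(\zeta)$ really is represented by the same pairing that defines $\cT_\pm$; this is the step I expect to consume most of the proof.
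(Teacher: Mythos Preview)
Your outline is essentially the paper's approach: reduce to a one-component identity between the normalized Kirby-color trace and $\cT_\varepsilon$ at $\zeta$, then propagate across all tensor factors. The paper packages this as Proposition~\ref{p.tech1}, with the hard work (center of the completion, Harish--Chandra, the handle-slide computation on $z_\lambda$) deferred exactly as you anticipate.

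Two points of precision. First, your displayed chain has a slip: $(\trq^\Omega\ho\cdots\ho\trq^\Omega)(J_{T^0})$ is $J_{L^0}(\Omega,\dots,\Omega)$ for the \emph{zero-framed} closure, so dividing by $\prod_i\ez(J_{U_{\varepsilon_i}}(\Omega))$ does not recover $\tau_M(\Omega)$; the framing twist $\ff_\lambda^{\varepsilon_i}$ is missing. The paper fixes this by introducing twisted colors $\Omega_\pm$ (absorbing both $\ff_\lambda^{\pm1}$ and the normalization into the coefficients) and setting $\tT_\pm=\trq^{\Omega_\pm}$, so that $\tau_M(\Omega)\eqz(\tT_{\varepsilon_1}\ho\cdots\ho\tT_{\varepsilon_n})(J_T)$ directly. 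Your ``key identification'' is then precisely $\tT_\varepsilon\eqz\cT_\varepsilon$ on the relevant invariant submodule. Second, ``apply it to each of the $n$ tensor factors'' hides a genuine step: after swapping one $\tT_{\varepsilon_j}$ for $\cT_{\varepsilon_j}$, the intermediate element must still be ad-invariant and lie in $\CtK_1$ so the $m=1$ identity applies again. The paper secures this with an admissibility lemma (Proposition~\ref{r.5011}: each of $\cT_\pm,\tT_\pm$ maps $(\CtK_m)^{\inv}$ into $(\CtK_{m-1})^{\inv}$) and then runs the telescope $a_0\eqz a_1\eqz\cdots\eqz a_n$ in Proposition~\ref{60011}; continuity alone is not enough here, you need preservation of ad-invariance at each step.
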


\begin{remark} There is no restriction on the order of $\zeta$ on the
  right hand side of \ref{r31}. We do not know how to directly define the WRT invariant with  $\ord(\zeta^{2\cD }) \le d(h^\vee -1)$.
\end{remark}

The remaining part of this section is devoted to a proof of this theorem. Throughout we fix a root of unity $\zeta$
and a strong Kirby color $\Omega$ at level $\zeta$. Let $\xi= \zeta^{2D}$ and $r=\ord(\xi)$.

\newcommand{\ff}{{\mathfrak f}}

\subsection{Reduction of Theorem \ref{thm03} to Proposition \ref{p.tech1}}\label{proof01}
 Here we reduce  Theorem \ref{thm03} to
 Proposition  \ref{p.tech1}, which will be proved later.
\subsubsection{Twisted colors $\Omega_\pm$}
Suppose the $j$-th component of a link $L$ is colored by $V= V_\lambda$,
 and $L'$ is obtained from $L$ by increasing the framing of the $j$-th component by 1, then it is known that
 \be
  J_{L'}(\ldots, V, \ldots) =   \ff_\lambda \,  J_L (\ldots, V, \ldots), \quad \text{where\ } \quad \ff_\lambda = q^{(\lambda,\lambda+ 2\rho)/2}= \frac{ \tr_q^V(\br^{-1})}{\dim_q V}.
  \label{n40101}
  \ee
For example, if $U_\pm$ is the unknot with framing $\pm 1$, then
$$J_{U_\pm}(V_\lambda) = \ff_\lambda^{\pm1} \, \dim_q( V_\lambda) = J_U\left (\ff_\lambda^{\pm1} \,  V_\lambda  \right)\,.$$
By definition $\Omega$ is a finite sum
$ \Omega= \sum c_\lambda V_\lambda$,
where $c_\lambda \in \cB=\BC[v^{\pm1}]$.
Define the pair $\Omega_\pm$ by
$$ \Omega_{\pm}= \sum \frac{   \ez\left( c_\lambda \ff_\lambda^{\pm 1}\right) } { \ez\left( J_{U_\pm }(\Omega)   \right) } \,  V_\lambda ,$$
which are $\BC$-linear combinations of finite-rank  irreducible $\Uh$-modules.

 Suppose a distinguished component of $L$ has framing $\ve=\pm 1$ and color $\Omega$, and $L'$ is the same link with the
distinguished component having framing $0$ and color $\Omega_\ve$. Then from \eqref{n40101} and the definition of $\Omega_\ve$ one has
\be
 J_L( \ldots, \Omega, \ldots) \eqz J_{U_\ve}(\Omega) \, J_{L'} (\ldots, \Omega_\ve, \ldots).
 \label{e032}
\ee

\subsubsection{Reduction of  Theorem \ref{thm03}}  Here we reduce Theorem \ref{thm03} to the following.

\begin{proposition}\label{p.tech1}
Let $\Omega$ be a strong Kirby color.
Suppose $T$ is an algebraically split $0$-framed bottom tangle $T$ with $m$ ordered components and $(\ve_1,\ldots, \ve_m)\in \{\pm 1\}^m$. Then
$$
\left(\tr_q^{\Omega_{\ve_1}} \otimes \cdots \otimes \tr_q^{\Omega_{\ve_m}}\right)
(J_T) \ \eqz \   \big( \cT_{\ve_1}
\otimes \cdots \otimes \cT_{\ve_m} \big) \, (J_T) .$$
\end{proposition}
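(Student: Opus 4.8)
The strategy is to reduce the identity to a statement about single tensor factors, namely that the partial twist form $\cT_{\ve}$, when compared with the ``normalized twisted quantum trace'' $\tr_q^{\Omega_{\ve}}$, agrees on the relevant $\UZ$-submodule after evaluation at $v^{1/\cD}=\zeta$. More precisely, I would first show that for a single-component $1$-framed unknot the two quantities agree, and then propagate this through the functoriality of the universal invariant. Recall from \eqref{e15} and the definition of the twist system that $(\cT_\pm\ho\id)(\bc)=\br^{\pm1}$, and that geometrically $\cT_\ve$ implements $\ve$-framed surgery along a trivial component (Lemma \ref{r47} and its refinement, the Claim in the proof of Proposition \ref{rah1}). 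On the trace side, formula \eqref{e032} says precisely that coloring a $\ve$-framed component by the Kirby color $\Omega$ and then evaluating is the same, up to the scalar $J_{U_\ve}(\Omega)$, as coloring the $0$-framed component by $\Omega_\ve$ and taking the ordinary quantum trace $\tr_q^{\Omega_\ve}$. So both sides of the proposed identity have the same ``surgery'' interpretation; the content is to match them.

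\textbf{Key steps.} First I would treat the case of a genuine bottom tangle $T_{p,q}$ as in Figure~\ref{twist2}(a), the basic building block used in the proof of Lemma~\ref{r47} and Proposition~\ref{rah1}. There one has the explicit formula $J_{T_{p,q}}=(\id\ho\id^{\ho p}\ho S^{\ho q})(\id\ho\Delta^{[p+q]})(\modc)$, so applying $\cT_\ve$ to the first factor gives $(\id^{\ho p}\ho S^{\ho q})\Delta^{[p+q]}(\br^{\ve})$ by property (iii) of the twist system. On the trace side, one has to compute $\tr_q^{\Omega_\ve}$ applied to the same $J_{T_{p,q}}$ and show that, after evaluation at $\zeta$, one gets the same element; this is essentially a bookkeeping of \eqref{n40101}, \eqref{e032}, and the definition of $\Omega_\ve$. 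The crucial input is that $\Omega$ is a \emph{strong} Kirby color: this is what guarantees that the coefficients $\ez(c_\lambda \ff_\lambda^{\pm1})/\ez(J_{U_\pm}(\Omega))$ assemble into a functional that behaves correctly under handle slides and therefore matches the algebraic twist form $\cT_\ve$ on the relevant homogeneous/even submodule. Then, since $T$ can be obtained from some $T_{p,q}$ by tensoring and composing fundamental tangles (none of which involve the distinguished first component), functoriality of the universal invariant extends the identity from $T_{p,q}$ to arbitrary algebraically split $0$-framed $T$, one component at a time; the evaluation $\ez$ is a ring homomorphism so it commutes with these operations. Iterating over all $m$ components yields the full statement.

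\textbf{The main obstacle.} The delicate point is \emph{not} the first (distinguished) component but the interaction of $\cT_{\ve_i}$ and $\tr_q^{\Omega_{\ve_j}}$ when applied simultaneously: one must know that the partially-defined functionals $\cT_\ve$ actually agree with $\ez\circ\tr_q^{\Omega_\ve}$ on the submodule of $\Uhg^{\ho n}$ (or rather of $\XZ^{\ho n}$, resp.\ the $G$-grading-$1$ part of $(\UZ^\ev)^{\ho n}$) where $J_T$ lives, and that this submodule is large enough and stable so the comparison can be done factor by factor. This is where the integrality and grading results of Sections~\ref{sec:integcore}--\ref{sec:nonc-grad} are used: $J_T\in\tK_n$, and $\cT_\ve$ on $\tK_n$ has values controlled by Proposition~\ref{r.integ10}, while $\tr_q^{\Omega_\ve}$ on the same module is governed by the $\zeta$-periodicity of the quantum trace established in \cite{Le_Duke} and recalled in \S\ref{WRT1} (the group of $\zeta$-periods contains $2rY\supset 2r\cD X$). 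The heart of the argument, which the excerpt defers, is presumably an identity saying that $\cT_\ve$ coincides, modulo the ideal killed by $\ez$, with the functional $x\mapsto \ez(\tr_q^{\Omega_\ve}(x))$ when restricted to $\XZ^\ev$ — in other words, that the algebraically defined ``full twist form'' is the $\zeta$-specialization of the representation-theoretic twisted trace. Establishing this identity, and in particular verifying that the non-degeneracy hypothesis built into ``strong Kirby color'' is exactly what makes it hold, will be the crux; everything else is functoriality and bookkeeping. I would expect the proof in the paper to dispatch the $T_{p,q}$ case by a direct computation with $\modc$, $\br^{\pm1}$, and the comultiplication, then invoke functoriality, exactly as in the proof of Lemma~\ref{r47}.
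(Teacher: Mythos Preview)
Your plan has a genuine gap: the identity you hope to establish, that $\cT_\ve$ and $\ez\circ\tr_q^{\Omega_\ve}$ agree on all of $\XZ^\ev$ (or on the image of $J_{T_{p,q}}$), is not what the paper proves and is almost certainly false as stated. The two functionals are only compared on \emph{central} elements, and this is the whole point. Both $\cT_\ve$ and $\tT_\ve:=\tr_q^{\Omega_\ve}$ are ad-invariant, so when one factor is peeled off from an ad-invariant element of $\CtK_m$ the remaining tensor lies in $(\CtK_1)^\inv=\Zc(\CtK_1)$. The paper's proof (Proposition \ref{60011}) therefore proceeds in three steps: (1) for $x=z_\lambda$ with $\lambda\in Y\cap X_+$, a handle-slide computation with the Hopf link (exactly the strong Kirby color hypothesis) gives $\cT_\ve(z_\lambda)\eqz\tT_\ve(z_\lambda)$; (2) for arbitrary $x\in\Zc(\CtK_1)$, a delicate decomposition via the Harish-Chandra isomorphism (Lemma \ref{40006}, using Propositions \ref{40003} and \ref{r.isomor}) writes $|\fW|x=\sum_k(q;q)_k x_k$ with $(q;q)_k x_k\in\Zc(\VA^\ev\otimes_\cA\cB)$, reducing to step (1); (3) a telescoping interpolation $a_k=(\bigotimes_{j\le k}\tT_{\ve_j}\otimes\bigotimes_{j>k}\cT_{\ve_j})(x)$ reduces general $m$ to the single-factor case on $(\CtK_1)^\inv$.

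Your $T_{p,q}$-and-functoriality route bypasses this structure and would require the functionals to agree on non-central elements, which you have no mechanism for. The Harish-Chandra analysis and the identification $\Zc(\VA^\ev)=\cA\text{-span}\{z_\lambda:\lambda\in Y\}$ are not optional bookkeeping here; they are the substance of the argument, and your proposal contains no substitute for them.
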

\begin{proof}
[Proof of Theorem \ref{thm03} assuming Proposition \ref{p.tech1}] Suppose $T$ is an $m$-component bottom tangle,  $\ve_1,\dots,\ve_m\in \{\pm1\}$, and $M=M(T,\ve_1,\dots,\ve_m)$. This means, if
$L$ is the closure link of $T$ and $L'$ is the same $L$ with framing of the $i$-th component switched to $\ve_i$, then $M$  is obtained from $S^3$ by surgery along $L'$. Every integral homology 3-sphere can be obtained in this way. By construction,
\begin{align*}
J_M& = (\cT_{\ve_1}
\otimes \cdots \otimes \cT_{\ve_m} \big) \, (J_T).\\
\end{align*}
From  \eqref{e032}  and the definition \eqref{n4011} of $\tau_M(\Omega)$, we have
$$
\tau_{M}(\Omega)  =
\ev_\zeta \Big(  \left(\tr_q^{\Omega_{\ve_1}} \otimes \cdots \otimes \tr_q^{\Omega_{\ve_m}}\right)
(J_T)\Big).
$$
By Proposition \ref{p.tech1}, we have  $\tau_{M}(\Omega) = \ez(J_M)$.
 This proves Theorem \ref{thm03}. \end{proof}
The rest of this section is devoted to a proof of Proposition \ref{p.tech1}.

\subsection{Integral form $\cU$ of $\Uq$}\label{sec:Ur}
 Besides the integral form $\UZ$ (of Lusztig) and $\VZ$ (of De Concini-Procesi), we need another integral form $\cU$ of $\Uq$, with $\VZ \subset \cU\subset \UZ$. Let
$$ \cU := \UZ^-\VZ =\UZ^- \VZ^0 \VZ^+=\UZ^{\ev,-}\VZ^0\VZ^+$$
and
\begin{gather*}
  \cU^\ev:= \cU\cap\cU_{\BZ}^\ev=\UZ^{\ev,-}\VZ^{\ev,0}\VZ^+.
\end{gather*}

\begin{theorem} \label{r.Ur}
(a) The $\cA$-module $\cU$ is an $\cA$-Hopf-subalgebra of $\UZ$.

(b) Each of $\cU$ and $ \cU^\ev$ is stable under $\ibar$ and $\tau$.

(c) There are even triangular decompositions
\begin{gather*}
  \UZ^{ev,-}\otimes \VZ ^0\otimes \VZ ^+ \congto \Ur ,\quad x\otimes y\otimes z\mapsto xyz \\
  \UZ^{ev,-}\otimes \VZ ^{\ev,0}\otimes \VZ ^+ \congto \Ur^\ev ,\quad x\otimes y\otimes z\mapsto xyz.
  \end{gather*}

  (d) For any longest reduced sequence, the sets
  \begin{align*}
  \{ F_\bm K_\bm K_\gamma E_\bn \mid \bn, \bm \in \BN^t, \gamma \in Y \} \\
  \{ F_\bm K_\bm K_{2\gamma} E_\bn \mid \bn, \bm \in \BN^t, \gamma \in Y \}
  \end{align*}
  are respectively $\cA$-bases of $\Ur$ and $\Ur^\ev$.

  (e) The Hopf algebra $\cU$ satisfies the assumptions of Theorem \ref{r.Jvalues}, i.e. $K_\al^{\pm1} \in \Ur$ for $\al\in \Pi$, $F_\bn \ot E_\bn, F'_\bn \ot E'_\bn \in \Ur \ot \Ur$ for $\bn \in \BN^t$.

(f) One has $\cT_\pm (\Ur^\ev) \subset \cA=\Zvv$.

(g) For any $n \ge 0$, one has $(\UZ^\ev)^{\ot n} \cap \Ur^{\ot n} = (\Ur^\ev)^{\ot n}$.

\end{theorem}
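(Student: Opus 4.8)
The plan is to deduce all of (a)--(g) from the structure theory already available for Lusztig's form $\UZ$ (Propositions~\ref{prop.basis}, \ref{r.ba3}) and De Concini--Procesi's form $\VZ$ (Propositions~\ref{r11}, \ref{r.ba3a}), viewing $\cU = \UZ^-\VZ$ as an integral form interpolating between them. The one genuine computation is a cancellation of denominators: in $\Uq$ one has $(1-q_\al)/(v_\al-v_\al^{-1}) = -v_\al$, a unit of $\cA$, so the sole bad denominator $v_\al-v_\al^{-1}$ occurring in $E_\al F_\al - F_\al E_\al = (K_\al-K_\al^{-1})/(v_\al-v_\al^{-1})$ is exactly killed by the defining factor $(1-q_\al)$ of the generator $(1-q_\al)E_\al$ of $\VZ^+$. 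Iterating the standard commutation of $E_\al$ with the $F_\beta^{(n)}$ gives $(1-q_\al)E_\al\cdot F_\beta^{(n)} - F_\beta^{(n)}\cdot(1-q_\al)E_\al\in\UZ^-\VZ^0$, hence $\VZ^+\UZ^-\subseteq\UZ^-\VZ^0\VZ^+$; combined with the trivial relations $\VZ^0\UZ^- = \UZ^-\VZ^0$ (commutation by units) and $\VZ^-\UZ^-\subseteq\UZ^-$ one gets $\VZ\cdot\UZ^-\subseteq\UZ^-\VZ$, so $\cU=\UZ^-\VZ$ is an $\cA$-subalgebra of $\UZ$. The same cancellation, applied to $F_\al^{(n)} = (\text{unit})(F_\al K_\al)^{(n)}K_\al^{-n}$ and to the $Y/2Y$-grading, identifies $\cU = \UZ^{\ev,-}\VZ^0\VZ^+$ and $\cU^\ev = \cU\cap\UZ^\ev = \UZ^{\ev,-}\VZ^{\ev,0}\VZ^+$. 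For (a) it then remains to check on the generators $F_\al K_\al$, $K_\al^{\pm1}$, $(1-q_\al)E_\al$ that $\Delta$ lands in $\cU\otimes\cU$, $S$ in $\cU$ and $\epsilon$ in $\cA$; all are immediate from the explicit formulas (e.g.\ $S((1-q_\al)E_\al) = -K_\al^{-1}(1-q_\al)E_\al\in\VZ^0\VZ^+$).

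Parts (b)--(e) are then formal. For the triangular decompositions (c), injectivity of $\UZ^{\ev,-}\otimes\VZ^0\otimes\VZ^+\to\cU$ is inherited from the even triangular decomposition of $\UZ$ (a tensor product of injections of free modules) and surjectivity is the definition; the even version follows by passing to the degree-$0$ part for the $Y/2Y$-grading, where, since $\UZ^{\ev,-}$ and $\VZ^+$ are homogeneous of degree $0$, evenness of $x = \sum b^-b^0b^+$ is detected on the Cartan factor $b^0$ and forces $b^0\in\VZ^{\ev,0}$. Part (d) reads off from (c) and the known bases of $\UZ^{\ev,-}$ ($\{F^{(\bm)}K_\bm\}$), of $\VZ^0 = \cA[K_1^{\pm1},\dots,K_\ell^{\pm1}]$ ($\{K_\gamma\}$) and of $\VZ^+$ ($\{(q;q)_\bn E^{(\bn)}\}$), using that $E_\bn = (\text{unit})(q;q)_\bn E^{(\bn)}$ (as $\{n\}_\al! = \pm v_\al^{-n(n+1)/2}(q_\al;q_\al)_n$) and $F_\bm K_\bm = (\text{unit})F^{(\bm)}K_\bm$; restricting $\gamma$ to $2Y$ gives the basis of $\cU^\ev$. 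For (b) one checks $\ibar$ and $\tau$ on the same generators ($\ibar((1-q_\al)E_\al) = -q_\al^{-1}(1-q_\al)E_\al\in\VZ^+$, $\tau(F_\al K_\al) = q_\al F_\al K_\al^{-1}\in\UZ^-\VZ^0$, etc.), and then $\cU^\ev = \cU\cap\UZ^\ev$ is stable because $\UZ^\ev$ is (Lemma~\ref{eq.evenD3}(c), Proposition~\ref{prop.basis}(d)). Part (e) is immediate: $K_\al^{\pm1}\in\VZ^0$, $E_\bn\in\VZ^+$, $F_\bn\in\UZ^-$ all lie in $\cU$, and $E'_\bn = \ibar(E_\bn)$, $F'_\bn = \ibar(F_\bn)$ lie in $\ibar(\cU) = \cU$ by (b).

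Part (f) needs the pairing formulas. By (d), $\cU^\ev$ has $\cA$-basis $\{F_\bm K_\bm K_{2\gamma}E_\bn\}$, and on such an element $\cT_+$ equals $\delta_{\bm,\bn}q^{(\rho,|E_\bn|)}v^{2(\gamma,\rho)-(\gamma,\gamma)}$ by \eqref{eq.ba11}, which lies in $\Zq\subseteq\cA$ since $(\gamma,\gamma)$ is even on the root lattice. For $\cT_-$ one invokes $\cT_-(x) = \cT_+(\varphi(x))$ (Proposition~\ref{p.989}): because $\varphi$ is a Hopf automorphism with $\varphi(\UZ^{\ev,-}) = \UZ^+$, $\varphi(\VZ^+) = \VZ^{\ev,-}$, $\varphi(\VZ^{\ev,0}) = \VZ^{\ev,0}$, it carries $\cU^\ev$ onto the ``mirror'' integral form $\VZ^{\ev,-}\VZ^{\ev,0}\UZ^+$ (the reordering $\UZ^+\VZ^{\ev,-}\subseteq\VZ^{\ev,-}\VZ^{\ev,0}\UZ^+$ again uses $(1-q_\al)/(v_\al-v_\al^{-1}) = -v_\al$), which has $\cA$-basis $\{(q;q)_\bm F_\bm K_\bm K_{2\gamma}E^{(\bn)}\}$; pairing such an element against $\br = \sum_\bk F_\bk K_\bk\br_0 E_\bk$ and using the triangular property \eqref{e89} with $\langle F_\bk K_\bk, E^{(\bn)}\rangle = (\text{unit})(q;q)_\bn^{-1}\delta_{\bk,\bn}$ and $\langle E_\bk, F_\bm K_\bm\rangle = \delta_{\bk,\bm}q^{(\rho,|E_\bm|)}$, the factor $(q;q)_\bn^{-1}$ is cancelled by the $(q;q)_\bm$ in the negative part and $\cT_+$ comes out in $\cA$. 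Hence $\cT_\pm(\cU^\ev)\subseteq\cA$.

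Finally (g) is bookkeeping with bases: expanding elements of $\Uq^{\ot n}$ in the tensor-product PBW basis, membership in $(\UZ^\ev)^{\ot n}$ means ``all Cartan factors even, all coordinates in $\cA$'' and membership in $\cU^{\ot n}$ means ``each coordinate divisible in $\cA$ by the product of the relevant $(q;q)$-factors''; intersecting gives exactly the defining condition of $(\cU^\ev)^{\ot n}$, using $\cA\cap(q;q)_\bn\cA = (q;q)_\bn\cA$. I expect the main obstacle to be the closure of $\cU$ under multiplication --- once that and the PBW basis (d) are established everything downstream is essentially mechanical --- together with the $\cT_-$ part of (f), where the denominator cancellation must be tracked through the triangular pairing formulas after transporting $\cU^\ev$ by $\varphi$ to its mirror form.
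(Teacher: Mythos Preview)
Your argument for (a) has a genuine gap. You correctly compute that $(1-q_\al)E_\al$ commutes past each $F_\beta^{(n)}$ modulo $\UZ^-\VZ^0$, but this only shows $A\cdot\UZ^- \subset \UZ^-\VZ^0\VZ^+$ where $A$ is the $\cA$-subalgebra of $\VZ^+$ generated by the simple-root elements $(1-q_\al)E_\al$, $\al\in\Pi$. The ``hence $\VZ^+\UZ^-\subseteq\UZ^-\VZ^0\VZ^+$'' does not follow, because $A \subsetneq \VZ^+$: e.g.\ in type $A_2$ with $\gamma=\al+\beta$, the degree-$\gamma$ part of $A$ is $(1-q)^2(\cA E_\al E_\beta + \cA E_\beta E_\al)$, whereas $(1-q)E_\gamma = (1-q)(E_\al E_\beta - v^{-1}E_\beta E_\al)\in\VZ^+$ is not divisible by $(1-q)^2$. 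So you have not established that $\cU$ is closed under multiplication. The paper sidesteps direct commutation entirely: taking $\scH_1 = \UZ^-\VZ^0$ and $\scH_2 = \VZ$, it uses the identity $xy = \sum y_{(2)}(S^{-1}(y_{(1)})\tri x)$ together with the ad-stability $\scH_1\tri\VZ^\ev\subset\VZ^\ev$ (Theorem~\ref{thm:8}, proved via the integral duality of Proposition~\ref{r.ortho3}) to get $\VZ^\ev\scH_1\subset\scH_1\VZ^\ev$ in one stroke, and then $\scH_2\scH_1 = \VZ^\ev\VZ^0\scH_1\subset\scH_1\scH_2$.

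The same gap resurfaces in your treatment of $\cT_-$ in (f): the reordering $\UZ^+\VZ^{\ev,-}\subseteq\VZ^{\ev,-}\VZ^{\ev,0}\UZ^+$ needed to identify $\varphi(\cU^\ev)$ with a triangular mirror form is precisely the mirror of the step you did not prove. The paper avoids this by a different trick: from $\langle\omega S(x),\omega S(y)\rangle = \langle y,x\rangle$ (\cite[Section 6.20]{Jantzen}) and $\omega S(\br^{-1}) = \br^{-1}$ one gets $\cT_-(x) = \cT_-((\omega S)^{-1}x)$; combined with $\cT_- = \cT_+\circ\varphi$ and $\varphi = \ibar\tau\omega S$ this yields $\cT_-(\cU^\ev) = \cT_+(\ibar\tau(\cU^\ev))\subset\cT_+(\cU^\ev)\subset\cA$, using only part (b). Your arguments for (b)--(e), (g), and the $\cT_+$ half of (f) agree with the paper's.
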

\begin{proof} (a) We have the following statement whose easy proof is dropped.

{\it Claim.} If $\scH_1,\scH_2$ are $\cA$-Hopf-subalgebras of a Hopf algebra $\scH$ such that  $\scH_2 \scH_1 \subset \scH_1 \scH_2$, then $\scH_1\scH_2$ is an $\cA$-Hopf-subalgebra of $\scH$.

We will apply the claim to $\scH_1=\UZ^- \VZ^0$ and $\scH_2=\VZ$.
By checking the explicit formulas of the co-products and the antipodes of $F_\al^{(n)}, K_\al$, $\al \in \Pi$, $n\in \BN$, which generates the $\cA$-algebra $\scH_1=\UZ^-\VZ^0$, we see that $\scH_1$ is an $\cA$-Hopf-subalgebra of $\UZ$. Since $\scH_2$ is also an $\cA$-Hopf-subalgebra of $\UZ$, it remains to show $\scH_2 \scH_1 \subset \scH_1 \scH_2$.

  Given $x,y$ in any Hopf algebra,we have
 $
    xy
    = \sum y_{(2)}(S^{-1}(y_{(1)})\trr x).
$ Hence, since $\scH_1$ is a Hopf algebra,
   and $\scH_1 \tri \VZ^\ev \subset \VZ^\ev$ (Theorem \ref{thm:8}),
  \begin{gather} \label{e8z}
    \VZ^\ev \scH_1 \subset  \scH_1 (\scH_1 \tri  \VZ^\ev) \subset \scH_1 \VZ^\ev.
  \end{gather}

Because $\VZ= \VZ^\ev \VZ^0$ and $\VZ^0 \scH_1= \VZ^0 \UZ^- \VZ^0= \UZ^- \VZ^0= \scH_1$, we have
$$ \scH_2 \scH_1 = \VZ \scH_1 = \VZ^\ev \VZ^0 \scH_1 = \VZ^\ev \scH_1 \subset \scH_1 \VZ^\ev \subset \scH_1 \scH_2,$$
where we used \eqref{e8z}. By the above claim, $\scH_1 \scH_2$ is an $\cA$-Hopf-subalgebra of $\UZ$.

(b) Let $f= \ibar$ or $f=\tau$.
By Propositions \ref{prop.basis} and \ref{r.phistab}, $f(\UZ^-) = \UZ^-\subset \UZ^- \VZ= \Ur$ and $f(\VZ)= \VZ\subset \UZ^- \VZ= \Ur $. Hence
$f(\Ur)=f(\UZ^- \VZ) \subset \Ur$.

By Proposition \ref{eq.evenD3}, $f(\Uq^\ev) \subset \Uq^\ev$. Hence
$$f(\Ur^\ev)= f(\Ur \cap \Uq^\ev)\subset f(\Ur) \cap f(\Uq^\ev) \subset \Ur \cap \Uq^\ev = \Ur^\ev.$$

(c) The even triangular decompositions of $\UZ$ (see Section \ref{sec:Lus}) imply the even triangular decompositions of $\Ur$.

(d) Since $F_\bm \sim  F^{(\bm)}$ and $E_\bn \sim (q;q)_\bn E^{(\bn)}$, where $a\sim b$ means $a=ub$ with $u$ a unit in $\cA$, Propositions \ref{r.ba3} and \ref{r11} show that $\{F_\bm K_\bm\}$ and $\{ E_\bn\}$ are respectively $\cA$-bases of $\UZ^{\ev,-}$ and $\VZ^+$. It is clear that $\{ K_\gamma \mid \gamma \in Y\}$ and $\{ K_{2\gamma} \mid \gamma \in Y\}$ are respectively $\cA$-bases of $\VZ^0$ and $\VZ^{\ev,0}$. Combining these bases using the even triangular decompositions, we get the desired bases of $\Ur$ and $ \Ur^\ev$.

(e) Since $K_\al^{\pm1}, F_\bn, E_\bm$ are among the basis elements described in (d), we have $K_\al^{\pm1} \in \Ur$ and $F_\bn \ot E_\bn\in \Ur \ot \Ur$.
Since $\Ur$ is stable under $\ibar$ and $F'_\bn=\ibar (F_\bn)$, $ E'_\bm= \ibar(E_\bm)$, we also have $F'_\bn \ot E'_\bn\in \Ur \ot \Ur$.

(f)  Applying $\cT_+$ to a basis element of $\Ur^\ev$  in (d), using \eqref{eq.989} and \eqref{eq.989a},
 \be
 \label{eq.evenT}
 \cT_+(F_\bm K_\bm K_{2\gamma} E_\bn)= \delta_{\bn,\bm} q^{(\rho, |E_\bn|)} q^{(\gamma,\rho) - (\gamma,\gamma)/2} \in \Zq \subset \cA.
 \ee
It follows that $\cT_+(\Ur^\ev) \subset \cA$.

Let us now show $\cT_-(\Ur^\ev) \subset \cA$. By \cite[Section 6.20]{Jantzen}, for any $x,y \in \Uq$, one has
 $$
 \la \omega S (x), \omega S (y)  \ra = \la y, x \ra.
 $$
 Because $\omega S (\br^{-1})= \br^{-1}$, and
 by \eqref{eq.qkcentral},
 $\la x, \br^{-1} \ra= \la \br^{-1},x \ra = \cT_-(x)$, we have
 $$\cT_-( x) = \cT_- (\omega S (x)),$$
 which is the same as
 $ \cT_-( x) = \cT_-\left( (\omega S)^{-1} (x)\right).
 $
 Hence,
 \begin{align*}
 \cT_-(\Ur^\ev) &= \cT_-\left( (\omega S)^{-1} (\Ur^\ev)\right) = \cT_+( \varphi \circ (\omega S)^{-1} (\Ur^\ev) )  \quad \text{by  \eqref{eq.cTminus}  }  \\
 &=  \cT_+ (\ibar \tau (\Ur^\ev))  \quad \text{because  $\varphi=  \ibar \tau\omega S $ by Proposition \ref{r.phi5} }  \\
  & \subset \cT_+(\Ur^\ev)  \subset\cA,
 \end{align*}
 where we have used part (b) which says $\ibar \tau (\Ur^\ev) \subset \Ur^\ev$.

 (g) It is clear that $(\Ur^\ev)^{\ot n} \subset (\UZ^\ev)^{\ot n} \cap \Ur^{\ot n}$. Let us prove the converse inclusion.

 The $\cA$-basis of $\Ur$  described in (d) is also a  $\BC(v)$-basis of $\Uq$. This basis  generates in a natural way an $\cA$-basis $\{ e(i) \mid i\in I\}$ of $\Ur^{\ot n}$, which is also a $\BC(v)$-basis of  $\Uq^{\ot n}$. There is a subset $I^\ev \subset I$ such that $\{ e(i) \mid i\in I^\ev\}$ is an $\cA$-basis of $(\Ur^\ev)^{\ot n}$ and at the same time a $\BC(v)$-basis of $(\Uq^\ev)^{\ot n}$. Using these bases, one can easily show that $(\Ur^\ev)^{\ot n} = (\Uq^\ev)^{\ot n} \cap \Ur^{\ot n}$. Hence,
 $$ (\UZ^\ev)^{\ot n} \cap \Ur^{\ot n} \subset (\Uq^\ev)^{\ot n} \cap \Ur^{\ot n} = (\Ur^\ev)^{\ot n},$$
 which is the converse inclusion.
 The proof is complete.
\end{proof}
Theorems \ref{r.Ur}(d) and  \ref{r.Jvalues} give the following.
\begin{corollary}
\label{r.Jvalues2}
If $T$ is an $n$-component bottom tangle with 0 linking matrix, then $J_T \in \tK_n(\cU)$.
\end{corollary}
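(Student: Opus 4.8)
The statement is an immediate consequence of the two results it cites, so the proof is short and the only real task is to verify that the hypotheses of Theorem \ref{r.Jvalues} are met by the integral form $\cU$. The plan is as follows. First I would recall that Theorem \ref{r.Jvalues}(a) asserts: for any $\cA$-Hopf-subalgebra $\cU$ of $\UZ$ satisfying the assumption of Proposition \ref{r.Jvalues1}, and for any $n$-component bottom tangle $T$ with zero linking matrix, one has $J_T \in \tK_n(\cU)$. The assumption of Proposition \ref{r.Jvalues1} on $\cU$ is precisely that $K_\alpha^{\pm1}\in\cU$ for all $\alpha\in\Pi$, and that $F_{\mathbf n}\ot E_{\mathbf n}$ and $F'_{\mathbf n}\ot E'_{\mathbf n}$ lie in $\cU\ot\cU$ for all $\mathbf n\in\BN^t$.

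The verification of these conditions for $\cU=\UZ^-\VZ$ is exactly the content of Theorem \ref{r.Ur}(e), which has already been established in the preceding subsection. Concretely: the elements $K_\alpha^{\pm1}$, $F_{\mathbf n}$ (which is $\sim F^{(\mathbf n)}$, up to a unit in $\cA$), and $E_{\mathbf n}$ (which is $\sim (q;q)_{\mathbf n}E^{(\mathbf n)}$) all appear among the preferred $\cA$-basis elements of $\cU$ and $\cU^\ev$ listed in Theorem \ref{r.Ur}(d); hence $K_\alpha^{\pm1}\in\cU$ and $F_{\mathbf n}\ot E_{\mathbf n}\in\cU\ot\cU$. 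Since $\cU$ is stable under the bar automorphism $\ibar$ by Theorem \ref{r.Ur}(b), and since $F'_{\mathbf n}=\ibar(F_{\mathbf n})$, $E'_{\mathbf n}=\ibar(E_{\mathbf n})$ by definition (see \eqref{e64}), one also gets $F'_{\mathbf n}\ot E'_{\mathbf n}\in\cU\ot\cU$. Thus $\cU$ satisfies the hypothesis of Proposition \ref{r.Jvalues1}, and Theorem \ref{r.Jvalues}(a) applies directly to give $J_T\in\tK_n(\cU)$.

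I anticipate no genuine obstacle here: all the work has been front-loaded into Theorem \ref{r.Ur} (establishing that $\cU$ is an $\cA$-Hopf-subalgebra of $\UZ$ with the right basis and $\ibar$-stability) and into Theorem \ref{r.Jvalues} (the admissibility machinery of Section \ref{sec:integralM}). The corollary is purely a matter of assembling these. If anything warrants a sentence of care, it is simply pointing out explicitly that the definition of the $R$-matrix entries $E'_{\mathbf n}$, $F'_{\mathbf n}$ via $\Theta^{-1}=(\ibar\ot\ibar)(\Theta)$ is what makes the bar-stability of $\cU$ the relevant property, so that one does not need any further structural input about $\cU$. The proof therefore reads: ``This follows immediately from Theorems \ref{r.Ur}(d),(e) and \ref{r.Jvalues}(a).''
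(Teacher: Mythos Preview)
Your proposal is correct and matches the paper's own justification essentially verbatim: the paper simply states that Corollary \ref{r.Jvalues2} follows from Theorems \ref{r.Ur}(d) and \ref{r.Jvalues}, which is precisely the assembly you describe (with Theorem \ref{r.Ur}(e) packaging the verification of the hypotheses, itself derived from parts (b) and (d) exactly as you outline). There is nothing to add or correct.
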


\begin{remark}
(a)  For the case $\fg = sl_2$, the algebra
$\cU$ was considered by the first author
\cite{H:integralform,H:unified}.

(b)  The algebra $\Ur$ is not balanced between $E_\al $ and $F_\al$, and   $\varphi(\cU)  \neq \cU$.
\end{remark}

\subsection{Complexification of $\tK_m(\Ur)$} \label{sec:Kmprime}
To accommodate the complex coefficients appearing in the definition of $\Omega_\pm$, we often extend the ground ring from $\cA=\BZ[v^{\pm1}]$ to $\cB= \BC[v^{\pm1}]$. Let
$$ \Cvh := \varprojlim_{k} \BC[v^{\pm 1}]/(q;q)_k = \varprojlim_{k} \BC[v]/(q;q)_k.$$

By \eqref{eq.bh2a},
 \begin{align*}
 \cF_k(\modK_m(\Ur))& = (q;q)_k (\sXZe)^{\ot m} \cap \left[ \UZ^{\ot m}\right]_1 \cap \Ur^{\ot m}\\
 & \subset \ (q;q)_k (\sXZe)^{\ot m}  \cap (\Ur^\ev)^{\ot m}  \quad \text{by  Theorem \ref{r.Ur}(g)}.
 \end{align*}
Let
 $$\cF_k(\modK_m') := \Big( (q;q)_k (\sXZ^\ev)^{\ot m}  \cap (\Ur^\ev)^{\ot m}\Big )
  \ot_{\cA}  \cB  \subset h^k (\CXh)^{\ho m} \cap h^k \Uh^{\ho m}.$$
 Define the completion
\be
\label{eq.CKn}
\CtK_m=\left \{
x=\sum_{k=0}^\infty x_k \mid   x_k \in \cF_k(\CK_m) \right\} \subset (\CXh)^{\ho m} \cap (\Uh)^{\ho m}.
\ee
Then $\widetilde \modK_m(\cU) \subset \CtK_m$, and $\CtK_0= \Cvh$. We will work with $\tK'_n$ instead of $\tK_n(\Ur)$.

\subsection{Integral basis of $V_\lambda$}
\label{sec:IntegBasis} For $\lambda \in X_+$
recall that $V_\lambda$ is the finite-rank $\Uh$-module of highest weight $\lambda$.
Let  $1_\lambda \in V_\lambda$ be a highest weight element. It is known that the $\UZ$-module
 $\UZ\cdot 1_\lambda$ is a free $\cA$-module of rank equal to the rank of $V_\lambda$ over $\Ch$. Besides, there is an $\cA$-basis of $\UZ\cdot 1_\lambda$ consisting of weight elements, see e.g. \cite{CP}.
We call such a basis an {\em integral basis} of $V_\lambda$. For example, the {\em canonical basis} of Kashiwara and Lusztig \cite{Kashiwara,Lusztig} is such an integral basis. An integral basis of $V_\lambda$ is also a topological basis of $V_\lambda$.

Recall that $\brUA= \brUA^0 \UZ$ and $\brU_q=\brU_q^0 \Uq$ are  respectively  the simply-connected versions of $\UZ$ and $\Uq$, see Section \ref{sec:simply}.
 For $\lambda \in X_+$ we have the quantum trace map $\tr_q^{V_\lambda}: \Uh \to \Ch$.
This maps extends to $\tr_q^{V_\lambda}: \Uh[h^{-1}] \to \Ch[h^{-1}]$. In particular, if $ x\in \brU_q$, then
 one can define $\tr_q^{V_\lambda}(x) \in \Ch[h^{-1}]$.

\begin{lemma} \label{r.911}
Suppose  $\lambda$ is a dominant weight, $\lambda \in X_+$.

(a) If $x \in  \UZ$  then the
$\tr_q^{V_\lambda}(x)\in  \cA$.

(b) If   $x \in  \brU_q$ then  $\tr_q^{V_\lambda}(x) \in \BQ(v^{\pm 1/D})$.

(c) If $x \in  \brUA$ and
$\lambda \in Y$ then $\tr_q^{V_\lambda}(x)\in  \cA$.

(d) If $x\in \sXZ$ then $\tr_q^{V_\lambda}(x) \in \tA$.

\end{lemma}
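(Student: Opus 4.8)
\textbf{Proof plan for Lemma \ref{r.911}.}

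The four statements (a)--(d) form a natural hierarchy, and I would prove them in order, each building on the previous one. For part (a), the key point is that for $\lambda\in X_+$ the $\UZ$-module $\UZ\cdot 1_\lambda$ is a free $\cA$-module with an $\cA$-basis $\{w_i\}$ consisting of weight vectors (an integral basis of $V_\lambda$, which exists as recalled in Section \ref{sec:IntegBasis}). Since $\UZ$ preserves $\UZ\cdot 1_\lambda$ and the balanced element $\modg=K_{-2\rho}$ acts on a weight vector of weight $\mu$ by the scalar $q^{-(\rho,\mu)}=v^{-2(\rho,\mu)}\in\cA$, the operator $x\mapsto \modg x$ is given by a matrix over $\cA$ in the basis $\{w_i\}$. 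Hence $\tr_q^{V_\lambda}(x)=\tr^{V_\lambda}(\modg x)$ is the trace of an $\cA$-matrix, so it lies in $\cA$.

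For part (b), $x\in\brU_q=\brU_q^0\Uq$, so it suffices to treat $x=\brK_\gamma u$ with $\gamma\in X$ and $u\in\Uq$; by $\cA$-linearity and part (a) applied after clearing denominators, the only new ingredient is that $\brK_\gamma$ acts on a weight-$\mu$ vector by $v^{(\gamma,\mu)}$, and $(\gamma,\mu)\in\frac1D\BZ$ when $\gamma\in X$, $\mu\in X$ (since $\cD X\subset Y$ and the form is $\BZ$-valued on $Y\times X$). So the trace acquires at worst a power of $v^{1/D}$ and lands in $\BQ(v^{\pm1/D})$. For part (c), one combines the two previous ideas: when $\lambda\in Y$, every weight $\mu$ of $V_\lambda$ lies in $Y$, so the generators $\breve f_{\al,m,n,k}=\briota(f_{\al,m,n,k})$ of $\brUA^0$ act on weight-$\mu$ vectors by scalars of the form $v^{m(\mu,\bral)}\cdot(\text{$q$-integer ratio})$; by \eqref{eq.brcom} and the integrality of the relevant $q$-binomial-type coefficients, these scalars are in $\cA$ whenever $(\mu,\bral)\in\BZ$, i.e. whenever $\mu\in Y$. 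Thus $\brUA\tri 1_\lambda$ (equivalently $\brUA\cdot 1_\lambda$) is stable under $\brUA$ with an $\cA$-matrix description, and the argument of part (a) goes through verbatim, using also that $\modg$ still acts by $v^{-2(\rho,\mu)}\in\cA$.

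Part (d) is the one requiring the extension to $\tA$: for $x\in\sXZ$, write $x$ in the preferred $\tA$-basis $\{\sqrt{(q;q)_\bn}\,\bbe(\bn,\bode)\}$ of $\sXZ$ (Theorem \ref{r.sXZ}). Since each $\bbe(\bn,\bode)\in\UZ$, part (a) gives $\tr_q^{V_\lambda}(\bbe(\bn,\bode))\in\cA$, and multiplying by $\sqrt{(q;q)_\bn}\in\tA$ stays in $\tA$; the sum is finite for any fixed $x$, so $\tr_q^{V_\lambda}(x)\in\tA$. I expect the main obstacle to be the careful bookkeeping in part (c): verifying that the action of the Cartan generators of $\brUA^0$ on an integral basis of $V_\lambda$ (for $\lambda\in Y$) is genuinely by elements of $\cA$ and that this basis is $\brUA$-stable --- this is where one must invoke the precise form of the generators $f_{\al,m,n,k}$, Proposition \ref{r.8aa}, and the integrality of quantum binomial coefficients, rather than appeal to a black box. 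Parts (a), (b), (d) are essentially formal once the integral/topological basis structure of $V_\lambda$ is in hand.
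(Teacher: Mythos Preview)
Your plan is correct and matches the paper's proof almost exactly: parts (a), (b), (d) are handled by the paper in the same way (integral basis gives an $\cA$-matrix for $\UZ$, extend scalars for $\brU_q$ and $\sXZ\subset\UZ\otimes_\cA\tA$), and part (c) is done by the same mechanism you describe --- when $\lambda\in Y$ all weights $\mu$ lie in $Y$ and the Cartan generators of $\brUA^0$ act by $\cA$-scalars. One small correction: in (c) the condition you need is not merely $(\mu,\bral)\in\BZ$ but $(\mu,\bral)\in d_\al\BZ$ (so that $\brK_\al^2$ acts by an \emph{integer} power of $q_\al$ and the $q$-binomial ratio lands in $\cA$); this is automatic for $\mu\in Y$ since $(\bral_i,\al_j)=\delta_{ij}d_{\al_i}$, so your conclusion stands, but the intermediate characterization should be sharpened.
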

\begin{proof} Fix an integral basis of $V_\lambda$. Using the basis,  each $x\in \Uh$ acts on $V_\lambda$ by a matrix with entries in $\BC[[h]]$, called the matrix of $x$.

 (a) If  $x\in \UZ$ then its matrix has entries in $\cA$. It follows that $\tr_q^{V_\lambda}(x) = \tr^{V_\lambda}(x K_{-2\rho}) \in \cA$.

(b) As a $\BQ(v)$-algebra, $\brU_q$ is generated by $\Uq$ and $\brK_\al, \al \in \Pi$. Since $\Uq= \UZ \ot_\cA \BC(v)$, the matrix of  $x\in \Uq$ has entries in $\BC(v)$.
For an element $e$ of weight $\mu$, we have $\brK_\al(e)= v^{(\bral, \mu)} e$. Note that $(\bral, \mu)\in \frac 1 D \BZ$. It follows that the matrix of $\brK_\al(e)$ has  entries in $\BQ(v^{\pm 1/D})$.
Hence the matrix of every $x\in \brU_q$ has  entries in $\BC(v^{\pm 1/D})$, and $\tr_q^{V_\lambda}(x) \in \BC(v^{\pm 1/D})$.

(c) As  $\cA$-algebra, $\brUA$ is generated by $\UZ$ and
$ f(\brK_\al;n,k) := \brK_\al^n \frac{(\brK_\al^2 ;q_\al)_k}{(q_\al;q_\al)_k}, \quad n\in \BZ, k\in \BN, \al \in \Pi.$

When $\lambda\in Y$, all the weights of $V_\lambda$ are in $Y$. From the orthogonality between simple roots and fundamental weights we have $(\bral, \mu) \in d_\al \BZ$ for every $\al\in \Pi$ and $y\in Y$. Hence
$$f(v^{(\bral,\mu)};n,k) =v^{n (\bral,\mu)} \frac{(q_\al^{(\bral,\mu)/d_\al} ;q_\al)_k}{(q_\al;q_\al)_k} \in \cA.$$
Suppose $e\in V_\lambda$ has weight $\mu \in Y$. Then
$$ f(\brK_\al;n,k) (e) = f(v^{(\bral,\mu)};n,k) \, e \in \cA e.$$
Thus, the matrix  of $f(\brK_\al;n,k)$ on $V_\lambda$ has entries in $\cA$. We conclude that the matrix of every $x \in \brUA$ has entries in $\cA$, and
 $\tr_q^{V_\lambda}(x)\in\cA$.

(d)
Because $\sXZ \subset \UZ \ot_{\cA} \tA$, by part (a)  we have
$\tr_q^{V_\lambda}(x) \in \tA$.
\end{proof}

\subsection{Quantum traces associated to $\Omega_\pm$} \label{n5013} %
Define
$$  \tT_\pm: \Uh \to \Ch \quad \text{ by } \  \tT_\pm(x) = \tr_q^{\Omega_\pm}(x).$$
Note that $\tT_\pm$, being quantum traces, are ad-invariant.
Since  $\Omega_\pm$  are $\BC$-linear combination of $V_\lambda$,
Lemma~\ref{r.911} shows that $\tT_\pm$
restricts to a $\cB$-linear map from $ \UZ\ot_\cA \cB$ to  $\cB=\Cv$.

Recall that $(\tK'_{n})^\inv$ denotes the set of elements in $\tK'_n$ which are  $\UZ$-ad-invariants.

\begin{proposition} \label{r.5011} Suppose $f$ is one of $\cT_\pm,\tT_\pm$. Then $f$ is $(\CK_m)^\inv$-admissible in the sense that
 for  $m\ge j  \ge 1$,
\begin{align*}
\big(\id^{\otimes j-1} \otimes\,  f \otimes \id^{\otimes m-j}\big) \left( \left(\CtK_{m} \right)^{\inv} \right)\subset \left(\CtK_{m-1} \right)^{\inv}.
 \end{align*}
\end{proposition}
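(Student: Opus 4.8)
The plan is to prove Proposition \ref{r.5011} in two stages: first establish the case $f=\cT_\pm$, then reduce the case $f=\tT_\pm$ to it via Proposition \ref{p.tech1} (or a restricted version thereof). For $\cT_\pm$, recall from Proposition \ref{r.918a} that $\cT_\pm\colon\UZ^\ev\to\BQ(v)$ preserves the $G$-grading, and from Theorem \ref{r.Ur}(f) that $\cT_\pm(\Ur^\ev)\subset\cA$; together with the integrality on $\sXZ^\ev$ coming from Proposition \ref{r.integ6}, these are exactly the ingredients that went into the proof of Proposition \ref{r.integ10}. So the first step is to show that $\cT_\pm$ is $(\CK_m)$-admissible (not yet worrying about the invariance part): writing $x\in\CtK_m$ as $x=\sum_k (q;q)_k x_k$ with $x_k\in\cF_k(\CK_m)$, apply $\id^{\ho(j-1)}\ho\cT_\pm\ho\id^{\ho(m-j)}$ termwise, using $h$-adic continuity of $\cT_\pm$; the three defining properties of an admissibility decomposition (here with the trivial one-element decomposition) give that $\cT_\pm$ maps $\cF_k(\CK_m)$ into $\cF_k(\CK_{m-1})$, essentially repeating the proof of Lemma \ref{r36} and Proposition \ref{r.integ10} with $\cA$ replaced by $\cB$.

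The second step is to handle the $\UZ$-ad-invariance. The key observation is Proposition \ref{r.inva}(d): if $g\colon V\to\Ch$ is an $\scH$-module homomorphism, then $\id^{\ho(j-1)}\ho g\ho\id^{\ho(m-j)}$ carries invariant elements of $V^{\ho m}$ to invariant elements of $V^{\ho(m-1)}$. Here $\cT_\pm\colon\sXZ\to\Chh$ is ad-invariant by Lemma \ref{r.invar}(b) (and its restriction to $\Ur^\ev\otimes\cB$ is ad-invariant for the $\UZ$-action), and the quantum traces $\tT_\pm=\tr_q^{\Omega_\pm}$ are ad-invariant because quantum traces always are, as noted in Section \ref{sec:adjaction} and in Section \ref{n5013}. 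Thus $\id^{\ho(j-1)}\ho f\ho\id^{\ho(m-j)}$ preserves $\UZ$-ad-invariance at the level of the ambient modules; combining this with the first step (that it preserves membership in $\CtK_{m-1}$) yields $(\id^{\ho(j-1)}\ho f\ho\id^{\ho(m-j)})((\CtK_m)^\inv)\subset(\CtK_{m-1})^\inv$. One has to check that the $\UZ$-action used in defining $(\CtK_m)^\inv$ is the adjoint action and that $\cT_\pm$, $\tT_\pm$ are genuine $\UZ$-module maps $\sXZ\to\Chh$ (resp. $\UZ\otimes\cB\to\cB$); this is immediate from ad-invariance.

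For the $f=\tT_\pm$ case, the only extra point is that $\tT_\pm$ is defined via the modules $\Omega_\pm$, which are $\BC$-linear combinations of $V_\lambda$, so one needs that $\tr_q^{\Omega_\pm}$ is well defined and integral-enough on $\CtK_m$: this is exactly Lemma \ref{r.911}(a),(d) (giving $\tr_q^{V_\lambda}(\UZ)\subset\cA$ and $\tr_q^{V_\lambda}(\sXZ)\subset\tA$), plus the fact (to be extracted from the proof of Theorem \ref{r1} via \eqref{eee2}) that partial quantum traces carry $\tK_m$ into $\tK_{m-1}$ up to a power of $q$; since $\Omega_\pm$ are normalized so that the $q$-power and the scalar $\ez(J_{U_\pm}(\Omega))$ are absorbed, one gets $\tT_\pm(\cF_k(\CK_m))\subset\cF_k(\CK_{m-1})$ after complexifying coefficients. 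I expect the main obstacle to be bookkeeping: verifying that the partial-trace estimate \eqref{eee2} can be upgraded from the module $\tK_n$ to the finer complexified module $\CtK_m$ built from $\Ur^\ev$ and that the $G$-grading constraint (the ``grading $1$'' part) is respected by the partial quantum trace, so that after applying $\tT_\pm$ to $m-1$ of the tensor factors one genuinely lands in $\CtK_{m-1}$ rather than merely in its ambient completion. Once that compatibility is in place, the invariance half is free from Proposition \ref{r.inva}(d).
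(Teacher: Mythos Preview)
Your overall two-step structure---reduce to filtration preservation $\big(\id^{\ho(j-1)}\ho f\ho\id^{\ho(m-j)}\big)(\cF_k(\CK_m))\subset\cF_k(\CK_{m-1})$, then invoke Proposition~\ref{r.inva}(d) for the invariance---is exactly what the paper does. However, two confusions are making your argument longer and shakier than necessary.

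First, you have misread the definition of $\CtK_m$. By Section~\ref{sec:Kmprime},
\[
\cF_k(\CK_m)=\Big((q;q)_k(\sXZ^\ev)^{\ot m}\cap(\Ur^\ev)^{\ot m}\Big)\ot_\cA\cB,
\]
which has \emph{no} $G$-grading $1$ constraint; that constraint was deliberately dropped in passing from $\modK_m(\Ur)$ to $\CK_m$. Consequently your invocation of Proposition~\ref{r.918a}, your reference to the partial-trace estimate \eqref{eee2} with its stray power of $q$, and your ``main obstacle'' about the grading $1$ part are all irrelevant here. The paper simply checks the two remaining conditions separately: for $\cT_\pm$, Proposition~\ref{r.integ6} gives $\cT_\pm(\sXZ^\ev)\subset\tA$ and Theorem~\ref{r.Ur}(f) gives $\cT_\pm(\Ur^\ev)\subset\cA$; for $\tT_\pm$, Lemma~\ref{r.911}(d) and (a) give $\tT_\pm(\sXZe)\subset\tA\ot_\cA\cB$ and $\tT_\pm(\Ur)\subset\cB$. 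Each pair immediately yields $\cF_k(\CK_m)\to\cF_k(\CK_{m-1})$.

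Second, your proposed reduction of the $\tT_\pm$ case to the $\cT_\pm$ case ``via Proposition~\ref{p.tech1}'' would be circular: Proposition~\ref{p.tech1} is proved through Proposition~\ref{60011}, whose induction step explicitly uses Proposition~\ref{r.5011} to guarantee $b_k\in(\CtK_1)^\inv$. You later abandon this reduction and argue directly via Lemma~\ref{r.911}, which is the right move; just drop the circular opening.
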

\begin{proof} Recall that  $\cT_\pm,\tT_\pm$ are ad-invariant. By Proposition \ref{r.inva}(d) it is enough to prove
\begin{align*}
\big(\id^{\otimes j-1} \otimes\,  f \otimes \id^{\otimes m-j}\big)   \left(\CtK_{m} \right) \subset \CtK_{m-1} ,
 \end{align*}
which, in turn, will follow from
\begin{align}
 \label{eq.as02}
 \big(\id^{\otimes j-1} \otimes\,  f \otimes \id^{\otimes m-j}\big) \left ( \cF_k ( \CK_m  ) \right) &\subset \cF_{ k} ( \CK_{m-1}) .
\end{align}

Let us prove \eqref{eq.as02} for $f=\cT_\pm$.
By Proposition \ref{r.integ6},
  $$ \big(\id^{\otimes j-1} \otimes\,  \cT_\pm \otimes \id^{\otimes m-j}\big) \Big(  (q;q)_k (\sXZ^\ev)^{\ot m} \Big)  \subset  (q;q)_k (\sXZ^\ev)^{\ot m-1}.$$
  By Proposition \ref{r.Ur}(f),
   $$ \big(\id^{\otimes j-1} \otimes\,  \cT_\pm \otimes \id^{\otimes m-j}\big) \Big(  (\Ur^\ev)^{\ot m} \Big)  \subset  (\Ur^\ev)^{\ot m-1}.$$
   Because $\cF_k(\modK_m')= ((q;q)_k (\sXZ^\ev)^{\ot m} \cap (\Ur^\ev)^{\ot m})\ot _\cA \cB$,   we have
   $$ \big(\id^{\otimes j-1} \otimes\,  \cT_\pm \otimes \id^{\otimes m-j}\big) \cF_k(\modK_m') \subset \cF_k(\modK_{m-1}').$$

Let us now prove \eqref{eq.as02} for $f=\tT_\pm$.
Because $\Omega_\pm$ is a $\BC$-linear combination of $V_\lambda$, by Lemma \ref{r.911}(d),
$\tT_\pm(\sXZe) \subset  \tA\ot_{\cA} \cB$. Hence,
\be
\label{eq.as04}
\big(\id^{\otimes j-1} \otimes\,  \tcD_{\pm} \otimes \id^{\otimes m-j}\big)\, \left ( (q;q)_k (\sXZe)^{\ot m} \right ) \, \subset \, (q;q)_k \left( (\sXZe)^{\ot m-1} \ot_{\cA} \cB \right).
\ee
From Lemma \ref{r.911}(a), $\tT_\pm (\Ur)\subset \cB$, and hence
$$
\big(\id^{\otimes j-1} \otimes\,  \tcD_{\pm} \otimes \id^{\otimes m-j}\big) \, \left ( ( \Ur^\ev)^{\ot m} \right ) \, \subset \, (\Ur^\ev)^{\ot m-1} \ot_{\cA} \cB,
$$
which, together with \eqref{eq.as04}, proves \eqref{eq.as02}.
\end{proof}

\subsection{Actions of Weyl group on $\Uh^0$ and Chevalley theorem}\label{sec:ActionW}
 The Weyl group acts on the Cartan part $\Uh^0$ by {\em  algebra automorphisms} given  by $w(H_\lambda) = H_{w(\lambda)}$. Then $w(K_\al)= K_{w(\al)}$, and
$\fW $ restricts and extends to actions on the Cartan parts $\UZ^0$, $\VA^0$, and $\Xh^0$.

 We say an element $x \in \Uh^0$ is {\em $\fW $-invariant} if $w(x)= x$ for every $w\in \fW $, and   $x$ is {\em $\fW $-skew-invariant} if $w(x)= \sgn(w)x$ for every $w\in \fW $. As usual, if $\fW $ acts on $V$ we denote by $V^\fW $
 the subset of $\fW $-invariant elements.

\no{

The quantum Killing form, when restricted to $\UZ^0$, is $\fW $-invariant in the sense then
\be
\label{eq.Winv}
 \la w(x), w(y) \ra = \la x, y\ra \quad \text{
for any $w\in \fW $, $x,y \in \UZ^0$.}
\ee
}

By Chevalley's theorem \cite{Chevalley}, there are $\ell$ homogeneous polynomials $e_1,\dots, e_\ell \in \BZ[H_1,\dots,H_\ell]$ such that $(\BC[H_1,\dots,H_\ell])^\fW = \BC[e_1,\dots,e_\ell]$, the  polynomial ring freely generated by $\ell$ elements $e_1, \dots,e_\ell$.

Suppose the degree of $e_i$ is $k_i$. Since $\exp(hH_\al)= K_\al^2$, we have
 \be
 \label{eq.hy00}
 \tilde e_i:= \exp \left( h^{k_i} e_i\right)  \in \BZ[K_{1}^{\pm2}, \dots, K_\ell^{\pm 2}]^\fW \subset (\VA^{\ev,0})^\fW .
 \ee

\begin{proposition} (a) One has
\begin{align}
\label{eq.hy01}
(\Uh^0)^\fW &=  \BC[e_1,\dots,e_\ell][[h]]%
\\
\label{eq.hy02}
(\Xh^0)^\fW &= \BC[h^{k_1/2}e_1,\dots,h^{k_\ell/2} e_\ell][[\sqrt h]]
\\
\label{eq.hy03}
(\Vh^0)^\fW &=  \overline{\BC[h^{k_1}e_1,\dots, h^{k_\ell}e_\ell][[h]]}
\\
\label{eq.hy03a}
&
=\overline{\BC[\tilde e_1,\dots, \tilde e_\ell][[h]]}.
\end{align}
Here the overline in \eqref{eq.hy03} and \eqref{eq.hy03a} denotes the topological closure in the $h$-adic topology of $\Uh$.

\end{proposition}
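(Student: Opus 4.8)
The plan is to prove \eqref{eq.hy01} first by classical invariant theory, then deduce \eqref{eq.hy02} and \eqref{eq.hy03}--\eqref{eq.hy03a} by a ``dilatation triple'' argument analogous to the stability principle already used for $\Xh$ and $\VA$, and finally check the equivalence of \eqref{eq.hy03} and \eqref{eq.hy03a}. For \eqref{eq.hy01}: we identify $\Uh^0 = \BC[H_1,\dots,H_\ell][[h]]$ as a topologically free $\Ch$-module, where $\fW$ acts $\Ch$-linearly and permutes the $H_i$ via $w(H_\al)=H_{w(\al)}$. Since the $\fW$-action is trivial on the $h$-direction, one has $(\Uh^0)^\fW = (\BC[H_1,\dots,H_\ell]^\fW)[[h]]$; indeed, if $x=\sum_k x_k h^k$ with $x_k\in\BC[H_1,\dots,H_\ell]$ satisfies $w(x)=x$ for all $w$, then equating coefficients of $h^k$ gives $w(x_k)=x_k$, so each $x_k\in\BC[e_1,\dots,e_\ell]$ by Chevalley's theorem. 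This gives \eqref{eq.hy01}. (One should take care that $\fW$ acts by algebra automorphisms on $\Uh^0$ since $\exp(hH_\al)=K_\al^2$ is sent to $K_{w(\al)}^2$, so the claimed identity is an identity of $\Ch$-algebras.)

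For \eqref{eq.hy02}: recall $\Xh^0 = \Xh\cap\Uh^0$ is the topologically free $\Chh$-module with topological basis $\{h^{\|\bn\|/2}H^\bn \mid \bn\in\BN^\ell\}$ (reading off from \eqref{eq.basissX}, the Cartan part of $\Xh$ is spanned by $\{h^{\|\bk\|/2}H^\bk\}$). We should check that the monomial $e_1^{a_1}\cdots e_\ell^{a_\ell}$, being homogeneous of total degree $\sum a_i k_i$ in the $H_j$, gives $h^{(\sum a_i k_i)/2}$ times a polynomial in the $H_j$ with coefficients involving no further powers of $h$; hence $(h^{k_i/2}e_i)^{a_i}$ products lie in $\Xh^0$. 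Conversely, if $x\in\Xh^0$ is $\fW$-invariant then $x\in(\Uh^0)^\fW = \BC[e_1,\dots,e_\ell][[h]]$ by \eqref{eq.hy01}; writing $x$ in the basis $\{e^\ba : \ba\in\BN^\ell\}$ of $\BC[H_1,\dots,H_\ell]$ (valid since $\BC[H_j]^\fW$ is a free polynomial ring) with coefficients in $\Ch$, the requirement that $x\in\Xh^0$ forces the coefficient of $e^\ba$ to be divisible by $\sqrt{h}^{\,\sum a_i k_i}$, which is exactly the statement that $x\in\BC[h^{k_1/2}e_1,\dots,h^{k_\ell/2}e_\ell][[\sqrt h]]$. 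The same bookkeeping with $\sqrt h$ replaced by $h$ (i.e. the dilatation with exponents $k_i$ instead of $k_i/2$) proves \eqref{eq.hy03}, with the closure appearing because $\Vh^0$ is the $h$-adic closure in $\Uh^0$ of the $\Ch$-span of $\{h^{\|\bn\|}H^\bn\}$ (a formal series module rather than a topologically free one, cf. Example~\ref{exa:1}).

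Finally, \eqref{eq.hy03a} follows from \eqref{eq.hy03} once we check that the $\Ch$-subalgebras $\BC[h^{k_1}e_1,\dots,h^{k_\ell}e_\ell][[h]]$ and $\BC[\tilde e_1,\dots,\tilde e_\ell][[h]]$ have the same $h$-adic closure. From $\tilde e_i = \exp(h^{k_i}e_i) = 1 + h^{k_i}e_i + \tfrac12 h^{2k_i}e_i^2+\cdots$ one sees $\tilde e_i - 1 \in h^{k_i}e_i + h^{2k_i}\Ch[e_i]$, and conversely $h^{k_i}e_i = \log \tilde e_i = (\tilde e_i - 1) - \tfrac12(\tilde e_i-1)^2+\cdots$ lies in the closure of $\BC[\tilde e_1,\dots,\tilde e_\ell][[h]]$; these two triangular relations between the two generating families, together with the fact that each is $h$-adically topologically nilpotent in the relevant sense, give the desired equality of closures. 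The main obstacle I anticipate is the converse direction in \eqref{eq.hy02}--\eqref{eq.hy03}: one needs that the divisibility condition ``$x\in\Xh^0$'' translates exactly into the stated divisibility of the $e^\ba$-coefficients, which requires knowing that the change of basis between $\{H^\bn\}$ and $\{e^\ba\}$ on the associated graded (by total $H$-degree) is invertible over $\BC$ and degree-preserving — true because $e_1,\dots,e_\ell$ are algebraically independent homogeneous generators — but it must be set up carefully so that no spurious denominators in $h$ or $\sqrt h$ are introduced. This is precisely the content of (the Cartan-part case of) the stability principle, Proposition~\ref{r.topdilat}, applied to the dilatation triples $(\Uh^0,\Xh^0,\Vh^0)$ and their $\fW$-invariant sub-triples, so I would phrase the argument as an invocation of that principle rather than redo the estimate by hand.
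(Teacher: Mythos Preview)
Your argument is correct and follows essentially the same route as the paper. The paper's treatment of \eqref{eq.hy02} and \eqref{eq.hy03} is slightly slicker: rather than writing an invariant element in the $e^{\mathbf a}$-basis and arguing coefficient divisibility, it simply observes that $\Xh^0$ is the $\Chh$-algebra on the generators $\sqrt h\,H_1,\dots,\sqrt h\,H_\ell$ and that $\fW$ acts $\BC$-linearly on these rescaled generators exactly as on the $H_i$; Chevalley's theorem applied to the variables $\tilde H_i:=\sqrt h\,H_i$ then gives $(\Xh^0)^\fW=\BC[e_1(\tilde H),\dots,e_\ell(\tilde H)][[\sqrt h]]=\BC[h^{k_1/2}e_1,\dots,h^{k_\ell/2}e_\ell][[\sqrt h]]$ directly, and similarly for $\Vh^0$ with $hH_i$ in place of $\sqrt h\,H_i$. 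This substitution bypasses the ``change-of-basis over $\BC$'' obstacle you flag, since homogeneity of the $e_i$ does all the work automatically. Your appeal to the stability principle (Proposition~\ref{r.topdilat}) at the end is a red herring: that principle transports a map between two dilatation triples, whereas here one is computing the $\fW$-invariants within a single triple, and there is no map to transport; the direct degree argument you give first is already complete. For \eqref{eq.hy03a} your triangular-relation argument matches the paper's.
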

\begin{proof} We have
$$(\Uh^0)^\fW = (\BC[H_1,\dots,H_\ell][[h]])^\fW = (\BC[H_1,\dots,H_\ell])^\fW [[h]] = \BC[e_1,\dots,e_\ell][[h]],$$
which proves \eqref{eq.hy01}. Similarly, using
\begin{align*}
(\Xh^0)^\fW &= (\BC[h^{1/2}H_1,\dots,h^{1/2}H_\ell][[h]])^\fW \\
(\Vh^0)^\fW &= (\BC[hH_1,\dots,hH_\ell][[h]])^\fW \end{align*}
we get \eqref{eq.hy02} and \eqref{eq.hy03}.
We have
$$ \tilde e_i -1 = h^{k_i} e_i + h (\Vh^0)^\fW .$$
 It follows that
$$ \BC[\tilde e_1,\dots,\tilde e_\ell][[h]]= \BC[h^{k_1}e_1,\dots, h^{k_\ell}e_\ell][[h]],$$
from which one has \eqref{eq.hy03a}.
\end{proof}

\subsection{The Harish-Chandra isomorphism, center of $\Uh$}\label{sec.HC1}

Let $\Zc(\Uh)$ be the center of $\Uh$, which is known to be $\Uh^\inv$, the ad-invariant subset of $\Uh$.
For any subset $V \subset \Uh$  denote  $\Zc(V)= V \cap \Zc(\Uh)$,  the set of central elements in $V$.

 Let  $p_0: {\Uh}\to \Uh^0$ be the projection
corresponding to the triangular decomposition. This means, if
$x = x_- \, x_0 \, x_+$, where $x_- \in \Uh^-, x_+ \in \Uh^+$ and $x_0\in \Uh^0$, then $p_0(x)=\boldsymbol\epsilon  (x_-)\,  \boldsymbol\epsilon (x_+) \, x_0$. Here $\boldsymbol\epsilon$ is the co-unit.

For $\mu \in X$, define the algebra homomorphism $\sh_\mu: \Uh^0 \to \Uh^0$ by $\sh_\mu(H_\al)= H_\al + (\al,\mu)$. Then
$\sh_\mu(K_\al) = v^{(\mu,\al)} \, K_\al$. Since $v^{(\mu,\al)} = \la K_{-2\mu}, K_\al \ra $, we have
\be
\label{eq.shift}
\sh_\mu(K_\al) = \la K_{-2\mu}, K_\al \ra \, K_\al.
\ee
The {\em  Harish-Chandra map} is the $\BC[[h]]$-module
  homomorphism
$$\cH = \sh_{-\rho} \circ \, p_0: {\Uh} \to \Uh^0
=\BC[H_1,\ldots,H_\ell][[h]].$$ The restriction of $\cH$ to the
$Y$-degree $0$ part of $\Uh$, denoted $\cH$ by abuse of notation, is a
$\BC[[h]]$-algebra homomorphism, called the {\em Harish-Chandra
homomorphism}.

 One has the following description of the center (see e.g. \cite{CP,Rosso}).
\begin{proposition}
  \label{r.HCiso}The restriction of $\cH$ on the center $\Zc(\Uh)$ is an algebra isomorphism from $ \Zc(\Uh)$ to
$ (\Uh^0)^\fW = \BC[H_1, \ldots, H_\ell]^\fW [[h]]$.

\end{proposition}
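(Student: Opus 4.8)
The statement is the classical quantum Harish-Chandra isomorphism for $\Uh=\Uh(\fg)$; I would prove it by the standard route, adapted to the $h$-adic setting. The plan is to show that $\cH$ restricted to the degree-$0$ part $\Uh_0$ (and in particular to the center $\Zc(\Uh)\subset\Uh_0$) is an algebra homomorphism, that its image lies in $(\Uh^0)^\fW$, and that it is both injective and surjective onto $(\Uh^0)^\fW$. The three pieces — algebra homomorphism, $\fW$-invariance of the image, injectivity, surjectivity — are treated separately, and then combined.

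\textbf{Step 1: $\cH|_{\Zc(\Uh)}$ is an algebra homomorphism.} First I would observe that $\Zc(\Uh)=\Uh^\inv$ is contained in the $Y$-degree $0$ part $\Uh_0=\bigoplus_{\mu}(\Uh^-)_{-\mu}\otimes\Uh^0\otimes(\Uh^+)_\mu$ (an easy weight argument using \eqref{eq.8157}: a central element commutes with all $K_\beta$, hence is homogeneous of degree $0$). On $\Uh_0$ the projection $p_0\colon\Uh_0\to\Uh^0$ killing the positive-height contributions is an algebra homomorphism, because for $x=x_-x_0x_+, y=y_-y_0y_+$ with $x_+,y_+$ of positive height, the product $xy$ has its degree-$0$, height-$0$ term equal to $\boldsymbol\epsilon(x_-)\boldsymbol\epsilon(x_+)\boldsymbol\epsilon(y_-)\boldsymbol\epsilon(y_+)x_0y_0$; all other terms contain $E_\alpha$ or $F_\alpha$ factors and are projected to $0$. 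Then $\sh_{-\rho}$ is an algebra automorphism of $\Uh^0$ by definition, so $\cH=\sh_{-\rho}\circ p_0$ is an algebra homomorphism on $\Uh_0$.

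\textbf{Step 2: the image lies in $(\Uh^0)^\fW$.} This is where the $\rho$-shift is essential. For each $\alpha\in\Pi$ I would use the finite-rank highest weight modules $V_\lambda$ ($\lambda\in X_+$) of Section \ref{sec:UhMod}: if $z\in\Zc(\Uh)$ acts on $V_\lambda$ by the scalar $\chi_\lambda(z)$, then evaluating $\cH(z)$ against the weight $\lambda$ (i.e.\ substituting $H_\alpha\mapsto(\alpha,\lambda)$) gives exactly $\chi_\lambda(z)$, because $z\,1_\lambda = p_0(z)\,1_\lambda$ up to lower-weight terms and the $\sh_{-\rho}$ absorbs the standard discrepancy. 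The $s_\alpha$-invariance $\cH(z)|_{\lambda}=\cH(z)|_{s_\alpha\cdot\lambda}$, where $s_\alpha\cdot\lambda=s_\alpha(\lambda+\rho)-\rho$ is the dot action, follows because $V_\lambda$ and the module generated from the $s_\alpha$-string contain a common composition factor, or more directly from the quantum Shapovalov/Jantzen machinery: the polynomial $\cH(z)$ takes equal values on the Zariski-dense set $\{\lambda+\rho:\lambda\in X_+\}$ before and after applying $s_\alpha$, hence $s_\alpha(\cH(z))=\cH(z)$ in the $H_\alpha$-variables. Since the $s_\alpha$ generate $\fW$, $\cH(z)\in(\Uh^0)^\fW=\BC[H_1,\dots,H_\ell]^\fW[[h]]$.

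\textbf{Step 3: injectivity and surjectivity.} Injectivity: if $z\in\Zc(\Uh)$ has $\cH(z)=0$, then $\chi_\lambda(z)=0$ for all $\lambda\in X_+$; since the $V_\lambda$ separate points of $\Uh$ modulo the Jacobson radical and $\Uh$ has trivial radical in the relevant sense (or more elementarily: a central element acting by $0$ on all $V_\lambda$ must be $0$, by density of finite-rank representations and the PBW basis), one gets $z=0$. For surjectivity I would exhibit, for each $\fW$-invariant generator, a central preimage. The cleanest route is the one the paper is clearly heading toward via Section \ref{sec:Drin}: Drinfel'd's construction produces, for each finite-rank $\Uh$-module $V$, a central element (the ``quantum Casimir'' $c_V$) whose Harish-Chandra image is computable; taking $V=V_{\varpi_i}$ for the fundamental weights, or more systematically the classes $[V_\lambda]$, one checks that the span of the $\cH(c_{V_\lambda})$ is all of $(\Uh^0)^\fW$ — this amounts to the statement that the characters of the $V_\lambda$, expressed via the Weyl character formula as $\fW$-symmetrized exponentials, span the $\fW$-invariants, combined with Chevalley's theorem (Section \ref{sec:ActionW}) that $(\Uh^0)^\fW=\BC[e_1,\dots,e_\ell][[h]]$ is a polynomial algebra. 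Alternatively, one can use the nondegenerate ad-invariant quantum Killing form $\sL$ of Section \ref{sec:rosso-form}: pairing the image of the triangular decomposition shows $\cH$ restricted to $\Zc(\Uh)$ has image of the ``right size,'' and a dimension/graded-rank count (working over $\BC((h))$ to reduce to the classical $U(\fg)$ statement, then using flatness/completeness to lift back to $\Ch$) forces surjectivity.

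\textbf{Main obstacle.} The genuinely nontrivial point is surjectivity (Step 3): producing enough central elements and identifying their Harish-Chandra images precisely. I expect to reduce this to the classical Harish-Chandra isomorphism for $U(\fg)$ by tensoring with $\BC((h))$ — where $\Uh[h^{-1}]$ becomes (a completion of) $U(\fg)\otimes\BC((h))$ after a suitable rescaling, so that $\Zc\otimes\BC((h))\cong Z(U(\fg))\otimes\BC((h))\cong\BC[e_1,\dots,e_\ell]\otimes\BC((h))$ — and then arguing that the $\Ch$-lattice $\Zc(\Uh)$ is exactly $\BC[e_1,\dots,e_\ell][[h]]$ by completeness and torsion-freeness, using that $\cH$ is a $\Ch$-module isomorphism modulo $h$ (where it recovers the classical statement). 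The care needed is purely in the $h$-adic bookkeeping: checking that $p_0$, $\sh_{-\rho}$, and the reduction mod $h$ are all continuous and compatible, and that no completion subtleties (of the kind discussed in Section \ref{sec:ChModules}) break the identification of lattices. Everything else is a routine transcription of the classical proof.
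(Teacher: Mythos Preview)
The paper does not prove this proposition at all: it is stated with the preamble ``One has the following description of the center (see e.g.\ \cite{CP,Rosso})'' and no proof is given, since the quantum Harish-Chandra isomorphism is a standard result in the literature. Your outline is the classical argument (adapted to the $h$-adic setting) that one finds in those references, so there is nothing to compare; if you were asked to supply a proof, your plan is the expected one, with the usual care needed in Step~3 for surjectivity.
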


\begin{remark} Suppose  $\sH \subset \Uh$ is any subring satisfying the triangular decomposition (like $\UZ$ or $\Vh$).
 From definition
\be
\label{eq.equal}
\chi(\Zc(\sH)) \subset (\sH^0)^\fW.
\ee
For $\sH= \Uh$, we have equality in \eqref{eq.equal} by Proposition \ref{r.HCiso}. But in general, the left hand side is strictly smaller than the right hand side.
For example, one can show that
$$ \chi(\Zc(\UZ)) \neq (\UZ^0)^\fW .$$

Over the ground ring $\cA$, the determination of the image of the Harish-Chandra map is difficult.  Later we will determine $\chi(\Zc(\sH))$ for two cases, $\sH= \VA^\ev$, which is defined over $\cA$, and $\sH= \Xh$, which is defined over $\Chh$. In both cases, the duality with respect to the quantum Killing form will play an important role.
\end{remark}

 \subsection{From $\Uh$-modules to central elements} \label{sec:Drin}
 In the classical case, the center of the enveloping algebra of $\fg$ is isomorphic to  the ring of $\fg$-modules via the character map. We will recall (and modify) here the corresponding fact in the quantized case.

 For a dominant weight $\lambda\in X_+$, recall that $V_\lambda$ is the irreducible $\Uh$-module  of highest weight $\lambda$. Since the map $\tr_q^{V_\lambda}: \Uh \to \Ch$ is
 ad-invariant and the clasp element $\modc$ is ad-invariant, by Proposition \ref{r.inva}(d) the element
 \be
 z_\lambda := \left( \tr_q^{V_\lambda} \ho \id\right)(\modc)
 \notag
 \ee
 is in $(\Uh)^\inv = \Zc(\Uh)$. This construction of central elements was sketched in \cite{Drinfeld}, and studied in details in \cite{JL2,Baumann}.
  Our approach gives a geometric meaning of $z_\lambda$ as it shows that   $z_\lambda= J_T$, where $T$ is  the open Hopf link bottom tangle depicted in
Figure \ref{fig:openHopf}, with the closed component  colored by $V_\lambda$.  \FIG{openHopf}{The open Hopf link (left) and the Hopf link}{height=26mm}
Let us summarize some more or less well-known properties of $z_\lambda$, see \cite{Baumann,Caldero,JL2}.

\begin{proposition}
\label{r.centt}
Suppose $\lambda,\lambda' \in X_+$.

(a) For every $x\in \brU_q$,
\be
\label{eq.cent}
 \tr_q^{V_\lambda}(x) = \la z_\lambda, x \ra .
 \ee

 (b) One has
\be
\label{40008}
\cH (z_\lambda) = \sum_{\mu \in X} \dim (V_\lambda)_{[\mu]} K_{-2 \mu}
=
\frac
{
\sum_{w\in \fW}
\sgn(w) \, K_{-2 w(\lambda +\rho)}
}{\sum_{w\in \fW}  \sgn(w) \, K_{-2 w(\rho)}}.
\ee

 (c) If  $L$ is  the Hopf link, see Figure \ref{fig:openHopf}, then
\be  J_{\Hopf}(V_\lambda, V_{\lambda'}) = \la z_\lambda, z_{\lambda'} \ra =  \tr_q^{V_\lambda}(z_{\lambda'}),
\label{eHopf}
\ee

(d) One has $z_\lambda \in \breve{\mathbf U}_q^\ev$, and if $\lambda \in Y$, then $z_\lambda \in \Uq^\ev$.

\end{proposition}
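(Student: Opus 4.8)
\textbf{Proof proposal for Proposition \ref{r.centt}.}

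The plan is to prove the four statements in the order (a), (b), (c), (d), since each relies on the previous ones together with facts already established in the excerpt. For part (a), I would start from the definition $z_\lambda = (\tr_q^{V_\lambda} \ho \id)(\modc)$ and use the defining property of the clasp form, namely Lemma \ref{r.ccc}(b): for $x \in \oX$ we have $(\sL \ho \id)(x \ot \bc) = x$. Extending the quantum Killing form / clasp form to $\brU_q$ as in Section \ref{sec:rosso-form} (where $\la.,.\ra$ is $\BC(v)$-bilinear on $\Uq$ and extends to $\brU_q$ via the simply-connected Cartan part), I would compute $\la z_\lambda, x\ra = \la (\tr_q^{V_\lambda}\ho\id)(\bc), x\ra$. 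By the ad-invariance of $\sL$ and the identity $(\id \ho \sL)(\bc \ot x) = x$ (Lemma \ref{r.ccc}(b), i.e. $\bc$ is ``self-dual'' with respect to $\sL$), pairing the second tensor factor of $\bc$ against $x$ and tracing the first factor over $V_\lambda$ reproduces $\tr_q^{V_\lambda}(x)$. This is essentially the statement that $\modc$ is the ``reproducing kernel'' of the quantum Killing form; the main subtlety is checking that all pairings make sense on $\brU_q$ (not just on $\Xh$ or $\Uq$), which follows because $z_\lambda \in \brU_q^\ev$ — a fact I would establish first as part of (d), or bootstrap from the explicit triangular expansion of $\bc$ in Lemma \ref{r5}.

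For part (b), I would compute the Harish-Chandra image $\cH(z_\lambda) = \sh_{-\rho} \circ p_0(z_\lambda)$ directly from the formula \eqref{e80} for $\modc$ in Lemma \ref{r5}. Only the $\bn = \bm = 0$ term of \eqref{e80} survives under $p_0$ (since $F_\bn, E_\bn$ with $\bn \neq 0$ contribute to nonzero weight spaces that are killed by the projection onto $\Uh^0$, once we also apply $\boldsymbol\epsilon$ to the $E$ and $F$ parts), leaving $p_0(z_\lambda) = \tr^{V_\lambda}(\modg \cdot(\text{diagonal part}))$. Expanding $\modg = K_{-2\rho}$ and the Cartan part $\cD^{-2}$ acting diagonally on each weight space $V_{\lambda,[\mu]}$, and then applying the shift $\sh_{-\rho}$, gives $\cH(z_\lambda) = \sum_\mu \dim (V_\lambda)_{[\mu]} K_{-2\mu}$. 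The second equality in \eqref{40008} is then the Weyl character formula (in the form appropriate to the weight-lattice group algebra), which I would quote. Part (c) is immediate from (a) and (b): $J_{\Hopf}(V_\lambda, V_{\lambda'})$ is by definition $(\tr_q^{V_\lambda} \ho \tr_q^{V_{\lambda'}})$ applied to the universal invariant of the Hopf link bottom tangle, which is $(\id \ho \tr_q^{V_{\lambda'}})(\modc)$ after the first evaluation, i.e. $\tr_q^{V_{\lambda'}}(z_\lambda)$; by part (a) applied with $x = z_{\lambda'} \in \brU_q$ this equals $\la z_\lambda, z_{\lambda'}\ra$, and symmetrically (using that $z_\lambda$ is central, so \eqref{eq.qkcentral} applies) it also equals $\tr_q^{V_\lambda}(z_{\lambda'})$.

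For part (d), I would again use the explicit expansion of $\modc$ from Lemma \ref{r5}: $\modc = \sum_{\bm,\bn} q^{\cdots}(F_\bm K_\bm \ot F_\bn K_\bn)\cD^{-2}(E_\bn \ot E_\bm)$. Applying $\tr_q^{V_\lambda} \ho \id$ to the first tensor factor, the surviving terms have the second factor of the form $F_\bn K_\bn \cdot(\text{Cartan})\cdot E_\bm$ with the Cartan part built from $K_{-2\rho}$ and powers of $\cD^{-2}$; by Corollary \ref{cor.918} (specifically \eqref{eq.nh2}, applied with the appropriate bookkeeping of which $K$'s appear) each such term lies in $\brU_q^\ev$, and the quantum-trace coefficient involves only the weight multiplicities, so $z_\lambda \in \brU_q^\ev$. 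When $\lambda \in Y$, all weights $\mu$ of $V_\lambda$ lie in $Y$, so the Cartan part $\sum_\mu \dim(V_\lambda)_{[\mu]} K_{-2\mu}$ already lies in $\Uq^0$ (not just $\brU_q^0$); combined with $z_\lambda \in \brU_q^\ev$ and the triangular decomposition $\brU_q^\ev = \UZ^{\ev,-}\otimes\brU_q^{\ev,0}\otimes\UZ^+$ (extended to $\BC(v)$-coefficients), the Cartan component of $z_\lambda$ lying in $\Uq^0$ forces $z_\lambda \in \Uq^\ev$. The main obstacle I anticipate is part (a): making the pairing $\la z_\lambda, x\ra$ rigorous for $x$ in the simply-connected form $\brU_q$ requires care about which completion the clasp form is defined on, and one must verify that the ``reproducing kernel'' identity from Lemma \ref{r.ccc}(b), proved for $\oX \ho \sX$, transfers to this setting — most cleanly by first pinning down $z_\lambda \in \brU_q^\ev$ via the Lemma \ref{r5} expansion and only then invoking the pairing.
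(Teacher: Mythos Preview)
Your treatment of (a), (b), (c) is essentially the same as the paper's: (a) uses the reproducing-kernel identity for $\modc$ with respect to the clasp form, (c) is immediate from (a) and the geometric description of $z_\lambda$ as the open-Hopf-link invariant, and for (b) the paper simply cites \cite[Chapter 6]{Jantzen} for the character formula rather than computing directly from \eqref{e80} as you propose (your route works too).

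The gap is in (d). First, Corollary~\ref{cor.918} (equation~\eqref{eq.nh2}) is about the total $G$-grading being $1$, not about membership in $\brU_q^\ev$; it does not give what you claim. Second, and more seriously, your deduction of $z_\lambda\in\Uq^\ev$ when $\lambda\in Y$ is invalid as written: knowing that the Harish-Chandra image $\cH(z_\lambda)=\sum_\mu\dim(V_\lambda)_{[\mu]}K_{-2\mu}$ lies in $\Uq^0$ controls only the $p_0$-component of $z_\lambda$, not all the Cartan factors in its triangular expansion. An element of $\brU_q^\ev$ can have $\cH$-image in $\Uq^0$ without lying in $\Uq^\ev$ (e.g.\ $F_\al K_\al\,\brK_\beta^2\,E_\al$ for suitable $\al\neq\beta$ has $p_0=0$). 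Centrality alone does not obviously rescue this without circularity, since the identification of $\Zc(\Uq^\ev)$ via $\cH$ in the paper (Proposition~\ref{40003}) itself relies on part~(d).

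The paper's argument for (d) is quite different and avoids any expansion: it invokes the Joseph--Letzter result \cite[Section 6.10]{JL:separation} that $z_\lambda\in\brU_q\tri K_{-2\lambda}$, and then applies the ad-stability $\brU_q\tri\brU_q^\ev\subset\brU_q^\ev$ (Lemma~\ref{eq.evenD3a}). When $\lambda\in Y$ one has $K_{-2\lambda}\in\Uq^\ev$, and the same lemma gives $z_\lambda\in\Uq^\ev$. Your direct-expansion strategy \emph{can} be made to work, but only if you carry the computation further: after applying $\tr_q^{V_\lambda}\otimes\id$ to \eqref{e80} and evaluating the first factor of $\cD^{-2}$ on each weight space $(V_\lambda)_{[\mu]}$, the second factor picks up $K_{-2(\mu+|E_\bn|)}$ with $\mu$ a weight of $V_\lambda$; every such $\mu\in X$ gives $K_{-2\mu}\in\brU_q^{\ev,0}$, and when $\lambda\in Y$ every $\mu\in Y$ so $K_{-2\mu}\in\Uq^{\ev,0}$. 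The sum over $\bm,\bn,\mu$ is finite by finite-dimensionality of $V_\lambda$. This is the honest version of your outline, but it is not the argument you wrote.
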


\begin{proof}

(a)
Recall that  the quantum Killing form is the dual to $\modc= \sum \modc_1 \ot \modc_2$, and $x = \sum \la \modc_2, x\ra \modc_1$.
We  have
\begin{align*}
\la z_\lambda, x \ra  &=  \la \sum \tr_q^{V_\lambda}(\modc_1) \modc_2 , x \ra  \\
&=  \sum \tr_q^{V_\lambda}(\modc_1) \la  \modc_2 , x \ra =  \sum \tr_q^{V_\lambda}(\la  \modc_2 , x \ra \modc_1) = \tr_q^{V_\lambda}(x).
\end{align*}

(b) In \cite[Chapter 6]{Jantzen}, it is proved that if $\lambda \in X_+\cap \frac12 Y$, then
\be
\label{eq.charac}
\cH (z_\lambda) = \sum_{\mu \in X} \dim \left( (V_\lambda)_{[\mu]}\right) K_{-2 \mu},
\ee
where $\dim \left( (V_\lambda)_{[\mu]}\right)$ is the rank of the weight $\mu$ submodule.
Actually, the simple proof in \cite[Chapter 6]{Jantzen} works for all $\lambda \in X_+$.
 The second equality of  \eqref{40008} is   the famous Weyl character formula, see e.g. \cite{Humphreys}.

 (c) Let $T$ be  the open Hopf link bottom tangle depicted in
Figure \ref{fig:openHopf}, with the closed component  colored by $V_\lambda$. Then $J_T= z_\lambda$. We have
$$ J_L(V_\lambda,V_{\lambda'})= \tr_q^{V_{\lambda'}}(J_T)= \la z_{\lambda'}, J_T\ra = \la z_{\lambda'}, z_{\lambda}\ra= \la z_{\lambda}, z_{\lambda'}\ra.$$

(d) %
 Joseph and Letzter \cite[Section 6.10]{JL:separation} (see \cite[Proposition 5]{Baumann} for another proof) showed that %
 $z_\lambda \in
 \brU_q \tri K_{-2\lambda}$. Since $K_{-2\lambda}\in \brU_q^\ev$, we have $z_\lambda\in \brU_q \tri \brU_q^\ev \subset  \brU_q^\ev$, by Lemma \ref{eq.evenD3a}. If $\lambda\in Y$, then $K_{-2\lambda}\in \Uq^\ev$, hence
 $z_\lambda\in \Uq^\ev$ again by Lemma \ref{eq.evenD3a}.
\end{proof}

Note that the right hand side of \eqref{40008}
makes sense, and is in $(\Uq^0)^\fW $, for {\em any}  $\lambda\in X$ not necessarily in  $\X_+ \cap \frac12 Y$. For {\em any}  $\lambda\in X$, define $z_\lambda\in \Zc(\brU_q)$ by
$$ z_\lambda = \chi^{-1} \left (  \sum_{\mu \in X} \dim (V_\lambda)_{[\mu]} K_{-2 \mu}   \right) .$$

If $\lambda+\rho$ and $\lambda'+\rho$ are in the same $\fW $-orbit, then by \eqref{40008}, $z_\lambda= z_{\lambda'}$. On the other hand, if $\lambda+\rho$ is fixed by a non-trivial element of
the Weyl group, then $z_\lambda=0$.

When $\lambda$ is in the root lattice,  $\lambda \in Y$, the right hand side of \eqref{40008} is in $\cA[K_{\al_1}^{\pm 2}, \ldots, K_{\al_\ell}^{\pm 2}]^\fW $.
Actually, the theory of invariant polynomials says that the right hand side of \eqref{40008}, with $\lambda\in Y$, gives all $\cA[K_{\al_1}^{\pm 2}, \ldots, K_{\al_\ell}^{\pm 2}]^{W}$, see e.g. \cite[Section 2.3]{Macdonald}.
Hence, we have the following statement.

\begin{proposition} The Harish-Chandra homomorphism maps the $\cA$-span of $\{z_\al, \al \in Y\} $ isomorphically onto $\cA[K_{\al_1}^{\pm 2}, \ldots, K_{\al_\ell}^{\pm 2}]^\fW $.
\label{50010}
\end{proposition}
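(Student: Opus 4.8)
\textbf{Proof plan for Proposition \ref{50010}.}

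The plan is to combine Proposition \ref{r.centt}(b), which computes $\cH(z_\lambda)$ for $\lambda\in X$, with the classical fact from invariant theory that the $\fW$-invariants of the group algebra of a weight lattice are spanned by orbit sums. First I would observe that for $\lambda\in Y$ the formula \eqref{40008} gives
\begin{gather*}
  \cH(z_\lambda)=\sum_{\mu\in X}\dim(V_\lambda)_{[\mu]}\,K_{-2\mu}
  =\frac{\sum_{w\in\fW}\sgn(w)\,K_{-2w(\lambda+\rho)}}{\sum_{w\in\fW}\sgn(w)\,K_{-2w(\rho)}},
\end{gather*}
and that all weights $\mu$ of $V_\lambda$ lie in $Y$ when $\lambda\in Y$, so $K_{-2\mu}\in\cA[K_{\alpha_1}^{\pm2},\dots,K_{\alpha_\ell}^{\pm2}]$; hence $\cH(z_\lambda)\in\cA[K_{\alpha_1}^{\pm2},\dots,K_{\alpha_\ell}^{\pm2}]^\fW$. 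Since by Proposition \ref{r.centt}(d) we already have $z_\lambda\in\Uq^\ev$ for $\lambda\in Y$, and $\cH$ restricted to the $Y$-degree-$0$ part is an algebra homomorphism, $\cH$ restricted to the $\cA$-span of $\{z_\lambda\mid\lambda\in Y\}$ lands in $\cA[K_{\alpha_1}^{\pm2},\dots,K_{\alpha_\ell}^{\pm2}]^\fW$.

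Next I would prove surjectivity onto that invariant ring. The classical statement (see e.g.\ \cite[Section 2.3]{Macdonald}, as cited) is that $\cA[K_{\alpha_1}^{\pm2},\dots,K_{\alpha_\ell}^{\pm2}]^\fW$ is a free $\cA$-module with basis the $\fW$-orbit sums $m_\nu:=\sum_{\mu\in\fW\nu}K_{-2\mu}$ indexed by $\fW$-orbits of $\nu\in 2Y$ (equivalently dominant $\nu\in Y$). I would then run the standard triangularity argument: fix a dominant $\lambda\in Y\cap X_+$ and expand $\cH(z_\lambda)=\sum_\mu\dim(V_\lambda)_{[\mu]}K_{-2\mu}$; since the weights of $V_\lambda$ are $\leq\lambda$ in the dominance order and the $\lambda$-weight space is one-dimensional, $\cH(z_\lambda)=m_\lambda+\sum_{\nu<\lambda}c_{\lambda\nu}m_\nu$ with $c_{\lambda\nu}\in\BZ\subset\cA$. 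This upper-triangular unipotent change of basis (indexed by dominant weights in $Y$, with respect to dominance order, which is locally finite below any fixed $\lambda$) shows the $\{\cH(z_\lambda)\}_{\lambda\in Y\cap X_+}$ form an $\cA$-basis of the invariant ring. Combined with the injectivity of $\cH$ on the center (which holds since $\cH$ is injective on all of $\Uq$'s $Y$-degree-$0$ part by the triangular decomposition, cf.\ the remark after Proposition \ref{r.HCiso}), this gives that $\cH$ maps the $\cA$-span of $\{z_\lambda\mid\lambda\in Y\}$ isomorphically onto $\cA[K_{\alpha_1}^{\pm2},\dots,K_{\alpha_\ell}^{\pm2}]^\fW$.

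The main obstacle I anticipate is a bookkeeping one rather than a conceptual one: making sure the invariant-theory input is invoked over $\cA=\BZ[v^{\pm1}]$ rather than over a field, and that the orbit-sum basis and the triangularity hold integrally. The orbit sums are manifestly $\cA$-linearly independent (distinct monomials appear), and the triangularity matrix has entries in $\BZ$ with $1$'s on the diagonal, so it is invertible over $\cA$; this is exactly the content needed. One should also note that $z_\lambda$ for $\lambda\in Y$ with $\lambda+\rho$ on a wall vanishes, and that $z_\lambda=z_{\lambda'}$ when $\lambda+\rho,\lambda'+\rho$ are $\fW$-conjugate, so the spanning set is naturally indexed by dominant $\lambda\in Y$ with $\lambda+\rho$ regular — but adding in the degenerate $z_\lambda$'s changes neither the span nor the conclusion. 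With these remarks the proof is a short assembly of Proposition \ref{r.centt}, the triangularity argument, and the cited invariant-theory fact.
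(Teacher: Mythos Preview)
Your proposal is correct and follows essentially the same approach as the paper: the paper also observes that for $\lambda\in Y$ the right-hand side of \eqref{40008} lies in $\cA[K_{\al_1}^{\pm2},\dots,K_{\al_\ell}^{\pm2}]^\fW$, and then appeals to the classical invariant-theory fact (citing \cite[Section~2.3]{Macdonald}) that the characters $\cH(z_\lambda)$ with $\lambda\in Y$ span all of this invariant ring over $\cA$. You have simply unpacked that citation by writing out the orbit-sum basis and the unipotent triangularity argument, which is exactly the content of the referenced result.
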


\subsection{Center of $\VA^\ev$}

\begin{lemma}
\label{6001}
Suppose $\beta \in Y$. Then
$z_\beta \in \VA^\ev$.
\end{lemma}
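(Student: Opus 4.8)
The goal is to show that for $\beta \in Y$ the Drinfel'd central element $z_\beta$ lies in $\VA^\ev$. The plan is to use the integral duality (Proposition~\ref{r.ortho3}) together with the Harish-Chandra description of $z_\beta$ and the invariant-theory input of Proposition~\ref{50010}. First I would recall from Proposition~\ref{r.centt}(a) that $\tr_q^{V_\lambda}(x) = \la z_\lambda, x \ra$ for all $x \in \brU_q$, and from Proposition~\ref{r.centt}(d) that $z_\beta \in \Uq^\ev$ when $\beta \in Y$. By Proposition~\ref{r.ortho3}(b), $\VA^\ev$ is precisely the $\cA$-dual of $\brUA^\ev$ inside $\Uq^\ev$ with respect to the quantum Killing form. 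So it suffices to prove $\la z_\beta, y\ra \in \cA$ for every $y \in \brUA^\ev$. By Proposition~\ref{r.centt}(a), $\la z_\beta, y \ra = \tr_q^{V_\beta}(y)$, and since $\beta \in Y$, Lemma~\ref{r.911}(c) gives exactly $\tr_q^{V_\beta}(y) \in \cA$ for $y \in \brUA$ (a fortiori for $y \in \brUA^\ev$). Combining: $z_\beta \in \Uq^\ev$ and $\la z_\beta, \brUA^\ev\ra \subset \cA$, hence $z_\beta \in \VA^\ev$ by the duality.

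An alternative, more self-contained route avoiding the reliance on Lemma~\ref{r.911}(c) would go through the Harish-Chandra image. By Proposition~\ref{r.centt}(b), $\cH(z_\beta) = \sum_{\mu} \dim(V_\beta)_{[\mu]} K_{-2\mu}$, which for $\beta \in Y$ lies in $\cA[K_{\al_1}^{\pm 2},\dots,K_{\al_\ell}^{\pm 2}]^{\fW}$. One would then need: (i) that $\cH$ restricted to $\Zc(\VA^\ev)$ is injective with image controllable, and (ii) that the $z_\beta$, $\beta \in Y$, span (over $\cA$) exactly the image described in Proposition~\ref{50010}. The issue here is that knowing $\cH(z_\beta)$ is integral on the Cartan part does not by itself place $z_\beta$ in $\VA^\ev$ — one would need the full statement that $\cH$ maps $\Zc(\VA^\ev)$ onto $\cA[K^{\pm 2}]^{\fW}$, which is presumably the content of the next results in the paper and would be circular to assume here. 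So I would favor the first route.

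The main obstacle I anticipate is making sure the duality statement is being applied in the correct ambient module: Proposition~\ref{r.ortho3}(b) characterizes $\VA^\ev$ as a subset of $\Uq^\ev$, so one genuinely needs $z_\beta \in \Uq^\ev$ first, which is why invoking Proposition~\ref{r.centt}(d) is essential and not merely cosmetic. A secondary point to verify carefully is that $\brUA^\ev$ (rather than the smaller $\UZ^\ev$) is the right dual partner: since $\VA^\ev$ is dual to $\brUA^\ev$ and $\UZ^\ev \subset \brUA^\ev$, checking integrality against all of $\brUA^\ev$ is exactly what is needed, and Lemma~\ref{r.911}(c) is stated for $\brUA$, so the hypotheses match. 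Once these alignments are in place the argument is a two-line deduction; I do not expect any computational difficulty beyond citing the already-established facts.
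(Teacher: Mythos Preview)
Your first route is exactly the paper's proof: use Proposition~\ref{r.ortho3}(b) to identify $\VA^\ev$ as the $\cA$-dual of $\brUA^\ev$ inside $\Uq^\ev$, invoke Proposition~\ref{r.centt}(d) to get $z_\beta \in \Uq^\ev$, and then conclude via $\la z_\beta, y\ra = \tr_q^{V_\beta}(y) \in \cA$ from Lemma~\ref{r.911}(c). The only detail you elide and the paper makes explicit is the reduction to dominant $\beta$ (since $V_\beta$, $\tr_q^{V_\beta}$, and the cited statements are only literally available for $\beta \in X_+$); this is harmless because every $z_\beta$ with $\beta \in Y$ either vanishes or equals $z_{\beta'}$ for some $\beta' \in X_+ \cap Y$ in the same shifted Weyl orbit.
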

\begin{proof}
By Proposition \ref{r.ortho3}, $\VA^\ev$ is the $\cA$-dual of
$\brUA^\ev$ with respect to the quantum Killing form, i.e.
$$ \VA^\ev = \{ x \in \Uq^\ev\mid \la x, y \ra \in \cA \quad \forall y \in \brUA^\ev \}.$$
Since $z_\beta \in \Uq^\ev$ by Proposition \ref{r.centt}, it is sufficient to show that for any $y \in \brUA^\ev$, $\la z_\beta,y \ra \in \cA$.

We can assume that $\beta$ is a dominant weight, $\beta \in X_+\cap Y$. By Proposition \ref{r.centt}
$$ \la z_\beta,y \ra = \tr_q^{V_\beta}(y)\in \cA,$$
 where the inclusion follows from  Lemma \ref{r.911}. This shows $z_\beta \in \VA^\ev$.
\end{proof}

\begin{proposition} \label{40003}
(a) One has
$$ \Zc(\VA^\ev) =  \Zc(\Ur^\ev) = \cA\text{\rm -span of } \{z_\al\mid \al \in Y\}.$$
(b)
The Harish-Chandra homomorphism maps $\Zc(\VA^\ev)$ isomorphically onto $(\VA^{\ev,0})^\fW $, i.e.
\be
\label{eq.hy05}
\chi(\Zc(\VA^\ev)) = (\VA^{\ev,0})^\fW =\cA[K_{\al_1}^{\pm 2}, \ldots, K_{\al_\ell}^{\pm 2}]^{\fW}.
\ee

\end{proposition}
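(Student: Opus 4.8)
\textbf{Proof proposal for Proposition \ref{40003}.}
The plan is to prove (a) and (b) together, deducing the equality of the three objects from a chain of inclusions combined with the Harish-Chandra description. First I would show $\chi(\Zc(\VA^\ev)) \subseteq (\VA^{\ev,0})^\fW$: by the triangular decomposition of $\VA^\ev$ (Proposition \ref{r11} and its even version \eqref{e84}) one has the analogue of \eqref{eq.equal}, namely $\chi(\Zc(\VA^\ev)) \subseteq (\VA^{\ev,0})^\fW$, since a central element of $\VA^\ev$ is in particular central in $\Uh$ so $\chi$ is an algebra map on it, and $p_0$ followed by $\sh_{-\rho}$ lands in the Cartan part $\VA^{\ev,0}$ (note $\sh_{-\rho}$ preserves $\VA^{\ev,0}=\cA[K_1^{\pm2},\dots,K_\ell^{\pm2}]$ by \eqref{eq.shift} since $\la K_{2\rho},K_\al\ra = v^{(\rho,\al)}\in\cA$). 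The same argument applies verbatim to $\Ur^\ev$, giving $\chi(\Zc(\Ur^\ev))\subseteq(\Ur^{\ev,0})^\fW$; but $\Ur^{\ev,0}=\VZ^{\ev,0}=\VA^{\ev,0}$ by the definition of $\cU$ in Section \ref{sec:Ur}, so both centers map into $(\VA^{\ev,0})^\fW$.

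Next I would establish the reverse inclusion $(\VA^{\ev,0})^\fW \subseteq \chi(\Zc(\VA^\ev))$. By Proposition \ref{50010} (or rather by the invariant-theoretic fact that the $z_\beta$, $\beta\in Y$, span $\cA[K_1^{\pm2},\dots,K_\ell^{\pm2}]^\fW$ over $\cA$), every element of $(\VA^{\ev,0})^\fW$ is an $\cA$-linear combination of elements $\chi(z_\beta)$ with $\beta\in Y$. By Lemma \ref{6001} each such $z_\beta$ lies in $\VA^\ev$, and since $z_\beta$ is central in $\Uh$ it lies in $\Zc(\VA^\ev)$. Hence $(\VA^{\ev,0})^\fW\subseteq\chi(\Zc(\VA^\ev))$, and combined with the first step this gives $\chi(\Zc(\VA^\ev))=(\VA^{\ev,0})^\fW$, which is (b); injectivity of $\chi$ on $\Zc(\VA^\ev)$ is inherited from the injectivity of $\chi$ on $\Zc(\Uh)$ (Proposition \ref{r.HCiso}).

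To finish (a) I would argue the chain $\cA\text{-span}\{z_\al\mid\al\in Y\} \subseteq \Zc(\VA^\ev)\subseteq \Zc(\Ur^\ev)$. The first inclusion is Lemma \ref{6001} together with centrality of $z_\al$ in $\Uh$. For the second, $\VA^\ev\subseteq\Ur^\ev$ (since $\cU=\UZ^-\VZ\supseteq\VZ$, hence $\cU^\ev\supseteq\VZ^\ev=\VA^\ev$), so any element of $\VA^\ev$ which is central in $\Uh$ is in particular an element of $\Ur^\ev$ central in $\Uh$. It then remains to close the loop by showing $\Zc(\Ur^\ev)$ is no larger than the $\cA$-span of $\{z_\al\}$: applying the already-proven inclusion $\chi(\Zc(\Ur^\ev))\subseteq(\VA^{\ev,0})^\fW$ and using injectivity of $\chi$, every central element $z\in\Ur^\ev$ has $\chi(z)\in(\VA^{\ev,0})^\fW=\chi(\cA\text{-span}\{z_\al\})$, so $z$ equals the corresponding $\cA$-combination of the $z_\al$. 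This forces all three objects to coincide. The main obstacle I anticipate is the verification that $\chi$ restricted to a central element of $\VA^\ev$ (or $\Ur^\ev$) is genuinely an \emph{algebra} homomorphism onto the Cartan part with image inside $\VA^{\ev,0}$ rather than some larger localized Cartan subalgebra — i.e. controlling that the projection $p_0$ does not leave $\VA^{\ev,0}$ — but this should follow from the fact that the PBW-type bases in Proposition \ref{r.Ur}(d) and Proposition \ref{r.basesUZ} respect the even triangular decomposition, so that $p_0$ of a basis element $F_\bm K_\bm K_{2\gamma}E_\bn$ is $\delta_{\bm,0}\delta_{\bn,0}K_{2\gamma}$, manifestly in $\VA^{\ev,0}$.
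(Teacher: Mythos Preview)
Your proposal is correct and follows essentially the same approach as the paper: the paper proves the circular chain $\Zc(\VA^\ev)\subseteq\Zc(\Ur^\ev)\subseteq\cA\text{-span}\{z_\al\}\subseteq\Zc(\VA^\ev)$, using Lemma~\ref{6001} for the last inclusion and the fact that $\Ur^{\ev,0}=\VA^{\ev,0}$ together with Proposition~\ref{50010} and injectivity of $\chi$ for the middle one, then deduces (b) from (a) and Proposition~\ref{50010}. You invoke the same ingredients, merely reordering the presentation so that (b) is established first and then feeds into (a); your final paragraph on controlling $p_0$ via the even triangular decomposition is exactly the point the paper uses implicitly when writing $\chi(\Zc(\Ur^\ev))\subset(\VA^{\ev,0})^\fW$.
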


\begin{proof} (a) Let us prove the following inclusions
\be  \Zc(\VA^\ev) \subset  \Zc(\Ur^\ev) \subset \cA\text{\rm -span of } \{z_\al\mid \al \in Y\} \subset  \Zc(\VA^\ev),
\label{n1011}
\ee
which implies that all the terms are the same and proves part (a).

The first inclusion is obvious, since $ \VA^\ev \subset \Ur^\ev$, while the third is  Lemma \ref{6001}.

Because the $\Ur^{\ev,0}=\cA[K_{\al_1}^{\pm 2}, \ldots, K_{\al_\ell}^{\pm 2}]$, one has $\cH(\Zc(\Ur^\ev)) \subset \cA[K_{\al_1}^{\pm 2}, \ldots, K_{\al_\ell}^{\pm 2}]^{\fW }$.
 Hence, by Proposition \ref{50010} we have
 $
 \Zc(\Ur^\ev)  \subset \cA\text{\rm -span of } \{z_\al\mid \al \in Y\}
 $,
 which is the second inclusion in \eqref{n1011}. This proves (a).

 (b)  follows from (a) and Proposition \ref{50010}.
\end{proof}

\begin{proposition}
\label{r.fu1}
The Harish-Chandra map $\chi$ is an isomorphism between $\Zc(\Vh)$ and $(\Vh^0)^\fW $.
\end{proposition}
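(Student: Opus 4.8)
The plan is to transport the known Harish-Chandra isomorphism for $\Uh$ (Proposition~\ref{r.HCiso}) and the integral version for $\VA^\ev$ (Proposition~\ref{40003}(b)) to the intermediate ground ring $\Chh$, using the duality with respect to the quantum Killing form as the main tool, exactly as in the $\cA$-case. Recall from Proposition~\ref{r.VZVh} that $\Vh$ is the topological closure of the $\Ch$-span of $\VA^\ev$, and from Proposition~\ref{r.42a} that $\Vh$ is ad-stable, so $\Zc(\Vh)=\Vh\cap\Zc(\Uh)=\Vh^\inv$ makes sense and $\chi(\Zc(\Vh))\subset (\Vh^0)^\fW$ by the triangular-decomposition argument of the Remark after Proposition~\ref{r.HCiso}. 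So the content is the reverse inclusion: every element of $(\Vh^0)^\fW$ is $\chi$ of a central element of $\Vh$, together with injectivity (which is free, since $\chi$ is already injective on $\Zc(\Uh)\supset\Zc(\Vh)$ by Proposition~\ref{r.HCiso}).

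First I would note that by \eqref{eq.hy03a}, $(\Vh^0)^\fW$ is the $h$-adic closure of $\BC[\tilde e_1,\dots,\tilde e_\ell][[h]]$, where $\tilde e_i\in(\VA^{\ev,0})^\fW$ by \eqref{eq.hy00}. By Proposition~\ref{40003}(b), $\chi$ maps $\Zc(\VA^\ev)$ onto $(\VA^{\ev,0})^\fW=\cA[K_1^{\pm2},\dots,K_\ell^{\pm2}]^\fW$, so each $\tilde e_i$ is $\chi(w_i)$ for some $w_i\in\Zc(\VA^\ev)\subset\Zc(\Vh)$ (the inclusion $\VA^\ev\subset\Vh$ is Proposition~\ref{r.VZVh}(a)). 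Since $\chi$ is an algebra homomorphism on the $Y$-degree-$0$ part and is continuous in the $h$-adic topology, the $\Ch$-subalgebra of $\Zc(\Vh)$ generated by $w_1,\dots,w_\ell$ together with its $h$-adic closure maps into $(\Vh^0)^\fW$, and in fact onto a dense subset. To upgrade density to surjectivity I would use that $\Zc(\Vh)$ is $h$-adically complete: $\Vh$ is a formal series $\Ch$-module (closed subsets of such are complete in the product $=h$-adic topology on the relevant piece), $\Zc(\Vh)=\Vh\cap\Zc(\Uh)$ is a closed $\Ch$-submodule, hence complete. Given $y\in(\Vh^0)^\fW$, using that $\chi$ restricted to $\Zc(\Uh)$ is an isomorphism onto $(\Uh^0)^\fW$, there is a unique $z\in\Zc(\Uh)$ with $\chi(z)=y$; I must show $z\in\Vh$. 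This is where the closure/completeness argument does the work: write $y=\lim_k y_k$ with $y_k\in\BC[\tilde e_1,\dots,\tilde e_\ell][[h]]$ and $y_k\equiv y_{k+1}\pmod{h^{k}}$; lift each $y_k$ to $z_k\in\Zc(\Vh)$ (polynomial expression in the $w_i$), and since $\chi$ is an $h$-adic isometry-like isomorphism on $\Zc(\Uh)$ — more precisely $\chi(h^k\Zc(\Uh))=h^k(\Uh^0)^\fW$ — the $z_k$ form a Cauchy sequence converging in $\Zc(\Vh)$ to an element $z'$ with $\chi(z')=y$; by injectivity $z=z'\in\Zc(\Vh)$.

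The step I expect to be the main obstacle is the compatibility of the $h$-adic filtration under $\chi$ on $\Zc(\Vh)$ — i.e. controlling that the lifts $w_i$ of $\tilde e_i$, and more generally lifts of $h^k$-small elements of $(\Vh^0)^\fW$, can themselves be taken $h^k$-small in $\Vh$ (not merely in $\Uh$). The subtlety is the familiar one flagged in Section~\ref{sec:ChModules}: the $h$-adic topology on the submodule $\Vh\subset\Uh$ need not coincide with the subspace topology. I would handle this by working with the explicit formal basis $\{h^{\|\bn\|}\bbe_h(\bn)\}$ of $\Vh$ and the fact that $\chi$ is, up to the harmless shift $\sh_{-\rho}$ (invertible, preserving the relevant lattices), just the projection $p_0$ onto the Cartan part composed with the base change $K_\al\mapsto K_\al$; on $\Vh$ this projection sends the basis element $h^{\|\bn\|}\bbe_h(\bn)$ either to $0$ (if $\bn_1$ or $\bn_3\ne0$) or to $h^{\|\bn_2\|}$ times a Cartan monomial, so $p_0(\Vh)=\Vh^0$ and $p_0(h^k\Vh)=h^k\Vh^0$ cleanly. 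Combined with $\chi(\Zc(\Vh))\subset(\Vh^0)^\fW$ already established, this gives the filtration compatibility and closes the argument. An alternative, possibly cleaner, route is to deduce this statement directly from Proposition~\ref{40003}(b) by the stability/dilatation formalism of Section~\ref{sec:topdil}, viewing $(\Uh,\Vh)$ or rather the pair $(\Zc(\Uh),\overline{\Zc(\VA^\ev)})$ through the dilatation triple attached to $\Vh$; I would pursue whichever of the two is shorter once the details are written out.
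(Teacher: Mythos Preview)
Your approach is correct and is essentially the paper's: lift each $\tilde e_i$ to $w_i\in\Zc(\VA^\ev)\subset\Zc(\Vh)$ via Proposition~\ref{40003}(b), then pass to the closure using \eqref{eq.hy03a}. The ``main obstacle'' you flag is not one: since $\Vh$ is by definition closed in $\Uh$ and $\chi^{-1}\colon(\Uh^0)^\fW\to\Zc(\Uh)\subset\Uh$ is automatically continuous (any $\Ch$-module homomorphism is), the limit $z=\lim z_k$ of your lifts lies in $\Vh$ without any need to compare the intrinsic $h$-adic topology of $\Vh$ with the subspace topology or to invoke the formal basis; this is exactly why the paper's proof is three lines. (The stability-principle alternative you mention is what the paper reserves for the companion result on $\Xh$, Proposition~\ref{r.isomor}, using the present proposition as input.)
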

\begin{proof} Since $\chi(\Zc(\Vh)) \subset (\Vh^0)^\fW $,  it remains to show $(\Vh^0)^\fW \subset  \chi(\Zc(\Vh))$. By \eqref{eq.hy03}
 \begin{align*}
 (\Vh^0)^\fW =  \overline  { \BC[\tilde e_1,\dots,\tilde e_\ell] [[h]]    } .
  \end{align*}
By \eqref{eq.hy00} and \eqref{eq.hy05},
 $$\tilde e_i \in (\VA^{\ev,0})^\fW = \chi(\Zc(\VA^\ev)) \subset  \chi(\Zc(\Vh)).$$
 Hence  $ (\Vh^0)^\fW \subset \chi(\Zc(\Vh))$. This completes the proof of the proposition.
\end{proof}

\subsection{Center of $\Xh$}
\begin{proposition}
\label{r.isomor}
The Harish-Chandra map $\chi$ is an isomorphism between $\Zc(\Xh)$ and $(\Xh^0)^\fW $.
\end{proposition}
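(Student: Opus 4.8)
The plan is to mimic the proof of the analogous statement for $\Vh$ (Proposition \ref{r.fu1}), since $\Xh$ sits between $\Vh$ and $\Uh$ in exactly the way $\Xh^0$ sits between $\Vh^0$ and $\Uh^0$. As always, one inclusion is formal: $\chi(\Zc(\Xh))\subset(\Xh^0)^\fW$ because the Harish-Chandra map sends central elements into $\fW$-invariants (this is \eqref{eq.equal}, valid for any subalgebra satisfying the triangular decomposition, and $\Xh$ does by the even triangular decomposition together with its topological basis \eqref{eq.basissX}), and because $\chi$ restricted to the center is always injective (it is the restriction of the injective Harish-Chandra homomorphism on all of $\Uh$, Proposition \ref{r.HCiso}). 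So the content is the reverse inclusion $(\Xh^0)^\fW\subset\chi(\Zc(\Xh))$.

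First I would compute $(\Xh^0)^\fW$ explicitly. By \eqref{eq.hy02} we have $(\Xh^0)^\fW = \BC[h^{k_1/2}e_1,\dots,h^{k_\ell/2}e_\ell][[\sqrt h]]$, so it suffices to realize each generator $h^{k_i/2}e_i$ (and, after $\sqrt h$-adic completion, arbitrary power series in them) as $\chi$ of a central element of $\Xh$. The natural candidates are the Drinfel'd central elements $z_\al$, $\al\in Y$, and their rescalings. From Proposition \ref{40003}(b), $\chi(\Zc(\VA^\ev))=(\VA^{\ev,0})^\fW = \cA[K_{\al_1}^{\pm2},\dots,K_{\al_\ell}^{\pm2}]^\fW$, and in particular the elements $\tilde e_i = \exp(h^{k_i}e_i)$ of \eqref{eq.hy00} lie in $\chi(\Zc(\VA^\ev))\subset\chi(\Zc(\Vh))$. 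Since $\VA^\ev\subset\Xh$ (indeed $\VA^\ev\subset\Vh\subset\Xh$, using Proposition \ref{r.VZVh}(a) and that $\Xh$ is the $\sqrt h$-adic completion of the $\Chh$-span of $\sXZ^\ev$, Theorem \ref{r.sXZ}(c)), these central elements of $\VA^\ev$ are central elements of $\Xh$; hence $\tilde e_i\in\chi(\Zc(\Xh))$. Now $\log\tilde e_i = h^{k_i}e_i$ is obtained from $\tilde e_i$ by a $\sqrt h$-adically convergent power series inside $(\Xh^0)^\fW$, and the preimage under $\chi$ of a convergent series of central elements is again a (convergent series of) central element(s) of $\Xh$, because $\Xh$ is $\sqrt h$-adically complete and $\Zc(\Xh)$ is closed (the center is cut out by the continuous conditions $x\btri c = \boldep(c)x$). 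So $h^{k_i}e_i\in\chi(\Zc(\Xh))$. The one genuinely new ingredient over the $\Vh$ case is getting the half-power $h^{k_i/2}e_i$: here I would invoke the structure of $\Xh$ as a dilatation triple $(\Uh,\Xh,\Vh)$ and the stability principle (Proposition \ref{r.topdilat}), or more concretely observe that $h^{k_i/2}e_i$ differs from an element already known to be in $\chi(\Zc(\VA^\ev))\otimes_\cA\tA$-type object only by square-roots of the relevant scalars, which are visible from the explicit basis of $\XZ^{\ev,0}$ in \eqref{eq.ba7}. Concretely, one writes $h^{k_i/2}e_i$ as a $\sqrt h$-adically convergent $\Chh$-combination of the basis elements $\sqrt{(q;q)_\bn}\,Q^\ev(\bn)$ of $\XZ^{\ev,0}$ and checks that the corresponding combination of $z_\al$'s (which also expand in this basis up to units, by \eqref{eq.hy00}) converges in $\Zc(\Xh)$.

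Assembling: the $\Chh$-algebra generated by $\{h^{k_i/2}e_i\}$ together with its $\sqrt h$-adic completion is all of $(\Xh^0)^\fW$ by \eqref{eq.hy02}, and each generator lies in $\chi(\Zc(\Xh))$, which is a $\Chh$-subalgebra closed under $\sqrt h$-adically convergent sums; hence $(\Xh^0)^\fW\subset\chi(\Zc(\Xh))$, completing the proof. The main obstacle I expect is the half-integral power issue: making rigorous that passing from $\Vh$-centrality to $\Xh$-centrality genuinely produces the $\sqrt h$-scaled generators, rather than only the integer-power ones. This is where the dilatation-triple formalism and the stability principle (Propositions \ref{r.topdilat}, \ref{r.closed}) should do the work, since the whole point of $\Xh = \Uh(\sqrt a)$ is that it is the ``square-root dilatation'' of $\Vh = \Uh(a)$, and $\fW$ acts compatibly on all three layers (Section \ref{sec:ActionW}); one feeds the inclusion $\Zc(\VA^\ev)\hookrightarrow\Zc(\Xh)$ through the stability principle applied to the $\fW$-invariant parts to conclude that the dilated generators are central. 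A secondary, more routine point to verify carefully is that $\Zc(\Xh)$ is indeed $\sqrt h$-adically closed and that $\chi|_{\Zc(\Xh)}$ remains injective, both of which follow from the corresponding facts for $\Uh$ together with $\Xh\subset\UUh$ being separated.
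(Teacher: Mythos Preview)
Your overall strategy is right --- the stability principle for the dilatation triple $(\Uh,\Xh,\Vh)$ is exactly the tool --- but your application of it is misdirected, and this is precisely the ``main obstacle'' you flag. You try to build preimages generator by generator: you get $\tilde e_i$, hence $h^{k_i}e_i$, in $\chi(\Zc(\Xh))$, and then you need $h^{k_i/2}e_i$. Your suggestions for that last step (``differs \dots\ only by square-roots of the relevant scalars'', or ``the corresponding combination of $z_\al$'s'') do not actually produce a central element of $\Xh$ mapping to $h^{k_i/2}e_i$; there is no square-root operation on central elements, and the $z_\al$ do not expand in the $Q^\ev(\bn)$ basis in a way that lets you simply halve the exponents. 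Feeding ``the inclusion $\Zc(\VA^\ev)\hookrightarrow\Zc(\Xh)$'' through the stability principle likewise does nothing: the stability principle transports a \emph{map} between dilatation triples, and an inclusion gives no new elements.

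The paper's proof applies the stability principle to the right map, namely $\chi^{-1}$ itself, viewed as a $\Ch$-module homomorphism $(\Uh^0)^\fW\to\Uh$. The point is that \eqref{eq.hy01}--\eqref{eq.hy03} exhibit $\bigl((\Uh^0)^\fW,(\Xh^0)^\fW,(\Vh^0)^\fW\bigr)$ as a topological dilatation triple (with dilatation factors $h^{k_i}$ on the generator $e_i$), while $(\Uh,\Xh,\Vh)$ is one by construction. One already knows $\chi^{-1}\bigl((\Uh^0)^\fW\bigr)\subset\Uh$ (Proposition~\ref{r.HCiso}) and $\chi^{-1}\bigl((\Vh^0)^\fW\bigr)\subset\Vh$ (Proposition~\ref{r.fu1}); the stability principle (Proposition~\ref{r.topdilat}) then gives $\chi^{-1}\bigl((\Xh^0)^\fW\bigr)\subset\Xh$ in one stroke. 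Since these preimages are automatically central (they lie in $\Zc(\Uh)$), this finishes the argument without ever touching individual generators or taking square roots. Your detour through $\tilde e_i$ and $\log\tilde e_i$ is the proof of Proposition~\ref{r.fu1}, not of the present statement.
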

\begin{proof} By the definition, $\chi(\Zc(\Xh))\subset (\Xh^0)^\fW $. We need to show that $\chi^{-1}((\Xh^0)^\fW ) \subset \Zc(\Xh)$. Because $\chi^{-1}((\Xh^0)^\fW )$ consists of central elements, one needs only to show $\chi^{-1}((\Xh^0)^\fW ) \subset \Xh$. We will use the stability principle of dilatation triples.

From \eqref{eq.hy01}, \eqref{eq.hy02}, and \eqref{eq.hy03}, the triple $(\Uh^0)^\fW , (\Xh^0)^\fW , (\Vh^0)^\fW $ form a topological dilatation triple (see Section \ref{sec:topdil}).

The triple $\Uh,\Xh,\Vh$ also form a topological dilatation  triple (see Section \ref{sec:sX}).
Since $\chi^{-1} \left( (\Uh^0)^\fW \right) \subset \Uh$ and $\chi^{-1} \left( (\Vh^0)^\fW \right) \subset \Vh$ by Proposition \ref{r.fu1}, one also has
$\chi^{-1} \left( (\Xh^0)^\fW \right) \subset \Xh$, by the  stability principle (Proposition \ref{r.topdilat}).
\end{proof}

\subsection{Quantum Killing form and Harish-Chandra homomorphism} \label{sec.DDD}
Since $\chi(x), \chi(y)$ determine $x,y$ for central  $x, y\in \UZ$, one should be able to calculate  $\la x, y\ra$ in terms of $\chi(x), \chi(y)$.

Let $\DDD$ be the denominator of the right hand side of \eqref{40008}, i.e.
$$\DDD:= \sum_{w \in \fW} \sgn(w) K_{-w(2\rho)}.$$
By the Weyl denominator formula,
\be
\label{eq.DDD}  \DDD= \prod_{\al \in \Phi_+}(K_\al^{-1} - K_{\al}) = K_{2\rho} \prod_{\al \in \Phi_+}(K_{\al}^{-2}-1) \in K_{2\rho} \VA^\ev.
\ee

 Let us define
$$ \quad \ddd :=  \la K_{-2\rho}, \DDD \ra =\prod_{\al \in \Phi_+}(v_\al^{-1} - v_{\al}).$$

From the formula for the quantum dimension \eqref{eq.unknot}, we have
\be
\label{eq.dimq}
{\ddd} \dim_q(V_\lambda) = \la K_{-2\rho-2\lambda}, \DDD \ra .
\ee

Here is a formula expressing $\la x, y\ra$ in terms of $\chi(x), \chi(y)$.
\begin{proposition}\label{r.KHC}
Suppose $x\in \Zc(\overline \Xh)$, and $y=z_\lambda$, $\lambda\in Y$.
Then
\be
\label{eq.KHC}
|\fW | {\ddd}\,  \la x, y \ra = \la \DDD \,\chi(x), \DDD \,\chi(y) \ra
\ee
\end{proposition}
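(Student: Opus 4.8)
The plan is to reduce the identity \eqref{eq.KHC} to a manipulation of the Harish--Chandra images, using the fact that on central elements the quantum Killing form is completely governed by $\chi$. The starting observation is that both sides of \eqref{eq.KHC} depend $\cA$-bilinearly (or $\Chh$-bilinearly after the appropriate extension) on $x$ and $y=z_\lambda$ through their Harish--Chandra images. By Proposition~\ref{r.isomor}, $\chi$ restricts to an isomorphism $\Zc(\overline\Xh)\cong(\Xh^0)^\fW$ (more precisely onto the closure), and by Proposition~\ref{40003}(b) the $\cA$-span of the $z_\lambda$, $\lambda\in Y$, maps isomorphically onto $(\VA^{\ev,0})^\fW$. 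Since these invariant Cartan subrings are $\cA$- (resp.\ $\Chh$-) spanned by the characters $\chi(z_\mu)$, $\mu\in Y$, it suffices by bilinearity to prove \eqref{eq.KHC} in the special case $x=z_\mu$, $y=z_\lambda$ with $\mu,\lambda\in Y$. So the first step is this reduction to Hopf-link-type central elements on both slots.

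Second, I would compute both sides in this special case. For the left-hand side, Proposition~\ref{r.centt}(c) gives $\la z_\mu,z_\lambda\ra = J_{\Hopf}(V_\mu,V_\lambda) = \tr_q^{V_\lambda}(z_\mu)$, which by the character formula \eqref{40008}/\eqref{eq.charac} and the evaluation of $\la K_{-2\rho-2\nu},\DDD\ra$ in \eqref{eq.dimq} can be written as an explicit alternating sum over $\fW\times\fW$: using $\chi(z_\mu)=\DDD^{-1}\sum_{w}\sgn(w)K_{-2w(\mu+\rho)}$, one gets
\[
  \la z_\mu, z_\lambda\ra
  = \frac{1}{\ddd}\,\sum_{w,w'\in\fW}\sgn(w)\sgn(w')\,\la K_{-2w(\mu+\rho)},\,K_{-2w'(\lambda+\rho)}\ra
\]
after clearing the $\DDD$'s against the Weyl denominators, where I use that $\la K_{-2\nu},\DDD\ra$ extracts exactly the relevant coefficient. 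For the right-hand side, $\DDD\,\chi(z_\mu)=\sum_w\sgn(w)K_{-2w(\mu+\rho)}$ and similarly for $\lambda$, so
\[
  \la \DDD\,\chi(z_\mu),\,\DDD\,\chi(z_\lambda)\ra
  = \sum_{w,w'\in\fW}\sgn(w)\sgn(w')\,\la K_{-2w(\mu+\rho)},\,K_{-2w'(\lambda+\rho)}\ra .
\]
Comparing the two displays, \eqref{eq.KHC} becomes the assertion that the doubled $\fW$-sum equals $|\fW|$ times the single-$\fW$-sum version appearing on the left, which follows from $\fW$-invariance of the quantum Killing form restricted to $\Uh^0$ (i.e.\ $\la w(a),w(b)\ra = \la a,b\ra$, valid since the form on the Cartan part is the dual of the Cartan part $\cD^{-2}$ of $\modc$, which is $\fW$-symmetric): summing over the diagonal $w=w'$ each of the $|\fW|$ terms of the inner single sum contributes equally, producing the factor $|\fW|$, while off-diagonal contributions are matched up by the change of variable $w'\mapsto w w'$. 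Carrying out this bookkeeping carefully gives the factor $|\fW|\ddd$ on the left and completes the special case.

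The main obstacle I anticipate is not the algebra of the $\fW$-sums but making the reduction in the first paragraph fully rigorous on the completed level: $x$ ranges over $\Zc(\overline\Xh)$, which is the $\sqrt h$-adic closure of a $\Chh$-span of characters, so one must check that both sides of \eqref{eq.KHC} are $\sqrt h$-adically continuous in $x$ (the quantum Killing form $\sL:\overline\Xh\ho\Xh\to\Chh$ is continuous by construction, and $\chi$, being $\Chh$-linear, is automatically continuous, as is multiplication by $\DDD\in K_{2\rho}\VA^\ev\subset\overline\Xh$), and that $y=z_\lambda$ with $\lambda\in Y$ already lies in $\Uq^\ev\subset\Xh$ by Proposition~\ref{r.centt}(d) so no completion is needed in the second slot. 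Granting this continuity, the identity for a dense set of $x$ (finite $\Chh$-combinations of $z_\mu$, $\mu\in Y$) extends to all of $\Zc(\overline\Xh)$, and the proof is done. A secondary technical point worth spelling out is the precise normalization: that $\la K_{-2\rho},\DDD\ra=\ddd$ and $\la K_{-2\rho},K_{-2\rho}\ra$ come out so that the $\DDD$-clearing in the left-hand computation really does leave a single factor $1/\ddd$ and not some extra power of $\ddd$ or sign; this is a direct application of \eqref{e57a} with $\mu$-parameters read off from the weights of $\DDD$.
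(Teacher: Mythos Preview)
Your reduction step has a genuine gap: the $\Chh$-span of $\{z_\mu : \mu \in Y\}$ is \emph{not} dense in $\Zc(\overline\Xh)$. Under $\chi$, each $z_\mu$ is a $\fW$-invariant Laurent polynomial in $K_\al^2 = \exp(hH_\al)$; in its $h$-expansion the part homogeneous of degree $k$ in the $H_\al$'s carries a full factor $h^k$, so relative to the basis $\{h^{\|\bn\|/2}H^{\bn}\}$ of $\Xh^0$ its degree-$k$ coefficients lie in $h^{k/2}\Chh$. This condition is preserved under $\Chh$-combinations and $\sqrt h$-adic limits, but by \eqref{eq.hy02} a general element of $(\Xh^0)^\fW$ has arbitrary $\Chh$-coefficients in each degree. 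For instance $\chi^{-1}(h e_1)\in\Zc(\Xh)$ (with $e_1$ the quadratic invariant) has unit degree-$2$ coefficients and is not approximable; the ribbon element $\br$, with $\chi(\br)=v^{(\rho,\rho)}\exp(-\tfrac{h}{2}\sum_{\al\in\Pi} H_\al\brH_\al/d_\al)$, fails for the same reason, and $\br$ is precisely the $x$ to which the proposition is applied in Lemma~\ref{40006}(c).

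The paper avoids any reduction by computing $\la x, z_\lambda\ra$ directly for arbitrary central $x$: since $x$ acts on $V_\lambda$ by a scalar $c(\lambda,x)$, one has $\la x, z_\lambda\ra = \tr_q^{V_\lambda}(x) = c(\lambda,x)\dim_q(V_\lambda)$ via \eqref{eq.cent}, and $c(\lambda,x)$ is computed from the action of $p_0(x)=\sh_\rho\chi(x)$ on the highest-weight vector, giving $c(\lambda,x)=\la K_{-2\rho-2\lambda},\chi(x)\ra$. After inserting $\DDD/\DDD$ and using \eqref{eq.dimq}, the $\fW$-skew-invariance of $\DDD\chi(x)$ produces the factor $|\fW|$ with no density argument needed. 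Your step~2 is not an independent shortcut: the displayed double-sum formula for $\la z_\mu,z_\lambda\ra$ is off by the factor $|\fW|$ you then try to restore, and obtaining the correct single-sum expression $\ddd\,\la z_\mu,z_\lambda\ra=\la K_{-2\rho-2\lambda},\DDD\chi(z_\mu)\ra$ is exactly the scalar-action computation above specialized to $x=z_\mu$.
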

\begin{proof}

As $x$ is central, it acts on $V_\lambda$ by $c(\lambda,x) \, \id$, where $c(\lambda,x) \in \Ch$. Recall that $1_\lambda$ is the highest weight vector of $V_\lambda$. We have
$K_\al \cdot 1_\lambda = v^{(\al,\lambda)} 1_\lambda = \la K_\al, K_{-2\lambda} \ra \, 1_\lambda$. Hence for every $z\in \Uh^0$,
\be
\label{eq.uua}
z \cdot 1_\lambda = \la x, K_{-2\lambda} \ra \, 1_\lambda
\ee
 Since the highest weight vector $1_\lambda$ is killed by all $E_\al, \al \in \Pi$, we have
\begin{align*}
 x \cdot 1_\lambda & = p_0(x) \cdot 1_\lambda = \sh_{\rho} \chi(x) \cdot 1_\lambda  \\
 &= \la \sh_{\rho} \chi(x), K_{-2 \lambda} \ra \, 1_\lambda \quad \text{by   \eqref{eq.uua} }
\end{align*}
Thus, $c(\lambda,x) = \la \sh_{\rho} \chi(x), K_{-2 \lambda} \ra$. Further, by \eqref{eq.shift},
\begin{align*}
c(\lambda,x) & = \la \sh_{\rho} \chi(x), K_{-2 \lambda} \ra = \big\la \la K_{-2 \rho}, \chi(x) \ra\, \chi(x),  K_{-2 \lambda}\big \ra \\
 &=  \la K_{-2 \rho}, \chi(x) \ra\,\la  \chi(x),  K_{-2 \lambda}\ra =  \la K_{-2 \rho}, \chi(x) \ra\,\la  K_{-2 \lambda}, \chi(x)  \ra = \la K_{-2 \rho-2\lambda}, \chi(x) \ra \\
 &= \la K_{-2 \rho-2\lambda}, \frac{\DDD \,\chi(x)}{\DDD} \ra = \frac{\la K_{-2 \rho-2\lambda}, \DDD \chi(x)\ra}{\la K_{-2 \rho-2\lambda}, \DDD\ra} = \frac{\la K_{-2 \rho-2\lambda}, \DDD \chi(x)\ra}{ \ddd \dim_q(V_\lambda)}\\
 &= \frac{1}{ \ddd \dim_q(V_\lambda)} \left\la \frac{1}{|\fW |}\sum_{w\in \fW} \sgn(w) K_{-2 w(\lambda +\rho}), \DDD \chi (x) \right\ra\\
 &= \frac{1}{ |\fW |\ddd \dim_q(V_\lambda)} \left\la \DDD\chi(z_\lambda), \DDD \chi (x) \right\ra.
\end{align*}
Here the last equality on line three follows from \eqref{eq.dimq}, and the equality on the line four follows from the fact that $\DDD \chi (x)$ is $\fW $-skew-invariant and the quantum Killing form is $\fW $-invariant on $\Xh^0$.

 Using \eqref{eq.cent} and the fact that $x= c(\lambda,x)\id$ on $V_\lambda$,
 \begin{align*}
 \la x, z_\lambda \ra& = \tr_q^{V_\lambda}(x) = c(\lambda,x) \dim_q(V_\lambda) = \frac{1}{ |\fW |\ddd } \left\la \DDD\chi(z_\lambda), \DDD \chi (x) \right\ra,
 \end{align*}
 where for the last equality we used the value of $c(\lambda,x)$ calculated above.
\end{proof}

\begin{remark}
It is not difficult to show that Proposition \ref{r.KHC} holds for every $y \in \Zc(\Xh)$.
\end{remark}

\subsection{Center of $\tK_1'$} Recall that $\CtK_1$ is the set of all elements of the form
$$ x= \sum x_k, \quad    x_k \in \cF_k(\CK_1).$$
One might expect that every central element of $\CtK_1$ has the same form with $x_k$ central. We don't know if this is true. We have here a weaker statement which is enough for our purpose. In our presentation, $x_k$ is central, but might not be in $\cF_k(\CK_1)$. However, $x_k$ still has enough integrality.

\begin{lemma} \label{40006} Suppose $x\in \Zc(\CtK_1)$. There are central elements $x_k \in \Zc(\CsX)$ such that

(a) one has $|\fW | x= \sum_{k=0}^\infty (q;q)_k\, x_k$,

(b) for every $k \ge 0$, $(q;q)_k x_k$ belongs to $\Zc(\VA^\ev \ot_\cA \cB)$,

(c) for every $k \ge 0$, one has $  \cT_\pm( x_k)  \in \frac1{\ddd}{\BC[v^{\pm 1}]}%
$, and

(d) for every $k \ge 0$, one has $ \tT_\pm( x_k )  \in \frac1{\ddd}{\BC[v^{\pm 1}]}%
 $.

\end{lemma}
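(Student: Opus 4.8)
The plan is to start from an arbitrary central element $x \in \Zc(\CtK_1)$ and bootstrap, using the triangular/Harish-Chandra machinery of Sections~\ref{sec.HC1}, \ref{sec.DDD}, the decomposition $x = \sum_k x_k$ with $x_k$ central, together with a priori integrality. First I would recall that $\CtK_1 \subset \overline{\Xh} \cap \Uh$, so $x \in \Zc(\overline\Xh)$ and, by Proposition~\ref{r.isomor} (applied to the $h$-adic closure, or directly to $\Xh$), $\chi(x)$ lies in $(\overline{\Xh^0})^\fW$. By \eqref{eq.hy02}, $(\Xh^0)^\fW = \BC[h^{k_1/2}e_1,\dots,h^{k_\ell/2}e_\ell][[\sqrt h]]$, and since $\tilde e_i = \exp(h^{k_i}e_i) \in (\VA^{\ev,0})^\fW$, the ring $(\Vh^0)^\fW$ is the $h$-adic closure of $\BC[\tilde e_1,\dots,\tilde e_\ell][[h]]$. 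The key point is that Proposition~\ref{40003}(b) identifies $\chi(\Zc(\VA^\ev)) = (\VA^{\ev,0})^\fW$ with the $\cA$-span of $\{z_\al : \al\in Y\}$, so each $\tilde e_i = \chi(z^{(i)})$ for some $z^{(i)} \in \Zc(\VA^\ev)$, and any element of $(\Vh^0)^\fW$ is an $h$-adically convergent $\BC[[h]]$-combination of monomials in the $z^{(i)}$.

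Next I would expand $x$ against the filtration. Since $x \in \CtK_1$, write $x = \sum_k (q;q)_k y_k$ with $y_k \in (\sXZ^\ev \cap \Ur^\ev)\otimes_\cA\cB$; this gives the a priori bound that, modulo $(q;q)_{k+1}$ times something, the partial sums of $x$ land in $(q;q)_k(\VA^\ev\otimes_\cA\cB)$. Combining with centrality and the fact that $\chi$ is injective on the center, I would produce $x_k \in \Zc(\CsX)$ with $|\fW|x = \sum_k (q;q)_k x_k$ and $(q;q)_k x_k \in \Zc(\VA^\ev\otimes_\cA\cB)$ — this is (a) and (b). The factor $|\fW|$ appears exactly because $\chi(\Zc(\Xh))$ versus $\chi(\Zc(\VA^\ev))$ differ only after clearing the denominator $|\fW|$ coming from the averaging in the Weyl-group skew-invariant manipulation of Proposition~\ref{r.KHC} (and the fact that $(q;q)_k x_k$ needs to be expressed via the $z_\al$, whose $\cA$-span is only reached after inverting $|\fW|$ in the worst case — more precisely, $\DDD\chi$ of a central element is skew-invariant and $|\fW|$-divisibility is the obstruction).

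For (c) and (d) I would apply Proposition~\ref{r.KHC}: for $(q;q)_k x_k \in \Zc(\VA^\ev\otimes_\cA\cB)$, both $\cT_\pm$ and the quantum trace $\tT_\pm$ are computed by pairing against $z_\lambda$'s and can be rewritten, via \eqref{eq.KHC} and \eqref{eq.cent}, as $\frac{1}{|\fW|\ddd}\la \DDD\chi(\cdot), \DDD\chi(\cdot)\ra$-type expressions where the remaining Killing-form pairings of elements of $K_{2\rho}\VA^\ev$ (using \eqref{eq.DDD}) lie in $\cB = \BC[v^{\pm1}]$ by the triangular integrality \eqref{e57a} and Theorem~\ref{r.Ur}(f). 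Thus $\cT_\pm((q;q)_k x_k)$ and $\tT_\pm((q;q)_k x_k)$ lie in $\frac{1}{\ddd}\BC[v^{\pm1}]$; since $\cT_\pm,\tT_\pm$ are $\cB$-linear and $(q;q)_k$ is a unit in $\BC((v))$ but we want the per-$x_k$ statement, I would instead run the pairing argument directly on $x_k$ (which is central in $\CsX$, hence $\chi(x_k)$ is $\fW$-invariant of the right shape), getting $\cT_\pm(x_k), \tT_\pm(x_k) \in \frac{1}{\ddd}\BC[v^{\pm1}]$. The main obstacle I anticipate is (b): producing the central elements $x_k$ that simultaneously (i) sum $h$-adically to $|\fW|x$ with the correct $(q;q)_k$ filtration weights and (ii) have $(q;q)_k x_k$ genuinely in $\Zc(\VA^\ev\otimes_\cA\cB)$ rather than merely in some completion — this requires carefully matching the dilatation-triple structure $(\Uh^0)^\fW \supset (\Xh^0)^\fW \supset (\Vh^0)^\fW$ with the filtration on $\CtK_1$ and invoking Proposition~\ref{40003}(b) to recognize finite $\cA$-combinations of $z_\al$'s order by order, keeping track of where the $|\fW|$ denominator is unavoidable.
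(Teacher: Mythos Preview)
Your proposal has the right general shape but contains a genuine gap in parts (a) and (b): you never say \emph{how} to construct the central $x_k$ from the (non-central) terms of an expansion of $x$. The paper's construction is concrete and elementary: write $x = \sum_k (q;q)_k x_k'$ with $x_k' \in \sXZ^\ev \otimes_\cA \cB$ and $(q;q)_k x_k' \in \Ur^\ev \otimes_\cA \cB$, set $y_k := \sum_{w\in\fW} w(\chi(x_k'))$ (Weyl-averaging the Cartan projection), and define $x_k := \chi^{-1}(y_k)$ via Proposition~\ref{r.isomor}. Since $\chi(x)$ is already $\fW$-invariant, $|\fW|\chi(x) = \sum_k (q;q)_k y_k$, whence $|\fW|x = \sum_k (q;q)_k x_k$. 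This is where the factor $|\fW|$ comes from --- simple averaging --- not from Proposition~\ref{r.KHC} or from skew-invariance of $\DDD\chi$ as you suggest. Part (b) is then immediate once you notice the key identity $\Ur^{\ev,0} = \VA^{\ev,0}$: since $(q;q)_k x_k' \in \Ur^\ev\otimes_\cA\cB$, its Cartan projection and its Weyl average $(q;q)_k y_k$ lie in $(\VA^{\ev,0})^\fW\otimes_\cA\cB$, and Proposition~\ref{40003}(b) gives $(q;q)_k x_k \in \Zc(\VA^\ev)\otimes_\cA\cB$. There is no order-by-order recognition of $z_\al$'s to do, and no $|\fW|$-divisibility obstruction here.

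For (c) and (d), your direction via Proposition~\ref{r.KHC} is correct, but you are missing the specific two-step mechanism. The paper first uses (b) and Theorem~\ref{r.Ur}(f) to get $\cT_\pm(x_k) \in \frac{1}{(q;q)_k}\cB \subset \BC(v)$. Separately, expanding $\ddd\cT_\pm(x_k) = \la \DDD K_{\pm 2\rho}\br_0^{\pm 1}, \DDD y_k\ra$ via \eqref{eq.sd10} and invoking Lemma~\ref{r.Hopfsub}(b) and Lemma~\ref{r.halfinteg.new} (the integrality of $\la \br_0^{\pm 1}, K_{2\rho}\,\cdot\,\ra$ on $\sXZ^{\ev,0}$) shows $\ddd\cT_\pm(x_k) \in \tB$. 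Intersecting, $\ddd\cT_\pm(x_k) \in \BC(v) \cap \tB = \cB$. The triangular formula \eqref{e57a} alone does not give this; the half-integrality lemmas for $\br_0$ are essential. The argument for (d) is analogous, using $\la K_{2\mu}, \DDD\ra \in \cB$ directly.
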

\begin{proof}
(a) Recall that
$\cF_k(\CK_1)= \Big( (q;q)_k (\sXZ^\ev) \cap (\Ur^\ev)\Big )
  \ot_{\cA}  \cB.$
Hence $x$ has a presentation
\be \label{eq.xn1}
 x =  \sum_{k=0}^\infty (q;q)_k\, x_k',
 \ee
where $x_k' \in \sXZe \ot_{\cA}  \cB$ and $(q;q)_k x_k'\in \Ur^\ev \ot_{\cA}  \cB$.

Let  $y_k = \sum _{w\in \fW} w(\chi(x'_k))$, which is $\fW $-invariant. Then $y_k \in (\CsX^0)^\fW $. By Proposition \ref{r.isomor}, $x_k:= \chi^{-1}(y_k)$ is central and belongs to $ \Zc(\CsX)$.

Using the  $\fW $-invariance of $\chi(x)$ and \eqref{eq.xn1}, and using  $\fW $-invariance of $\chi(x)$,
$$
 |\fW |\, \chi( x) =  \sum _{w\in \fW} w(\chi(x))=    \sum_{k=0}^\infty (q;q)_k \, \sum _{w\in \fW} w(\chi(x_k')) = \sum_{k=0}^\infty (q;q)_k \, y_k.
 $$
Applying $\chi^{-1}$ to the above, we get the form required in (a):
$|\fW | x= \sum_{k=0}^\infty (q;q)_k\, x_k.$

(b)
Since $(q;q)_k\, x_k'\in \Ur^\ev\ot_{\cA}  \cB $ and $\Ur^{\ev,0}= \VA^{\ev,0}$, one has
$$(q;q)_k \,y_k=(q;q)_k  \sum _{w\in \fW} w(\chi(x'_k))\in \VA^{\ev,0}\ot_{\cA}  \cB.$$
 By Proposition \ref{40003}, $(q;q)_k\,x_k = \chi^{-1}((q;q)_k\,y_k)\in \Zc(\VA^\ev)\ot_{\cA}  \cB$. %

(c)
Because $\VA^\ev \subset \Ur^\ev$, we have $\cT_\pm (\VA^\ev)\subset \cA$, by Theorem \ref{r.Ur}(f). From (b), we have $$(q;q)_k \cT_\pm (x_k) \in \cA\ot_\cA \cB=\cB,$$
 or
\be
\label{eq.jh1}
  \cT_\pm (x_k) \in  \frac 1{(q;q)_k }\cB.
\ee
A simple calculation shows that $\chi(\br)= v^{(\rho,\rho)} K_{2\rho} \br_0$. Since $\sXZ^{\ev,0}$ is an $\tA$-Hopf-algebra (Lemma~\ref{r.Hopfsub}), we have
 $$ \Delta (K_{2\rho}\sXZ^{\ev,0}) \subset K_{2\rho}\sXZ^{\ev,0} \ot K_{2\rho}\sXZ^{\ev,0}.$$
 Since $\DDD\in K_{2\rho} \VA^{\ev,0}$, we have $\DDD y_k \in K_{2\rho}\sXZ^{\ev,0}$. Hence $\Delta(\DDD y_k) = \sum K_{2\rho} y_k' \ot K_{2\rho} y_k''$, where $y'_k,y_k''\in \sXZ^{\ev,0}\ot_{\cA}  \cB$.
 Since $\DDD K_{\pm 2\rho} \in \sXZ^{\ev,0}$, we have $\Delta (\DDD K_{\pm 2\rho})= \sum a_1 \ot a_2$ with $a_1,a_2 \in \sXZ^{\ev,0}$.
Using \eqref{eq.KHC}, we have
\begin{align*}
  \ddd\cT_\pm(x_k) & =    \ddd\la \br^{\pm 1},  x_k \ra =  \la \DDD \chi(\br^{\pm 1}), \DDD  \chi(x_k) \ra= v^{(\rho,\rho)}\la \DDD K_{\pm 2\rho} \br^{\pm 1}_0, \DDD y_k \ra  \\
  &= v^{(\rho,\rho)}\sum  \la \DDD K_{\pm 2\rho}, K_{ 2\rho} y_k' \ra \, \la  \br^{\pm 1}_0,  K_{ 2\rho}y_k'' \ra \quad \text{by \eqref{eq.sd10}}  \\
  &= v^{(\rho,\rho)}\sum  \la a_1, K_{\pm 2\rho}   \ra  \, \la a_2,  y_k' \ra \, \la  \br^{\pm 1}_0,  K_{ 2\rho} y_k'' \ra \quad \text{again by \eqref{eq.sd10}}.
 \end{align*}
The first two factors $\la a_1, K_{\pm 2\rho}\ra$ and $\la a_2,  y_k' \ra$ are in $\tB$ by Lemma \ref{r.Hopfsub}, where $\tB=\tA \ot_{\cA}  \cB$.
The third factor $\la  \br^{\pm 1}_0,  K_{ 2\rho} y_k'' \ra $ is in $v^{(\rho,\rho)} \tB$ by Lemma \ref{r.halfinteg.new}.
Hence $ \ddd\cT_\pm(x_k) \in v^{2(\rho,\rho)} \tB =\tB$. Together with \eqref{eq.jh1},
$$   \ddd\cT_\pm(x_k) \in \BC(v) \cap \tB= \cB.$$

(d)  By definition, $\Omega_\pm =\sum c^{\pm}_{\lambda} V_\lambda$, where the sum is finite and $c^\pm_\lambda \in \BC$.
We have
$$ \ddd\, \tT_\pm (x_k)=
\sum c^{\pm}_{\lambda} \ddd \,\tr_q^{V_\lambda}(x_k)
 .$$
Hence, to show that $\ddd\, \tT_\pm (x_k) \in \cB$,
it is enough to show that for any $\lambda \in X_+$,
$
\ddd \, \tr^{V_\lambda}(x_k) \in \cB.
$
Using \eqref{eq.cent} and \eqref{eq.KHC}, we have
\begin{align*}
|\fW | \ddd \, \tr_q^{V_\lambda}(x_k) &= |\fW |\ddd \, \la z_\lambda,x_k \ra = \la \DDD \chi(z_\lambda), \DDD \chi(x_k) \ra \\
&=  \left\la \sum_{w\in \fW} \sgn(w) K_{-2 w(\lambda + \rho)}, \DDD  y_k \right\ra \qquad \text{by \eqref{40008}} \\
&= \sum_{w\in \fW} \sgn(w) \la  K_{-2 w(\lambda + \rho)}, \DDD  y_k \ra \\
& = \sum_{w\in \fW} \sgn(w) \la  K_{-2 w(\lambda + \rho)}, \DDD \ra  \la  K_{-2 w(\lambda + \rho)}, y_k \ra.
\end{align*}
The second factor $ \la  K_{-2 w(\lambda + \rho)}, y_k \ra$ is in $\tB$ by Lemma \ref{r.Hopfsub}. As for the first factor, for any $\mu \in X$,
$$ \la  K_{2 \mu}, \DDD \ra = \la  K_{2 \mu}, \prod_{\al \in \Phi_+}(K_\al - K_\al^{-1}) \ra = \prod_{\al \in \Phi_+} (\la  K_{2 \mu} , K_\al\ra - \la  K_{2 \mu} , K_{-\al}\ra)=
\prod_{\al \in \Phi_+} (v^{-(\mu,\al)} - v^{(\mu,\al)}) \in \Cv.$$
Hence, $\ddd\, \tr_q^{V_\lambda}(x_k)\in \tB$.

On the other hand, since $(q;q)\, x_k \in \VA^\ev\ot_{\cA}  \cB$, we have $\la z_\lambda, (q;q) x_k \ra \in \cB$. Hence
$$ \ddd \, \tr_q^{V_\lambda}(x_k)\in  \tB \cap \BC(v) = \Cv.$$
This completes the proof of the lemma.
\end{proof}

\subsection{Comparing $\cT$ and $\tcD$}
\begin{proposition} Suppose $\Omega$ is a strong Kirby color at level $\zeta$, $x \in (\CtK_m)^\inv$, and $\ve_j=\pm 1$ for $j=1, \ldots,m$. Then
$$  \Big( \bigotimes_{j=1}^m \tcD_{\ve_j}\Big) \big(x \big) \eqz \Big( \bigotimes_{j=1}^m \cT_{\ve_j}\Big) \big(x \big).
$$
\label{60011}
\end{proposition}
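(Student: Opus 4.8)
The plan is to prove the identity by reducing it, in three successive steps, to a computation on the Drinfel'd central elements $z_\alpha$ ($\alpha\in Y$), where it becomes a direct consequence of the handle‑slide property of the strong Kirby color $\Omega$. Recall that $\tcD_\pm=\tr_q^{\Omega_\pm}$. \textbf{Step 1 (reduction to $m=1$).} For $0\le k\le m$ put $F_k:=\cT_{\ve_1}\ho\cdots\ho\cT_{\ve_k}\ho\tcD_{\ve_{k+1}}\ho\cdots\ho\tcD_{\ve_m}$, so $F_m=\bigotimes_j\cT_{\ve_j}$ and $F_0=\bigotimes_j\tcD_{\ve_j}$. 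By Proposition~\ref{r.5011}, each of $\cT_\pm$ and $\tcD_\pm$ carries the ad‑invariant part of $\CtK_n$ into that of $\CtK_{n-1}$. The $m$ functionals entering $F_{k-1}$ and $F_k$ act on disjoint tensor slots, so we may apply all of them except the $k$‑th one first; by the previous sentence this produces a well‑defined $y\in(\CtK_1)^\inv=\Zc(\CtK_1)$, and $F_{k-1}(x)=\tcD_{\ve_k}(y)$, $F_k(x)=\cT_{\ve_k}(y)$. Hence, once we show $\cT_\pm(y)\eqz\tcD_\pm(y)$ for every $y\in\Zc(\CtK_1)$, we obtain $F_0(x)\eqz F_1(x)\eqz\cdots\eqz F_m(x)$, which is the proposition.

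\textbf{Step 2 (reduction to $z_\alpha$, $\alpha\in Y$).} Given $y\in\Zc(\CtK_1)$, write $|\fW|\,y=\sum_{k\ge0}(q;q)_k\,y_k$ as in Lemma~\ref{40006}, so $(q;q)_k y_k\in\Zc(\VA^\ev\otimes_{\cA}\cB)$. By continuity, $|\fW|\,\cT_\pm(y)=\sum_k(q;q)_k\cT_\pm(y_k)\in\Cvh$ and likewise for $\tcD_\pm$; since $\cT_\pm$ (resp.\ $\tcD_\pm$) carries $\cF_k(\CK_1)$ into $(q;q)_k\cB$, the tails $\sum_{k\ge r}(\cdots)$ lie in $(q;q)_r\Cvh$ and vanish under $\ev_\xi$ ($r=\ord(\xi)$), while each $(q;q)_k\cT_\pm(y_k)=\cT_\pm\big((q;q)_k y_k\big)$ and $(q;q)_k\tcD_\pm(y_k)=\tcD_\pm\big((q;q)_k y_k\big)$ lies in $\cB$ (by Theorem~\ref{r.Ur}(f) for $\cT_\pm$, and by Lemma~\ref{r.911}(a) applied to the constituents of $\Omega_\pm$ for $\tcD_\pm$). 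Thus it suffices to prove $\cT_\pm(w)\eqz\tcD_\pm(w)$ for every $w\in\Zc(\VA^\ev\otimes_{\cA}\cB)$. By Proposition~\ref{40003}, $\Zc(\VA^\ev)$ is the $\cA$‑span of $\{z_\alpha\mid\alpha\in Y\}$, so by $\cB$‑linearity it is enough to treat $w=z_\alpha$ with $\alpha\in Y$; as $z_\alpha$ depends only on the $\fW$‑orbit of $\alpha+\rho$ we may moreover assume $\alpha\in Y\cap X_+$.

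\textbf{Step 3 (the computation on $z_\alpha$).} Here $z_\alpha=J_T$ for $T$ the open Hopf link bottom tangle whose closed component carries the color $V_\alpha$. Using that $z_\alpha$ and $\br^{\pm1}$ are central, \eqref{eq.qkcentral} and Proposition~\ref{r.centt}(a) (extended to $\br^{\pm1}$ by continuity), together with \eqref{n40101}, give
\[
\cT_+(z_\alpha)=\la\br,z_\alpha\ra=\la z_\alpha,\br\ra=\tr_q^{V_\alpha}(\br)=\ff_\alpha^{-1}\dim_q(V_\alpha)=J_{U_-}(V_\alpha),
\]
and symmetrically $\cT_-(z_\alpha)=J_{U_+}(V_\alpha)$; geometrically this just reflects that $(\pm1)$‑surgery on the arc component of $T$ turns the $V_\alpha$‑circle into a $(\mp1)$‑framed unknot. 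On the other hand, writing $\Omega=\sum_\mu c_\mu V_\mu$ and combining the definition of $\Omega_+$ with Proposition~\ref{r.centt}(c) ($\tr_q^{V_\mu}(z_\alpha)=J_{\mathrm{Hopf}}(V_\mu,V_\alpha)$),
\[
\ev_\xi\big(\tcD_+(z_\alpha)\big)=\frac{\ez\big(\sum_\mu c_\mu\ff_\mu\,J_{\mathrm{Hopf}}(V_\mu,V_\alpha)\big)}{\ez\big(J_{U_+}(\Omega)\big)}=\frac{\ez\big(J_L(\Omega,V_\alpha)\big)}{\ez\big(J_{U_+}(\Omega)\big)},
\]
where $L$ is the Hopf link with the $\Omega$‑colored component $(+1)$‑framed and the $V_\alpha$‑colored component $0$‑framed (the $\ff_\mu$ realizing the $(+1)$‑framing). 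Because $\alpha\in Y\cap X_+$, the color $V_\alpha$ is a \emph{root color}, so the strong handle‑slide property \eqref{em1} applies: sliding the $V_\alpha$‑component over the $(+1)$‑framed $\Omega$‑component unlinks the Hopf link and leaves a $(-1)$‑framed unknot colored $V_\alpha$ split from the $(+1)$‑framed $\Omega$‑unknot, whence $J_L(\Omega,V_\alpha)\eqz J_{U_+}(\Omega)\,J_{U_-}(V_\alpha)$ and $\tcD_+(z_\alpha)\eqz J_{U_-}(V_\alpha)=\cT_+(z_\alpha)$. The case of $\cT_-$ and $\Omega_-$ is identical, sliding over the $(-1)$‑framed $\Omega$‑component.

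\textbf{Main difficulty.} The substance is Step 3, where the delicate point is purely topological: one must pin down orientations and framings so that the handle slide of the root‑colored component over the Kirby‑colored component of a Hopf link genuinely produces a split union of framed unknots with exactly the stated framings, and so that the $\pm$‑labels on $\cT$, on $\Omega$, and on the resulting blow‑down all match. A lesser but non‑trivial point is the bookkeeping in Step 2 — that $\cT_\pm$ and $\tcD_\pm$ may be interchanged with the $(q;q)$‑adic expansion of $y$ and the resulting values evaluated termwise at $\zeta$ — which rests on the continuity of these functionals together with the integrality of their values on $\VA^\ev\otimes_{\cA}\cB$ recorded in Lemma~\ref{40006}.
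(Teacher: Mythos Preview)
Your three–step strategy is exactly that of the paper (your Steps 1, 2, 3 correspond to the paper's Steps 3, 2, 1), and Steps 1 and 3 are correct. The gap is in Step~2.

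In Step~2 you write $|\fW|\,y=\sum_{k\ge0}(q;q)_k y_k$ via Lemma~\ref{40006} and then assert that the tail $\sum_{k\ge r}(q;q)_k\cT_\pm(y_k)$ lies in $(q;q)_r\Cvh$ ``since $\cT_\pm$ carries $\cF_k(\CK_1)$ into $(q;q)_k\cB$''. But the $y_k$ produced by Lemma~\ref{40006} are \emph{not} in $\CK_1$ (this is precisely the point of that lemma: the $y_k$ are central, at the cost of leaving $\cF_k(\CK_1)$). Your only integrality input is $(q;q)_k\cT_\pm(y_k)\in\cB$, which gives the tail as an element of $\Cvh\cap(q;q)_r\Chh$; this intersection is \emph{strictly larger} than $(q;q)_r\Cvh$ and need not lie in $\ker(\ev_\zeta)$. (For instance, with $r=1$ and $\zeta^D=-1$ the polynomial $(v-1)(v+2)$ is in $\Cv\cap(1-q)\Chh$ but evaluates to $-2\neq0$.) So the truncation is not justified.

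The paper closes this gap using parts (c) and (d) of Lemma~\ref{40006}, namely $\cT_\pm(y_k),\,\tT_\pm(y_k)\in\frac{1}{\ddd}\cB$. This gives $(q;q)_k\cT_\pm(y_k)\in\frac{(q;q)_k}{\ddd}\cB$, so the series converges in $\frac{1}{\ddd}\Cvh$, and since $\ddd\not\eqz0$ and $(q;q)_k\eqz0$ for $k\ge r$, the truncation follows. You cite Lemma~\ref{40006} but never invoke (c) and (d); once you do, your Step~2 is complete and the proof goes through.
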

\begin{proof}

We proceed in three steps.

 Step 1: $m=1$ and
$x \in (\VA^\ev\ot_{\cA}  \cB)^\inv= \Zc(\VA^\ev\ot_{\cA}  \cB)$. By Proposition~\ref{40003}, $x$ is a $\cB$-linear combination of   $z_\lambda$, $\lambda \in Y$. We can assume that $x= z_\lambda$ for some $\lambda\in X_+\cap Y$.

Let $L_1$ be the disjoint union of  $U_{-\ve}$ and $U_{\ve}$, where the first is colored by $V_\lambda$ and the second by $\Omega$. Sliding the first component over the second, from $L_1$
we get a link $L_2$, which is the Hopf link where
 the first component has framing $0$ and the second has framing $\ve$, see Figure \ref{fig:sliding}.
 \FIG{sliding}{Links $L_1$ (left) and $L_2$, which is obtained from $L_1$ by sliding. Here $\ve=-1$}{height=3cm}
  From the strong handle slide invariance \eqref{em1} we get
\be
  J_{L_1}(V_\lambda,\Omega)  \eqz  J_{L_2}(V_\lambda,\Omega).
 \label{e031a}
\ee

 Let us rewrite the left hand side and the right hand side of \eqref{e031a}.
 \begin{align*}\text{LHS of \eqref{e031a}} &= J_{U_{-\ve}}(V_\lambda) \, J_{U_{\ve}}(\Omega) = \tr_q^{V_\lambda}(\br^{\ve}) \,  J_{U_{\ve}}(\Omega) \\
 &= \la z_\lambda, \br^{\ve} \ra  J_{U_{\ve}}(\Omega) = \cT_{\ve}(z_\lambda)\, J_{U_{\ve}}(\Omega) .
 \end{align*}

 Let $L_0$ be the Hopf link with $0$ framing on both components. Then
\begin{align*}
 \text{RHS of \eqref{e031a}} = J_{L_2}(V_\lambda,\Omega )
& =  J_{U_{\ve}}(\Omega) \, J_{L_0} (V_\lambda, \Omega_\ve)  \quad \text{ by \eqref{e032}} \\
&\eqz   J_{U_{\ve}}(\Omega)  \,  \tr_q^{\Omega_\ve } (z_\lambda) \quad \text{ by \eqref{eHopf}} \\
&=  J_{U_{\ve}}(\Omega) \tT_\ve(z_\lambda).
\end{align*}

Comparing the left hand side and the right hand side of \eqref{e031a} we get
 $\cT_{\ve}(z_\lambda) \eqz \tcD_\ve(z_\lambda)$.

Step 2: $m=1$, and $x$ is an arbitrary element of $(\CtK_1)^\inv= Z (\CtK_1)$. Let
$ x = \sum_{k=0}^\infty (q;q)_k\, x_k$
be the presentation of $x$ as described in
 Lemma \ref{40006}. Since
  $x_k \in \Zc (\Xh)$ and all $\cT_\pm, \tT_\pm$ %
  are continuous in the $h$-adic topology of $\Xh$,
  \begin{align*}
  \cT_\pm (x) =  \sum_{k=0}^\infty (q;q)_k \cT_\pm  (x_k) \\
  \tT_\pm (x) =  \sum_{k=0}^\infty (q;q)_k \tT_\pm  (x_k).
  \end{align*}
Both right hand sides are in $\frac1\ddd\hCv$ because $\cT_\pm  (x_k) , \tT_\pm  (x_k) \in \frac1\ddd\Cv$
by Lemma \ref{40006}.
Since $(q;q)_k \eqz 0$ if  $k \ge r$ and $\ddd \not \eqz 0$,  we have
\begin{align*}
\cT_\pm (x) \eqz \sum_{k=0}^{r-1} (q;q)_k \cT_\pm(x_k)\eqz \cT_\pm \left( \sum_{k=0}^{r-1} (q;q)_k x_k  \right)\\
\tT_\pm (x) \eqz \sum_{k=0}^{r-1} (q;q)_k \tT_\pm(x_k)\eqz \tT_\pm \left( \sum_{k=0}^{r-1} (q;q)_k x_k  \right)
\end{align*}
By Lemma \ref{40006}(b), the elements in the big parentheses are in $\Zc(\VA^\ev\ot_\cA \cB)$. Hence, by the result of Step 1, we have $\cT_\pm(x)\eqz \tT_\pm(x)$.

Step  3: general case. Define $a_k$ (for
 $k=0,1,\ldots,m$) and $b_k$ (for
 $k=1,\ldots,m$)  as follows:
$$ a_k= \left( \bigotimes_{j=1}^k \tcD_{\ve_j} \,  \otimes \,    \bigotimes_{j=k+1}^m \cT_{\ve_j} \right) (x), \quad b_k = \left(  \bigotimes_{j=1}^{k-1} \tcD_{\ve_j} \,  \otimes\id\otimes  \,  \bigotimes_{j=k+1}^m \cT_{\ve_j} \right) (x).$$

Then
\be a_{k-1}= \cT_{\ve_k} (b_k), \quad \text{and} \quad a_{k}= \tcD_{\ve_k} (b_k).
\label{n6101}
\ee

By
 Proposition  \ref{r.5011},  $b_k \in (\CtK_1)^\inv$.
By Step 2,

$$  \tcD_{\ve_k} (b_k) \eqz  \cT_{\ve_k} (b_k).$$

Using \eqref{n6101}, the above identity becomes
$
 a_{k-1}\eqz a_{k} .
$
Since this holds true for $k=1,2,\ldots,m$, we have
$ a_0 \eqz a_m,$ which is the statement of the proposition.
\end{proof}
\subsection{Proof of Proposition \ref{p.tech1}} By Theorem \ref{r.Jvalues}, if $T$ is an algebraically split $m$-component bottom tangle, then $J_T\in \tK_m(\Ur)\subset \CtK_m$. Hence Proposition \ref{p.tech1} follows from Proposition \ref{60011}.
This also completes the proof of Theorems \ref{thm03} and \ref{thm030}.

\subsection{Proof of Theorem \ref{r40a}}
\label{sec:r40a}
The existence of invariant $J_M=J_M^\fg \in \Zqh$ is established by Theorem \ref{r.Jvalues}.  Theorem \ref{thm03} shows that  $\ev_\xi(J^\fg_M)= \tau_M^\fg(\xi)$. The uniqueness of $J_M$ follows from (i) every element of $\Zqh$ is determined by its values at infinitely many roots of 1  of prime power orders (see Section \ref{sec.Habiro}), and (ii) $\ZZ'_{P\fg}$ contains infinitely many such roots of unity (by Proposition \ref{pKirby}). This completes the proof of Theorem \ref{r40a}.

\subsection{The case $\zeta=1$, proof of Proposition \ref{r10}}
\label{sec:r10}
Let $\Omega$ be the trivial $\Uh$-module $\BC[[h]]$. By   Proposition \ref{p.color1a}, $\Omega$ is a strong Kirby color, and $\tau_M(\Omega)=1$.
 By Theorem \ref{thm03}, we have
$\ev_1(J_M)=1$. This completes the proof of Proposition \ref{r10}.

Proposition \ref{r10} can also be proved using the theory of finite type invariants of
integral homology $3$-spheres as follows. Note that $\ev_1(J_M)$ is the constant coefficient of the Taylor expansion of $J_M$ at $q=1$, which is a finite type invariant of order $0$ (see for example \cite{KLO}). Hence $\ev_1(J_M)$ is  constant on
the set of integral homology $3$-spheres. For $M=S^3$, $\ev_1(J_M)= 1$.  Hence $\ev_1(J_M)= 1$ for any integral homology $3$-sphere $M$.
\np
\appendix

\section{Another Proof of Proposition \ref{r.41a}}\label{sec:r.41a} In the main text we take Proposition \ref{r.41a} from work of Drinfel'd  \cite{Drinfeld} and Gavarini \cite{Gavarini}. Here we give an independent proof.

Each of  $\Uh^{\le 0} := (\Uh^0 \Uh^-)\,\hat{}$ and  $\Uh^{\ge 0} := (\Uh^0 \Uh^+)\,\hat{}$, where $(\ )\,\hat{}$ denotes the $h$-adic completion, is a Hopf subalgebra of $\Uh$, and
 $\cR \in \Uh^{\le 0} \ho \Uh ^{\ge 0}$. Let $A_L\subset\Uh^{\le0}$ and $A_R\subset\Uh^{\ge0}$ are respectively the left image (see Section \ref{sec.leftmodule}) and the right image  of $\cR \in \Uh^{\le 0} \ho \Uh ^{\ge 0}$. Here the right image is the obvious counterpart of the left image and can be formally defined so that  $\sigma_{21}(A_R)$ is the left image of $\sigma_{21}(\cR)$, where $\sigma_{21}: \Uh^{\le 0} \ho \Uh ^{\ge 0} \to \Uh^{\ge 0} \ho \Uh ^{\le 0}$ is the isomorphism given by $\sigma_{21}(x\ot y) = y \ot x$.

 Explicitly, $A_L$ and $A_R$ are defined as follows.
For $\bn=(\bn_1, \bn_2) \in \BN^t \times \BN^\ell$ let
$$ \cR'(\bn)= F^{(\bn_1)} H^{\bn_2}, \quad \cR''(\bn)= E^{(\bn_1)} \brH^{\bn_2}.$$
Then $\{ \cR'(\bn) \mid \bn \in \BN^{t+\ell} \}$ is a topological basis of $\Uh^{\le 0}$, and $\{ \cR''(\bn) \mid \bn \in \BN^{t+\ell} \}$ is a topological basis of $\Uh^{\ge 0}$. From \eqref{501}, there are units $f(\bn)$ in $\Ch$ such that
$$ \cR = \sum_{\bn \in \BN^{t+\ell}} f(\bn) \, h^{\| \bn\| } \cR'(\bn) \ot \cR''(\bn).$$
Then $A_L$  and $A_R$ are respectively  the topological closures (in $\Uh$) of the $\Ch$-span of
 \be
 \label{eq.basisALR}
 \{ h^{\| \bn\| } \cR'(\bn) \mid \bn \in \BN^{t+\ell} \}\  \text{ and } \  \{ h^{\| \bn\| } \cR''(\bn) \mid \bn \in \BN^{t+\ell} \}.
 \ee

For $\Ch$-submodules $\sH_1, \sH_2 \subset \Uh$, let $\overline{\sH_1 \ot \sH_2}$, called the {\em closed tensor product}, be the topological closure of $\sH_1 \ot \sH_2$ in the $h$-adic topology of $\Uh\ho \Uh$.
\begin{proposition} For each of $A= A_L, A_R$ one has
$$\boldmu (\overline {A \ot A}) \subset A, \quad \Delta(A) \subset \overline {A \ot A}, \quad S(A) \subset A.$$

This means, each of $A_L,A_R$ is a Hopf algebra in the category where the completed tensor product is replaced by the closed tensor product.
\end{proposition}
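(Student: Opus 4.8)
The plan is to realize $A_L$ as the image of the space of continuous $\Ch$-linear functionals on $\Uh^{\ge 0}$ under the ``right-leg pairing'' with $\cR$, and then to transport the Hopf structure across this identification using the quasitriangularity axioms of $\cR$. Concretely, I would fix a topological basis $\{b_i\}_{i\in I}$ of the Hopf subalgebra $\Uh^{\ge 0}$ and write $\cR=\sum_i a_i\ot b_i$ with $a_i\in\Uh^{\le0}$; as in Section~\ref{sec.leftmodule} the family $(a_i)$ is automatically $0$-convergent, so for every continuous $\Ch$-linear $f\colon\Uh^{\ge0}\to\Ch$ the sum $\lambda(f):=(\id\ot f)(\cR)=\sum_i f(b_i)\,a_i$ converges in $\Uh^{\le0}$. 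Taking $f$ to be a dual basis element gives $\lambda(b_i^{*})=a_i$, and a general $\lambda(f)$ is an arbitrary $h$-adically convergent $\Ch$-combination of the $a_i$; hence $\{\lambda(f):f\in(\Uh^{\ge0})^{*}\}=A_L$ exactly (with $\lambda(\epsilon)=1\in A_L$ coming from $(\id\ot\epsilon)(\cR)=1$).

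The three closure properties then follow from $(\id\ot\Delta)(\cR)=\cR_{13}\cR_{12}$, $(\Delta\ot\id)(\cR)=\cR_{13}\cR_{23}$, and $(S\ot\id)(\cR)=\cR^{-1}=(\id\ot S^{-1})(\cR)$, used as in the classical proof that the left leg of an $R$-matrix spans a sub-Hopf-algebra, but with care about convergence and about closed versus completed tensor products. From the first identity one gets $\lambda(f)\lambda(g)=\lambda(g*f)$, where $g*f$ is the convolution $(g*f)(x)=\sum g(x_{(1)})f(x_{(2)})$, again a continuous functional on $\Uh^{\ge0}$ since $\Uh^{\ge0}$ is a Hopf subalgebra; thus $\boldmu(A_L\ot A_L)\subset A_L$, and since $\boldmu\colon\Uh\ho\Uh\to\Uh$ is continuous and $A_L$ is closed, $\boldmu(\overline{A_L\ot A_L})\subset\overline{\boldmu(A_L\ot A_L)}\subset A_L$. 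From the third identity together with $S^{-1}(\Uh^{\ge0})\subset\Uh^{\ge0}$ one gets $S(\lambda(f))=\lambda(f\circ S^{-1})\in A_L$.

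The comultiplication is the delicate case. Applying $b_i^{*}$ to the third tensor slot of $(\Delta\ot\id)(\cR)=\cR_{13}\cR_{23}=\sum_{k,l}a_k\ot a_l\ot b_kb_l$ yields $\Delta(a_i)=\sum_{k,l}b_i^{*}(b_kb_l)\,a_k\ot a_l$; I would verify this double sum converges $h$-adically in $\Uh^{\le0}\ho\Uh^{\le0}$, using that $a_k\ot a_l\in h^{\,v(k)+v(l)}\Uh^{\ho2}$ with $v(k)\to\infty$, so that its finite partial sums, which lie in the \emph{algebraic} tensor product $A_L\ot A_L$, witness $\Delta(a_i)\in\overline{A_L\ot A_L}$. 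Then for general $f$, $\Delta(\lambda(f))=\sum_i f(b_i)\,\Delta(a_i)$ is a $0$-convergent sum of elements of the closed $\Ch$-submodule $\overline{A_L\ot A_L}$, hence lies in $\overline{A_L\ot A_L}$. For $A_R$ the argument is entirely parallel, pairing the first leg of $\cR$ against continuous functionals on $\Uh^{\le0}$ and using the same three identities. Finally, the remaining Hopf-algebra axioms (co/associativity, counit, antipode) for $A_L$ and $A_R$ are inherited verbatim from $\Uh$, since $\boldmu$, $\Delta$, $S$, $\epsilon$ and the unit all restrict to these subspaces; so the three displayed containments are exactly what must be checked.

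The main obstacle is precisely this topological bookkeeping in the coproduct step: the functional-calculus argument gives $\Delta(A)\subset A\ho A$ for free, and the work is in upgrading this to $\Delta(A)\subset\overline{A\ot A}$, which is where the explicit $0$-convergence estimates and the specific form of $\cR$ relative to the bases \eqref{eq.basisALR} genuinely enter. Everything else is a routine transcription of the standard minimal-quasitriangular-Hopf-algebra argument into the $h$-adic setting.
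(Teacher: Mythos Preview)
Your proposal is correct and takes essentially the same approach as the paper. Both arguments exploit the duality between $A_L$ and $\Uh^{\ge0}$ encoded by $\cR$: the paper writes $\Delta(\tcR'(\bn))=\sum_{\bm,\bk}f_{\bm,\bk}^{\bn}\,\tcR'(\bm)\ot\tcR'(\bk)$ using the structure constants of multiplication in $\Uh^{\ge0}$, while your $\lambda(f)\lambda(g)=\lambda(g*f)$ and $\Delta(a_i)=\sum_{k,l}b_i^{*}(b_kb_l)\,a_k\ot a_l$ say exactly the same thing in the language of continuous functionals; the antipode step is likewise identical (the paper's auxiliary $A'_L$ is just your $\{\lambda(f\circ S^{-1})\}$). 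Your explicit treatment of the passage from $\Delta(a_i)\in\overline{A_L\ot A_L}$ to $\Delta(\lambda(f))\in\overline{A_L\ot A_L}$ via closedness is in fact slightly more careful than the paper, which leaves that step implicit.
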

\begin{remark}
When the ground ring is a field, the fact that both $A_L, A_R$ are Hopf subalgebras is proved in \cite{Radford}. Here we modify the proof in \cite{Radford} for the case when the ground ring is $\Ch$.
\end{remark}
\begin{proof} We prove the proposition for $A=A_L$ since the case $A=A_R$ is quite analogous.

Let $\tcR'(\bn) = f(\bn) \, h^{\| \bn\| } \cR'(\bn)$. Then $\cR = \sum _{\bn} \tcR'(\bn) \ot \cR''(\bn)$.
Using the defining relation  $(\Delta\ot \id)(\cR)= \cR_{13} \cR_{23}$, we have
 \be
\label{eq.ne1} \sum_\bn \Delta(\tcR'(\bn)) \ot \cR''(\bn) = \sum_{\bk, \bm} \tcR'(\bm) \ot \tcR'(\bk)  \ot \cR''(\bm) \cR''(\bk).
 \ee

 Since $\{ \cR''(\bn)\}$ is a topological basis of $\Uh^{\ge 0}$, there are structure constants $f_{\bm,\bk}^\bn \in \Ch$ such that
$$ \cR''(\bm) \cR''(\bk) = \sum_{\bn} f_{\bm,\bk}^\bn \cR''(\bn),$$
and the right hand side converges. Using the above in \eqref{eq.ne1}, we have
$$ \Delta(\tcR'(\bn)) = \sum_{\bm,\bk} f_{\bm,\bk}^\bn \tcR'(\bm) \ot \tcR'(\bk),$$
with the right hand side convergent in the $h$-adic topology of $\Uh \ho \Uh$. This proves
$\Delta(A_L) \subset \overline {A_L \ot A_L}$. Actually, we just proved that the co-product in $A_L$ is dual to the product in $\Uh^{\ge 0}$.

Similarly, using $(\id \ot \Delta) (\cR) = \cR_{13} \cR_{12}$, one can easily prove that the product in $A_L$ is dual to the co-product in $\Uh^{\ge 0}$, i.e.
$$ \tcR'(\bm) \,  \tcR'(\bk) = \sum_{\bn} f^{\bm,\bk}_\bn \tcR'(\bn), \quad \text{where } \Delta(\cR''(\bn)) = \sum_{\bm,\bk} f^{\bm,\bk}_\bn \cR''(\bm) \ot \cR''(\bk).$$
This proves that $\boldmu(\overline {A_L \ot A_L}) \subset A_L$.

Next we consider the antipode. We have $(S \ho \id)(\cR) = (\id \ho S^{-1}) (\cR) = \cR^{-1}$. Let $A'_L$ be the left image of $\cR^{-1}$.

Since $S^{-1}$ is an $\Ch$-module automorphism of $\Uh^{\ge 0}$, Identity $(\id \ho S^{-1}) (\cR) = \cR^{-1}$ shows that $A'_L= A_L$.

 Identity $(S\ho \id) (\cR) = \cR^{-1}$ shows that $A'_L= S(A_L)$. Thus, we have $A_L= S(A_L)$.
\end{proof}

Proposition \ref{r.41a} follows immediately from the following.
\begin{proposition}
(a) One has $\overline{A_R A_L} = \overline{A_L A_R}$. It follows that $\overline{A_L A_R}$ is a  Hopf algebra with closed tensor products.

(b) One has $\overline{A_R A_L}= \Vh$.
\end{proposition}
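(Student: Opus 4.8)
The plan is to reduce both statements to the single inclusion $A_R A_L\subseteq\overline{A_L A_R}$, which I will call $(\star)$, and then to prove $(\star)$ by an explicit straightening. Granting $(\star)$: by the first proposition of this appendix, $S$ is a continuous anti-automorphism with $S(A_L)=A_L$ and $S(A_R)=A_R$ (hence $S^{-1}(A_L)=A_L$, $S^{-1}(A_R)=A_R$), so $S(A_RA_L)=A_LA_R$ and $S(\overline{A_LA_R})=\overline{A_RA_L}$; applying $S$ to $(\star)$ gives $A_LA_R\subseteq\overline{A_RA_L}$, and the two inclusions give $\overline{A_RA_L}=\overline{A_LA_R}$, the equality of (a). Also $\overline{A_LA_R}$ is a $\Ch$-subalgebra: $(A_LA_R)(A_LA_R)=A_L(A_RA_L)A_R\subseteq A_L\,\overline{A_LA_R}\,A_R\subseteq\overline{A_LA_R}$, using $(\star)$, that $A_L,A_R$ are algebras, and continuity of multiplication. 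It is then closed under $S$ (since $S(\overline{A_LA_R})=\overline{A_RA_L}=\overline{A_LA_R}$) and under $\Delta$ (using $\Delta(A_L)\subseteq\overline{A_L\ot A_L}$ and $\Delta(A_R)\subseteq\overline{A_R\ot A_R}$ one gets $\Delta(\overline{A_LA_R})\subseteq\overline{(A_LA_R)\ot(A_LA_R)}$), so once $\overline{A_LA_R}$ is identified with $\Vh$ we obtain the Hopf-algebra assertion, with $\Delta$ landing in $\Vh^{\bo2}$.

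For the identification $\overline{A_LA_R}=\Vh$, the inclusion $\subseteq$ follows once $A_L\subseteq\Vh$ and $A_R\subseteq\Vh$ are known, since $\Vh$ is a closed $\Ch$-algebra (Proposition \ref{r.phu1}). A spanning element $h^{\|\bp\|}F^{(\bp_1)}H^{\bp_2}$ of $A_L$ equals $\big(h^{\|\bp\|}F^{(\bp_1)}K_{\bp_1}H^{\bp_2}\big)K_{\bp_1}^{-1}$, whose first factor is one of the elements spanning $\Vh$ and whose second lies in $\Vh$ (as noted in the proof of Proposition \ref{r.phu1}, $K_\gamma\in\Vh$ for $\gamma\in Y$); and a spanning element $h^{\|\bp\|}E^{(\bp_1)}\brH^{\bp_2}$ of $A_R$ equals $\big(h^{\|\bp_1\|}E^{(\bp_1)}\big)\big(h^{\|\bp_2\|}\brH^{\bp_2}\big)$, whose first factor spans $\Vh$ and whose second lies in $\Vh$ because each $h\brH_\al$ is a $\BQ$-combination of the $hH_j\in\Vh$. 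For the reverse inclusion, Proposition \ref{r.phu1} presents $\Vh$ as the closed $\Ch$-algebra generated by $\{hH_\al,hF_\gamma(\bi),hE_\gamma(\bi)\}$; each of these generators is one of the spanning elements of $A_L$ or of $A_R$, hence lies in $A_L\cup A_R\subseteq\overline{A_LA_R}$, and $\overline{A_LA_R}$ is a closed $\Ch$-subalgebra, so it contains the algebra they generate. Thus $\overline{A_LA_R}=\overline{A_RA_L}=\Vh$, which completes (a) and (b).

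It remains to prove $(\star)$, and by bilinearity and continuity it suffices to put into $\overline{A_LA_R}$ a product $ba$ of spanning elements $b=h^{\|\bm\|}E^{(\bm_1)}\brH^{\bm_2}\in A_R$ and $a=h^{\|\bp\|}F^{(\bp_1)}H^{\bp_2}\in A_L$. First commute $\brH^{\bm_2}$ to the right past $F^{(\bp_1)}$, which only shifts the Cartan part by scalars, writing $ba$ as a finite $\BQ$-combination of terms $h^{\|\bm\|+\|\bp\|}E^{(\bm_1)}F^{(\bp_1)}Q(H)$ with $\deg Q\le\|\bm_2\|+\|\bp_2\|$. Then straighten $E^{(\bm_1)}F^{(\bp_1)}$, through the triangular decomposition of $\Uh$, into a finite sum of terms $F^{(\bp_1')}\,C\,E^{(\bm_1')}$ with $C\in\Uh^0$ and, for the number $j$ of contractions, $\|\bp_1'\|=\|\bp_1\|-j$, $\|\bm_1'\|=\|\bm_1\|-j$; and move $Q(H)$ back to the left of $E^{(\bm_1')}$, changing it only to a Cartan polynomial of the same degree. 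The point is the $h$-adic bookkeeping: split $h^{\|\bm\|+\|\bp\|}$ as $h^{\|\bp_1'\|}\cdot h^{\|\bm_1'\|}\cdot h^{\|\bm_2\|+\|\bp_2\|+2j}$; the first factor makes $h^{\|\bp_1'\|}F^{(\bp_1')}$ a spanning element of $A_L$, the second makes $h^{\|\bm_1'\|}E^{(\bm_1')}$ one of $A_R$, and the middle block, of $h$-length $\|\bm_2\|+\|\bp_2\|+2j$, more than suffices to absorb the combined Cartan factor (a polynomial of degree $\le\|\bm_2\|+\|\bp_2\|+j$ in the $H_\al$ times a unit power series) into the Cartan subalgebra $A_L^0\subseteq A_L$, with a surplus of $j$ powers of $h$. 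Hence each term lies in $A_LA_R$; summing over the finitely many contractions gives $ba\in A_LA_R$ for spanning $a,b$, whence $(\star)$ follows by a routine continuity argument.

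I expect the straightening in the previous paragraph to be the main obstacle. For higher rank it is not formal: it rests on the Levendorskii--Soibelman relations among the root vectors $E_\gamma(\bi),F_\delta(\bi)$ and on a uniform $h$-order bound for the Gauss-binomial-type Cartan elements $\qbinom{K_\al;c}{j}$ produced by the straightening (namely that $h^{j}\qbinom{K_\al;c}{j}\in A_L^0$), and one must keep track of the subtlety recalled in Section \ref{sec:ChModules} that the $h$-adic topology of $\Vh$ is strictly finer than the one induced from $\Uh$. A more conceptual but equally delicate alternative would be to realize $\overline{A_LA_R}$ and $\overline{A_RA_L}$ as a completed Drinfel'd double of the mutually dual Hopf algebras $A_L,A_R$ paired via $\cR$, in which both orderings of the multiplication span the same $\Ch$-submodule.
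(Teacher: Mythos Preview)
Your reductions are sound and in some ways tidier than the paper's: using that $S(A_L)=A_L$, $S(A_R)=A_R$ to deduce one inclusion from the other is a clean observation the paper does not exploit (it proves both inclusions separately), and your derivation of part (b) from Proposition~\ref{r.phu1} is valid and more conceptual than the paper's direct matching of spanning sets. So once $(\star)$ is established, everything you write goes through.

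The gap is precisely where you flagged it, and it is genuine. Your straightening claim, that $E^{(\bm_1)}F^{(\bp_1)}$ becomes a finite sum $\sum_j F^{(\bp_1')}C\,E^{(\bm_1')}$ with $\|\bp_1'\|=\|\bp_1\|-j$ and $\|\bm_1'\|=\|\bm_1\|-j$, is an $sl_2$ picture that fails already in rank $2$. For instance in type $A_2$, commuting $E_{\al_1+\al_2}$ past $F_{\al_1}$ produces a term $C\cdot E_{\al_2}$ with $\|\bp_1'\|=0=\|\bp_1\|-1$ but $\|\bm_1'\|=1=\|\bm_1\|$; so there is no single ``number of contractions'' $j$. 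What is preserved under commutation is the \emph{height} $\Ht(|E^{(\bm_1)}|)-\Ht(|F^{(\bp_1)}|)$, not the $\|\cdot\|$-norms, and the $h$-power you have to allocate is governed by $\|\cdot\|$, not by height. So your $h$-bookkeeping collapses. One could try to rescue it via the Levendorskii--Soibelman relations and a careful bound relating $\|\cdot\|$ to height for every term appearing in the straightening, but this amounts to reproving the integrality of the De Concini--Procesi form under multiplication, which is a substantial computation.

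The paper bypasses straightening entirely. For $x\in A_L$, $y\in A_R$, it writes $x=\sum_{\bn}\tcR'(\bn)\,p(\cR''(\bn))$ for a suitable functional $p$ on $\Uh^{\ge0}$, and then uses the Hopf-algebraic identity
\[
\cR\,(y\ot 1)=\sum_{(y)}(y_{(2)}\ot y_{(1)})\,\cR\,(1\ot S(y_{(3)}))
\]
(and its companions, from \cite{Radford}) to get
\[
xy=\sum_{(y)}y_{(2)}\Big(\sum_{\bn}\tcR'(\bn)\,p\big(y_{(1)}\cR''(\bn)S(y_{(3)})\big)\Big)\in\overline{A_RA_L},
\]
using only that $A_R$ is a coalgebra in the closed sense (proved in the preceding proposition). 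This gives $A_LA_R\subseteq\overline{A_RA_L}$ with no PBW combinatorics. The reverse inclusion is analogous. If you want to keep your structural reductions, the cleanest fix is to replace your third paragraph by this Radford-identity argument for $(\star)$; your use of $S$ then saves you from repeating it for the other direction.
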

\begin{proof}
We use the following identity in a ribbon Hopf algebra: for every $y \in \Uh$ one has
\begin{align}
\label{eq.ne2}  \cR (y \ot 1) & = \sum_{(y)} (y_{(2)} \ot y_{(1)}) \cR (1 \ot S( y_{(3)})) \\
\label{eq.ne21}  (y \ot 1)  \cR & = \sum_{(y)} (1 \ot S( y_{(1)})) \cR   (y_{(2)} \ot y_{(3)})  \\
\label{eq.ne22}  \cR (1 \ot y) & = \sum_{(y)} (y_{(3)} \ot y_{(2)}) \cR ( S^{-1}( y_{(1)} \ot 1)) \\
\label{eq.ne23}  (1 \ot y) \cR & = \sum_{(y)}  ( S^{-1}( y_{(1)} \ot 1)) \cR (y_{(1)} \ot y_{(2)}),
\end{align}
which are Identities (6)--(9) in  \cite{Radford}.
Suppose $x \in A_L$ and $y\in A_R$. We will show that $xy \in \overline{A_R A_L}$. This will imply that $\overline{A_L A_R} \subset \overline{A_R A_L}$. We only need the fact that $A_R$ is a co-algebra in the closed category: $\Delta(A_R) \subset \overline{A_R \ot A_R} \subset \Uh^{\ge 0} \ho \Uh^{\ge 0}$.

Since $x\in A_L$, we have a presentation
$$ x = \sum _{\bn \in \BN^{t+\ell}} x_\bn \tcR'(\bn), \quad x_\bn \in \Ch\ \forall \bn \in \BN^{t+\ell}.$$
Let $p: \Uh^{\ge 0} \to \Ch$ be the unique $\Ch$-module homomorphism such that $p(\cR''(\bn)) = x_\bn$. Then
$ x = \sum_\bn \tcR'(\bn) \, p(\cR''(n))$. Hence
\begin{align*}
 xy &= \sum_{\bn} \tcR'(\bn) y \, p(\cR''(n)) \\
    &= \sum_{\bn} y_{(2)}\tcR'(\bn) \, p(   y_{(1)}  \, \cR''(n) \, S(y_{(3)})) \in \overline{A_R A_L}.
\end{align*}

Similarly, one can prove $\overline{A_RA_L} \subset \overline{A_L A_R}$, and conclude that $\overline{A_LA_R}=\overline{A_RA_L}$.

(b) The two sets $\{ h^{\|\bn\|} H^{\bn} \mid \bn\in \BN^\ell \}$ and $\{h^{\|\bn\|} \breve H^{\bn} \mid \bn\in \BN^\ell \}$ span the same $\Ch$-subspace of $\Uh^0$.
Using spanning sets \eqref{eq.basisALR}, we see that $\overline{A_LA_R}$ is the topological closure of the $\Ch$-span of $$\{ h^{\| \bn_1\| + \| \bn_2\| + \| \bn_3\| } F^{(\bn_1)} H^{\bn_2} E^{(\bn_3)} \mid \bn_1,\bn_3\in \BN^t, \bn_2 \in \BN^\ell\}.$$
Comparing this set with  the formal basis \eqref{eq.hbasis} of $\Vh$, one can easily show that $\Vh= \overline{A_L A_R}$.
\end{proof}

\section{Integral duality} \label{sec:u0}
\subsection{Decomposition of $\UZ^{\ev,0}$} \label{sec:as12}
Recall that
\begin{align*}      %
\Uq^{\ev,0}  = \BC(v)[K_\al ^{\pm 2}, \ \al \in \Pi], \quad
\VA^{\ev,0}   = \cA[K_\al ^{\pm 2}, \ \al \in \Pi].
\end{align*}
For simple root $\al\in\Pi$,
 {\em the even $\al$-part of $\UZ^{0}$} is defined to be $\cI_\al:= \BC(v)[K_\al ^{\pm 2}] \cap \UZ^{0}$.
 Note that $\cI_\al$ is an $\cA$-Hopf-subalgebra of $\BQ(v)[K_\al^{\pm2}]$.
From  Proposition \ref{prop.basis}, $\cI_\al$ is $\cA$-spanned by
 \be
 \label{eq.inbas}
 \{\frac{K_\al^{2m}(q_\al^n K_{\al}^2 ; q_\al)_k}{(q_\al;q_\al)_k}  \mid m,n \in \BZ, k \in \BN \},
 \ee
 and there is an isomorphism
 \be
\label{eq.iso1}
 \bigotimes_{\al\in \Pi} \cI_\al \overset \cong \longrightarrow \UZ^{\ev,0}, \quad \bigotimes_{\al\in \Pi} a_\al \to \prod_{\al} a_\al.
 \ee

 Hence, if one can find  $\cA$-bases for $\cI_\al$, then one can combine them together using \eqref{eq.iso1} to get an $\cA$-basis for $\UZ^{\ev,0}$.

 Similarly, let $\VZ^{\ev,0} \cap \BC(v)[K_\al ^{\pm 2}] = \cA[K_\al^{\pm2}]$ be the {\em even $\al$-part} of $\VZ^{\ev,0}= \cA[K_\al ^{\pm 2}, \ \al \in \Pi]$. The analog of \eqref{eq.iso1} is much easier for $\VZ^{\ev,0}$, since in this case it is
\be
  \label{eq.iso2}\bigotimes_{\al\in \Pi}  \cA[K_\al^{\pm2}]  \overset \cong \longrightarrow \VZ^{\ev,0}= \cA[K_\al ^{\pm 2}, \ \al \in \Pi], \quad \bigotimes_{\al\in \Pi}a_\al \to \prod_{\al} a_\al.
  \ee

\subsection{Bases for $\cI_\al$ and $\cA[x^{\pm 1}]$}\label{sec:u1}
 Fix $\al \in \Pi$, and denote by $x= K_\al^2$, and $y= \brK_\al^2$. The even $\al$-part of $\VZ^{\ev,0}$ is $\cA[x^{\pm1}]$, and  $\cI_\al$, the even $\al$-part of $\UZ^{\ev,0}$, is now an $\cA$-submodule of
 $\BQ(v)[x^{\pm 1}]$. The quantum Killing form restricts to the
 $\BQ(v)$-bilinear form
\be
\label{eq.222}
 \la ., . \ra : \BQvK \ot \BQ(v)[y^{\pm 1}] \to \BQ(v) \quad \text{given by} \quad
 \la x^m, y^{n} \ra= q_\al^{-mn}.
 \ee

Let $\briota: \BQvK \to \BQ(v)[y^{\pm 1}]$ be the $\BQ(v)$-algebra map defined by $\briota(x)=y$. For $n\in \BN$, let
\begin{align}
\label{eq.Qnn}
Q'(\al;n) &:= x^{-\lfloor \frac n2  \rfloor}(q_\al^{-\lfloor \frac {n-1}2 \rfloor} \, x;q_\al)_n
, \quad &\brQ'(\al,n)&:= \briota(Q'(\al;n))
\\
 Q(\al;n) & := \frac{Q'(\al;n)}{(q_\al;q_\al)_n}
  , \quad &\brQ(\al,n)&:= \briota(Q(\al;n))
  .
  \notag
\end{align}

 We will consider  $\cA[x^{\pm 1}] \subset \BC[H_\al][[h]]$ by setting  $x = \exp(hH_\al)$.

\begin{proposition} \label{r.orthog}
(a) The $\cA$-module $\cI_\al$ is the $\cA$-dual of  $\cA[y^{\pm1}]$   with respect to the form \eqref{eq.222} in the sense that
\begin{align*}
 \cI_\al = \{ f(x) \in \BQvK \mid  \la f(x), g(y) \ra \in \cA \quad \forall\ g(y) \in \BQ(v)[y^{\pm 1}] \}.
 \end{align*}

(b) The set $\{{Q'(\al;n)} \mid n \in \BN\}$ is an $\cA$-basis of $\cA[x^{\pm1}]$.

(c)
One has the following orthogonality
\be
\label{eq.ortho1}
 \la Q(\al;n), Q'(\al;m)\ra = \delta_{m,n}\,   q_\al^{-\lfloor (n+1)/2 \rfloor ^2}.
 \ee

(d) The set $\{{Q(\al;n)} \mid n \in \BN\}$ is an $\cA$-basis of $\cI_\al$.

\end{proposition}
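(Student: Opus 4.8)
I would establish the four assertions in the order (b), (c), (a), (d), since (a) and (d) rest on the orthogonality in (c) and the basis in (b). Throughout write $q=q_\al$ and recall that the form is fixed by $\langle x^a,y^b\rangle=q^{-ab}$, so that for a Laurent polynomial $f=\sum_a f_a x^a$ and any $g(y)$ one has the substitution identity $\langle f(x),g(y)\rangle=\sum_a f_a\,g(q^{-a})$; in particular $\langle x^a,\brQ'(\al,m)\rangle=Q'(\al;m)(q^{-a})$, which can be read off directly from the definition of $Q'$.

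For (b): expanded as a Laurent polynomial in $x$, the element $Q'(\al;n)$ is supported exactly on the exponent interval $\{-\lfloor n/2\rfloor,\dots,\lceil n/2\rceil\}$, with coefficient $1$ on the bottom monomial $x^{-\lfloor n/2\rfloor}$ and a unit of $\cA$ (namely $(-1)^n q^{e(n)}$, where $e(n)=k$ if $n=2k$ and $e(n)=0$ if $n=2k+1$) on the top monomial $x^{\lceil n/2\rceil}$. As $n$ increases, these intervals grow by one step, alternately at the bottom (when $n$ is even) and at the top (when $n$ is odd). Listing the $Q'(\al;n)$ in this zigzag order makes the change-of-basis matrix to the monomial basis $\{x^j\}$ triangular with unit diagonal, hence invertible over $\cA$; that is (b).

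For (c): by the substitution identity, $\langle x^a,\brQ'(\al,m)\rangle=q^{a\lfloor m/2\rfloor}(q^{-\lfloor(m-1)/2\rfloor-a};q)_m$, which vanishes precisely when $a\in\{-\lfloor(m-1)/2\rfloor,\dots,\lceil(m-1)/2\rceil\}$. If $m>n$ this interval contains the full $x$-support of $Q'(\al;n)$, so $\langle Q'(\al;n),\brQ'(\al,m)\rangle=0$; and since $\langle Q'(\al;n),\brQ'(\al,m)\rangle=\sum_{a,b}[Q'(\al;n)]_a[Q'(\al;m)]_b\,q^{-ab}$ is symmetric under $m\leftrightarrow n$, the case $m<n$ follows as well. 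When $m=n$, the $x$-support of $Q'(\al;n)$ exceeds the above vanishing interval by exactly one exponent — the bottom one if $n$ is even, the top one if $n$ is odd — and evaluating that single surviving term, using the coefficient from (b) together with a short $q$-Pochhammer rearrangement, gives $\langle Q'(\al;n),\brQ'(\al,n)\rangle=q^{-\lfloor(n+1)/2\rfloor^2}(q;q)_n$; dividing by $(q;q)_n$ is the asserted value for $Q(\al;n)$.

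For (a) and (d): first, $Q(\al;n)=x^{-\lfloor n/2\rfloor}(q^{-\lfloor(n-1)/2\rfloor}x;q)_n/(q;q)_n$ has the shape of the spanning elements of $\cI_\al$ in \eqref{eq.inbas}, so $Q(\al;n)\in\cI_\al$. By (b) the $\brQ'(\al,n)$ form an $\cA$-basis of $\cA[y^{\pm1}]$, and by (c) the elements $q^{\lfloor(n+1)/2\rfloor^2}Q(\al;n)$ are the dual basis inside $\BQvK$; hence the $\cA$-dual of $\cA[y^{\pm1}]$ — the set of $f\in\BQvK$ with $\langle f,g\rangle\in\cA$ for every $g\in\cA[y^{\pm1}]$ — is exactly the $\cA$-span of $\{Q(\al;n)\}$, which lies in $\cI_\al$. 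For the reverse inclusion one checks on the generators \eqref{eq.inbas}: $\langle x^m(q^{c}x;q)_k/(q;q)_k,\,y^j\rangle=q^{-jm}(q^{c-j};q)_k/(q;q)_k$, and $(q^{c-j};q)_k/(q;q)_k$ is, up to a sign and a power of $q$, a Gaussian binomial coefficient (or zero), hence in $\cA$. This proves (a), namely $\cI_\al=(\cA[y^{\pm1}])^{\vee}$; and since $\{Q(\al;n)\}$ is $\BQ(v)$-linearly independent by (c) and $\cA$-spans $\cI_\al$, it is an $\cA$-basis of $\cI_\al$, which is (d). The part I expect to be the actual work is the orthogonality (c): the vanishing for $m>n$ is immediate from the support computation, but the symmetry of the form is needed to reach $m<n$, and in the diagonal case one must isolate the unique surviving coefficient and carry the $q$-Pochhammer bookkeeping carefully enough that the exponent collapses to $-\lfloor(n+1)/2\rfloor^2$.
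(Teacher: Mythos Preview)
Your proof is correct and follows the same overall strategy as the paper --- zigzag triangularity for (b), a support/vanishing computation for the orthogonality (c), and duality for (d). The only notable difference is in (a): the paper proves it first and independently, by invoking the characterization of $\cI_\al$ as the Laurent polynomials taking values in $\cA$ at every integer power of $q_\al$ (citing an external result from \cite{BCL}) and then observing $\langle f(x),y^k\rangle=f(q_\al^{-k})$; you instead deduce (a) after (b) and (c), identifying the dual of $\cA[y^{\pm1}]$ with the $\cA$-span of $\{Q(\al;n)\}$ and then matching it with $\cI_\al$ by a direct check on the spanning elements \eqref{eq.inbas}, which makes your argument self-contained. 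Your use of the symmetry $\langle Q'(\al;n),\brQ'(\al;m)\rangle=\langle Q'(\al;m),\brQ'(\al;n)\rangle$ to handle the $m<n$ case in (c) is also a slightly cleaner variant of the paper's ``similarly'' for the other direction.
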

\begin{proof} (a) In Section \ref{sec:as12}, $\cI_\al$
is the $\cA$-submodule
of $\BQvK$ spanned by the set \eqref{eq.inbas} with $K_\al^2$ replaced by $x$. This set spans the module of polynomial with $q$-integral values: By \cite[Proposition 2.6]{BCL},
$\cI_\al$ is exactly the set of all
 Laurent polynomials $f(x)\in \BQ(v)[x^{\pm 1}]$ such that $f(q_\al^k)\in \cA=\BZ[v^{\pm 1}]$ for every $k\in \BZ$.

For $f(x)\in \BQvK$, $g(y) \in \BQ(v)[y^{\pm 1}]$, and $k\in \BZ$, from \eqref{eq.222},
\be
\label{eq.sf3}
\la f(x), y^{k}\ra = f(q_\al^{-k}), \quad \la x^{k}, g(y)\ra =  g(q_\al^{-k}).
\ee

Suppose now $f(x) \in \BQ(v)[x^{\pm 1}]$. Since $\{ y^k \mid k\in \BZ\}$ is an $\cA$-basis of $\cA[y^{\pm 1}]$,
\begin{align*}
 f(x) \  \text{is in $\cA$-dual of $\cA[y^{\pm 1}]$} \quad   & \Longleftrightarrow  \quad \la f(x), y^k \ra \in \cA  \ \forall k\in \BZ\\
&  \Longleftrightarrow  \quad  f(q_\al^{-k}) \in \cA \  \forall k\in \BZ   \Longleftrightarrow  f(x) \in \cI_\al.
\end{align*}
This proves part (a).

(b)  The bijective map $j: \BN \to \BZ$ given by
$ j(n) = (-1) ^{n+1} \lfloor \frac{n+1} 2 \rfloor$
defines an order on $\BZ$, by $j(0) \leu j(1) \leu j(2)\leu \dots$. This order looks as follows:
$$ 0 \leu 1 \leu -1 \leu  2 \leu -2 \leu 3 \leu -3 \dots$$
We define an order on the set of monomials $\{ x^n \mid n \in \BZ\}$ by $ x^n \leu x^m$ if $ n \leu m$.
Using this order, one can define the leading term of a non-zero Laurent polynomial $f(x) \in \BQ(v)[x^{\pm 1}]$. One can easily calculate the leading term of $Q'(\al;n)$,
\be
\label{eq.211a}
Q'(\al;n)= (-1)^n x^{j(n)} + \text{ lower order terms}.
\ee
It follows that $\{Q'(\al;n) \mid n\in \BN \}$ is an $\cA$-basis of $\cA[x^{\pm 1}]$.

(c)  Suppose $m< n$. By \eqref{eq.sf3},
$$ \la Q'(\al;n), y^{j(m)} \ra = Q'(\al;n)\vert_{x= q_\al^{-j(m)}}= 0,$$
since $x= q_\al^{-j(m)}$ annihilates one of the factors of $Q'(\al;n)$ when $m <n$. By expanding $\brQ'(\al;m)$ using
 \eqref{eq.211a}, we have
$$ \la Q'(\al;n), \brQ'(\al;m)  \ra = 0 \quad \text{ if $m<n$}.$$

Similarly, one also has $ \la Q'(\al;n), \brQ'(\al;m)  \ra = 0$ if $m>n$. It remains to consider the case $m=n$. Using \eqref{eq.211a}, we have
$$ \la Q'(\al;n), \brQ'(\al;n)  \ra =  \la  Q'(\al;n), (-1)^n y^{j(n)}  \ra = (-1)^n \, Q'(\al;n)|_{x=q_\al^{-j(n)}} =q_\al^{-\lfloor (n+1)/2 \rfloor ^2}\, (q_\al;q_\al)_n,$$
where the last identity follows from an easy calculation. This proves part (c).

(d) By part (b), $\{ \brQ'(\al;n)\mid n \in \BN\}$ is an $\cA$-basis of $\cA[y^{\pm 1}]$.
Because $\cI_\al$ is the $\cA$-dual of $\cA[y^{\pm 1}]$ with respect to the form \eqref{eq.222}, the orthogonality \eqref{eq.ortho1}  shows that $\{ Q(\al;n) \mid n \in \BN\}$ is an $\cA$-basis of $\cI_\al$. This proves part (d).
\end{proof}

\subsection{Proof of Proposition \ref{r.bases5}}
\begin{proof}
(a) The  definition  \eqref{eq.Qn14} means that, for $\bn= (n_1,\dots, n_\ell)\in \BN^t$,
\be
\label{eq.117a}
Q^\ev(\bn) := \prod_{j=1}^\ell Q(\al_j;n_j)|_{x= K_j^2},\quad (q;q)_\bn\, Q^\ev(\bn)= \prod_{j=1}^\ell Q'(\al_j;n_j)|_{x= K_j^2}.
\ee
By Proposition \ref{r.orthog}(d), $\{ Q(\al_j;n)|_{x= K_j^2} \mid n \in \BN\}$ is an $\cA$-basis of $\cI_{\al_j}$.
Hence the isomorphism \eqref{eq.iso1} shows that $\{ Q^\ev(\bn) \mid \bn \in \BN^\ell\}$ is an $\cA$-basis of $\UZ^{\ev,0}$.

Similarly, Proposition \ref{r.orthog}(b) and isomorphism \eqref{eq.iso2} shows that $\{ (q;q)_\bn Q^\ev(\bn) \mid \bn \in \BN^\ell\}$ is an $\cA$-basis of $\VA^{\ev,0}$.

(b) Let $K^{\bode}= \prod_j {K_{\al_j}^{\delta_j}}$ for $\bode=(\delta_1,\dots,\delta_\ell)$. We have
$$\VZ^0 = \bigoplus_{\bode \in \{ 0,1\}^\ell}  K^{\bode} \VZ^{\ev,0}, \quad  \UZ^0 = \bigoplus_{\bode \in \{ 0,1\}^\ell}  K^{\bode} \UZ^{\ev,0},$$
where the first identity is obvious and the second follows from Proposition \ref{prop.basis}.
 Hence (b) follows from (a). This completes the proof of Proposition \ref{r.bases5}.
\end{proof}

\subsection{Proof of Lemma \ref{r.117}}
\begin{proof}
For $\al,\beta\in \Pi$, $\la K^2_\al, \brK^2_\beta \ra = \delta_{\al,\beta}\, q_\al$. Hence, with $Q^\ev(\bn), \brQ^\ev(\bm)$ as in \eqref{eq.117a},
$$ \la Q^\ev(\bn), \brQ^\ev(\bm)\ra = \prod_{j=1}^\ell\la Q(\al_j;n_j), \brQ(\al_j;m_j) \ra = \delta_{\bn,\bm} \prod_{j=1}^\ell q_j^{- \lfloor(n_j+1)/2 \rfloor^2}, $$
where the last identity follows from Proposition \ref{r.orthog}(c). This proves Lemma \ref{r.117}.
\end{proof}

\section{On the existence of the WRT invariant}

Here we prove Proposition \ref{pKirby} on the existence of strong Kirby colors at every level $\zeta$ such that
$\ord(\zeta^{2\cD }) > d (h^\vee -1)$. We also determine when $\zeta \in \ZZ'_\fg$ and when  $\zeta\in \ZZ'_{P\fg}$, if $\ord(\zeta^{2\cD }) > d (h^\vee -1)$.
\subsection{Criterion for non-vanishing of Gauss sums} Suppose $\mathfrak A $ is a free abelian group of rank $\ell$ and
$\phi: \mathfrak A  \times \mathfrak A  \to \BZ$ is a symmetric $\BZ$-bilinear form. Assume further $\phi$ is even in the sense that $\phi(x,x)\in 2 \BZ$ for every $x \in \mathfrak A$.

The {\em quadratic Gauss sum associated to $\phi$ at level $m\in \BN$}
is defined by
$$\mathfrak G_\phi(m):= \sum_{x\in \mathfrak A /m \mathfrak A } \exp\left (\pi i \frac{\phi(x,x)}{m}\right).$$
Let $\mathfrak A_\phi^*$ be the $\BZ$-dual of $\mathfrak A$ with respect to $\phi$, and
$$ \ker_\phi(m) := \{ x\in \mathfrak A  \mid \phi(x,y) \in m \BZ\ \forall y\in\mathfrak A \} = m\mathfrak A ^*_\phi \cap \mathfrak A .$$

We have the following  well-known criterion for the vanishing of $\mathfrak G_\phi(m)$, see \cite[Lemma 1]{Deloup}.
\begin{lemma} \label{l656}
 (a)  If $m$ is odd, then $\mathfrak G_\phi(m) \neq 0$.

(b)  $\mathfrak G_\phi(m) \neq 0$ if and only for every $x \in \ker_\phi(m)$ one has
$ \frac{1}{2m} \phi(x,x) \in \BZ.$

\end{lemma}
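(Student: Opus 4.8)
\medskip

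The plan is to reduce the whole statement to one structural fact about the ``level-$m$ radical'' $N := \ker_\phi(m) = m\,\mathfrak A^*_\phi \cap \mathfrak A$. Put $q_m(x) := \phi(x,x)/m$, regarded modulo $2\BZ$. First I would check, using that $\phi$ is even, that $x \mapsto \exp(\pi i\, q_m(x))$ is a well-defined function on $\mathfrak A/m\mathfrak A$: replacing $x$ by $x+mz$ changes $\phi(x,x)/m$ by $2\phi(x,z)+m\phi(z,z)\in 2\BZ$, so $\mathfrak G_\phi(m)$ indeed makes sense. The key observation is then the identity $q_m(x+n)\equiv q_m(x)+q_m(n)\pmod{2\BZ}$, valid for every $n\in N$ and every $x\in\mathfrak A$, because $n\in N$ forces $\phi(x,n)\in m\BZ$, so the cross term $2\phi(x,n)/m$ lies in $2\BZ$. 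In particular $\psi\colon n\mapsto\exp(\pi i\, q_m(n))$ is a character of the finite abelian group $N/m\mathfrak A$, and $\psi$ is trivial exactly when $\tfrac{1}{2m}\phi(n,n)\in\BZ$ for all $n\in N$ --- i.e. exactly under the hypothesis of part (b).

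Part (b) then splits into two cases. If $\psi$ is non-trivial, I would group the terms of $\mathfrak G_\phi(m)$ by the cosets of $N/m\mathfrak A$ in $\mathfrak A/m\mathfrak A$; on a coset $x_0+N/m\mathfrak A$ the partial sum equals $\exp(\pi i\, q_m(x_0))\sum_{n\in N/m\mathfrak A}\psi(n)=0$, so $\mathfrak G_\phi(m)=0$. If $\psi$ is trivial, then $\exp(\pi i\, q_m(\cdot))$ descends to the finite group $G:=\mathfrak A/N$ (finite since $m\mathfrak A\subseteq N$), and $\mathfrak G_\phi(m)=|N/m\mathfrak A|\cdot S$ with $S:=\sum_{x\in G}\exp(\pi i\, q_m(x))$. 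To prove $S\ne 0$ I would use the classical second-moment computation: expanding $|S|^2$ and substituting $x\mapsto x+z$ gives $|S|^2=\sum_{z\in G}\exp(\pi i\, q_m(z))\sum_{x\in G}\exp\!\big(2\pi i\,\phi(x,z)/m\big)$; since $\phi$ reduced modulo $m$ is non-degenerate on $G=\mathfrak A/N$ (by the very definition of $N$), the inner character sum is $|G|$ for $z=0$ and $0$ otherwise, whence $|S|^2=|G|\ne0$ and $S\ne0$.

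Part (a) is then a one-line corollary: if $m$ is odd, any $x\in N=\ker_\phi(m)$ has $\phi(x,x)\in m\BZ$ (pair $x$ against itself in the defining condition for $N$) and $\phi(x,x)\in 2\BZ$ (evenness of $\phi$), hence $\phi(x,x)\in 2m\BZ$ because $\gcd(2,m)=1$; thus the hypothesis of (b) holds and $\mathfrak G_\phi(m)\ne 0$. I expect the only genuinely substantive step to be the evaluation of $|S|^2$ on the non-degenerate quotient $\mathfrak A/N$, and that is also where care is needed: the radical $N/m\mathfrak A$ need not admit a subgroup complement in $\mathfrak A/m\mathfrak A$ (already $\BZ/4\BZ$ shows this), so one must argue through the quotient $G=\mathfrak A/N$ rather than by splitting off a non-degenerate orthogonal summand. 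Everything else is routine bookkeeping with characters of finite abelian groups together with the well-definedness checks sketched above.
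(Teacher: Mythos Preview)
Your argument is correct and self-contained. Note, however, that the paper does not prove this lemma at all: it simply cites \cite[Lemma~1]{Deloup} as a well-known criterion and moves on. So there is no ``paper's own proof'' to compare against; you are supplying what the authors chose to outsource.

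Your strategy is the standard one and would in fact reproduce the argument behind the cited reference: isolate the radical $N=\ker_\phi(m)$, observe that $n\mapsto\exp(\pi i\,\phi(n,n)/m)$ is a genuine character of $N/m\mathfrak A$, split into the trivial/non-trivial character cases, and in the trivial case pass to the non-degenerate quotient $\mathfrak A/N$ where the square-modulus computation gives $|S|^2=|\mathfrak A/N|$. Your remark that one must work on the quotient $\mathfrak A/N$ rather than try to split off an orthogonal complement inside $\mathfrak A/m\mathfrak A$ is exactly the right point of care. The derivation of part~(a) from part~(b) via $\gcd(2,m)=1$ is also the intended one-liner. Nothing is missing.
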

\begin{lemma}\label{l822}
For every $x \in \ker_\phi(m)$, $ \frac{1}{2m} \phi(x,x) \in \frac12\BZ.$
\end{lemma}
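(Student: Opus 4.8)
The plan is to read the claim off directly from the definition of $\ker_\phi(m)$, with no use of the evenness of $\phi$. First I would fix $x\in\ker_\phi(m)$ and recall that, by definition, $\phi(x,y)\in m\BZ$ for every $y\in\mathfrak A$. Specializing to $y=x$ gives $\phi(x,x)\in m\BZ$, so we may write $\phi(x,x)=mk$ for some $k\in\BZ$. Dividing by $2m$ then yields $\tfrac{1}{2m}\phi(x,x)=\tfrac{k}{2}\in\tfrac12\BZ$, which is exactly the assertion.

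There is essentially no obstacle here: the statement is immediate once the definition is unwound, and in fact it holds for any symmetric $\BZ$-bilinear form, even or not. The only points worth a sentence in the write-up are (i) to note that the evenness hypothesis on $\phi$ is needed not here but in the ``if'' direction of the non-vanishing criterion (Lemma~\ref{l656}), where one wants to upgrade $\tfrac{1}{2m}\phi(x,x)\in\tfrac12\BZ$ to $\tfrac{1}{2m}\phi(x,x)\in\BZ$ under the extra hypothesis that the relevant sub-sum over $\ker_\phi(m)$ does not cancel; and (ii) to record the geometric meaning, namely that for $x\in\ker_\phi(m)$ we have $\phi(x,x)/m\in\BZ$ and hence $\exp(\pi i\,\phi(x,x)/m)=(-1)^{\phi(x,x)/m}=\pm1$, so that $2\mathfrak G_\phi(m)$-type obstructions are the worst possible and the Gauss sum $\mathfrak G_\phi(m)$ vanishes precisely when some term over $\ker_\phi(m)$ equals $-1$. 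This is how Lemma~\ref{l822} feeds into the determination of the sets $\ZZ'_\fg$ and $\ZZ'_{P\fg}$ in this appendix.
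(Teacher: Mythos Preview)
Your proof is correct and is essentially identical to the paper's: both simply observe that $x\in\ker_\phi(m)$ gives $\phi(x,x)\in m\BZ$ (the paper phrases this via $x\in m\mathfrak A^*_\phi$, you via specializing $y=x$ in the defining condition), and then divide by $2m$. Your additional remarks on the role of evenness and the geometric interpretation are accurate commentary but not part of the argument the paper records.
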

\begin{proof} Because $x \in m\mathfrak A ^*_\phi$, one has $\phi(x,x) \in m\BZ$. Hence $ \frac{1}{2m} \phi(x,x) \in \frac12\BZ$.
\end{proof}

\subsection{Gauss sums on weight lattice}
Recall that $X,Y$ are respectively the weight lattice  and the root lattice  in ${\mathfrak h}^*_\BR$, which is equipped with the invariant inner product.
The $\BZ$-dual $X^*$ of $X$ is $\BZ$-spanned by $\al/d_\al, \al \in \Pi$.
\begin{lemma}For $y \in X^*$, we have $(y,y)\in \Ztwo:= \{ a/b \mid a,b \in \BZ,\ b \text{ odd}\}$.
\label{l823}
\end{lemma}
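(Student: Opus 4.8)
The claim is that for $y \in X^*$, where $X^*$ is the $\BZ$-dual of the weight lattice $X$ with respect to the invariant inner product, one has $(y,y) \in \Ztwo$, i.e. $(y,y)$ is a rational number with odd denominator. Since $X^*$ is $\BZ$-spanned by the elements $\alpha/d_\alpha$ for $\alpha \in \Pi$, it suffices to understand the Gram matrix of this spanning set and show that its diagonal entries, as well as the relevant quadratic combinations, lie in $\Ztwo$. Concretely, if $y = \sum_{\alpha \in \Pi} m_\alpha \, \alpha/d_\alpha$ with $m_\alpha \in \BZ$, then $(y,y) = \sum_{\alpha,\beta} m_\alpha m_\beta \, (\alpha,\beta)/(d_\alpha d_\beta)$, so I would reduce to showing each $(\alpha,\beta)/(d_\alpha d_\beta) \in \Ztwo$ — in fact I expect these entries are half-integers or integers, and the point is that the denominators $2$ that appear do not obstruct membership in $\Ztwo$ once one sums up, OR more carefully that the quadratic form itself only ever produces odd denominators.

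The plan is to argue case by case using the classification of simple Lie algebras in Table \ref{tab:1} together with the explicit Cartan data. Recall $(\alpha,\alpha) = 2 d_\alpha$ with $d_\alpha \in \{1,2,3\}$, and $d = \max_\alpha d_\alpha$. So $(\alpha,\alpha)/(d_\alpha d_\alpha) = 2/d_\alpha \in \{2, 1, 2/3\}$, and for $\alpha \neq \beta$ adjacent in the Dynkin diagram, $(\alpha,\beta)$ is a negative integer whose magnitude divides into these. The subtle denominators are $2$ (arising when $d_\alpha = 1$ but... no: $2/d_\alpha$ has denominator dividing $d_\alpha \le 3$, so the only denominators that can appear in entries of the Gram matrix of $\{\alpha/d_\alpha\}$ are $1, 2, 3$) — but the $2$ case needs care. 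I would note that the potentially-even denominator arises only for $d_\alpha = 2$ (types $B, C, F_4$) giving diagonal term $1$, which is fine, while off-diagonal terms $(\alpha,\beta)/(d_\alpha d_\beta)$ with both $d$'s equal to $2$ give $(\alpha,\beta)/4$; since adjacent roots of equal length $2$ have $(\alpha,\beta) = -2$, this is $-1/2$. So half-integers genuinely appear in the Gram matrix. The real content is then: show that $\sum_{\alpha,\beta} m_\alpha m_\beta (\alpha,\beta)/(d_\alpha d_\beta)$ nonetheless always has odd denominator.

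The cleanest route, which I would pursue rather than brute-forcing each type, is a dual-lattice argument: $(y,y)$ for $y \in X^*$ is governed by the discriminant form of the lattice $Q = (\text{root lattice with form})$, or rather one observes $X^* \supset Y$ and the quotient $X^*/Y$ is dual to $X/Y$, which has order $\cD$. The "bad primes" for $2$-divisibility of the quadratic form on $X^*$ relative to $Y$ are controlled by whether $2 \mid \cD$ and by $d$; I would invoke that $2$-adically $X^* $ has the property that the form takes values in $\BZ_{(2)}$ (localization at $2$) on $X^*$ — equivalently $X^*_{(2)} = X_{(2)}$ or the form is $2$-adically unimodular enough. The main obstacle, and where I would spend effort, is precisely pinning down which types could a priori fail: these are the types where both $2 \mid d$ and $2 \mid \cD$, but from Table \ref{tab:1} one checks $d = 2$ occurs for $B_\ell$ ($\cD = 2$), $C_\ell$ ($\cD = 2$), $F_4$ ($\cD = 1$), and one must verify by hand in $B_\ell$ and $C_\ell$ that the half-integer off-diagonal contributions always cancel in pairs modulo $\Ztwo$. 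This finite check — showing $(y,y) \in \Ztwo$ for $y$ ranging over a $\BZ$-basis of $X^*$ in types $B_\ell, C_\ell$ (and trivially verifying $F_4$, $G_2$, and the simply-laced types where the argument is easy since $d = 1$) — is the one genuinely non-formal step, and I expect it to be short once the structure above is in place.
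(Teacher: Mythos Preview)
Your plan would eventually work, but it is far more elaborate than what is actually needed, because you miss the one-line observation that short-circuits all the case analysis. When you expand $(y,y) = \sum_i m_i^2 (\alpha_i,\alpha_i)/d_i^2 + 2\sum_{i<j} m_i m_j (\alpha_i,\alpha_j)/(d_i d_j)$, the diagonal terms give $2/d_i$ as you note, and for the off-diagonal terms the key fact is that $(\alpha_i,\alpha_j)/d_j = 2(\alpha_i,\alpha_j)/(\alpha_j,\alpha_j)$ is a Cartan matrix entry, hence an \emph{integer}. Thus the off-diagonal contribution $2(\alpha_i,\alpha_j)/(d_i d_j) = 2 a_{ji}/d_i$ also lies in $(2/d_i)\BZ \subset (2/d)\BZ$, uniformly for all types. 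Since $d \in \{1,2,3\}$, one gets $(y,y) \in (2/d)\BZ \subset \Ztwo$ immediately.

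The paper's proof is exactly this two-line computation. Your worry about half-integer Gram matrix entries (e.g.\ $(\alpha_i,\alpha_j)/(d_i d_j) = -1/2$ for adjacent long roots in type $B$ or $C$) is resolved not by any delicate cancellation or type-by-type check, but simply by the factor of $2$ coming from the symmetry $i \leftrightarrow j$ in the quadratic form --- which then combines with the Cartan integer to clear the denominator $d_j$. Your proposed discriminant/dual-lattice argument and case verification in types $B_\ell, C_\ell$ is correct in principle but entirely avoidable.
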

\begin{proof} Suppose $y = \sum k_i \al_i /d_i$. Then
$$(y,y) = \sum_i k_i^2 \frac{(\al_i,\al_i)}{d_i^2}+ 2 \sum_{i<j}\frac{(\al_i,\al_j)}{d_i d_j}= \sum_i k_i^2 \frac{2}{d_i}+ \sum_{i<j}\frac{2(\al_i, \al_j)/d_j}{d_i}\in \frac{2}{d}\BZ.$$
Since $d=1,2$ or $3$, we see that $(y,y) \in \Ztwo$.
\end{proof}
\begin{lemma} \label{l825}

Suppose $\zeta$ is a root of $1$ of order $s$. Let $r= s/\gcd(s,2\cD )$ be the order $\xi=\zeta^{2\cD }$.

  (a) Suppose $r$ is odd. Then $\mathfrak G^{\P\fg}(\zeta)\neq 0$, where

$$ \mathfrak G^{\P\fg}(\zeta):= \sum_{\lambda \in P_\zeta \cap Y} \zeta^ {D (\lambda, \lambda + 2\rho)}= \sum_{\lambda \in P_\zeta \cap Y} \xi^ {(\lambda, \lambda + 2\rho)/2} .$$

(b) Suppose $r$  is even. Then  $\mathfrak G^{\fg}(\zeta)\neq 0$, where
$$ \mathfrak G^{\fg}(\zeta):= \sum_{\lambda \in P_\zeta} \zeta^ {D (\lambda, \lambda + 2\rho)}.$$
\end{lemma}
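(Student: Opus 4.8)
The plan is to reduce both parts to the non‑vanishing criterion for quadratic Gauss sums recorded in Lemmas \ref{l656}, \ref{l822} and \ref{l823}, applied to the even integral form on the weight lattice coming from the invariant inner product on $\modh^*_\BR$. First I would complete the square: since $(\lambda,\lambda+2\rho)=(\lambda+\rho,\lambda+\rho)-(\rho,\rho)$ and $D(\rho,\rho)\in\BZ$, one has $\mathfrak G^{\fg}(\zeta)=\zeta^{-D(\rho,\rho)}\sum_{\lambda\in P_\zeta}\zeta^{D(\lambda+\rho,\lambda+\rho)}$, and likewise for $\mathfrak G^{P\fg}$. Now $P_\zeta$ is a fundamental domain for $X/(2rD)X$, and $P_\zeta\cap Y$ for $Y/(2rD)X$ (note $(2rD)X\subseteq Y$ because $DX\subseteq Y$), and the summand $\zeta^{D(\mu,\mu)}$ descends to these quotients — this uses $D(X,X)\subseteq\BZ$ together with $\zeta^{2rD}=\xi^{r}=1$. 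Writing $\zeta=e^{2\pi i c/s}$ with $\gcd(c,s)=1$ and setting $\phi(\mu,\nu):=2cD(\mu,\nu)$, an even symmetric $\BZ$‑bilinear form on $X$, one gets $\zeta^{D(\mu,\mu)}=e^{\pi i \phi(\mu,\mu)/s}$, and since $s\mid 2rD$ (because $s=er$ with $e=\gcd(s,2D)$, so $2rD/s=2D/e\in\BZ$) the summand descends further to $X/sX$. Hence $\mathfrak G^{\fg}(\zeta)$ equals a nonzero scalar times the Gauss sum $\mathfrak G_\phi(s)$ with $\phi$ on $\mathfrak A=X$ and modulus $m=s$.

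For $\mathfrak G^{P\fg}(\zeta)$ I would carry out the analogous computation on the lattice $Y$, using the form $(\,\cdot\,,\,\cdot\,)$, which is even and integral there; after descending I obtain a nonzero multiple of $\sum_{\lambda\in Y/rY}\xi^{(\lambda,\lambda+2\rho)/2}$. In part (a), $r$ is odd, so $2$ is invertible modulo $r$; substituting $\lambda\mapsto 2\lambda$ and completing the square over $Y\otimes\BZ_{(r)}$ (legitimate since $2\rho\in Y$ and $r$ is odd) expresses this as a root‑of‑unity multiple of a pure quadratic Gauss sum of odd modulus, so Lemma \ref{l656}(a) gives non‑vanishing immediately. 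In part (b), $r$ is even, hence $s$ is even, and $\mathfrak G^{\fg}(\zeta)$ is, up to a nonzero factor, $\mathfrak G_\phi(s)$ with $s$ even; then by Lemma \ref{l656}(b) together with the a priori bound of Lemma \ref{l822} it remains only to prove that $\tfrac1{2s}\phi(x,x)\in\BZ$ for every $x\in\ker_\phi(s)$.

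To settle that last point I would describe $\ker_\phi(s)$ explicitly: unwinding the condition $2cD(x,y)\in s\BZ$ for all $y\in X$ and using $\gcd(2c,s)=2$, its elements are the $x\in X$ with $\tfrac{2D}{s}x\in X^{*}$, the $\BZ$‑span of the $\alpha/d_\alpha$. For such an $x$, writing $x=\tfrac{s}{2D}y$ with $y\in X^{*}$ gives $\tfrac1{2s}\phi(x,x)=\tfrac{cs}{4D}(y,y)$, and Lemma \ref{l823} says $(y,y)$ has odd denominator; combining with Lemma \ref{l822} it remains to exclude the half‑integer value, which reduces to a $2$‑adic valuation count — from $r$ even one deduces $v_2(s)\ge v_2(D)+2$ and $v_2(c)=0$, and these force $\tfrac{cs}{4D}(y,y)\in\BZ$. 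I expect this $2$‑adic bookkeeping, and dually the verification in part (a) that the hypothesis "$r$ odd" really does leave an odd effective modulus on $Y$ after the $2\lambda$‑substitution and coset completion, to be the main obstacle; everything else is routine provided the lattices $X$, $Y$, $X^{*}$ and the moduli $s$, $r$, $2rD$ are kept carefully straight.
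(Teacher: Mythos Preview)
Your proposal is correct and follows essentially the same route as the paper's proof. Both arguments complete the square, reduce to a pure quadratic Gauss sum on the appropriate lattice ($Y/rY$ for part (a), $X/sX$ for part (b)), and then invoke Lemma~\ref{l656}; for (b) both use Lemmas~\ref{l822} and~\ref{l823} together with the observation that $r$ even forces $s/(4D)\in\BZ_{(2)}$ (your $2$-adic inequality $v_2(s)\ge v_2(D)+2$ is exactly this). The only cosmetic differences are that the paper first applies a Galois automorphism to reduce to $\zeta=\exp(2\pi i/s)$ (so $c=1$ throughout), and in part (a) it completes the square via the shift $\lambda\mapsto\lambda+(r+1)\rho$, using that $(r+1)\rho=\tfrac{r+1}{2}(2\rho)\in Y$ since $r+1$ is even --- which is the same mechanism as your ``$2$ invertible mod $r$ and $2\rho\in Y$'' observation, just packaged without the preliminary $\lambda\mapsto 2\lambda$ substitution.
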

\begin{proof} After a Galois transformation of the from $\zeta \to \zeta^k$ with $\gcd(k,s) =1$ we can assume that $\zeta = \exp(2 \pi i/s)$.

(a) The following is the well-known completing the square trick:
\begin{align*} \mathfrak G^{\P\fg}(\zeta)& = \sum_{\lambda \in P_\zeta \cap Y} \xi^ {\frac12 (\lambda, \lambda + 2\rho(r+1))} \quad \text{ since } \ord(\xi)= r\\
&= \xi^{-  \frac{(r+1)^2 }{2}(\rho,\rho) } \sum_{\lambda \in P_\zeta \cap Y} \xi^ {\frac12 (\lambda +  (r+1)\rho , \lambda + (r+1)\rho)}\\
&= \xi^{-  \frac{(r+1)^2 }{2}(\rho,\rho) }  \sum_{\lambda \in P_\zeta \cap Y} \xi^ {\frac12 (\lambda  , \lambda )}.
\end{align*}
Here the last identity follows because $2\rho \in Y$ and hence $(r+1)\rho \in Y$ since $r+1$ is even, and because the shift $\lambda \to \lambda + \beta$ does not change the Gauss sum
for any $\beta \in Y$.

The expression $\xi^{\frac12(\lambda, \lambda)}, \lambda \in Y$ is invariant under the translations by vectors in both $rY$ and $2rX$. Hence
\begin{align*}
 \mathfrak G^{\P\fg}(\zeta) & = \xi^{-  \frac{(r+1)^2 }{2}(\rho,\rho) }  \sum_{\lambda \in P_\zeta \cap Y} \xi^ {(\lambda  , \lambda)/2 }\\
 &= \xi^{-  \frac{(r+1)^2 }{2}(\rho,\rho) }  \frac{\vol(2rX)}{\vol(rY)} \sum_{\lambda \in Y/rY } \xi^ {(\lambda  , \lambda )/2}
 \end{align*}

By Lemma \ref{l656}(a) with $\mathfrak A  =Y$, $\phi(x,y) = (x,y)$, and $m=r$, the right hand side is non-zero.

(b) Again using the completing the square trick we get

\begin{align} \mathfrak G^{\fg}(\zeta)= \zeta^{-D (\rho,\rho)}\sum_{\lambda \in P_\zeta} \zeta^ {D (\lambda, \lambda)}& = \zeta^{-D (\rho,\rho)}\sum_{\lambda \in X/2rD X} \exp\left( \frac{\pi i}{s} {2\cD  (\lambda, \lambda)}\right) \notag\\
&= \zeta^{-D (\rho,\rho)}\left( \frac{2\cD r}{s}\right)^\ell \, \sum_{\lambda \in X/sX} \exp\left( \frac{\pi i}{s} {2\cD  (\lambda, \lambda)}\right).
\label{e771}
\end{align}

Note that $\frac{s}{\gcd(s,2\cD )}$ is even if and only if
\be \frac{s}{4D}\in \Ztwo.\label{e823}
\ee

Apply Lemma \ref{l656}(b) with $\mathfrak A  = X$, $\phi(x,y) = 2\cD (x,y)$, and $m=s$. Then $s \mathfrak A _\phi^* = \frac{s}{2\cD } X^*$.
Suppose $x\in \ker_\phi(s) = s \mathfrak A _b^* \cap \mathfrak A  \subset s \mathfrak A _b^*$.
Then $x = \frac{s}{2\cD } y$ with $y \in X^*$.

We have

$$ \frac{1}{2s} \phi(x,x)= \frac{s}{4D} (y,y) \in \Ztwo,$$
where the last inclusion follows from \eqref{e823} and Lemma \ref{l823}.
  From Lemma \ref{l822}  we have
$$ \frac{1}{2s} \phi(x,x) \in \frac12 \BZ \cap \BZ_{(2)} = \BZ.$$
By Lemma \ref{l656}(b), the right hand side of \eqref{e771} is non-zero.
\end{proof}

\subsection{Proof of Proposition \ref{pKirby}} \label{sec:ppKirby}
\begin{proof}[Proof of Proposition \ref{pKirby}] By \cite[Proposition 2.3 \& Theorem 3.3]{Le:quantum},   $\Omega^\fg(\zeta)$ and $\Omega^{P\fg}(\zeta)$ are strong handle-slide colors.
Although the formulation in \cite{Le:quantum} only says that $\Omega^\fg(\zeta)$ and $\Omega^{P\fg}(\zeta)$ are handle-slide colors, the proofs there actually show that $\Omega^\fg(\zeta)$ and $\Omega^{P\fg}(\zeta)$ are strong handle-slide colors.

It remains to show that $J_{U_\pm}(\Omega^\fg(\zeta))\neq 0$ if $r$ is even, and  $J_{U_\pm}(\Omega^{P\fg}(\zeta)) \neq 0$ if $r$ is odd.

  From  \cite[Section 2.3]{Le:quantum}, with the assumption  $\ord(\zeta^{2\cD } )> d(h^\vee -1)$, we have
\be
 J_{U_+}(\Omega^\fg(\zeta))  \overset {(\zeta)}{=} \frac{\mathfrak G^\fg(\zeta)}{\prod_{\al \in \Phi_+}(1- \xi^{(\al,\rho)})}, \quad  J_{U_+}(\Omega^{P\fg}(\zeta))  \overset {(\zeta)}{=} \frac{\mathfrak G^{P\fg}(\zeta)}{\prod_{\al \in \Phi_+}(1- \xi^{(\al,\rho)})} \label{e825}.
 \ee
Besides, $J_{U_-}(\Omega^\fg(\zeta))$ and $J_{U_-}(\Omega^{P\fg}(\zeta)) $ are respectively  the complex conjugates of $J_{U_+}(\Omega^\fg(\zeta))$ and $J_{U_+}(\Omega^{P\fg}(\zeta)) $.
By Lemma \ref{l825}, if $\ord(\zeta^{2 \cD})$ is even then $J_{U_+}(\Omega^\fg(\zeta))\neq 0$, and if  $\ord(\zeta^{2 \cD})$ is odd then $J_{U_+}(\Omega^{P\fg}(\zeta))\neq 0$. This completes the proof of Proposition \ref{pKirby}.
\end{proof}

\subsection{The sets $\ZZ'_\fg$ and $\ZZ'_{P\fg}$ for each simple Lie algebra} %

\begin{proposition}\null (a) One has $\mathfrak G^{\fg}(\zeta)=0$ in and only in the following cases:

\begin{itemize} \item
 $\fg= A_\ell$ with  $\ell$  odd and $\ord(\zeta)  \equiv 2 \pmod 4$.

\item $\fg= B_\ell$ with $\ell$ odd and $\ord(\zeta) \equiv 2 \pmod 4$.

\item $\fg= B_\ell$ with $\ell \equiv 2 \pmod 4$ and $\ord(\zeta)  \equiv 4 \pmod 8$.

\item $\fg= C_\ell$ and $\ord(\zeta)  \equiv 4 \pmod 8$.

\item $\fg= D_\ell$ with $\ell$ odd and $\ord(\zeta) \equiv 2 \pmod 4$.

\item $\fg= D_\ell$ with $\ell \equiv 2 \pmod 4$ and $\ord(\zeta)  \equiv 4 \pmod 8$.

\item $\fg= E_7$  and $\ord(\zeta)  \equiv 2 \pmod 4$.
\end{itemize}

(b) In particular, if $\ord(\zeta)$ is odd or $\ord(\zeta)$ is divisible by $2d\cD$, then $\mathfrak G^{\fg}(\zeta)\neq 0$.
\label{pro.Gauss}
\end{proposition}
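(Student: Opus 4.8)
The strategy is to recognize $\mathfrak G^{\fg}(\zeta)$ as a quadratic Gauss sum on the weight lattice and then feed it into the vanishing criterion of Lemmas \ref{l656}, \ref{l822}, \ref{l823}. Writing $s=\ord(\zeta)$, the completing-the-square computation already carried out in the proof of Lemma \ref{l825}(b) shows that $\mathfrak G^{\fg}(\zeta)=c\cdot\mathfrak G_\phi(s)$ for an explicit nonzero constant $c$ (namely $\zeta^{-D(\rho,\rho)}(2D/\gcd(s,2D))^{\ell}$), where $\mathfrak G_\phi$ is the quadratic Gauss sum attached to the even symmetric form $\phi(x,y)=2D(x,y)$ on $\mathfrak A=X$. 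Hence $\mathfrak G^{\fg}(\zeta)=0$ if and only if $\mathfrak G_\phi(s)=0$. By Lemma \ref{l656}(a) this already forces $s$ to be even; and for $s$ even, Lemma \ref{l656}(b) together with Lemma \ref{l822} shows $\mathfrak G_\phi(s)=0$ exactly when there exists $x\in\ker_\phi(s)$ with $\tfrac{1}{2s}\phi(x,x)$ a genuine half-integer rather than an integer.

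Next I would make $\ker_\phi(s)=s\mathfrak A^{*}_\phi\cap X=\tfrac{s}{2D}X^{*}\cap X$ explicit. Recall that $X^{*}$ is the $\BZ$-span of $\{\alpha/d_\alpha:\alpha\in\Pi\}$, that $Y\subseteq X^{*}$, and that $|X/Y|=D$ (in particular $X^{*}=Y$ for the simply-laced types). Writing $x=\tfrac{s}{2D}y$ with $y\in X^{*}$, one computes $\tfrac{1}{2s}\phi(x,x)=\tfrac{s}{4D}(y,y)$, which by Lemma \ref{l823} lies in $\tfrac12\Ztwo\subset\tfrac12\BZ$. So the whole question reduces to deciding, for each $\fg$, whether some $y\in X^{*}$ satisfies $\tfrac{s}{2D}y\in X$ and $\tfrac{s}{4D}(y,y)$ odd — a condition depending only on the $2$-adic valuation of $s$ and on the reduction modulo small powers of $2$ of the quadratic form $(y,y)$ on $X^{*}$ relative to the sublattice $X$. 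When $D$ is odd — the cases $A_\ell$ with $\ell$ even, together with $E_6$, $E_8$, $F_4$, $G_2$ — one checks immediately that $\tfrac{s}{4D}(y,y)$ is always even, so $\mathfrak G^{\fg}(\zeta)\neq0$; this is exactly why these types are absent from list (a).

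It then remains to run the type-by-type computation for $A_\ell$ ($\ell$ odd), $B_\ell$, $C_\ell$, $D_\ell$ and $E_7$, using the standard Euclidean realizations of the weight and root lattices (and, for $E_7$, the inverse Cartan matrix). In each case one locates the ``worst'' candidate $y$ — a generator of $X^{*}/X$ (equivalently the relevant minuscule coweight) — and reads off the precise congruence on $s=\ord(\zeta)$ that makes $\tfrac{s}{4D}(y,y)$ odd: one gets the threshold $s\equiv2\pmod4$ for $A_\ell$ ($\ell$ odd), for $B_\ell$ and $D_\ell$ with $\ell$ odd, and for $E_7$ (all with $v_2(D)=0$ or $1$ forcing valuation $1$), the threshold $s\equiv4\pmod8$ for $C_\ell$ and for $B_\ell$, $D_\ell$ with $\ell\equiv2\pmod4$, and no vanishing at all for $D_\ell$ with $\ell\equiv0\pmod4$. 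Assembling these yields exactly the seven cases of (a). Part (b) is then an immediate corollary: if $\ord(\zeta)$ is odd we are in Lemma \ref{l656}(a); and if $2dD\mid\ord(\zeta)$ then in each of the seven cases the $2$-adic valuation of $\ord(\zeta)$ is too large for the stated congruence to hold (for instance $2dD=2(\ell+1)$ for $A_\ell$, and $2dD=8$ for $C_\ell$), so $\mathfrak G^{\fg}(\zeta)\neq0$.

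The conceptual content is entirely contained in the Gauss-sum criterion of Lemma \ref{l656} and the reduction above; the genuine work — and the main source of potential error — is the explicit lattice arithmetic for $B_\ell$, $C_\ell$, $D_\ell$, where the parity of $\ell$ and the $2$-adic valuation of $\ord(\zeta)$ interact, and where one must also verify that \emph{no} spurious additional vanishing case appears beyond those listed. I expect this bookkeeping, rather than any new idea, to be the bottleneck.
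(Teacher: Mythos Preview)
Your approach is exactly the one the paper indicates: the paper's proof consists of a single sentence saying that the result follows from ``a careful, tedious, but not difficult check of the vanishing of the Gaussian sum using Lemma \ref{l656} and the explicit description of the weight lattice for each simple Lie algebra,'' and your outline fleshes out precisely this check via the completing-the-square reduction of Lemma \ref{l825}(b) followed by type-by-type lattice arithmetic. One small slip: your parenthetical ``all with $v_2(D)=0$ or $1$'' is not literally correct (e.g.\ $D_\ell$ has $D=4$), but this does not affect the argument.
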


The proof is a careful, tedious, but not difficult check of the vanishing of the Gaussian sum using Lemma \ref{l656} and the explicit description of the weight lattice for each simple Lie algebra, and we drop the details.

\begin{corollary} Suppose $\zeta \in \ZZ$ with $\ord(\zeta^{2\cD}) > d(h^\vee -1)$. Then $\zeta \in \ZZ'_\fg$ if and only if
 $\zeta$ satisfies the condition of
Proposition \ref{pro.Gauss}(a).
\label{cor.Gauss}
\end{corollary}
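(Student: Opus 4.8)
The plan is to reduce the membership $\zeta\in\ZZ'_\fg$ to the non-vanishing of the Gauss sum $\mathfrak G^\fg(\zeta)$ and then to quote Proposition \ref{pro.Gauss}(a). Recall first that under the standing hypothesis $\ord(\zeta^{2\cD})>d(h^\vee-1)$ the color $\Omega^\fg(\zeta)$ is already a strong handle-slide color at level $v^{1/\cD}=\zeta$; this is exactly what is recorded, quoting \cite[Proposition 2.3 \& Theorem 3.3]{Le:quantum}, in the first paragraph of the proof of Proposition \ref{pKirby}. Hence, by the definition of a strong Kirby color, $\zeta\in\ZZ'_\fg$ holds if and only if $\Omega^\fg(\zeta)$ satisfies the non-degeneracy condition \eqref{em2}, that is, $J_{U_\pm}(\Omega^\fg(\zeta))\neqz 0$.

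Next I would invoke the evaluation formula \eqref{e825},
\[
J_{U_+}(\Omega^\fg(\zeta))\ \eqz\ \frac{\mathfrak G^\fg(\zeta)}{\prod_{\al\in\Phi_+}(1-\xi^{(\al,\rho)})},
\qquad \xi=\zeta^{2\cD},\quad r=\ord(\xi),
\]
and observe that the denominator is a nonzero complex number: for every $\al\in\Phi_+$ one has $0<(\al,\rho)\le\max_{\beta\in\Phi_+}(\rho,\beta)=d(h^\vee-1)<r$, where the first equality is the one recalled in Section \ref{WRT1} and the last inequality is the standing hypothesis, so $\xi^{(\al,\rho)}\neq 1$. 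Therefore $J_{U_+}(\Omega^\fg(\zeta))\neqz 0$ if and only if $\mathfrak G^\fg(\zeta)\neq 0$; and since $J_{U_-}(\Omega^\fg(\zeta))$ is the complex conjugate of $J_{U_+}(\Omega^\fg(\zeta))$ after evaluation at $\zeta$ (as noted in the proof of Proposition \ref{pKirby}), the two non-degeneracy conditions hold or fail together. Combining these steps gives $\zeta\in\ZZ'_\fg$ if and only if $\mathfrak G^\fg(\zeta)\neq 0$.

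Finally, Proposition \ref{pro.Gauss}(a) lists precisely the pairs $(\fg,\zeta)$ for which $\mathfrak G^\fg(\zeta)=0$; together with the previous equivalence this yields the claimed description of $\ZZ'_\fg$ for all $\zeta$ with $\ord(\zeta^{2\cD})>d(h^\vee-1)$, namely that $\zeta\in\ZZ'_\fg$ precisely when $\zeta$ does not fall into one of the exceptional cases of Proposition \ref{pro.Gauss}(a). Granting Proposition \ref{pro.Gauss}(a), the corollary is essentially formal; the only genuine point to watch in the write-up is the non-vanishing of the denominator in \eqref{e825}, which is exactly where the hypothesis $\ord(\zeta^{2\cD})>d(h^\vee-1)$ enters, and keeping careful track of both signs $U_{\pm}$ in the definition of non-degeneracy. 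The real computational effort sits inside Proposition \ref{pro.Gauss}(a) itself — the case-by-case evaluation of the quadratic Gauss sum over the weight lattice via the criterion of Lemma \ref{l656} — which is a separate statement that may be assumed here.
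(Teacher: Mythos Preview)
Your argument is correct and is precisely the one implicit in the paper: the corollary has no separate proof there, and is meant to follow immediately from the ingredients assembled in the proof of Proposition \ref{pKirby} (strong handle-slide property of $\Omega^\fg(\zeta)$, formula \eqref{e825}, the complex-conjugate relation between $J_{U_\pm}$) together with Proposition \ref{pro.Gauss}(a). Your explicit observation that the denominator $\prod_{\al\in\Phi_+}(1-\xi^{(\al,\rho)})$ is nonzero because $0<(\al,\rho)\le d(h^\vee-1)<r$ is exactly the point at which the hypothesis enters, and your reading of the conclusion---$\zeta\in\ZZ'_\fg$ precisely when $\zeta$ does \emph{not} fall into one of the exceptional cases listed in Proposition \ref{pro.Gauss}(a)---is the intended one.
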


Similarly, using Lemma \ref{l656}, one can prove the following.
\begin{proposition}\null Let $r= \ord(\xi)= \ord(\zeta^{2\cD})$.

 (a) One has $\mathfrak G^{P\fg}(\zeta)=0$ in and only in the following cases:

\begin{itemize} \item
 $\fg= A_\ell$ and  $\ord_2(r) = \ord_2 (\ell+1) \ge 1$.

\item $\fg=B_\ell$  and $r  \equiv 2 \pmod 4$.

\item $\fg= C_\ell$, $r$ even and  $ r\ell \equiv 4 \pmod 8$.

\item $\fg= D_\ell$, $r$ even and  $ r\ell \equiv 4 \pmod 8$.

\item $\fg= E_7$  and $r  \equiv 2 \pmod 4$.

\end{itemize}
Here $\ord_2(n)$ is the order of $2$ in the prime decomposition of the integer $n$.

(b) In particular, if $r$ is co-prime with $2^{\ord_2(\cD)}$, then $\mathfrak G^{P\fg}(\zeta)\neq 0$.

\label{pro.Gauss2}

\end{proposition}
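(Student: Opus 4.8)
The plan is to follow the same route as the proof of Lemma~\ref{l825} and of Proposition~\ref{pro.Gauss}, reducing $\mathfrak G^{P\fg}(\zeta)$ to a (possibly shifted) quadratic Gauss sum and then invoking the vanishing criterion of Lemma~\ref{l656}. First I would complete the square, writing $(\lambda,\lambda+2\rho)/2=\tfrac12\big((\lambda+\rho,\lambda+\rho)-(\rho,\rho)\big)$, so that
\[
  \mathfrak G^{P\fg}(\zeta)=\xi^{-(\rho,\rho)/2}\sum_{\lambda\in P_\zeta\cap Y}\xi^{(\lambda+\rho,\lambda+\rho)/2}.
\]
Since $P_\zeta\cap Y$ is a union of fundamental domains for $Y$ modulo the period lattice $2rY$ (the exponent being invariant under $\mu\mapsto\mu+2r\gamma$, $\gamma\in Y$, once $\xi^{1/2\cD}$ is fixed to be $\zeta$), the right-hand side is, up to a nonzero scalar and an overall multiplicity $\cD^{\ell-1}$, the shifted Gauss sum $\sum_{\mu\in(\rho+Y)/2rY}\xi^{(\mu,\mu)/2}$ attached to the root lattice $Y$ with the invariant inner product, at level $r$, over the coset $\rho+Y\subseteq X$. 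When $r$ is odd, $(r+1)\rho\in Y$ (exactly as in Lemma~\ref{l825}(a)) lets me absorb the shift, and $\mathfrak G^{P\fg}(\zeta)\neq0$ follows from Lemma~\ref{l656}(a); likewise the shift is harmless whenever $\rho\in Y$. So the real content is the case $r$ even with $\rho\notin Y$, where this Gauss sum can genuinely vanish.

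For the remaining cases I would diagonalize the quadratic form $x\mapsto(x,x)$ on $Y$ over $\BZ_{(2)}$ — only the $2$-adic part matters, since by Lemma~\ref{l656}(a) the odd-level part of any Gauss sum is nonzero and odd primes contribute only units. The Gauss sum then factors into one-dimensional shifted Gauss sums of the form $\sum_{j\bmod r}\exp\!\big(\pi i(c_kj^2+2b_kj)/r\big)$, each of which vanishes precisely under an explicit elementary congruence on $r$, $c_k$, $b_k$. Feeding in the standard coordinate descriptions of $Y$, $X$ and $\rho$ for each Cartan type ($A_\ell$: trace-zero integer vectors; $B_\ell,C_\ell,D_\ell$: the classical sublattices of $\BZ^\ell$; and the exceptional lattices) yields the list in part~(a): $A_\ell$ with $\ord_2(r)=\ord_2(\ell+1)\ge1$, $B_\ell$ and $E_7$ with $r\equiv2\pmod4$, and $C_\ell,D_\ell$ with $r$ even and $r\ell\equiv4\pmod8$; the types $E_6$ ($\cD=3$, so $r$ odd is automatic and the even case needs only a short direct check), $E_8$, $F_4$, $G_2$ (where $X=Y$, so the shift trick of Lemma~\ref{l825}(a) applies directly and Lemma~\ref{l656} gives nonvanishing) never occur. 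An equivalent and cleaner packaging is to apply Lemma~\ref{l656}(b) together with Lemma~\ref{l822}: $\mathfrak G^{P\fg}(\zeta)=0$ if and only if there is $x\in\ker_\phi(r)=rY^{*}_\phi\cap Y$ with $\tfrac1{2r}(x,x)+\tfrac1r(x,\rho)\notin\BZ$, a condition tested type by type using the same lattice data.

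Part~(b) then follows from~(a) by inspection. If $\gcd\!\big(r,2^{\ord_2(\cD)}\big)=1$ then $r$ is odd whenever $\cD$ is even: for $B_\ell,C_\ell,E_7$ one has $\cD=2$ and for $D_\ell$ one has $\cD=4$, and each listed vanishing condition there requires $r$ even, which is excluded; for $A_\ell$ one has $\cD=\ell+1$, and coprimality forces $\ord_2(r)<\ord_2(\ell+1)$ (vacuously when $\ell+1$ is odd), so $\ord_2(r)=\ord_2(\ell+1)\ge1$ cannot hold. Hence no vanishing case of~(a) applies, and $\mathfrak G^{P\fg}(\zeta)\neq0$.

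The main obstacle is the second step: the $2$-adic diagonalization of the root lattice together with the careful tracking of the shift by $\rho$ for every simple type — in particular matching $C_\ell$ and $D_\ell$, which land on the same congruence from slightly different lattices, and pinning down the exact $2$-adic valuation condition for $A_\ell$. This is routine but long, exactly as the authors remark for the companion Proposition~\ref{pro.Gauss}; no idea beyond Lemmas~\ref{l656}, \ref{l822} and \ref{l823} and the completing-the-square reduction is needed.
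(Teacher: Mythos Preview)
Your proposal is correct and follows essentially the same approach the paper indicates: the paper's own proof is just the sentence ``Similarly, using Lemma~\ref{l656}, one can prove the following,'' pointing back to the completing-the-square reduction of Lemma~\ref{l825} and the case-by-case lattice check mentioned for Proposition~\ref{pro.Gauss}. Your sketch is in fact more detailed than what the paper provides. One small wording slip: for $E_6$ you write ``$\cD=3$, so $r$ odd is automatic'' --- $r$ need not be odd there; what you presumably mean is that the $r$-odd case is handled by the general absorption trick and the $r$-even case needs a short direct verification (which indeed gives nonvanishing).
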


\begin{corollary}
 Suppose $\ord(\zeta^{2\cD}) > d(h^\vee -1)$. Then $\zeta \in \ZZ'_{P\fg}$ if and only if
 $\zeta$ satisfies the condition of
Proposition \ref{pro.Gauss2}(a).
\end{corollary}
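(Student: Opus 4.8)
The final statement to prove is Corollary in Appendix C describing $\ZZ'_{P\fg}$ explicitly: assuming $\ord(\zeta^{2\cD}) > d(h^\vee -1)$, one has $\zeta \in \ZZ'_{P\fg}$ if and only if $\zeta$ satisfies the conditions of Proposition \ref{pro.Gauss2}(a).

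\textbf{Plan of proof.} The strategy is to combine three ingredients already assembled in the appendix: (i) the fact from \cite{Le:quantum} (quoted just before Proposition \ref{pKirby}) that $\Omega^{P\fg}(\zeta)$ is always a \emph{strong handle-slide color} at level $v^{1/\cD}=\zeta$ whenever $\ord(\zeta^{2\cD}) > d(h^\vee-1)$; (ii) the definition that $\zeta\in\ZZ'_{P\fg}$ precisely when $\Omega^{P\fg}(\zeta)$ is additionally \emph{non-degenerate}, i.e. $J_{U_\pm}(\Omega^{P\fg}(\zeta)) \neqz 0$; and (iii) the explicit evaluation \eqref{e825}, namely
\[
J_{U_+}(\Omega^{P\fg}(\zeta)) \overset{(\zeta)}{=} \frac{\mathfrak G^{P\fg}(\zeta)}{\prod_{\al\in\Phi_+}(1-\xi^{(\al,\rho)})},
\]
together with the observation that $J_{U_-}(\Omega^{P\fg}(\zeta))$ is the complex conjugate of $J_{U_+}(\Omega^{P\fg}(\zeta))$. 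Under the running hypothesis $\ord(\zeta^{2\cD}) = r > d(h^\vee-1) \ge \max_{\al\in\Phi_+}(\rho,\al)$, the denominator $\prod_{\al\in\Phi_+}(1-\xi^{(\al,\rho)})$ does not vanish at $q=\xi$ (each factor $1-\xi^{(\al,\rho)}$ is nonzero since $0 < (\al,\rho) < r$). Hence $J_{U_\pm}(\Omega^{P\fg}(\zeta)) \neqz 0$ if and only if $\mathfrak G^{P\fg}(\zeta) \ne 0$. This reduces the corollary entirely to Proposition \ref{pro.Gauss2}(a), which characterizes exactly when $\mathfrak G^{P\fg}(\zeta)=0$.

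\textbf{Carrying it out.} First I would record the reduction: $\zeta\in\ZZ'_{P\fg}$ iff $\Omega^{P\fg}(\zeta)$ is a strong Kirby color iff ($\Omega^{P\fg}(\zeta)$ is a strong handle-slide color \emph{and} non-degenerate). The first conjunct holds automatically under $\ord(\zeta^{2\cD})>d(h^\vee-1)$ by the cited result, so $\zeta\in\ZZ'_{P\fg}$ iff non-degeneracy holds. Second, I would invoke \eqref{e825} and the non-vanishing of the denominator (justified as above from $r>d(h^\vee-1)\ge d(h^\vee-1)\ge(\rho,\al)$ for all positive roots, using $\max_{\al\in\Phi_+}(\rho,\al)=d(h^\vee-1)$ which is stated in Section \ref{WRT1}) to conclude non-degeneracy is equivalent to $\mathfrak G^{P\fg}(\zeta)\ne 0$. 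Third, I would apply Proposition \ref{pro.Gauss2}(a) to translate $\mathfrak G^{P\fg}(\zeta)\ne 0$ into the explicit list of arithmetic conditions on $\fg$ and $r=\ord(\zeta^{2\cD})$; note that the conditions in the statement of Proposition \ref{pro.Gauss2}(a) give exactly the cases where the sum \emph{vanishes}, so $\zeta\in\ZZ'_{P\fg}$ corresponds to the \emph{complement} of that list — one should be slightly careful about whether the corollary statement is phrased in terms of the vanishing or non-vanishing list, and phrase the final ``if and only if'' to match Proposition \ref{pro.Gauss2}(a) verbatim as the excerpt does.

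\textbf{Main obstacle.} The genuinely hard content is Proposition \ref{pro.Gauss2}(a) itself — the case-by-case determination, for each simple Lie algebra, of when the quadratic Gauss sum $\mathfrak G^{P\fg}(\zeta) = \sum_{\lambda\in P_\zeta\cap Y}\xi^{(\lambda,\lambda+2\rho)/2}$ vanishes. That proof rests on the completing-the-square manipulation already used in the proof of Lemma \ref{l825}(a) (which reduces $\mathfrak G^{P\fg}(\zeta)$ up to a nonzero factor to a pure Gauss sum $\sum_{\lambda\in Y/rY}\xi^{(\lambda,\lambda)/2}$), followed by the vanishing criterion Lemma \ref{l656}(b): $\mathfrak G_\phi(m)\ne 0$ iff $\tfrac1{2m}\phi(x,x)\in\BZ$ for every $x\in\ker_\phi(m)$. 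Applying this with $\mathfrak A = Y$ (for $P\fg$), $\phi(x,y)=(x,y)$, $m=r$, one computes $\ker_\phi(r) = rY^*_\phi\cap Y$ where $Y^*_\phi$ is the $\BZ$-dual of $Y$ — here $Y^*_\phi = X^*$ in the notation of Lemma \ref{l823} is \emph{not} the relevant one; rather it is the dual of $Y$ under the normalized form, which for $A_\ell$ is related to $X$ and introduces the $\ord_2(\ell+1)$ condition. The bookkeeping of $2$-adic valuations of $(\lambda,\lambda)$ for $\lambda$ in the dual lattice of $Y$, type by type, is the tedious but routine core; I would organize it as a table (the excerpt explicitly says ``we drop the details''), treating odd $r$ immediately via Lemma \ref{l656}(a), then handling $r\equiv 2\pmod 4$ and the finer congruences $r\ell\pmod 8$ separately for $B,C,D$, and checking $E_7$ by hand using its connection index $D=2$.
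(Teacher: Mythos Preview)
Your proposal is correct and follows essentially the same approach as the paper: the corollary has no separate proof in the paper, but it is an immediate consequence of the proof of Proposition~\ref{pKirby} (specifically, formula~\eqref{e825} and the nonvanishing of its denominator under the hypothesis $\ord(\zeta^{2D})>d(h^\vee-1)$) combined with Proposition~\ref{pro.Gauss2}(a), exactly as you outline. Your extended discussion of how one would prove Proposition~\ref{pro.Gauss2}(a) itself is accurate in spirit but unnecessary here, since that proposition is taken as given.
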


\np

\section{Table of notations}

\begin{tabular}{|l|l|l|}
	\hline
Notations &  defined in   & remarks  \\
	\hline
$\Zqh$, $(x;q)_n$ & \ref{sec.Habiro} & \\
$\left( \Ch^I\right)_0 $ & \ref{sec:top1} & \\
$\sH, \boldmu,\boldsymbol {\eta},\Delta,\boldsymbol \epsilon,S$   &  \ref{sec:ribbon-hopf-algebra}             & Hopf algebra \\
$\cR$    &   \ref{sec:ribbon-hopf-algebra}, \ref{Rmatrix2}           &  $R$-matrix\\
$ \br$  &     \ref{sec:ribbon-hopf-algebra}, \ref{Rmatrix2}         & ribbon element \\
$\modg$   &     \ref{sec:ribbon-hopf-algebra}          & balanced element\\
$\ad(x \ot y), x \tri y$  & \ref{sec:adjaction} & adjoint action \\
$\tr_q^V$ &\ref{sec:adjaction}  & quantum trace \\
$J_T$   &   \ref{sec:uni11}            & universal invariant of bottom tangle \\
${\boldsymbol{\psi}}$, $\bD,\bS$ & \ref{transmutation} & braiding, transmutation\\
$\modc,\modc^-, C^+, C^-$  &  \ref{sec:clasp-bottom-tangle}  & clasp, $\modc = J_{C^+}$  \\
$\cT_\pm$ & \ref{sec.simple}, \ref{sec.twist1} & full twist forms \\
$J_M$ &  \ref{sec.partii}, \ref{sec.JM} & invariant of 3-manifold \\
$\Upsilon$ & \ref{sec:borr-tangle-braid}& braided commutator\\
$\modb $ & \ref{sec.partii}   & universal invariant of Borromean tangle\\
$ \sL(x \ot y)$, $\la x, y \ra$ & \ref{sec:coresub}, \ref{sec:rosso-form} & clasp form \\
$\fg,\ell,  \modh$ & \ref{sec.UhUq1} &  Lie algebra, its rank, Cartan subalgebra\\
$d, d_\al, t, \Ht(\gamma)$ & \ref{sec.UhUq1} &   \\
$X, Y$ & \ref{sec.UhUq1} & weight lattice, root lattice \\
$\Pi, \Phi, \Phi_+$ & \ref{sec.UhUq1} &  simple roots, all roots, positive roots\\
$\rho, \al_i, \bral_i$ & \ref{sec.UhUq1} &  \\
$h,v,q,v_\al , q_\al, \cA$ & \ref{sec.UhUq2} & $q=v^2= \exp(h)$, $\cA= \BZ[v^{\pm1}]$\\
$[n]_\al, \{ n\}_\al,  [n]_\al!, \{ n\}_\al!, \qbinom n k _\al$ & \ref{sec.UhUq2} & \\
$\Uh, F_\al, E_\al, H_\lambda, F_i, E_i$ & \ref{subsub.UU} & \\
$K_\lambda, \brK_\al, K_i$ & \ref{subsub.UU} & \\
$\Uq, \brU_q, \brU_q^0$ & \ref{sec.UU2} & \\
$\ibar, \varphi,\omega,\tau$ & \ref{sec.8147} & (anti) automorphisms of $\Uh$\\
$|x|$ & \ref{sec:y-grading} & $Y$-grading\\
$\Uq^\ev$, $\brU_q^\ev$  & \ref{even-grading} & even grading\\
$\Uh^\pm,\Uh^0, \Uq^\pm, \Uq^0$, $\Uq^{\ev,-}, \Uq^{\ev,0}$  & \ref{sec.tri1} & \\
$\fW, s_\al, s_i$  & \ref{503} & Weyl group, reflection\\
$T_\al$  & \ref{503} & braid group action\\
$E_\gamma, F_\gamma, E^{(\bn)}, F^{(\bn)}, K_\bn $  & \ref{sec.PBW} & \\
$\Theta, E_\bn, F_\bn, E'_\bn, F'_\bn $  & \ref{Rmatrix} & \\
$\cD, \brH_\al, \br_0$  & \ref{Rmatrix2} & \\
$\Gamma$  & \ref{Rosso-form} & quasi-clasp element\\
$\Uhh$  & \ref{sec:corealgebra} &  $\UUh := \Uh \ho _{\Ch} \Chh$\\
$\|\bn \|, \bbe_h(\bn), \Vh, \Vh^{\bar \ot n}$  & \ref{sec.Vh} & \\
$\Xh $  & \ref{sec:sX} & core subalgebra of $\UUh$\\
$\tA $  & \ref{sec.dilita} & \\

	\hline
\end{tabular}

\np

\begin{tabular}{|l|l|l|}
	\hline
Notations &  defined in   & remarks  \\
	\hline
$\UZ,\UZ^\pm, \UZ^0, \UZ^\ev, \UZ^{\ev,-}, \UZ^{\ev,0} $  & \ref{sec:Lus} & \\
$\VZ,\VZ^\pm, \VZ^0, \VZ^\ev, \VZ^{\ev,-}, \VZ^{\ev,0} $  & \ref{sec:algebra-cv} & \\
$(q;q)_\bn $  & \ref{sec:algebra-cv}, \ref{sec:pre-bas}, \ref{sec:basesUZ} & \\
$Q^\ev(\bn), Q(\bn,\bode) $  & \ref{sec:pre-bas} & \\
$\bbe^\ev(\bn), \bbe(\bn,\bode) $  & \ref{sec:basesUZ} & \\
$\brUZ, \brUZ^0, \brUZ^\ev, \brUZ^{\ev,0} $  & \ref{sec:simply} & \\
$\brbe^\ev(\bn), \brbe(\bn,\bode) $  & \ref{sec:simply} & \\
$\XZ, \XZ^\ev $  & \ref{sec:XZ} & integral core subalgebra\\
$G, G^\ev, \dv,\dK_\al, \de_\al, \dot x, [\Uq]_g $  & \ref{sec:grad1} & \\
$ [\Uq^{\ot n}]_g $  & \ref{sec:gradTotal} & $G$-gradings\\
$ \modK_n, \tK_n, \cF_k(\modK_n) $  & \ref{sec:Kn1} & \\
$ \modK_n(\cU), \tK_n(\cU), \cF_k(\modK_n(\cU)) $  & \ref{sec:Kn2} & \\
$\max(\bn), o(\bn) $  & \ref{sec:Kn4} & \\
$\ZZ, \ZZ_\fg, h^\vee, D  $  & \ref{sec.WRT01} & \\
$\dim_q(V), U  $  & \ref{WRT1} & \\
$\ev_{v^{1/D}= \zeta}(f),  f \eqzeta g  $  & \ref{WRT1} & \\
$\cB, U_\pm, \tau_M(\Omega) $  & \ref{sec.Kirby} & $\cB= \BC[v^{\pm 1}]$\\
$ \tau^\fg, \tau^{P\fg}, \ZZ'_\fg, \ZZ'_{P\fg} $  & \ref{sec:st.Kirby} & \\
$ \Omega_\pm $  & \ref{proof01} &  twisted colors\\
$\cU, \cU^\ev$  & \ref{sec:Ur} &  \\
$\modK'_m, \cF_k(\modK'_m), \tK'_m$  & \ref{sec:Kmprime} &  \\
$\tT_\pm$  & \ref{n5013} &  \\
$\Zc(\Uh), \Zc(V), \chi, \sh_\mu$  & \ref{sec.HC1} &  \\
$z_\lambda$  & \ref{sec:Drin} &  \\
$\DDD, \ddd $ & \ref{sec.DDD}  &\\
	\hline
\end{tabular}

\end{document}